\numberwithin{equation}{section}
\def\subsectionstyle{\@startsection{subsection}{2}%
	\z@{.5\linespacing\@plus.7\linespacing}{-.5em}%
	{\normalfont\bfseries}}
\newtheorem{thm}{Theorem}[section]
\newtheorem{cor}[thm]{Corollary}
\newtheorem{lem}[thm]{Lemma}
\newtheorem{prop}[thm]{Proposition}
\newtheorem*{thm*}{Theorem}
\newtheorem*{prop*}{Proposition}
\newcounter{theoremalph}
\newtheorem{thmAlph}[theoremalph]{Theorem}
\newtheorem{corAlph}[theoremalph]{Corollary}
\theoremstyle{definition}
\newtheorem{defn}[thm]{Definition}
\newtheorem*{defn*}{Definition}
\theoremstyle{remark}
\newtheorem{ques}[thm]{Question}
\newtheorem{rem}[thm]{Remark}
\newtheorem*{rem*}{Remark}
\newtheorem{exmp}[thm]{Example}
\newtheorem{notat}[thm]{Notation}
\DeclareMathOperator{\Ann}{Ann}
\DeclareMathOperator{\Aut}{Aut}
\DeclareMathOperator{\CAT}{CAT}
\DeclareMathOperator{\Ext}{Ext}
\DeclareMathOperator{\Homfd}{Hom_\textrm{fd}}
\DeclareMathOperator{\Hom}{Hom}
\DeclareMathOperator{\Tor}{Tor}
\DeclareMathOperator{\asdim}{asdim}
\DeclareMathOperator{\ccd}{ccd}
\DeclareMathOperator{\cd}{cd}
\DeclareMathOperator{\coker}{coker}
\DeclareMathOperator{\diam}{diam}
\DeclareMathOperator{\id}{id}
\DeclareMathOperator{\im}{im}
\DeclareMathOperator{\lf}{lf}
\DeclareMathOperator{\rank}{rank}
\DeclareMathOperator{\supp}{supp}
\DeclareMathOperator{\vcd}{vcd}
\newcommand{\Haus}{d_\textrm{Haus}}
\newcommand{\coarse}{H_\textrm{coarse}}
\newcommand{\hcoarse}{H^\textrm{coarse}}
\newcommand{\bB}{\mathbf{B}}
\newcommand{\bC}{\mathbf{C}}
\newcommand{\bD}{\mathbf{D}}
\newcommand{\bE}{\mathbf{E}}
\newcommand{\bL}{\mathbf{L}}
\newcommand{\bM}{\mathbf{M}}
\newcommand{\bN}{\mathbf{N}}
\newcommand{\bP}{\mathbf{P}}
\newcommand{\bZ}{\mathbf{Z}}
\newcommand{\bbF}{\mathbb{F}}
\newcommand{\bbN}{\mathbb{N}}
\newcommand{\bbR}{\mathbb{R}}
\newcommand{\bbZ}{\mathbb{Z}}
\newcommand{\cB}{\mathcal{B}}
\newcommand{\cC}{\mathcal{C}}
\newcommand{\cF}{\mathcal{F}}
\newcommand{\cM}{\mathcal{M}}
\newcommand{\cN}{\mathcal{N}}
\newcommand{\cZ}{\mathcal{Z}}
\author{Alexander Margolis}
\title{Coarse homological invariants of metric spaces}
\address{Alexander Margolis, Department of Mathematics, The Ohio State University,  Mathematics Tower,  231 W 18th Ave,  Columbus,  OH  43210, USA}
\email{margolis.93@osu.edu}
\subjclass[2020]{20J05, 20J06, 20F65, 51F30}
\begin{document}
\begin{abstract}
	Inspired by group cohomology, we define  several coarse topological invariants of metric spaces. We define the coarse cohomological dimension of a metric space, and demonstrate that if $G$ is a countable group, then the coarse cohomological dimension of $G$ as a metric space coincides with the cohomological dimension of $G$ as a group whenever the latter is finite. Extending a result of Sauer, it is shown that coarse cohomological dimension is monotone under coarse embeddings, and hence is invariant under coarse equivalence. We characterise  unbounded quasi-trees as quasi-geodesic metric spaces of coarse cohomological dimension one.
	A classical theorem of Hopf and Freudenthal states that if $G$ is a finitely generated group, then the number of ends of $G$ is either $0$, $1$, $2$ or $\infty$. We prove a higher-dimensional analogue of this result, showing that if $\bbF$ is a field, $G$ is countable, and  $H^k(G,\bbF G)=0$ for $k<n$, then $\dim H^n(G,\bbF G)=0,1$ or $\infty$, significantly extending a result of Farrell from 1975. Moreover, in the case $\dim H^n(G,\bbF G)=1$, then $G$ must be a coarse Poincar\'e duality group. We prove an  analogous result   for metric spaces.
\end{abstract}
\maketitle
\section{Introduction}
In this article,  we define  several coarse homological invariants of metric spaces, inspired by and analogous to  concepts in  group cohomology; see Table~\ref{table:dictionary}. We first discuss an application of this machinery.
A foundational result in the theory of ends, proven independently by Freudenthal and Hopf, states that the number of ends of a finitely generated group is  equal to either $0,1,2$ or $\infty$.
We recall the number of ends of an infinite group $G$ can be characterised cohomologically as the dimension  of $H^1(G,\bbF_2 G)$ minus one. We can thus reformulate the Freudenthal--Hopf theorem as follows:
\begin{thm}[\cite{hopf1944enden,freudenthal1945uber}]\label{thm:hopf_ends_cohom}
	If $G$ is a  finitely generated group, then exactly one of the following holds:
	\begin{enumerate}
		\item $\dim_{\bbF_2} H^1(G,\bbF_2 G)=0$.
		\item $\dim_{\bbF_2}  H^1(G,\bbF_2 G)=1$, in which case $G$ is virtually infinite cyclic.
		\item  $\dim_{\bbF_2}  H^1(G,\bbF_2 G)=\infty$.
	\end{enumerate}
\end{thm}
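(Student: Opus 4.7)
The plan is to translate the statement into the classical trichotomy for the number of ends of $G$, then handle the combinatorial counting and the two-ended case separately. The first step is to identify $\dim_{\bbF_2} H^1(G,\bbF_2 G)$ with $e(G)-1$ for $G$ infinite (and with $0$ for $G$ finite), where $e(G)$ is the number of ends of a Cayley graph $\Gamma$ of $G$. This is standard, following from the dictionary between $H^1(G,\bbF_2 G)$ and almost invariant subsets of $G$ modulo finite symmetric difference: $H^1(G,\bbF_2 G)$ is naturally isomorphic to the reduced zeroth $\bbF_2$-cohomology of $\mathrm{Ends}(\Gamma)$, which has dimension $e(G)-1$ when the end space is finite and is infinite-dimensional otherwise. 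This converts the claim into: $e(G)\in\{0,1,2,\infty\}$, and $e(G)=2$ forces $G$ virtually infinite cyclic.

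To rule out $3\leq e(G)<\infty$, I would use a single cutting argument. Write $n=e(G)$ and fix a finite connected $F\subset\Gamma$ whose complement has exactly $n$ infinite components $C_1,\ldots,C_n$. Using vertex-transitivity of the $G$-action on $\Gamma$, choose $g\in G$ with $gF\subset C_1$ at distance greater than $\diam(F)$ from $F$. Then $\Gamma\setminus gF$ has $n$ infinite components $gC_1,\ldots,gC_n$, of which exactly one---call it $gC_1$---contains the connected set $F$ and hence all of $C_2,\ldots,C_n$. The remaining $gC_2,\ldots,gC_n$ are infinite, connected in $\Gamma\setminus gF$, and disjoint from $F$, so they lie in $C_1\setminus gF$. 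Consequently $C_2,\ldots,C_n,gC_2,\ldots,gC_n$ are $2(n-1)$ pairwise disjoint infinite components of $\Gamma\setminus(F\cup gF)$, yielding $e(G)\geq 2(n-1)>n$ for $n\geq 3$, a contradiction.

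For $e(G)=2$, the $G$-action on the end set factors through the symmetric group on two letters, so its kernel $H\leq G$ has index at most two, and it suffices to show $H$ is virtually $\bbZ$. Write $\Gamma\setminus F=A\sqcup B$ with both sides infinite. For $h\in H$ the sets $A$ and $hA$ represent the same end, so $A\triangle hA$ is finite, and $\tau(h):=|hA\setminus A|-|A\setminus hA|$ is well-defined. Recognising $\tau(h)$ as the augmentation of the finitely supported cochain $h\cdot\chi_A-\chi_A$ shows $\tau\colon H\to\bbZ$ is a homomorphism. Choosing $h\in H$ that translates $F$ deep into the $A$-end makes $\tau(h)\neq 0$; meanwhile any element of $\ker\tau$ acts with bounded displacement on $\Gamma$, so $\ker\tau$ is finite by local finiteness of $\Gamma$. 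Hence $H$ is virtually $\bbZ$.

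The main technical obstacle is the claim that elements of $\ker\tau$ have bounded displacement, since a priori $\tau$ only sees how $h$ moves $A$, not how it moves individual vertices. One way to discharge this is to use that a two-ended finitely generated group is quasi-isometric to $\bbZ$, so $H$ embeds up to finite kernel into the group of coarse translations of $\bbR$, and vanishing $\tau$ corresponds to a bounded coarse translation. An alternative---shorter but heavier---is to invoke Stallings' splitting theorem, which presents $H$ as an HNN extension over a finite subgroup and directly implies virtual cyclicity when $e(G)=2$.
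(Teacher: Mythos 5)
Note first that the paper does not supply a proof of this theorem: it is stated as the classical Freudenthal--Hopf trichotomy, reformulated via the standard identification $\dim_{\bbF_2}H^1(G,\bbF_2 G)=\max(e(G)-1,0)$, and attributed to the original sources. So there is no internal proof to compare against, and what follows assesses your argument on its own terms. Your reduction to the end trichotomy is exactly the framing the paper has in mind, and the cutting argument ruling out $3\le e(G)<\infty$ is correct: translating $F$ deep into $C_1$ so that $gF$ is disjoint from $F$ and connected, and then checking each $gC_j$ ($j\ge 2$) meets $C_1$ along a vertex adjacent to $gF$, produces $2(n-1)$ pairwise disjoint infinite components of $\Gamma\setminus(F\cup gF)$, whence $n\ge 2(n-1)$, impossible for $n\ge 3$.

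The two-ended case contains a genuine gap, which to your credit you flag explicitly. The step ``$\tau(h)=0$ implies $h$ has bounded displacement'' does not follow from what you have set up: vanishing of $\tau(h)$ records only that the signed flux of $h$ across $\partial A$ is zero, and this bounds neither $d(v,hv)$ for individual vertices nor $|hA\triangle A|$ uniformly over $h\in\ker\tau$. Neither proposed patch resolves this. Appealing to ``two-ended finitely generated groups are quasi-isometric to $\bbZ$'' relocates rather than removes the difficulty: even granting that statement, you would still need to show that an isometry of $\Gamma$ inducing a coarse translation of $\bbR$ with translation number zero lies at bounded distance from the identity, and a subadditive displacement sequence $a_n=d(o,h^n o)$ with $a_n/n\to 0$ need not be bounded (e.g.\ $a_n\sim\sqrt{n}$ is subadditive and sublinear), so this is the same claim in different clothing. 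Appealing to Stallings' ends theorem is logically admissible but disproportionate and anachronistic, and even then one must separately argue that a two-ended group splitting over a finite subgroup is virtually infinite cyclic. The classical proofs close the gap with a genuine finiteness argument: the \emph{exact} stabiliser $\{h\in H:hA=A\}$ is finite, since it permutes the finite vertex boundary of $A$ and the Cayley action is free, hence it injects into a finite symmetric group; a separate counting argument on the translates $hA$ (using that cuts of fixed size crossing a fixed finite region are finite in number, and that the $hA$ form a chain under almost-inclusion) then shows $\ker\tau$ is commensurable with this exact stabiliser. That uniform control on translates is the crux that your sketch omits.
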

We prove the following  higher-dimensional analogue of Theorem~\ref{thm:hopf_ends_cohom}, significantly extending work of Farrell~\cite{farrell1975pdgroups}.
\newcommand{\maingroup}{Let $n\in \bbN$,  let $\bbF$ be a field and let $G$ be a countable group. If $H^k(G,\bbF G)=0$ for $k<n$, then exactly one of the following holds:
	\begin{enumerate}
		\item $\dim_{\bbF} H^n(G,\bbF G)=0$.
		\item $\dim_{\bbF} H^n(G,\bbF G)=1$, in which case $G$ is a coarse Poincar\'e duality group of dimension $n$ over  $\bbF$.
		\item  $\dim_{\bbF} H^n(G,\bbF G)=\infty$.
	\end{enumerate}
	Moreover, if $H^n(G,\bbF G)$ has countable dimension and a finite-dimensional $G$-invariant subspace, then $\dim_{\bbF} H^n(G,\bbF G)=1$.}
\begin{thmAlph}[Theorem~\ref{thm:main_field}]\label{thm:main_field_intro}
	\maingroup{}
\end{thmAlph}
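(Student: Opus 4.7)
The plan is to reduce the full trichotomy to the ``moreover'' clause, and then prove the moreover via the coarse Poincar\'e duality machinery developed earlier in the paper.

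For the reduction, observe that since $G$ is countable, $\bbF G$ has countable $\bbF$-dimension, and $\bbF$ admits a free resolution by countably generated free $\bbF G$-modules, so $H^n(G,\bbF G)$ has at most countable $\bbF$-dimension automatically. If $\dim_{\bbF} H^n(G,\bbF G)$ is finite and non-zero, then $V := H^n(G,\bbF G)$ itself is a finite-dimensional $G$-invariant subspace, so the moreover clause forces $\dim=1$. The three cases $\dim=0,1,\infty$ then exhaust all possibilities.

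To prove the moreover, let $V\subseteq H^n(G,\bbF G)$ be a non-zero finite-dimensional $G$-invariant subspace. Since $V$ is a finite-dimensional $\bbF G$-module, it admits a composition series; choose a simple subquotient $W$ and a non-zero class $\alpha\in W$, lifted to $V$. The key assertion is that $\alpha$ functions as a coarse fundamental class for $G$. Using the hypothesis $H^k(G,\bbF G)=0$ for $k<n$, the class $\alpha$ may be represented by a cocycle on a free resolution $P_\bullet\to\bbF$, and the vanishing of lower cohomology supplies the chain-level nullhomotopies needed to assemble $\alpha$ into a cap-product chain map inducing an isomorphism between $H^*(G,-)$ and locally finite homology in shifted degree. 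Translated into the coarse-cohomological dictionary of the paper, this exhibits $G$ as a coarse Poincar\'e duality group of dimension $n$ over $\bbF$, possibly twisted by a finite-order character arising from the $G$-action on $\bbF\alpha$, which does not affect $\bbF$-dimensions. Since a coarse PD group of dimension $n$ has $\dim_{\bbF} H^n(G,\bbF G)=1$, both conclusions of the moreover are obtained simultaneously.

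The hard part will be the construction of the cap-product duality from the algebraic data of a single class $\alpha$, without any finite-presentation hypothesis on $G$. Farrell's 1975 arguments leveraged finite presentability to realise $H^n(G,\bbZ G)$ as an inverse limit of finite pieces compatible with the chain-level constructions; here one must instead invoke the coarse-cohomological framework of the earlier sections, working at the level of coarse chains on $G$ viewed as a metric space, together with the monotonicity and coarse-invariance properties of coarse cohomological dimension proved there. A secondary subtle point is the passage from a general finite-dimensional invariant $V$ to the one-dimensional pseudo-invariant line $\bbF\alpha$: one must control the simple constituents of $V$ as $\bbF G$-modules, showing that they are essentially characters factoring through a finite quotient, and verify that the resulting finite-order twists integrate into the duality without obstruction.
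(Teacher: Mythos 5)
Your high-level architecture is the right one and matches the paper: reduce the trichotomy to the ``moreover'' clause by observing that if $\dim_\bbF H^n(G,\bbF G)$ is finite and non-zero then $H^n(G,\bbF G)$ is itself a finite-dimensional $G$-invariant subspace of countable dimension, and then prove the ``moreover'' by exhibiting $G$ as a coarse $PD_n^\bbF$ group. However, the proposal as written contains two misconceptions and, more seriously, does not supply the two technical constructions that make the argument work without finiteness hypotheses on $G$.

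The first misconception is the aside that $H^n(G,\bbF G)$ automatically has countable dimension for a countable group $G$. This is false in general: $\Hom_{\bbF G}(P_n,\bbF G)$ for a countably (but not finitely) generated free $\bbF G$-module $P_n$ is a direct \emph{product} of copies of $\bbF G$, which has uncountable $\bbF$-dimension, and this uncountability can survive in cohomology. Compare Proposition~\ref{prop:ctble_cohomology_fingen}, which needs uniform acyclicity (equivalently type $FP_n$) to conclude countable generation. Fortunately your reduction does not actually use this claim, since finite dimension trivially implies countable dimension. The second misconception is the worry about a ``finite-order character'' twisting the duality: coarse $PD_n^R$ as defined in Definition~\ref{defn:coarsePD_n} imposes no equivariance whatsoever on the dualising chain maps, so there is no orientation module and no twist to integrate. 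For the same reason, the passage through a composition series of $V$ and selection of a simple subquotient is unnecessary; the paper's Proposition~\ref{prop:construct_map_weak_fin_disp} works directly with any non-zero class in any $G$-invariant finitely generated submodule.

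The real gap is the step you explicitly flag as ``the hard part'' and then do not carry out. Two constructions are missing. First, you need a class $[\tau]\in\hcoarse_n(G;\bbF)$ with $\tau([\alpha])=1$; this is where the countable-dimension hypothesis enters, via the universal coefficient theorem for inverse limits of cochain complexes (Theorem~\ref{thm:univcoeff_ctblehyp} together with the stability analysis of derived limits in Appendix~\ref{sec:inverse_limits}), and your proposal never explains how to produce $\tau$. Second, you need a chain map $f_\#:\bC_\bullet\to \bC^{n-\bullet}_c$ with controlled supports to feed into the cap product; the obstacle is that without coarse uniform $(n-1)$-acyclicity, the coboundary maps $\delta:\bC^{k-1}_c\to\bC^k_c$ do \emph{not} have uniform preimages, so the projective lifting lemma (Proposition~\ref{prop:projective}) is unavailable. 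The paper's substitute is to exploit the group structure of $G$: the boundary maps of the standard resolution are $G$-equivariant, so one can demand that the image of each $G$-orbit of basis elements be contained in a single $G$-translate of a fixed finitely generated submodule of $\bC^{n-k}_c$ (this is the content of Proposition~\ref{prop:construct_map_weak_fin_disp}, using Noetherianity of $\bbF$). That equivariant finiteness condition is what replaces uniform preimages and yields the displacement bound of Lemma~\ref{lem:equiv-uniformity}. Appealing generically to ``monotonicity and coarse-invariance of coarse cohomological dimension'' does not supply either construction, and these two steps are precisely where the theorem goes beyond Farrell. Once $\tau$ and $f_\#$ are produced, Lemma~\ref{lem:pdn_criterion_main} does the rest, but that lemma itself is nontrivial (it must establish \emph{a posteriori} that $G$ is coarsely uniformly acyclic and reduce to a finite-height resolution).
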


\emph{Poincar\'e duality groups} were introduced by Johnson--Wall and Bieri~\cite{johnsonwall1972groups,bieri1972gruppen}, and are an important and well-studied class of groups that possess a  form of duality between group homology and cohomology. In particular, fundamental groups of closed aspherical manifolds are Poincar\'e duality groups.
\emph{Coarse Poincar\'e duality groups of dimension $n$ over a ring $R$}, henceforth called \emph{coarse $PD_n^R$} groups, were introduced by Kapovich--Kleiner~\cite{kapovich2005coarse}. They  are a generalisation of Poincar\'e duality groups in which the dualising maps between chains and cochains need not  be equivariant; see Remark~\ref{rem:coarse_pdn}. Such groups have the same large-scale topology as $\bbR^n$. A coarse $PD_n^R$ group is a Poincar\'e duality group if and only if it has finite cohomological dimension (Proposition~\ref{prop:pdngroup_properties}).  We note that for $n=1$, a finitely generated group is a coarse $PD_1^\bbF$ group if and only if it is virtually infinite cyclic.

Informally, Theorem~\ref{thm:main_field_intro} can be seen as a precise articulation of the fact that for a countable group $G$, setting $n\coloneqq\inf\{k\in \bbN\mid H^k(G,\bbF  G)\neq 0\}$ and assuming $n<\infty$, either:
\begin{itemize}
	\item $G$ is highly singular  ---  $\dim H^n(G,\bbF G)=\infty$; or
	\item $G$ is highly non-singular ---  $G$  has the same large-scale topology as $\bbR^n$.
\end{itemize}
An analogous dichotomy was shown in the case of a hyperbolic group $G$, whose boundary $\partial G$ is either  a highly non-Euclidean
``fractal'' space or a sphere~\cite[Theorem 4.4]{kapovichbenakli2002boundaries}. There is a third more exotic possibility  that $H^n(G,\bbF  G)= 0$ for all $n$, which occurs when $G$ is Thompson's group $F$~\cite{browngeoghegan1984infinitedimensional}.

In 1975, Farrell obtained a similar result to Theorem~\ref{thm:main_field_intro} under --- in Farrell's own words --- ``rather strong finiteness conditions'', specifically that the group $G$ is of type $FP_n^\bbF$  and has finite cohomological dimension over $\bbF$~\cite{farrell1975pdgroups}; see also~\cite{farrell1974second} for earlier work in the case $n=2$. In Theorem~\ref{thm:main_field_intro}, we impose  no such assumptions. Rather, we can actually  deduce that in the $\dim H^n(G,\bbF G)=1$ case of Theorem~\ref{thm:main_field_intro},  $G$ must necessarily be of type $FP_\infty^\bbF$, and must have \emph{finite coarse cohomological dimension}, a concept we introduce in this article and will discuss shortly.

We recall the following question of Serre from 1974:
\begin{ques}[\cite{farrell1974second}]\label{ques:serre}
	Let $n\in \bbN$,  let $\bbF$ be a field and let $G$ be a finitely presented group. Does every $G$-invariant subspace of $H^n(G,\bbF G)$ have dimension 0, 1 or $\infty$?
\end{ques}
Theorem~\ref{thm:main_field_intro} builds on work of Farrell~\cite{farrell1974second,farrell1975pdgroups} to obtain many more instances in which Question~\ref{ques:serre} has a positive answer, specifically when $n=\inf\{k\in \bbN\mid H^k(G,\bbF  G)\neq 0\}<\infty$ and $H^n(G,\bbF G)$ has countable dimension, the latter condition always holding  if $G$ is of type $FP_n^\bbF$ by Proposition~\ref{prop:ctble_cohomology_fingen}. Moreover, only countability, not  finite presentability, is needed. However,  Question~\ref{ques:serre} in full generality is still wide open.

We observe the main conclusion  of Theorem~\ref{thm:main_field_intro} --- that $\dim H^n(G,\bbF G)$ is either $0$, $1$ or $\infty$ --- is a statement purely about group cohomology. Therefore, one might naturally expect that such a statement  can be proved using group cohomology, i.e.\ via an algebraic argument using  $\bbF G$-modules and $G$-equivariant maps between them. However, this is not the approach we take to prove Theorem~\ref{thm:main_field_intro}, and the author strongly believes such an approach  is impossible without additional hypotheses.
Instead, we prove Theorem~\ref{thm:main_field_intro} by appealing to one of the key concepts of  geometric group theory: we equip a countable group with a metric and then forget about the algebraic structure of a group, focussing solely on the large-scale geometric and topological properties of the group as a metric space.

In order to  prove Theorem~\ref{thm:main_field_intro}, we work not in the category of $RG$-modules,  but in the category of \emph{$R$-modules over a metric space} that we introduce in Section~\ref{sec:rmod_metric}. A large part of this article is therefore spent developing  the necessary language and machinery of \emph{homological algebra over metric spaces}. Indeed, this machinery is of independent interest, and was developed in ongoing projects of  the author to prove quasi-isometric rigidity of splittings and of commensurated subgroups.
To illustrate the philosophy adopted in the article, we state a variant of Theorem~\ref{thm:main_field_intro} that applies to metric spaces rather than to groups:
\newcommand{\mainspace}{{Let $R$ be a PID and  let $n\geq 1$. Suppose $X$ is a coarsely homogeneous proper metric space that is coarsely uniformly $(n-1)$-acyclic over $R$. Assume that $\coarse^k(X;R)=0$ for $k<n$. Then either:
			\begin{enumerate}
				\item $\coarse^n(X;R)=0$.
				\item $\coarse^n(X;R)\cong R$, in which case $X$ is a coarse $PD_n^R$ space.
				\item $\coarse^n(X;R)$ is not finitely generated as an $R$-module.
			\end{enumerate}}}
\begin{thmAlph}[Corollary~\ref{cor:main_finiteness_space}]\label{thm:main_finitenss_space_intro}
	\mainspace{}
\end{thmAlph}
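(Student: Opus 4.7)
The plan is to exploit coarse homogeneity, the PID hypothesis on $R$, and the coarse uniform $(n-1)$-acyclicity of $X$ to pin down the structure of $\coarse^n(X;R)$ as an $R$-module. Using the machinery of $R$-modules over a metric space developed in Section~\ref{sec:rmod_metric}, the $(n-1)$-acyclicity supplies a uniformly finite chain model realising $\coarse^n(X;R)$ as a natural invariant of $X$ that is functorial under coarse self-equivalences of $X$. Coarse homogeneity then guarantees that there is a rich supply of such self-equivalences, approximately permuting basepoints transitively; this is the metric-space analogue of the $G$-action on $H^n(G,\bbF G)$ in the group setting, and is the basic tool for constraining $\coarse^n(X;R)$.

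Assume that we are not in case~(3), so that $\coarse^n(X;R)$ is finitely generated over the PID $R$ and hence decomposes as $R^r \oplus T$ with $T$ a finite torsion module. The first step is to bound $r \leq 1$: after base-changing to the fraction field $K = \mathrm{Frac}(R)$, the rank equals $r = \dim_K \coarse^n(X;K)$ by flat base change, and applying the field case of the trichotomy to $X$ over $K$ together with the finite-generation assumption forces $r \in \{0,1\}$. The second step rules out torsion: working one prime $\pi$ at a time, a Bockstein / universal coefficient comparison between $\coarse^n(X;R)$ and $\coarse^n(X;R/(\pi))$ shows that any $\pi$-torsion summand would inflate the $R/(\pi)$-dimension of $\coarse^n(X;R/(\pi))$ in a way incompatible with its own trichotomy over the field $R/(\pi)$. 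Combining these steps, if $\coarse^n(X;R)$ is nonzero then it must in fact be isomorphic to $R$.

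Finally, I would upgrade the isomorphism $\coarse^n(X;R) \cong R$ to the coarse Poincar\'e duality conclusion. The canonical generator, unique up to units in $R$, plays the role of a coarse fundamental class; capping with a chain-level representative yields a candidate duality morphism between coarse cohomology and locally finite coarse homology. The uniformly finite chain model provided by $(n-1)$-acyclicity makes this cap product explicit on chains, while coarse homogeneity ensures it is compatible with translations. An acyclic-models or spectral-sequence argument then promotes the morphism to an isomorphism, establishing that $X$ is a coarse $PD_n^R$ space.

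The main obstacle, I expect, will be the rank-and-torsion analysis in the second paragraph. The passage from a field (as in the classical Hopf--Freudenthal and Farrell results) to a general PID introduces genuinely new content, and extracting the PID trichotomy from the field one requires a clean universal coefficient formalism \emph{inside} the category of $R$-modules over a metric space, together with careful control over how coarse homogeneity interacts with the base change $R \to R/(\pi)$. This is the step most dependent on the full strength of the machinery developed earlier in the paper, and where the book-keeping for ``uniformly finite'' resolutions is most delicate.
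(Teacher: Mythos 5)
Your proposed decomposition does not match the paper's architecture, and the two places where the proposal diverges are exactly where it develops gaps.

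\textbf{The rank bound via the fraction field does not close.} You propose to show $r=\rank\coarse^n(X;R)\leq 1$ by base-changing to $K=\mathrm{Frac}(R)$ and invoking the field case in degree $n$. But the paper's Universal Coefficient Theorem (Theorem~\ref{thm:truncated_kunneth}) only gives an \emph{injection} $\coarse^n(X;R)\otimes_R K\hookrightarrow\coarse^n(X;K)$ in degree $n$, not an isomorphism (the short exact sequence is only available for $k<n$). More seriously, even granting equality, the conclusion $r\geq 2\Rightarrow\dim_K\coarse^n(X;K)\geq 2$ just pushes you into the $\infty$ branch of the field trichotomy, which is not a contradiction. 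The same problem wrecks your torsion-elimination step: inflating $\dim\coarse^n(X;R/(\pi))$ again only forces you into the $\infty$ branch in degree $n$, and in fact if there is $\pi$-torsion then $\coarse^{n-1}(X;R/(\pi))\cong\Tor_1^R(\coarse^n(X;R),R/(\pi))\neq 0$, so the hypothesis $\coarse^k=0$ for $k<n$ fails over $R/(\pi)$ and the degree-$n$ trichotomy cannot even be invoked. The paper's proof of Theorem~\ref{thm:main_finiteness} drops one dimension: $\pi$-torsion makes $\coarse^{n-1}(X;R/(\pi))$ nonzero and finite-dimensional, the field case in degree $n-1$ then forces $X$ to be a coarse $PD_{n-1}^{R/(\pi)}$ space, hence $\coarse^{n}(X;R/(\pi))=0$ by Proposition~\ref{prop:pdn_properties}, contradicting the nonvanishing of $H^n(\bC^\bullet_c)\otimes_R R/(\pi)$ via the degree-$n$ injection.

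\textbf{The rank-one conclusion should come out of the duality, not precede it.} The paper never bounds $r$ separately: it takes a primitive element $[\alpha]$ of the free module $\coarse^n(X;R)$ (which may a priori have any rank $r\geq 1$), builds the dual class $[\tau]\in H_n(\bC_\bullet^{\lf})$ with $\tau(\alpha)=1$ from Theorem~\ref{thm:truncated_kunneth}, and feeds everything into Lemma~\ref{lem:pdn_criterion_main}. That lemma \emph{internally} shows $H^n(\bC^\bullet_c)\cong R$ (in Step~3, via the double dual $\bC\cong(\bC^*_c)^*_c$ and the splitting through $H_0$), so the rank bound is a consequence of the duality, not a precondition. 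Your final step, ``cap with the fundamental class and promote by acyclic models or a spectral sequence,'' glosses over the genuine obstruction: the category of $R$-modules over $X$ with finite-displacement morphisms is not abelian, so an acyclic-models argument is not available off the shelf. The replacement is the explicit criterion of Lemma~\ref{lem:pdn_criterion_main}, whose construction of the chain map $f_\#:\bC_\bullet\to\bC^{n-\bullet}_c$ uses the uniform-preimage property of the dualised coboundary maps (Proposition~\ref{prop:unif_preimage_cobdry}, which is where coarse homogeneity enters) together with the projective lifting property (Proposition~\ref{prop:projective}), and whose promotion to a chain homotopy equivalence goes through weak uniform acyclicity of $\bC^{n-\bullet}_c$ rather than any model-category machinery. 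This is the main technical content, and your sketch does not yet engage with it.
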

Theorem~\ref{thm:main_finitenss_space_intro} is another generalisation of the Hopf--Freudenthal theorem on ends.
Although there is significant overlap between Theorems~\ref{thm:main_field_intro} and~\ref{thm:main_finitenss_space_intro}, neither one implies the other. Indeed, Theorem~\ref{thm:main_field_intro} makes no assumptions that the group is  coarsely uniformly $(n-1)$-acyclic (see Theorem~\ref{thm:type_FPn}),  whilst Theorem~\ref{thm:main_finitenss_space_intro} holds for metric spaces, and is valid for any PID, not just a field. Additional variations of these results are proved in Section~\ref{sec:coarsePDn}.

We briefly discuss the terms in the statement of Theorem~\ref{thm:main_finitenss_space_intro}.
A metric space $X$ is \emph{coarsely homogeneous} if for all points $x,y\in X$, there is a coarse equivalence $\phi$ with uniform control functions such that $\phi(x)=y$. Such an assumption is clearly necessary for Theorem~\ref{thm:main_finitenss_space_intro} --- in the case $n=1$, there exist metric spaces that are not coarsely homogeneous, but possess any finite number of ends, a fact that would contradict the conclusion of Theorem~\ref{thm:main_finitenss_space_intro}.
The property of being coarsely uniformly $(n-1)$-acyclic is a metric property that is analogous to a group being of type $FP_n$. Indeed,  Theorem~\ref{thm:type_FPn} says the two properties coincide when $G$ is a countable group equipped with  a proper left-invariant metric.

Theorem~\ref{thm:main_finitenss_space_intro} is formulated using $\coarse^*(X;R)$, the \emph{coarse cohomology} of a metric space $X$ with coefficients in $R$. For a countable group $G$ equipped with a proper left-invariant metric, $\coarse^*(G;R)$ coincides with group cohomology $H^*(G,RG)$ with group ring coefficients.
Roe first defined a version of coarse cohomology~\cite{roe1993coarse}. It is well-known to capture many aspects of the topology at infinity of a space or of a group. Building on work of Kapovich--Kleiner~\cite{kapovich2005coarse}, we develop a more general coarse cohomology theory that is better suited to  quantitative and effective arguments on the chain and cochain level.

We  define coarse cohomology modules of the form $\coarse^*(X;\bM)$, where the coefficient module $\bM$ is an  \emph{$R$-module over $X$}.  Our theory is thus a better analogue of group cohomology, in which cohomology modules are of the form $H^*(G,M)$ for an $RG$-module $M$.
In this article, many phenomena that occur  within the  category of $RG$-modules are reimagined  within the more general category of $R$-modules over a metric space. To that end, Table~\ref{table:dictionary} gives us a  dictionary between these two categories.
\begin{table}[bht]
	\centering
	\begin{tabular}{p{0.45\textwidth} p{0.45\textwidth}}
		\toprule
		A group $G$                                                        & A metric space $X$                                                                                                                                            \\
		\midrule
		An $RG$-module $M$                                                 & An $R$-module $\bM$ over $X$                                                                                                                                  \\
		\midrule A projective $RG$-module $M$                              & A projective $R$-module $\bM$ over $X$                                                                                                                        \\
		\midrule
		A $G$-equivariant map $M\to N$ between $RG$-modules                &
		A finite displacement map $\bM\to \bN$ between $R$-modules over $X$                                                                                                                                                                \\
		\midrule
		Group cohomology $H^n(G,M)$                                        & Coarse cohomology $\coarse^n(X;\bM)$                                                                                                                          \\
		\midrule
		A finitely generated $RG$-module                                   & A finite-height $R$-module over $X$                                                                                                                           \\
		\midrule
		A projective resolution $C_\bullet$ of the trivial $RG$-module $R$ & A projective $R$-resolution $\bC_\bullet$ over $X$                                                                                                            \\

		\midrule
		The following are equivalent:
		\begin{itemize}[leftmargin=*]
			\item $H^k(G,M)=0$ for all $RG$-modules $M$ and  $k>n$.
			\item There exists a  $C_\bullet$ as above such that $C_k=0$ for $k>n$.
		\end{itemize}
		When either  hold, we say $\cd_R(G)\leq n$.
		                                                                   & The following are equivalent:
		\begin{itemize}[leftmargin=*]
			\item $\coarse^k(X;\bM)=0$ for every $R$-module $\bM$ over $X$  and every $k>n$.
			\item There exists a  $\bC_\bullet$ as above  such that $\bC_k=0$ for $k>n$.
		\end{itemize}
		When either  hold, we say $\ccd_R(X)\leq n$.                                                                                                                                                                                       \\
		\midrule
		$G$ is of type $FP_n^R$ if there exists a $C_\bullet$ as above such that $C_k$ is a finitely generated $RG$-module for $k\leq n$.
		                                                                   & $X$ is coarsely uniformly $(n-1)$-acyclic over $R$  if and only if there exists a $\bC_\bullet$ as above such that $\bC_k$ is of finite height for $k\leq n$. \\
		\midrule
		Type $FP$ over $R$                                                 & Coarse finite type over $R$                                                                                                                                   \\
		\midrule
		A Poincar\'e duality group of dimension $n$ over $R$               & A coarse $PD_n^R$ space                                                                                                                                       \\
		\bottomrule
	\end{tabular}

	\caption{A dictionary relating concepts in group cohomology in the left-hand column, to  analogous concepts in the coarse cohomology of  metric spaces in the right-hand column.}\label{table:dictionary}
\end{table}
Table~\ref{table:dictionary} is intended as a rough guide for the casual reader who is already familiar with the group cohomology concepts in the left-hand column, and desires   to quickly gain an informal appreciation of the vocabulary of $R$-modules over metric spaces in the right-hand column. However, we  do not yet assert any precise relationship between items in the left-hand and right-hand  columns, referring the reader to the main text for precise definitions and formal statements of results.

It is important to emphasise that even if one is only interested in the group cohomology of a countable group $G$, there are reasons to work in the category of $R$-modules over $G$. Indeed, there are many instances in which it is unnecessarily restrictive  to limit ourselves to $G$-equivariant maps between $G$-modules. Passing to the more flexible framework of non-equivariant maps between  $R$-modules over a metric space is crucial in proving  results such as Theorems~\ref{thm:main_field_intro} and~\ref{thm:main_finitenss_space_intro}.

These ideas are further illustrated by the  \emph{coarse cohomological dimension} of a metric space, which we introduce in Section~\ref{sec:coarse_dim}. We recall that the classical cohomological dimension $\cd_R(G)$ of a group $G$ with respect to a coefficient ring $R$ is one of the most  important  and  well-studied invariants of infinite groups~\cite{brown1982cohomology,bieri1981homological}. Unfortunately (at least from the viewpoint of the author) $\cd_R(G)$ is quite sensitive to the precise algebraic structure of the group.

The situation is the most extreme  in the case of groups with torsion: any group $G$ containing a torsion element satisfies $\cd(G)=\infty$. We consider the illuminating case where $G=D_\infty$ is the infinite dihedral group and $H\leq G$ is an infinite cyclic subgroup of index 2, which satisfies $1=\cd(H)\leq\cd(G)=\infty$. We suppose that \[\cdots\to C_2\to C_1\to C_0\to \bbZ\to 0\] is a resolution of the trivial $\bbZ G$-module $\bbZ$ by free $\bbZ G$-modules. The condition $\cd(G)\leq 1$ is equivalent to the existence of a $G$-equivariant retraction $r:C_1\to C_1$ onto $K\coloneqq \ker(C_1\to C_0)$. Indeed, the existence of such a map $r$ allows us to $G$-equivariantly truncate the above  resolution to a finite length projective resolution
\[0\to \ker(r)\to C_0\to \bbZ\to 0\] of the trivial $\bbZ G$-module $\bbZ$.
Of course, since $G=D_\infty$ has torsion, $\cd(G)=\infty$, and so  no such retraction $r:C_1\to C_1$ exists. However, since $\cd(H)=1$, there is an $H$-equivariant retraction $r:C_1\to C_1$ onto $K\coloneqq \ker(C_1\to C_0)$.

When we pass to a more general category that allows non-equivariant maps, such obstructions to finite dimensionality disappear, and we obtain a robust notion of dimension that is invariant under coarse equivalence. In particular, even though the $H$-equivariant retraction $r:C_1\to C_1$ considered above is not $G$-equivariant, it is nonetheless a finite displacement map between $\bbZ$-modules over $G$; see Section~\ref{sec:rmod_metric} for definitions of these terms. We can therefore  (non-equivariantly) truncate the resolution $C_\bullet$ to obtain a finite length resolution $0\to \ker(r)\to C_0\to \bbZ\to 0$ by projective $R$-modules over $G$, which shows that the coarse cohomological dimension of $G$ is equal to 1; see Section~\ref{sec:coarse_dim}.

This approach is very fruitful, and yields a robust notion of  dimension for arbitrary metric spaces. We let $\ccd_R(X)$ denote the \emph{coarse cohomological dimension} of $X$ with coefficients in $R$, and in the case $R=\bbZ$, we denote $\ccd_\bbZ$ by $\ccd$. We show $\ccd_R(X)$ satisfies the following properties:

\begin{thmAlph}[Theorems~\ref{thm:ccd_monotonicity} and~\ref{thm:gp_ccd}]\label{thm:ccd_intro}
	For each  commutative ring $R$, the following hold:
	\begin{enumerate}
		\item If $X$ and $Y$ are metric spaces and $f:X\to Y$ is a coarse embedding, then $\ccd_R(X)\leq \ccd_R(Y)$. In particular, if $X$ and $Y$ are coarsely equivalent, then $\ccd_R(X)=\ccd_R(Y)$.
		\item If $G$ is a countable group equipped with a proper left-invariant metric and $\cd_R(G)<\infty$, then $\cd_R(G)=\ccd_R(G)$.
	\end{enumerate}
\end{thmAlph}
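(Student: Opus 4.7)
\textbf{Proof proposal for Theorem~\ref{thm:ccd_intro}.}

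For part (1), the plan is to factor any coarse embedding $f : X \to Y$ as the composition of a coarse equivalence $X \to f(X)$ with the metric subspace inclusion $f(X) \hookrightarrow Y$ and treat the two cases separately. Invariance of $\ccd_R$ under coarse equivalence should follow from constructing a functorial equivalence between the categories of $R$-modules over coarsely equivalent spaces which preserves projectivity, finite-displacement maps, and lengths of projective $R$-resolutions of the trivial module. It then suffices to prove that $\ccd_R(Z) \leq \ccd_R(Y)$ for metric subspaces $Z \subseteq Y$; given a projective $R$-resolution $\bC_\bullet \to R$ over $Y$ of length $\ccd_R(Y)$, I would construct a restriction functor producing a projective $R$-resolution over $Z$ of the same length. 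The main obstacle is that naive restriction to $Z$ may destroy either projectivity or exactness of the augmented complex, and overcoming this is where the paper's extension of Sauer's monotonicity result is needed.

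For part (2), I would prove the two inequalities separately. For the direction $\ccd_R(G) \leq \cd_R(G)$: a projective $RG$-resolution $C_\bullet \to R$ of length $n = \cd_R(G)$ can be reinterpreted as a projective $R$-resolution over $G$ of the same length. Indeed, each $C_k$ is a direct summand of a free $RG$-module $\bigoplus_{I_k} RG$, which is naturally a free (hence projective) $R$-module over $G$ under the height function assigning the basis element at $(i, g)$ to the point $g \in G$. Each $RG$-linear differential has finite displacement, because $RG$-equivariance forces the image of any standard basis element to have finite support in its codomain. Thus $\ccd_R(G) \leq n$.

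For the reverse inequality $\cd_R(G) \leq \ccd_R(G)$ under the hypothesis $\cd_R(G) = m < \infty$, I would appeal to the paper's identification $\coarse^*(G; R) = H^*(G, RG)$ for countable $G$ equipped with a proper left-invariant metric. Combined with the classical fact~\cite{bieri1981homological, brown1982cohomology} that $H^m(G, RG) \neq 0$ whenever $\cd_R(G) = m$ is finite, this gives $\coarse^m(G; R) \neq 0$, whence $\ccd_R(G) \geq m$. Combining the two inequalities gives $\ccd_R(G) = \cd_R(G)$. Across the whole theorem, the principal obstacle is anticipated to be the restriction-to-subspace step in part (1); part (2) is a comparatively clean consequence of the formalism established earlier in the paper together with classical cohomology-of-groups results.
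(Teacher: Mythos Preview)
Your strategy for part~(1) is workable but takes a detour the paper avoids. Factoring $f$ as a coarse equivalence followed by an isometric inclusion does not simplify matters: the inclusion $f(X)\hookrightarrow Y$ is itself a coarse embedding, so the ``restriction to subspace'' step you flag as the main obstacle \emph{is} the full problem. The paper handles an arbitrary coarse embedding $f:X\to Y$ in one stroke via the pullback construction $f^*\bC_\bullet$ (Proposition~\ref{prop:pullback_res}): if $\bC_\bullet$ is a length-$n$ projective $R$-resolution over $Y$, then $f^*\bC_\bullet$ is a length-$n$ projective $R$-resolution over $X$, and the proof of Theorem~\ref{thm:ccd_monotonicity} is two lines. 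The substantive work is in verifying that pullback preserves projectivity, finite displacement, and uniform preimages (Propositions~\ref{prop:induced_mod_maps} and~\ref{prop:projective_operations_second}), which is what your ``restriction functor'' would have to do anyway.

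For part~(2) there are two gaps. In the inequality $\ccd_R(G)\le\cd_R(G)$, it is not enough to observe that the differentials of a projective $RG$-resolution have finite displacement: a projective $R$-resolution over $G$ must also be \emph{weakly uniformly acyclic} (Definition~\ref{defn:proj_res}), equivalently have uniform preimages (Proposition~\ref{prop:equiv_weakuniformacyc}). For Noetherian $R$ this follows automatically from $G$-equivariance (Proposition~\ref{prop:ginv_unifpreim}), but the theorem is stated for arbitrary commutative $R$, and the paper explicitly notes that a different argument is required in the non-Noetherian case: one takes a $G$-equivariant resolution $\bC_\bullet$ as in Corollary~\ref{cor:projres_ginv}, uses~\cite[Proposition~4.1b)]{bieri1981homological} to obtain a $G$-equivariant retraction $\bC_n\to\bC_n$ onto $\im(\bC_{n+1}\to\bC_n)$, observes this retraction has finite displacement by Proposition~\ref{prop:ginduced}, and concludes via Proposition~\ref{prop:ccd_chars}. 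In the reverse inequality, your appeal to ``the classical fact that $H^m(G,RG)\ne 0$'' is not quite right: the classical fact (\cite[Proposition~5.1]{bieri1981homological}) only guarantees $H^m(G,F)\ne 0$ for \emph{some} free $RG$-module $F=RG\otimes_R N$. The paper therefore uses the more general identification $\coarse^m(G;N)\cong H^m(G,RG\otimes_R N)$ (Corollary~\ref{cor:induced_mod}) with $N$ a free $R$-module, rather than $N=R$.
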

\begin{rem}
	For readers more accustomed to quasi-isometries than to coarse equivalences, we note that coarse equivalences and quasi-isometries coincide for quasi-geodesic metric spaces; see Lemma~\ref{lem:gro_triviallem}. In particular, two finitely generated groups are coarsely equivalent if and only if they are quasi-isometric.
\end{rem}
Applying Theorem~\ref{thm:ccd_intro} to the class of countable groups with finite cohomological dimension, we recover the following result of Sauer:
\begin{corAlph}[{\cite[Theorem 1.2]{sauer2006cdqi}}]\label{cor:cd_intro}
	Let $R$ be a commutative ring and let $G$ and $H$ be countable groups. If $f:H\to G$ is a coarse embedding and $\cd_R(H)$ is finite, then $\cd_R(H)\leq \cd_R(G)$.
	In particular, if $G$ and $H$ are finitely generated quasi-isometric groups and $\cd_R(H),\cd_R(G)<\infty$, then $\cd_R(G)=\cd_R(H)$.
\end{corAlph}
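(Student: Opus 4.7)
The plan is to deduce Corollary~\ref{cor:cd_intro} directly from Theorem~\ref{thm:ccd_intro}, with essentially no additional work. Since $G$ and $H$ are countable, we equip each with a proper left-invariant metric; such metrics exist and are unique up to coarse equivalence, which makes the hypothesis that $f\colon H\to G$ is a coarse embedding intrinsic to the group structure and compatible with the metric setup of Theorem~\ref{thm:ccd_intro}.

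The first assertion then reduces to chaining three inequalities. Since $\cd_R(H)<\infty$ by hypothesis, part (2) of Theorem~\ref{thm:ccd_intro} gives $\cd_R(H)=\ccd_R(H)$. Part (1) of Theorem~\ref{thm:ccd_intro}, applied to the coarse embedding $f$, yields $\ccd_R(H)\leq \ccd_R(G)$. It remains to compare $\ccd_R(G)$ with $\cd_R(G)$: if $\cd_R(G)<\infty$, then part (2) of Theorem~\ref{thm:ccd_intro} applied to $G$ gives $\ccd_R(G)=\cd_R(G)$, while if $\cd_R(G)=\infty$ the target inequality $\cd_R(H)\leq \cd_R(G)$ is automatic. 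Stringing these together produces $\cd_R(H)\leq \cd_R(G)$.

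For the second assertion, I would use the remark preceding the corollary: a quasi-isometry between finitely generated groups is a coarse equivalence, hence provides coarse embeddings in both directions. Since $\cd_R(G)$ and $\cd_R(H)$ are both finite, applying the first part of the corollary symmetrically gives $\cd_R(H)\leq \cd_R(G)$ and $\cd_R(G)\leq \cd_R(H)$, whence equality.

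There is no real obstacle in this deduction; all the substance has already been absorbed into Theorem~\ref{thm:ccd_intro}, whose part (2) in turn relies on the non-equivariant truncation trick illustrated by the infinite dihedral group in the introduction. The only minor point to check is that coarse embeddings between countable groups with proper left-invariant metrics correspond to what Sauer means by a coarse embedding in \cite{sauer2006cdqi}, so that the conclusion translates back to the purely group-theoretic statement.
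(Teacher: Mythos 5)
Your proof is correct and is essentially the deduction the paper has in mind when it says Corollary~\ref{cor:cd_intro} follows by ``applying Theorem~\ref{thm:ccd_intro} to the class of countable groups with finite cohomological dimension.'' The only wrinkle worth spelling out --- which you handle correctly --- is the case $\cd_R(G)=\infty$, where one cannot invoke part (2) of Theorem~\ref{thm:ccd_intro} for $G$ but the desired inequality is vacuously true.
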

However, Theorem~\ref{thm:ccd_intro} is a stronger and more powerful result than Corollary~\ref{cor:cd_intro}, as it applies to metric spaces rather than just groups. Moreover, while $\cd_R(-)$ is only a quasi-isometry invariant when restricted to groups of finite cohomological dimensional, $\ccd_R(-)$ is a genuine quasi-isometry invariant amongst all finitely generated groups, agreeing with $\cd_R(-)$ when the latter is finite.

It should be remarked that our proof of  Corollary~\ref{cor:cd_intro} takes a rather different approach than that used by Sauer. In~\cite{sauer2006cdqi}, given a coarse embedding $H\to G$ and an $H$-module $M$,  an induced $G$-module $\overline{I}(M)$ is constructed, which allows one to relate the cohomology of $G$ to the cohomology of $H$.  See also~\cite{li2018dynamic}. Our approach is to bypass group cohomology and $G$-modules entirely, instead relating the coarse cohomological dimensions of $G$ and $H$ when considered as metric spaces.

A (virtually) torsion-free hyperbolic group $G$  has finite (virtual) cohomological dimension, and Bestvina--Mess proved the famous formula~\cite{bestvina1991boundary} \[\vcd(G)=\dim(\partial G)+1,\] where $\dim(\partial G)$ is the Lebesgue covering dimension of the Gromov boundary of $G$. Using work of Bestvina, Dranishnikov and Moran on $\cZ$-boundaries~\cite{bestvina1996zboundary,dranishnikov2006bestvinamess,moran2016finite}, we extend the Bestvina--Mess formula to all hyperbolic and $\CAT(0)$ groups:
\newcommand{\bdry}{Suppose that either:
	\begin{enumerate}
		\item $G$ is a hyperbolic group and $X$ is a Cayley graph of $G$; or
		\item $G$ is a $\CAT(0)$ group acting geometrically on a proper $\CAT(0)$ space $X$.
	\end{enumerate}
	Then  $\ccd(G)=\dim(\partial X)+1<\infty$.}
\begin{thmAlph}[{Corollary~\ref{cor:bdry}}]\label{thm:bdry_intro}
	\bdry{}
\end{thmAlph}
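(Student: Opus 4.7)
The plan is to combine the coarse invariance of $\ccd$ from Theorem~\ref{thm:ccd_intro} with the existence of $\cZ$-compactifications in both cases, and then deduce the dimension formula via a coarse analogue of the Bestvina--Mess theorem.

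Since $G$ acts geometrically on $X$, the two are coarsely equivalent and Theorem~\ref{thm:ccd_intro} gives $\ccd(G)=\ccd(X)$. Because $\ccd$ is a coarse invariant, I may replace $X$ by any coarsely equivalent $G$-cocompact contractible finite-dimensional model $Y$. In the $\CAT(0)$ case $Y=X$ works, with visual boundary $\partial X$ yielding a $\cZ$-compactification $X\cup\partial X$. In the hyperbolic case, the one-dimensional Cayley graph is replaced by the Rips complex $P_d(G)$ for $d$ sufficiently large: it is coarsely equivalent to $X$, contractible, finite-dimensional, $G$-cocompact, and admits the Bestvina--Mess $\cZ$-compactification $P_d(G)\cup\partial G$. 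By Bestvina--Mess in the hyperbolic case and the results of Moran cited in the excerpt in the $\CAT(0)$ case, the model $Y$ can be chosen with $\dim Y=\dim\partial X+1<\infty$.

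The upper bound $\ccd(G)\leq\dim\partial X+1$ is then essentially immediate: the augmented cellular chain complex of $Y$, regarded as a complex of $R$-modules over $Y$ in the sense of Section~\ref{sec:rmod_metric}, is a projective resolution of the constant module of length $\dim Y$, giving $\ccd(Y)\leq\dim Y$. For the lower bound I would invoke a coarse Bestvina--Mess-type comparison: the $\cZ$-compactification should produce a natural isomorphism $\coarse^{n}(Y;R)\cong \check{H}^{n-1}(\partial X;R)$ in top degree $n=\dim\partial X+1$, which is the coarse analogue of Roe's identification of coarse cohomology with \v{C}ech cohomology at infinity. Standard dimension theory on the compact metric space $\partial X$ then supplies a (possibly relative) coefficient system with nontrivial \v{C}ech cohomology in degree $n-1$, furnishing a nonvanishing coarse cohomology class and hence $\ccd(G)\geq n$.

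The main obstacle will be implementing this coarse Bestvina--Mess comparison inside the framework of $R$-modules over a metric space, and verifying that the uniform contractibility of $Y$ --- automatic for proper $\CAT(0)$ spaces, and obtainable for Rips complexes of hyperbolic groups by a standard coning argument --- is enough to make \v{C}ech cohomology of $\partial X$ faithfully detect top-degree coarse classes of $Y$ with suitably chosen coefficients. Once this comparison is in place the two bounds coincide and the formula $\ccd(G)=\dim\partial X+1<\infty$ follows.
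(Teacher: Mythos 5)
There is a genuine gap in your upper bound. You assert that ``the model $Y$ can be chosen with $\dim Y=\dim\partial X+1<\infty$'' by appealing to Bestvina--Mess and Moran, and then deduce $\ccd(G)\leq\dim\partial X+1$ from the cellular chain complex of $Y$. But neither Bestvina--Mess nor Moran produces such a model, and in general none exists: the Rips complex $P_d(G)$ of a hyperbolic group has dimension that grows without bound in $d$, and for $G$ with torsion there is no finite-dimensional contractible free $G$-complex at all; likewise a $\CAT(0)$ group acts cocompactly on \emph{some} finite-dimensional contractible complex, but with no control relating its dimension to $\dim\partial X$. What a finite-dimensional $G$-cocompact model gives you, via Proposition~\ref{prop:ccd_action}, is only the \emph{finiteness} $\ccd(G)<\infty$ --- and a priori $\ccd(G)\leq\dim Y$ can be a strict inequality.

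The paper closes this gap differently, and the missing ingredient is Theorem~\ref{thm:ccd_finit_type_max}: for a proper coarsely homogeneous space of coarse finite type over a Noetherian ring, $\ccd_R(X)=\max\{n:\coarse^n(X;R)\neq 0\}$. Given the finiteness $\ccd(G)<\infty$ (from the finite-dimensional model), this pins down $\ccd$ as the top nonvanishing degree of \emph{constant-coefficient} coarse cohomology, so you never need to produce a geometric model of the exact right dimension. The dimension formula then falls out of Proposition~\ref{prop:zbdry}: uniform contractibility of the $\cZ$-structure gives $\coarse^{n+1}(G;R)\cong H^{n+1}_c(\overline X\setminus Z;R)\cong H^n(Z;R)$ for \emph{all} $n$ (not only top degree), Dranishnikov's theorem identifies the top nonvanishing degree of $H^\bullet(Z;R)$ with $\dim_R(Z)$, and Moran's finiteness result plus Alexandroff's theorem convert $\dim_{\bbZ}(Z)$ to $\dim(Z)$. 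Your lower-bound sketch is closer to the paper's route, though you would still need to reconcile the \emph{absolute} Čech cohomology appearing in the compactly-supported long exact sequence with the \emph{relative} Čech cohomology defining $\dim_R(Z)$ --- the paper outsources precisely this point to Dranishnikov's result.
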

Here, $\partial X$ is either the Gromov or the visual boundary of $X$. It is a notorious open problem to determine whether all hyperbolic groups have a finite-index torsion-free subgroup. This problem is equivalent to determining whether all hyperbolic groups are residually finite~\cite{wise96thesis,olshanskiui2000basslubotzky}. One consequence of a positive solution to this problem would be that all hyperbolic groups have finite virtual cohomological dimension, and then the original Bestvina--Mess formula would hold. Theorem~\ref{thm:bdry_intro} circumvents this issue by  passing to the more robust notion of coarse cohomological dimension, which is finite for all hyperbolic groups, regardless of whether they are virtually torsion-free.

In contrast, there  are  examples of $\CAT(0)$ groups that are not virtually torsion-free. Such groups  have infinite (virtual) cohomological dimension, but by Theorem~\ref{thm:bdry_intro}, the Bestvina--Mess formula holds when formulated in terms of coarse cohomological dimension.  For example, Wise constructed a group $G$ which acts geometrically on a $\CAT(0)$ square complex, but has no finite-index torsion-free subgroup~\cite{wise96thesis}. In fact, this example  is both biautomatic and a $C(4)-T(4)$ small-cancellation group. I am  grateful to Jingyin Huang for alerting the author to this  example.

Recall that a \emph{quasi-tree} is a metric space that is quasi-isometric to a simplicial tree. Work of  Bestvina--Bromberg--Fujiwara has emphasised the importance  of quasi-trees in geometric group theory~\cite{bestvinabrombergfujiwara2015constructing}. Using Manning's \emph{bottleneck criterion}~\cite{manning2005pseudo}, we  give  a coarse topological characterisation of quasi-trees. We restrict ourselves to unbounded quasi-trees, since for each $R$,  a metric space $X$ is bounded if and only if $\ccd_R(X)=0$ (Proposition~\ref{prop:bdd_0dim}).
\begin{thmAlph}[Theorem~\ref{thm:ccd1}]\label{thm:ccd1_intro}
	Let $X$ be a quasi-geodesic metric space and let $R$ be a commutative ring. The following are equivalent:
	\begin{enumerate}
		\item $\ccd_R(X)=1$;
		\item $X$ is an  unbounded quasi-tree.
	\end{enumerate}
\end{thmAlph}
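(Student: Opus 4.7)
The plan is to prove both implications. The forward direction is short and follows from the tree model and coarse invariance of $\ccd_R$, while the reverse direction relies on Manning's bottleneck criterion combined with structural features of a length-one projective resolution.

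For the implication $(2)\Rightarrow(1)$, an unbounded quasi-tree $X$ is coarsely equivalent to an unbounded simplicial tree $T$ with $R$-coefficient augmented simplicial chain complex
\[
0 \longrightarrow C_1(T;R) \longrightarrow C_0(T;R) \longrightarrow R \longrightarrow 0.
\]
This is a length-one resolution of the trivial module $\bR$ by free, and hence projective, $R$-modules over $T$, which gives $\ccd_R(T)\leq 1$. Since $T$ is unbounded, Proposition~\ref{prop:bdd_0dim} rules out $\ccd_R(T)=0$, so $\ccd_R(T)=1$. Coarse invariance (Theorem~\ref{thm:ccd_intro}(1)) then yields $\ccd_R(X)=\ccd_R(T)=1$.

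For the implication $(1)\Rightarrow(2)$, assume $X$ is quasi-geodesic with $\ccd_R(X)=1$. Proposition~\ref{prop:bdd_0dim} forces $X$ to be unbounded, and after replacing $X$ with a coarsely equivalent connected graph (for instance, the Rips graph at sufficiently large scale on a separated net), we may assume $X$ is geodesic. I would then verify Manning's bottleneck criterion. Fix a length-one projective $R$-resolution $0\to \bC_1 \xrightarrow{\partial} \bC_0 \xrightarrow{\epsilon} \bR \to 0$ over $X$ with all maps of finite displacement. Injectivity of $\partial$ means every element of $\ker\epsilon$, in particular every formal difference $[p]-[q]$ of augmentation classes, has a \emph{unique} preimage $c_{p,q}\in\bC_1$. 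The geometric heart of the argument is to show that $c_{p,q}$ admits a representative whose support is uniformly close to any chosen geodesic $[p,q]$; I would obtain this by pushing finite-displacement chain maps from the simplicial chain complex of a sufficiently large-scale Rips graph into the resolution $\bC_\bullet$, using projectivity of $\bC_0$ and $\bC_1$, so that a ``path realisation'' from $p$ to $q$ in the Rips graph maps to a $\bC_1$-chain supported in a uniform neighbourhood of that path. Assuming this realisation lemma, suppose for contradiction Manning's criterion fails: there exist $p_n,q_n\in X$, geodesic midpoints $m_n\in[p_n,q_n]$, and continuous paths $\gamma_n$ from $p_n$ to $q_n$ with $d(\gamma_n,m_n)\to\infty$. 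Applying the realisation lemma once along a geodesic through $m_n$ and once along $\gamma_n$ produces two chains in $\bC_1$ with boundary $[p_n]-[q_n]$ but with supports separated by an arbitrarily large distance. By uniqueness of preimages under the injective map $\partial$, they must coincide, contradicting the disjoint-support estimate for $n$ large.

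The main obstacle will be the realisation lemma producing $c_{p,q}$ with controlled support along a prescribed path; this is essentially a quantitative, finite-displacement upgrade of the length-one resolution and is the technical heart of the argument. Once it is in place, the contradiction is immediate, Manning's criterion holds, and combined with the unboundedness established above, $X$ is an unbounded quasi-tree.
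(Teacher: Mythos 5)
Your forward direction matches the paper: realise the quasi-tree as a simplicial tree, observe that its simplicial chain complex gives a length-one free $R$-resolution over the tree, then invoke Proposition~\ref{prop:bdd_0dim} to rule out dimension zero and Theorem~\ref{thm:ccd_intro}(1) for coarse invariance. The overall strategy for the reverse direction is also the paper's: reduce to the geodesic case, extract a projective summand on which $\partial$ is injective so that preimages are unique, and pit two ``realisations'' supported near a geodesic and near a competitor path against each other to extract Manning's bottleneck constant. So far, same route.

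However, there is a genuine gap in the crucial contradiction step, and the phrase ``the contradiction is immediate'' is where it hides. Uniqueness of the preimage tells you that the chain realised along the geodesic and the chain realised along $\gamma_n$ are the \emph{same} element $c$, hence $\supp(c)$ lies in the intersection of a uniform neighbourhood of the geodesic and a uniform neighbourhood of $\gamma_n$. But this alone gives no contradiction: the two tubes always meet near $p_n$ and $q_n$, and nothing you have said forces $\supp(c)$ to contain any point near the midpoint $m_n$. Indeed, the whole point of the quasi-tree condition is that $\supp(c)$, together with $\{p_n,q_n\}$, must form a coarse chain from $p_n$ to $q_n$ at a uniformly bounded scale; projecting such a chain to the geodesic shows it must hit a ball around $m_n$. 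The paper establishes exactly this via Lemma~\ref{lem:chain-path} (a $1$-chain in the standard resolution with boundary $y-x$ has, in its support together with $\{x,y\}$, a coarse path from $x$ to $y$) followed by a closest-point projection argument. This step is nontrivial and is what actually drives the bottleneck; your sketch does not supply it.

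A related structural issue: you propose to work directly in an abstract length-one resolution $\bD_\bullet$, where ``$[p]-[q]$'' is not literally an element of $\bD_0$ and, more seriously, $\bD_1$ carries no path-combinatorial structure, so there is no obvious analogue of the chain-to-path lemma. The paper sidesteps this by staying in the standard resolution $\bC_\bullet$ of $X$ (whose $1$-chains are literally formal sums of ordered pairs, so Lemma~\ref{lem:chain-path} is elementary) and using Lemma~\ref{lem:comp} together with Proposition~\ref{prop:ccd_chars} to produce a finite-displacement retraction $r_\#$ onto a projective complement $\bP\le\bC_1$ of $\im(\bC_2\to\bC_1)$; injectivity of $\partial|_{\bP}$ supplies the uniqueness you want, while $r_\#(\omega)$ still lives in $\bC_1$, where the combinatorial path structure is available. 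If you insist on working in an abstract $\bD_\bullet$, you would need to transport back to the standard resolution before invoking any path argument, at which point you have essentially reconstructed the paper's retraction. I would recommend adopting that formulation and then explicitly proving and using the analogue of Lemma~\ref{lem:chain-path} before asserting that the midpoint lies in a uniform neighbourhood of the support.
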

Since a  finitely generated group that is quasi-isometric to a simplicial tree is  virtually free~\cite{dunwoody1985accessibility}, Theorem~\ref{thm:ccd1_intro}  is a generalisation of the following theorem of Stallings (in the case $R=\bbZ$) and Dunwoody (for general $R$).
\begin{cor}[{\cite{stallings1968torsionfree,dunwoody1979accessibility}}]\label{cor:fg_free_intro}
	Suppose $R$ is a commutative ring and $G$ is a finitely generated group.  If  $\cd_R(G)=1$, then $G$ is virtually free.
\end{cor}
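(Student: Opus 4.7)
The plan is to chain together Theorems~\ref{thm:ccd_intro} and~\ref{thm:ccd1_intro} with a classical rigidity statement for groups quasi-isometric to trees, so that no new work is required beyond the coarse machinery already developed.

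First, I would equip $G$ with a word metric associated to a finite generating set. This yields a proper left-invariant metric on $G$, and the associated Cayley graph realises $G$ as a geodesic (hence quasi-geodesic) metric space. Since $\cd_R(G)=1<\infty$, part (2) of Theorem~\ref{thm:ccd_intro} applies and gives
\[
\ccd_R(G)\;=\;\cd_R(G)\;=\;1.
\]

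Next, I would invoke Theorem~\ref{thm:ccd1_intro} with $X=G$. The equivalence of conditions (1) and (2) there, combined with $\ccd_R(G)=1$, forces $G$ to be an unbounded quasi-tree, i.e.\ quasi-isometric to some simplicial tree $T$. At this stage, $G$ has been reduced from a purely cohomological hypothesis to a purely geometric conclusion.

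The final step is to cite Dunwoody's accessibility theorem~\cite{dunwoody1985accessibility}, which (as already recorded in the excerpt preceding the corollary) tells us that a finitely generated group quasi-isometric to a simplicial tree is virtually free. Applying this to $G$ completes the argument. The substantive content is entirely contained in the invoked theorems; no genuine obstacle arises in the deduction itself, and in particular the argument uniformly handles both the torsion-free case (e.g.\ $R=\bbZ$, where one recovers Stallings' theorem) and the general case (e.g.\ $R=\bbF_p$, where $G$ may genuinely have torsion).
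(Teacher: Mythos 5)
Your proof is correct and is precisely the route the paper intends: convert $\cd_R(G)=1$ to $\ccd_R(G)=1$ via Theorem~\ref{thm:ccd_intro}(2), deduce $G$ is an unbounded quasi-tree via Theorem~\ref{thm:ccd1_intro}, then appeal to Dunwoody's accessibility theorem~\cite{dunwoody1985accessibility} to conclude $G$ is virtually free. Nothing is missing, and your closing observation that the argument uniformly covers both the torsion-free ($R=\bbZ$) and torsion ($R=\bbF_p$) cases is a fair reading of why the conclusion is ``virtually free'' rather than ``free''.
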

\begin{rem}
	In fact, work of Swan and Dunwoody shows Corollary~\ref{cor:fg_free_intro} holds without the hypothesis that $G$ is finitely generated~\cite{swan1969codim1,dunwoody1979accessibility}. However, this stronger statement is not generalised by Theorem~\ref{thm:ccd1_intro}. We note also that Theorem~\ref{thm:ccd1_intro} does \emph{not} give another proof of the fact that a finitely generated  group quasi-isometric to a tree is virtually free; this requires  work of Stallings and Dunwoody~\cite{stallings1968torsionfree,dunwoody1985accessibility}.
\end{rem}

There  is another large-scale notion of dimension of a metric space $X$, due to Gromov, called the \emph{asymptotic dimension} and denoted $\asdim(X)$. In Proposition~\ref{prop:ccd_vs_asdim}, we apply  work of Dranishnikov~\cite{dranishnikov2009cohom} to show that $\ccd(X)\leq \asdim(X)$ provided $\ccd(X)<\infty$ and $X$ is coarsely uniformly acyclic. However, the hypothesis that $X$ is coarsely uniformly acyclic cannot be dropped due to the following example of Denis Osin~\cite[Example 1.3]{fujiwarawhyte2007note}. For any non-trivial finite group $F$, the wreath product $G=F\wr \bbZ$ is finitely generated, not  finitely presented, and satisfies $\asdim(G)=1$. In particular, $G$ is not virtually free, so  by Theorem~\ref{thm:ccd1_intro}, we have $\ccd(G)>1=\asdim(G)$.

Outside the world of discrete countable groups, the tools developed in this article can be used to study various non-discrete topological groups.
It is known that many uncountable groups,  such as locally compact compactly generated groups~\cite{cornulierdlH2016metric} and more generally $CB$ generated groups~\cite{rosendal2022coarse}, can be endowed with a metric unique up to quasi-isometry. Thus, $\ccd_R(G)$ provides a well-defined  dimension of such groups.
For instance, if $T_\infty$ is the regular tree of countably infinite valence, then Rosendal showed that $\Aut(T_\infty)$ is quasi-isometric to $T_\infty$~\cite[Example 6.34]{rosendal2022coarse}. Therefore, Theorem~\ref{thm:ccd1_intro} implies that $\ccd(\Aut(T_\infty))=1$.   One potential future application of coarse cohomological dimension in particular, and the theory of $R$-modules over metric spaces in general,  would be to distinguish \emph{big groups}, such as big mapping class groups and homeomorphism groups of manifolds, up to quasi-isometry.

\subsection*{Comparison to other coarse cohomology theories}
We briefly compare and contrast our methods to earlier approaches in the study of coarse algebraic topology of groups and metric spaces.  Coarse topology was developed and used by many authors throughout the last 30–40 years~\cite{blockweinberger1992aperiodic,gromov1993asymptotic,gersten1993quasi,roe1993coarse,schwartz95r1lattices,farbschwartz96,farbmosher1998bs1,eskinfarb1997quasiflats,mosher2003quasi,mosher2011quasiactions}. Early work of Gersten~\cite{gersten1993quasi} showed that when restricted to groups of type $FP_\infty$, $H^i(G,\bbZ G)$ is a quasi-isometry invariant, and that when restricted to groups of type $FP$, $\cd(G)$ is a quasi-isometry invariant. These ideas were significantly developed by Kapovich--Kleiner in both the main article and appendix of~\cite{kapovich2005coarse},  where they developed  the coarse algebraic topology of \emph{coarsely uniformly acyclic metric spaces}.
The work of Kapovich--Kleiner has  several applications, including work of Farb--Mosher~\cite{farbmosher2000abelianbycyclic,farbmosher2002surfacebyfree}, Mosher--Sageev--Whyte~\cite{mosher2003quasi,mosher2011quasiactions}, Papasoglu~\cite{papasoglu2007group} and earlier works of the author~\cite{margolis2018quasi,margolisxer2021geometry,margolis2019codim1}.
The framework and language used in this article is principally inspired by these works. However, one of the key novelties in our approach is that by using filtrations,  we are able to prove results for general metric spaces with no uniform acyclicity hypothesis.

Another approach to coarse algebraic topology is due to Roe~\cite{roe1993coarse}. The coarse cohomology modules $\coarse^k(X;R)$ defined in this article coincide with Roe's coarse cohomology (Proposition~\ref{prop:roe_iso}), although we consider more general coarse cohomology modules of the form $\coarse^k(X;\bM)$ where $\bM$ is an $R$-module over $X$.
Although Roe's theory is well suited to problems in index theory,  the main difficulty with Roe's theory is that it is harder to carry out  arguments at the cochain level that are needed, for instance,  to prove results such as Theorems~\ref{thm:main_field_intro} and~\ref{thm:main_finitenss_space_intro}. As Kapovich--Kleiner state, ``By passing to the limit \ldots{} one inevitably loses quantitative information which
is essential in many applications of coarse topology to quasi-isometries
and geometric group theory''. Another key feature of our work,  not fully developed by either Roe or Kapovich--Kleiner, is a homological algebraic approach to manipulating \emph{chain complexes of $R$-modules over a metric space} as objects in their own right and not merely as a means to defining homology and cohomology. This perspective is central to this article, especially   in  the theory of coarse cohomological dimension.

\subsection*{Outline of article}
In Section~\ref{sec:prelim}, we review basic concepts in coarse geometry and group cohomology. In Sections~\ref{sec:rmod_metric} and~\ref{sec:projective}, we define  $R$-modules over a metric space and projective $R$-modules over a metric space. Section~\ref{sec:duality} establishes the basic properties of $\Hom$-sets of maps between $R$-modules over a metric space.

In Section~\ref{sec:projres}, we finally define the main tool of this paper: projective $R$-resolutions over a metric space. These are analogues of projective resolutions of the trivial $RG$-module $R$. After doing this, it is relatively straightforward to define and prove basic properties about coarse cohomology and homology, coarse cohomological dimension, and the theory of uniformly acyclic spaces, which we do in Sections~\ref{sec:coarse_cohom},~\ref{sec:coarse_dim} and~\ref{sec:unif_acyc} respectively. In Section~\ref{sec:ccd1}, we characterise quasi-trees in terms of coarse cohomological dimension, using Manning's bottleneck criterion~\cite{manning2005pseudo}.

In Section~\ref{sec:cupcap} we develop a notion of cup and cap products for proper metric spaces. Section~\ref{sec:coarsePDn} is the culmination of the article, in which we use virtually all the machinery developed so far to further extend the theory of coarse $PD_n$ spaces, originally developed by Kapovich--Kleiner~\cite{kapovich2005coarse}. In particular, we prove Theorems~\ref{thm:main_field_intro} and~\ref{thm:main_finitenss_space_intro}. To do this, we make use of some results from Appendix~\ref{sec:inverse_limits} regarding the cardinality of the cohomology of inverse and derived limits of cochain complexes.

\subsection*{Acknowledgements}
I would like to thank Jingyin Huang for many helpful discussions and suggestions.

\section{Preliminaries}\label{sec:prelim}
All rings considered in this article will possess a multiplicative identity, and all homomorphisms between such rings will be assumed to preserve the multiplicative identity. Most rings we consider will be commutative, and will typically be denoted by either $R$ or $S$. The only potentially non-commutative   rings that we consider are group rings, which will be denoted $RG$, $SH$ etc.
We assume $0$ is a natural number.

We will frequently work with maps $f:X\to Y$ between partially ordered sets.  It is always assumed that $\bbN$ and $\bbR$ are equipped with their standard orders, and that  $\bbN\times \bbN$ is equipped with the  product partial order, i.e.\ $(m,n)\leq (m',n')$ if and only if $m\leq m'$ and $n\leq n'$. A map $f:X\to Y$ between partially ordered sets is \emph{increasing} if $f(x)\leq f(y)$ whenever $x\leq y$. In particular, increasing functions need not be strictly increasing.
The set of all maps $f:X\to Y$ between partially ordered sets will itself be equipped with a partial order such that $f\leq f'$ when $f(x)\leq f'(x)$ for all $x\in X$. In the case $Y$ is totally ordered, any non-empty finite collection of functions $X\to Y$ will have a maximum element.

\subsection{Coarse geometry of  metric spaces}
Let $X$ and $Y$ be metric spaces.  Given subsets $Z,W\subseteq X$ and $r\geq 0$, we write \[N_r(Z)\coloneqq \{x\in X\mid d(x,Z)\leq r\}\] for the closed metric $r$-neighbourhood of $Z$, and $\Haus(Z,W)$ for the Hausdorff distance between $Z$ and $W$.
Given $A\geq 0$, two maps $f,f':X\to Y$ are said to be \emph{$A$-close} if $d(f(x),f'(x))\leq A$ for all $x\in X$. We say $f,f':X\to Y$ are \emph{close} if they are $A$-close for some $A$. An $A$-\emph{coarse inverse} of $f:X\to Y$ is a map $g:Y\to X$ such that $g\circ f$ and $f\circ g$ are $A$-close to $\id_X$ and $\id_Y$ respectively. A \emph{coarse inverse} of $f$ is an $A$-coarse inverse of $f$ for some $A\geq 0$.
\begin{defn}
	Let $(X,d_X)$ and $(Y,d_Y)$ be metric spaces, and let $\Upsilon,\Lambda:\bbR_{\geq 0}\to \bbR_{\geq 0}$ be proper increasing functions. We say that a function $f:X\to Y$ is:
	\begin{itemize}
		\item \emph{$\Upsilon$-bornologous} if for all $x,x'\in X$, $d_Y(f(x),f(x'))\leq \Upsilon(d_X(x,x'))$.
		\item A \emph{$(\Lambda,\Upsilon)$-coarse embedding} if for all $x,x'\in X$, \[\Lambda\left(d_X(x,x')\right)\leq d_Y(f(x),f(x'))\leq \Upsilon\left(d_X(x,x')\right).\]
		\item A \emph{$(\Lambda,\Upsilon)$-coarse equivalence} if $f$ is a $(\Lambda,\Upsilon)$-coarse embedding, and has an $\Upsilon(0)$-coarse inverse that is also a $(\Lambda,\Upsilon)$-coarse embedding.
	\end{itemize}
	We call $\Upsilon$ and $\Lambda$  the \emph{control functions} of $f$.
	We say that $f$ is \emph{bornologous} if it is $\Upsilon$-bornologous for some proper increasing function $\Upsilon$, and we define the notion of a  \emph{coarse embedding} and a \emph{coarse equivalence} similarly.
\end{defn}

If $\Upsilon:\mathbb{R}_{\geq 0}\rightarrow \mathbb{R}_{\geq 0}$ is a proper increasing function, we define the \emph{weak inverse} of $\Upsilon$ to be the proper increasing function $\widetilde {\Upsilon}:\mathbb{R}_{\geq 0}\rightarrow \mathbb{R}_{\geq 0}$ given by \[\widetilde{\Upsilon}(R)\coloneqq \begin{cases}
		\sup(\Upsilon^{-1}([0,R])) & \textrm{if}\;\Upsilon(0)\leq R, \\
		0                          & \textrm{if}\; \Upsilon(0)> R.
	\end{cases}\]  The weak inverse satisfies the following properties:
\begin{itemize}
	\item If $\Upsilon(S)\leq R$, then $S\leq \widetilde \Upsilon(R)$.
	\item If $R<\Upsilon(S)$, then $\widetilde \Upsilon(R)\leq S$.
\end{itemize}
Using weak inverses, is straightforward to verify a coarse embedding $f:X\to Y$ is a coarse equivalence if and only if $Y=N_A(f(X))$ for some $A$.

A map $f:X\to Y$ is \emph{coarse Lipschitz} (resp.\ a \emph{quasi-isometric embedding}, a \emph{quasi-isometry}) if it is bornologous (resp.\ a coarse embedding, a coarse equivalence) with affine control functions. A metric space is said to be \emph{coarsely  geodesic} (resp.\ \emph{quasi-geodesic}) if it is coarsely equivalent (resp.\ quasi-isometric) to a geodesic metric space.

The following lemma is due to Gromov:
\begin{lem}[{\cite[\S0.2.D, Lemma]{gromov1993asymptotic}}]\label{lem:gro_triviallem}
	Let $X$ and $Y$ be metric spaces, and let $f:X\to Y$ be bornologous.
	\begin{enumerate}
		\item If $X$ is quasi-geodesic, then $f$ is coarse Lipschitz.
		\item If $X$ and $Y$ are quasi-geodesic and $f$ is a coarse equivalence, then $f$ is a quasi-isometry.
	\end{enumerate}
\end{lem}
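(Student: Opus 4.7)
The plan is to first reformulate the quasi-geodesic hypothesis on $X$ in terms of \emph{discrete paths}: if $q:X\to X'$ is a quasi-isometry to a geodesic metric space $X'$, then for any $x,x'\in X$ one can build a sequence $x=x_0,x_1,\dots,x_n=x'$ in $X$ whose consecutive jumps $d_X(x_i,x_{i+1})$ are bounded by a uniform constant $K$, and whose length $n$ is bounded by an affine function of $d_X(x,x')$. To construct this, I would sample a geodesic in $X'$ from $q(x)$ to $q(x')$ at unit intervals, pull the samples back through an $\Upsilon(0)$-coarse inverse $g':X'\to X$ (which is again a quasi-isometry, and in particular coarse Lipschitz by the very definition), and prepend $x$ and append $x'$ to account for the fact that $g'\circ q$ is only close to $\id_X$. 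The affine control of $q$ and $g'$ translates the unit-speed samples in $X'$ into jumps uniformly bounded by some $K$, and the length bound follows from $n\leq d_{X'}(q(x),q(x'))+1$ together with the affine upper bound on $d_{X'}(q(x),q(x'))$.

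For part (1), I would apply $f$ to such a discrete path. Because $f$ is $\Upsilon$-bornologous and consecutive jumps are bounded by $K$, each edge contributes at most $\Upsilon(K)$ to $d_Y(f(x),f(x'))$, and summing over the at most affine number of edges yields an affine upper bound for $d_Y(f(x),f(x'))$ in terms of $d_X(x,x')$. Hence $f$ is coarse Lipschitz.

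For part (2), write the coarse equivalence with control functions $(\Lambda,\Upsilon)$ and let $g:Y\to X$ be an $\Upsilon(0)$-coarse inverse that is itself a $(\Lambda,\Upsilon)$-coarse embedding; in particular $g$ is bornologous. Part (1) applied to $f$ gives an affine upper bound $d_Y(f(x),f(x'))\leq L_1\,d_X(x,x')+C_1$. Part (1) applied to $g$ (valid since $Y$ is quasi-geodesic) shows $g$ is coarse Lipschitz with constants $L_2,C_2$. Writing $A$ for a closeness constant between $g\circ f$ and $\id_X$, the triangle inequality gives
\[d_X(x,x')\leq d_X(x,g(f(x)))+d_X(g(f(x)),g(f(x')))+d_X(g(f(x')),x')\leq L_2\,d_Y(f(x),f(x'))+C_2+2A,\]
so rearranging yields an affine lower control on $f$. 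Combined with the affine upper bound, $f$ is a quasi-isometric embedding; since a coarse equivalence is coarsely surjective ($Y=N_{A'}(f(X))$ for some $A'$), $f$ is in fact a quasi-isometry.

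The main technical obstacle is cleanly producing the discrete path in $X$, since we must transport a geodesic from the reference geodesic space $X'$ back to $X$ through a coarse inverse of $q$ without sacrificing quantitative control. This is precisely where the affine — as opposed to merely bornologous — control on the quasi-isometry is indispensable, and it is the only step where the distinction between the ``coarse'' and ``quasi-isometric'' categories is actually doing work in the argument.
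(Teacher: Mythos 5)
Your proof is correct. The paper does not supply its own argument here — it simply cites Gromov — so there is nothing to compare against; your discrete-path argument (sample a geodesic in the reference geodesic model space, pull the samples back through a coarse inverse of the quasi-isometry to get a chain in $X$ with uniformly bounded jumps and affinely bounded length, then sum the bornologous control over the chain) is the standard one, and your triangle-inequality derivation of the lower affine bound in part (2) is exactly what is needed. Your closing remark correctly pinpoints why the affine control on the quasi-isometry $q:X\to X'$ (as opposed to the weaker bornologous control of a general coarse equivalence) is indispensable: it is what keeps the number of sample points, and hence the sum $\sum_i d_Y(f(x_i),f(x_{i+1}))$, affinely bounded in $d_X(x,x')$.
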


Each countable group $G$ admits a proper left-invariant metric, and this metric is unique up to coarse equivalence; see for instance~\cite[\S1]{nowakyu2023large}. For example, if $G$ is finitely generated, the word metric with respect to any finite generating set is such a metric, and is in fact a quasi-geodesic metric.  In this article, we always assume every countable group $G$ is equipped with such a metric. Moreover, in the case $G$ is finitely generated, we assume this metric is the word metric with respect to a finite generating set, hence is quasi-geodesic.

Given a metric space $X$ and a parameter $r\geq 0$, the \emph{Rips complex}  $P_r(X)$  of $X$ with parameter $r$ is the  simplicial complex with vertex set $X$, and where each finite subset $\{x_0,\dots, x_k\}\subseteq X$ spans a simplex of $P_r(X)$  if $d(x_i,x_j)\leq r$ for all $0\leq i,j\leq k$. For each subspace $Y\subseteq X$ and  $s\leq r$, we always identify $P_s(Y)$ with its image in $P_r(X)$ under the natural inclusion $P_s(Y)\to P_r(X)$.

In the following definition, if $Z$ is a simplicial complex and $R$ is a commutative ring, then $\widetilde{H}_k(Z;R)$ denotes the $k$th reduced simplicial homology of $Z$ with coefficients in $R$.
\begin{defn}\label{defn:unif_acyc}
	Let  $\lambda:\mathbb{R}_{\geq 0}\rightarrow \mathbb{R}_{\geq 0}$ and $\mu:\mathbb{R}_{\geq 0}\times \mathbb{R}_{\geq 0} \rightarrow \mathbb{R}_{\geq 0}$  be increasing functions such that $\lambda(i)\geq i$ and $\mu(i,r)\geq r$ for all $i,r\in \mathbb{R}$.  Given a commutative ring $R$, we say that a metric space $X$ is \emph{$(\lambda,\mu)$-coarsely uniformly $n$-acyclic over $R$} if for every $k\leq n$, $i,r\geq 0$ and $x\in X$, the map
	\[\widetilde{H}_k(P_i(N_r(x));R)\to \widetilde{H}_k(P_{\lambda(i)}(N_{\mu(i,r)}(x));R),\] induced by inclusion, is zero. A metric space is \emph{coarsely uniformly $n$-acyclic over $R$} if there exist $\lambda$ and $\mu$ as above, and a metric space $X$ is \emph{coarsely uniformly acyclic over $R$} if it is coarsely uniformly $n$-acyclic over $R$ for each $n$.
\end{defn}
Coarse uniform $n$-acyclicity over $R$ is invariant under coarse equivalence~\cite[Proposition 2.15]{margolis2018quasi}.

\subsection{Group cohomology}
We recall  some key concepts from group cohomology. The reader is referred to  the books of Bieri and Brown for more details~\cite{bieri1981homological,brown1982cohomology}. We fix a commutative ring $R$.
Suppose $G$ is a group and \[\cdots \to P_n\to P_{n-1}\to \cdots \to P_0\to R\to 0\] is a projective resolution of  the trivial $RG$-module $R$. If $N$ is any $RG$-module, we define the \emph{group cohomology}   $H^k(G,N)$ to be the $k$th cohomology $H^k(\Hom_{RG}(P_\bullet,N))$ of the cochain complex $\Hom_{RG}(P_\bullet,N)$ obtained by applying the functor $\Hom_{RG}(-,N)$  to $P_\bullet$.

The \emph{cohomological dimension} $\cd_R(G)$ of $G$ over $R$ is the least $n$ such that there exists a length $n$ projective resolution  \[\cdots\to 0 \to P_n\to P_{n-1}\to \cdots \to P_0\to R\to 0\] of  the trivial $RG$-module $R$. We set $\cd_R(G)=\infty$ if no such $n$ exists. If $G$ acts freely on a contractible $n$-dimensional CW complex, then $\cd_R(G)\leq n$ for all $R$. We write $\cd_\bbZ(G)$ as $\cd(G)$. If $G$ has torsion, then $\cd(G)=\infty$. If $G$ has a torsion-free finite-index subgroup $G'$, then the \emph{virtual cohomological dimension} $\vcd_R(G)$ is defined to be $\cd_R(G')$. A theorem of Serre ensures that $\vcd_R(G)$ is independent of the choice of $G'$. If no such $G'$ exists, then $\vcd_R(G)=\infty$.

Given a commutative ring $R$,  a group $G$ is of type $FP_n^R$ if the trivial $RG$-module $R$ admits a projective resolution \[\cdots \to P_n\to P_{n-1}\to \cdots \to P_0\to R\to 0\] with $P_i$ a finitely generated $RG$-module for each $i\leq n$. In the case $R=\bbZ$, we write type $FP_n^\bbZ$ simply as $FP_n$. Being of type $FP_1$ coincides with being finitely generated, whilst being of type $FP_2$ generalises the notion of being finitely presented; see~\cite{bestvina1997morse,geoghegan2008topological} for more details. A group that acts freely on a contractible locally finite CW complex with finitely many $G$-orbits of $k$-cells for $k\leq n$ is necessarily of type $FP_n^{R}$.

The following can be deduced from work of Alonso~\cite{alonso1994finiteness} and Kapovich--Kleiner~\cite{kapovich2005coarse}; see also~\cite[Theorem 3.2]{margolis2018quasi} for an explicit statement.
\begin{thm}[{\cite{alonso1994finiteness,kapovich2005coarse},\cite[Theorem 3.2]{margolis2018quasi}}]\label{thm:type_FPn}
	A countable group $G$ is of type $FP_n^R$ if and only if it is coarsely uniformly $(n-1)$-acyclic over $R$.
\end{thm}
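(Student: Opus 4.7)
The plan is to prove both implications by exploiting the geometry of Rips complexes $P_r(G)$, which bridge algebraic finiteness of $G$ on one side and metric acyclicity on the other. The essential setup uses that $G$ (countable with a proper left-invariant metric) acts on each $P_r(G)$ isometrically, with finite point-stabilisers and finitely many orbits of simplices in each dimension, and that each $P_r(G)$ is locally finite because metric balls in $G$ are finite. Since coarse uniform $(n-1)$-acyclicity is a coarse invariant, I may switch between proper left-invariant metrics freely, and by $G$-homogeneity only need to verify coarse acyclicity at the identity $e \in G$.

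For the direction ``coarsely uniformly $(n-1)$-acyclic $\Rightarrow FP_n^R$'', I would argue as follows. A $k$-cycle $c$ in $P_i(G)$ has finite support, so $c$ lies in $P_i(N_s(e))$ for some $s$. By the definition of coarse uniform $(n-1)$-acyclicity applied at $x = e$, $c$ bounds in $P_{\lambda(i)}(N_{\mu(i,s)}(e)) \subseteq P_{\lambda(i)}(G)$. Hence the map $\widetilde H_k(P_i(G); R) \to \widetilde H_k(P_{\lambda(i)}(G); R)$ vanishes for all $k \leq n-1$. Passing to the direct limit, the augmented cellular chain complex $C_*(P_\infty(G); R) \to R \to 0$ obtained from the colimit over $r$ is $(n-1)$-acyclic. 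Because each $P_r(G)$ is cofinite and locally finite under the $G$-action, the truncated chain complex $C_\bullet(P_r(G); R)$ in degrees $\leq n$ consists of finitely generated $RG$-modules (each $C_k$ is a finite direct sum of permutation modules with finite stabilisers). Applying Bieri's criterion --- type $FP_n^R$ is equivalent to the existence of any exact sequence $M_n \to \cdots \to M_0 \to R \to 0$ of finitely generated $RG$-modules --- then yields $FP_n^R$, after extracting a partial resolution by choosing $r$ large enough that cycles of dimension $\leq n-1$ appearing as differentials are killed upon inclusion into $P_{r'}(G)$.

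For the direction ``$FP_n^R \Rightarrow$ coarsely uniformly $(n-1)$-acyclic'', start with a free resolution $\cdots \to C_n \to \cdots \to C_0 \to R \to 0$ with $C_k = (RG)^{m_k}$ for $k \leq n$ (obtained from a finitely generated projective resolution by Eilenberg's swindle/taking projective summands inside frees). Since each differential $\partial_k$ is $RG$-linear, it is described by a matrix with entries in $RG$, and I let $B$ bound the diameter of the supports of these entries for $k \leq n$. Choose $r_0 \geq B$ and $G$-equivariantly realise generators of $C_k$ as simplices in $P_r(G)$ for $r$ sufficiently large, extending step by step to build a $G$-equivariant chain map $\varphi_\bullet \colon C_*(P_r(G); R) \to C_\bullet$ in degrees $\leq n$ compatible with the augmentation to $R$, and an equivariant chain homotopy inverse $\psi_\bullet$ into $C_*(P_{r'}(G); R)$ for some $r' \geq r$. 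Given a $k$-cycle $c \in P_i(N_r(x))$ with $k < n$, $\varphi(c)$ is a cycle in $C_k$, hence bounds some $b \in C_{k+1}$ by exactness. Pulling $b$ back via $\psi$ and using boundedness of the supports yields a filling of $c$ within $P_{\lambda(i)}(N_{\mu(i,r)}(x))$ for explicit $\lambda$, $\mu$ depending on $B$ and the data of $\varphi, \psi$.

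The hardest step is the construction of the equivariant chain map $\varphi_\bullet$ with controlled support, and its compatibility with both the algebraic resolution $C_\bullet$ and the geometric Rips complex. In particular, the inductive construction of $\varphi_{k+1}$ requires that, for each generator $e_i$ of $C_{k+1}$, one can equivariantly choose a filling of $\partial(e_i)$ inside $P_r(G)$ with bounded support; this is where finite generation of $C_{k+1}$ and the local finiteness of $P_r(G)$ must be combined carefully, and it dictates how $r$ must grow with the homological degree. The converse direction's main subtlety is verifying that Bieri's algebraic finiteness conclusion can be extracted solely from the asymptotic vanishing of $\widetilde H_k(P_r(G); R) \to \widetilde H_{r'}(P_r(G); R)$ rather than from outright acyclicity of a single $P_r(G)$, which I expect to handle by a standard mapping telescope argument to genuinely truncate a partial resolution at degree $n$.
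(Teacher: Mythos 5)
The statement is cited in the paper (from Alonso, Kapovich--Kleiner, and~\cite[Theorem 3.2]{margolis2018quasi}) and not proved there, so there is no internal proof to compare against; I am assessing your sketch on its own merits. Your overall strategy --- compare the equivariant chain complexes $C_\bullet(P_r(G);R)$ of Rips complexes to a partial free $RG$-resolution via equivariant chain maps, and use Brown's criterion / a mapping telescope for the converse --- is indeed the standard route taken in the cited references.

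However, there is a genuine gap in your direction ``$FP_n^R \Rightarrow$ coarsely uniformly $(n-1)$-acyclic'', at the final step. You take a reduced $k$-cycle $c$ (with $k<n$), observe that $\varphi(c)$ is a cycle in $C_k$, and by exactness write $\varphi(c)=\partial b$. You then assert that ``pulling $b$ back via $\psi$ and using boundedness of the supports'' gives a filling of $c$ with controlled support. But nothing in your argument ensures that $b$ itself has support of controlled diameter: exactness only gives \emph{some} preimage, and an arbitrary $b\in C_{k+1}$ can have support wildly spread across $G$, so $\psi(b)$ need not lie in any bounded neighbourhood of $\supp(c)$. What you need is precisely the \emph{uniform preimage property} for the differential $\partial_{k+1}\colon C_{k+1}\to C_k$: given a cycle of support diameter $\le D$ near $e$, a filling of support diameter $\le \Omega(D)$ near $e$. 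This is nontrivial. In the language of this paper it is Proposition~\ref{prop:ginv_unifpreim}, whose proof relies on $R$ being Noetherian (one uses that the $R$-module of cycles supported in a fixed ball is finitely generated, and that its intersection with the image of $\partial_{k+1}$ is again finitely generated). For the theorem as stated --- general commutative $R$ --- that argument does not apply verbatim, and a different mechanism (exploiting that $Z_k=B_k$ is finitely generated as an $RG$-module for $k<n$, and the structure of the resolution) is needed. Until this uniform preimage step is supplied, the proof of this direction is incomplete, and it is exactly the technical heart of the theorem.

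Two smaller remarks. First, your reduction ``obtained from a finitely generated projective resolution by Eilenberg's swindle'' is imprecise: Eilenberg's swindle replaces a finitely generated projective by a summand of an \emph{infinitely} generated free module; what you actually want is the standard fact that $FP_n^R$ is equivalent to the existence of a free resolution with $C_k$ finitely generated \emph{free} for $k\le n$ (Brown, Prop.\ VIII.4.5). Second, in the converse direction, your phrase ``after extracting a partial resolution by choosing $r$ large enough'' is a placeholder for the mapping-telescope / Brown's-criterion construction you mention at the end; that part is fine as an outline, though you should be careful that each $\widetilde H_k(P_i(N_r(x)))\to \widetilde H_k(P_{\lambda(i)}(N_{\mu(i,r)}(x)))$ vanishing (the local statement) is what the definition gives you, and it is the telescope that upgrades this to a finitely generated partial resolution.
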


\section{Modules over metric spaces}\label{sec:rmod_metric}
\emph{We define an $R$-module over a metric space in Definition~\ref{defn:rmod_overX}.  We discuss and develop  the basic theory of $R$-modules over a metric space, including finite displacement maps, maps satisfying the uniform preimage property, pullbacks and pushforwards.}
\vspace{.3cm}

Henceforth, we let $R$ be a commutative ring unless explicitly stated otherwise.

A  \emph{filtration} of a set $B$ is a collection $\{B(i)\}_{i\in \bbN}$ of subsets of $B$ satisfying the following properties:
\begin{enumerate}
	\item If $i\leq j$, then $B(i)\subseteq B(j)$.
	\item $B=\cup_{i\in \bbN} B(i)$.
\end{enumerate}
As all filtrations considered in this article will have indexing set $\bbN$, we will write $\{B(i)\}_{i\in \bbN}$ as either $\{B(i)\}_{i}$ or as $\{B(i)\}$ when convenient.

\begin{defn}\label{defn:rmod_overX}
	Let $X$ be a metric space and let $R$ be  a commutative ring. An \emph{$R$-module over $X$} consists of the data $\bM=(M,B,\delta,p,\cF)$ where:
	\begin{enumerate}
		\item $M$ is an $R$-module, called the \emph{underlying $R$-module}.
		\item $B$ is an arbitrary indexing set.
		\item $\delta$ is a function $B\to M^*=\Hom_R(M,R)$. We denote the image of $b\in B$ under $\delta$ as $\delta_b$. We call  each $\delta_b$ a \emph{coordinate}.
		\item $p:B\to X$ is a function called the \emph{control function}.
		\item For each $m\in M$, $\delta_b(m)=0$ for all $b\in B$ if and only if $m=0$.\label{item:moddef_zero}
		\item $\cF=\{B(i)\}_{i\in \bbN}$ is a filtration of $B$  such that for each $m\in M$, there exists some $i\in \bbN$ such that $\delta_b(m)=0$ for all $b\in B\setminus B(i)$.\label{item:moddef_filtr}
	\end{enumerate}
	For each $m\in M$, we write \[\supp_\bM(m)\coloneqq \{p(b)\mid b\in B, \delta_b(m)\neq 0\},\] and for each $i\in \bbN$ we write, \[\bM(i)\coloneqq \{m\in M\mid \delta_b(m)=0 \text{ for all }b\in B\setminus B(i)\}.\]
\end{defn}
We note that $\{\bM(i)\}_i$ is a filtration of $\bM$ by submodules. For every $R$-module $\bM$ over $X$, the product map $\Pi_{b\in B}\delta_b:M\to \Pi_{b\in B}R$ is an embedding. This places restrictions on  $R$-modules over $X$; for instance, they are always   torsion-free (or more accurately, their underlying $R$-modules are torsion-free).  We frequently write $\{\delta_b\}_{b\in B}$ or even $\{\delta_b\}$ instead of $\delta$ when convenient.
An  $R$-module $\bM$ over $X$ is frequently identified with its underlying $R$-module $M$, so we will write expressions such as $m\in \bM$ and $m\in M$ interchangeably. We will use bold typeface $\bM$, $\bN$ etc.\ to denote   $R$-modules over metric spaces, and will use regular typeface $M$, $N$ etc.\ to denote $R$-modules that are not equipped with the structure of an $R$-module over a metric space.
When unambiguous, we write $\supp$ instead of $\supp_\bM$.

Before developing the general theory, we first give some important examples of $R$-modules over a metric space.
\begin{exmp}\label{exmp:first_exam}
	Let $X$ be a metric space and let $M$ be the free abelian group with basis the set $B=\{[x,y]\mid x,y\in X\}$ of all ordered pairs in $X$. We define $\delta:B\to M^*$ so that $\{\delta_b\}_{b\in B}$ is the dual basis to $B$, i.e.\ \[\delta_{[x,y]}([x',y'])=\begin{cases}
			1 & \textrm{if}\; x=y\;\textrm{and}\; y=y', \\
			0 & \textrm{otherwise}.\end{cases}\] We define the map $p:B\to X$ by $[x,y]\mapsto x$.
	Finally, we define a filtration $\{B(i)\}$ of $B$ by \[B(i)\coloneqq\{[x,y]\in B\mid d(x,y)\leq i\}\] for each $i\in \bbN$.

	We claim that $\bM=(M,B,\delta,p,\{B(i)\})$ is a $\bbZ$-module over $X$. Indeed, every $m\in M$ can be written as a sum \[m=\sum_{b\in B}\delta_b(m)b\] in which almost all (i.e.\ all but finitely many) terms are zero. Thus either $m=0$, or $\delta_b(m)\neq 0$ for some $b\in B$, proving (\ref{item:moddef_zero}). Moreover, as almost all terms in the above sum are zero, there is some $i$ such that $\delta_b(m)=0$ for all $b\in B\setminus B(i)$, proving (\ref{item:moddef_filtr}).

	A  slogan that should  guide the reader's intuition   about   elements of a general $R$-module $\bN$ over a metric space $X$ is the following:
	\begin{quote}
		\emph{If $n\in \bN(i)$, then $n$ should be visualized as lying close to $\supp_\bN(n)$, up to some error depending on $i$.}
	\end{quote}
	We examine this statement for $\bN=\bM$ as defined above. Suppose $x,y,z,w$ are distinct points in $X$ and $m=[x,y]+3[z,w]\in \bM$. We have $\delta_{[x,y]}(m)=1$, $\delta_{[z,w]}(m)=3$, and $\delta_b(m)=0$ for all $b\in B\setminus \{[x,y],[z,w]\}$. Thus $\supp_\bM(m)=\{x,z\}$. We now note that $m\in \bM(i)$ if and only if $i\geq \max(d(x,y),d(z,w))$. Thus if $m\in \bM(i)$ for some $i$, we have $\{x,y,z,w\}\subseteq N_i(\supp_\bM(m))$. This holds more generally: if $m'\in \bM(j)$ and $L\subseteq X$ is the set of all elements that appear when we express $m'$ as a sum of elements of $B$, then $L\subseteq N_j(\supp_\bM(m'))$.

\end{exmp}

\begin{exmp}\label{exmp:orbit}
	Let $G$ be a countable group equipped with a proper left-invariant metric. Let $M$ be a free $\bbZ G$-module with countably infinite $\bbZ G$-basis $C=\{c_0,c_1,\ldots \}$.
	We let $B\coloneqq GC_0$, which is a free $\bbZ$-basis of $M$. Let $\{\delta_b\}_{b\in B}$ be the dual basis of $B$. We define $p:B\to G$ by $p(gc_i)=g$ for all $i\in \bbN$ and $g\in G$, and we set $B(i)=\{gc_k\mid g\in G, k\leq i \}$. Then \[\bM=(M,B,\delta,p,\{B(i)\})\] is a $\bbZ$-module over $G$.
	For each $m\in \bM$, knowing $\supp_\bM(m)$ and that $m\in \bM(i)$ tells us a lot of information about $m$. For example, if $m\in \bM(1)$ and $\supp_\bM(m)=\{g,k\}$, we know that $m$ must lie in the $\bbZ$-span of $\{gc_0, gc_1, kc_0, kc_1\}$.
\end{exmp}

Examples~\ref{exmp:first_exam} and~\ref{exmp:orbit} are both instances of \emph{free $R$-modules over a metric space}, which we discuss in more detail in Section~\ref{sec:projective}. These specific examples will be generalised and developed further in Proposition~\ref{prop:standard_proj_res} and Lemma~\ref{lem:ginduced_produce} respectively.

We now return to the general theory of $R$-modules over metric spaces.
\begin{lem}\label{lem:support_sum}
	Let $\bM$ be an $R$-module over $X$, and let $m_1, m_2\in \bM$ with $r_1,r_2\in R$. Then $\supp_\bM(r_1m_1+r_2m_2)\subseteq \supp_\bM(m_1)\cup \supp_\bM(m_2)$.
\end{lem}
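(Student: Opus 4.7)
The plan is to argue by contrapositive on individual coordinates: I will show that if $x\in X$ is \emph{not} in $\supp_\bM(m_1)\cup\supp_\bM(m_2)$, then for every $b\in B$ with $p(b)=x$, the coordinate $\delta_b$ vanishes on $r_1m_1+r_2m_2$, so $x\notin \supp_\bM(r_1m_1+r_2m_2)$. Equivalently, I will pick an arbitrary $x\in \supp_\bM(r_1m_1+r_2m_2)$, produce a witness $b\in B$ with $p(b)=x$ and $\delta_b(r_1m_1+r_2m_2)\neq 0$, and then use $R$-linearity of $\delta_b\in \Hom_R(M,R)$ to conclude.

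The core computation is just the identity
\[
\delta_b(r_1m_1+r_2m_2)\;=\;r_1\delta_b(m_1)+r_2\delta_b(m_2),
\]
valid because $\delta_b$ is $R$-linear. If the left-hand side is non-zero, then at least one of $\delta_b(m_1),\delta_b(m_2)$ must be non-zero, which means $x=p(b)$ lies in $\supp_\bM(m_1)$ or in $\supp_\bM(m_2)$. This gives the claimed inclusion.

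There is essentially no obstacle here: the lemma is a direct consequence of the $R$-linearity of the coordinate maps $\delta_b$ built into Definition~\ref{defn:rmod_overX}, together with the definition of $\supp_\bM$. The only conceptual point worth flagging is that the inclusion may be strict, since cancellation among coordinates can shrink the support (e.g.\ with $r_2=-r_1$ and $m_2=m_1$, both sides are comparable but the left becomes empty); this asymmetry is exactly why the statement is phrased as an inclusion rather than an equality. No appeal to the filtration $\{B(i)\}$ or to the control function $p$ beyond its role in defining $\supp_\bM$ is needed.
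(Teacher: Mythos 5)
Your proof is correct and follows essentially the same argument as the paper: pick $x\in\supp_\bM(r_1m_1+r_2m_2)$, take a witness $b\in B$ with $p(b)=x$ and $\delta_b(r_1m_1+r_2m_2)\neq 0$, and use $R$-linearity of $\delta_b$ to conclude that one of $\delta_b(m_1),\delta_b(m_2)$ is nonzero. The remark about the inclusion being possibly strict is a nice observation but not needed.
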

\begin{proof}
	Suppose $\bM=(M,B,\delta,p,\cF)$ and  $x\in \supp_\bM(r_1m_1+r_2m_2)$. Then there exists $b\in B$ with $p(b)=x$ and $\delta_b(r_1m_1+r_2m_2)\neq 0$. Then one of $\delta_b(m_1)$ or $\delta_b(m_2)$ is non-zero, and so either $x\in \supp(m_1)$ or $x\in \supp(m_2)$.
\end{proof}

A \emph{homomorphism} $\phi:\bM\to \bN$ between $R$-modules over $X$ and $Y$ respectively, is  simply a homomorphism $\phi:M\to N$ between underlying $R$-modules.
However, two homomorphisms  $\phi:\bM\to \bN$ and $\psi:\bM'\to \bN'$ are regarded as being equal when $\bM=\bM'$, $\bN=\bN'$ and the underlying homomorphism $M\to N$ are equal. Formally, we  should think of  a homomorphism from $\bM$ to $\bN$ as a triple $(\bM,\bN,\phi)$, where $\phi:M\to N$ is a homomorphism between underlying $R$-modules. However, it is notationally cumbersome to distinguish between the triple $(\bM,\bN,\phi)$ and the map $\phi$.
In particular, we note that  if $\bM$ and $\bM'$ are different $R$-modules over $X$, then $\phi:\bM\to \bN$ and $\psi:\bM'\to \bN$  should always be thought of as distinct since they have different ``domains'', even in the case when $\bM$ and $\bM'$ have the same underlying $R$-module $M$  and the underlying homomorphisms $M\to N$ between $R$-modules are equal.

\begin{defn}\label{defn:submodule}
	Let $X$ be a metric space and let $\bM=(M,B,\delta,p,\cF)$ be an $R$-module over $X$. A \emph{geometric submodule} $\bN$ of $\bM$, denoted  $\bN\leq \bM$, is an $R$-module over $X$ of the form $\bN=(N,B,\epsilon,p,\cF)$ where $N$ is a submodule of $M$ and for  each $b\in B$, the coordinate $\epsilon_b$ of $\bN$ is the restriction of the coordinate $\delta_b$ of  $\bM$.
\end{defn}
If $\bN\leq \bM$, then for each $n\in \bN$ and $i\in \bbN$, we have $\supp_\bN(n)=\supp_\bM(n)$ and  $\bN(i)=\bM(i)\cap N$. In particular, there is no ambiguity in writing $\supp(n)$.
It is clear that there is a bijective correspondence between geometric submodules of $\bM$ and submodules of the underlying $R$-module $M$.
We emphasise that the notation $\bN\leq \bM$ means $\bN$ is a geometric submodule, which is stronger than merely requiring that the underlying $R$-module of $\bN$ be a submodule of the underlying $R$-module of $\bM$, which we  denote as $N\leq M$.

We now define the appropriate notion of morphism between $R$-modules over a metric space:
\begin{defn}\label{defn:findisp}
	Let $\bM$ and $\bN$ be $R$-modules over $X$ and $Y$ respectively. Given a map $f:X\to Y$ and a  homomorphism $\phi:\bM\to \bN$, we say $\phi$ has \emph{finite displacement}  over $f$ if there exists an increasing function $\Phi:\bbN\to\bbN$ such that the following hold for all $i\in \bbN$:
	\begin{enumerate}
		\item $\phi(\bM(i))\subseteq \bN({\Phi(i)})$;
		\item for all $m\in \bM(i)$, $\supp_{\bN}(\phi(m))\subseteq N_{\Phi(i)}(f(\supp_{\bM}(m)))$.
	\end{enumerate}
	When the above holds, we say that $\phi$ has \emph{displacement $\Phi$ over $f$}.
	In the case where $X=Y$ and no $f:X\to X$ is explicitly mentioned, we say  $\phi$ has \emph{finite displacement} when it has finite displacement over the identity map $\id_X:X\to X$.
\end{defn}
We note that if $f,g:X\to Y$ are close, then $\phi:\bM\to \bN$ has finite displacement over $f$ if and only if it has finite displacement over $g$.

\begin{defn}\label{defn:iso}
	A map $\phi:\bM\to \bN$ between two  $R$-modules  over $X$ is a \emph{geometric isomorphism} if it is an isomorphism between the underlying $R$-modules such that both $\phi$ and $\phi^{-1}$ have finite displacement over $\id_X$.
\end{defn}

\begin{rem}
	We do \emph{not} typically identify geometrically isomorphic $R$-modules over $X$. This is because $\supp_{\bM}(m)$ is not necessarily equal to $\supp_{\bN}(\phi(n))$ when $\phi:\bM\to \bN$ is a geometric isomorphism. For quantitative arguments in this paper, it is vital that $\supp_{\bM}(m)$ has a definite meaning,  and this  is lost upon replacing  $\bM$ with its isomorphism class. Nonetheless,  $\supp_{\bM}(m)$ and $\supp_{\bN}(\phi(m))$  are related up to some  error depending on the displacement of $\phi$ and $\phi^{-1}$.
\end{rem}

We will often consider two $R$-modules $\bM$ and $\bN$ over metric spaces $X$ and $Y$ that have the same underlying $R$-module $M$. In this situation, the \emph{canonical identification} is the map $\iota:\bM\to \bN$ that is the identity on $M$. In the case $X=Y$, we say $\bM$ and $\bN$ are \emph{canonically isomorphic} if the canonical identification is a geometric isomorphism, i.e.\ both $\iota$ and $\iota^{-1}$ have finite displacement over $\id_X$. For transparency, we explicitly state when $R$-modules over a metric space are canonically isomorphic:

\begin{lem}\label{lem:canon_isom}
	Let $X$ be a metric space and let $\bM$ and $\bM'$ be $R$-modules over $X$ with the same underlying $R$-module. Let $\iota:\bM\to \bM'$ be the canonical identification. Then $\bM$ and $\bM'$ are canonically isomorphic if and only if there exists an increasing function $\Psi:\bbN\to \bbN$ such that the following hold:
	\begin{enumerate}
		\item $\iota(\bM(i))\subseteq \bM'(\Psi(i))$ and $\bM'(i)\subseteq \iota(\bM(\Psi(i)))$ for all $i$;
		\item For all $m\in \bM(i)$, $\Haus(\supp_\bM(m),\supp_{\bM'}(\iota(m)))\leq \Psi(i)$.
	\end{enumerate}
\end{lem}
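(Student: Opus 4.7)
The plan is essentially a mechanical unpacking of definitions: the single function $\Psi$ in the lemma is meant to simultaneously encode the displacement of both $\iota$ and $\iota^{-1}$, so the proof is an exercise in juggling the two constants that come from the two displacement hypotheses, and conversely deducing both from the two-sided conditions in the lemma.

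\medskip

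\noindent\textbf{Forward direction.} Assume $\bM$ and $\bM'$ are canonically isomorphic. Then $\iota$ has some displacement function $\Phi$ and $\iota^{-1}$ has some displacement function $\Phi'$, both over $\id_X$. I would set
\[\Psi(i) \coloneqq \max\bigl(\Phi(i),\, \Phi'(\Phi(i)),\, \Phi'(i)\bigr),\]
which is increasing. For (1), the displacement condition for $\iota$ gives $\iota(\bM(i))\subseteq \bM'(\Phi(i))\subseteq \bM'(\Psi(i))$, and applying $\iota$ to the inclusion $\iota^{-1}(\bM'(i))\subseteq \bM(\Phi'(i))$ yields $\bM'(i)\subseteq \iota(\bM(\Phi'(i)))\subseteq \iota(\bM(\Psi(i)))$. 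For (2), the displacement of $\iota$ gives one Hausdorff inclusion $\supp_{\bM'}(\iota(m))\subseteq N_{\Phi(i)}(\supp_\bM(m))$ for $m\in\bM(i)$. For the reverse inclusion, the key observation is that $\iota(m)\in \bM'(\Phi(i))$, so applying the displacement hypothesis for $\iota^{-1}$ at level $\Phi(i)$ gives $\supp_\bM(m)=\supp_\bM(\iota^{-1}(\iota(m)))\subseteq N_{\Phi'(\Phi(i))}(\supp_{\bM'}(\iota(m)))$. Both neighbourhoods are contained in the $\Psi(i)$-neighbourhood, so the Hausdorff distance is bounded by $\Psi(i)$.

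\medskip

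\noindent\textbf{Backward direction.} Conversely, assume (1) and (2) hold for some increasing $\Psi$. That $\iota$ has displacement $\Psi$ over $\id_X$ is immediate: the first half of (1) gives $\iota(\bM(i))\subseteq \bM'(\Psi(i))$, and (2) gives the support inclusion $\supp_{\bM'}(\iota(m))\subseteq N_{\Psi(i)}(\supp_\bM(m))$ for $m\in\bM(i)$. For $\iota^{-1}$, the second half of (1) says that every $m'\in\bM'(i)$ equals $\iota(m)$ for some $m\in\bM(\Psi(i))$, so $\iota^{-1}(\bM'(i))\subseteq \bM(\Psi(i))$; moreover, applying (2) at level $\Psi(i)$ to this $m$ gives $\supp_\bM(\iota^{-1}(m'))=\supp_\bM(m)\subseteq N_{\Psi(\Psi(i))}(\supp_{\bM'}(m'))$. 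Thus $\iota^{-1}$ has displacement $\Psi\circ\Psi$ over $\id_X$, and $\iota$ is a geometric isomorphism.

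\medskip

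There is no real obstacle here — the only point that requires a moment of thought is the reverse Hausdorff inclusion in the forward direction, where one has to remember to apply the displacement of $\iota^{-1}$ at the appropriate filtration level $\Phi(i)$ rather than at $i$, which is why the composition $\Phi'\circ\Phi$ appears in the definition of $\Psi$. Everything else is direct substitution into Definition~\ref{defn:findisp}.
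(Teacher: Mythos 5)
The paper states this lemma without proof — it is a direct reformulation of Definition~\ref{defn:iso} applied to the canonical identification, so the author leaves the unpacking to the reader. Your proof is correct and does exactly the expected unpacking: the forward direction correctly composes the two displacement functions, noting the crucial point that the displacement of $\iota^{-1}$ must be applied at level $\Phi(i)$ (since $\iota(m)\in\bM'(\Phi(i))$), and the backward direction splits the Hausdorff bound back into its two one-sided inclusions. One pedantic remark: in the backward direction you assert $\iota^{-1}$ has displacement $\Psi\circ\Psi$, but your two bounds are $\iota^{-1}(\bM'(i))\subseteq\bM(\Psi(i))$ and $\supp_\bM(\iota^{-1}(m'))\subseteq N_{\Psi^2(i)}(\supp_{\bM'}(m'))$, and the lemma does not hypothesise $\Psi\geq\id$, so strictly one should take the displacement function to be $i\mapsto\max\bigl(\Psi(i),\Psi(\Psi(i))\bigr)$; this is harmless since Definition~\ref{defn:iso} only asks for \emph{some} finite displacement, but it is worth being precise when naming an explicit control function.
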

If $\bN\leq \bM$ is a geometric submodule, then the inclusion $\bN\to \bM$ has finite displacement.
If $\phi:\bN\to \bM$ has finite displacement, then we  think of  $\ker(\phi)\leq \bN$ and $\im(\phi)\leq \bM$ as geometric submodules. Moreover, the induced map $\bN\to \im(\phi)$ through which $\phi$ factors also has finite displacement.
\begin{rem}
	If $\bN$ and $\bM$ are $R$-modules over $X$ with $\bN\leq \bM$ a geometric submodule, then $\bM/\bN$ is \emph{not} naturally endowed with the structure of an $R$-module over $X$. In particular, in the category $\cC$ whose objects are $R$-modules over $X$ and whose morphisms are finite displacement maps over $\id_X$, cokernels do not typically exist. Thus $\cC$ is not an abelian category, and there is no general nonsense approach to carrying out homological algebraic methods in $\cC$.
\end{rem}

We now show that the composition of finite displacement maps is frequently also  of finite displacement:
\begin{lem}\label{lem:compos_findisp}
	Suppose $\bL$, $\bM$ and $\bN$ are   $R$-modules over  $X$, $Y$ and $Z$.
	Suppose also that  $\phi:\bL\to \bM$ and $\psi:\bM\to \bN$ both have displacement $\Psi$ over $f:X\to Y$ and $g:Y\to Z$. We also  suppose that $g$ is $\Upsilon$-bornologous. Then $\psi\circ \phi$ has  displacement $\Psi'$ over $g\circ f$, where $\Psi'$ depends only on $\Psi$ and $\Upsilon$.
\end{lem}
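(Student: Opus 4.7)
The plan is to verify the two conditions of Definition~\ref{defn:findisp} for the composition $\psi\circ\phi$ directly, by chaining together the displacement bounds for $\phi$ and $\psi$, and using the $\Upsilon$-bornologous hypothesis on $g$ to convert an expression of the form $g(N_r(A))$ into a controlled metric neighbourhood of $g(A)$.

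First I would verify the filtration condition. Given $\ell \in \bL(i)$, the displacement bound for $\phi$ over $f$ gives $\phi(\ell)\in\bM(\Psi(i))$, and then the displacement bound for $\psi$ over $g$ gives $\psi(\phi(\ell))\in\bN(\Psi(\Psi(i)))$. So this condition holds for any $\Psi'$ with $\Psi'(i)\geq\Psi(\Psi(i))$.

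Next I would verify the support condition. For $\ell\in\bL(i)$, applying the support bound for $\phi$ gives
\[\supp_\bM(\phi(\ell))\subseteq N_{\Psi(i)}\bigl(f(\supp_\bL(\ell))\bigr).\]
Since $\phi(\ell)\in\bM(\Psi(i))$, the support bound for $\psi$ then gives
\[\supp_\bN(\psi\phi(\ell))\subseteq N_{\Psi(\Psi(i))}\bigl(g(\supp_\bM(\phi(\ell)))\bigr)\subseteq N_{\Psi(\Psi(i))}\Bigl(g\bigl(N_{\Psi(i)}(f(\supp_\bL(\ell)))\bigr)\Bigr).\]
The key step is then to invoke that $g$ is $\Upsilon$-bornologous: for any set $A\subseteq Y$ and $r\geq 0$, we have $g(N_r(A))\subseteq N_{\Upsilon(r)}(g(A))$. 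Applying this with $A=f(\supp_\bL(\ell))$ and $r=\Psi(i)$, the previous inclusion becomes
\[\supp_\bN(\psi\phi(\ell))\subseteq N_{\Psi(\Psi(i))+\Upsilon(\Psi(i))}\bigl((g\circ f)(\supp_\bL(\ell))\bigr).\]

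To conclude, I would set $\Psi'(i)\coloneqq\lceil\Psi(\Psi(i))+\Upsilon(\Psi(i))\rceil$, which depends only on $\Psi$ and $\Upsilon$, is increasing as a function $\bbN\to\bbN$, and simultaneously dominates both $\Psi(\Psi(i))$ and the support displacement computed above. There is no substantive obstacle here: the only non-trivial ingredient is the bornologous hypothesis on $g$, and this is used in exactly one place to absorb the intermediate $\Psi(i)$-neighbourhood from $\bM$ into an $\Upsilon(\Psi(i))$-neighbourhood downstream in $Z$. The analogous use for $f$ is unnecessary, since the support bound for $\phi$ produces the $f$-image directly without expanding a neighbourhood first.
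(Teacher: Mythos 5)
Your argument is correct and is essentially identical to the paper's proof: you chase the same chain of inclusions, obtain the same intermediate bound $N_{\Psi(\Psi(i))}\bigl(g(N_{\Psi(i)}(f(\supp_\bL(\ell))))\bigr)$, and use the $\Upsilon$-bornologous property of $g$ in exactly the same place to convert $g(N_{\Psi(i)}(\cdot))$ into an $\Upsilon(\Psi(i))$-neighbourhood of $g(\cdot)$. The resulting $\Psi'(i)=\Psi(\Psi(i))+\Upsilon(\Psi(i))$ (up to rounding) matches the paper's $\Psi^2(i)+\Upsilon(\Psi(i))$.
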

\begin{proof}
	Suppose $l\in \bL(i)$. Then $\psi(\phi(l))\in \bN(\Psi^2(i))$ and
	\begin{align*}
		\supp_\bN(\psi\circ\phi(l)) & \subseteq N_{\Psi^2(i)}(g(\supp_{\bM}(\phi(l))))                     \\
		                            & \subseteq
		N_{\Psi^2(i)}(g( N_{\Psi(i)}(f(\supp_{\bL}l)) ))                                                   \\
		                            & \subseteq N_{\Psi^2(i)+\Upsilon(\Psi(i))}(gf(\supp_{\bL}l)).\qedhere
	\end{align*}
\end{proof}

\begin{defn}
	Let $\phi:\bM\to \bN$ be a homomorphism  between $R$-modules over $X$. We say $\phi$ has \emph{uniform preimages} if there is an increasing function  $\Omega:\bbN\times \bbN\to \bbN$  such that for every $i\in \bbN$ and $n\in \bN(i)\cap \im(\phi)$,  there is some $m\in \phi^{-1}(n)$ such that:
	\begin{itemize}
		\item $m\in \bM({\Omega(i,D)})$ and
		\item $\supp_\bM(m)\subseteq N_{\Omega(i,D)}(\supp_\bN(n))$,
	\end{itemize}
	where $D\coloneqq \lceil\diam(\supp_\bN(n))\rceil$.
	For $\Omega$ as above, we say $\phi$ has \emph{$\Omega$-uniform preimages} when the above condition is satisfied.
\end{defn}
\begin{rem}
	The uniform preimage property is defined only for maps $\bM\to\bN$ between $R$-modules over the same metric space.
\end{rem}

We now discuss pullbacks and pushforwards of modules over metric spaces.
\begin{defn}
	Let $\bM=(M,B,\delta,p,\cF)$ be an $R$-module over $X$. If $f:X\to Y$ is an arbitrary map, the \emph{pushforward} of $\bM$ is the $R$-module $f_*\bM=(M,B,\delta,f\circ p,\cF)$ over $Y$.
\end{defn}
Since $\bM$ and $f_*\bM$ have the same underlying $R$-module, there is a  canonical identification $\iota:\bM\to f_*\bM$.
It is clear that for each $m\in \bM$, $f(\supp_\bM(m))=\supp_{f_*\bM}(\iota (m))$. This implies the canonical identification $\bM\to f_*\bM$ has finite displacement over $f$.  Moreover,  $(\id_X)_*\bM=\bM$ and $g_*f_*\bM=(gf)_*\bM$ for any $g:Y\to Z$.
\begin{lem}\label{lem:induced_map_compos}
	Let $X$ and $Y$ be metric spaces and let $f:X\to Y$ be a coarse embedding. Suppose  $\bM$ and $\bN$ are $R$-modules over $X$ and $\bL$ is an $R$-module over $Y$. Let $\iota:\bN\to f_*\bN$ be the canonical identification. Suppose $\phi:\bM\to \bL$ has finite displacement over $f$ and  $\psi:\bL\to f_*\bN$ has finite displacement over $\id_Y$.  Then $\iota^{-1}\circ \psi\circ\phi:\bM\to \bN$ has finite displacement over $\id_X$.
	\begin{figure}[htb]
		\begin{tikzcd}
			X && X && {\mathbf{M}} && {\mathbf{N}} \\
			Y && Y && {\mathbf{L}} && {f_*\mathbf{N}}
			\arrow["{\text{id}_X}", from=1-1, to=1-3]
			\arrow["f"', from=1-1, to=2-1]
			\arrow["f", from=1-3, to=2-3]
			\arrow["{\iota^{-1}\circ\psi\circ\phi}", from=1-5, to=1-7]
			\arrow["\phi"', from=1-5, to=2-5]
			\arrow["\iota", from=1-7, to=2-7]
			\arrow["{\text{id}_Y}", from=2-1, to=2-3]
			\arrow["\psi"', from=2-5, to=2-7]
		\end{tikzcd}
		\caption{Each map in the right-hand  diagram has finite displacement over the corresponding map in the left-hand diagram.}
	\end{figure}
\end{lem}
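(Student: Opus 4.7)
The plan is to first compose $\phi$ and $\psi$ using Lemma~\ref{lem:compos_findisp}, and then convert the resulting finite displacement over $f$ into finite displacement over $\id_X$ by exploiting two facts: first, that $\bN$ and $f_*\bN$ share the same underlying $R$-module, indexing set, coordinates, and filtration (so $\iota$ preserves heights exactly); and second, that the coarse embedding $f$ admits a weak inverse of its lower control function, allowing us to pull support estimates back from $Y$ to $X$.

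To begin, observe that $\id_Y$ is trivially bornologous, so Lemma~\ref{lem:compos_findisp} applied to $\phi\colon \bM \to \bL$ (with displacement over $f$) and $\psi\colon\bL\to f_*\bN$ (with displacement over $\id_Y$) yields an increasing function $\Psi\colon\bbN\to\bbN$ such that $\psi\circ\phi\colon \bM\to f_*\bN$ has displacement $\Psi$ over $\id_Y\circ f = f$. Explicitly, for every $i\in\bbN$ and every $m\in \bM(i)$, we have $\psi(\phi(m))\in (f_*\bN)(\Psi(i))$ and $\supp_{f_*\bN}(\psi\phi(m))\subseteq N_{\Psi(i)}(f(\supp_\bM(m)))$.

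Next, I would use the definition of pushforward: since $f_*\bN=(N,B,\delta,f\circ p,\cF)$ has exactly the same filtration $\cF$ as $\bN$, the subsets $\bN(j)$ and $(f_*\bN)(j)$ of $N$ coincide for every $j$, so the canonical identification $\iota$ satisfies $\iota^{-1}((f_*\bN)(j)) = \bN(j)$. In particular $\iota^{-1}(\psi\phi(m))\in \bN(\Psi(i))$. Similarly, for any $n\in N$, $\supp_{f_*\bN}(\iota(n)) = f(\supp_\bN(n))$, so the support containment above becomes
\[
f(\supp_\bN(\iota^{-1}\psi\phi(m)))\subseteq N_{\Psi(i)}(f(\supp_\bM(m))).
\]

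Finally, I would invoke the coarse embedding hypothesis. Let $\Lambda,\Upsilon$ be control functions for $f$, and let $\widetilde{\Lambda}$ be the weak inverse of $\Lambda$. Given any $x'\in \supp_\bN(\iota^{-1}\psi\phi(m))$, the displayed inclusion gives some $x\in \supp_\bM(m)$ with $d_Y(f(x'),f(x))\leq \Psi(i)$; the lower bound $\Lambda(d_X(x',x))\leq d_Y(f(x'),f(x))$ together with the defining property of the weak inverse then yields $d_X(x',x)\leq \widetilde{\Lambda}(\Psi(i))$. Consequently $\supp_\bN(\iota^{-1}\psi\phi(m))\subseteq N_{\widetilde{\Lambda}(\Psi(i))}(\supp_\bM(m))$, and setting $\Psi'(i)\coloneqq \max(\Psi(i),\widetilde{\Lambda}(\Psi(i)))$ shows that $\iota^{-1}\circ\psi\circ\phi$ has displacement $\Psi'$ over $\id_X$, as required. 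The only mildly delicate step is the last one — namely, remembering to use the weak inverse of $\Lambda$ (not $\Upsilon$) to invert the coarse embedding on supports — but this is routine given the properties of weak inverses recalled in the preliminaries.
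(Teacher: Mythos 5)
Your proposal is correct and takes essentially the same approach as the paper's own proof: first compose to get finite displacement over $f$, then use that $\iota$ preserves filtration levels exactly and satisfies $\supp_{f_*\bN}(\iota(n))=f(\supp_\bN(n))$, and finally pull back the support estimate via the weak inverse $\widetilde\Lambda$ of the lower control function. The only cosmetic difference is that you invoke Lemma~\ref{lem:compos_findisp} for the first step, whereas the paper carries out that composition computation inline.
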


\begin{proof}
	Suppose that  $f$ is a $(\Lambda, \Upsilon)$-coarse embedding, and that $\phi$ and $\psi$ have displacement $\Phi$ over $f$ and $\id_Y$ respectively. We define an increasing function  $\Psi:\bbN\to \bbN$ by $\Psi(i)\coloneqq  \max\left(\widetilde{\Lambda}\left(\Phi(i)+\Phi^2(i)\right),\Phi^2(i)\right)$, where $\widetilde{\Lambda}$ is the weak inverse of $\Lambda$ as in Section~\ref{sec:prelim}. We show $\iota^{-1}\circ \psi\circ\phi:\bM\to \bN$ has displacement $\Psi$ over $\id_X$.

	Fix $i\in \bbN$ and set $i_k=\Phi^k(i)$.
	It is clear that \begin{align*}
		(\iota^{-1}\circ \psi\circ \phi)(\bM(i))\subseteq (\iota^{-1}\circ \psi)(\bL(i_1))\subseteq \iota^{-1}(f_*\bN(i_2))=\bN(i_2)\subseteq \bN(\Psi(i)).
	\end{align*}
	Now suppose $m\in \bM(i)$. Then since $\phi(m)\in \bL(i_1)$, we have:
	\begin{align*}
		f(\supp_\bN((\iota^{-1}\circ\psi\circ\phi)(m))) & =\supp_{f_*\bN}((\psi\circ\phi)(m)) \\&\subseteq N_{i_2}(\supp_\bL(\phi(m))) \\&\subseteq N_{i_2+i_1}(f(\supp_\bM(m))).
	\end{align*}
	It follows that $\supp_\bN((\iota^{-1}\circ\psi\circ\phi)(m))\subseteq N_{\Psi(i)}(\supp_\bM(m))$ as required.
\end{proof}
\begin{lem}\label{lem:induced_close}
	If $\bM$ is an $R$-module over $X$ and $f,g:X\to Y$ are close, then $f_*\bM$ and $g_*\bM$ are canonically isomorphic. In particular, if $f:X\to Y$ is a coarse equivalence with coarse inverse $g:Y\to X$, then $g_*f_*\bM$ is canonically isomorphic to $\bM$.
\end{lem}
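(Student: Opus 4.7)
The plan is to verify the hypotheses of Lemma~\ref{lem:canon_isom} directly. Write $\bM=(M,B,\delta,p,\cF)$ with $\cF=\{B(i)\}$, so that by the definition of pushforward, $f_*\bM=(M,B,\delta,f\circ p,\cF)$ and $g_*\bM=(M,B,\delta,g\circ p,\cF)$. The crucial observation is that the pushforward construction changes only the control function; both the underlying module and the filtration are preserved. Consequently, for the canonical identification $\iota\colon f_*\bM\to g_*\bM$, the filtrations match on the nose: $\iota((f_*\bM)(i))=(g_*\bM)(i)$ for every $i$, since membership in the $i$th level of either filtration is equivalent to $m\in \bM(i)$.

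Next I would track supports. By the definition of $\supp$, for any $m\in M$ we have
\[
\supp_{f_*\bM}(m)=f(\supp_\bM(m))\quad\text{and}\quad \supp_{g_*\bM}(m)=g(\supp_\bM(m)),
\]
since the coordinates are unchanged and the control function has been post-composed with $f$ or $g$ respectively. Now suppose $f$ and $g$ are $A$-close. Then for each $x\in \supp_\bM(m)$, the point $f(x)\in \supp_{f_*\bM}(m)$ lies within distance $A$ of $g(x)\in \supp_{g_*\bM}(m)$, and symmetrically. Hence
\[
\Haus\bigl(\supp_{f_*\bM}(m),\supp_{g_*\bM}(\iota(m))\bigr)\leq A.
\]
Taking $\Psi(i)\coloneqq \max(i,\lceil A\rceil)$ satisfies both conditions of Lemma~\ref{lem:canon_isom}, so $\iota$ is a geometric isomorphism and $f_*\bM$, $g_*\bM$ are canonically isomorphic.

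For the ``in particular'' clause, let $f\colon X\to Y$ be a coarse equivalence with coarse inverse $g\colon Y\to X$. By definition $g\circ f$ is close to $\id_X$, so applying the first part to the two maps $g\circ f,\id_X\colon X\to X$ and the $R$-module $\bM$ over $X$ gives a canonical isomorphism $(g\circ f)_*\bM\cong (\id_X)_*\bM$. Since pushforwards compose, i.e.\ $g_*f_*\bM=(g\circ f)_*\bM$, and $(\id_X)_*\bM=\bM$ (both observations noted immediately before the lemma), the conclusion follows.

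There is essentially no obstacle here; the argument is a bookkeeping exercise once one notices that pushforwards leave filtrations untouched and merely push supports forward along the control map. The only minor subtlety is the trivial one of packaging constants into an increasing function $\bbN\to\bbN$ so as to apply Lemma~\ref{lem:canon_isom} verbatim.
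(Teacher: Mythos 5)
Your proof is correct and follows essentially the same route as the paper: verify the two hypotheses of Lemma~\ref{lem:canon_isom}, noting that the filtration is untouched by pushforward and that $A$-closeness of $f,g$ bounds the Hausdorff distance between $f(\supp_\bM(m))$ and $g(\supp_\bM(m))$ by $A$. The paper's proof is terser — it does not spell out the filtration check or the composition step for the ``in particular'' clause — but the content is identical.
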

\begin{proof}
	The condition that $f$ and $g$ are close ensures there is a constant $A$ such that $f(\supp_\bM(m))$ and $g(\supp_\bM(m))$ have Hausdorff distance at most $A$. By Lemma~\ref{lem:canon_isom}, this implies the canonical identification  $f_*\bM\to g_*\bM$  is a geometric isomorphism.
\end{proof}
The notion of a pullback is more subtle than that of a pushforward:
\begin{defn}\label{defn:pullback}
	Let $f:X\to Y$ be a coarse embedding.
	We define a  map $\pi:Y\to X$  as follows: for each $y\in Y$, pick $\pi(y)\in X$ with $d(f(\pi(y)),y)$ minimal. In other words, $\pi$ is the composition of a closest-point projection $Y\to f(X)$ and a section $f(X)\to X$ of $f$.
	If   $\bM=(M,B,\delta,p,\{B_i\})$ is an $R$-module over $Y$, we define the \emph{pullback} $R$-module $f^*\bM$ over $X$ by \[f^*\bM=(M,B,\delta,\pi\circ p,\{f^*B(i)\}),\]
	where $f^*B(i)\coloneqq \{b\in B(i)\mid d(f(X),p(b))\leq i\}$.
\end{defn}
Although $\pi$ is not necessarily unique,  $f^*\bM$ is nonetheless well-defined up to canonical isomorphism:
\begin{lem}
	There is a function $\Psi:\bbN\to\bbN$ such that for all  $y\in N_i(f(X))$, whenever  $x,x'\in X$ are chosen such that $d(f(x),y)=d(f(x'),y)$ is minimal, then $d(x,x')\leq \Psi(i)$. Consequently, up to canonical isomorphism, $f^*\bM$ is independent of the choice of $\pi:Y\to X$.
\end{lem}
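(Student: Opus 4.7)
The plan is to prove the geometric claim directly using the lower control function of the coarse embedding, and then to deduce that $f^*\bM$ is well-defined up to canonical isomorphism by verifying the criterion of Lemma~\ref{lem:canon_isom}.

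First I would handle the geometric statement. Suppose $f$ is a $(\Lambda,\Upsilon)$-coarse embedding. Fix $y\in N_i(f(X))$ and let $x,x'\in X$ be any two points at which $d(f(\cdot),y)$ is minimised. Since $d(y,f(X))\leq i$, the common minimum value $d(f(x),y)=d(f(x'),y)$ is at most $i$, so the triangle inequality in $Y$ yields $d(f(x),f(x'))\leq 2i$. The lower control function then gives $\Lambda(d(x,x'))\leq d(f(x),f(x'))\leq 2i$, and the properties of the weak inverse recalled in Section~\ref{sec:prelim} imply $d(x,x')\leq \widetilde{\Lambda}(2i)$. Hence $\Psi(i)\coloneqq \lceil \widetilde{\Lambda}(2i)\rceil$ works.

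Next I would apply Lemma~\ref{lem:canon_isom} to the canonical identification between the two pullbacks. Let $\pi,\pi':Y\to X$ be two valid choices of closest-point section, and write $\bM_\pi$ and $\bM_{\pi'}$ for the resulting pullback modules. By Definition~\ref{defn:pullback}, these share the same underlying $R$-module $M$, the same indexing set $B$, the same coordinates $\delta$, and the same filtration $\{f^*B(i)\}$; only the control functions $\pi\circ p$ and $\pi'\circ p$ differ. Consequently the first condition of Lemma~\ref{lem:canon_isom} holds trivially with $\Psi(i)=i$. For the support condition, fix $m\in \bM_\pi(i)=\bM_{\pi'}(i)$. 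Every $b\in B$ with $\delta_b(m)\neq 0$ must lie in $f^*B(j)$ for some $j\leq i$, and by definition of $f^*B(j)$ this forces $p(b)\in N_i(f(X))$. The first part of the lemma then gives $d(\pi(p(b)),\pi'(p(b)))\leq \Psi(i)$ for each such $b$. Since $\supp_{\bM_\pi}(m)=\{\pi(p(b))\mid \delta_b(m)\neq 0\}$ and $\supp_{\bM_{\pi'}}(m)=\{\pi'(p(b))\mid \delta_b(m)\neq 0\}$ are the $\pi$- and $\pi'$-images of the \emph{same} finite subset of $Y$, their Hausdorff distance is at most $\Psi(i)$, as required.

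The main obstacle is genuinely cosmetic rather than conceptual: one has to notice that the filtration $\{f^*B(i)\}$ built into Definition~\ref{defn:pullback} was set up precisely so that elements of $\bM_\pi(i)$ involve only basis vectors $b$ with $p(b)\in N_i(f(X))$, which is the very hypothesis needed to invoke the first part of the lemma. Once this observation is made, no further analysis of $\bM$ itself is required, and the canonical-isomorphism criterion follows immediately.
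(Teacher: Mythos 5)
Your proof is correct and follows essentially the same route as the paper: bound $d(f(x),f(x'))\leq 2i$ via the triangle inequality, invoke the weak inverse to get $d(x,x')\leq\widetilde\Lambda(2i)$, and then apply Lemma~\ref{lem:canon_isom}, using the fact that elements of $f^*\bM(i)$ are supported on basis elements $b$ with $p(b)\in N_i(f(X))$. The paper's proof is terser but contains the same ideas; your explicit verification of the two conditions in Lemma~\ref{lem:canon_isom} is a welcome clarification.
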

\begin{proof}
	Suppose $f$ is a $(\Lambda,\Upsilon)$-coarse embedding and $\widetilde{\Lambda}$ is the weak inverse of $\Lambda$.
	Since $d(f(x),f(x'))\leq 2i$,  we have $d(x,x')\leq \widetilde{\Lambda}(2i)$.
	Thus if $\pi,\pi':Y\to X$ are two functions as in Definition~\ref{defn:pullback}, then the restrictions of $\pi$ and $\pi'$ to $N_i(f(X))$ are $\widetilde \Lambda(2i)$-close.
	This ensures that for each $b\in f^*B(i)$,  $d(\pi(p(b)),\pi'(p(b)))\leq \widetilde \Lambda(2i)$. By Lemma~\ref{lem:canon_isom}, the resulting pullback modules defined using $\pi$ and $\pi'$ are canonically isomorphic.
\end{proof}

\begin{lem}\label{lem:induced_mod}
	Let $f:X\to Y$ be a coarse embedding between metric spaces, let  $\bM$ be an $R$-module over $Y$, and let $\iota:f^*\bM\to \bM$ be the canonical identification. Then:
	\begin{enumerate}
		\item $\iota(f^*\bM(i))=\{m\in \bM(i)\mid \supp_\bM(m)\subseteq N_i(f(X))\}$.\label{lemitem:induced_mod1}
		\item If $m\in f^*\bM(i)$, then $\Haus(\supp_\bM(\iota(m)),f(\supp_{f^*\bM}(m)))\leq i$.\label{lemitem:induced_mod2}%
	\end{enumerate}
	In particular, $\iota:f^*\bM\to \bM$ has finite displacement over $f$.
\end{lem}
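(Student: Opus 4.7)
The plan is to unpack definitions directly; there is no clever idea needed here, only careful bookkeeping using the defining property of the closest-point projection $\pi\colon Y\to X$. Recall that $f^*\bM$ shares the same underlying $R$-module $M$ and coordinates $\{\delta_b\}$ as $\bM$; the only difference is the control function ($\pi\circ p$ instead of $p$) and the filtration ($f^*B(i)$ instead of $B(i)$). So the proof reduces to translating between the supports/filtrations on the two sides.

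For part (\ref{lemitem:induced_mod1}), I would verify both inclusions by chasing the definitions. If $m\in f^*\bM(i)$, then $\delta_b(m)=0$ for every $b\notin f^*B(i)$. Since $f^*B(i)\subseteq B(i)$, this already forces $m\in \bM(i)$; moreover, for any $b$ with $\delta_b(m)\neq 0$, we have $b\in f^*B(i)$, so $p(b)\in N_i(f(X))$, giving $\supp_\bM(m)\subseteq N_i(f(X))$. Conversely, if $m\in \bM(i)$ with $\supp_\bM(m)\subseteq N_i(f(X))$, then every $b$ with $\delta_b(m)\neq 0$ lies in $B(i)$ and satisfies $d(f(X),p(b))\leq i$, hence $b\in f^*B(i)$; this is exactly the statement that $m\in f^*\bM(i)$.

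For part (\ref{lemitem:induced_mod2}), the key input is the defining property of $\pi$: for any $y\in N_i(f(X))$, the distance $d(f(\pi(y)),y)$ is minimal, hence at most $i$. Take $m\in f^*\bM(i)$ and consider any $b\in B$ with $\delta_b(m)\neq 0$; then $b\in f^*B(i)$, so $p(b)\in N_i(f(X))$ and therefore $d(p(b),f(\pi(p(b))))\leq i$. This gives a pairing: each $p(b)\in \supp_\bM(\iota(m))$ lies within distance $i$ of $f(\pi(p(b)))\in f(\supp_{f^*\bM}(m))$, and each $f(\pi(p(b)))\in f(\supp_{f^*\bM}(m))$ lies within distance $i$ of $p(b)\in \supp_\bM(\iota(m))$. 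Both directions of the Hausdorff bound follow.

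For the final sentence, the displacement function $\Phi(i)=i$ works: part (\ref{lemitem:induced_mod1}) shows $\iota(f^*\bM(i))\subseteq \bM(i)$, and part (\ref{lemitem:induced_mod2}) immediately gives $\supp_\bM(\iota(m))\subseteq N_i(f(\supp_{f^*\bM}(m)))$ for $m\in f^*\bM(i)$. There is no real obstacle; the only thing to be careful about is the fact that $\pi$ is defined by a closest-point minimization, and we must use that minimum distance — not just some finite distance — to get the sharp constant $i$ (rather than a function of $i$) appearing in both statements.
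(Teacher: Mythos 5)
Your proof is correct and follows essentially the same route as the paper: both parts are settled by unwinding the definitions of $f^*B(i)$ and the closest-point projection $\pi$, and part~(2) is handled by pairing support points through the common index $b\in f^*B(i)$, using that $d(f(\pi(y)),y)\leq i$ whenever $y\in N_i(f(X))$. The only difference is presentational — the paper dismisses part~(1) as "immediate" whereas you spell it out, and the paper treats the two Hausdorff inclusions as separate direction arguments rather than a single pairing — but the substance is identical.
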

\begin{proof}
	(\ref{lemitem:induced_mod1}) is an immediate consequence of  the definitions. For (\ref{lemitem:induced_mod2}), we fix $m\in f^*\bM(i)$. First, suppose that $x\in \supp_{f^*\bM}(m)$.  Pick $b\in f^*B(i)$ with $\pi(p(b))=x$ and $\delta_b(m)\neq 0$. It follows that $p(b)\in \supp_\bM(\iota(m))$. Since $b\in f^*B(i)$, we have $p(b)\in N_i(f(X))$, and so the definition of $\pi$ ensures $d(f(x),p(b))\leq i$. Thus $f(\supp_{f^*\bM}(m))\subseteq N_i(\supp_\bM(\iota(m)))$.

	Conversely, suppose $y\in \supp_\bM(\iota(m))$. Then there exists some $b\in B$ with $p(b)=y$ and $\delta_b(m)\neq 0$. Since $m\in f^*\bM(i)$ and $\delta_b(m)\neq 0$, we know $b\in f^*B(i)$ and so $d(p(b),f(X))\leq i$. It follows that $\pi(y)\in \supp_{f^*\bM}(m)$ and $d(f(\pi(y)),y)\leq i$, which shows $\supp_\bM(\iota(m))\subseteq N_i(f(\supp_{f^*\bM}(m)))$.
\end{proof}
Let $\bM$ and $\bN$ be $R$-modules over $Y$, and let $f:X\to Y$ be a coarse embedding.   Every map $\phi:\bM\to \bN$ corresponds to a  map $f^*\phi:f^*\bM\to f^*\bN$ by identifying $\bM$ and $\bN$ with $f^*\bM$ and $f^*\bN$ respectively via their canonical identifications.
\begin{prop}\label{prop:induced_mod_maps}
	Let $f:X\to Y$ be a coarse embedding between metric spaces, and let $\bM$ and $\bN$ be $R$-modules over $Y$.
	\begin{enumerate}
		\item If $\phi:\bM\to \bN$ has finite displacement over $\id_Y$, then $f^*\phi:f^*\bM\to f^*\bN$ has finite displacement over $\id_X$.\label{propitem:induced_mod_maps1}
		\item  If $\phi:\bM\to \bN$ has uniform preimages, then so does $f^*\phi$.\label{propitem:induced_mod_maps2}
	\end{enumerate}
\end{prop}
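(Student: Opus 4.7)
\medskip

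\textbf{Plan.} Both parts of the proposition should follow by direct bookkeeping: the underlying $R$-module maps are identical under the canonical identifications $\iota_\bM:f^*\bM\to\bM$ and $\iota_\bN:f^*\bN\to\bN$, and Lemma~\ref{lem:induced_mod} gives a precise quantitative dictionary between filtrations/supports in $\bM$ (resp.\ $\bN$) and in $f^*\bM$ (resp.\ $f^*\bN$). The only new input, relative to those lemmas, is that $f$ is a $(\Lambda,\Upsilon)$-coarse embedding, which allows one to pull back metric bounds from $Y$ to $X$ via the weak inverse $\widetilde{\Lambda}$.

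\smallskip

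\textbf{Part (\ref{propitem:induced_mod_maps1}).} Suppose $\phi$ has displacement $\Phi$ over $\id_Y$. Fix $m\in f^*\bM(i)$. By Lemma~\ref{lem:induced_mod}(\ref{lemitem:induced_mod1}), $\iota_\bM(m)\in \bM(i)$ with $\supp_\bM(\iota_\bM(m))\subseteq N_i(f(X))$. Applying $\phi$ gives $\phi(\iota_\bM(m))\in\bN(\Phi(i))$ with support contained in $N_{\Phi(i)}(\supp_\bM(\iota_\bM(m)))\subseteq N_{\Phi(i)+i}(f(X))$. Hence, by Lemma~\ref{lem:induced_mod}(\ref{lemitem:induced_mod1}) applied to $\bN$, we find $(f^*\phi)(m)\in f^*\bN(j)$ with $j=i+\Phi(i)$. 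Using Lemma~\ref{lem:induced_mod}(\ref{lemitem:induced_mod2}) on both sides,
\[
f(\supp_{f^*\bN}(f^*\phi(m)))\subseteq N_j(\supp_\bN(\phi(\iota_\bM(m))))\subseteq N_{j+\Phi(i)+i}(f(\supp_{f^*\bM}(m))),
\]
and applying the coarse embedding inequality $\Lambda(d(x,x'))\leq d(f(x),f(x'))$ pulls this to $\supp_{f^*\bN}(f^*\phi(m))\subseteq N_{\widetilde{\Lambda}(j+\Phi(i)+i)}(\supp_{f^*\bM}(m))$. Taking $\Psi(i):=\max\bigl(i+\Phi(i),\,\widetilde{\Lambda}(2i+2\Phi(i))\bigr)$ yields the required displacement bound.

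\smallskip

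\textbf{Part (\ref{propitem:induced_mod_maps2}).} Suppose $\phi$ has $\Omega$-uniform preimages. Given $n\in f^*\bN(i)\cap \im(f^*\phi)$, write $D':=\lceil\diam(\supp_{f^*\bN}(n))\rceil$. By Lemma~\ref{lem:induced_mod}, $\iota_\bN(n)\in\bN(i)$ and $\supp_\bN(\iota_\bN(n))\subseteq N_i(f(\supp_{f^*\bN}(n)))$, so the triangle inequality and the upper control $\Upsilon$ give $D:=\lceil\diam(\supp_\bN(\iota_\bN(n)))\rceil\leq 2i+\Upsilon(D')$. By hypothesis, there exists $m\in\phi^{-1}(\iota_\bN(n))$ with $m\in\bM(\Omega(i,D))$ and $\supp_\bM(m)\subseteq N_{\Omega(i,D)}(\supp_\bN(\iota_\bN(n)))\subseteq N_{\Omega(i,D)+i}(f(X))$. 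By Lemma~\ref{lem:induced_mod}(\ref{lemitem:induced_mod1}) this $m$ lies in $\iota_\bM(f^*\bM(j))$ with $j:=\Omega(i,D)+i$, so there is $m'\in f^*\bM(j)$ with $f^*\phi(m')=n$. A final application of Lemma~\ref{lem:induced_mod}(\ref{lemitem:induced_mod2}) together with the coarse embedding gives $\supp_{f^*\bM}(m')\subseteq N_{\widetilde{\Lambda}(j+\Omega(i,D)+i)}(\supp_{f^*\bN}(n))$, and defining $\Omega'(i,D')$ as the maximum of $j$ and this last radius (with $D$ replaced by $2i+\Upsilon(D')$ throughout) produces a function increasing in both variables, as required.

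\smallskip

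\textbf{Expected main obstacle.} No step is conceptually difficult; the only place one has to be careful is in Part (\ref{propitem:induced_mod_maps2}), where the uniform preimage property is stated in terms of the diameter of $\supp_\bN(\iota_\bN(n))$ in $Y$, whereas we must bound things in terms of $D'=\diam(\supp_{f^*\bN}(n))$ in $X$. The control $\Upsilon$ of $f$ converts $D'$ into a bound on $D$, but one has to check this produces an $\Omega'$ that is genuinely increasing in both coordinates and depends only on $\Omega,\Lambda,\Upsilon$.
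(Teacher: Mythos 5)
Your proof is correct and follows essentially the same route as the paper: both parts unwind the canonical identification via Lemma~\ref{lem:induced_mod}, transport bounds from $X$ to $Y$ via $\Upsilon$ and back via $\widetilde\Lambda$, and in part (2) pass from the diameter of $\supp_{f^*\bN}(n)$ in $X$ to the diameter of $\supp_\bN(\iota_\bN(n))$ in $Y$ before invoking $\Omega$-uniform preimages. The constants you obtain differ from the paper's only by harmless relabelling (your $D'$ is the paper's $D$), and the concern you flag at the end — that $\Omega'$ must be built from $\Omega(i,\lceil 2i+\Upsilon(D')\rceil)$ so that it is genuinely increasing in $D'$ — is exactly the point the paper also handles, and you resolve it the same way.
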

\begin{proof} Suppose $f$ is a $(\Lambda, \Upsilon)$-coarse embedding and $\widetilde{\Lambda}$ is the weak inverse of $\Lambda$. We make use of both parts of Lemma~\ref{lem:induced_mod} freely throughout the subsequent proof.

	(\ref{propitem:induced_mod_maps1}): Suppose $\phi$ has displacement $\Phi$ over $\id_X$ and $m\in f^*\bM(i)$. Then $\iota(f^*\phi(m))=\phi(\iota(m))\subseteq \phi(\bM(i))\subseteq \bN(\Phi(i))$ and
	\begin{align*}\supp_\bN(\phi(\iota(m)))&\subseteq N_{\Phi(i)}(\supp_\bM(\iota(m)))\subseteq N_{\Phi(i)+i}(f(\supp_{f^*\bM}(m)))\subseteq N_{\Phi(i)+i}(f(X)).\end{align*} Thus $f^*\phi(m)\in f^*\bN(\Phi(i)+i)$. Moreover,
	\begin{align*}
		f(\supp_{f^*\bN}(f^*\phi(m))) & \subseteq
		N_{\Phi(i)+i}(\supp_{\bN}(\phi(\iota(m))))                                        \subseteq N_{2\Phi(i)+2i}(f(\supp_{f^*\bM}(m))),
	\end{align*} which ensures that $\supp_{f^*\bN}(f^*\phi(m))\subseteq N_{\widetilde{\Lambda}(2\Phi(i)+2i)}(\supp_{f^*\bM}(m))$. This shows $f^*\phi$ has finite displacement.

	(\ref{propitem:induced_mod_maps2}): 	Suppose $\phi$ has $\Omega$-uniform preimages. Suppose also that $n\in \im(f^*\phi)\cap f^*\bN(i)$ and  $D\coloneqq \lceil\diam(\supp_{f^*\bN}(n)) \rceil$. Then  Lemma~\ref{lem:induced_mod} ensures $\iota(n)\in \im(\phi)\cap \bN(i)$ and $\supp_\bN(\iota(n))\subseteq N_i(f(X))$.
	Moreover, if $y,y'\in \supp_\bN(\iota(n))$, then there exist $x,x'\in \supp_{f^*\bN}(n)$ such that $d(f(x),y)\leq i$ and $ d(f(x'),y')\leq i$. Thus $d(y,y')\leq 2i+d(f(x),f(x'))\leq \Upsilon(D)+2i$, so that  $\diam(\supp_{\bN}(\iota(n)))\leq \Upsilon(D)+2i$.

	Setting $j\coloneqq \Omega(i,\lceil\Upsilon(D)+2i\rceil)$,  there exists some $\iota(m)\in \bM(j)$ with $\phi(\iota(m))=\iota(n)$ and \[\supp_\bM(\iota(m))\subseteq N_j(\supp_\bN(\iota(n)))\subseteq N_{i+j}(f(X)).\] Thus $m\in f^*\bM(i+j)$, $(f^*\phi)(m)=n$ and
	\begin{align*}
		f(\supp_{f^*\bM}(m)) & \subseteq N_{i+j}(\supp_\bM(\iota(m)))\subseteq N_{i+2j}(\supp_\bN(\iota(n))) \subseteq N_{2i+2j}(f(\supp_{f^*\bN}(n))).
	\end{align*} Thus $\supp_{f^*\bM}(m)\subseteq N_{\widetilde{\Lambda}(2i+2j)}(\supp_{f^*\bN}(n))$. This demonstrates that $f^*\phi$ has uniform preimages.
\end{proof}

When dealing with countable families of $R$-modules over metric spaces, it will be convenient to assume the following condition:
\begin{defn}\label{defn:refinement}
	Let $\cM\coloneqq \{\bM_j\}_{j\in \bbN}$, where each \[\bM_j=(M_j,B_j,\delta^j,p_j,\{B_j(i)\}_i)\] is an $R$-module over a metric space $X_j$. We say $\cM$ is \emph{staggered} if for each $i$, the set $\{j\in \bbN\mid B_j(i)\neq \emptyset \}$ is finite.
\end{defn}
The following remark shows that there is no loss of generality in working with staggered collections:
\begin{rem}\label{rem:staggering}
	For any countable collection $\cM\coloneqq \{\bM_j\}_{j\in \bbN}$ of  $R$-modules over metric spaces, we can always modify the filtration on each $\bM_j$ to  ensure that $\cM$ is staggered. In detail, if $\bM_j=(M_j,B_j,\delta^j,p_j,\{B_j(i)\}_i)$, we set \[B'_j(i)=\begin{cases}
			0      & \textrm{if}\; j>i,     \\
			B_j(i) & \textrm{if}\; j\leq i.
		\end{cases}\] We let $\bM'_j\coloneqq(M_j,B_j,\delta^j,p_j,\{B_j'(i)\}_i)$.  We note that $\bM_j$ and $\bM'_j$ are canonically isomorphic.  It is clear from the construction that $\cM'=\{\bM_j'\}$ is staggered.
\end{rem}
The utility of working with staggered collections is the following:
\begin{prop}\label{prop:uniform_displacement}
	Let $\cM\coloneqq \{\bM_j\}_{j\in \bbN}$ and $\cN\coloneqq \{\bN_j\}_{j\in \bbN}$ be staggered, where each $\bM_j$ and $\bN_j$ is an  $R$-module over  $X_j$ and $Y_j$ respectively.
	Suppose   $\{\phi_j:\bM_j\to \bN_j\}_{j\in \bbN}$ is a family of  maps.
	\begin{enumerate}
		\item If each $\phi_j$ has finite displacement over some $f_j:X_j\to Y_j$, then there exists a $\Psi$ such that each $\phi_j$ has displacement $\Psi$ over $f_j$.\label{item:unif_disp_map}
		\item Suppose $X_j=Y_j$ for each $j$ and  each  $\phi_j$ has uniform preimages. Then there exists an $\Omega$ such that each $\phi_j$ has $\Omega$-uniform preimages.\label{item:unif_unif_preimages}
	\end{enumerate}
\end{prop}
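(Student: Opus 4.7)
The plan is to exploit the staggered condition to convert what would naively be an infinite supremum over $j$ into a finite maximum at each filtration level, from which a uniform control function can be assembled.

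The key observation, used throughout, is that if $B_j(i) = \emptyset$ for some $\bM_j = (M_j, B_j, \delta^j, p_j, \{B_j(i)\})$, then by axioms (\ref{item:moddef_zero}) and (\ref{item:moddef_filtr}) of Definition~\ref{defn:rmod_overX} we have $\bM_j(i) = \{0\}$: any $m \in \bM_j(i)$ satisfies $\delta_b(m) = 0$ for all $b \in B_j \setminus B_j(i) = B_j$, whence $m = 0$. Thus at filtration level $i$, only those $j$ with $B_j(i) \neq \emptyset$ contribute non-trivially, and by the staggered assumption there are only finitely many such $j$.

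For (\ref{item:unif_disp_map}), fix for each $j$ an increasing $\Psi_j : \bbN \to \bbN$ witnessing that $\phi_j$ has displacement $\Psi_j$ over $f_j$. Let $J_i := \{j \in \bbN : B_j(i) \neq \emptyset\}$, which is finite by the staggering of $\cM$, and define
\[
\Psi(i) := \max\bigl(\{i\} \cup \{\Psi_j(i') : i' \leq i,\ j \in J_{i'}\}\bigr).
\]
This is a non-decreasing function $\bbN \to \bbN$ satisfying $\Psi(i) \geq \Psi_j(i)$ for every $j \in J_i$. For any $j$ and $m \in \bM_j(i)$, either $j \in J_i$, in which case the displacement bounds for $\phi_j$ with function $\Psi_j$ immediately yield those with function $\Psi$; or $j \notin J_i$ and $m = 0$, in which case both bounds hold trivially.

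Part (\ref{item:unif_unif_preimages}) proceeds symmetrically, using staggering of $\cN$ in place of $\cM$. Fix $\Omega_j$ witnessing the $\Omega_j$-uniform preimage property of $\phi_j$, let $J_i^\bN := \{j \in \bbN : B_j^\bN(i) \neq \emptyset\}$, and define
\[
\Omega(i, D) := \max\bigl(\{0\} \cup \{\Omega_j(i', D') : i' \leq i,\ D' \leq D,\ j \in J_{i'}^\bN\}\bigr),
\]
which is increasing in both arguments. For $n \in \bN_j(i) \cap \im(\phi_j)$ with $D = \lceil \diam \supp_{\bN_j}(n) \rceil$, either $j \in J_i^\bN$ and the preimage $m$ produced by the $\Omega_j$-uniform preimage property of $\phi_j$ already satisfies $m \in \bM_j(\Omega(i,D))$ and $\supp_{\bM_j}(m) \subseteq N_{\Omega(i,D)}(\supp_{\bN_j}(n))$; or $j \notin J_i^\bN$, so $n = 0$ and the choice $m = 0$ works. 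No serious obstacle is anticipated: the staggered condition is designed precisely to make the pointwise maxima above finite, and the rest is bookkeeping that verifies the two clauses of Definition~\ref{defn:findisp} (respectively the uniform preimage property) against the definition of $\Psi$ (respectively $\Omega$).
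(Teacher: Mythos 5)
Your proof is correct and follows essentially the same route as the paper: both exploit staggering to reduce to a finite maximum at each filtration level, the only cosmetic difference being that the paper packages the finite set $\{j : B_j(i) \neq \emptyset\}$ via a bounding function $\gamma$ with $j > \gamma(i) \Rightarrow B_j(i) = \emptyset$, whereas you work with the finite sets $J_i$ directly. (Your more elaborate definition of $\Psi$ with the union over $i' \leq i$ is harmless but unnecessary: since the $J_i$ are nested increasing and each $\Psi_j$ is increasing, it collapses to $\max(\{i\} \cup \{\Psi_j(i) : j \in J_i\})$.)
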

\begin{proof}  Suppose \begin{align*}
		\bM_j=(M_j,B_j,\delta^j,p_j,\{B_j(i)\}) \;\text{and}\; \bN_j=(N_j,C_j,\epsilon^j,q_j,\{C_j(i)\}).
	\end{align*}
	Since $\cM$ and $\cN$ are staggered, we can pick an increasing function $\gamma:\bbN\to \bbN$ such that $B_j(i)=C_j(i)=\emptyset$ for all $i,j\in \bbN$ satisfying $j>\gamma(i)$. In particular, this implies $\bM_j(i)=\bN_j(i)=0$ for all $i,j\in \bbN$ satisfying $j>\gamma(i)$.

	(\ref{item:unif_disp_map}): Suppose each $\phi_j:\bM_j\to \bN_j$  has displacement $\Psi_j$ over $f_j$. 	 We set $\Psi(i)\coloneqq \max_{j\leq \gamma(i)}\Psi_j(i)$ and claim that each $\phi_j$ has displacement $\Psi$ over $f_j$.
	To see this, let $i\in \bbN$. In the case $\phi_j(\bM_j(i))=0$ there is nothing to show, so assume that $\phi_j(\bM_j(i))\neq 0$. This implies that $j\leq \gamma(i)$. It follows that
	\[\phi_j(\bM_j(i))\subseteq \bN_j(\Psi_j(i))\subseteq  \bN_j(\Psi(i)),\] and for each $m\in \bM_j(i)$, we have \[\supp_{\bN_j}(\phi_j(m))\subseteq N_{\Psi_j(i)}(f_j(\supp_{\bM_j}(m)))\subseteq N_{\Psi(i)}(f_j(\supp_{\bM_j}(m))).\] This shows each $\phi_j$ has displacement $\Psi$ as required.

	(\ref{item:unif_unif_preimages}): Assume each $\phi_j$ has $\Omega_j$-uniform preimages.  Set $\Omega(i,D)\coloneqq \max_{j\leq \gamma(i)}\Omega_j(i,D)$. We claim each $\phi_j$ has $\Omega$-uniform preimages.
	Suppose $n\in \im(\phi_j)\cap \bN_j(i)$ with $D=\lceil \diam(\supp_{\bN_j}(n))\rceil$.  If $n=0$ there is nothing to show, so we assume $n\neq 0$. Since $\bN_j(i)\neq 0$, we have  $j\leq \gamma(i)$,   and so $\Omega_j(i,D)\leq \Omega(i,D)\eqqcolon i'$. As $\phi_j$ has $\Omega_j$-uniform preimages, it follows there exists $m\in \bM_j(i')$ with $\supp_{\bM_j}(m)\subseteq N_{i'}(\supp_{\bN_j}(n))$ and $\phi_j(m)=n$. This proves that $\phi_j$ has $\Omega$-uniform preimages.
\end{proof}
\begin{rem}\label{rem:uniformdisplacement}
	It is clear from the proof of Proposition~\ref{prop:uniform_displacement} that  $\Psi$ and $\Omega$ depend only on $\cM$, $\cN$ and the functions $\{\Psi_i\}_i$ and $\{\Omega_i\}_i$ respectively, not on the actual maps $\{\phi_i\}$.
\end{rem}
We can take the direct sum  of $R$-modules over a metric space as follows. We use the following notation: if $f:X\to Z$ and $g:Y\to Z$ are functions, then $f\sqcup g:X\sqcup Y\to Z$ is the function agreeing with $f$ on $X\subseteq X\sqcup Y$ and with $g$ on $Y\subseteq X\sqcup Y$. The proof of Proposition~\ref{prop:dirsum_overX} is routine, so will be omitted.
\begin{prop}\label{prop:dirsum_overX}
	Let $\bM=(M,B,\delta,p,\{B(i)\})$ and $\bN=(N,C,\epsilon,q,\{C(i)\})$ be $R$-modules over $X$. We identify the codomain $\Hom_R(M,R)$ of $\delta$  with a submodule of $\Hom_R(M\oplus N,R)\cong \Hom_R(M,R)\oplus \Hom_R(N,R)$ in the obvious way, and similarly for $\epsilon$, so that $\delta\sqcup \epsilon$ is a map $B\sqcup C\to \Hom(M\oplus N,R)$.
	\begin{enumerate}
		\item The direct sum  \[\bM\oplus \bN\coloneqq(M\oplus N,B\sqcup C, \delta\sqcup \epsilon, p\sqcup q,\{B(i)\sqcup C(i)\}_i)\] is an  $R$-module over $X$.
		\item $(\bM\oplus\bN)(i)=\bM(i)\oplus \bN(i)$.
		\item $\supp_{\bM\oplus\bN}(m\oplus n)= \supp_\bM(m)\cup \supp_\bN(n)$ for all $m\in \bM$ and $n\in \bN$.
	\end{enumerate}
\end{prop}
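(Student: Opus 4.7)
The plan is to verify each of the three assertions by unwinding the definition of an $R$-module over $X$ (Definition~\ref{defn:rmod_overX}) and the definition of support, using the fact that $\bM$ and $\bN$ already satisfy these axioms. First I would make explicit the identification in the statement: each coordinate $\delta_b\in \Hom_R(M,R)$ is extended to $\widetilde{\delta_b}\in \Hom_R(M\oplus N, R)$ by $\widetilde{\delta_b}(m\oplus n) = \delta_b(m)$, and similarly for each $\epsilon_c$. Under this identification the map $\delta\sqcup\epsilon$ takes $B\sqcup C$ into $\Hom_R(M\oplus N,R)$ as required, and $\{B(i)\sqcup C(i)\}$ is clearly a filtration of $B\sqcup C$ since $\{B(i)\}$ and $\{C(i)\}$ are filtrations of $B$ and $C$.

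For (1), the only nontrivial axioms to check are (\ref{item:moddef_zero}) and (\ref{item:moddef_filtr}) of Definition~\ref{defn:rmod_overX}. For (\ref{item:moddef_zero}), if $\widetilde{\delta_b}(m\oplus n)=0$ for all $b\in B$ and $\widetilde{\epsilon_c}(m\oplus n)=0$ for all $c\in C$, then $\delta_b(m)=0$ for all $b$ and $\epsilon_c(n)=0$ for all $c$, so property (\ref{item:moddef_zero}) for $\bM$ and $\bN$ forces $m=0$ and $n=0$. The converse is immediate. For (\ref{item:moddef_filtr}), given $m\oplus n$ pick $i_1, i_2$ with $\delta_b(m)=0$ for $b\notin B(i_1)$ and $\epsilon_c(n)=0$ for $c\notin C(i_2)$, and then $i\coloneqq\max(i_1,i_2)$ works.

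For (2), the same case analysis shows that $m\oplus n\in(\bM\oplus\bN)(i)$ if and only if $\widetilde{\delta_b}(m\oplus n)=\delta_b(m)=0$ for all $b\in B\setminus B(i)$ and $\widetilde{\epsilon_c}(m\oplus n)=\epsilon_c(n)=0$ for all $c\in C\setminus C(i)$, which is precisely the condition $m\in\bM(i)$ and $n\in\bN(i)$. For (3), an element $x\in X$ lies in $\supp_{\bM\oplus\bN}(m\oplus n)$ exactly when there is some $b\in B$ with $p(b)=x$ and $\delta_b(m)\neq 0$, or some $c\in C$ with $q(c)=x$ and $\epsilon_c(n)\neq 0$; this is a literal translation of $x\in\supp_\bM(m)\cup\supp_\bN(n)$.

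There is no real obstacle here, as the author acknowledges in calling the proof routine; the only thing to be careful about is the implicit identification of $\Hom_R(M,R)$ and $\Hom_R(N,R)$ as summands of $\Hom_R(M\oplus N,R)$, which is needed so that $\delta\sqcup\epsilon$ is a well-defined function into the correct dual space, and the mild verification that the new filtration $\{B(i)\sqcup C(i)\}$ interacts correctly with each coordinate. Once this bookkeeping is in place, all three parts reduce to splitting vectors and indices into their $B$-components and $C$-components and applying the corresponding property for $\bM$ or $\bN$ separately.
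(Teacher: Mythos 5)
Your proof is correct and is exactly the routine verification the paper has in mind when it states the proof "is routine, so will be omitted": you unpack the identification of the dual summands, check axioms (\ref{item:moddef_zero}) and (\ref{item:moddef_filtr}) by splitting into $B$- and $C$-components, and observe that both the filtration condition and the support set decompose coordinatewise. Nothing to add.
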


We now introduce two  finiteness properties that an $R$-module $\bM$ over  $X$ may possess:
\begin{defn}\label{defn:finiteness_geom}
	Let  $\bM=(M,B,\delta,p,\{B(i)\})$ be an $R$-module over $X$. We say that:
	\begin{enumerate}
		\item $\bM$ is  \emph{proper} if,  for each $i\in \bbN$ and bounded set $F\subseteq X$, the intersection $p^{-1}(F)\cap B(i)$ is finite.
		\item $\bM$ has \emph{finite height} if  there exists an $i\in \bbN$ such that $\bM=\bM(i)$. The minimal such $i$ is the \emph{height} of $\bM$.

	\end{enumerate}
\end{defn}

The following proposition follows readily from the definitions:
\begin{prop}\label{prop:finiteness_operations_first}
	Let $\bM$ be an $R$-module over $X$.
	\begin{enumerate}
		\item If $\bN\leq \bM$ is a geometric submodule and $\bM$ is proper (resp.\ of finite height), then so is $\bN$.
		\item If $f:X\to Y$ is a coarse embedding and  $\bM$ is proper (resp.\ of finite height), then so is $f_*\bM$.
		\item If $f:Y\to X$ is a coarse embedding and  $\bM$ is proper, then so is $f^*\bM$. \item If $f:Y\to X$ is a coarse equivalence and  $\bM$ is of finite height, then so is $f^*\bM$. (This is not typically true if $f$ is only a coarse embedding.)
		\item If $\bN$ is an $R$-module over $X$ and both $\bM$ and $\bN$ are proper (resp.\ of finite height), then so is $\bM\oplus \bN$.
	\end{enumerate}
\end{prop}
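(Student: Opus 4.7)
The plan is to verify each of the five assertions by direct unpacking of Definition~\ref{defn:finiteness_geom} together with the definition of the relevant operation. The unifying principle is that properness is a finiteness statement about $p^{-1}(F) \cap B(i)$ for bounded $F$, while finite height records that some single $B(i)$ already accommodates every element of $M$. Parts (1), (2), and (5) involve operations that leave the filtration essentially unchanged (as an intersection with a submodule, an unaltered copy, or a disjoint union), so I would dispatch them first with minimal bookkeeping.

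For (1) the geometric submodule $\bN$ shares the indexing set, control function, and filtration of $\bM$, and $\bN(i) = \bM(i) \cap N$, so both properties transfer immediately. For (2) the pushforward $f_*\bM$ keeps the filtration $\{B(i)\}$ and replaces $p$ with $f\circ p$; finite height is therefore automatic, while properness follows from the identity $(f\circ p)^{-1}(F)\cap B(i) = p^{-1}(f^{-1}(F))\cap B(i)$, together with the fact that for a coarse embedding $f$ the preimage $f^{-1}(F)$ of a bounded set is bounded. For (5), Proposition~\ref{prop:dirsum_overX} reduces properness of $\bM\oplus\bN$ to the disjoint union decomposition $(p\sqcup q)^{-1}(F)\cap (B(i)\sqcup C(i)) = (p^{-1}(F)\cap B(i))\sqcup (q^{-1}(F)\cap C(i))$, and taking the maximum of the heights of $\bM$ and $\bN$ settles finite height via the identity $(\bM\oplus\bN)(i)=\bM(i)\oplus\bN(i)$.

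The pullback cases (3) and (4) require a little more care, since the filtration $\{f^*B(i)\}$ is strictly smaller than $\{B(i)\}$ and the control function is $\pi\circ p$. For (3), given a bounded $F\subseteq Y$, any $b \in f^*B(i)$ with $\pi(p(b))\in F$ satisfies $d(p(b),f(F))\leq i$ by the minimising property of $\pi$ combined with the containment $b\in f^*B(i)$. Since $f(F)$ is bounded in $X$ by bornologousness of $f$, its $i$-neighbourhood is bounded, and since $f^*B(i)\subseteq B(i)$, properness of $\bM$ confines such $b$ to a finite set. For (4), the essential extra input is that a coarse equivalence $f:Y\to X$ provides a constant $C$ with $X = N_C(f(Y))$, so \emph{every} $b\in B$ automatically satisfies $d(p(b),f(Y))\leq C$. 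Taking $i = \max(j, C)$, where $j$ is the height of $\bM$, then yields $f^*B(i) \supseteq B(j)\cap\{b : d(p(b),f(Y))\leq C\}$, which contains every coordinate for which $\delta_b$ does not vanish on $M$; hence $f^*\bM = f^*\bM(i)$.

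No step here presents a genuine obstacle; the proof is a methodical walk through the definitions. The most informative feature is the sharpness of the parenthetical remark in (4): a one-line counterexample is furnished by the inclusion of a single point into an infinite discrete metric space, for which a finite-height module whose coordinates are supported throughout the ambient space pulls back to a module of infinite height, since the condition $d(p(b),f(Y))\leq i$ cuts off arbitrarily many coordinates as $i$ varies.
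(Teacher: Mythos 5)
Your verification of all five parts is correct and matches the direct unpacking that the paper implicitly intends (the paper omits the proof with the remark that it ``follows readily from the definitions''). Each case is handled with the right observation: in (1), (2), (5) the filtration data transfers unchanged or componentwise; in (3) the minimising property of $\pi$ together with $b\in f^*B(i)$ gives $d(p(b),f(F))\leq i$, reducing to properness of $\bM$ on the bounded set $N_i(f(F))$; and in (4) the cocompactness constant $A$ with $X=N_A(f(Y))$ is exactly what makes $f^*B(\max(j,A))\supseteq B(j)$. One small point on your counterexample: for the parenthetical in (4) to fail you need $X$ to be \emph{unbounded}, so ``infinite discrete metric space'' should be read as, say, $\bbZ$ with its usual metric rather than a set with all pairwise distances equal to $1$ (the latter would make the inclusion of a point a coarse equivalence). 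With $X=\bbZ$, $Y=\{0\}$, and $\bM$ the free module on $B=\bbZ$ with $B(i)=B$ for all $i$ and $p=\id$, your example works exactly as you describe.
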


\section{Projective modules over metric spaces}\label{sec:projective}
\emph{We define projective $R$-modules over a metric space, and we prove in Proposition~\ref{prop:projective} that they satisfy a lifting property. In Lemma~\ref{lem:ginduced_produce}, we show that projective $RG$-modules induce projective $R$-modules over $G$. In Propositions~\ref{prop:ginduced} and~\ref{prop:ginv_unifpreim}, we show $G$-equivariant maps between these induced modules have finite displacement and uniform preimages. Lemma~\ref{lem:comp} shows that the finite displacement retraction of a projective $R$-module over $X$ is also a projective $R$-module over $X$, which plays an important role in the theory of coarse cohomological dimension, and is the main reason why it is necessary  to work with projective $R$-modules over $X$ rather than free $R$-modules over $X$.}
\vspace{.3cm}

Recall that, given an $R$-module $M$ and an arbitrary indexing set $B$, a \emph{projective basis} of $M$ is a collection $\{(m_b,\delta_b)\}_{b\in B}$  satisfying the following. For each $b\in B$, $m_b\in M$, $\delta_b\in M^*=\Hom_R(M,R)$, and  for all $m\in M$, \[\sum_{b\in B}\delta_b(m)m_b=m,\] where   $\delta_b(m)=0$ for all but finitely many $b\in B$. We call each $m_b$ a \emph{basis element} and each $\delta_b$ a \emph{coordinate}. The existence of a projective basis of $M$ is equivalent to $M$ being a projective $R$-module~\cite[Proposition I.8.2]{brown1982cohomology}. We define  a \emph{projective  $R$-module $\bM$ over $X$} as follows:
\begin{defn}\label{defn:projective}
	Let $X$ be a metric space and let $\bM=(M,B,\delta,p,\{B(i)\})$ be an $R$-module over $X$.
	We say a collection of elements $\{m_b\}_{b\in B}$ of $M$ is a  \emph{projective basis} of $\bM$ if there exists an increasing function $\Psi:\bbN\to \bbN$ such that  the following hold:
	\begin{itemize}
		\item $\{(m_b,\delta_b)\}_{b\in B}$ is a projective basis of $M$;
		\item  $m_b\in \bM(\Psi(i))$ for each $b\in B(i)$;
		\item  $\supp_
			      \bM(m_b)\subseteq N_{\Psi(i)}(p(b))$ for each $b\in B(i)$.
	\end{itemize}
	When the above holds, we say that \emph{$\{m_b\}_{b\in B}$ has displacement $\Psi$}. We say that $\bM$ is a \emph{projective $R$-module over $X$} if it has a projective basis, and we say that $\bM$ has \emph{displacement $\Psi$} if it has a projective basis of displacement $\Psi$.
\end{defn}
We will shortly see in Proposition~\ref{prop:projective} that projective $R$-modules over $X$  satisfy a lifting property.
An important special type of projective $R$-module over $X$ is the following:
\begin{defn}
	An $R$-module $\bM$ over $X$ is a \emph{free $R$-module over $X$} if it admits a projective basis $\{m_b\}_{b\in B}$ that is $R$-linearly independent, in which case we say that $\{m_b\}_{b\in B}$ is a \emph{basis}.
\end{defn}

It is important to note that since the coordinates $\{\delta_b\}_b$ are built into the structure of $\bM$ as an $R$-module over $X$, the choice of a projective basis of $\bM$ is extremely limited. For instance, in the case $\bM$ is a free $R$-module over $X$, the basis $\{m_b\}$ is uniquely determined by the coordinates $\{\delta_b\}_b$, in which case $\{\delta_b\}_{b\in B}$ is the dual basis of $\{m_b\}_{b\in B}$.

When working with a free $R$-module $\bM=(M,B,\delta,p,\cF)$ over $X$, we may identify the indexing set $B$ and the basis $\{m_b\}_{b\in B}$. We can thus say that $B$ is a basis of $\bM$ and write $m=\sum_{b\in B}\delta_b(m)b$ for each $m\in \bM$. This identification is not typically possible when working with a general projective $R$-module over $X$.

In practice, we often start with an $R$-module and specify the data defining the $R$-module structure over $X$ and the projective basis at once. It will thus be convenient to write  \[\bM=(M,B,\delta,p,\{B(i)\},\{m_b\})\] to indicate that $\bM$ is the $R$-module over $X$ given by $(M,B,\delta,p,\{B(i)\})$, and that $\{m_b\}$ is a projective basis of $\bM$.

\begin{rem}\label{rem:rmod_conditions}
	Given the data \[\bM=(M,B,\delta,p,\{B(i)\},\{m_b\})\] as above such that $\{(m_b,\delta_b)\}$ is a projective basis of $M$, conditions (\ref{item:moddef_zero}) and (\ref{item:moddef_filtr})  in Definition~\ref{defn:rmod_overX} are automatically satisfied, and so $\bM$ is an $R$-module over $X$.
\end{rem}

The following proposition complements Proposition~\ref{prop:uniform_displacement}:
\begin{prop}\label{prop:uniform_displacement_projective}
	Suppose $\cM\coloneqq \{\bM_j\}_{j\in \bbN}$  is staggered, where each $\bM_j$ is a projective  $R$-module over  $X_j$.
	Then there exists a $\Phi$ such that each $\bM_j$ is of displacement $\Phi$.
\end{prop}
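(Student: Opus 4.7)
The plan is to follow the template of the proof of Proposition~\ref{prop:uniform_displacement}, replacing the maps between modules with the projective bases themselves. For each $j\in\bbN$, since $\bM_j$ is a projective $R$-module over $X_j$, pick a projective basis $\{m_b^j\}_{b\in B_j}$ of $\bM_j$ with some displacement $\Psi_j:\bbN\to\bbN$. The task is to cook up a single $\Phi$ witnessing the displacement condition for all $\bM_j$ simultaneously.

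Because $\cM$ is staggered, we may choose an increasing function $\gamma:\bbN\to\bbN$ such that $B_j(i)=\emptyset$ for every $i,j$ with $j>\gamma(i)$. I then define
\[
\Phi(i)\coloneqq \max_{j\leq \gamma(i)} \Psi_j(i),
\]
which is an honest increasing function $\bbN\to\bbN$: both $\gamma(i)$ and each $\Psi_j(i)$ are increasing in $i$, so as $i$ grows the index set $\{j\leq\gamma(i)\}$ enlarges and each $\Psi_j(i)$ weakly increases, forcing the max to be weakly increasing as well.

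Next, I verify that the projective basis $\{m_b^j\}_{b\in B_j}$ has displacement $\Phi$ for every $j$. Fix $i\in\bbN$ and $b\in B_j(i)$. If $j>\gamma(i)$, then $B_j(i)=\emptyset$ by the staggered condition, so this case is vacuous. Otherwise $j\leq\gamma(i)$, whence $\Psi_j(i)\leq\Phi(i)$, and the displacement property of the original basis gives $m_b^j\in\bM_j(\Psi_j(i))\subseteq\bM_j(\Phi(i))$ and $\supp_{\bM_j}(m_b^j)\subseteq N_{\Psi_j(i)}(p_j(b))\subseteq N_{\Phi(i)}(p_j(b))$. This is exactly the displacement condition of Definition~\ref{defn:projective} with function $\Phi$, so every $\bM_j$ has displacement $\Phi$, as required.

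There is no real obstacle here; the argument is essentially bookkeeping, and the only subtlety is ensuring that the index set $\{j\leq\gamma(i)\}$ over which we take the maximum is finite, which is precisely the content of the staggered hypothesis. As in Remark~\ref{rem:uniformdisplacement}, I would note in passing that $\Phi$ depends only on $\cM$ and on the chosen functions $\{\Psi_j\}$, not on the particular projective bases used.
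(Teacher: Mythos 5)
Your proof is essentially identical to the paper's: both exploit the staggered hypothesis to choose an increasing $\gamma$ with $B_j(i)=\emptyset$ for $j>\gamma(i)$, define $\Phi(i)$ as the maximum of the individual displacement functions over $j\leq\gamma(i)$, and verify the two displacement conditions for a fixed $b\in B_j(i)$. The only addition is your explicit check that $\Phi$ is increasing, which is a harmless bit of extra care.
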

\begin{proof}
	Suppose $\bM_j=(M_j,B_j,\delta^j,p_j,\{B_j(i)\})$.
	We can pick an increasing function $\gamma:\bbN\to \bbN$ such that $B_j(i)=\emptyset$ for all $i,j\in \bbN$ satisfying $j>\gamma(i)$.
	Suppose that each $\bM_j$ has displacement $\Phi_j$.
	Set $\Phi(i)\coloneqq \max_{j\leq \gamma(i)}\Phi_j(i)$. We claim each $\bM_j$ has displacement $\Phi$. Indeed, if $b\in B_j(i)$, then $j\leq \gamma(i)$. Thus $m_b\in \bM_j(\Phi_j(i))\subseteq \bM_j(\Phi(i))$ and  $\supp_{\bM_j}(m_b)\subseteq N_{\Phi_j(i)}(p(b))\subseteq N_{\Phi(i)}(p(b))$ as required.
\end{proof}

The following lemma furnishes us with  an important criterion for constructing finite displacement maps between $R$-modules over a metric space.
\begin{lem}\label{lem:define_fin_disp}
	Let $X$ and $Y$ be metric spaces and suppose $\bM= (M,B,\delta,p,\{B(i)\})$ is a projective $R$-module over $X$ with projective basis $\{m_b\}_{b\in B}$. Suppose also  $\bN$ is an  $R$-module over $Y$ and  $f:X\to Y$ is an arbitrary map.
	Assume there is an increasing function $\Psi:\bbN\to\bbN$ and a collection $\{q_b\}_{b\in B}$ of elements of $\bN$ such that for all $i\in \bbN$ and  $b\in B(i)$: \begin{enumerate}
		\item $q_b\in \bN(\Psi(i))$; and
		\item $\supp_\bN(q_b)\subseteq N_{\Psi(i)}(f(p(b)))$.
	\end{enumerate}  Then $m\mapsto  \sum_{b\in B}\delta_b(m)q_b$ defines a  map $\phi:\bM\to \bN$ of displacement $\Psi$ over $f$.
\end{lem}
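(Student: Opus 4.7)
The plan is to verify, by direct unwinding of the definitions, that the formula $\phi(m) \coloneqq \sum_{b \in B} \delta_b(m) q_b$ produces a well-defined $R$-module homomorphism and then to check the two displacement conditions by restricting the sum to $B(i)$ when $m \in \bM(i)$.

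First I would establish that $\phi$ is well-defined and $R$-linear. Because $\{(m_b, \delta_b)\}_{b \in B}$ is a projective basis of the underlying $R$-module $M$, the set $\{b \in B : \delta_b(m) \neq 0\}$ is finite for every $m \in M$, so the defining sum has only finitely many nonzero terms. Since each $\delta_b$ is $R$-linear, $m \mapsto \sum_b \delta_b(m) q_b$ is likewise $R$-linear.

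Next, to verify displacement $\Psi$ over $f$, fix $i \in \bbN$ and $m \in \bM(i)$. By condition (\ref{item:moddef_filtr}) of Definition~\ref{defn:rmod_overX}, $\delta_b(m) = 0$ for all $b \in B \setminus B(i)$, so
\[
\phi(m) = \sum_{b \in B(i)} \delta_b(m) q_b.
\]
Each $q_b$ with $b \in B(i)$ lies in $\bN(\Psi(i))$ by hypothesis, and $\bN(\Psi(i))$ is a submodule of $N$ since it is the common kernel of a family of linear maps. Therefore $\phi(m) \in \bN(\Psi(i))$, giving the first displacement condition. For the support, an iterated application of Lemma~\ref{lem:support_sum} to the finite sum above yields
\[
\supp_\bN(\phi(m)) \subseteq \bigcup_{\substack{b \in B(i) \\ \delta_b(m) \neq 0}} \supp_\bN(q_b).
\]
For any such $b$, the hypothesis gives $\supp_\bN(q_b) \subseteq N_{\Psi(i)}(f(p(b)))$, and by definition of $\supp_\bM(m)$ we have $p(b) \in \supp_\bM(m)$, hence $f(p(b)) \in f(\supp_\bM(m))$. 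Combining these containments gives $\supp_\bN(\phi(m)) \subseteq N_{\Psi(i)}(f(\supp_\bM(m)))$, which is the second displacement condition.

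There is no real obstacle in this proof: everything reduces to unpacking Definitions~\ref{defn:rmod_overX},~\ref{defn:findisp}, and~\ref{defn:projective}, together with Lemma~\ref{lem:support_sum}. The only mild subtlety worth remarking on is that one must use the filtration condition on $\bM$ to restrict the (a priori possibly arbitrary) sum to indices in $B(i)$ when $m \in \bM(i)$; the projective basis hypothesis itself plays no role beyond ensuring the sum is finite and $R$-linear.
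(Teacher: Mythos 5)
Your proof is correct and follows essentially the same route as the paper's: use the projective basis to ensure the defining sum is finite and $R$-linear, restrict the sum to $B(i)$ when $m \in \bM(i)$ (which is immediate from the definition of $\bM(i)$, not strictly from item~(\ref{item:moddef_filtr})), place $\phi(m)$ in $\bN(\Psi(i))$ since each $q_b$ with $b\in B(i)$ lies there, and trace supports via Lemma~\ref{lem:support_sum} together with the observation that $\delta_b(m)\neq 0$ forces $p(b)\in\supp_\bM(m)$. No gaps.
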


\begin{proof}
	Since each $\delta_b$ is a homomorphism and for each $m\in M$, $\delta_b(m)=0$ for almost all $b$, we deduce that $\phi$ is a well-defined $R$-module  homomorphism.  If $m\in \bM(i)$, then all non-zero summands of $\sum_{b\in B}\delta_b(m)q_b$ are contained in $\bN({\Psi(i)})$, hence $\phi(m)\in \bN({\Psi(i)})$. It remains to show that if $y\in\supp_\bN(\phi(m))$, then $y\in N_{\Psi(i)}(f(\supp_\bM(m)))$.  Indeed, Lemma~\ref{lem:support_sum} implies that $y\in \supp_\bN(q_b)$ for some $b\in B(i)$ with $\delta_b(m)\neq 0$, so that $p(b)\in \supp_\bM(m)$. Thus  $y\in N_{\Psi(i)}(f(p(b)))\subseteq  N_{\Psi(i)}(f(\supp_\bM(m)))$ as required.
\end{proof}

We now show  projective  $R$-modules over a metric space satisfy the following lifting property:
\begin{prop}\label{prop:projective}
	Let $X$ and $Y$ be  metric spaces and let $f:X\to Y$ be an $\Upsilon$-bornologous map.
	Let $\bP$ be a  projective $R$-module over $X$ of displacement $\Phi$, and let $\bM$ and $\bN$ be $R$-modules over $Y$. Suppose $\psi:\bM\to \bN$  has  $\Omega$-uniform preimages.
	If $\phi:\bP\to \bN$ has  displacement $\Phi$ over $f$ and $\im(\phi)\subseteq \im(\psi )$, then there exists a map $\xi:\bP\to \bM$ of  displacement $\Psi$ over $f$ such that $\psi\circ \xi=\phi$. Moreover, $\Psi$ depends only on $\Upsilon$, $\Phi$ and $\Omega$.

	\begin{figure}[htp]
		\[
			\begin{tikzcd}
				& \bP \arrow[d, "\phi"] \arrow[ld, "\xi"', dashed] \\
				\bM \arrow[r, "\psi"'] & \bN
			\end{tikzcd}
		\]
		\caption{The lifting property for  projective $R$-modules over a metric space.}
	\end{figure}
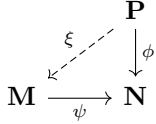
\end{prop}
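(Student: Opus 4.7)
The plan is to mimic the classical lifting argument for projective modules, with the additional bookkeeping needed to track displacements. The strategy is to choose preimages for each basis element via the uniform preimage property, and then extend $R$-linearly using the projective basis formula; Lemma~\ref{lem:define_fin_disp} will then package this into a finite displacement map, and $\psi \circ \xi = \phi$ will follow automatically from $R$-linearity.

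First, I would fix a projective basis $\{p_b\}_{b\in B}$ of $\bP=(P,B,\delta,p,\{B(i)\})$ of displacement $\Phi$. For each $b\in B(i)$, I note that $p_b\in \bP(\Phi(i))$, so $\phi(p_b)\in \bN(\Phi^2(i))$, since $\phi$ has displacement $\Phi$ over $f$. Moreover,
\[
\supp_\bN(\phi(p_b))\subseteq N_{\Phi^2(i)}\bigl(f(\supp_\bP(p_b))\bigr)\subseteq N_{\Phi^2(i)+\Upsilon(\Phi(i))}\bigl(f(p(b))\bigr),
\]
using that $\supp_\bP(p_b)\subseteq N_{\Phi(i)}(p(b))$ and that $f$ is $\Upsilon$-bornologous. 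In particular, setting $D_i\coloneqq 2(\Phi^2(i)+\Upsilon(\Phi(i)))$, we get $\diam(\supp_\bN(\phi(p_b)))\leq D_i$, and $\phi(p_b)\in \bN(\Phi^2(i))\cap\im(\psi)$ by the hypothesis $\im(\phi)\subseteq \im(\psi)$.

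Next, I apply the uniform preimage property of $\psi$ to produce, for each $b\in B(i)$, some $q_b\in \bM$ with $\psi(q_b)=\phi(p_b)$, satisfying $q_b\in \bM(\Omega(\Phi^2(i),\lceil D_i\rceil))$ and $\supp_\bM(q_b)\subseteq N_{\Omega(\Phi^2(i),\lceil D_i\rceil)}(\supp_\bN(\phi(p_b)))$. Defining
\[
\Psi(i)\coloneqq \Omega(\Phi^2(i),\lceil D_i\rceil)+\Phi^2(i)+\Upsilon(\Phi(i)),
\]
the triangle inequality gives $q_b\in \bM(\Psi(i))$ and $\supp_\bM(q_b)\subseteq N_{\Psi(i)}(f(p(b)))$. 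I may need to replace $\Psi$ by an increasing envelope, which is harmless. Crucially, $\Psi$ depends only on $\Phi$, $\Upsilon$ and $\Omega$, as required.

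Now Lemma~\ref{lem:define_fin_disp} applies to the family $\{q_b\}_{b\in B}$ and produces a map $\xi:\bP\to \bM$ of displacement $\Psi$ over $f$, given by $\xi(m)=\sum_{b\in B}\delta_b(m)q_b$. To verify $\psi\circ \xi=\phi$, I compute for any $m\in \bP$:
\[
\psi(\xi(m))=\sum_{b\in B}\delta_b(m)\,\psi(q_b)=\sum_{b\in B}\delta_b(m)\,\phi(p_b)=\phi\Bigl(\sum_{b\in B}\delta_b(m)p_b\Bigr)=\phi(m),
\]
using the projective basis identity $\sum_b\delta_b(m)p_b=m$, with only finitely many nonzero terms, so that $R$-linearity of $\phi$ applies. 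There is no real conceptual obstacle here; the only care needed is in verifying that all the constants assemble into a single increasing function $\Psi$ that depends solely on $\Phi$, $\Upsilon$, $\Omega$, which is the content of the quantitative bookkeeping above.
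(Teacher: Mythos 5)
Your proof is correct and follows the same approach as the paper: push basis elements forward, bound displacement via bornologousness of $f$, apply the $\Omega$-uniform preimage property to get lifts $q_b$, and assemble via Lemma~\ref{lem:define_fin_disp}. The only cosmetic difference is that the paper collects the constants into a single $i_2=\Phi^2(i)+\Upsilon(\Phi(i))$ (slightly overshooting the filtration level of $\phi(p_b)$ for notational convenience), whereas you keep the tighter $\Phi^2(i)$; both are valid, and your acknowledgment that $\Psi$ may need an integer increasing envelope handles the remaining bookkeeping.
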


\begin{proof}
	We suppose $\bP=(P,B,\delta,p,\{B(i)\})$ and that $\{p_b\}_{b\in B}$ is a projective basis of $\bP$ with displacement $\Phi$.
	Let $i\in \bbN$, and set $i_1=\Phi(i)$ and $i_2\coloneqq \Phi(i_1)+\Upsilon(i_1)$. Fixing $b\in B(i)$, we see $p_b\in \bP(i_1)$ and $\supp_\bP(p_b)\subseteq N_{i_1}(p(b))$.  Therefore $\phi(p_b)\in \bN(i_2)$ and
	\[\supp_\bN(\phi(p_b))\subseteq N_{\Phi(i_1)}(f(\supp_\bP(p_b)))\subseteq N_{i_2}(f(p(b))).\]   In particular, $\supp_\bN(\phi(p_b))$ has diameter at most $2i_2$. Setting $i_3\coloneqq \Omega(i_2,2i_2)+i_2$, we see there exists some $m_b\in \bM(i_3)$ with $\supp_\bM(m_b)\subseteq N_{i_3}(f(p(b)))$ and $\psi(m_b)=\phi(p_b)$.

	We can thus apply Lemma~\ref{lem:define_fin_disp} to define a map $\xi:\bP\to \bM$ given by  $p\mapsto  \sum_{b\in B}\delta_b(p)m_b$. By the choice of $\{m_b\}$ and Lemma~\ref{lem:define_fin_disp},  $\xi$ is  of  displacement $\Psi$  over $f$, where  $\Psi$ depends only on $\Upsilon$, $\Phi$ and $\Omega$. For each $p\in \bP$, we have
	\[\psi(\xi(p))=\psi\left(\sum_{b\in B} \delta_b(p) m_b\right)=\sum_{b\in B} \delta_b(p)\psi (m_b)=\phi\left(\sum_{b\in B} \delta_b(p)p_b\right)=\phi (p)\]
	as required.
\end{proof}

We now show that projective $RG$-modules induce projective $R$-modules over $G$.
As always, we assume that whenever $G$ is a countable group, it is equipped with a proper left-invariant metric.
\begin{defn}
	Let $G$ be a countable group.
	A \emph{$G$-induced projective $R$-module over $G$} is a projective $R$-module $\bM=(M,B,\delta,p,\{B(i)\},\{m_b\})$ over $G$ satisfying the following:
	\begin{itemize}
		\item $M$ is an $RG$-module;
		\item $G$ acts on $B$ so that $m_{gb}=gm_b$ and $\delta_{gb}(m)=\delta_b({g^{-1}m})$ for all $b\in B$ and $g\in G$;
		\item $p:B\to G$ is $G$-equivariant;
		\item For each $i$, $B(i)=GB(i)$ and $B(i)$ consists of finitely many $G$-orbits.
	\end{itemize}
\end{defn}
It is clear that if $\bM$ is a $G$-induced projective $R$-module over $G$, then $\bM(i)$ is an  $RG$-submodule of $\bM$, and $\supp_\bM(gm)=g\supp_\bM(m)$ for all $b\in B$ and $m\in \bM$. Moreover, the underlying $RG$-module $M$ is necessarily countably generated.

\begin{lem}\label{lem:ginduced_produce}
	Every countably generated projective $RG$-module $M$ can be endowed with the structure of a $G$-induced projective $R$-module $\bM$ over $G$.
\end{lem}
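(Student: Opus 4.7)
The plan is to realise $M$ as a direct $RG$-summand of a countably generated free $RG$-module, and then transfer the natural $G$-induced structure (as in Example~\ref{exmp:orbit}) to $M$ via the projection onto the summand. First, since $M$ is countably generated and projective, we will produce a free $RG$-module $F$ with countable basis $C=\{c_k\}_{k\in\bbN}$ together with an $RG$-linear projection $\pi\colon F\to M$ split by an inclusion $\iota\colon M\hookrightarrow F$. Setting $B\coloneqq GC$ (a free $R$-basis of $F$) with dual $R$-basis $\{\epsilon_b\}_{b\in B}$, I would then define $m_b\coloneqq\pi(b)$, $\delta_b(m)\coloneqq\epsilon_b(\iota(m))$, and $p(gc_k)\coloneqq g$. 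The identity $\sum_b \delta_b(m)\,m_b=\pi\iota(m)=m$ verifies that $\{(m_b,\delta_b)\}$ is a projective basis of $M$ as an $R$-module, and the $G$-equivariance of $\pi$, $\iota$, and $p$ will immediately yield the compatibilities $m_{gb}=gm_b$, $\delta_{gb}(m)=\delta_b(g^{-1}m)$, and the $G$-equivariance of $p$.

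The main step is the construction of the filtration $\{B(i)\}$, which must be $G$-invariant with finitely many $G$-orbits in each $B(i)$ and compatible with the displacement requirement of a projective $R$-module over $G$. For each $k$, the element $\iota\pi(c_k)\in F$ is a finite $R$-linear combination of elements of $B$, so the set
\[
S_k\coloneqq\bigl\{(k',g')\in\bbN\times G\,:\,\epsilon_{g'c_{k'}}(\iota\pi(c_k))\neq 0\bigr\}
\]
is finite. I would then pick an increasing function $\alpha\colon\bbN\to\bbN$ with $\alpha(k)\geq k$ and with $\max(k',\lceil d(g',e)\rceil)\leq\alpha(k)$ for every $(k',g')\in S_k$, and define $B(i)\coloneqq\{gc_k\mid g\in G,\,\alpha(k)\leq i\}$. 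This filtration is $G$-invariant, has at most $i+1$ orbits in $B(i)$, and exhausts $B$; combined with Remark~\ref{rem:rmod_conditions}, this upgrades the data already assembled to an $R$-module over $G$.

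With $\alpha$ in hand, the displacement bound follows immediately. For $b=gc_k\in B(i)$, writing $\iota(m_{c_k})=\iota\pi(c_k)$ shows that $m_{c_k}$ is supported on $\{g'c_{k'}\mid(k',g')\in S_k\}\subseteq B(\alpha(k))\subseteq B(\alpha(i))$, giving $m_{c_k}\in\bM(\alpha(i))$ and $\supp_\bM(m_{c_k})\subseteq N_{\alpha(k)}(e)$. Applying $g$ and using the $G$-invariance of $\bM(\alpha(i))$ then yields $m_b\in\bM(\alpha(i))$ and $\supp_\bM(m_b)\subseteq N_{\alpha(i)}(p(b))$, i.e.\ displacement $\Psi\coloneqq\alpha$. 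I do not expect a serious obstacle in this argument: the content is essentially bookkeeping, and the function $\alpha$ is designed precisely to absorb the finite but a priori unbounded spread of $\iota\pi(c_k)$ across the basis $B$.
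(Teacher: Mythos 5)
Your proof follows the same route as the paper's: split $M$ off a countably generated free $RG$-module, pull back the free module's natural $G$-induced structure via the projection and the $RG$-linear section, and then use the $G$-action to reduce the displacement estimate to the $RG$-transversal of the basis. The one place to flag is the displayed chain $\{g'c_{k'}\mid(k',g')\in S_k\}\subseteq B(\alpha(k))\subseteq B(\alpha(i))$: membership of $g'c_{k'}$ in $B(\alpha(k))$ would require $\alpha(k')\leq\alpha(k)$, but your construction of $\alpha$ only guarantees $k'\leq\alpha(k)$, and when $k'>k$ you may well have $\alpha(k')>\alpha(k)$. The conclusion you actually need is $\{g'c_{k'}\}\subseteq B(\alpha(i))$, and that does follow directly --- from $k'\leq\alpha(k)\leq i$ and monotonicity one gets $\alpha(k')\leq\alpha(i)$ --- so the intermediate containment should simply be dropped rather than asserted. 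With that repair the argument is sound. The paper handles the bookkeeping slightly differently, fixing an arbitrary finite filtration $\{C(i)\}$ of the generating set first and only afterwards extracting a displacement function $\Phi$, which keeps the filtration and the displacement cleanly decoupled and avoids exactly the kind of self-reference that tripped you up here.
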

\begin{proof}
	We pick a countable generating set $C$ of $M$ as an $RG$-module, and let $\{C(i)\}$ be a filtration of $C$ with each $C(i)$ finite. Let $F_C$ be the free $RG$-module with $RG$-basis $\{e_c\mid c\in C\}$, and let  $\pi:F_C\to M$ be the surjection  given by      $\pi(e_c)=c$. Since $M$ is a projective $RG$-module, there exists a $G$-equivariant section $s:M\to F_C$ of $\pi$.

	Let  $B=\{ge_c\mid g\in G, c\in C\}\subseteq F_C$, which is a basis of $F_C$ as a free $R$-module, and let $\{\hat\delta_b\}_b$ be the dual basis to $B$, i.e.\ \[\delta_b(b')=\begin{cases}
			1 & \textrm{if}\;b=b', \\0& \textrm{if}\; b\neq b'.
		\end{cases}\] For each $b\in B$, define $\delta_b:M\to R$ by $\hat\delta_b\circ s$.
	For each $m\in M$, we have \[s(m)=\sum_{b\in B}\hat\delta_b(s(m))b=\sum_{b\in B}\delta_b(m)b.\]  Setting $m_b\coloneqq \pi(b)$ for each $b\in B$, we see that $\cB=\{(m_b,\delta_b)\}_{b\in B}$ is a projective basis of $M$ as an $R$-module. Since $s$ and $\pi$ are $G$-equivariant, we  note that  $m_{gb}=gm_b$ and $\delta_b(g^{-1}m)=\delta_{gb}(m)$.

	Set $B(i)\coloneqq \{ge_c\mid g\in G, c\in C(i)\}$, noting that $\cB=\{B(i)\}$ is a filtration of $B$. We define $p:B\to G$ by $p(ge_c)=g$ for each $g\in G$ and $c\in C$.
	We define $\bM$ to be $(M,B,\delta,p,\{B(i)\})$. All that remains to be shown is that $\{m_b\}$  has displacement $\Phi$ for some $\Phi$.
	For each $i\in \bbN$, as $C(i)$ is finite, we can pick $n_i$ such that $m_{e_c}\in \bM(n_i)$ and $\supp_\bM(m_{e_c})\subseteq N_{n_i}(1)$ for all $c\in C(i)$. By $G$-equivariance, we see that $m_b\in \bM(n_i)$ and $\supp(m_b)\subseteq N_{n_i}(p(b))$ for all $b\in B(i)$.  Defining $\Phi(i)\coloneqq \max\{n_j\mid j\leq i \}$, we see that $\{m_b\}$ is a projective basis of displacement $\Phi$.
\end{proof}
\begin{rem}\label{rem:staggering_ginv}
	Let $G$ be a countable group and let $\{M_j\}_{j\in \bbN}$ be a collection of countably generated $RG$-modules. The proof of Lemma~\ref{lem:ginduced_produce} ensures that we can define a staggered family  $\{\bM_j\}$ of $G$-induced projective modules over $G$ in which each $\bM_j$ has underlying $RG$-module $M_j$.
\end{rem}

We now discuss some  properties of $G$-induced modules:
\begin{prop}\label{prop:ginduced}
	Let $G$ be a countable group.
	\begin{enumerate}
		\item Suppose $M$ is a countably generated projective $RG$-module and that $\bM$ and $\bM'$ are two $G$-induced projective $R$-modules over $G$ with the same underlying $RG$-module $M$. Then $\bM$ and $\bM'$ are canonically isomorphic.\label{item:ginduced1}
		\item Suppose $\bM$ and $\bN$ are  $G$-induced projective $R$-modules over $G$. Then any $G$-equivariant map $\phi:\bM\to\bN$  has finite displacement over $\id_G$.\label{item:ginduced2}
	\end{enumerate}
\end{prop}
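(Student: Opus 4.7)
The plan for both parts is the same: construct the relevant finite-displacement maps via an appeal to Lemma~\ref{lem:define_fin_disp}, using the $G$-equivariance of the structures together with left-invariance of the metric on $G$ to promote bounds on finitely many orbit representatives to uniform bounds over all of $B(i)$.

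For Part (\ref{item:ginduced1}), I would write $\bM=(M,B,\delta,p,\{B(i)\})$ and $\bM'=(M,B',\delta',p',\{B'(i)\})$ with their respective projective bases $\{m_b\}_{b\in B}$ and $\{m'_{b'}\}_{b'\in B'}$, and aim to show the canonical identification $\iota\colon\bM\to \bM'$ has finite displacement over $\id_G$. By symmetry, $\iota^{-1}$ would also, so $\iota$ would be a geometric isomorphism. Fixing $i\in\bbN$, since $B(i)$ consists of finitely many $G$-orbits, I would pick representatives $b_1,\dots,b_n$ and observe that each $m_{b_k}\in M$ lies in some $\bM'(j_k)$ by axiom~(\ref{item:moddef_filtr}) of Definition~\ref{defn:rmod_overX}, and that $\supp_{\bM'}(m_{b_k})$ is \emph{finite}: its expansion $m_{b_k}=\sum_{b'}\delta'_{b'}(m_{b_k})m'_{b'}$ in the projective basis of $\bM'$ has only finitely many non-zero terms. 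Thus for some $r_k$ we have $\supp_{\bM'}(m_{b_k})\subseteq N_{r_k}(p(b_k))$. For any $b=g\cdot b_k\in B(i)$, $G$-equivariance gives $m_b=gm_{b_k}\in g\bM'(j_k)=\bM'(j_k)$ (since $\bM'(j_k)$ is $G$-invariant) and $\supp_{\bM'}(m_b)=g\cdot\supp_{\bM'}(m_{b_k})\subseteq N_{r_k}(p(b))$ by left-invariance of the metric. Taking $\Psi(i)\coloneqq \max_{k\leq n}\max(j_k,r_k)$ and invoking Lemma~\ref{lem:define_fin_disp} with $f=\id_G$ would conclude Part~(\ref{item:ginduced1}).

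Part (\ref{item:ginduced2}) follows along nearly identical lines. Setting $q_b\coloneqq \phi(m_b)$, I would again use the fact that for each orbit representative $b_k\in B(i)$, the element $q_{b_k}\in\bN$ automatically lies in some $\bN(j_k)$ by axiom~(\ref{item:moddef_filtr}), and has finite support (expanding in the projective basis of $\bN$), hence $\supp_\bN(q_{b_k})\subseteq N_{r_k}(p(b_k))$ for some $r_k$. The $G$-equivariance of $\phi$ then gives $q_{gb_k}=g q_{b_k}$, and left-invariance transfers the bounds to every $b\in B(i)$. One final application of Lemma~\ref{lem:define_fin_disp} to $m\mapsto\sum_b\delta_b(m)q_b=\phi(m)$ yields the required finite displacement.

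The only subtle point — really the only thing to verify carefully — is that the support of each individual basis image $m_{b_k}$ (resp.\ $\phi(m_{b_k})$) in the \emph{other} module structure is bounded. This is not automatic from the filtration axiom alone, but is forced by expanding in a projective basis, where the sum has only finitely many non-zero terms. Once this is noted, both parts become essentially a bookkeeping exercise: taking a maximum of constants over the finitely many $G$-orbits in $B(i)$ and transporting bounds via left-invariance. There is no significant homological obstacle.
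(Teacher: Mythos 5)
Your proof is correct and follows essentially the same route as the paper: reduce to finitely many $G$-orbit representatives in $B(i)$, use the projective-basis structure to see the relevant supports are finite (hence bounded), transport the bounds over the whole of $B(i)$ by $G$-equivariance of $p$, $\delta$, $\{m_b\}$, $\phi$, and the left-invariance of the metric, and conclude via Lemma~\ref{lem:define_fin_disp}. The only organizational difference is that the paper proves part~(\ref{item:ginduced2}) first and then dispatches part~(\ref{item:ginduced1}) in one line as the special case $\phi = \id_M$ (the identity is $G$-equivariant with $G$-equivariant inverse), whereas you prove part~(\ref{item:ginduced1}) directly with a parallel argument; the two are the same argument written twice, so you could have factored it as the paper does. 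Your flagged ``subtle point'' --- that finiteness of $\supp_{\bN}(\phi(m_{b_k}))$ follows from expanding in the projective basis of $\bN$ rather than from the filtration axiom alone --- is accurate and worth stating; the paper leaves this boundedness implicit in its assertion that an increasing $\Psi$ with $\supp_\bN(\phi(m_c))\subseteq N_{\Psi(i)}(p(c))$ exists.
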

\begin{proof}
	(\ref{item:ginduced1}): This is a special case of (\ref{item:ginduced2}), since the identity map  $\id_M$ is $G$-equivariant with $G$-equivariant inverse.

	(\ref{item:ginduced2}): Suppose $\bM=(M,B,\delta,p,\{B(i)\},\{m_b\})$, and that $B(i)=GC(i)$ for some finite $C(i)\subseteq B(i)$.  Since $C(i)$ is finite, we may choose an increasing function $\Psi:\bbN\to\bbN$ such that for all $i\in \bbN$, the following hold:
	\begin{itemize}
		\item $\phi(m_c)\in \bN(\Psi(i))$ for all $c\in C(i)$;
		\item $\supp_{\bN}(\phi(m_c))\subseteq N_{\Psi(i)}(p(c))$ for all $c\in C(i)$.
	\end{itemize}

	Thus for each $b\in B(i)$, we have $b=gc$ for some $c\in C(i)$, so that $\phi(m_b)=g\phi(m_c)\in \bN(\Psi(i))$ and $\supp(\phi(m_b))\subseteq gN_{\Psi(i)}(p(c))=N_{\Psi(i)}(p(b))$. It follows from  Lemma~\ref{lem:define_fin_disp} that $\phi$ has displacement $\Psi$ over $\id_G$.
\end{proof}

\begin{prop}\label{prop:ginv_loc_fin_type}
	Let $G$ be a countable group and let $\bM$ be a $G$-induced projective $R$-module over $G$. Then $\bM$ is proper, and is of finite height if and only if it is finitely generated as an $RG$-module.
\end{prop}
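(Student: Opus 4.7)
The plan is to handle properness and the finite-height/finitely-generated equivalence separately, using in each case the key structural features of a $G$-induced projective module: $G$-equivariance of the control function $p\colon B\to G$, $G$-invariance of each filtration level $B(i)$ together with the fact that $B(i)$ consists of only finitely many $G$-orbits, and the interaction $\delta_{gb}(m)=\delta_b(g^{-1}m)$ between the coordinates and the $G$-action.

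For properness, I would fix $i\in\bbN$ and a bounded set $F\subseteq G$. Since $G$ is countable and equipped with a proper left-invariant metric, $F$ is finite. Write $B(i)=\bigsqcup_{k=1}^{n}Gc_{k}$ for orbit representatives $c_{1},\dots,c_{n}\in B(i)$, and set $h_{k}\coloneqq p(c_{k})$. By $G$-equivariance of $p$, $p(gc_{k})=gh_{k}$, so
\[
p^{-1}(F)\cap Gc_{k}\;=\;\{gc_{k}\mid g\in Fh_{k}^{-1}\},
\]
which has cardinality at most $|F|<\infty$. Summing over the finitely many orbits gives $|p^{-1}(F)\cap B(i)|<\infty$.

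For the height/finiteness equivalence, the forward direction is immediate: if $\bM=\bM(i)$, then for every $m\in\bM$ the projective basis expansion $m=\sum_{b\in B}\delta_{b}(m)m_{b}$ involves only $b\in B(i)$, so $\{m_{b}\mid b\in B(i)\}$ generates $\bM$ as an $R$-module; using $m_{gb}=gm_{b}$ and the fact that $B(i)$ has finitely many $G$-orbits, a set of orbit representatives $\{m_{c_{1}},\dots,m_{c_{n}}\}$ generates $\bM$ as an $RG$-module.

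For the converse, suppose $\bM$ is generated as an $RG$-module by a finite set $\{s_{1},\dots,s_{n}\}$. Each $s_{j}$ has finite support in the projective basis expansion, so there is some $i_{j}$ with $s_{j}\in\bM(i_{j})$; take $i\coloneqq\max_{j}i_{j}$. The main step is to verify that $\bM(i)$ is actually an $RG$-submodule of $\bM$. It is already an $R$-submodule since the coordinates are $R$-linear, so I need only check $G$-invariance: given $m\in\bM(i)$, $h\in G$, and $b\notin B(i)$, the element $b'\coloneqq h^{-1}b$ also lies outside $B(i)$ by $G$-invariance of $B(i)$, and the compatibility relation gives $\delta_{b}(hm)=\delta_{hb'}(hm)=\delta_{b'}(m)=0$. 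Thus $\bM(i)$ is an $RG$-submodule containing the generators $s_{1},\dots,s_{n}$, forcing $\bM=\bM(i)$.

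The arguments are all elementary; the only step requiring any thought is confirming that $\bM(i)$ is closed under the $G$-action, and this follows directly from the definitional compatibility $\delta_{gb}(m)=\delta_b(g^{-1}m)$ together with $GB(i)=B(i)$. No deeper input (such as the lifting property of projectives or the discussion of staggered families) is needed.
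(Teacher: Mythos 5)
Your proof is correct and takes essentially the same route as the paper: properness from $G$-equivariance of $p$ plus finitely many orbits in each $B(i)$, and the height/finite-generation equivalence from the fact that $\{\bM(i)\}_i$ is an exhaustive filtration by $RG$-submodules. The only difference is that you spell out the verification that $\bM(i)$ is $G$-invariant, which the paper records as a remark immediately after the definition of $G$-induced modules rather than inside this proof.
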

\begin{proof}
	Suppose $\bM=(M,B,\delta, p, \{B(i)\})$. The fact that $G$ is proper follows from the fact that $B(i)$ consists of finitely many $G$-orbits, $p$ is $G$-equivariant and  $G$ is equipped with a proper metric.
	If $\bM$ is of finite 	height, then $\bM=\bM(i)$ for some $i$, so that $\bM$ is generated by $B(i)$ as an $R$-module. Since $B(i)$ consists of finitely many $G$-orbits, $\bM$ is finitely  generated as an $RG$-module.
	For the converse, we note that $\bM=\cup_i\bM(i)$ is a filtration of $\bM$ by $RG$-modules. Thus if $\bM$ is finitely generated as an $RG$-module, then $\bM=\bM(i)$ for some $i$ sufficiently large.
\end{proof}

The uniform preimage property holds automatically for $G$-equivariant maps under a mild assumption on the ring $R$:
\begin{prop}\label{prop:ginv_unifpreim}
	If $R$ is a Noetherian ring, $G$ is a countable group and $\phi:\bM\to \bN$ is a $G$-equivariant map between $G$-induced projective $R$-modules over $G$, then $\phi$ has uniform preimages.
\end{prop}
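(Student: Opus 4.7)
My proof plan is to exploit the $G$-equivariance of $\phi$ together with the Noetherian hypothesis on $R$ to reduce the uniform preimage question to the ascending chain condition on a finitely generated $R$-module.

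First, I would use $G$-equivariance to reduce to preimages of ``localized'' elements. Given $n \in \im(\phi) \cap \bN(i)$ with $D = \lceil \diam \supp_\bN(n) \rceil$, pick any $x \in \supp_\bN(n)$; then $x^{-1} n \in \bN(i)$ (since $\bN(i)$ is $G$-invariant for a $G$-induced module) and $\supp_\bN(x^{-1} n) \subseteq N_D(1)$. Any preimage $m'$ of $x^{-1} n$ yields a preimage $m \coloneqq x m'$ of $n$ via $\phi(x m') = x \phi(m') = n$, and if $\supp_\bM(m') \subseteq N_L(1)$ and $m' \in \bM(L)$ then $m \in \bM(L)$ with $\supp_\bM(m) = x \supp_\bM(m') \subseteq N_L(x) \subseteq N_L(\supp_\bN(n))$. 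So it suffices to produce, for each pair $(i, D)$, a uniform bound $L(i, D)$ for preimages of $n \in \im(\phi) \cap \bN(i)$ satisfying $\supp_\bN(n) \subseteq N_D(1)$.

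Next I would introduce the localized submodules
\[
\bN'(i, D) \coloneqq \{n \in \bN(i) : \supp_\bN(n) \subseteq N_D(1)\}, \qquad \bM'(L) \coloneqq \{m \in \bM(L) : \supp_\bM(m) \subseteq N_L(1)\}.
\]
The crucial step is showing that $\bN'(i, D)$ is finitely generated over $R$. Writing the projective basis of $\bN$ as $\{n_b\}_{b \in B}$, for $n \in \bN'(i, D)$ the expansion $n = \sum_{b \in B} \delta_b(n) n_b$ has $\delta_b(n) \neq 0$ only when $b \in B(i)$ and $p(b) \in \supp_\bN(n) \subseteq N_D(1)$. By Proposition~\ref{prop:ginv_loc_fin_type}, $\bN$ is proper, so this index set is finite. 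Hence $\bN'(i, D)$ sits inside the $R$-span of a finite subset of $\bN$, and since $R$ is Noetherian, $\bN'(i, D)$ is itself a finitely generated, hence noetherian, $R$-module.

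Finally, set $S_L \coloneqq \phi(\bM'(L)) \cap \bN'(i, D)$. Since $\bM'(L) \subseteq \bM'(L+1)$, this is an ascending chain of $R$-submodules of $\bN'(i, D)$. Because every $m \in \bM$ has finite support (a consequence of the projective basis condition) and lies in some $\bM(k)$, we have $\bM = \bigcup_L \bM'(L)$, so $\bigcup_L S_L = \im(\phi) \cap \bN'(i, D)$. By noetherianness, the chain stabilizes at some value $L(i, D)$, meaning every $n \in \im(\phi) \cap \bN'(i, D)$ admits a preimage in $\bM'(L(i, D))$. Setting $\Omega(i, D) \coloneqq \max_{i' \leq i,\, D' \leq D} L(i', D')$ yields an increasing function, and combining with the $G$-translation reduction gives the required uniform preimage property. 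The main obstacle is the finite generation of $\bN'(i, D)$; this is precisely where the Noetherian hypothesis on $R$ is indispensable, since without it the chain $S_L$ could fail to terminate.
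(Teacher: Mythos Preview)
Your proof is correct and follows essentially the same approach as the paper. Both arguments reduce via $G$-equivariance to elements supported near the identity, observe that the localized module $\bN'(i,D)$ is finitely generated because $\bN$ is proper and $R$ is Noetherian, and then extract a uniform bound on preimages. The only cosmetic difference is in how Noetherianness is invoked: the paper notes that $\bN'(i,D)\cap\im(\phi)$ is finitely generated, picks preimages for a finite generating set, and takes linear combinations; you instead run the ascending chain $S_L=\phi(\bM'(L))\cap\bN'(i,D)$ and use the ascending chain condition directly.
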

\begin{proof}
	Since $\bN$ is $G$-induced, it is proper by Proposition~\ref{prop:ginv_loc_fin_type}. Thus for each $i,D\in \bbN$, the set $\Omega_{i,D}\coloneqq \{n\in\bN(i)\mid \supp_\bN(n)\subseteq N_D(e) \}$ is a  finitely generated $R$-module. Since $R$ is Noetherian,   the submodule $\Omega_{i,D}\cap \im(\phi)$ is also finitely generated.  Hence, there is a number $\Psi(i,D)$ such that for each $n\in \Omega_{i,D}\cap \im(\phi)$, there is some $m\in \bM(\Psi(i,D))$ with $\phi(m)=n$ and $\supp_\bM(m)\subseteq N_{\Psi(i,D)}(e)$.

	Now suppose $n\in \bN(i)\cap \im(\phi)$ with $\diam(\supp(n))\leq D$. If $g\in \supp_\bN(n)$, we see that  $g^{-1}n\in \Omega_{i,D}\cap \im(\phi)$. Then there is some $m\in \bM(\Psi(i,D))$ with $\phi(m)=g^{-1}n$ and $\supp_\bM(m)\subseteq N_{\Psi(i,D)}(e)$. Hence we deduce that $\phi(gm)=n$, that $m\in  \bM(\Psi(i,D))$, and that $\supp_\bM(gm)\subseteq N_{\Psi(i,D)}(g)\subseteq N_{\Psi(i,D)}(\supp_\bN(n))$. This demonstrates that $\phi$ has uniform preimages.
\end{proof}

If $\bM$ is a projective $R$-module over $X$ of height $i_0<\infty$ and $\{m_b\}$ is an arbitrary projective basis of $\bM$, then it is possible that $m_b\neq 0$ for some $b\in B\setminus B(i_0)$. However, we can always pick a projective basis such that this does not occur:
\begin{lem}\label{lem:proj_basis_finheight}
	Let $\bM=(M,B, \delta, p,\{B(i)\})$ be a projective $R$-module over $X$ of displacement $\Phi$ and height $i_0<\infty$. Then there exists a projective basis $\{m_b\}$ of $\bM$ of displacement $\Phi$ such that $m_b=0$ for all $b\in B\setminus B(i_0)$.
\end{lem}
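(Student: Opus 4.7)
The plan is to modify the given projective basis by truncating it: we simply replace $m_b$ with $0$ for every $b\in B\setminus B(i_0)$ and leave the rest untouched. The only real content is checking that the coordinates $\{\delta_b\}$ already annihilate all of $M$ outside $B(i_0)$, so nothing is lost in the expansion formula.

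Concretely, let $\{m_b\}_{b\in B}$ be the given projective basis of displacement $\Phi$, and define
\[
m_b' \;=\; \begin{cases} m_b & \text{if } b\in B(i_0),\\ 0 & \text{if } b\in B\setminus B(i_0).\end{cases}
\]
The first step is to show that $\{(m_b',\delta_b)\}_{b\in B}$ is still a projective basis of the underlying $R$-module $M$. Because $\bM$ has height $i_0$, we have $M=\bM(i_0)$, so by the definition of $\bM(i_0)$ every $m\in M$ satisfies $\delta_b(m)=0$ for all $b\in B\setminus B(i_0)$. Hence
\[
\sum_{b\in B}\delta_b(m)\,m_b' \;=\; \sum_{b\in B(i_0)}\delta_b(m)\,m_b \;=\; \sum_{b\in B}\delta_b(m)\,m_b \;=\; m,
\]
and only finitely many terms are non-zero since this was already true for the original basis.

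The second step is to verify that $\{m_b'\}$ is a projective basis of $\bM$ of displacement $\Phi$. For $b\in B(i)\cap B(i_0)$ we have $m_b'=m_b$, so $m_b'\in \bM(\Phi(i))$ and $\supp_\bM(m_b')\subseteq N_{\Phi(i)}(p(b))$ by the hypothesis on the original basis. For $b\in B(i)\setminus B(i_0)$ we have $m_b'=0$, which lies in every $\bM(j)$ and has empty support, so both conditions hold trivially. Therefore $\{m_b'\}$ has displacement $\Phi$, and by construction it vanishes on $B\setminus B(i_0)$.

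There is essentially no obstacle here; the key observation, which does all the work, is that on a finite-height module the coordinates indexed by $B\setminus B(i_0)$ are identically zero, so the corresponding basis elements never contribute to the reconstruction $m=\sum_b\delta_b(m)m_b$ and can be safely zeroed out.
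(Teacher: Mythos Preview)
Your proof is correct and follows essentially the same approach as the paper: both observe that $\delta_b$ vanishes identically for $b\in B\setminus B(i_0)$ since $\bM=\bM(i_0)$, and then truncate the given projective basis by zeroing out those entries. You spell out the verification of the displacement condition and the projective-basis identity in slightly more detail than the paper, but the argument is the same.
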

\begin{proof}
	Since $\bM=\bM(i_0)$, we see $\delta_b=0$ for all $b\in B\setminus B(i_0)$. By hypothesis, there exists a projective basis $\{m'_b\}$ of $\bM$ of displacement $\Phi$. Setting \[m_b\coloneqq \begin{cases}
			m'_b & \textrm{if}\;b\in B(i_0), \\ 0 & \textrm{if}\;b\in B\setminus B(i_0),
		\end{cases}\] we see that $\{m_b\}$ is the required projective basis of $\bM$.
\end{proof}

We now show pullbacks, pushforwards and  direct sums preserve projective $R$-modules over a metric space:

\begin{prop}\label{prop:projective_operations_second}
	Let $\bM$ be a projective $R$-module over $X$.
	\begin{enumerate}
		\item If $f:X\to Y$ is a coarse embedding, then $f_*\bM$ is a projective $R$-module over $Y$.\label{item:projective_operations1}
		\item If $f:Y\to X$ is a coarse embedding, then $f^*\bM$ is a projective $R$-module over $Y$.\label{item:projective_operations2}
		\item If $\bN$ is a projective $R$-module over $X$, then  so is $\bM\oplus \bN$.\label{item:projective_operations3}
	\end{enumerate}
\end{prop}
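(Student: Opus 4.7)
My plan is to handle all three parts by producing, from a projective basis $\{m_b\}_{b\in B}$ of $\bM$ of displacement $\Phi$, a projective basis of the output module whose displacement is controlled in terms of $\Phi$ and the control functions in play. In each case the candidate basis is essentially the same collection $\{m_b\}$ (or its disjoint union with a basis of $\bN$ in part (3)), the underlying projective-basis identity $\sum_b\delta_b(m)m_b=m$ is inherited for free, and the only real work is to verify the two geometric conditions of Definition~\ref{defn:projective} with a new displacement function.

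For part (1), $f_*\bM$ shares with $\bM$ the indexing set $B$, the coordinates $\delta$ and the filtration $\{B(i)\}$, so $(f_*\bM)(i)=\bM(i)$ as submodules of the common underlying $R$-module, and the condition $m_b\in(f_*\bM)(\Phi(i))$ for $b\in B(i)$ is immediate. For the support condition I would use $\supp_{f_*\bM}(m_b)=f(\supp_\bM(m_b))\subseteq N_{\Upsilon(\Phi(i))}(f(p(b)))$, where $\Upsilon$ is a bornologous control function of $f$, giving a projective basis of displacement $\max(\Phi,\Upsilon\circ\Phi)$. Part (3) is equally short: by Proposition~\ref{prop:dirsum_overX}, $\{m_b\}_{b\in B}\cup\{n_c\}_{c\in C}$ is a projective basis of $\bM\oplus\bN$ indexed by $B\sqcup C$, and since $(\bM\oplus\bN)(i)=\bM(i)\oplus\bN(i)$ and supports in the direct sum are unions of the component supports, its displacement is just the pointwise maximum of the two input displacements.

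The main obstacle is part (2), because the pullback simultaneously refines the filtration $\{B(i)\}$ on the indexing set to $\{f^*B(i)\}$ and, via Lemma~\ref{lem:induced_mod}(1), refines the filtration on the underlying module. My plan is to still use $\{m_b\}$ as a candidate basis and carefully quantify these refinements. Fix $b\in f^*B(i)$, so $b\in B(i)$ and $p(b)\in N_i(f(X))$; then $m_b\in\bM(\Phi(i))$ and $\supp_\bM(m_b)\subseteq N_{\Phi(i)}(p(b))\subseteq N_{\Phi(i)+i}(f(X))$, so Lemma~\ref{lem:induced_mod}(1) places $m_b\in f^*\bM(\Phi(i)+i)$, yielding the filtration condition. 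For the support condition, I would write $f$ as a $(\Lambda,\Upsilon)$-coarse embedding with weak inverse $\widetilde\Lambda$; for any $b'\in B$ with $\delta_{b'}(m_b)\neq 0$, the points $p(b)$ and $p(b')$ lie within distance $\Phi(i)$ of each other and each within distance $\Phi(i)+i$ of $f(X)$, so by the defining property of $\pi$ and the triangle inequality
\[
d(f(\pi(p(b))),f(\pi(p(b'))))\leq 2\Phi(i)+2i,
\]
and hence $d(\pi(p(b)),\pi(p(b')))\leq\widetilde\Lambda(2\Phi(i)+2i)$. Setting $\Psi(i):=\max(\Phi(i)+i,\widetilde\Lambda(2\Phi(i)+2i))$ then exhibits $\{m_b\}$ as a projective basis of $f^*\bM$ of displacement $\Psi$. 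This triangle-inequality step involving $\widetilde\Lambda$ is the only delicate part of the proposition; the rest is essentially bookkeeping built directly out of the definitions and Lemma~\ref{lem:induced_mod}.
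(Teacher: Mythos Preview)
Your proposal is correct and follows essentially the same approach as the paper. The only differences are cosmetic: in part~(2) you write $f(X)$ where you mean $f(Y)$ (since here $f:Y\to X$), and you carry out the support estimate by hand on individual $b'$ with $\delta_{b'}(m_b)\neq 0$ rather than invoking Lemma~\ref{lem:induced_mod}(\ref{lemitem:induced_mod2}), but the resulting bound $\widetilde\Lambda(2\Phi(i)+2i)$ is identical to the paper's.
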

\begin{proof}
	We suppose  $\bM=(M,B,\delta,p,\{B(i)\})$ has a projective basis $\{m_b\}_{b\in B}$ of displacement $\Phi$, and that $f$ is a $(\Lambda, \Upsilon)$-coarse embedding.

	(\ref{item:projective_operations1}): Let $\iota:\bM \to f_*\bM$ be the canonical identification. For each $b\in B(i)$, we have 	$\iota(m_b)\in f_*\bM(\Phi(i))$ and $\supp_{f_*\bM}(\iota(m_b))=f(\supp_\bM(m_b))\subseteq f(N_{\Phi(i)}(p(b)))\subseteq N_{\Upsilon(\Phi(i))}(f(p(b)))$, showing that $\{\iota(m_b)\}$ is a projective basis of $f_*\bM$.

	(\ref{item:projective_operations2}): Let $\iota:f^*\bM \to \bM$ be the canonical identification, and let $n_b\coloneqq \iota^{-1}(m_b)$ for each $b\in B$.  For each $b\in f^*B(i)$, we have $d(p(b),f(Y))\leq i$ and so $m_b\in \bM(\Phi(i))$ and $\supp_\bM(m_b)\subseteq N_{\Phi(i)}(p(b))\subseteq N_{\Phi(i)+i}(f(Y))$. By Lemma~\ref{lem:induced_mod}, this implies $n_b\in f^*\bM(\Phi(i)+i)$ and
	\begin{align*}
		f(\supp_{f^*\bM}(n_b)) & \subseteq N_{\Phi(i)+i}(\supp_\bM(m_b)) \subseteq N_{2\Phi(i)+i}(p(b))\subseteq N_{2\Phi(i)+2i}((f\circ\pi\circ p)(b)),
	\end{align*} where $\pi$ is as in Definition~\ref{defn:pullback}. Thus $\supp_{f^*\bM}(n_b)\subseteq N_{\widetilde \Lambda(2\Phi(i)+2i)}(\pi(p(b)))$, showing that $\{n_b\}$ is a projective basis of $f^*\bM$.

	(\ref{item:projective_operations3}) is straightforward.
\end{proof}
In the following lemma, we assume the product $X\times Y$ of metric spaces is endowed with the $\ell_1$-product metric.
\begin{prop}\label{prop:tensor_overX}
	Let $\bM=(M,B,\delta,p,\{B(i)\})$ and $\bN=(N,C,\epsilon,q,\{C(i)\})$ be projective $R$-modules over $X$ and $Y$ respectively of displacement $\Phi$.
	\begin{enumerate}
		\item The tensor product  \[\bM\otimes \bN\coloneqq(M\otimes N,B\times C, \delta\otimes \epsilon, p\times q,\{B(i)\times C(i)\}_i)\] is a projective $R$-module over $X\times Y$ of displacement $\Phi$, where for each $b\in B$ and $c\in C$, $(\delta\otimes\epsilon)_{(b,c)}$ is the map $M\otimes N\to R$ given by $m\otimes n\mapsto \delta_b(m)\epsilon_c(n)$ for all $m\in M$ and $n\in N$.
		\item  $\supp_{\bM\otimes \bN}(m\otimes n)=\supp_\bM(m)\times \supp_\bN(n)$ for all $m\in \bM$ and $n\in \bN$.
		\item $\bM(i)\otimes \bN(i)\subseteq (\bM\otimes \bN)(i)\subseteq \bM(\Phi(i))\otimes \bN(\Phi(i))$.
		\item If  both $\bM$ and $\bN$ are proper (resp.\ of finite height), then so is $\bM\otimes \bN$.
		\item Suppose $\bP$ and $\bL$ are projective $R$-modules  over $W$ and $Z$ respectively. If $\phi:\bM\to \bP$ and $\psi:\bN\to \bL$ have finite displacement over bornologous maps $f:X\to W$ and $g:Y\to Z$, then $\phi\otimes \psi:\bM\otimes\bN\to \bP\otimes \bL$ has finite displacement over $f\times g:X\times Y\to W\times Z$.
	\end{enumerate}
\end{prop}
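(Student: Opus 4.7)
The plan is to proceed through parts (1)--(5) in order, exploiting the projective bases of $\bM$ and $\bN$ to reduce each claim to elementary tensor calculations. Write $\{m_b\}_{b\in B}$ and $\{n_c\}_{c\in C}$ for the given projective bases of displacement $\Phi$. For (1), I first verify that $\{(m_b \otimes n_c, \delta_b \otimes \epsilon_c)\}_{(b,c)}$ is a projective basis of the underlying $R$-module $M \otimes N$: the identity $\sum_{(b,c)} (\delta_b \otimes \epsilon_c)(m \otimes n)\,m_b \otimes n_c = m \otimes n$ holds on simple tensors, involves only finitely many nonzero terms, and extends $R$-bilinearly to all of $M \otimes N$. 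The filtration condition of Definition~\ref{defn:rmod_overX} is verified by writing any element of $M \otimes N$ as a finite sum of simple tensors and taking a common filtration bound on the factors, so that Remark~\ref{rem:rmod_conditions} certifies $\bM \otimes \bN$ as an $R$-module over $X \times Y$. The displacement bound on the basis $\{m_b \otimes n_c\}$ will follow from parts (2) and (3) below. Part (2) is immediate from the formulas: $\supp_{\bM\otimes\bN}(m \otimes n) = \{(p(b),q(c)) \mid \delta_b(m)\epsilon_c(n) \neq 0\}$ factors as $\supp_\bM(m) \times \supp_\bN(n)$.

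For (3), the forward inclusion follows from $(\delta_b \otimes \epsilon_c)(m \otimes n) = \delta_b(m)\epsilon_c(n)$ and $R$-bilinearity: for $m \in \bM(i)$ and $n \in \bN(i)$, these coefficients vanish off $B(i) \times C(i)$. For the reverse inclusion, expand $z \in (\bM \otimes \bN)(i)$ via the projective basis as $z = \sum_{(b,c) \in B(i) \times C(i)} (\delta_b \otimes \epsilon_c)(z)\,m_b \otimes n_c$; by the displacement hypothesis, $m_b \in \bM(\Phi(i))$ and $n_c \in \bN(\Phi(i))$, so $z \in \bM(\Phi(i)) \otimes \bN(\Phi(i))$. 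Part (4) is routine: for properness, any bounded $F \subseteq X \times Y$ projects to bounded sets $F_X, F_Y$ with $(p \times q)^{-1}(F) \cap (B(i) \times C(i)) \subseteq (p^{-1}(F_X) \cap B(i)) \times (q^{-1}(F_Y) \cap C(i))$, which is finite by properness of $\bM$ and $\bN$; finite height is immediate from the inclusion in (3) applied with $i$ equal to the maximum of the heights of $\bM$ and $\bN$.

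The technical heart is (5). Suppose $\phi$ and $\psi$ have displacement $\Phi'$ over $\Upsilon$-bornologous maps $f,g$. Given $z \in (\bM \otimes \bN)(i)$, expand via the projective basis and apply $\phi \otimes \psi$ termwise:
\[(\phi \otimes \psi)(z) = \sum_{(b,c) \in B(i) \times C(i)} (\delta_b \otimes \epsilon_c)(z)\,\phi(m_b) \otimes \psi(n_c).\]
Since $\phi(m_b) \in \bP(\Phi'(\Phi(i)))$ and $\psi(n_c) \in \bL(\Phi'(\Phi(i)))$, the forward inclusion in (3) applied to $\bP \otimes \bL$ places each summand in $(\bP \otimes \bL)(\Phi'(\Phi(i)))$. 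For the support bound, I combine Lemma~\ref{lem:support_sum} with (2): each summand has support $\supp(\phi(m_b)) \times \supp(\psi(n_c))$, lying in an $(f \times g)$-neighbourhood of $(p(b), q(c))$ of radius depending only on $\Phi, \Phi', \Upsilon$. Whenever $(\delta_b \otimes \epsilon_c)(z) \neq 0$, the point $(p(b), q(c))$ belongs to $\supp_{\bM \otimes \bN}(z)$, so the entire support of $(\phi \otimes \psi)(z)$ sits in a uniform neighbourhood of $(f \times g)(\supp(z))$. The main obstacle lies in (5): using the projective-basis expansion rather than an arbitrary decomposition into simple tensors is essential to keep each summand anchored to a genuine coordinate of $z$, thereby preserving the quantitative link between $\supp((\phi \otimes \psi)(z))$ and $\supp(z)$. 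Once this observation is in place, assembling the final displacement function is a routine combination of $\Phi$, $\Phi'$, and $\Upsilon$.
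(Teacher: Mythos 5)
Your proposal is correct and follows essentially the same route as the paper's proof: verifying the projective basis by bilinear expansion, reducing the displacement bound on $\{m_b\otimes n_c\}$ to parts (2) and (3), and — for part (5) — expanding $z$ along the projective basis so that Lemma~\ref{lem:support_sum} anchors the support of each term $\phi(m_b)\otimes\psi(n_c)$ to a coordinate $(p(b),q(c))\in\supp_{\bM\otimes\bN}(z)$. The only differences are cosmetic: you keep separate displacement functions $\Phi,\Phi'$ where the paper enlarges $\Phi$ once to cover everything, and you spell out part (4), which the paper leaves to the reader.
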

\begin{proof}
	(1): Let $\{m_b\}$ and $\{n_c\}$ be projective bases of  $\bM$ and $\bN$ of displacement $\Phi$. If $z\in M\otimes N$, then  $z=\sum_{i=1}^n m_i\otimes n_i$ with $m_i\in M$ and $n_i\in N$ for all $i$. Thus, \begin{align*}
		z & =\sum_{i=1}^n m_i\otimes n_i  =
		\sum_{i=1}^n \left(\sum_{b\in B}\delta_b(m_i)m_b\right)\otimes  \left(\sum_{c\in C}\epsilon_c(n_i)n_c\right) \\
		  & =\sum_{i=1}^n\sum_{(b,c)\in B}\delta_b(m_i)\epsilon_c(n_i)m_b\otimes n_c
		=\sum_{i=1}^n\sum_{(b,c)\in B}(\delta\otimes \epsilon)_{(b,c)}(m_i\otimes n_i)m_b\otimes n_c                 \\
		  & =\sum_{(b,c)\in B}(\delta\otimes \epsilon)_{(b,c)}(z)m_b\otimes n_c,
	\end{align*}
	which verifies $M\otimes N$ has  projective basis $\{(m_b\otimes n_c),(\delta_b\otimes \epsilon_c)\}_{(b,c)\in B\times C}$. Remark~\ref{rem:rmod_conditions} now implies $\bM\otimes \bN$ is an $R$-module over $X\times Y$.
	It remains to show $\{m_b\otimes n_c\}$ has finite  displacement, which will follow from (2) and (3).

	(2): We note that $(x,y)\in \supp_{\bM\otimes \bN}(m\otimes n)$ if and only if there exist $b\in p^{-1}(x)$ and $c\in q^{-1}(y)$ such that $\delta_b(m)\epsilon_c(n)\neq 0$. The latter is true if and only if  $(x,y)\in \supp_\bM(m)\times \supp_\bN(n)$.

	(3): If $z\in \bM(i)\otimes \bN(i)$, then it is clear that $(\delta_b\otimes \epsilon_c)(z)=0$ whenever $(b,c)\notin B(i)\times C(i)$, hence $z\in (\bM\otimes \bN)(i)$.
	If  $z\in (\bM\otimes \bN)(i)$, then  $(\delta_b\otimes \epsilon_c)(z)=0$ whenever $(b,c)\notin B(i)\times C(i)$. Hence $z=\sum_{(b,c)\in B(i)\times C(i)}(\delta\otimes \epsilon)_{(b,c)}(z)m_b\otimes n_c$. For each  $b\in B(i)$ and $c\in C(i)$, we have $m_b\otimes n_c\in \bM(\Phi(i))\otimes  \bN(\Phi(i))$. Thus $z\in \bM(\Phi(i))\otimes  \bN(\Phi(i))$ as required.

	(4): This follows easily from the definitions.

	(5): By increasing $\Phi$ if necessary, we may assume that $\bP$ and $\bL$ have displacement $\Phi$, and that $\phi$ and $\psi$ have displacement $\Phi$ over $f$ and $g$ respectively. We pick $\Upsilon$ such that $f$ and $g$ are $\Upsilon$-bornologous. Fix $i$ and set $i_k=\Phi^k(i)$ for $k\in \bbN$.
	Thus \begin{align*}
		(\phi\otimes \psi)((\bM\otimes \bN)(i)) & \subseteq (\phi\otimes \psi)(\bM(i_1)\otimes \bN(i_1))\subseteq \phi(\bM(i_1))\otimes \psi(\bN(i_1)) \\
		                                        & \subseteq \bP(i_2)\otimes \bL(i_2)\subseteq (\bP\otimes \bL)(i_2).
	\end{align*}
	Moreover, suppose that $v\in (\bM\otimes \bN)(i)$ and that $(w,z)\in \supp_{\bP\otimes \bL}((\phi\otimes \psi)(v))$.
	Since $v=\sum_{(b,c)\in B(i)\times C(i)}(\delta_b\otimes \epsilon_c)(v)m_b\otimes n_c$, Lemma~\ref{lem:support_sum} implies  there exists some $(b,c)\in B(i)\times C(i)$ with $(\delta_b\otimes \epsilon_c)(v)\neq 0$ and $(w,z) \in \supp_{\bP\otimes \bL}((\phi\otimes \psi)(m_b\otimes n_c))$. Therefore \begin{align*}(w,z) & \in\supp_{\bP\otimes \bL}(\phi(m_b)\otimes \psi(n_c))                         \\
                    & =\supp_\bP(\phi(m_b))\times \supp_\bL(\phi(n_c))                              \\
                    & \subseteq N_{i_2}(f(\supp_\bM(m_b)))\times N_{i_2}(g(\supp_\bN(n_c)))         \\
                    & \subseteq N_{i_2}(f(N_{i_1}(p(b))))\times N_{i_2}(f(N_{i_1}(q(c))))           \\
                    & \subseteq N_{i_2+\Upsilon(i_1)}(f(p(b)))\times N_{i_2+\Upsilon(i_1)}(g(q(c))) \\
                    & \subseteq N_{j}(f(p(b)),g(q(c))),\end{align*}
	where $j\coloneqq 2i_2+2\Upsilon(i_1)$. As $(\delta_b\otimes \epsilon_c)(v)\neq0$, we see that $(p(b),q(c))\in \supp_{\bM\otimes \bN}(v)$, and so $\supp_{\bP\otimes \bL}((\phi\otimes \psi)(v))\subseteq N_{j}((f\times g)(\supp_{\bM\otimes \bN}(v)))$. Thus $\phi\otimes \psi$ has finite displacement over $f\times g$.
\end{proof}

Given a  homomorphism $f:R\to S$ between commutative rings and an $R$-module $M$, we have $M\otimes _R S$ is an $S$-module, where $S$ is thought of as a left $R$-module via $r\cdot s=f(r)s$. We can thus convert a projective $R$-module over $X$ to a projective $S$-module over $X$ as follows:
\begin{prop}\label{prop:change_rings}
	Let $f:R\to S$ be a ring homomorphism and let $\bM=(M,B,\delta,p,\{B(i)\})$ be a projective $R$-module over $X$ of displacement $\Phi$.
	\begin{enumerate}
		\item Then \[\bM\otimes _R S =(M\otimes _R S, B,\epsilon,p,\{B(i)\})\] is a projective $S$-module over $X$, where $\epsilon_b$ is given by $\epsilon_b(m\otimes s)=f(\delta_b(m))s$.\label{item:change_rings1}
		\item $\bM(i)\otimes_R S \subseteq (\bM\otimes_R S)(i)\subseteq  \bM(\Phi(i))\otimes_R S$.\label{item:change_rings2}
		\item $\supp_{\bM\otimes_R S}(m\otimes s)\subseteq \supp_\bM(m)$ for all $m\in \bM$ and $s\in S$.\label{item:change_rings3}
		\item If $\bN$ is an $R$-module over $Y$ and $\phi:\bM\to \bN$ has finite displacement over $f$, then $\phi\otimes \id_S:\bM\otimes_R S\to \bN\otimes_R S$ has finite displacement over $f$.\label{item:change_rings4}
	\end{enumerate}
\end{prop}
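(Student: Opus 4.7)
The plan is to verify each part by systematically exploiting the projective basis $\{m_b\}_{b \in B}$ of $\bM$ to transfer the relevant structure across the tensor product. I would start with part (\ref{item:change_rings3}) since it is the simplest: if $x \in \supp_{\bM \otimes_R S}(m \otimes s)$, then there exists $b \in B$ with $p(b) = x$ and $\epsilon_b(m \otimes s) = f(\delta_b(m)) s \neq 0$, forcing $\delta_b(m) \neq 0$ and hence $p(b) \in \supp_\bM(m)$. This single-line check is also the main ingredient that lets parts (\ref{item:change_rings1}) and (\ref{item:change_rings2}) bootstrap their support/filtration bounds.

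For part (\ref{item:change_rings1}), I would propose that $\{m_b \otimes 1\}_{b \in B}$ serves as a projective basis of $\bM \otimes_R S$ with coordinates $\{\epsilon_b\}_{b \in B}$. Expanding $m = \sum_{b} \delta_b(m) m_b$ in $M$ and using the tensor identity $(rm) \otimes s = m \otimes f(r) s$ in $M \otimes_R S$, one checks
\[
m \otimes s \;=\; \sum_{b \in B} (m_b \otimes 1) \cdot f(\delta_b(m)) s \;=\; \sum_{b \in B} \epsilon_b(m \otimes s)(m_b \otimes 1),
\]
with only finitely many nonzero terms; this extends $S$-linearly to all of $M \otimes_R S$. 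By Remark~\ref{rem:rmod_conditions}, the data $\bM \otimes_R S$ defines an $S$-module over $X$ provided $\{m_b \otimes 1\}$ has a finite displacement bound as in Definition~\ref{defn:projective}, and this reduces immediately to part (\ref{item:change_rings3}) together with the filtration bound from part (\ref{item:change_rings2}).

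Part (\ref{item:change_rings2}) I would handle next. For $\bM(i) \otimes_R S \subseteq (\bM \otimes_R S)(i)$, write an arbitrary $z = \sum_j m_j \otimes s_j$ with $m_j \in \bM(i)$; for $b \notin B(i)$ each $\delta_b(m_j) = 0$, so $\epsilon_b(z) = 0$. For the reverse inclusion, expand $z \in (\bM \otimes_R S)(i)$ using the projective basis furnished by part (\ref{item:change_rings1}): the sum $z = \sum_b \epsilon_b(z)(m_b \otimes 1)$ has only contributions from $b \in B(i)$, and each such $m_b$ lies in $\bM(\Phi(i))$, placing $z$ in $\bM(\Phi(i)) \otimes_R S$.

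For part (\ref{item:change_rings4}) (where I read the ``$f$'' as the bornologous metric-space map over which $\phi$ has finite displacement, say $\Upsilon$-bornologous with $\phi$ of displacement $\Psi$), the projective basis expansion gives $(\phi \otimes \id_S)(z) = \sum_{b \in B(i)} \epsilon_b(z)(\phi(m_b) \otimes 1)$ for $z \in (\bM \otimes_R S)(i)$. Each $\phi(m_b) \in \bN(\Psi(\Phi(i)))$, so by part (\ref{item:change_rings2}) applied to $\bN$, the image lies in $(\bN \otimes_R S)(\Psi(\Phi(i)))$. For the support bound, if $y \in \supp_{\bN \otimes_R S}((\phi \otimes \id_S)(z))$, then part (\ref{item:change_rings3}) forces $y \in \supp_\bN(\phi(m_b))$ for some $b \in B(i)$ with $\epsilon_b(z) \neq 0$; the displacement of $\phi$ gives $\supp_\bN(\phi(m_b)) \subseteq N_{\Psi(\Phi(i))}(f(\supp_\bM(m_b)))$, the displacement of $\bM$ gives $\supp_\bM(m_b) \subseteq N_{\Phi(i)}(p(b))$, and bornologousness of $f$ collapses these into $N_{\Psi(\Phi(i)) + \Upsilon(\Phi(i))}(f(p(b)))$. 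Since $\epsilon_b(z) \neq 0$ places $p(b) \in \supp_{\bM \otimes_R S}(z)$, we obtain the desired displacement estimate. The only real subtlety worth flagging is this last step: a general element of $\bM \otimes_R S$ admits many decompositions as sums of elementary tensors, so control of support under $\phi \otimes \id_S$ must be obtained through the canonical projective basis expansion rather than through any ad hoc representation.
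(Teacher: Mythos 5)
Your proposal is correct and follows essentially the same route as the paper: expand arbitrary elements of $\bM\otimes_R S$ via the projective basis $\{m_b\otimes 1\}$ with coordinates $\{\epsilon_b\}$, deduce (2) and (3) directly from the definitions, and in (4) use the canonical basis expansion $z=\sum_{b\in B(i)}\epsilon_b(z)(m_b\otimes 1)$ to control the filtration level and support of $(\phi\otimes\id_S)(z)$. The paper silently absorbs the displacements of $\bN$, $\bM$, and $\phi$ into a single $\Phi$, whereas you keep separate constants $\Phi$, $\Psi$, $\Upsilon$, but the estimates and the logical structure are the same; your remark about the overloaded symbol $f$ in part (4) is a genuine notational clash in the statement, and you resolve it the same way the paper does in its proof.
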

\begin{proof}
	Suppose that $\{m_b\}$ is a projective basis of $\bM$  and that $\{m_b\}$ has displacement $\Phi$.

	(\ref{item:change_rings1}): For each $z\in \bM\otimes _R S$, we have $z=\sum_{i=1}^n m_i\otimes s_i$, where each  $m_i\in \bM$ and $s_i\in S$. Therefore \begin{align*}
		z & =\sum_{i=1}^n  \left(\sum_{b\in B}\delta_b(m_i)m_b\right)\otimes s_i
		=\sum_{b\in B}\sum_{i=1}^n  f(\delta_b(m_i))s_i (m_b\otimes 1 )          \\
		  & =\sum_{b\in B}\sum_{i=1}^n\epsilon_b(m_i\otimes s_i) (m_b\otimes 1 )
		=\sum_{b\in B}\epsilon_b(z)(m_b\otimes 1 ),
	\end{align*}
	showing that $\{(m_b\otimes 1, \epsilon_b)\}$ is a projective basis of $M\otimes _R S$. Remark~\ref{rem:rmod_conditions} now implies $ \bM\otimes_R S $ is an $S$-module over $X$.
	It remains to show $\{m_b\otimes 1\}$ has finite  displacement,  which will follow from (\ref{item:change_rings2}) and (\ref{item:change_rings3}).

	(\ref{item:change_rings2}): If $m\in \bM(i)$ and $s\in S$, then $\epsilon_b(m\otimes s)=f(\delta_b(m))s=0$ for all $b\in B\setminus B(i)$. This implies $\epsilon_b(\bM(i)\otimes _R S)=0$ for all $b\in B\setminus B(i)$, and so $\bM(i)\otimes_R S\subseteq (\bM\otimes S)(i)$ as required.
	Now suppose $z\in (\bM\otimes S)(i)$. Then $z=\sum_{b\in B(i)}\epsilon_b(z)(m_b\otimes 1)$. Since for each $b\in B(i)$ we know that $m_b\in \bM(\Phi(i))$, we deduce $z\in \bM(\Phi(i))\otimes_R S$ as required.

	(\ref{item:change_rings3}): Suppose $x\in \supp_{\bM\otimes_R S}(m\otimes s)$. Then there is some $b\in p^{-1}(x)$ with $\epsilon_b(m\otimes s)=f(\delta_b(m))s\neq 0$. Thus $\delta_b(m)\neq 0$, and so $x=p(b)\in \supp_{\bM}(m)$.

	(\ref{item:change_rings4}): By increasing $\Phi$, we can assume $\bN$ has displacement $\Phi$ and $\phi$ has displacement $\Phi$ over $f$. We pick $\Upsilon$ so that $f$ is $\Upsilon$-bornologous. Fix $i\in \bbN$ and let $i_k\coloneqq \Phi^k(i)$ for each $k\in \bbN$. Then \begin{align*}
		(\phi\otimes \id_S)((\bM\otimes_R S)(i)) & \subseteq (\phi\otimes \id_S)(\bM(i_1)\otimes_R S)
		\subseteq \bN(i_2)\otimes_R S\subseteq (\bN\otimes_R S)(i_2).
	\end{align*}
	Suppose that $z\in (\bM\otimes_R S)(i)$ and that $y\in \supp_{\bN\otimes_R S}((\phi\otimes\id)(z))$. Since $z=\sum_{b\in B(i)}\epsilon_b(z)(m_b\otimes 1)$, there is some $b\in B(i)$ with $\epsilon_b(z)\neq 0$ and  $y \in \supp_{\bN\otimes_R S}((\phi\otimes\id)(m_b\otimes 1))$. Thus
	\begin{align*} y  \in \supp_{\bN\otimes_R S}(\phi(m_b)\otimes 1) & \subseteq\supp_\bN(\phi(m_b))
               \subseteq N_{i_2}(f(\supp_\bM(m_b)))                                              \\&\subseteq N_{i_2}(f(N_{i_1}(p(b))))\subseteq
               N_{j}(f(p(b))),
	\end{align*}
	where $j\coloneqq i_2+\Upsilon(i_1)$.
	Since $\epsilon_b(z)\neq 0$, we see $p(b)\in \supp_{\bM\otimes_R S}(z)$ and so  $\supp_{\bN\otimes_R S}((\phi\otimes\id)(z))\subseteq N_{j}(f(\supp_{\bM\otimes_R S}(z)))$. This demonstrates that $\phi\otimes \id_S$ has finite displacement over $f$.
\end{proof}

Given an $R$-module $M$ and a submodule $N$, a homomorphism $r:M\to M$ is said to be a \emph{retraction  onto $N$} if $\im(r)=N$ and $r|_{N}=\id_N$.
\begin{lem}\label{lem:comp}
	Let $\bM$ be a projective $R$-module over $X$ and let $\bN\leq \bM$ be a geometric submodule. The following are equivalent:
	\begin{enumerate}
		\item $\bN$ is a projective $R$-module over $X$.\label{item:comp1}
		\item There exists a finite displacement retraction $\phi:\bM\to \bM$ onto $\bN$.\label{item:comp2}
		\item There exists a geometric submodule $\bL\leq \bM$ such that $\bM$ splits as a direct sum $\bM=\bN\oplus\bL$ with finite displacement projections.\label{item:comp3}
	\end{enumerate}
\end{lem}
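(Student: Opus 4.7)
The plan is to prove the chain $(1) \Rightarrow (2) \Rightarrow (3) \Rightarrow (2) \Rightarrow (1)$, with $(2) \Leftrightarrow (3)$ being the standard algebraic equivalence (summand iff retract) dressed up with finite displacement bookkeeping. I will do $(1) \Leftrightarrow (2)$ first, since this is where the projective basis structure really gets used, and then treat $(2) \Leftrightarrow (3)$ separately.

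For $(1) \Rightarrow (2)$, assume $\bN$ is projective over $X$ with projective basis $\{n_b\}_{b \in B}$ of some displacement $\Psi$. The key observation is that, because $\bN \leq \bM$ is a \emph{geometric} submodule, the coordinates $\epsilon_b$ of $\bN$ are just the restrictions $\delta_b|_N$, so the projective basis identity $n = \sum_b \epsilon_b(n) n_b$ for $n \in \bN$ reads $n = \sum_b \delta_b(n) n_b$. This suggests defining $\phi : \bM \to \bM$ by $\phi(m) = \sum_b \delta_b(m) n_b$, where $n_b$ is viewed inside $\bM$. The sum is finite since $\delta_b(m) = 0$ for almost all $b$, and $\phi$ lands in $\bN$. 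Applying Lemma~\ref{lem:define_fin_disp} with the collection $\{n_b\}$ and target $\bN$ (whose filtration and support agree with those induced from $\bM$) shows $\phi$ has finite displacement into $\bN$, and composing with the finite-displacement inclusion $\bN \hookrightarrow \bM$ preserves this. For $n \in \bN$, $\phi(n) = \sum_b \delta_b(n) n_b = \sum_b \epsilon_b(n) n_b = n$, so $\phi$ is a retraction onto $\bN$.

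For $(2) \Rightarrow (1)$, let $\{m_b\}$ be a projective basis of $\bM$ of displacement $\Phi$ and let $\phi : \bM \to \bM$ be a retraction onto $\bN$ of displacement $\Psi$. Set $n_b \coloneqq \phi(m_b) \in \bN$. For any $n \in \bN$, applying $\phi$ to the expansion $n = \sum_b \delta_b(n) m_b$ and using $\phi(n) = n$ yields $n = \sum_b \delta_b(n) n_b = \sum_b \epsilon_b(n) n_b$, so $\{(n_b, \epsilon_b)\}$ is an $R$-module projective basis of $N$. For $b \in B(i)$ we have $m_b \in \bM(\Phi(i))$ with support in $N_{\Phi(i)}(p(b))$, hence $n_b \in \bM(\Psi(\Phi(i))) \cap N = \bN(\Psi(\Phi(i)))$ (using that $\bN$ is geometric, so $\bN(j) = \bM(j) \cap N$), and $\supp_{\bN}(n_b) = \supp_{\bM}(n_b) \subseteq N_{\Psi(\Phi(i)) + \Phi(i)}(p(b))$. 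Thus $\{n_b\}$ is a projective basis of $\bN$ of displacement $\Psi(\Phi(i)) + \Phi(i)$.

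For $(2) \Rightarrow (3)$, given a finite displacement retraction $\phi$ onto $\bN$, let $\bL$ be the geometric submodule with underlying module $\ker(\phi) \leq M$. Then $M = N \oplus L$ algebraically, the projection $\bM \to \bN$ is $\phi$ itself, and the projection $\bM \to \bL$ is $\id_\bM - \phi$; both have finite displacement since $\id_\bM$ and $\phi$ do (finite displacement maps form an $R$-module under pointwise operations). Conversely, for $(3) \Rightarrow (2)$, if $\bM = \bN \oplus \bL$ with finite displacement projections, the composition of the projection $\bM \to \bN$ with the inclusion $\bN \hookrightarrow \bM$ is a finite displacement retraction onto $\bN$. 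The only delicate point throughout is keeping the finite displacement conditions uniform; this is handled entirely by Lemma~\ref{lem:define_fin_disp} and Lemma~\ref{lem:compos_findisp}, so no truly hard step arises.
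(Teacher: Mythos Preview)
Your proof is correct and follows essentially the same approach as the paper. The paper organises the implications as the cycle $(1)\Rightarrow(2)\Rightarrow(3)\Rightarrow(1)$, whereas you prove $(1)\Leftrightarrow(2)$ and $(2)\Leftrightarrow(3)$ separately, but the content of each step---defining the retraction via $\phi(m)=\sum_b\delta_b(m)n_b$, and conversely setting $n_b=\phi(m_b)$ to obtain a projective basis of $\bN$---is identical, including the displacement bounds.
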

\begin{rem}
	Condition (\ref{item:comp1})  is significantly more restrictive than merely requiring that the underlying $R$-module $N$ is projective. The key point is that as $\bN\leq \bM$, the coordinates of $\bN$ are obtained by restricting the coordinates of $\bM$. The  condition that $\bN$ is a projective $R$-module thus requires that there is a projective basis of $\bN$  whose coordinates are compatible with the coordinates of $\bM$.
\end{rem}
\begin{proof}
	Suppose $\bM=(M,B,\delta,p,\cF)$ and $\{m_b\}_{b\in B}$ is a projective basis of $\bM$.

	(\ref{item:comp1}) $\implies$ (\ref{item:comp2}): Assume $\{n_b\}_{b\in B}$ is a projective basis of $\bN$. We note that by Definition~\ref{defn:submodule},  the indexing of $\bN$ is identical to the indexing set of $\bM$. We define a  map $\phi:\bM\to \bM$ by $\phi(m)=\sum_{b\in B}\delta_b(m)n_b$. This map has finite displacement because $\{n_b\}$ is a projective basis of $\bN$, and so  Lemma~\ref{lem:define_fin_disp} applies. Since $\im(\phi)=\bN$ and for each $n\in \bN$, we have \[\phi(n)=\phi\left(\sum_{b\in B}\delta_b(n)m_b\right)=\sum_{b\in B}\delta_b(n)n_b=n,\] we conclude that $\phi$ is indeed a finite displacement retraction onto $\bN$.

	(\ref{item:comp2}) $\implies$ (\ref{item:comp3}): Setting $\bL=\ker(\phi)$, we see that $\bM=\bN\oplus \bL$ and the projections to $\bN$ and to $\bL$ are given by $\phi$ and $\id_\bM-\phi$, which clearly have finite displacement.

	(\ref{item:comp3}) $\implies$ (\ref{item:comp1}): Let $\phi:\bM\to \bN$ be the finite displacement projection to $\bN$. For each $b\in B$, set $n_b=\phi(m_b)$. Then for each $n\in \bN$, we have \[\sum_{b\in B}\delta_b(n)n_b=\sum_{b\in B}\delta_b(n)\phi(m_b)=\phi\left(\sum_{b\in B}\delta_b(n)m_b\right)=\phi(n)=n,\] so that $\{(n_b,\delta_b)\}_{b\in B}$ is a projective basis of the $R$-module $N$. Pick $\Phi$ such that both $\{m_b\}$ and $\phi$ have displacement $\Phi$.
	Thus for each $b\in B(i)$ we have \[n_b=\phi(m_b)\in \phi(\bM(\Phi(i)))\subseteq \bN(\Phi^2(i))\] and \[\supp_\bN(n_b)=\supp_\bM(n_b)=\supp_\bM(\phi(m_b))\subseteq N_{\Phi^2(i)}(\supp_\bM(m_b))\subseteq N_{\Phi^2(i)+\Phi(i)}(p(b)),\] showing that $\{n_b\}_{b\in B}$ is a projective basis of $\bN$.
\end{proof}

\section{Hom-sets and duality}\label{sec:duality}
\emph{If $Y$ is a locally finite simplicial complex and $C_k(Y)$ is the module of simplicial $k$-chains of $Y$, there are three modules that can be  associated to $C_k(Y)$:\begin{enumerate}
		\item the module $C^k(Y)$ of all $k$-cochains;
		\item the submodule $C^k_c(Y)\leq C^k(Y)$ of compactly supported $k$-cochains;
		\item the module $C^{\lf}_k(Y)$ of infinite, locally finite chains, which contains $C_k(Y)$ as a submodule.
	\end{enumerate} We refer the reader to~\cite[Part III]{geoghegan2008topological} for a helpful discussion of these topics.\\
	\indent In this section, we construct three analogous modules for each proper projective $R$-module $\bM$ over a metric space $X$, denoted $\bM^*$, $\bM^*_c$ and $\bM^{\lf}$ respectively. We also study the hom-set $\Homfd(\bM,\bN)$ of finite displacement maps between $R$-modules over metric spaces, which will be used to define coarse cohomology of metric spaces. In general, neither  $\bM^*$ nor $\bM^*_c$ is an $R$-module over $X$. However, in the case $\bM$ is a finite-height proper projective $R$-module over $X$, we show in Proposition~\ref{prop:duality_ht} that $\bM^*_c$ is also a finite-height proper projective $R$-module over $X$. This is an analogue of the fact that the dual of a finitely generated projective $RG$-module is also a finitely generated projective $RG$-module, and will play a key role in our study of coarse $PD_n^R$ spaces in Section~\ref{sec:coarsePDn}.
}
\vspace{.3cm}

\begin{defn}\label{defn:homfd}
	Let $\bM$ and $\bN$ be $R$-modules over a metric space $X$, with underlying $R$-modules $M$ and $N$.  Let $\Homfd(\bM,\bN)$ denote the submodule of $\Hom_R(M,N)$ consisting of homomorphisms with finite displacement over $\id_X$.
\end{defn}
\begin{rem}\label{rem:hom_close}
	If $\bM$ and $\bM'$ are canonically isomorphic in the sense of Definition~\ref{defn:iso}, then we see that $\Homfd(\bM,\bN)=\Homfd(\bM',\bN)$ since they are equal as subsets of $\Hom_R(M,N)$. Similarly, if $\bN$ and $\bN'$ are canonically isomorphic, then $\Homfd(\bM,\bN)=\Homfd(\bM,\bN')$.
	In particular, suppose $f,g:X\to Y$ are close, $\bM$ is an $R$-module over $X$, and $\bL$ is an $R$-module over $Y$. Then Lemma~\ref{lem:induced_close} implies that  $\Homfd(\bL,f_*\bM)=\Homfd(\bL,g_*\bM)$ and $\Homfd(f_*\bM,\bL)=\Homfd(g_*\bM,\bL)$
\end{rem}

The following is a reformulation of  Lemma~\ref{lem:induced_map_compos}:
\begin{prop}\label{prop:induced_hom}
	Let $X$ and $Y$ be metric spaces and let $f:X\to Y$ be a coarse embedding. Suppose  $\bM$ and $\bN$ are $R$-modules over $X$, and $\bL$ is an $R$-module over $Y$. Let $\iota:\bN\to f_*\bN$ be the canonical identification.  If $\phi:\bM\to \bL$ has finite displacement over $f$, then $\phi$ induces a homomorphism $\phi^*:\Homfd(\bL,f_*\bN)\to \Homfd(\bM,\bN)$ given by $\psi\mapsto \iota^{-1}\circ\psi\circ \phi$.
	In particular, when $X=Y$ and $f=\id_X$, then $\phi$ induces a homomorphism $\phi^*:\Homfd(\bL,\bN)\to \Homfd(\bM,\bN)$ given by $\psi\mapsto \psi\circ \phi$.
\end{prop}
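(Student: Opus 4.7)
The statement is explicitly flagged as a reformulation of Lemma~\ref{lem:induced_map_compos}, so the plan is essentially to repackage that lemma in functorial language. First I would verify that the assignment $\psi\mapsto \iota^{-1}\circ\psi\circ\phi$ produces an element of $\Homfd(\bM,\bN)$. Given $\psi\in\Homfd(\bL,f_*\bN)$, by definition $\psi:\bL\to f_*\bN$ has finite displacement over $\id_Y$, while $\phi:\bM\to\bL$ has finite displacement over $f$ by hypothesis. These are exactly the hypotheses of Lemma~\ref{lem:induced_map_compos}, whose conclusion is that $\iota^{-1}\circ\psi\circ\phi:\bM\to\bN$ has finite displacement over $\id_X$; that is, it lies in $\Homfd(\bM,\bN)$.

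Next I would check that the resulting map $\phi^\ast$ is $R$-linear. Since composition of $R$-module homomorphisms is $R$-bilinear in each variable, and $\iota^{-1}$ is a fixed $R$-linear map between underlying modules, the assignment
\[
\psi\longmapsto \iota^{-1}\circ\psi\circ\phi
\]
is $R$-linear in $\psi$, so $\phi^\ast$ is a homomorphism of $R$-modules.

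For the final sentence, when $X=Y$ and $f=\id_X$, the pushforward $f_\ast\bN$ coincides with $\bN$ as data (same underlying module, coordinates, control function and filtration), so the canonical identification $\iota:\bN\to f_\ast\bN$ is literally the identity map, and the formula collapses to $\psi\mapsto \psi\circ\phi$. I do not anticipate any obstacle: all the real work was done in Lemma~\ref{lem:induced_map_compos}, and the only point requiring minor care is to remember that elements of $\Homfd(\bL,f_\ast\bN)$ are by definition finite-displacement over $\id_Y$ (not over $f$), so that the hypotheses of that lemma are met verbatim.
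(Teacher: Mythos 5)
Your proposal is correct and matches the paper's intent exactly: the paper introduces Proposition~\ref{prop:induced_hom} as ``a reformulation of Lemma~\ref{lem:induced_map_compos}'' and supplies no separate proof, which is precisely the observation you make. The routine verifications you add ($R$-linearity of $\phi^*$ and the collapse of $\iota$ when $f=\id_X$) are exactly what is needed to make the reformulation explicit.
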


Another type of hom-set we work with is the following:
\begin{defn}\label{defn:homc}
	Let $X$ be a metric space, let  $\bM$ be an $R$-module over $X$, and let $N$ be an $R$-module. A homomorphism $\phi:M\to N$ has \emph{locally bounded support} if for each $i$,  there exists a bounded subset $F_i\subseteq X$ such that $\phi(m)=0$ for  all $m\in \bM(i)$ with $\supp(m)\subseteq X\setminus F_i$. We then define \[\Hom_c(\bM,N)\coloneqq  \left\{\phi\in \Hom_R(M,N)\;\middle|\;\phi \text{ has locally  bounded support}\right\}.\] In the case $N=R$, we define $\bM^*_c\coloneqq \Hom_c(\bM,R)$ and $\bM^*\coloneqq \Hom_R(\bM,R)$, and we observe that $\bM^*_c\leq \bM^*$.
\end{defn}

We now show that hom-sets of the form $\Hom_c(\bM,N)$  are special cases of hom-sets of the form  $\Homfd(\bM,\bN)$,  ensuring  results proved concerning the latter apply to the former.
\begin{defn}\label{defn:constant}
	An  $R$-module $\bN$ over $X$ is said to be \emph{constant} if:
	\begin{enumerate}
		\item $\supp_\bN(\bN)=\{x_0\}$ for some $x_0\in X$; and
		\item $\bN(i)=\bN$ for all $i\in \bbN$.
	\end{enumerate}
\end{defn}
Any $R$-module $N$ can be endowed with the structure of a constant $R$-module $\bN$ over any (non-empty) metric space.  Applying Lemma~\ref{lem:canon_isom}, we see that all constant $R$-modules over $X$ with the same underlying  $R$-module  are canonically isomorphic.  Consequently, Remark~\ref{rem:hom_close} ensures that if $\bM$ is an $R$-module over $X$ and if $\bN$ and $\bN'$ are two constant $R$-modules over $X$ with the same underlying $R$-module, then  $\Homfd(\bM,\bN)=\Homfd(\bM,\bN')$.
\begin{lem}\label{lem:trivial_module}
	Let $X$ be a metric space, let  $\bM$ be an $R$-module over $X$, and let $N$ be an $R$-module. Then $\Hom_{c}(\bM,N)=\Homfd(\bM,\bN)$, where $\bN$ is some (equivalently any)  constant  $R$-module over $X$ with underlying $R$-module $N$.
\end{lem}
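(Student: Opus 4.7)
The plan is to unpack both definitions, exploiting the special degeneracy of a constant module on the filtration side. First, by Remark~\ref{rem:hom_close} together with the observation preceding the lemma that any two constant $R$-modules over $X$ with the same underlying $R$-module $N$ are canonically isomorphic, I may fix a particular $\bN$ with $\supp_\bN(\bN) = \{x_0\}$ for some basepoint $x_0 \in X$ and $\bN(i) = \bN$ for every $i \in \bbN$. Under this choice, the first displacement condition of Definition~\ref{defn:findisp}, $\phi(\bM(i)) \subseteq \bN(\Phi(i))$, is automatic since $\bN(\Phi(i)) = \bN$. For the second, I would observe that for any $m \in \bM(i)$ the set $\supp_\bN(\phi(m))$ is either $\emptyset$ (when $\phi(m) = 0$) or $\{x_0\}$; therefore the condition $\supp_\bN(\phi(m)) \subseteq N_{\Phi(i)}(\supp_\bM(m))$ is equivalent to the implication ``$\phi(m) \neq 0 \implies \supp_\bM(m) \cap N_{\Phi(i)}(x_0) \neq \emptyset$'', or contrapositively, ``$\supp_\bM(m) \subseteq X \setminus N_{\Phi(i)}(x_0) \implies \phi(m) = 0$''.

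Next I would verify the two inclusions using this reformulation. For $\Homfd(\bM,\bN) \subseteq \Hom_c(\bM,N)$, given $\phi$ of displacement $\Phi$, I set $F_i \coloneqq N_{\Phi(i)}(x_0)$, which is a bounded subset of $X$; the contrapositive above gives $\phi(m) = 0$ whenever $m \in \bM(i)$ with $\supp_\bM(m) \subseteq X \setminus F_i$, so $\phi \in \Hom_c(\bM,N)$. For the reverse inclusion, given $\phi \in \Hom_c(\bM,N)$ with locally bounded support witnessed by bounded sets $F_i$, I choose $r_i \in \bbN$ with $F_i \subseteq N_{r_i}(x_0)$ (possible since $F_i$ is bounded) and define $\Phi(i) \coloneqq \max\{r_j \mid j \leq i\}$ to guarantee that $\Phi:\bbN\to\bbN$ is increasing. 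Then $N_{\Phi(i)}(x_0) \supseteq F_i$, so the reformulation shows $\phi$ has displacement $\Phi$ over $\id_X$, as required.

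There is no substantive obstacle: the lemma is a definitional compatibility check expressing that both hom-sets formalise the same geometric idea, namely a homomorphism that vanishes on elements of bounded filtration level whose support lies sufficiently far from a designated basepoint. The only point of minor care is that the finite displacement function $\Phi$ must be increasing in $i$, which is handled by the running maximum construction above.
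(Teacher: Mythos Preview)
Your proof is correct and follows essentially the same approach as the paper's: both argue directly from the definitions by reducing to the observation that $\supp_\bN(\phi(m))$ is either $\emptyset$ or $\{x_0\}$, and both use $F_i = N_{\Phi(i)}(x_0)$ for the $\Homfd \subseteq \Hom_c$ direction. Your presentation is arguably slightly cleaner in two respects: you isolate the contrapositive reformulation upfront so that both inclusions become transparent, and you are explicit about taking a running maximum to ensure $\Phi$ is increasing (the paper's choice $\Phi(i) = \lceil \sup_{x \in F_i} d(x_0,x) \rceil$ need not be increasing as stated, though this is easily repaired).
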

\begin{proof}
	Pick $x_0\in X$ such that $\supp_\bN(\bN)=\{x_0\}$.

	Suppose $\phi:M\to N$ has locally bounded support. For each $i\in \bbN$, choose a bounded $F_i\subseteq X$ such that $\phi(m)=0$ for all $m\in \bM(i)$ with $\supp(m)\subseteq X\setminus F_i$. For each $i$, set $\Phi(i)\coloneqq\lceil \sup_{x\in F_i}d(x_0,x)\rceil$. We claim $\phi:\bM\to \bN$ has displacement $\Phi$.
	Clearly $\phi(\bM(i))\subseteq \bN=\bN(\Phi(i))$. Suppose $m\in \bM(i)$. It remains to show $\supp_{\bN}(\phi(m))\subseteq N_{\Phi(i)}(\supp_\bM(m))$. If $\phi(m)=0$ we are done,  so assume $\phi(m)\neq 0$. Then $\supp_\bM(m)\cap F_i\neq \emptyset$, and so $\supp_\bN(\phi(m))=\{x_0\}\subseteq N_{\Phi(i)}(\supp_\bM(m))$ as required.

	Conversely, suppose $\phi:\bM\to\bN$ has displacement $\Phi$. Set $F_i\coloneqq N_{\Phi(i)}(x_0)$, which is a bounded subset of $X$.  Suppose $m\in \bM(i)$ and $\supp_\bM(m)\subseteq X\setminus F_i$. Then \[\supp_\bN(\phi(m))\subseteq N_{\Phi(i)}(\supp_\bM(m))\subseteq X\setminus \{x_0\}.\] If $\phi(m)\neq 0$, then $\supp_\bN(\phi(m))=\{x_0\}$, which is a contradiction. Thus $\phi(m)=0$, and so $\phi$ has locally bounded support.
\end{proof}

It is easy to see that if $\bL$ is a constant $R$-module over $X$ and $f:X\to Y$ is arbitrary, then $f_*\bL$ is a constant $R$-module over $Y$. Combined with Proposition~\ref{prop:dual_fd}, this implies:
\begin{prop}\label{prop:dual_fd}
	Let $X$ and $Y$ be metric spaces and let $f:X\to Y$ be a coarse embedding. Suppose  $\bM$ and $\bN$ are $R$-modules over $X$ and $Y$ respectively, and  $\phi:\bM\to \bN$ has finite  displacement over $f$. Then for each $R$-module $L$, $\phi$ induces the dual map $\phi^*:\Hom_c(\bN,L)\to \Hom_c(\bM,L)$ via $\alpha\mapsto \alpha\circ \phi$.
\end{prop}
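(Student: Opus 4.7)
The plan is to reduce this proposition to Proposition~\ref{prop:induced_hom} by expressing the hom-sets $\Hom_c(-,L)$ in the form $\Homfd(-,\bL)$ for an appropriate constant $R$-module $\bL$. For any metric space $Z$ and any $R$-module $L$, endow $L$ with the structure of a constant $R$-module $\bL_Z$ over $Z$ in the sense of Definition~\ref{defn:constant} (pick any basepoint $z_0\in Z$ and set $\bL_Z(i)=L$ for all $i$). Lemma~\ref{lem:trivial_module} then identifies $\Hom_c(\bM,L)=\Homfd(\bM,\bL_X)$ and $\Hom_c(\bN,L)=\Homfd(\bN,\bL_Y)$.

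Next I would observe that the pushforward $f_*\bL_X$ is itself a constant $R$-module over $Y$ (its support equals $\{f(x_0)\}$ and its filtration is unchanged), so it has the same underlying $R$-module $L$ as $\bL_Y$. By the discussion following Definition~\ref{defn:constant}, constant modules over $Y$ with the same underlying $R$-module are canonically isomorphic, and hence by Remark~\ref{rem:hom_close} we have $\Homfd(\bN,f_*\bL_X)=\Homfd(\bN,\bL_Y)$.

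With these identifications in hand, I would apply Proposition~\ref{prop:induced_hom} with the constant module $\bL_X$ playing the role of $\bN$ there. Since $\phi:\bM\to \bN$ has finite displacement over $f$, that proposition supplies an induced map $\phi^*:\Homfd(\bN,f_*\bL_X)\to \Homfd(\bM,\bL_X)$, namely $\psi\mapsto \iota^{-1}\circ\psi\circ\phi$ where $\iota:\bL_X\to f_*\bL_X$ is the canonical identification. Since $\iota$ is the identity on the underlying $R$-module $L$, this formula collapses to $\psi\mapsto \psi\circ\phi$. Threading this through the identifications from Lemma~\ref{lem:trivial_module} yields the desired map $\phi^*:\Hom_c(\bN,L)\to \Hom_c(\bM,L)$ sending $\alpha$ to $\alpha\circ\phi$.

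There is no real obstacle here; all the analytic work (tracking displacement constants and verifying the finite-displacement property after composition) is already packaged into Proposition~\ref{prop:induced_hom} and Lemma~\ref{lem:trivial_module}. The only thing to notice is that pushforwards preserve constancy of modules, which is transparent from the definitions, and that the resulting constant module over $Y$ is canonically isomorphic to $\bL_Y$ so that the hom-sets coincide on the nose rather than merely being related by a map.
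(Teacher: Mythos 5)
Your proof is correct and takes essentially the same route the paper intends: reduce $\Hom_c(-,L)$ to $\Homfd(-,\bL)$ via Lemma~\ref{lem:trivial_module}, observe that pushforwards of constant modules are constant and that constant modules over $Y$ with the same underlying $R$-module give the same $\Homfd$-set, and then invoke Proposition~\ref{prop:induced_hom}. (The sentence in the paper preceding the statement contains a circular self-reference to Proposition~\ref{prop:dual_fd}; it plainly should cite Proposition~\ref{prop:induced_hom}, which is exactly what you use.) Your extra observation that the canonical identification $\iota:\bL_X\to f_*\bL_X$ is the identity on underlying $R$-modules, so the formula $\iota^{-1}\circ\psi\circ\phi$ collapses to $\psi\circ\phi$, is a worthwhile detail the paper leaves implicit.
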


For the remainder of this section, we focus on projective $R$-modules over $X$, which have a richer duality theory than arbitrary $R$-modules over $X$.  We first establish counterparts of the  support function $\supp_\bM$ that apply to  elements of $\Hom_R(\bM,N)$:
\begin{defn}\label{defn:localsupport}
	Let $X$ be a metric space, let  $\bM=(M,B, \delta, p, \{B(i)\})$ be a projective $R$-module over $X$ with projective basis $\{m_b\}$, and let $N$ be an $R$-module. We assume  that $\{m_b\}$ satisfies the conclusion of Lemma~\ref{lem:proj_basis_finheight} when $\bM$ has finite height; by Lemma~\ref{lem:proj_basis_finheight}, there is no loss of generality in assuming this.
	For each $R$-module homomorphism $\alpha:\bM\to N$ and $i\in \bbN$, we define the \emph{local support of $\alpha$ at height $i$} to be
	\[\supp(\alpha,i)=\supp_\bM(\alpha,i)\coloneqq \{p(b)\in X\mid b\in B(i),  \alpha(m_b)\neq 0\}.\]
\end{defn}

It is important to note that $\supp(\alpha,i)$ depends upon the choice of projective basis $\{m_b\}$. However, as we will observe in Remark~\ref{rem:locsupport_indep},  $\supp(\alpha,i)$ is independent of $\{m_b\}$ up to some error depending on the displacement of $\{m_b\}$. We assume that some $\{m_b\}$ as in Definition~\ref{defn:localsupport} is fixed.

\begin{prop}\label{prop:bdd_support}
	Let $X$ be a metric space, let  $\bM$ be a projective $R$-module over $X$,  and let $N$ be an $R$-module. Then $\alpha:\bM\to N$ has locally bounded support if and only if $\supp(\alpha,i)$ is bounded for each $i\in \bbN$.
	In particular, when $\bM$ is proper, $\phi$ has locally bounded support if and only if $\supp(\alpha,i)$ is finite for each $i$.
\end{prop}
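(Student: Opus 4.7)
The plan is to unwind both statements using the projective basis expansion $m = \sum_{b \in B} \delta_b(m) m_b$, together with the two controls coming from the displacement $\Phi$ of $\{m_b\}$: for $b \in B(i)$, one has $m_b \in \bM(\Phi(i))$ and $\supp_\bM(m_b) \subseteq N_{\Phi(i)}(p(b))$. Throughout I would also use that $m \in \bM(i)$ forces $\delta_b(m) = 0$ for all $b \notin B(i)$, so the expansion of such an $m$ ranges only over $B(i)$.

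For the backward direction, assuming $\supp(\alpha, i)$ is bounded for every $i$, I would simply take $F_i \coloneqq \supp(\alpha, i)$. Given $m \in \bM(i)$ with $\supp_\bM(m) \subseteq X \setminus F_i$, the expansion
\[
\alpha(m) = \sum_{b \in B(i)} \delta_b(m)\, \alpha(m_b)
\]
has every summand vanishing: a nonzero summand would require both $\delta_b(m) \neq 0$ and $\alpha(m_b) \neq 0$, forcing $p(b) \in \supp_\bM(m) \cap \supp(\alpha,i) = \supp_\bM(m) \cap F_i$, which is empty by assumption.

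For the forward direction, fix $i$. The critical step is a filtration shift: for $b \in B(i)$ the basis element $m_b$ lives in $\bM(\Phi(i))$, not in $\bM(i)$, so one must invoke locally bounded support \emph{at level $\Phi(i)$}, producing a bounded $F_{\Phi(i)} \subseteq X$ on whose complement $\alpha$ vanishes for all of $\bM(\Phi(i))$. Combined with $\supp_\bM(m_b) \subseteq N_{\Phi(i)}(p(b))$, any $b \in B(i)$ with $p(b) \notin N_{\Phi(i)}(F_{\Phi(i)})$ satisfies $\supp_\bM(m_b) \cap F_{\Phi(i)} = \emptyset$, hence $\alpha(m_b) = 0$. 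Therefore $\supp(\alpha, i) \subseteq N_{\Phi(i)}(F_{\Phi(i)})$, which is bounded.

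The ``in particular'' clause follows immediately from properness of $\bM$ (Definition~\ref{defn:finiteness_geom}): $p^{-1}(F) \cap B(i)$ is finite for each bounded $F$, so a bounded $\supp(\alpha, i) \subseteq p(B(i))$ is the $p$-image of a finite set and hence finite, while the converse is trivial. The only (minor) obstacle I anticipate is the filtration shift in the forward direction --- one has to remember that the projective basis elements sit a level higher than their indices might suggest, so the locally bounded support hypothesis must be applied at $\Phi(i)$, not $i$, in order to capture all of $\supp(\alpha, i)$.
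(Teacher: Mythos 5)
Your proof is correct and follows the same approach as the paper: the backward direction is the identical expansion argument using $F_i = \supp(\alpha,i)$, and the forward direction hinges on precisely the same filtration shift to level $\Phi(i)$ (stated in contrapositive form, where the paper argues directly). The ``in particular'' clause is likewise handled correctly via properness.
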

\begin{proof}
	Assume that  the projective basis $\{m_b\}$ as in Definition~\ref{defn:localsupport} has displacement $\Phi$. We first suppose $\alpha$ has locally bounded supports. Fix $i\in \bbN$ and pick a bounded set $F\subseteq X$ such that $\alpha(m)=0$ for all $m\in \bM(\Phi(i))$ with $\supp_\bM(m)\subseteq X\setminus F$. Suppose $x\in \supp(\alpha,i)$. Then there is some $b\in B(i)$ with $p(b)=x$ and $\alpha(m_b)\neq 0$. Since $m_b\in \bM(\Phi(i))$ and $\alpha(m_b)\neq 0$, we deduce $\supp(m_b)\cap F\neq \emptyset$.  As $\supp(m_b)\subseteq N_{\Phi(i)}(p(b))=N_{\Phi(i)}(x)$, we deduce that $x\in N_{\Phi(i)}(F)$. Thus $\supp(\alpha,i)\subseteq N_{\Phi(i)}(F)$ is bounded.

	Conversely, suppose that  $\supp(\alpha,i)$ is bounded for every $i\in \bbN$. Fix $i\in \bbN$ and let $F\coloneqq \supp(\alpha,i)$, which is bounded by assumption. Suppose $m\in \bM(i)$ with $\supp(m)\subseteq X\setminus F$. It is sufficient to show $\alpha(m)=0$. To see this, we know that $m=\sum_{b\in B(i)} \delta_b(m)m_b$ with almost all terms zero. If $\alpha(m_b)\neq 0$ for some $b\in B(i)$, then $p(b)\in\supp(\alpha,i)\subseteq F$, and so $\delta_b(m)=0$ because $\supp(m)\subseteq X\setminus F$. Thus for each $b\in B(i)$, we have $\delta_b(m)\alpha(m_b)=0$. This shows that $\alpha(m)=0$ as required.
\end{proof}

\begin{lem}\label{lem:pairing_support_finite}
	Let $X$ be a metric space, let  $\bM$ be a projective $R$-module over $X$,  and let $N$ be an $R$-module. If $\sigma\in \bM(i)$, $\alpha\in \Hom_R(\bM,N)$ and $\alpha(\sigma)\neq 0$, then $\supp(\sigma)\cap \supp(\alpha,i)\neq \emptyset$.
\end{lem}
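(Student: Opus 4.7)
The plan is to exploit the projective basis expansion of $\sigma$ together with the linearity of $\alpha$. Fix a projective basis $\{m_b\}_{b\in B}$ of $\bM$ of the type used in Definition~\ref{defn:localsupport}. Since $\sigma \in \bM(i)$, we have $\delta_b(\sigma) = 0$ for every $b \in B \setminus B(i)$, so the projective basis expansion collapses to
\[
\sigma \;=\; \sum_{b \in B(i)} \delta_b(\sigma)\, m_b,
\]
with only finitely many nonzero terms. Applying the $R$-linear map $\alpha$ then gives
\[
\alpha(\sigma) \;=\; \sum_{b \in B(i)} \delta_b(\sigma)\, \alpha(m_b).
\]

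From the hypothesis $\alpha(\sigma) \neq 0$, at least one term on the right-hand side must be nonzero; pick $b \in B(i)$ with $\delta_b(\sigma)\, \alpha(m_b) \neq 0$. The conclusion is now immediate: since $\delta_b(\sigma) \neq 0$ and $p(b)$ is the control point attached to this coordinate, we have $p(b) \in \supp_\bM(\sigma)$ by the definition of $\supp_\bM$; and since $b \in B(i)$ with $\alpha(m_b) \neq 0$, we have $p(b) \in \supp(\alpha, i)$ by Definition~\ref{defn:localsupport}. Hence $p(b)$ lies in the intersection, which is therefore nonempty.

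There is essentially no obstacle here: the statement is a direct unpacking of the definitions, and the only small subtlety to be careful about is that when $\bM$ has finite height we are implicitly using the convention of Lemma~\ref{lem:proj_basis_finheight}, so that the expansion $\sigma = \sum_{b\in B(i)} \delta_b(\sigma) m_b$ is legitimate regardless of whether any basis elements $m_b$ with $b \notin B(i_0)$ vanish. Apart from that, no displacement bounds or properness hypotheses are needed, and the argument does not depend on the choice of $N$.
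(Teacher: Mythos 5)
Your proof is correct and is essentially identical to the paper's: expand $\sigma = \sum_{b\in B(i)}\delta_b(\sigma)m_b$ using the projective basis, apply $\alpha$, and pick a nonzero summand to obtain $b\in B(i)$ with $\delta_b(\sigma)\neq 0$ and $\alpha(m_b)\neq 0$, hence $p(b)$ lies in both supports. The closing remark about the finite-height convention is harmless but slightly off-point: the expansion over $B(i)$ follows purely from $\sigma\in\bM(i)$ and the projective-basis property, independent of how $\{m_b\}$ is chosen; the convention of Lemma~\ref{lem:proj_basis_finheight} matters only for pinning down the meaning of $\supp(\alpha,i)$ itself, and since the same fixed basis is used throughout, the argument goes through.
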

\begin{proof}
	Since $\sigma\in \bM(i)$, we have $\sigma=\sum_{b\in B(i)}\delta_b(\sigma)m_b$. If $\alpha(\sigma)\neq 0$, then there is some $b\in B(i)$ with $\delta_b(\sigma)\neq 0$ and $\alpha(m_b)\neq 0$. Thus $p(b)\in  \supp(\sigma)\cap \supp(\alpha,i)$.
\end{proof}
\begin{lem}\label{lem:local_support_disp}
	Let $X$ be a metric space, let  $\bM$ and $\bN$ be  projective $R$-modules over $X$ of displacement $\Phi$,  and let $L$ be an $R$-module. Suppose  $\phi:\bM\to \bN$ has displacement $\Phi$ over $\id_X$. For all  $i\in \bbN$ and $\alpha\in \Hom_R(\bN,L)$,  \[\supp_\bM(\phi^*\alpha,i)\subseteq N_{\Phi^2(i)+\Phi(i)}(\supp_\bN(\alpha,\Phi^2(i))),\]
	where $\phi^*$ is as in Proposition~\ref{prop:dual_fd}.
\end{lem}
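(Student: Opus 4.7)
The plan is to unpack the definition of $\supp_\bM(\phi^*\alpha,i)$ using a projective basis $\{m_b\}_{b\in B}$ of $\bM$ (with control map $p\colon B\to X$), chase a single basis element through $\phi$ to land inside $\bN$, and then finish via Lemma~\ref{lem:pairing_support_finite} applied in $\bN$.

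First I would fix $x\in \supp_\bM(\phi^*\alpha,i)$ and, by Definition~\ref{defn:localsupport}, select $b\in B(i)$ with $p(b)=x$ and $(\phi^*\alpha)(m_b)=\alpha(\phi(m_b))\neq 0$. Since $\{m_b\}$ has displacement $\Phi$, we get $m_b\in\bM(\Phi(i))$ and $\supp_\bM(m_b)\subseteq N_{\Phi(i)}(p(b))=N_{\Phi(i)}(x)$. Applying the finite displacement hypothesis on $\phi\colon\bM\to\bN$ (which has displacement $\Phi$ over $\id_X$) to $m_b\in\bM(\Phi(i))$ yields
\[
\phi(m_b)\in \bN(\Phi^2(i)),\qquad \supp_\bN(\phi(m_b))\subseteq N_{\Phi^2(i)}(\supp_\bM(m_b))\subseteq N_{\Phi^2(i)+\Phi(i)}(x).
\]

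The main (and only non-bookkeeping) step is to extract a point of $\supp_\bN(\alpha,\Phi^2(i))$ that is close to $x$. For this, since $\phi(m_b)\in\bN(\Phi^2(i))$ and $\alpha(\phi(m_b))\neq 0$, Lemma~\ref{lem:pairing_support_finite} (with $\sigma=\phi(m_b)$ and height $\Phi^2(i)$) gives
\[
\supp_\bN(\phi(m_b))\cap \supp_\bN(\alpha,\Phi^2(i))\neq\emptyset.
\]
Any $y$ in this intersection lies in $N_{\Phi^2(i)+\Phi(i)}(x)$ by the support bound above, so $d(x,y)\leq \Phi^2(i)+\Phi(i)$ and hence $x\in N_{\Phi^2(i)+\Phi(i)}(\supp_\bN(\alpha,\Phi^2(i)))$, which is the desired containment.

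I do not anticipate any genuine obstacle: the proof is essentially a one-line diagram chase once the right heights are tracked. The only subtlety is being careful that the height bound $\Phi^2(i)$ used to invoke Lemma~\ref{lem:pairing_support_finite} matches the height at which the local support of $\alpha$ is evaluated on the right-hand side, and that the additive error $\Phi^2(i)+\Phi(i)$ comes from the two nested neighbourhood bounds: $\Phi(i)$ from the projective basis of $\bM$ and $\Phi^2(i)$ from pushing forward via $\phi$.
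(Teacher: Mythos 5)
Your proof is correct and follows essentially the same route as the paper's: pick a basis element $b\in B(i)$ witnessing $x$, track heights through $\{m_b\}$ and $\phi$ to bound $\supp_\bN(\phi(m_b))$ in $N_{\Phi^2(i)+\Phi(i)}(x)$, then invoke Lemma~\ref{lem:pairing_support_finite} at height $\Phi^2(i)$ to produce a point of $\supp_\bN(\alpha,\Phi^2(i))$ within that neighbourhood. The intermediate bounds and the appeal to the pairing lemma match the paper exactly.
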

\begin{proof}
	Suppose $x\in \supp_\bM(\phi^*(\alpha),i)$. Then there is some $b\in B(i)$ with $p(b)=x$ and  $\phi^*(\alpha)(m_b)=\alpha(\phi(m_b))\neq 0$.  Since $m_b\in \bM(\Phi(i))$, we see that $\phi(m_b)\in \bN(\Phi^2(i))$ and that $\supp_\bN(\phi(m_b))\subseteq N_{\Phi^2(i)}(\supp_\bM(m_b))\subseteq N_{\Phi^2(i)+\Phi(i)}(x)$. Lemma~\ref{lem:pairing_support_finite} thus implies that since $\alpha(\phi(m_b))\neq 0$, we have \[\emptyset\neq \supp_\bN(\alpha,\Phi^2(i))\cap \supp_\bN(\phi(m_b))\subseteq \supp_\bN(\alpha,\Phi^2(i))\cap N_{\Phi^2(i)+\Phi(i)}(x). \] It follows that $x\in N_{\Phi^2(i)+\Phi(i)}(\supp_\bN(\alpha,\Phi^2(i)))$.
\end{proof}
\begin{rem}\label{rem:locsupport_indep}
	Applying Lemma~\ref{lem:local_support_disp} in the situation  $\bM=\bN$,  $\phi=\id$, and $\bM$ and $\bN$ are equipped with two different  projective basis, each of displacement $\Phi$, we see that up to an error of at most $\Phi^2(i)+\Phi(i)$,  $\supp_\bM(\alpha,i)$ is independent of the choice of projective basis  $\{m_b\}$.
\end{rem}

For the remainder of this section, we focus on $\bM^*=\Hom_R(\bM,R)$ and $\bM^*_c=\Hom_c(\bM,R)$, where $\bM$ is a projective $R$-module over $X$. We first make the following observation:

\begin{rem}\label{rem:coord_boundedsupp}
	Let $\bM=(M,B,\delta,p,\{B(i)\})$ be an $R$-module over $X$. If $b\in B$, then $\delta_b(m)\neq 0$ only if $p(b)\in \supp_\bM(m)$. Consequently, if $m\in \bM$ with $\supp(m)\subseteq X\setminus \{p(b)\}$, then $\delta_b(m)=0$. This shows $\delta_b\in \bM^*_c$.
\end{rem}
While it is not typically  the case that either $\bM^*$ or $\bM^*_c$ are themselves projective $R$-modules over $X$, we will see  the coordinates $\{\delta_b\}$ of $\bM$ are a  sort of ``projective Schauder  basis'' of $\bM^*$. To make this precise, we introduce the following convention regarding infinite sums:
\begin{notat}\label{not:infsum}
	Let $M$ be an $R$-module, let $B$ be an arbitrary indexing set and $\phi\in M^*=\Hom_R(M,R)$. Suppose that for each $b\in B$, we have elements $\phi_b\in M^*$ and $r_b\in R$. We write the expression \[\phi=\sum_{b\in B}r_b\phi_b,\] to indicate that for  each $m\in M$,  we have $\phi(m)=\sum_{b\in B}r_b\phi_b(m)$ and that $r_b\phi_b(m)=0$ for almost all  $b\in B$.
\end{notat}
In the case $B$ is finite, this notation is clearly consistent with the sum $\sum_{b\in B}r_b\phi_b$ of  finitely many elements of the $R$-module $M^*$.

This yields a useful description of elements of $\bM^*$:
\begin{lem}\label{lem:dual_infsum_second}
	Let $\bM=(M,B,\delta,p,\{B(i)\})$ be  a projective $R$-module over $X$ with  projective basis $\{m_b\}$.
	\begin{enumerate}
		\item For each $\phi\in \bM^*$, \[\phi=\sum_{b\in B}\phi(m_b)\delta_b.\]
		\item For each $i\in \bbN$ and $\phi\in \bM^*$, we have $\phi-\phi_i\in \Ann(\bM(i))$, where $\phi_i\coloneqq \sum_{b\in B(i)}\phi(m_b)\delta_b$ and  $\Ann(\bM(i))\coloneqq\{\phi\in \bM^*_c\mid \phi(\bM(i))=0\}$.
	\end{enumerate}
\end{lem}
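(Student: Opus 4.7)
The plan is to exploit the defining identity of a projective basis: for every $m \in M$, the expansion $m = \sum_{b\in B}\delta_b(m)m_b$ converges as a finite sum because almost all $\delta_b(m)$ vanish. Applying $\phi$ to this expansion, and reinterpreting the result via Notation~\ref{not:infsum}, will deliver both parts of the lemma.

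For (1), I would fix $\phi \in \bM^*$ and $m \in M$, use $R$-linearity to commute $\phi$ past the finite sum, and invoke commutativity of $R$ to obtain $\phi(m) = \sum_{b\in B}\delta_b(m)\phi(m_b) = \sum_{b\in B}\phi(m_b)\delta_b(m)$. Since $\phi(m_b)\delta_b(m) = 0$ whenever $\delta_b(m) = 0$, this identity satisfies the convention of Notation~\ref{not:infsum}, yielding $\phi = \sum_{b\in B}\phi(m_b)\delta_b$. The same reasoning shows $\phi_i \coloneqq \sum_{b\in B(i)}\phi(m_b)\delta_b$ is a well-defined element of $\bM^*$, setting up (2). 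For the vanishing half of (2), fix $m \in \bM(i)$; by Definition~\ref{defn:rmod_overX}(\ref{item:moddef_filtr}) together with the definition of $\bM(i)$, one has $\delta_b(m) = 0$ for every $b \in B \setminus B(i)$, so the expansion truncates to $m = \sum_{b \in B(i)}\delta_b(m)m_b$, and applying $\phi$ exactly as in (1) gives $\phi(m) = \phi_i(m)$, whence $(\phi - \phi_i)(m) = 0$.

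The main obstacle will be verifying that $\phi - \phi_i$ actually lies in $\bM^*_c$ rather than merely in $\bM^*$. By Proposition~\ref{prop:bdd_support}, this reduces to bounding $\supp(\phi - \phi_i, j) \subseteq X$ for each $j \in \bbN$. If $\{m_b\}$ has displacement $\Phi$ and $\Phi(j) \leq i$, then $m_b \in \bM(\Phi(j)) \subseteq \bM(i)$ for every $b \in B(j)$, so the vanishing established above forces $(\phi - \phi_i)(m_b) = 0$ and the local support is empty. The delicate case is $\Phi(j) > i$, where I expect the argument to rely on the precise interplay between the expansion of each $m_b$, the filtration $\{B(i)\}$, and the support function $p$: the idea would be that a basis element $m_b$ on which $\phi - \phi_i$ is nonzero must, via its expansion, interact with coordinates indexed by $B \setminus B(i)$, and one would then invoke a Lemma~\ref{lem:local_support_disp}-style displacement estimate applied to $\id_{\bM}$ (comparing the basis $\{m_b\}$ with itself) to confine $p(b)$ to a bounded subset of $X$. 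I anticipate this final boundedness step to be the technical heart of the proof.
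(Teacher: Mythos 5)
Your treatment of part (1) and of the vanishing $(\phi-\phi_i)(\bM(i))=0$ in part (2) is precisely the paper's argument (expand $m=\sum_b\delta_b(m)m_b$, truncate to $B(i)$ when $m\in\bM(i)$, push $\phi$ through the finite sum), and that vanishing is the entire content of the paper's proof.

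The ``main obstacle'' you flag is a real issue, but the displacement estimate you anticipate does not exist: for an arbitrary $\phi\in\bM^*$ the claim $\phi-\phi_i\in\bM^*_c$ is simply false. Take $X=\bbZ$, let $\bM$ be the free $R$-module over $X$ with basis $B=\{b_{x,k}:x\in\bbZ,\ k\in\bbN\}$, control $p(b_{x,k})=x$ and filtration $B(i)=\{b_{x,k}:k\leq i\}$, and let $\phi\in\bM^*$ be the functional with $\phi(b_{x,k})=1$ for all $x,k$. Then $\phi_0=\sum_{x}\delta_{b_{x,0}}$ and $(\phi-\phi_0)(b_{x,1})=1$ for every $x\in\bbZ$, so $\supp(\phi-\phi_0,1)=\bbZ$ is unbounded and $\phi-\phi_0\notin\bM^*_c$ by Proposition~\ref{prop:bdd_support}. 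So the statement, read literally with $\Ann(\bM(i))\subseteq\bM^*_c$, overreaches slightly, and the paper's own proof neither supplies nor can supply the $\bM^*_c$-membership at this level of generality; it only establishes the annihilation of $\bM(i)$. In the places the lemma is actually invoked (the proofs of Proposition~\ref{prop:induced_proj_lf} and Lemma~\ref{lem:lf_induced}), the input $\phi$ lies in $\bM^*_c$ and $\bM$ is proper, so Proposition~\ref{prop:bdd_support} forces $\phi(m_b)=0$ for all but finitely many $b\in B(i)$, making $\phi_i$ a \emph{finite} $R$-combination of the coordinates $\delta_b\in\bM^*_c$ (Remark~\ref{rem:coord_boundedsupp}), whence $\phi-\phi_i\in\bM^*_c$ trivially. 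The right move is therefore not to hunt for a displacement bound on $\phi-\phi_i$ in general, but to record the vanishing as the substance of (2) and observe that the $\bM^*_c$-membership is automatic under the hypotheses in which the lemma is used.
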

\begin{proof}
	Let $m\in \bM$. Since $\delta_b(m)= 0$ for almost all $b\in B$, we have  \[\phi(m)=\phi\left(\sum_{b\in B}\delta_b(m)m_b\right)=\left(\sum_{b\in B}\phi(m_b)\delta_b\right)(m),\] as required.
	If $\phi\in \bM^*$ and $m\in \bM(i)$, then \begin{align*}
		\phi(m)=\phi\left(\sum_{b\in B}\delta_b(m)m_b\right) & =
		\phi\left(\sum_{b\in B(i)}\delta_b(m)m_b\right)                                                                       =\left(\sum_{b\in B(i)}\phi(m_b)\delta_b\right)(m)=\phi_i(m),
	\end{align*}
	so that $\phi-\phi_i\in \Ann(\bM(i))$.
\end{proof}

In the case $\bM$ is a proper projective $R$-module over $X$, we can define an  important submodule of $\Hom_R(\bM^*_c,R)$ as follows:
\begin{defn}\label{defn:lf}
	Let $\bM$ be a proper projective $R$-module over $X$. We set \[\bM^{\lf}\coloneqq \{\psi\in \Hom_R(\bM^*_c ,R)\mid  \psi(\Ann(\bM(i)))=0 \text{ for some $i$} \}.\]
\end{defn}
We now show that $\bM^{\lf}$ can be endowed with the structure  of an $R$-module over $X$:
\begin{prop}\label{prop:induced_proj_lf}
	Let $\bM=(M,B,\delta,p,\cF)$ be a proper projective $R$-module over $X$ with projective basis $\{m_b\}$ of displacement $\Phi$.
	\begin{enumerate}
		\item The double-dual map $\tau:\bM\to \bM^{\lf}$   given by $\tau(m)(\phi)=\phi(m)$ is injective.\label{item:induced_proj_lf0}
	\end{enumerate}
	Henceforth, we identify $\bM$ with its image under $\tau$.
	\begin{enumerate}
		\setcounter{enumi}{1}
		\item\label{item:induced_proj_lf1} For each $\psi\in \bM^{\lf}$ and $i$ is sufficiently large such that $\psi(\Ann(\bM(i)))=0$, we have \[\psi=\sum_{b\in B}\psi(\delta_b)m_b=\sum_{b\in B(i)}\psi(\delta_b)m_b.\]

		\item $\bM^{\lf}$ can be endowed with the structure of an $R$-module over $X$ given by \[\bM^{\lf}=(\bM^{\lf},B,\hat \delta,p,\cF),\] where $\hat\delta_b(\psi)=\psi(\delta_b)$ for all $b\in B$. Moreover,  after identifying $\bM$ with its image under  $\tau$, we have that $\bM$ is a geometric submodule of $\bM^{\lf}$.\label{item:induced_proj_lf2}

		\item\label{item:induced_proj_lf4} For all $i\in \bbN$, we have \begin{align*}
			      \{\psi\in \bM^{\lf}\mid \psi(\Ann(\bM(i)))=0\}  \subseteq \bM^{\lf}(i) \subseteq \{\psi\in \bM^{\lf}\mid \psi(\Ann(\bM(\Phi(i))))=0\}.
		      \end{align*}
	\end{enumerate}
\end{prop}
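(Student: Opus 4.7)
The plan is to dispatch the four parts in order, relying on a single technical observation as the workhorse: \emph{for each $b \in B \setminus B(i)$, the coordinate $\delta_b$ lies in $\Ann(\bM(i))$}. This is immediate from condition~(\ref{item:moddef_filtr}) of Definition~\ref{defn:rmod_overX}, since $m \in \bM(i)$ forces $\delta_b(m) = 0$ for every $b \notin B(i)$.

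For part~(1), I would first confirm that $\tau$ lands in $\bM^{\lf}$: for any $m$, choose $i$ with $m \in \bM(i)$; then every $\phi \in \Ann(\bM(i))$ satisfies $\tau(m)(\phi) = \phi(m) = 0$. For injectivity, if $\tau(m) = 0$ then, since each $\delta_b \in \bM^*_c$ by Remark~\ref{rem:coord_boundedsupp}, I obtain $\delta_b(m) = \tau(m)(\delta_b) = 0$ for every $b$, which forces $m = 0$ via condition~(\ref{item:moddef_zero}) of Definition~\ref{defn:rmod_overX}. For part~(2), the technical observation immediately kills every coefficient $\psi(\delta_b)$ with $b \notin B(i)$, so the two displayed sums coincide. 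To verify they actually equal $\psi$, I would test against an arbitrary $\phi \in \bM^*_c$: Lemma~\ref{lem:dual_infsum_second} yields $\phi - \phi_i \in \Ann(\bM(i))$ where $\phi_i = \sum_{b \in B(i)} \phi(m_b) \delta_b$, and properness of $\bM$ combined with the bounded support of $\phi$ forces this sum to have only finitely many non-zero terms. Applying $\psi$ then gives $\psi(\phi) = \psi(\phi_i) = \sum_{b \in B(i)} \psi(\delta_b) \phi(m_b)$, which matches the pairing of $\sum_{b \in B(i)} \psi(\delta_b) \tau(m_b)$ against $\phi$.

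For part~(3), I would verify the two non-automatic conditions of Definition~\ref{defn:rmod_overX} for the proposed data on $\bM^{\lf}$. Condition~(\ref{item:moddef_zero}) follows from part~(2): if every $\hat\delta_b(\psi) = 0$, the expansion forces $\psi = 0$. Condition~(\ref{item:moddef_filtr}) follows from the technical observation: choosing $i$ with $\psi(\Ann(\bM(i))) = 0$ gives $\hat\delta_b(\psi) = \psi(\delta_b) = 0$ for every $b \notin B(i)$. The ``geometric'' part of the submodule claim is a one-line calculation: $\hat\delta_b(\tau(m)) = \tau(m)(\delta_b) = \delta_b(m)$, so coordinates, control function, and filtration all match $\bM$ by construction.

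For part~(4), the first inclusion is immediate from the technical observation. For the second, given $\psi \in \bM^{\lf}(i)$ and $\phi \in \Ann(\bM(\Phi(i)))$, I would pick some $j \geq i$ with $\psi(\Ann(\bM(j))) = 0$ (possible since annihilators shrink as $j$ grows). Part~(2) then expands $\psi = \sum_{b \in B(j)} \psi(\delta_b) \tau(m_b)$, and the hypothesis $\psi \in \bM^{\lf}(i)$ eliminates every term with $b \notin B(i)$. Pairing with $\phi$ gives $\psi(\phi) = \sum_{b \in B(i)} \psi(\delta_b) \phi(m_b)$; the displacement hypothesis forces $m_b \in \bM(\Phi(i))$ for each $b \in B(i)$, so every $\phi(m_b) = 0$ and hence $\psi(\phi) = 0$. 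The only genuine subtlety — more bookkeeping than obstacle — is ensuring the infinite sums over $B(j)$ are well-defined in $\bM^{\lf}$: convergence at each test $\phi \in \bM^*_c$ always reduces to combining properness of $\bM$ with the locally bounded support of $\phi$, which produces only finitely many non-zero summands at each evaluation.
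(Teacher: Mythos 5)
Your proof is correct and follows essentially the same strategy as the paper's, hinging on the same key observation that $\delta_b \in \Ann(\bM(i))$ for $b \notin B(i)$. The only place you diverge slightly is the injectivity step in part (1): the paper expands $m = \sum_b \tau(m)(\delta_b)\,m_b = 0$ via the projective basis, whereas you conclude directly from $\delta_b(m) = 0$ for all $b$ and condition~(\ref{item:moddef_zero}) of Definition~\ref{defn:rmod_overX}; your route is marginally more economical but the two are equivalent.
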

\begin{proof}
	(\ref{item:induced_proj_lf0}): We first show that for each $m\in \bM$, the element $\tau(m)\in \Hom_R(\bM^*_c,R)$ given by $\tau(m)(\phi)=\phi(m)$ is actually in $\bM^{\lf}$. Indeed, we know that $m\in \bM(i)$ for some $i\in \bbN$. If $\phi\in \Ann(\bM(i))$, we have $\tau(m)(\phi)=\phi(m)=0$. Thus $\tau(m)(\Ann(\bM(i)))=0$, and so $\tau(m)\in \bM^{\lf}$. To see $\tau$ is injective, we observe that if $m\in \bM$ such that $\tau(m)=0$, then \[m=\sum_{b\in B}\delta_b(m)m_b=\sum_{b\in B}\tau(m)(\delta_b)m_b=0.\]

	(\ref{item:induced_proj_lf1}): Let $\psi\in \bM^{\lf}$ with $\psi(\Ann(\bM(i)))=0$, and suppose $\phi\in \bM^*_c$. Note that for $b\in B\setminus B(i)$, we have $\delta_b\in \Ann(\bM(i))$, and so $\psi(\delta_b)=0$. Since $\bM$ is proper, it follows from Proposition~\ref{prop:bdd_support} that $\phi(m_b)=0$ for almost all $b\in B(i)$; thus almost all terms of  $\phi_i\coloneqq \sum_{b\in  B(i)}\phi(m_b)\delta_b$ are zero. Lemma~\ref{lem:dual_infsum_second} implies that $\phi-\phi_i\in \Ann(\bM(i))$, and so $\psi(\phi)=\psi(\phi_i)$. Therefore,
	\begin{align*}
		\psi(\phi)=\psi\left(\sum_{b\in  B(i)}\phi(m_b)\delta_b\right)=\sum_{b\in  B(i)}\phi(m_b)\psi(\delta_b)=\left(\sum_{b\in  B(i)}\psi(\delta_b)\tau(m_b)\right)(\phi).
	\end{align*}
	Since this holds for all $\phi\in \bM^*_c$, we conclude that $\psi=\sum_{b\in  B(i)}\psi(\delta_b)\tau(m_b)$. Since we are identifying  $\bM$ with its image under $\tau$, we have $\psi=\sum_{b\in  B(i)}\psi(\delta_b)m_b$.

	(\ref{item:induced_proj_lf2}):
	To show $(\bM^{\lf},B,\hat \delta,p,\cF)$ is an $R$-module over $X$, we need to verify (\ref{item:moddef_zero}) and (\ref{item:moddef_filtr}) of Definition~\ref{defn:rmod_overX}. Firstly, if $\psi\in \bM^{\lf}$ with $\hat \delta_b(\psi)=\psi(\delta_b)=0$ for all $b\in B$, then $\psi=0$ by 	(\ref{item:induced_proj_lf1}). Secondly, if $\psi\in \bM^{\lf}$, then  $\psi(\Ann(\bM(i)))=0$ for some $i$. Thus for $b\in B\setminus B(i)$,  $\delta_b\in \Ann(\bM(i))$ and so $\hat\delta_b(\psi)=0$. This shows that $\bM^{\lf}$ is indeed an $R$-module over $X$. To see $\tau(\bM)$ is a geometric submodule of $\bM^{\lf}$, we need to show $\hat\delta_b\circ \tau$ agrees with $\delta_b$ for each $b\in B$. This is clear, since for each $m\in M$, we have \begin{align*}
		\hat\delta_b(\tau(m))=\tau(m)(\delta_b)=\delta_b(m)
	\end{align*} as required.

	(\ref{item:induced_proj_lf4}): First suppose $\psi(\Ann(\bM(i)))=0$. For each $b\in B\setminus B(i)$, we have  $\delta_b\in \Ann(\bM(i))$ and so $\hat\delta_b(\psi)=\psi(\delta_b)=0$. It follows that $\psi\in \bM^{\lf}(i)$.

	Now suppose $\psi\in \bM^{\lf}(i)$. Let $\phi\in\Ann(\bM(\Phi(i)))$.  We know from (\ref{item:induced_proj_lf1}) that
	\begin{align}\label{eqn:lf_expand}
		\psi(\phi)=\left(\sum_{b\in  B}\tau(m_b)\psi(\delta_b)\right)(\phi)=\sum_{b\in  B}\psi(\delta_b)\phi(m_b),
	\end{align} where $\psi(\delta_b)\phi(m_b)=0$ for almost all $b\in B$. If $b\in B(i)$, then $m_b\in \bM(\Phi(i))$ and so $\phi(m_b)=0$. If $b\in B\setminus B(i)$, then $\psi(\delta_b)=\hat\delta_b(\psi)=0$ since $\psi\in \bM^{\lf}(i)$. Thus $\psi(\delta_b)\phi(m_b)=0$ for all $b\in B$, and so (\ref{eqn:lf_expand}) implies  $\psi(\phi)=0$. This implies $\psi(\Ann(\bM(\Phi(i))))=0$.
\end{proof}

Henceforth, whenever $\bM$ is a  proper projective $R$-module over $X$, we endow $\bM^{\lf}$ with the structure of an $R$-module over $X$ as in Proposition~\ref{prop:induced_proj_lf}, and identify $\bM$ with its image in $\bM^{\lf}$ under $\tau$.
\begin{lem}\label{lem:lf_induced}
	Let $\bM$ and $\bN$ be proper projective $R$-modules over $X$ and $Y$ respectively. Suppose $f:X\to Y$ is a coarse embedding  and $\phi:\bM\to \bN$ has finite displacement over $f$.
	Then $\phi$ extends to  a finite displacement map $\phi^{\lf}:\bM^{\lf}\to \bN^{\lf}$ given by $\phi^{\lf}(\psi)(\alpha)=\psi(\phi^*\alpha)$ for each $\psi\in \bM^{\lf}$ and $\alpha\in \bN^*_c$.
\end{lem}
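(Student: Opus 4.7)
The plan is to check three things in turn: that $\phi^{\lf}(\psi)\in \bN^{\lf}$ is well-defined for each $\psi\in \bM^{\lf}$; that $\phi^{\lf}$ agrees with $\phi$ after identifying $\bM$ and $\bN$ with their canonical images in $\bM^{\lf}$ and $\bN^{\lf}$; and that $\phi^{\lf}$ has finite displacement over $f$. The first two points are essentially formal, and the substantive work lies in the third.

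For well-definedness, Proposition~\ref{prop:dual_fd} yields $\phi^*\alpha\in \bM^*_c$ for every $\alpha\in \bN^*_c$, so $\phi^{\lf}(\psi)(\alpha)\coloneqq \psi(\phi^*\alpha)$ defines an $R$-linear map $\bN^*_c\to R$. Fix an increasing $\Phi$ bounding the displacements of $\phi$ over $f$, of the projective bases of $\bM$ and $\bN$, and a bornologous control function of $f$. If $\psi(\Ann(\bM(i)))=0$ and $\alpha\in \Ann(\bN(\Phi(i)))$, then for any $m\in \bM(i)$ we have $\phi(m)\in \bN(\Phi(i))$, so $(\phi^*\alpha)(m)=\alpha(\phi(m))=0$; hence $\phi^*\alpha\in \Ann(\bM(i))$ and $\phi^{\lf}(\psi)(\alpha)=0$, giving $\phi^{\lf}(\psi)\in \bN^{\lf}$. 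For the extension property, $\phi^{\lf}(\tau_\bM(m))(\alpha)=\tau_\bM(m)(\phi^*\alpha)=\alpha(\phi(m))=\tau_\bN(\phi(m))(\alpha)$ for every $m\in \bM$ and $\alpha\in \bN^*_c$, so $\phi^{\lf}\circ \tau_\bM=\tau_\bN\circ \phi$.

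For finite displacement, suppose $\psi\in \bM^{\lf}(i)$. By Proposition~\ref{prop:induced_proj_lf}(\ref{item:induced_proj_lf4}), $\psi(\Ann(\bM(\Phi(i))))=0$, and by the calculation above $\phi^{\lf}(\psi)$ annihilates $\Ann(\bN(\Phi^2(i)))$, so Proposition~\ref{prop:induced_proj_lf}(\ref{item:induced_proj_lf4}) again places $\phi^{\lf}(\psi)$ in $\bN^{\lf}(\Phi^2(i))$, giving the height bound. For the support bound I would expand $\psi$ via Proposition~\ref{prop:induced_proj_lf}(\ref{item:induced_proj_lf1}) as $\psi=\sum_{b\in B(\Phi(i))}\psi(\delta_b) m_b$, a sum that is finite when paired with any element of $\bM^*_c$ by properness of $\bM$ combined with Proposition~\ref{prop:bdd_support}. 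Then for each coordinate $\epsilon_c$ of $\bN$,
\[\hat\epsilon_c(\phi^{\lf}(\psi))=\psi(\phi^*\epsilon_c)=\sum_{b\in B(\Phi(i))}\psi(\delta_b)\,\epsilon_c(\phi(m_b)).\]
If this is nonzero, some $b\in B(\Phi(i))$ has both $\psi(\delta_b)\neq 0$, forcing $p(b)\in \supp_{\bM^{\lf}}(\psi)$, and $\epsilon_c(\phi(m_b))\neq 0$, forcing $q(c)\in \supp_\bN(\phi(m_b))$. Chaining $\supp_\bM(m_b)\subseteq N_{\Phi^2(i)}(p(b))$, the displacement of $\phi$ over $f$, and the bornologous control on $f$ yields $q(c)\in N_{\Psi(i)}(f(p(b)))\subseteq N_{\Psi(i)}(f(\supp_{\bM^{\lf}}(\psi)))$ for an explicit increasing $\Psi$ depending only on $\Phi$.

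The only real obstacle is keeping the displacement bookkeeping straight in the last paragraph, which collapses once one commits to a single $\Phi$ dominating every displacement and control function in sight; modulo that, each step is a direct application of the structural results of Section~\ref{sec:duality} and of Proposition~\ref{prop:induced_proj_lf}.
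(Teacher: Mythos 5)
Your proof is correct and follows essentially the same approach as the paper: the well-definedness and height bound are identical, and the support bound reduces to the same finite sum $\sum_{b\in B(\Phi(i))}\psi(\delta_b)\,\epsilon_c(\phi(m_b))$, which you reach by expanding $\psi$ via Proposition~\ref{prop:induced_proj_lf}(\ref{item:induced_proj_lf1}) while the paper expands $\phi^*\epsilon_c$ via Lemma~\ref{lem:dual_infsum_second} — two dual views of the same calculation. The final displacement bookkeeping matches the paper's.
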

\begin{proof}
	Suppose $\bM=(M,B,\delta,p,\{B(i)\})$ and $\bN=(N,C,\epsilon,q,\{C(i)\})$. Pick $\Phi:\bbN\to \bbN$ large enough such that  $\bM$ has displacement $\Phi$ and $\phi$ has displacement $\Phi$ over $f$. Let $\{m_b\}$ be a projective basis of $\bM$ with displacement $\Phi$. Assume also that $f$ is a $(\Lambda,\Upsilon)$-coarse embedding. Fix $i\in \bbN$ and set $i_k=\Phi^k(i)$.

	If $\psi\in \bM^{\lf}(i)$, then $\psi(\Ann(\bM(i_1)))=0$ by (\ref{item:induced_proj_lf4}) of Proposition~\ref{prop:induced_proj_lf}. Since $\phi(\bM(i_1))\subseteq \bN(i_2)$, it follows that $\phi^*(\Ann(\bN(i_2)))\subseteq \Ann(\bM(i_1))$. Thus \[\phi^{\lf}(\psi)(\Ann(\bN(i_2)))\subseteq \psi(\Ann(\bM(i_1)))=0,\] and so (\ref{item:induced_proj_lf4}) of Proposition~\ref{prop:induced_proj_lf} implies $\phi^{\lf}(\psi)\in \bN^{\lf}(i_2)$.

	Now suppose $\psi\in \bM^{\lf}(i)$ and $y\in \supp_{\bN^{\lf}}(\phi^{\lf}(\psi))$. Then there exists some $c\in C$ with $q(c)=y$ and $\hat\epsilon_c(\phi^{\lf}(\psi))\neq 0$. It follows that $\psi(\phi^*\epsilon_c)=\phi^{\lf}(\psi)(\epsilon_c)\neq 0$. As observed above, we have $\psi(\Ann(\bM(i_1)))=0$. By Lemma~\ref{lem:dual_infsum_second} $\phi^*\epsilon_c-\sum_{b\in B(i_1)}(\phi^*\epsilon_c)(m_b)\delta_b\in \Ann(\bM(i_1))$, and so \begin{align*}
		\psi\left(\sum_{b\in B(i_1)}(\phi^*\epsilon_c)(m_b)\delta_b\right)=\psi(\phi^*\epsilon_c)\neq 0.
	\end{align*}
	Thus there is some $b\in B(i_1)$ with $\epsilon_c(\phi(m_b))\hat\delta_b(\psi)\neq 0$. Since $\hat\delta_b(\psi)\neq 0$, we have $p(b)\in \supp_{\bM^{\lf}}(\psi)$. Since $m_b\in \bM(i_2)$ and $\epsilon_c(\phi(m_b))\neq 0$, we have
	\[y\in \supp_\bN(\phi(m_b))\subseteq N_{i_3}(f(\supp_\bM(m_b)))
		\subseteq N_{i_3}(f(N_{i_2}(p(b))))
		\subseteq N_{j}(f(\supp_{\bM^{\lf}}(\psi))),\]
	where $j\coloneqq\Upsilon(i_2)+i_3$.
	Thus $\supp_{\bN^{\lf}}(\phi^{\lf}(\psi))\subseteq N_j(f(\supp_{\bM^{\lf}}(\psi)))$ as required.
\end{proof}

\begin{rem}
	Suppose $\phi$ is as above and $\bM$ has projective basis $\{m_b\}$. Then  each $\psi\in \bM^{\lf}$ can be expressed as an infinite sum $\psi=\sum_{b\in B}\psi(\delta_b)m_b$, and it can be readily verified that \[\phi^{\lf}\left(\sum_{b\in B}\psi(\delta_b)m_b\right)=\sum_{b\in B}\psi(\delta_b)\phi(m_b),\] where all these infinite sums satisfy the criterion in Notation~\ref{not:infsum}. Thus $\phi^{\lf}$ can be thought of as the linear extension of $\phi$ to infinite sums.
\end{rem}

Although  $\bM^*_c$ does not naturally inherit the structure of an $R$-module over $X$ in general, the next proposition demonstrates that  if $\bM^*_c$ is a proper projective $R$-module of finite height, then so is $\bM^*_c$. In light of Proposition~\ref{prop:ginv_loc_fin_type}, this is an analogue of the fact that the dual of a finitely generated projective $RG$-module is also projective~\cite[Proposition I.8.3]{brown1982cohomology}.
\begin{prop}\label{prop:duality_ht}
	Let $\bM=(M,B,\delta,p,\{B(i)\})$ be a proper  projective $R$-module over $X$,  of displacement $\Phi$ and of height $i_0<\infty$. Then $\bM^*_c$ can be endowed with the structure of a  proper  projective $R$-module over $X$, of displacement $\Phi'=\Phi'(\Phi,i_0)$ and of height at most $i_0$. Moreover, the following are  satisfied:
	\begin{enumerate}
		\item\label{item:duality_ht_support_basis}  $\supp_{\bM^*_c}(\delta_b)\subseteq N_{\Phi(i_0)}(p(b))$ for all $b\in B$.
		\item\label{item:duality_ht_support} If $\alpha\in \bM^*_c$ and $m\in \bM$ satisfy $\alpha(m)\neq 0$, then \[\supp_{\bM}(m)\cap N_{\Phi(i_0)}(\supp_{\bM^*_c}(\alpha))\neq \emptyset.\]
		\item\label{item:duality_ht_double}
		      After identifying  $\bM$ with its image under the natural monomorphism $\tau:\bM\to \Hom_R(\bM^*_c,R)$ given by $\tau(m)(\alpha)=\alpha(m)$, we have $\bM=(\bM^*_c)^*_c$.
	\end{enumerate}

	\begin{enumerate}
		\setcounter{enumi}{3}
		\item\label{item:duality_ht_natural} Suppose $f:X\to Y$ is a $(\Lambda, \Upsilon)$-coarse equivalence and $\phi:\bM\to \bN$ has  displacement $\Phi$ over $f$, where $\bN$ is a finite-height proper projective $R$-module over $Y$. Assume  that $\bN^*_c$ is endowed with the structure of a proper projective finite-height $R$-module over $X$, in an analogous manner as $\bM^*_c$. Then $\phi^*:\bN^*_c\to \bM^*_c$ has  displacement $\Phi'$ over an $\Upsilon(0)$-coarse inverse of $f$, where $\Phi'$ depends only on $\Lambda$, $\Upsilon$, $\Phi$ and $\bM$.
	\end{enumerate}
\end{prop}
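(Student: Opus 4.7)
The plan is to endow $\bM^*_c$ with a structure of $R$-module over $X$ by reusing the indexing data of $\bM$, with the coordinates $\{\delta_b\}$ of $\bM$ themselves serving as a projective basis. Explicitly, I set $B' = B$, $p' = p$, filtration $\{B(i)\}$, coordinates $\delta'_b(\alpha) \coloneqq \alpha(m_b)$, and proposed basis $\{\delta_b\}_{b \in B}$; Remark~\ref{rem:coord_boundedsupp} ensures each $\delta_b$ actually lies in $\bM^*_c$. By Lemma~\ref{lem:proj_basis_finheight}, I may also assume $m_b = 0$ for $b \notin B(i_0)$. The expansion $\alpha = \sum_{b \in B} \delta'_b(\alpha)\,\delta_b$ is supplied by Lemma~\ref{lem:dual_infsum_second}, and the crucial point is that this sum is finitely supported: $\alpha(m_b) \neq 0$ forces $b \in B(i_0)$ and $p(b) \in \supp(\alpha,i_0)$, but $\supp(\alpha,i_0)$ is finite by Proposition~\ref{prop:bdd_support} combined with properness of $\bM$, so there are only finitely many such $b$ by properness again. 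Remark~\ref{rem:rmod_conditions} then certifies that $\bM^*_c$ is a projective $R$-module over $X$ with basis $\{\delta_b\}$.

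Next I establish the quantitative properties. Height $\leq i_0$ is immediate since $\delta'_b(\alpha) = \alpha(0) = 0$ for $b \notin B(i_0)$, and properness is inherited from $\bM$. For the displacement: if $b' \in B(i_0)$ and $\delta_b(m_{b'}) \neq 0$, then $p(b) \in \supp_\bM(m_{b'}) \subseteq N_{\Phi(i_0)}(p(b'))$, which gives both (\ref{item:duality_ht_support_basis}) and a projective basis displacement of the form $\Phi'(i) = \max(i_0, \Phi(i_0))$, depending only on $\Phi$ and $i_0$. Property (\ref{item:duality_ht_support}) is then a one-line consequence of the expansion $m = \sum_{b \in B(i_0)} \delta_b(m)\, m_b$: if $\alpha(m) \neq 0$ then some summand contributes a nonzero $\delta_b(m)\,\alpha(m_b)$, producing a point $p(b) \in \supp_\bM(m) \cap \supp_{\bM^*_c}(\alpha)$.

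For (\ref{item:duality_ht_double}), I iterate the construction. The canonical $\tau$ is injective because $\tau(m) = 0$ annihilates every $\delta_b$, so $m = \sum_b \delta_b(m)\,m_b = 0$. Re-applying the construction to $\bM^*_c$ yields a projective $R$-module $(\bM^*_c)^*_c$ with basis $\{\delta'_b\}$ and coordinates $\psi \mapsto \psi(\delta_b)$; but $\delta'_b = \tau(m_b)$ by definition, and for any $\psi \in (\bM^*_c)^*_c$ the finite expansion $\psi = \sum_b \psi(\delta_b)\,\delta'_b$ exhibits $\psi$ as $\tau$ applied to $\sum_b \psi(\delta_b)\,m_b \in \bM$, giving surjectivity. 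The indexing set, control function, filtration, and coordinates of $(\bM^*_c)^*_c$ coincide with those of $\bM$ under $\tau$, so $\bM = (\bM^*_c)^*_c$ as $R$-modules over $X$.

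The most delicate step is (\ref{item:duality_ht_natural}). Choose an $\Upsilon(0)$-coarse inverse $g \colon Y \to X$ of $f$, which is itself $\Upsilon$-bornologous. Since $\phi^*\alpha \in \bM^*_c = \bM^*_c(i_0)$ automatically, only the support estimate requires work. Suppose $\alpha \in \bN^*_c$ and $p(b) \in \supp_{\bM^*_c}(\phi^*\alpha)$, so $b \in B(i_0)$ and $\alpha(\phi(m_b)) \neq 0$. Finite displacement of $\phi$ over $f$ combined with $m_b \in \bM(\Phi(i_0))$ gives $\phi(m_b) \in \bN(\Phi^2(i_0))$ with $\supp_\bN(\phi(m_b)) \subseteq N_{\Phi^2(i_0) + \Upsilon(\Phi(i_0))}(f(p(b)))$. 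Applying Lemma~\ref{lem:pairing_support_finite}, together with the observation that for any $k$ the local support $\supp_\bN(\alpha, k)$ is contained in $\supp_{\bN^*_c}(\alpha)$ (by the same argument that bounds the height of $\bM^*_c$ by $i_0$), produces some $y \in \supp_{\bN^*_c}(\alpha)$ within this distance of $f(p(b))$. Pushing through $g$ and combining $\Upsilon$-bornologousness of $g$ with $d(g(f(x)), x) \leq \Upsilon(0)$ bounds $d(p(b), g(y))$ by $\Upsilon(\Phi^2(i_0) + \Upsilon(\Phi(i_0))) + \Upsilon(0)$, a quantity depending only on $\Upsilon$, $\Phi$, and $i_0$; this gives the required displacement of $\phi^*$ over $g$.
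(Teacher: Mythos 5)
Your proposal is correct and follows essentially the same approach as the paper's own proof: endowing $\bM^*_c$ with the structure $(\bM^*_c, B, \{\tau(m_b)\}, p, \{B(i)\})$ with projective basis $\{\delta_b\}$, establishing displacement via the containment $\supp_\bM(m_{b'}) \subseteq N_{\Phi(i_0)}(p(b'))$, iterating the construction for part (3), and tracking supports through $g$ for part (4). The only cosmetic difference is that in (4) you invoke Lemma~\ref{lem:pairing_support_finite} together with the remark that $\supp_\bN(\alpha,k)\subseteq\supp_{\bN^*_c}(\alpha)$, whereas the paper performs the same support-overlap argument by directly expanding $\phi(m_b)$ in the basis $\{n_c\}$; these are interchangeable.
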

\begin{proof}
	Since $\bM$ has height $i_0$, we deduce that $\delta_b=0$ for all $b\in B\setminus B(i_0)$. By Lemma~\ref{lem:proj_basis_finheight}, we can pick a projective basis $\{m_b\}$ of $\bM$ with displacement $\Phi$ such that $m_b=0$ for all $b\in B\setminus B(i_0)$.
	Let $\tau:\bM\to \Hom_R(\bM^*_c,R)$ be given by $\tau(m)(\alpha)=\alpha(m)$. We  give $\bM^*_c=\Hom_c(\bM,R)$ the structure of an $R$-module over $X$ by \[\bM^*_c=(\bM^*_c,B,\{\tau(m_b)\},p,\{B(i)\}).\] We  note that  $\bM^*_c=\bM^*_c(i_0)$,  showing $\bM^*_c$ has height at most $i_0$. Moreover, $\bM^*_c$ is proper as $\bM$ is.

	We claim $\{\delta_b\}$ is a projective basis of $\bM^*_c$. Firstly,  $\delta_b\in \bM^*_c$ by Remark~\ref{rem:coord_boundedsupp}.
	Let $\alpha\in \bM^*_c$. By Proposition~\ref{prop:bdd_support}, there are only finitely many $b\in B(i_0)$ such that $\tau(m_b)(\alpha)=\alpha(m_b)\neq 0$. Since $m_b=0$ for $b\in B\setminus B(i_0)$, there are only finitely many $b\in B$ such that $\tau(m_b)(\alpha)=\alpha(m_b)\neq 0$
	Thus for each $m\in \bM=\bM(i_0)$, we have
	\begin{align*}
		\sum_{b\in B}\tau(m_b)(\alpha)\delta_b(m)=\sum_{b\in B}\alpha(m_b)\delta_b(m)=\alpha\left(\sum_{b\in B}\delta_b(m)m_b\right)=\alpha(m),
	\end{align*}
	hence $\alpha=\sum_{b\in B}\tau(m_b)(\alpha)\delta_b$, where almost all  terms of this sum are zero. This shows that $\{(\delta_b,\tau(m_b))\}_{b\in B}$ is a projective basis of $\bM^*_c$ as an $R$-module.

	To see $\bM^*_c$ is a projective $R$-module over $X$, it remains to demonstrate that $\{\delta_b\}$ has finite displacement. Let $b\in B$. If $x\in \supp_{\bM^*_c}(\delta_b)$, then there is some $b'\in B$ with $\tau(m_{b'})(\delta_b)\neq 0$ and $p(b')=x$. Thus $\delta_b(m_{b'})\neq 0$, and so $p(b)\in \supp_{\bM}(m_{b'})\subseteq N_{\Phi(i_0)}(x)$. Therefore $\supp_{\bM^*_c}(\delta_b)\subseteq N_{\Phi(i_0)}(p(b))$, proving (\ref{item:duality_ht_support_basis}).  As $\delta_b\in \bM^*_c(i_0)$ for all $b\in B$, it follows that $\{\delta_b\}$ has displacement depending only on $\Phi$ and $i_0$.

	(\ref{item:duality_ht_support}):  Suppose that $\alpha\in \bM^*_c$ and $m\in \bM$ satisfy $\alpha(m)\neq 0$. We have $\alpha=\sum_{b\in B}\tau(m_b)(\alpha)\delta_b$ and $m=\sum_{b\in B}\delta_b(m)m_b$. Thus there exist $b,b'\in B(i_0)$ with $\tau(m_b)(\alpha)=\alpha(m_b)\neq 0$, $\delta_{b'}(m)\neq 0$ and $\delta_b(m_{b'})\neq 0$. Thus $p(b)\in \supp_\bM(m_{b'})\subseteq N_{\Phi(i_0)}(p(b'))$ so that $d(p(b),p(b'))\leq \Phi(i_0)$. Since $p(b)\in \supp_{\bM^*_c}(\alpha)$ and $p(b')\in \supp_\bM(m)$, we conclude that $p(b')\in \supp_{\bM}(m)\cap N_{\Phi(i_0)}(\supp_{\bM^*_c}(\alpha))$ as required.

	(\ref{item:duality_ht_double}):
	Repeating the above construction with $\bM^*_c$ in place of $\bM$, we have  that \[(\bM^*_c)^*_c=((\bM^*_c)^*_c,B,\theta(\delta_b),p,\{B(i)\})\] is a finite height projective $R$-module over $X$ with projective basis $\{\theta(\delta_b)\}$, where   $\theta(\delta_b)(\nu)=\nu(\delta_b)$ for all $\nu\in(\bM^*_c)^*_c$ and $b\in B$.
	For each $m\in M$, we note \[\{b\in B\mid \tau(m)(\delta_b)\neq 0\}=\{b\in B\mid \delta_b(m)\neq 0\}\subseteq p^{-1}(\supp_\bM(m))\]  is finite because $\bM$ is proper. Thus Proposition~\ref{prop:bdd_support} ensures $\tau(m)\in (\bM^*_c)^*_c$.  To see $\tau$ is injective, we note that  if $\tau(m)=0$, then $m=\sum_{b\in B}\delta_b(m)m_b=\sum_{b\in B}\tau(m)(\delta_b)m_b=0$. If $\nu\in (\bM^*_c)^*_c$, we see that \begin{align*}
		\tau\left(\sum_{b\in B}\nu(\delta_b)m_b\right)=\sum_{b\in B}\nu(\delta_b)\tau (m_b)=\sum_{b\in B}\theta(\delta_b)(\nu)\tau (m_b)=\nu,
	\end{align*}
	where the last equality follows because $(\tau(m_b),\theta(\delta_b))$ is a projective basis of $(\bM^*_c)^*_c$. Thus $\tau$ induces an isomorphism $\bM\to (\bM^*_c)^*_c$ between underlying $R$-modules. Since for each $m\in \bM$ and $b\in B$, we have \[\theta(\delta_b)(\tau(m))=\tau(m)(\delta_b)=\delta_b(m),\] we see that after identifying $\bM$ with $(\bM^*_c)^*_c$ via $\tau$, we have $\theta(\delta_b)=\delta_b$, and so $\bM=(\bM^*_c)^*_c$.

	(\ref{item:duality_ht_natural}): Now suppose $\bN=(N,C, \epsilon,q,\{C(i)\})$ is another proper finite height projective $R$-module over $Y$ with projective basis $\{n_c\}$. As above, we define $\bN^*_c=(\bN^*_c, C,\{\tau(n_c)\},q,\{C(i)\})$ with projective basis $\{\epsilon_c\}$.
	Setting $A=\Upsilon(0)$, there exists a coarse inverse $g:Y\to X$ that is a  $(\Lambda, \Upsilon)$-coarse equivalence such that $g\circ f$ and $f\circ g$ are $A$-close to the identity maps.
	Let $\alpha\in \bN^*_c$ and suppose $x\in \supp_{\bM^*_c}(\phi^*(\alpha))$. There is some $b\in B(i_0)$ with $p(b)=x$ and $0\neq \tau(m_b)(\phi^*(\alpha))=\alpha(\phi(m_b))$. Since $\phi(m_b)=\sum_{c\in C}\epsilon_c(\phi(m_b))n_c$, there is some $c\in C$ with $\tau(n_c)(\alpha)=\alpha(n_c)\neq 0$ and $\epsilon_c(\phi(m_b))\neq 0$.
	Thus $q(c)\in \supp_{\bN^*_c}(\alpha)\cap \supp_\bN(\phi(m_b))$. We have
	\[\supp_\bN(\phi(m_b))\subseteq N_{\Phi(i_0)}(f(\supp_\bM(m_b)))\subseteq N_{\Phi(i_0)+\Upsilon(\Phi(i_0))}(f(x))\]
	Setting $r\coloneqq \Upsilon(\Phi(i_0)+\Upsilon(\Phi(i_0)))+A$, we have $d(x,g(q(c)))\leq r$. This implies that $\supp_{\bM^*_c}(\phi^*(\alpha))\subseteq N_r(g(\supp_{\bN^*_c}(\alpha)))$. Since $\bN^*_c$ and $\bM^*_c$ have finite height, this  proves that $\phi^*:\bN^*_c\to\bM^*_c$ has finite displacement over $g$.
\end{proof}
\begin{rem}\label{rem:duality_ht}
	Henceforth, for $\bM$ satisfying the hypotheses of Proposition~\ref{prop:duality_ht}, we always assume $\bM^*_c$ is endowed with the structure of a projective $R$-module over $X$  as in Proposition~\ref{prop:duality_ht}, and we freely identify $\bM$ and $(\bM^*_c)^*_c$ when convenient.
\end{rem}
\begin{rem}\label{rem:height_support}
	We remark that if $\bM$ is of height $i_0<\infty$ and $i\geq i_0$, then $\supp(\alpha,i)=\supp_{\bM^*_c}(\alpha)$ for all $\alpha\in \bM^*_c$.
\end{rem}
\begin{prop}\label{prop:tensor_dual}
	Let $\bM$ be a finite-height proper projective $R$-module over $X$. If $f:R\to S$ is a ring homomorphism, then there is a geometric isomorphism  $\Psi_\bM=\Psi:\bM^*_c\otimes_R S\to (\bM\otimes_R S)^*_c$ given by $\Psi(\alpha\otimes s)(m\otimes s')=f(\alpha(m))ss'$ for all  $m\in \bM$, $\alpha\in \bM^*_c$ and $s,s'\in S$.

	Moreover, $\Psi_\bM$ in natural in the following sense. If  $\phi:\bM\to \bN$ is a finite displacement between finite-height projective $R$-modules  over $\id_X$, then the following diagram commutes:
	\begin{figure}[htb]
		\[\begin{tikzcd}
				{\bN^*_c\otimes_R S} && {(\bN\otimes_R S)^*_c} \\
				\\
				{\bM^*_c \otimes_R S} && {(\bM\otimes_R S)^*_c}
				\arrow["{\Psi_\mathbf{N}}", from=1-1, to=1-3]
				\arrow["{\phi^*\otimes \id_S}"', from=1-1, to=3-1]
				\arrow["{(\phi\otimes \id_S)^*}"', from=1-3, to=3-3]
				\arrow["{\Psi_\mathbf{M}}", from=3-1, to=3-3]
			\end{tikzcd}\]
		\caption{A commutative diagram showing naturality of $\Psi$.}
	\end{figure}
\end{prop}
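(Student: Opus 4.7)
The plan is to deduce that $\Psi$ is a geometric isomorphism by verifying that it intertwines the two natural sets of coordinates on $\bM^*_c \otimes_R S$ and $(\bM \otimes_R S)^*_c$, after which all support and filtration data match exactly.

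First I would set up the two module structures explicitly. Using Lemma~\ref{lem:proj_basis_finheight}, choose a projective basis $\{m_b\}$ of $\bM$ of displacement $\Phi$ with $m_b = 0$ for $b\notin B(i_0)$, where $i_0$ is the height of $\bM$. By Proposition~\ref{prop:change_rings}, $\bM\otimes_R S$ is a projective $S$-module over $X$ with coordinates $\epsilon_b$ given by $\epsilon_b(m\otimes s)=f(\delta_b(m))s$ and projective basis $\{m_b\otimes 1\}$; since it is also proper (by the staggering-style argument in Proposition~\ref{prop:finiteness_operations_first}, or directly) and of height $\leq i_0$, Proposition~\ref{prop:duality_ht} equips $(\bM\otimes_R S)^*_c$ with coordinates $\tau(m_b\otimes 1)$ and projective basis $\{\epsilon_b\}$. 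Dually, Proposition~\ref{prop:duality_ht} equips $\bM^*_c$ with coordinates $\tau(m_b)$ and projective basis $\{\delta_b\}$, and then Proposition~\ref{prop:change_rings} equips $\bM^*_c\otimes_R S$ with coordinates $\hat\tau(m_b)$ satisfying $\hat\tau(m_b)(\alpha\otimes s)=f(\alpha(m_b))s$ and projective basis $\{\delta_b\otimes 1\}$.

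Next I would define $\Psi$ as the $S$-linear map induced from the $R$-bilinear pairing $\bM^*_c\times S\to \Hom_S(\bM\otimes_R S,S)$, $(\alpha,s)\mapsto [(m\otimes s')\mapsto f(\alpha(m))ss']$; $R$-bilinearity comes from the identity $f(r\alpha(m))=f(r)f(\alpha(m))$. The image lands in $(\bM\otimes_R S)^*_c$ because, for any $\zeta=\sum_j\alpha_j\otimes s_j$, one has $\supp(\Psi(\zeta),i)\subseteq \bigcup_j\supp(\alpha_j,i)$, each of which is bounded by Proposition~\ref{prop:bdd_support}. The crucial identity is
\[\tau(m_b\otimes 1)(\Psi(\zeta))=\Psi(\zeta)(m_b\otimes 1)=\sum_j f(\alpha_j(m_b))s_j=\hat\tau(m_b)(\zeta),\]
valid for every $b\in B$ and $\zeta\in \bM^*_c\otimes_R S$. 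From this identity, injectivity of $\Psi$ follows from condition~(\ref{item:moddef_zero}) of Definition~\ref{defn:rmod_overX} applied to $\bM^*_c\otimes_R S$; and surjectivity follows by taking $\gamma\in (\bM\otimes_R S)^*_c$ and setting $\zeta\coloneqq \sum_{b\in B(i_0)}\delta_b\otimes \tau(m_b\otimes 1)(\gamma)$ (a finite sum by local boundedness of $\gamma$ and properness), then checking $\tau(m_b\otimes 1)(\Psi(\zeta)-\gamma)=0$ for all $b$ and invoking condition~(\ref{item:moddef_zero}) again on $(\bM\otimes_R S)^*_c$.

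The coordinate identity also immediately yields $\Psi((\bM^*_c\otimes_R S)(i))=((\bM\otimes_R S)^*_c)(i)$ and $\supp_{\bM^*_c\otimes_R S}(\zeta)=\supp_{(\bM\otimes_R S)^*_c}(\Psi(\zeta))$ for every $i$ and $\zeta$, so Lemma~\ref{lem:canon_isom} gives that $\Psi$ is a canonical (hence geometric) isomorphism with essentially trivial displacement. For naturality, I would simply unfold both compositions on a simple tensor: $((\phi\otimes\id_S)^*\circ \Psi_\bN)(\beta\otimes s)$ and $(\Psi_\bM\circ(\phi^*\otimes\id_S))(\beta\otimes s)$ both send $m\otimes s'$ to $f(\beta(\phi(m)))ss'$. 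I do not anticipate a real obstacle here; the only subtle step is verifying the two sets of coordinates agree, which is a direct unpacking of the constructions in Propositions~\ref{prop:duality_ht} and~\ref{prop:change_rings}.
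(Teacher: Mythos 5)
Your argument is correct, and it is close in spirit to the paper's proof but packages the key observation dually. The paper verifies that $\Psi$ sends the projective basis $\{\delta_b\otimes 1\}$ of $\bM^*_c\otimes_R S$ to the projective basis $\{\epsilon_b\}$ of $(\bM\otimes_R S)^*_c$, then invokes Lemma~\ref{lem:define_fin_disp} twice to produce $\Psi$ and an explicit inverse $\Theta$ of finite displacement, and checks $\Theta\Psi=\Psi\Theta=\id$. You instead establish the coordinate identity $\tau(m_b\otimes 1)\circ\Psi=\hat\tau(m_b)$ and extract bijectivity and finite displacement directly from it; since coordinates are the dual basis of the projective basis, the two verifications are really the same fact viewed from opposite sides. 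One small remark: your appeal to Lemma~\ref{lem:canon_isom} is not literally applicable, because that lemma concerns two $R$-module structures over $X$ on the \emph{same} underlying $R$-module, whereas $\bM^*_c\otimes_R S$ and $(\bM\otimes_R S)^*_c$ have different underlying modules. But the lemma is not needed: the coordinate identity directly gives $\Psi((\bM^*_c\otimes_R S)(i))=((\bM\otimes_R S)^*_c)(i)$ and $\supp(\Psi(\zeta))=\supp(\zeta)$, so both $\Psi$ and $\Psi^{-1}$ have displacement $\id_\bbN$ over $\id_X$, verifying Definition~\ref{defn:iso} directly. Everything else, including the finiteness of the sum in your surjectivity step (via properness and Proposition~\ref{prop:bdd_support}) and the naturality check, is sound.
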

\begin{proof}
	Suppose that $\bM=(M,B,{\delta_b},p,\{B(i)\})$ and has projective basis $\{m_b\}$.
	We use the description of projective basis given in Propositions~\ref{prop:change_rings} and~\ref{prop:duality_ht} to deduce:
	\begin{itemize}
		\item $\bM^*_c\otimes_R S$ has projective basis $\{\delta_b\otimes 1\}_{b\in B}$;
		\item $(\bM\otimes _R S)^*_c$ has projective basis $\{\epsilon_b\}_{b\in B}$, where $\epsilon_b:\bM\otimes_R S\to S$ is the map $\epsilon_b(s\otimes m)=f(\delta_b(m))s$.
	\end{itemize}
	Note that for each $b\in B$, $s\in S$ and $m\in \bM$, we have \[\Psi(\delta_b\otimes 1)(m\otimes s)=f(\delta_b(m))s=\epsilon_b(m\otimes s)\] so that $\Psi(\delta_b\otimes 1)=\epsilon_b$ for all $b\in B$.

	We choose $\Phi$ sufficiently large such that  both these projective bases have displacement $\Phi$, i.e.\
	\begin{enumerate}
		\item For each $b\in B(i)$, $\delta_b\otimes 1\in (\bM^*_c\otimes_R S)(\Phi(i))$ and $\supp(\delta_b\otimes 1)\subseteq N_{\Phi(i)}(p(b))$.
		\item For each $b\in B(i)$, $\epsilon_b\in ((\bM\otimes _R S)^*_c)(\Phi(i))$ and $\supp(\epsilon_b)\subseteq N_{\Phi(i)}(p(b))$.
	\end{enumerate}
	Thus Lemma~\ref{lem:define_fin_disp} implies that $\Psi$ has finite displacement over $\id_X$. Moreover,  Lemma~\ref{lem:define_fin_disp} also implies we can define a finite displacement map $\Theta:(\bM\otimes _R S)^*_c\to \bM^*_c\otimes_R S$ such that $\Theta(\epsilon_b)= \delta_b\otimes 1$. Since $\Theta\circ \Psi=\id$ and $\Psi\circ \Theta=\id$, we deduce $\Psi$ is a geometric isomorphism.

	If $\phi:\bM\to \bN$ has finite displacement, then for all  $s,s'\in S$, $m\in \bM$ and $\alpha\in \bN^*_c$, we have \begin{align*}
		\left((\phi\otimes \id_S)^*\circ \Psi_\bN\right)( \alpha\otimes s)(m\otimes s') & =\Psi_\bN(\alpha\otimes s)(\phi(m)\otimes s')=f(\alpha(\phi(m))) ss'    \\
		                                                                                & =f(\phi^*(\alpha)(m))ss'                                                \\
		                                                                                & =\Psi_\bM(\phi^*(\alpha)\otimes s)(m\otimes s')                         \\
		                                                                                & =(\Psi_\bM\circ (\phi^*\otimes \id_S))( \alpha \otimes s)(m\otimes s').
	\end{align*}
	Thus $(\phi\otimes \id_S)^*\circ \Psi_\bN= \Psi_\bM\circ (\phi^*\otimes \id) $ as required.
\end{proof}

\section{Projective resolutions over a metric space}\label{sec:projres}
\emph{In this section, we  define the main objects of study: projective $R$-resolutions over a metric space. We prove they always exist in Proposition~\ref{prop:resln_exist}, and that they are unique up to finite displacement chain homotopy in Corollary~\ref{cor:projres_indpt}. A key result is Proposition~\ref{prop:proj_res}, which is  an analogue of the \emph{Fundamental Lemma of Homological Algebra}~\cite[Lemma I.7.4]{brown1982cohomology}. It asserts that a coarse embedding $f:X\to Y$ induces a finite displacement chain map $f_\#:\bC_\bullet\to \bD_\bullet$ between projective $R$-resolutions over $X$ and $Y$, and this map is  unique up to a finite displacement chain homotopy.
}
\vspace{.3cm}

\begin{defn}\label{defn:r-chain-complex}
	We say that a non-negative chain complex $\bC_\bullet$ is an \emph{$R$-chain complex over $X$} if the following hold:
	\begin{enumerate}
		\item each $\bC_i$ is an $R$-module over $X$;
		\item each boundary map $\partial: \bC_i\to \bC_{i-1}$ has finite displacement over $\id_X$;
		\item $\{\bC_i\}_i$ is staggered.
	\end{enumerate}
\end{defn}
\begin{rem}
	The requirement that $\{\bC_i\}_i$ is staggered is somewhat artificial, and the  subsequent arguments can be easily adapted to remove it. However,  Remarks~\ref{rem:staggering} and~\ref{rem:staggering_ginv} ensure there is no real loss of generality in assuming this condition. Doing so means we can apply Propositions~\ref{prop:uniform_displacement} and~\ref{prop:uniform_displacement_projective}, allowing us  to work with a single function $\Phi:\bbN\to \bbN$ rather than countable families of such functions, thus drastically simplifying subsequent notation by reducing indices needed.
\end{rem}

An $R$-chain complex $\bC_\bullet$ is said to be \emph{proper} (resp.\ of \emph{finite height}) if each $\bC_i$ is proper (resp.\ of finite height).
An $R$-chain complex $\bC_\bullet$ over $X$ has \emph{uniform preimages} (resp.\ \emph{$\Omega$-uniform preimages}) if  each boundary map $\partial: \bC_i\to \bC_{i-1}$ has uniform preimages (resp.\ $\Omega$-uniform preimages). By Proposition~\ref{prop:uniform_displacement}, if $\bC_\bullet$ has uniform preimages, then there exists  an $\Omega$ such that $\bC_\bullet$ has $\Omega$-uniform preimages.

We say a subcomplex $\bD_\bullet\leq \bC_\bullet$ is a \emph{geometric subcomplex} if each $\bD_k$ is a geometric submodule of $\bC_k$. A chain map $f_\#:\bC_\bullet\to \bD_\bullet$ between $R$-chain complexes over $X$ and $Y$ respectively is said to be of \emph{finite displacement} (resp.\ \emph{displacement $\Phi$}) over $f:X\to Y$ if each $f_i:\bC_i\to \bD_i$ has finite displacement (resp.\ displacement $\Phi$) over $f$. If $f_\#$ has finite displacement over $f$, then Proposition~\ref{prop:uniform_displacement} guarantees $f_\#$ has displacement $\Phi$ over $f$ for some $\Phi$.
Given chain maps $f_\#,g_\#:\bC_\bullet\to \bD_\bullet$, a \emph{chain homotopy} from $f_\#$ to $g_\#$, denoted $f_\#\stackrel{h_\#}{\simeq}g_\#$ is a map $h_\#:\bC_\bullet\to \bD_{\bullet+1}$ such that $g_\#-f_\#=\partial h_\#+h_\#\partial$. A chain homotopy $h_\#:\bC_\bullet\to \bD_{\bullet+1}$ has \emph{finite displacement} (resp.\ \emph{displacement $\Phi$}) over $f$ if each $h_i:\bC_i\to \bD_{i+1}$ has finite displacement (resp.\ displacement $\Phi$) over $f$.

An \emph{augmentation} of an $R$-chain  complex $\bC_\bullet$ over $X$  consists of an $R$-module homomorphism $\epsilon:\bC_0\to R$  such that the composition
\[\bC_1\xrightarrow{\partial} \bC_0\xrightarrow{\epsilon}R\] is zero.
An \emph{augmented $R$-chain complex $(\bC_\bullet,\epsilon)$ over $X$}  is an $R$-chain complex $\bC_\bullet$ over $X$ equipped with an augmentation $\epsilon$.
Given  chain complexes $\bC_\bullet$ and $\bD_\bullet$ and   augmentations $\epsilon:\bC_0\to R$ and $\epsilon':\bD_0\to R$,  we say a chain map $f_\#:\bC_\bullet\to \bD_\bullet$ is \emph{augmentation-preserving} if $\epsilon'\circ f_0=\epsilon$. A \emph{reduced $k$-cycle} of $(\bC_\bullet,\epsilon)$ is a $k$-cycle  $\sigma\in \bC_k$ satisfying $\epsilon(\sigma)=0$ in the case $k=0$. We can thus define the \emph{reduced homology} $\widetilde{H}_k(\bC_\bullet)$ to be the $R$-module of reduced $k$-cycles modulo $k$-boundaries. When unambiguous, we will suppress $\epsilon$ from the notation, writing $\bC_\bullet$ instead of  $(\bC_\bullet,\epsilon)$. In the case where we have an augmented chain complex  whose augmentation has not been explicitly specified in the notation,  we will always use the letter $\epsilon$ to refer to this augmentation.

Let $\bM=(M,B,\delta,p,\cF)$ be an $R$-module over $X$. A geometric submodule $\bN\leq \bM$ is \emph{standard} if it is of the form $\bN=\cap_{b\in B'}\ker(\delta_b)$ for some $B'\subseteq B$. We note that each $\bM(i)$ is a standard submodule.
If $\bC_\bullet$ is an $R$-chain complex over $X$, we say a subcomplex $\bD_\bullet$ is \emph{standard} if each $\bD_k\leq\bC_k$ is standard.

\begin{defn}\label{defn:standard}
	Let $\bC_\bullet$ be an $R$-chain complex over $X$.
	For each $i\in \bbN$ and $Y\subseteq X$, we define $\bC_\bullet(i,Y)$ to be the maximal standard  subcomplex of $\bC_\bullet$ such that for each $k\in \bbN$, the following hold:
	\begin{enumerate}
		\item $\bC_k(i,Y)\subseteq \bC_k(i)$.\label{item:standard_1}
		\item If $\sigma\in \bC_k(i,Y)$, then $\supp(\sigma)\subseteq Y$.\label{item:standard_2}
	\end{enumerate}
\end{defn}

We always assume that when $\bC_\bullet$ is equipped with an augmentation $\epsilon:\bC_0\to R$, each $\bC_\bullet(i,Y)$ is equipped with the augmentation obtained by restricting $\epsilon$. The key property $\bC_\bullet(i,Y)$ satisfies is  the following:
\begin{lem}\label{lem:standard}
	Let $X$ be a metric space and suppose $\bC_\bullet$ is an $R$-metric complex over $X$ such that each boundary map has   displacement $\Phi$.
	\begin{enumerate}
		\item If $\sigma\in \bC_k(i,Y)$, then $\sigma\in \bC_k(i)$ and $\supp(\sigma)\subseteq Y$.\label{item:lemstandard1}
		\item If $\sigma\in \bC_k(i)$ and $\supp(\sigma)\subseteq Y$, then $\sigma\in \bC_k(\Phi^k(i),N_{2^k\Phi^k(i)}(Y))$.\label{item:lemstandard2}
	\end{enumerate}
\end{lem}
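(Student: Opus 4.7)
Part (1) is immediate from the two conditions defining $\bC_k(i,Y)$ in Definition~\ref{defn:standard}. For part (2), the strategy is to exhibit a standard subcomplex $\bD_\bullet\leq \bC_\bullet$ containing $\sigma$ in degree $k$ and satisfying $\bD_j\subseteq \bC_j(\Phi^k(i))$ and $\supp(\bD_j)\subseteq N_{2^k\Phi^k(i)}(Y)$ for all $j$; the maximality clause of Definition~\ref{defn:standard} then forces $\bD_\bullet\leq \bC_\bullet(\Phi^k(i),N_{2^k\Phi^k(i)}(Y))$, whence $\sigma\in \bC_k(\Phi^k(i),N_{2^k\Phi^k(i)}(Y))$. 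After replacing $\Phi$ by $\max(\Phi,\mathrm{id})$ if necessary, we may assume $\Phi(n)\geq n$ for all $n$, so that the iterates $\Phi^j(i)$ are non-decreasing in $j$.

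For each $0\leq j\leq k$, set $i_j\coloneqq \Phi^{k-j}(i)$ and $r_j\coloneqq \sum_{l=1}^{k-j}\Phi^l(i)$, with $r_k=0$. Writing $\bC_j = (C_j,B_j,\delta,p,\{B_j(i)\})$, define
\[
\bD_j \;\coloneqq\; \bigcap_{b\in B_j\setminus S_j}\ker\delta_b, \qquad S_j\coloneqq \{b\in B_j(i_j) : p(b)\in N_{r_j}(Y)\}.
\]
A direct unpacking shows that $\bD_j$ coincides with $\{\tau\in \bC_j(i_j) : \supp(\tau)\subseteq N_{r_j}(Y)\}$, and in particular is a standard submodule of $\bC_j$. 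That $\sigma\in \bD_k$ is clear, since $i_k=i$ and $r_k=0$. The parameters $(i_j,r_j)$ are chosen precisely so that $\bD_\bullet$ is closed under $\partial$: for $\tau\in \bD_j$, the $\Phi$-displacement of $\partial$ yields $\partial\tau\in \bC_{j-1}(\Phi(i_j))=\bC_{j-1}(i_{j-1})$ and
\[
\supp(\partial\tau)\subseteq N_{\Phi(i_j)}(\supp(\tau))\subseteq N_{\Phi(i_j)+r_j}(Y)=N_{r_{j-1}}(Y),
\]
so $\partial\tau\in \bD_{j-1}$.

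It remains to check that $\bD_j$ lies inside $\bC_j(\Phi^k(i))$ with support in $N_{2^k\Phi^k(i)}(Y)$. This reduces to two estimates: monotonicity of $\Phi$-iterates gives $i_j=\Phi^{k-j}(i)\leq \Phi^k(i)$, while $r_j\leq (k-j)\Phi^{k-j}(i)\leq k\Phi^k(i)\leq 2^k\Phi^k(i)$. The only genuinely delicate point is calibrating $(i_j,r_j)$ so that a single $\partial$-step simultaneously respects both the filtration bound and the support bound—any attempt to keep them uniformly close to $i$ and $0$ is doomed, since $\partial$ is allowed to enlarge both by $\Phi$ at each step, and these errors must accumulate over $k$ degrees to produce the unavoidable exponential-in-$k$ factors appearing in the conclusion.
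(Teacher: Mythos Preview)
Your proof is correct and follows essentially the same approach as the paper: both identify the ``rectangular'' standard submodules $\{\tau\in\bC_j(i'):\supp(\tau)\subseteq Y'\}$, use that $\partial$ maps one such module into another with parameters shifted by $\Phi$, and then invoke maximality of $\bC_\bullet(\cdot,\cdot)$. The only organisational difference is that the paper packages this as an induction on $k$ (applying maximality once per step), whereas you unroll the induction into a single staircase complex $\bD_0,\ldots,\bD_k$ and apply maximality once at the end; the two arguments are equivalent, and your explicit handling of the harmless assumption $\Phi\geq\mathrm{id}$ is a point the paper leaves implicit.
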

\begin{proof}
	(\ref{item:lemstandard1})  follows immediately from Definition~\ref{defn:standard}.

	(\ref{item:lemstandard2}): Suppose $\bC_k=(C_k,B_k,\delta^k,p_k,\{B_k(i)\})$ for each $k\in \bbN$. For each $Y\subseteq X$ and $i,k\in \bbN$, let
	\[\bD_k(i,Y)\coloneqq \cap \{\ker(\delta_b)\mid b\in B_k\setminus( B_k(i)\cap p_k^{-1}(Y))\},\] and note that $\bD_k(i,Y)$ consists precisely of the set of all $\sigma\in \bC_k(i)$ with $\supp(\sigma)\subseteq Y$. In particular, $\bC_k(i,Y)\subseteq \bD_k(i,Y)$ for each $k$.

	We will show $\bD_k(i,Y)\subseteq \bC_k(\Phi^k(i),N_{2^k\Phi^k(i)}(Y))$ for all $k,i\in\bbN$ and $Y\subseteq X$, which is equivalent to (\ref{item:lemstandard2}). We proceed by induction on $k$. For the base case, maximality of  $\bC_\bullet(i,Y)$ implies that $\bC_0(i,Y)= \bD_0(i,Y)$.
	For the inductive step,  we note that \begin{align*}
		\partial (\bD_k(i,Y)) & \subseteq\bD_{k-1}(\Phi(i),N_{\Phi(i)}(Y))\subseteq \bC_{k-1}(\Phi^k(i),N_{2^{k-1}\Phi^{k}(i)+\Phi(i)}(Y)) \\&\subseteq
		\bC_{k-1}(\Phi^k(i),N_{2^{k}\Phi^{k}(i)}(Y)),
	\end{align*} where the first inclusion follows because $\partial$ has displacement $\Phi$, and the second inclusion follows from the inductive hypothesis. Maximality of $\bC_{\bullet}(\Phi^k(i),N_{2^{k}\Phi^k(i)}(Y))$ implies that $\bD_k(i,Y)\subseteq \bC_{k}(\Phi^k(i),N_{2^{k}\Phi^k(i)}(Y))$ as required.
\end{proof}

\begin{defn}\label{defn:weak_uniform_acyc}
	Let $(\bC_\bullet, \epsilon)$ be an augmented $R$-metric complex over $X$. We say $\bC_\bullet$ is \emph{weakly uniformly $n$-acyclic} if there is a function $\Omega:\bbN\times \bbN\to \bbN$ such that \begin{enumerate}
		\item for all $i,r\in \bbN$, $k\leq n$ and $x\in X$, the map \[\widetilde{H}_k(\bC_\bullet(i,N_r(x)))\to \widetilde{H}_k(\bC_\bullet(\Omega(i,r),N_{\Omega(i,r)}(x)))\] induced by inclusion is zero;
		\item for each $x\in X$, there is an $m_x\in \bC_0(\Omega_0)$ with $\epsilon(m_x)=1$ and $\supp(m_x)\subseteq N_{\Omega_0}(x)$, where $\Omega_0\coloneqq \Omega(0,0)$.
	\end{enumerate}
	When the above hold, we say $\bC_\bullet$ is \emph{$\Omega$-weakly uniformly $n$-acyclic}. We say $\bC_\bullet$ is \emph{weakly uniformly acyclic} (resp.\ \emph{$\Omega$-weakly uniformly acyclic}) if $\bC_\bullet$ is \emph{weakly uniformly $n$-acyclic} (resp.\ \emph{$\Omega$-weakly uniformly $n$-acyclic}) for all $n$.
\end{defn}
\begin{rem}
	Weak uniform acyclicity should not be confused with the more restrictive condition of uniform acyclicity considered in Section~\ref{sec:unif_acyc}. We refer to Remark~\ref{rem:unif_acyc} for an explanation of the difference between these terms.
\end{rem}
We can reformulate Definition~\ref{defn:weak_uniform_acyc} as follows:
\begin{prop}\label{prop:equiv_weakuniformacyc}
	Let $(\bC_\bullet, \epsilon)$ be an augmented $R$-metric complex over $X$. The following are equivalent:
	\begin{enumerate}
		\item\label{item:equiv_weakuniformacyc1} $(\bC_\bullet, \epsilon)$ is weakly uniformly acyclic.
		\item\label{item:equiv_weakuniformacyc2} The following hold:
		      \begin{enumerate}
			      \item The augmented complex \[\dots \to\bC_2\to \bC_1\to \bC_0\xrightarrow{\epsilon} R\to 0\] is exact.
			      \item Each boundary map $\partial: \bC_n\to \bC_{n-1}$ has uniform preimages.
			      \item There is a constant $D$ such that for each $x\in X$, there is an $m_x\in \bC_0(D)$ with $\epsilon(m_x)=1$ and $\supp(m_x)\subseteq N_D(x)$.\label{item:equiv_weakuniformacyc_2c}
		      \end{enumerate}
	\end{enumerate}
\end{prop}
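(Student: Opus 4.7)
The plan is to prove the two implications separately, using Lemma~\ref{lem:standard} as the bridge between the ``filtration level plus bounded support'' description of chains in $\bC_k$ and the ``standard subcomplex $\bC_\bullet(i,N_r(x))$'' description. At the outset I would fix $\Phi\colon\bbN\to\bbN$, a common displacement function for every boundary of $\bC_\bullet$, obtained by applying Proposition~\ref{prop:uniform_displacement} to the staggered family $\{\bC_k\}$.

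For (\ref{item:equiv_weakuniformacyc1})$\Rightarrow$(\ref{item:equiv_weakuniformacyc2}), assume $\bC_\bullet$ is $\Omega$-weakly uniformly acyclic. Condition (\ref{item:equiv_weakuniformacyc_2c}) is immediate from (2) of Definition~\ref{defn:weak_uniform_acyc}, and surjectivity of $\epsilon$ onto $R$ is furnished by the existence of the $m_x$. For exactness at $\bC_k$, a reduced $k$-cycle $\sigma$ lies in some $\bC_k(i)$ and has $\supp(\sigma)\subseteq N_r(x)$ for some $x, r$; Lemma~\ref{lem:standard}(\ref{item:lemstandard2}) then places $\sigma$ inside $\bC_\bullet(i',N_{r'}(x))$ for appropriate $i',r'$, and weak uniform acyclicity produces $\tau\in\bC_{k+1}$ with $\partial\tau=\sigma$. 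To establish uniform preimages of $\partial\colon\bC_k\to\bC_{k-1}$, given $\sigma\in\bC_{k-1}(i)\cap\im(\partial)$ with $D=\lceil\diam(\supp(\sigma))\rceil$ and fixing any $y\in\supp(\sigma)$, Lemma~\ref{lem:standard}(\ref{item:lemstandard2}) would place $\sigma$ in some $\bC_\bullet(i',N_{r'}(y))$ with $i',r'$ controlled by $i,D,\Phi$. Since $\sigma$ is a reduced $(k-1)$-cycle---automatic for $k\geq 2$, and following from $\epsilon\partial=0$ when $k=1$---weak uniform acyclicity would produce $\tau\in\bC_k(\Omega(i',r'),N_{\Omega(i',r')}(y))$ with $\partial\tau=\sigma$ and $\supp(\tau)\subseteq N_{\Omega(i',r')+D}(\supp(\sigma))$, exhibiting uniform preimages.

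For the converse, I would first apply Proposition~\ref{prop:uniform_displacement} to the staggered family $\{\partial\colon\bC_k\to\bC_{k-1}\}$ to secure a single $\Omega$ such that every boundary has $\Omega$-uniform preimages. Given a reduced $k$-cycle $\sigma$ in $\bC_\bullet(i,N_r(x))$, exactness yields $\tau'\in\bC_{k+1}$ with $\partial\tau'=\sigma$, and $\Omega$-uniform preimages upgrades this to $\tau\in\bC_{k+1}(\Omega(i,2r))$ satisfying $\supp(\tau)\subseteq N_{\Omega(i,2r)+r}(x)$. Lemma~\ref{lem:standard}(\ref{item:lemstandard2}) would then place $\tau$ inside a standard subcomplex $\bC_{k+1}(j_k,N_{j_k}(x))$ for some $j_k$ depending on $i,r,k,\Phi,\Omega$, so that $[\sigma]=0$ in $\widetilde H_k(\bC_\bullet(j_k,N_{j_k}(x)))$. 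Since $\{\bC_k\}$ is staggered, for each fixed $(i,r)$ only finitely many $k$ have $\bC_k(i)\neq 0$, so $\Omega'(i,r)\coloneqq\max(D,\max_k j_k)$ is well-defined and, together with the $m_x$ of~(\ref{item:equiv_weakuniformacyc_2c}), yields weak uniform acyclicity.

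The main obstacle I expect is bookkeeping the nested index shifts: Lemma~\ref{lem:standard}(\ref{item:lemstandard2}) inflates both the filtration level (from $i$ to $\Phi^k(i)$) and the support radius (from $r$ to $2^k\Phi^k(i)+r$) in a way that depends on the homological degree, and these shifts must then be composed with the input dependence of $\Omega$ in both directions of the argument. The staggered hypothesis on $\{\bC_k\}$ is crucial throughout, allowing me to take finite maxima over the degrees contributing to each $(i,r)$ and to invoke Proposition~\ref{prop:uniform_displacement} to obtain single functions $\Phi$ and $\Omega$ valid uniformly in $k$.
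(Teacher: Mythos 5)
Your proposal is correct and takes essentially the same approach as the paper, whose proof is a terse sketch reducing exactly to the two ingredients you identify: applying Proposition~\ref{prop:uniform_displacement} to get a single $\Omega$ uniform in the degree (exploiting the staggered hypothesis), and then translating between the $\bC_k(i)$ filtration-plus-support description and the $\bC_\bullet(i,N_r(x))$ standard subcomplexes via Lemma~\ref{lem:standard}. The only caveats are small and harmless: you should assume WLOG that $\Omega(i,r)\geq\max(i,r)$ and $\Phi(i)\geq i$ to guarantee the relevant inclusions of subcomplexes, and your phrase ``automatic for $k\geq 2$'' really means that reducedness is just the cycle condition (no augmentation to check), with $\partial\sigma=0$ in all cases coming from $\sigma\in\im(\partial)$ and $\partial^2=0$.
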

\begin{proof}
	Since $\bC_\bullet$ is staggered, Proposition~\ref{prop:uniform_displacement} ensures that if each boundary map $\partial: \bC_n\to \bC_{n-1}$ has uniform preimages, then there is an $\Omega$ such that each $\partial$ has $\Omega$-uniform preimages. The equivalence of the two conditions now follows easily by unpacking the definitions and applying Lemma~\ref{lem:standard}.
\end{proof}
\begin{rem}\label{rem:equiv_weakuniformacyc}
	A more  quantitive version of Proposition~\ref{prop:equiv_weakuniformacyc} holds:
	Suppose $(\bC_\bullet, \epsilon)$ is $\Omega$-weakly acyclic and each $\partial$ has displacement $\Phi$. Then each $\partial$ has $\Omega'$-uniform preimages, where $\Omega'=\Omega'(\Omega,\Phi)$, and the constant $D$ in (\ref{item:equiv_weakuniformacyc_2c}) depends only on $\Omega$ and $\Phi$, and vice versa for the (\ref{item:equiv_weakuniformacyc2}) implies (\ref{item:equiv_weakuniformacyc1}) direction.
\end{rem}

We are now ready to  formulate our key definition:
\begin{defn}\label{defn:proj_res}
	Let $X$ be a metric space. A \emph{projective (resp.\ free) $R$-resolution over $X$} is an augmented $R$-chain complex $(\bC_\bullet,\epsilon)$ over $X$  such that:
	\begin{enumerate}
		\item Each $\bC_i$ is a projective (resp.\ free) $R$-module over $X$.

		\item $(\bC_\bullet,\epsilon)$ is weakly uniformly acyclic.\label{defnitem_projres3}
	\end{enumerate}
\end{defn}
We note that every free $R$-resolution over $X$ is a projective $R$-resolution over $X$, but not conversely.

\begin{rem}
	A more accurate but verbose term for a projective $R$-resolution over $X$ might be \emph{a  resolution of $R$ by projective $R$-modules over $X$}. Since we do not consider projective resolutions of $R$-modules other than $R$, there is no harm in using the terminology projective $R$-resolution over $X$.
\end{rem}

As usual, when the  augmentation does not need to be explicitly stated, we frequently omit $\epsilon$ from the notation and simply say call the complex $\bC_\bullet$ a projective $R$-resolution over $X$.
Given an increasing function $\Phi:\bbN\to \bbN$, we say that $\bC_\bullet$ has \emph{displacement $\Phi$} if the following hold:
\begin{itemize}
	\item  each $\bC_i$ is a projective $R$-module of displacement $\Phi$;
	\item  each boundary map $\partial:\bC_i\to \bC_{i-1}$ has displacement $\Phi$.
\end{itemize}
It follows from  Propositions~\ref{prop:uniform_displacement} and~\ref{prop:uniform_displacement_projective}  that each $R$-resolution over $X$ has displacement $\Phi$ for some $\Phi$.

\begin{exmp}\label{exmp:metric_complex}
	In~\cite{kapovich2005coarse}, Kapovich--Kleiner define a class of spaces called \emph{metric simplicial complexes}. Recall that a metric simplicial complex  consists of the pair $(X^{(1)},X)$, where $X$ is  the geometric realisation of connected locally finite simplicial complex $X$, and where $X^{(1)}$ is the 1-skeleton of $X$ equipped with the induced path metric where edges have length one. If $(X^{(1)},X)$ is uniformly acyclic (defined in~\cite{kapovich2005coarse}), then the simplicial chain complex $C_\bullet(X)$ with coefficients in $R$ can be given the structure of a free $R$-resolution over $X^{(1)}$.
	Specifically, if $(X^{(1)},X)$ is a metric simplicial complex, then we define an $R$-chain complex over $X^{(1)}$ with \[\bC_k(X)=(C_k(X),\Sigma_k,\delta^k,p_k,\{\Sigma_k(i)\}),\] where:\begin{enumerate}
		\item $\Sigma_k$ is the set of $k$-simplices in $X$ and $C_k(X)$ is the free $R$-module with basis $\Sigma_k$;
		\item $\{\delta^k_\sigma\}_{\sigma\in \Sigma_k}$ is the dual basis to $\Sigma_k$;
		\item $p:\Sigma_k\to X^{(1)}$ is a function such that  $p_k(\sigma)\in \sigma$ for each simplex $\sigma\in \Sigma_k$;
		\item $\Sigma_k(i)=\begin{cases}
				      \Sigma_k  & \textrm{if}\;i\geq k, \\
				      \emptyset & \textrm{if}\;i<k.
			      \end{cases}$
	\end{enumerate}
	The requirement that $\Sigma_k(i)=\emptyset$ if $i<k$ is to ensure that $\{\bC_k(X)\}_k$ is staggered.

	More generally, Dru\c{t}u--Kapovich define \emph{metric cell complexes}~\cite{drutu2018geometric}. It can be shown similarly that the cellular chain complex of a bounded geometry uniformly acyclic metric cell complex can be endowed with the structure of free $R$-resolution over its 1-skeleton. Even more generally, if $(X,\bC_\bullet)$ is a  \emph{uniformly acyclic metric complex},  as defined in the appendix of~\cite{kapovich2005coarse}, then $\bC_\bullet$ is a free $R$-resolution over $X$.
\end{exmp}

\begin{prop}\label{prop:proj_res}
	Let $X$ and $Y$ be  metric spaces and let $f:X\to Y$ be $\Upsilon$-bornologous. Suppose $\bC_\bullet$ and $\bD_\bullet$ are  projective  $R$-resolutions over $X$ and $Y$ respectively, that  are  of displacement $\Phi$ and are  $\Omega$-weakly uniformly acyclic.
	\begin{enumerate}
		\item There exists an augmentation-preserving  chain map $f_\#:\bC_\bullet\to \bD_\bullet$ of displacement $\Phi'$ over $f$, where $\Phi'$ depends only on $\Phi$, $\Omega$ and $\Upsilon$.\label{item:proj_res1}
		\item Suppose  there exist  augmentation-preserving  chain maps $f_\#,g_\#:\bC_\bullet\to \bD_\bullet$  of displacement $\Psi$ over $f$. Then there exists a chain homotopy $f_\#\stackrel{h_\#}{\simeq}g_\#$ of displacement $\Psi'$ over $f$, where $\Psi'$ depends  only on $\Phi$, $\Psi$, $\Omega$ and $\Upsilon$.\label{item:proj_res2}
	\end{enumerate}
\end{prop}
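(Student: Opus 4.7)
The plan is to imitate the classical inductive construction of chain maps between projective resolutions, using the lifting property for projective $R$-modules over a metric space (Proposition~\ref{prop:projective}) in place of the usual lifting property. Throughout I rely on the characterisation of weak uniform acyclicity from Proposition~\ref{prop:equiv_weakuniformacyc} to get both exactness of $\bD_\bullet$ and uniform preimages of its boundary maps, with quantitative dependence as in Remark~\ref{rem:equiv_weakuniformacyc}.

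For part~(1), I would construct $f_k$ inductively. For the base case, fix a projective basis $\{c_b\}_{b\in B}$ of $\bC_0$ with control $p\colon B\to X$. Condition~(c) of Proposition~\ref{prop:equiv_weakuniformacyc} yields a constant $D=D(\Omega,\Phi)$ and, for each $y\in Y$, an element $m_y\in \bD_0(D)$ with $\epsilon(m_y)=1$ and $\supp(m_y)\subseteq N_D(y)$. Setting $f_0(c_b)\coloneqq \epsilon_\bC(c_b)\,m_{f(p(b))}$ and extending via Lemma~\ref{lem:define_fin_disp} produces an augmentation-preserving map of finite displacement over $f$. For the inductive step, suppose $f_0,\dots,f_{k-1}$ have already been built. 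The composition $\phi\coloneqq f_{k-1}\circ \partial\colon \bC_k\to \bD_{k-1}$ has finite displacement over $f$ by Lemma~\ref{lem:compos_findisp}, and $\partial\circ\phi = f_{k-2}\circ\partial\circ\partial=0$ (with the convention $f_{-1}=\epsilon_\bD$ when $k=1$). Exactness of $\bD_\bullet$ therefore gives $\im(\phi)\subseteq \ker\partial=\im\partial$, so since $\partial\colon \bD_k\to \bD_{k-1}$ has uniform preimages and $\bC_k$ is projective, Proposition~\ref{prop:projective} furnishes $f_k\colon \bC_k\to \bD_k$ of finite displacement over $f$ with $\partial f_k = f_{k-1}\partial$.

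Part~(2) proceeds analogously. Set $h_{-1}\coloneqq 0$. For $k=0$, augmentation-preservation gives $\epsilon(g_0-f_0)=0$, so $\im(g_0-f_0)\subseteq \ker\epsilon = \im\partial$ by exactness; Proposition~\ref{prop:projective} applied to $\bC_0$ and $\partial\colon \bD_1\to \bD_0$ produces $h_0$ with $\partial h_0 = g_0-f_0$. For $k\geq 1$, set $\psi_k\coloneqq g_k-f_k-h_{k-1}\partial$, which has finite displacement over $f$. The calculation
\[
\partial\psi_k = (g_{k-1}-f_{k-1})\partial - \partial h_{k-1}\partial = (g_{k-1}-f_{k-1}-\partial h_{k-1})\partial = h_{k-2}\partial\partial = 0
\]
(using the inductive identity $\partial h_{k-1}+h_{k-2}\partial = g_{k-1}-f_{k-1}$) shows $\im(\psi_k)\subseteq \im\partial$, and another application of Proposition~\ref{prop:projective} yields the desired $h_k$.

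The main obstacle will be extracting a \emph{single} displacement function $\Phi'$ (resp.\ $\Psi'$) that works in every degree and depends only on $\Phi$, $\Omega$, $\Upsilon$ (and the staggering data implicit in $\bC_\bullet,\bD_\bullet$). Naively, the inductive construction produces bounds $\Phi'_k$ that grow with $k$, because each application of Proposition~\ref{prop:projective} requires composing through $f_{k-1}$. The fix is to use the staggered structure from Definition~\ref{defn:r-chain-complex}: for each fixed filtration level $i$ there are only finitely many degrees $k$ with $\bC_k(i)\neq 0$ or $\bD_k(i)\neq 0$, so Proposition~\ref{prop:uniform_displacement}(1) allows us to amalgamate the degreewise bounds $\Phi'_k$ into a single $\Phi'$ with the stated dependencies. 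The same staggering argument produces the uniform $\Psi'$ for the chain homotopy in part~(2).
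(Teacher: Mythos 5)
Your proof is correct and takes essentially the same approach as the paper: an inductive construction using the lifting property (Proposition~\ref{prop:projective}) for projective modules over a metric space, with the base case via the augmentation condition from Proposition~\ref{prop:equiv_weakuniformacyc}, and the uniform displacement bound extracted via the staggering hypothesis and Proposition~\ref{prop:uniform_displacement}. The paper isolates the base case into a separate Lemma~\ref{lem:proj_res_basecase} (since it does not require $f$ bornologous), but the content is identical.
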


We will prove Proposition~\ref{prop:proj_res} by induction.
The base case is stated separately, as it does not require the assumption that $f$ is bornologous:
\begin{lem}\label{lem:proj_res_basecase}
	Let $X$ and $Y$ be  metric spaces, and let $f:X\to Y$ be an arbitrary map. Suppose $\bC_\bullet$, $\bD_\bullet$, $\Phi$ and $\Omega$ are as in Proposition~\ref{prop:proj_res}.
	Then there exists an augmentation-preserving map $f_0:\bC_0\to \bD_0$ of displacement $\Phi'$ over $f$, where $\Phi'$ depends only on $\Phi$ and $\Omega$.
\end{lem}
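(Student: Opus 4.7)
The plan is to construct $f_0$ by prescribing its values on a projective basis of $\bC_0$, using the augmentation-preserving vertex elements guaranteed by weak uniform acyclicity of $\bD_\bullet$, and then invoking Lemma~\ref{lem:define_fin_disp} to assemble a genuine finite displacement map.

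First, apply Proposition~\ref{prop:equiv_weakuniformacyc} (together with Remark~\ref{rem:equiv_weakuniformacyc}) to $\bD_\bullet$: since $\bD_\bullet$ is $\Omega$-weakly uniformly acyclic and of displacement $\Phi$, there is a constant $D = D(\Phi,\Omega)$ such that for every $y \in Y$, one can choose an element $m_y \in \bD_0(D)$ with $\epsilon_\bD(m_y) = 1$ and $\supp_{\bD_0}(m_y) \subseteq N_D(y)$. Fix such a choice of $\{m_y\}_{y \in Y}$ once and for all.

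Next, write $\bC_0 = (C_0, B_0, \delta, p, \{B_0(i)\})$ with projective basis $\{c_b\}_{b \in B_0}$ of displacement $\Phi$. For each $b \in B_0$, define $q_b \coloneqq \epsilon_\bC(c_b)\, m_{f(p(b))} \in \bD_0$. For every $i$ and every $b \in B_0(i)$ we have $q_b \in \bD_0(D)$, and since scaling by an element of $R$ can only shrink support, $\supp_{\bD_0}(q_b) \subseteq \supp_{\bD_0}(m_{f(p(b))}) \subseteq N_D(f(p(b)))$. Setting $\Phi'(i) \coloneqq \max(i, D)$ (so that $\Phi'$ is increasing and depends only on $\Phi$ and $\Omega$), the hypotheses of Lemma~\ref{lem:define_fin_disp} are satisfied, yielding a well-defined $R$-module homomorphism $f_0 \colon \bC_0 \to \bD_0$ of displacement $\Phi'$ over $f$, given by $f_0(c) = \sum_{b \in B_0} \delta_b(c)\, q_b$.

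Finally, verify that $f_0$ is augmentation-preserving. For any $c \in \bC_0$, using $R$-linearity of $\epsilon_\bD$, the identity $\epsilon_\bD(m_{f(p(b))}) = 1$, and the projective basis expansion $c = \sum_b \delta_b(c) c_b$, we compute
\[
\epsilon_\bD(f_0(c)) = \sum_{b \in B_0} \delta_b(c)\, \epsilon_\bC(c_b) \cdot 1 = \epsilon_\bC\!\left(\sum_{b \in B_0} \delta_b(c) c_b\right) = \epsilon_\bC(c),
\]
as required. The only conceptual ingredient beyond bookkeeping is the existence of the uniformly controlled vertex elements $m_y$, which is the one place where weak uniform acyclicity of $\bD_\bullet$ (specifically condition~(c) of Proposition~\ref{prop:equiv_weakuniformacyc}) genuinely enters; no bornologous hypothesis on $f$ is needed since at chain level zero the construction is entirely pointwise in $y = f(p(b))$.
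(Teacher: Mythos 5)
Your proposal is correct and follows essentially the same argument as the paper: use the uniformly controlled elements $m_y$ with $\epsilon(m_y)=1$ guaranteed by weak uniform acyclicity, set $q_b = \epsilon(c_b)\,m_{f(p(b))}$, and invoke Lemma~\ref{lem:define_fin_disp}. The only cosmetic difference is that the paper reads the constant $\Omega_0 = \Omega(0,0)$ directly off Definition~\ref{defn:weak_uniform_acyc}~(2) rather than routing through Proposition~\ref{prop:equiv_weakuniformacyc}~(\ref{item:equiv_weakuniformacyc_2c}), and uses the constant displacement $\Omega_0$ rather than your $\max(i,D)$; both are fine.
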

\begin{proof}
	Set $\Omega_0\coloneqq \Omega(0,0)$.
	For each $y\in Y$, there is some $n_y\in \bD_0(\Omega_0)$ with $\supp_{\bD_0}(n_y)\subseteq N_{\Omega_0}(y)$ and $\epsilon(n_y)=1$.
	We suppose \begin{align*}
		\bC_0=(C_0,B,\delta,p,\cF)
	\end{align*}
	has projective basis $\{m_b\}$ of displacement $\Phi$.
	For each $b\in B$, set $q_b=\epsilon(m_b) n_{f(p(b))}$, noting that $q_b\in \bN(\Omega_0)$ and that $\supp_\bN(q_b)\subseteq N_{\Omega_0}(f(p(b)))$.  By Lemma~\ref{lem:define_fin_disp}, $m\mapsto \sum_{b\in B}\delta_b(m)q_b$ defines  a  map $f_0:\bC_0\to \bD_0$ of  displacement $\Omega_0$ over $f$, which is clearly augmentation-preserving by the choice of $\{q_b\}$.
\end{proof}

\begin{proof}[Proof of Proposition~\ref{prop:proj_res}]
	(\ref{item:proj_res1}): We define $f_k:\bC_k\to \bD_k$ by induction on $k$, where the base case $k=0$ follows from Lemma~\ref{lem:proj_res_basecase}.  If $f_i:\bC_i\to \bD_i$ has been defined for $i\leq k$, we define $f_{k+1}:\bC_{k+1}\to \bD_{k+1}$ as follows. Since $f_k$ has finite displacement over $f$ and $\partial:\bC_{k+1}\rightarrow\bC_k$ has finite displacement over $\id_X$, Lemma~\ref{lem:compos_findisp} implies $f_k\partial:\bC_{k+1}\to \bD_k$ has finite displacement over $f$.
	Exactness of $\bD_\bullet$ implies  $\im(f_k\partial)\subseteq \im(\bD_{k+1}\xrightarrow{\partial}\bD_k)$, where in the case $k=0$, we use the fact that $f_0$ is augmentation-preserving. As $\partial:\bD_{k+1}\to \bD_k$ has uniform preimages by Proposition~\ref{prop:equiv_weakuniformacyc}, we can apply Proposition~\ref{prop:projective} to define a map $f_{k+1}:\bC_{k+1}\to \bD_{k+1}$ of finite displacement over $f$,  with $\partial f_{k+1}=f_k\partial$.

	It follows inductively from Lemmas~\ref{lem:define_fin_disp} and~\ref{lem:compos_findisp}, Proposition~\ref{prop:projective}, and Remark~\ref{rem:equiv_weakuniformacyc} that the displacement of each $f_k$ depends only on  $\Upsilon$, $\Phi$ and $\Omega$. Thus Proposition~\ref{prop:uniform_displacement} and Remark~\ref{rem:uniformdisplacement} imply $f_\#$ has displacement $\Phi$, depending only on $\Upsilon$, $\Phi$ and $\Omega$.

	(\ref{item:proj_res2}):  Since $\epsilon\circ (f_0-g_0)=\epsilon-\epsilon=0$ on $\bC_0$, $\im(f_0-g_0)\subseteq \im(\bD_1\xrightarrow{\partial}\bD_0)$. We can thus use Proposition~\ref{prop:projective} as above to define a  map $h_0:\bC_0\to \bD_1$ of finite displacement over $f$ with $\partial h_0=f_0-g_0$. We now have $\partial (f_1-g_1-h_0\partial)=f_0\partial-g_0\partial -\partial h_0\partial=0$. Exactness of $\bD_2\to \bD_1\to \bD_0$ means $\im(f_1-g_1-h_0\partial)\subseteq \im(\bD_2\xrightarrow{\partial}\bD_1)$. We can thus apply Proposition~\ref{prop:projective} to define a map $h_1:\bC_1\to \bD_2$ such that $f_1-g_1=h_0\partial+\partial h_1$ of finite displacement over $f$. We continue in this way, inductively defining a chain homotopy $h_\#:\bC_{\bullet}\to \bD_{\bullet+1}$ between $f_\#$ and $g_\#$. A similar argument to that used in (\ref{item:proj_res1}) shows that $h_\#$ has displacement $\Psi'$, where $\Psi'$  depends  only on $\Phi$, $\Psi$, $\Omega$ and $\Upsilon$.
\end{proof}
Proposition~\ref{prop:proj_res} readily  implies that for any metric space $X$, a  projective  $R$-resolution over $X$ is unique up to a finite displacement chain homotopy.

\begin{cor}\label{cor:projres_indpt}
	Let $X$ be a metric space and let $\bC_\bullet$ and $\bD_\bullet$ be projective $R$-resolutions over $X$. Then there exists an augmentation-preserving finite displacement chain homotopy equivalence $f_\#:\bC_\bullet\to \bD_\bullet$, i.e.\ there exist augmentation-preserving finite displacement chain maps $f_\#:\bC_\bullet\to \bD_\bullet$ and $g_\#:\bD_\bullet\to \bC_\bullet$, and finite displacement chain homotopies  $\id\simeq g_\#f_\#$ and $\id\simeq f_\#g_\#$.
\end{cor}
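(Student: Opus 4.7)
The plan is to deduce the corollary as a direct application of Proposition~\ref{prop:proj_res}, using the identity map $\id_X:X\to X$ as the bornologous map in both directions. Since $\id_X$ is $\Upsilon$-bornologous for $\Upsilon=\id$, applying part (\ref{item:proj_res1}) of Proposition~\ref{prop:proj_res} to $\id_X$ in both directions (once with $\bC_\bullet$ as domain and $\bD_\bullet$ as codomain, once with the roles swapped) produces augmentation-preserving finite displacement chain maps $f_\#:\bC_\bullet\to\bD_\bullet$ and $g_\#:\bD_\bullet\to\bC_\bullet$, each over $\id_X$.

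Next, I would form the compositions $g_\# f_\#:\bC_\bullet\to\bC_\bullet$ and $f_\# g_\#:\bD_\bullet\to\bD_\bullet$. Each level of these compositions has finite displacement over $\id_X$ by Lemma~\ref{lem:compos_findisp} (with $f=g=\id_X$ and $\Upsilon=\id$). Both compositions are manifestly chain maps and are augmentation-preserving, since $f_\#$ and $g_\#$ are. On the other hand, the identity chain maps $\id_{\bC_\bullet}$ and $\id_{\bD_\bullet}$ are also augmentation-preserving finite displacement chain maps over $\id_X$ (they have displacement $\Phi(i)=i$).

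With that in hand, I would invoke part (\ref{item:proj_res2}) of Proposition~\ref{prop:proj_res}: applied to the pair $(\id_{\bC_\bullet},\, g_\#f_\#)$ it yields a finite displacement chain homotopy $\id_{\bC_\bullet}\simeq g_\# f_\#$, and applied to $(\id_{\bD_\bullet},\, f_\# g_\#)$ it yields a finite displacement chain homotopy $\id_{\bD_\bullet}\simeq f_\# g_\#$. This exhibits $f_\#$ as an augmentation-preserving finite displacement chain homotopy equivalence with chain homotopy inverse $g_\#$, completing the proof.

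There is essentially no obstacle here; the content of the corollary is already packaged into Proposition~\ref{prop:proj_res}, and the only mild point to verify is that compositions of finite displacement maps over $\id_X$ remain of finite displacement, which is exactly Lemma~\ref{lem:compos_findisp} with bornologous control function $\Upsilon=\id$. No additional uniform control over the displacement functions of $f_\#$, $g_\#$, or the homotopies is needed for the qualitative statement, so one does not even have to track how the output functions of Proposition~\ref{prop:proj_res} depend on $\Phi$, $\Omega$, and $\Upsilon$.
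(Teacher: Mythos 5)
Your proof is correct and takes essentially the same approach as the paper: apply Proposition~\ref{prop:proj_res}(\ref{item:proj_res1}) twice to $\id_X$ to produce $f_\#$ and $g_\#$, then apply Proposition~\ref{prop:proj_res}(\ref{item:proj_res2}) twice to obtain the two chain homotopies. The extra remark about Lemma~\ref{lem:compos_findisp} ensuring the compositions $g_\#f_\#$ and $f_\#g_\#$ remain of finite displacement is a sensible, if brief, addition of a detail the paper leaves implicit.
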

\begin{proof}
	The existence of $f_\#$ and $g_\#$ follows by applying Proposition~\ref{prop:proj_res} (\ref{item:proj_res1}) twice to the identity $\id_X$, whilst the existence  of chain homotopies follows by applying Proposition~\ref{prop:proj_res} (\ref{item:proj_res2}) twice.
\end{proof}

We now give a criterion that allows one to truncate projective resolutions to obtain another projective resolution:
\begin{lem}\label{lem:retract_truncate}
	Let $\bC_\bullet$ be a projective $R$-resolution over a metric space $X$. If there exists a finite displacement retraction from $\bC_n$ onto $\im(\bC_{n+1}\to \bC_{n})$, then $\bC_n$ contains a projective submodule $\bP$ such that \[0 \to \bP\xrightarrow{\partial|_\bP} \bC_{n-1}\xrightarrow{\partial} \bC_{n-2}\to \cdots \] is a projective $R$-resolution over $X$.
\end{lem}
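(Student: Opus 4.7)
The plan is to build the truncating module as the complement of $\ker(\partial\colon \bC_n\to \bC_{n-1})$ provided by the given retraction, and then verify that the resulting truncated complex satisfies the equivalent criterion for being a projective $R$-resolution given in Proposition~\ref{prop:equiv_weakuniformacyc}.

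First I would invoke exactness of $\bC_\bullet$ (via Proposition~\ref{prop:equiv_weakuniformacyc}) to identify $K := \im(\bC_{n+1}\to\bC_n) = \ker(\partial\colon\bC_n\to\bC_{n-1})$. Letting $r\colon\bC_n\to\bC_n$ denote the hypothesized finite displacement retraction onto $K$ and setting $\bP := \ker(r)$, the complementary map $\id - r$ is a finite displacement retraction onto $\bP$, so Lemma~\ref{lem:comp} yields a splitting $\bC_n = K\oplus\bP$ with finite displacement projections and certifies that $\bP$ is a projective $R$-module over $X$. Next I would verify exactness of the truncated augmented complex $\bC_\bullet'$: the map $\partial|_\bP$ is injective because $\bP\cap K = 0$, and $\im(\partial|_\bP) = \im(\partial\colon\bC_n\to\bC_{n-1})$ because $\bP$ is complementary to $K = \ker\partial$; exactness at lower degrees and with respect to the augmentation is inherited from $\bC_\bullet$. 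The remaining bookkeeping (finite displacement of the new boundary map, staggering of the finite family) is straightforward since $\partial|_\bP$ factors as the geometric inclusion $\bP\hookrightarrow\bC_n$ composed with $\partial$.

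The only step requiring genuine work --- and the main obstacle --- is verifying that $\partial|_\bP$ has uniform preimages; all remaining hypotheses of Proposition~\ref{prop:equiv_weakuniformacyc} (the constant controlling $\epsilon$, and uniform preimages of boundary maps in degrees below $n$) are inherited directly from $\bC_\bullet$. To handle $\partial|_\bP$, given $\sigma\in\im(\partial|_\bP)\cap\bC_{n-1}(i)$ with $\lceil\diam(\supp\sigma)\rceil = D$, I would apply the uniform preimage property of the original $\partial\colon\bC_n\to\bC_{n-1}$ to produce $\tau\in\bC_n$ with $\partial\tau = \sigma$, whose height and support are controlled in terms of $i$ and $D$. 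Setting $\rho := (\id - r)(\tau)\in\bP$, the fact that $r(\tau)\in K = \ker\partial$ forces $\partial\rho = \partial\tau = \sigma$, while finite displacement of $\id - r$ propagates the control on $\tau$ to $\rho$ (using also that $\bP\leq \bC_n$ is a geometric submodule, so the filtration and supports agree). This gives uniform preimages for $\partial|_\bP$ with a displacement function expressible in terms of $\Omega$ and the displacement of $\id - r$, and concludes the argument.
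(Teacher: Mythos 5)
Your proof is correct and follows essentially the same route as the paper's. Both arguments split $\bC_n$ via Lemma~\ref{lem:comp} into the kernel of the boundary map and a projective complement $\bP$, verify exactness, and establish uniform preimages of $\partial|_\bP$ by applying the uniform-preimage property of $\partial\colon\bC_n\to\bC_{n-1}$ and then projecting the resulting chain onto $\bP$ via the finite displacement projection; the paper calls this projection $\pi$ where you write $\id-r$, but these coincide.
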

\begin{proof}
	Set $\bN\coloneqq \im(\bC_{n+1}\to \bC_{n})$. By Lemma~\ref{lem:comp} the  finite displacement retraction $\bC_n\to \bN$ implies the existence of a direct sum $\bC_n=\bN\oplus \bP$, where both $\bN$ and $\bP$ are projective $R$-modules over $X$, and both  projections  $\bC_n\to \bN$ and $\bC_n\to \bP$ of finite displacement. It follows that $0\to \bP\xrightarrow{\partial|_\bP} \bC_{n-1}\to \bC_{n-2}$ is exact.

	We pick $\Omega$ such that $\bC_n\to \bC_{n-1}$ has $\Omega$-uniform preimages. If $\sigma\in \im(\bC_n \to \bC_{n-1})$ with $\sigma\in \bC_{n-1}(i)$ and $\diam(\supp(\sigma))\leq D$, then there exists  $\omega\in \bC_n(j)$ with $\partial \omega=\sigma$ and $\supp(\omega)\subseteq N_j(\supp(\sigma))$, where $j=\Omega(i,D)$. If the projection $\pi:\bC_n\to \bP$ has displacement $\Psi$, then $\pi(\omega)\in \bP(\Psi(j))$ and $\supp(\pi(\omega))\subseteq N_{j+\Psi(j)}(\supp(\sigma))$ with $\partial \pi(\omega)=\sigma$.  It follows that $\bP\xrightarrow{\partial|_\bP} \bC_{n-1}$ has uniform preimages. It thus follows from Proposition~\ref{prop:equiv_weakuniformacyc} that \[0 \to \bP\xrightarrow{\partial|_\bP} \bC_{n-1}\xrightarrow{\partial} \bC_{n-2}\to \cdots \] is a projective $R$-resolution over $X$.
\end{proof}
Given a coarse embedding $f:X\to Y$ and an  $R$-projective resolution $\bC_\bullet$ over $Y$, we can consider the \emph{pullback chain complex} $f^*\bC_\bullet$, where the boundary maps are the pullbacks of the boundary maps of $\bC_\bullet$, as in Proposition~\ref{prop:induced_mod_maps}.  There is a \emph{canonical identification}  $\iota_\#:f^*\bC_\bullet\to \bC_\bullet$ as in Section~\ref{sec:rmod_metric}, which is just the identity map on underlying $R$-modules, and is clearly a chain map. We equip $f^*\bC_\bullet$ with an augmentation $\epsilon$ so that $\iota_\#$ is augmentation-preserving.

\begin{prop}\label{prop:pullback_res}
	Let $f:X\to Y$ be a coarse embedding and let $\bC_\bullet$ be a  projective $R$-resolution over $Y$. Then the pullback complex $f^*\bC_\bullet$ is a projective $R$-resolution over $X$.
\end{prop}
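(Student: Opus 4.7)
The plan is to verify the conditions of Definition~\ref{defn:proj_res} for $f^*\bC_\bullet$ one by one, using earlier results to handle all but one condition essentially for free. First, each $f^*\bC_k$ is a projective $R$-module over $X$ by part~(\ref{item:projective_operations2}) of Proposition~\ref{prop:projective_operations_second}. The family $\{f^*\bC_k\}_k$ is staggered because $f^*B_k(i) \subseteq B_k(i)$, so non-emptiness of $f^*\bC_k(i)$ implies non-emptiness of $\bC_k(i)$. Each boundary map $f^*\partial \colon f^*\bC_k \to f^*\bC_{k-1}$ has finite displacement over $\id_X$ by part~(\ref{propitem:induced_mod_maps1}) of Proposition~\ref{prop:induced_mod_maps}.

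It remains to show $f^*\bC_\bullet$ is weakly uniformly acyclic, which I will verify via the equivalent formulation in Proposition~\ref{prop:equiv_weakuniformacyc}. Since $f^*\bC_\bullet$ has the same underlying $R$-modules, boundary maps and augmentation as $\bC_\bullet$, exactness of the augmented sequence is immediate from exactness for $\bC_\bullet$. Moreover, each $f^*\partial$ has uniform preimages by part~(\ref{propitem:induced_mod_maps2}) of Proposition~\ref{prop:induced_mod_maps}.

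The main obstacle — and really the only substantive step — is verifying condition~(\ref{item:equiv_weakuniformacyc_2c}) of Proposition~\ref{prop:equiv_weakuniformacyc}: the existence of a uniform constant $D'$ such that, for every $x \in X$, there exists $m_x \in f^*\bC_0(D')$ with $\epsilon(m_x) = 1$ and $\supp_{f^*\bC_0}(m_x) \subseteq N_{D'}(x)$. Here is the approach. Let $f$ be a $(\Lambda,\Upsilon)$-coarse embedding with $\widetilde{\Lambda}$ the weak inverse of $\Lambda$. Weak uniform acyclicity of $\bC_\bullet$ gives a constant $D$ such that, for each $y \in Y$, there is $n_y \in \bC_0(D)$ with $\epsilon(n_y) = 1$ and $\supp_{\bC_0}(n_y) \subseteq N_D(y)$. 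Given $x \in X$, apply this with $y = f(x)$ to obtain $n_{f(x)} \in \bC_0(D)$ with $\supp_{\bC_0}(n_{f(x)}) \subseteq N_D(f(x)) \subseteq N_D(f(X))$. By part~(\ref{lemitem:induced_mod1}) of Lemma~\ref{lem:induced_mod} applied to the canonical identification $\iota \colon f^*\bC_0 \to \bC_0$, there exists $m_x \in f^*\bC_0(D)$ with $\iota(m_x) = n_{f(x)}$, and by our choice of augmentation on $f^*\bC_\bullet$ we have $\epsilon(m_x) = \epsilon(n_{f(x)}) = 1$.

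Finally, part~(\ref{lemitem:induced_mod2}) of Lemma~\ref{lem:induced_mod} yields
\[
f(\supp_{f^*\bC_0}(m_x)) \subseteq N_D(\supp_{\bC_0}(n_{f(x)})) \subseteq N_{2D}(f(x)),
\]
and the coarse embedding inequality $\Lambda(d(s,x)) \leq d(f(s),f(x))$ then gives $\supp_{f^*\bC_0}(m_x) \subseteq N_{\widetilde{\Lambda}(2D)}(x)$. Setting $D' \coloneqq \max(D, \widetilde{\Lambda}(2D))$ completes the verification of condition~(\ref{item:equiv_weakuniformacyc_2c}) and hence the proof that $f^*\bC_\bullet$ is a projective $R$-resolution over $X$.
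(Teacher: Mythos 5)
Your proof is correct and follows essentially the same route as the paper's: projectivity and staggeredness are read off directly, finite displacement and uniform preimages come from Proposition~\ref{prop:induced_mod_maps}, exactness is inherited from $\bC_\bullet$ at the level of underlying $R$-modules, and the only real work is verifying condition~(\ref{item:equiv_weakuniformacyc_2c}) via Lemma~\ref{lem:induced_mod} and the weak inverse $\widetilde{\Lambda}$, exactly as the paper does.
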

\begin{proof}
	Proposition~\ref{prop:induced_mod_maps} ensures each boundary map of $f^*\bC_\bullet$ has finite displacement and uniform preimages. Moreover, \[\dots \to f^*\bC_2\to f^*\bC_1\to f^*\bC_0\xrightarrow{\epsilon} R\to 0\] is exact since $f^*\bC_\bullet$ is isomorphic to $\bC_\bullet$ on the level of underlying $R$-modules. Each $f^*\bC_k$ is a projective $R$-module over $X$ by Proposition~\ref{prop:projective_operations_second}. The definition of the pullback of $\bM=(M,B,\delta,p,\{B(i)\})$ ensures that if $B(i)=\emptyset$, then $f^*B(i)=\emptyset$. This ensures that $f^*\bC_\bullet$ is staggered.

	All that remains is to show that condition (\ref{item:equiv_weakuniformacyc_2c}) of Proposition~\ref{prop:equiv_weakuniformacyc} holds, for then we can apply Proposition~\ref{prop:equiv_weakuniformacyc} to deduce  $f^*\bC_\bullet$ is weakly uniformly acyclic. To see this, suppose $f$ is a $(\Lambda, \Upsilon)$-coarse embedding. Since $\bC_\bullet$ is an $R$-projective resolution over $Y$, there exists  $D$ such that for each $y\in Y$, there exists $m_y\in \bC_0(D)$ with $\epsilon(m_y)=1$ and $\supp_{\bC_0}(m_y)\subseteq N_D(y)$. Let $\iota:f^*\bC_0\to \bC_0$ be the canonical identification, and let $n_x\coloneqq \iota^{-1}(m_{f(x)})$ for each $x\in X$. Then for each $x\in X$, we have $\iota(n_x)=m_{f(x)}\in \bC_0(D)$ and $\supp_{\bC_0}(m_{f(x)})\subseteq N_D(f(x))\subseteq N_D(f(X))$. Thus Lemma~\ref{lem:induced_mod} implies $n_x\in f^*\bC_0(D)$. Lemma~\ref{lem:induced_mod}  also implies  $f(\supp_{f^*\bC_0}(n_x))\subseteq N_D(\supp_{\bC_0}(m_{f(x)}))\subseteq N_{2D}(f(x))$, and so $\supp_{f^*\bC_0}(n_x)\subseteq N_{D'}(x)$, where $D'\coloneqq \widetilde{\Lambda}(2D)$. Hence condition (\ref{item:equiv_weakuniformacyc_2c}) of Proposition~\ref{prop:equiv_weakuniformacyc} holds as required.
\end{proof}

We now prove the existence of a projective $R$-resolution over a metric space.

\begin{prop}\label{prop:resln_exist}
	If $X$ is a metric space, then there exists a free (and hence projective)  $R$-resolution  $\bC_\bullet$ over $X$.  Moreover, $\bC_0$ has finite height and if $X$ is a proper metric space, we may assume $\bC_\bullet$ is proper.
\end{prop}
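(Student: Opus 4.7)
The plan is to construct an analogue of the classical bar resolution over $X$. For each $k\geq 0$, let $\bC_k$ be the free $R$-module over $X$ with basis $B_k\coloneqq X^{k+1}$, dual-basis coordinates $\{\delta_\sigma\}_{\sigma\in B_k}$, control function $p_k(x_0,\ldots,x_k)\coloneqq x_0$, and filtration
\[
B_k(i) \coloneqq
\begin{cases}
\{(x_0,\ldots,x_k)\in X^{k+1} : \diam\{x_0,\ldots,x_k\}\leq i\} & \text{if } i\geq k,\\
\emptyset & \text{if } i<k,
\end{cases}
\]
the second clause ensuring that $\{\bC_k\}_k$ is staggered; note that $B_0(0)=X$, so $\bC_0$ has height $0$. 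Equip $\bC_\bullet$ with the simplicial boundary $\partial(x_0,\ldots,x_k)\coloneqq\sum_{j=0}^k(-1)^j(x_0,\ldots,\hat{x}_j,\ldots,x_k)$ and augmentation $\epsilon(x)\coloneqq 1$. Since removing a coordinate does not increase the diameter of a tuple, and the only new first coordinate appearing in $\partial(x_0,\ldots,x_k)$ is $x_1$ (at distance at most $i$ from $x_0$), Lemma~\ref{lem:define_fin_disp} yields that $\partial$ has finite displacement over $\id_X$.

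Weak uniform acyclicity will then follow from Proposition~\ref{prop:equiv_weakuniformacyc}. For each $y\in X$, let $C_y\colon\bC_k\to\bC_{k+1}$ be the $R$-linear extension of $(x_0,\ldots,x_k)\mapsto(y,x_0,\ldots,x_k)$; a direct calculation gives $\partial C_y+C_y\partial=\id$ in positive degrees, and $\partial C_y(\sigma)=\sigma$ for any reduced $0$-cycle $\sigma$, so $\cdots\to\bC_1\to\bC_0\xrightarrow{\epsilon}R\to 0$ is exact. For uniform preimages of $\partial\colon\bC_k\to\bC_{k-1}$, given a nonzero $\sigma\in\bC_{k-1}(i)\cap\im(\partial)$ with $D\coloneqq\lceil\diam(\supp(\sigma))\rceil$, pick any $y\in\supp(\sigma)$: every basis element appearing in $\sigma$ lies within distance $i+D$ of $y$, so $\omega\coloneqq C_y(\sigma)$ lies in $\bC_k(i+D+1)$, has $\supp(\omega)=\{y\}\subseteq\supp(\sigma)$, and satisfies $\partial\omega=\sigma$. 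The augmentation-witness condition of Proposition~\ref{prop:equiv_weakuniformacyc} is immediate via $m_x\coloneqq x$. Hence $(\bC_\bullet,\epsilon)$ is a free $R$-resolution over $X$ with $\bC_0$ of finite height.

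For the final claim, when $X$ is proper the construction above need not yield a proper $\bC_\bullet$, since $X$ itself may fail to be locally finite. I would remedy this by first choosing a maximal $1$-separated subset $X'\subseteq X$, which exists by Zorn's lemma and is locally finite whenever $X$ is proper (closed bounded subsets of a proper metric space are compact, hence meet any $1$-separated set in finitely many points); the inclusion $\iota\colon X'\hookrightarrow X$ is then a coarse equivalence. Let $\bD_\bullet$ denote the bar resolution constructed above with $X'$ replacing $X$. Local finiteness of $X'$ ensures each $\bD_k$ is proper. Pushing forward via $\iota$ yields a free $R$-chain complex $\iota_*\bD_\bullet$ over $X$ by Proposition~\ref{prop:projective_operations_second}, which remains proper by Proposition~\ref{prop:finiteness_operations_first} since any bounded $F\subseteq X$ meets $X'$ in a finite set. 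Because $\iota$ is an isometric embedding, diameters and neighborhoods transfer unchanged, so the coning argument still delivers uniform preimages, and the augmentation-witness condition is verified by taking $m_x\coloneqq x'$ for any $x'\in X'$ with $d(x,x')\leq 1$. The principal technical subtlety is to ensure that the bounds on the cone construction are uniform across all $k$; this is arranged by the explicit estimate $\Omega(i,D)=i+D+1$ above, which holds independently of $k$ whenever $\bC_{k-1}(i)\neq 0$.
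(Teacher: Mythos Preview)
Your proof is correct and follows essentially the same route as the paper: both construct the standard (bar) resolution with the identical filtration, verify weak uniform acyclicity via the cone operator $C_y$ and Proposition~\ref{prop:equiv_weakuniformacyc}, and then pass to a $1$-net for the proper case. The only genuine difference is in this last step: the paper takes the standard resolution $\bD_\bullet$ over the net $Y$ and pulls it back along a closest-point projection $f\colon X\to Y$, invoking Proposition~\ref{prop:pullback_res} to conclude that $f^*\bD_\bullet$ is a resolution over $X$; you instead push $\bD_\bullet$ forward along the isometric inclusion $\iota\colon X'\hookrightarrow X$ and re-verify weak uniform acyclicity by hand. Your route is slightly more self-contained (it avoids the general pullback machinery), while the paper's route illustrates that Proposition~\ref{prop:pullback_res} does the bookkeeping automatically; both are equally valid here because $\iota$ is an isometric coarse equivalence, so pushforward and pullback are essentially inverse to one another.
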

We prove Proposition~\ref{prop:resln_exist} via the following general construction:
\begin{defn}\label{defn:standard_res}
	Let $X$ be a metric space and let $R$ be a commutative ring. The \emph{standard $R$-resolution} $\bC_\bullet$ of $X$ is defined as follows:
	\begin{enumerate}
		\item For $j\geq 0$,  $C_j$ is the free $R$-module  generated by $B_j$, the set of  all $(j+1)$-tuples $[x_0,\dots, x_j]$ of $X$, and $C_j=0$ for $j<0$. Let $\cB_j=\{(b,\delta_{b})\}_{b\in B_j}$ be the corresponding projective basis of $C_j$.
		\item The boundary map $\partial:C_k\to C_{k-1}$ is given by  \[{[x_0,\dots, x_k]\mapsto \sum_{i=0}^k(-1)^i[x_0,\dots,\widehat{x_i},\dots,x_k],}\] and  as usual, $[x_0,\dots,\widehat{x}_i,\dots,x_k]$ denotes the $k$-tuple with $x_i$ removed.
		\item We define $p_j:B_j\to R$ by $[x_0,\dots,x_j]\mapsto x_0$.
		\item We define a filtration $\cF_j=\{B_j(i)\}_{i\in \bbN}$ by \[B_j(i)\coloneqq \begin{cases}
				      \{[x_0,\dots,x_j]\in B_j\mid d(x_k,x_l)\leq i \;\text{for all $0\leq k,l\leq j$}\} & \textrm{if}\;j\leq i, \\
				      \emptyset                                                                          & \textrm{if}\;j>i.
			      \end{cases}\]
		\item Let $\bC_j=(C_j,B_j,\delta,p,\cF_j,
			      \{b\}_{b\in B_j})$ be the projective $R$-module over $X$.
		\item We equip $\bC_\bullet$ with the augmentation $\epsilon:\bC_0\to R$ given by $\epsilon(b)=1$ for each $b\in B_0$.
	\end{enumerate}
\end{defn}

\begin{prop}\label{prop:standard_proj_res}
	The standard $R$-resolution $\bC_\bullet$ of a metric space $X$ is a free $R$-resolution over $X$.
\end{prop}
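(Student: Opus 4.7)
The plan is to verify the four defining conditions of a free $R$-resolution over $X$ (Definitions~\ref{defn:r-chain-complex} and~\ref{defn:proj_res}) in turn, using the explicit description from Definition~\ref{defn:standard_res}.

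First, I would check that each $\bC_j$ is a free $R$-module over $X$ with basis $B_j$. Since $C_j$ is by construction the free $R$-module on the set $B_j$ with dual basis $\{\delta_b\}$, conditions (1)--(4) of Definition~\ref{defn:rmod_overX} are immediate, and any $\sigma\in C_j$ is a finite $R$-linear combination of basis elements, yielding conditions (\ref{item:moddef_zero}) and (\ref{item:moddef_filtr}) once one observes that for a finite sum $\sigma=\sum r_b[y_0^b,\dots,y_j^b]$ one has $\sigma\in \bC_j(i)$ for $i\coloneqq \max(j,\max_b\max_{k,l}d(y_k^b,y_l^b))$. The constant displacement $\Phi(i)=i$ of the projective basis is clear from the fact that $p_j([y_0,\dots,y_j])=y_0$ and $\supp_{\bC_j}([y_0,\dots,y_j])=\{y_0\}\subseteq N_0(p_j([y_0,\dots,y_j]))$. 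Staggering is immediate from $B_j(i)=\emptyset$ whenever $j>i$.

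Next, I would check that each boundary map $\partial:\bC_j\to\bC_{j-1}$ has finite displacement over $\id_X$. For $[y_0,\dots,y_j]\in B_j(i)$, every face $[y_0,\dots,\widehat{y_l},\dots,y_j]$ inherits the pairwise distance bound $i$ and has length $j-1\leq i$, so $\partial[y_0,\dots,y_j]\in \bC_{j-1}(i)$. Moreover, the support of each face is either $\{y_0\}$ or $\{y_1\}$, so by Lemma~\ref{lem:support_sum},
\[\supp_{\bC_{j-1}}(\partial[y_0,\dots,y_j])\subseteq \{y_0,y_1\}\subseteq N_i(y_0)=N_i(p_j([y_0,\dots,y_j])).\]
Applying Lemma~\ref{lem:define_fin_disp} to the basis $\{b\}_{b\in B_j}$ gives $\partial$ displacement $\Phi(i)=i$ over $\id_X$.

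The main task is to verify that $(\bC_\bullet,\epsilon)$ is weakly uniformly acyclic.  I would do this directly, by constructing for each $x\in X$ a cone chain contraction at the basepoint $x$. For a $k$-chain $\sigma\in \bC_k(i,N_r(x))$, define $h_x:\bC_k(i,N_r(x))\to \bC_{k+1}$ on basis elements by
\[h_x([y_0,\dots,y_k])\coloneqq [x,y_0,\dots,y_k].\]
The standard simplicial identity gives $\partial h_x+h_x\partial=\id$ for $k\geq 1$, and for $k=0$ the same identity holds on reduced cycles because $\partial[x,y_0]=[y_0]-[x]$ and the $[x]$ terms cancel when $\epsilon(\sigma)=0$.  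The key quantitative estimates are as follows: each basis element appearing in $\sigma$ has all entries in $N_{r+i}(x)$ (since $y_0\in N_r(x)$ and pairwise distances are at most $i$), so the tuple $[x,y_0,\dots,y_k]$ has all pairwise distances at most $2(r+i)$ and length $k+1\leq i+1$; consequently it lies in $B_{k+1}(i')$ for $i'\coloneqq \max(2(r+i),i+1)$.  Since $p_{k+1}([x,y_0,\dots,y_k])=x$, the support of $h_x(\sigma)$ is $\{x\}\subseteq N_0(x)$.  Setting $\Omega(i,r)\coloneqq \max(2(r+i),i+1)$, this shows $h_x(\sigma)\in\bC_{k+1}(\Omega(i,r),N_{\Omega(i,r)}(x))$, so every reduced cycle in $\bC_k(i,N_r(x))$ bounds in $\bC_{k+1}(\Omega(i,r),N_{\Omega(i,r)}(x))$.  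Condition (2) of Definition~\ref{defn:weak_uniform_acyc} is witnessed by $m_x\coloneqq [x]\in \bC_0(0)$.

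I don't anticipate a serious obstacle beyond carefully tracking the height and support bounds in the coning argument; the only subtle point is the separate treatment of $k=0$, where one uses the augmentation-reduced cycle condition to absorb the $[x]$ term produced by $\partial h_x[y_0]$.
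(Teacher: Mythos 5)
Your construction is the same cone-at-$x$ argument the paper uses, and the verification that each $\bC_j$ is a free $R$-module over $X$, that the collection is staggered, and that $\partial$ has displacement $\Phi(i)=i$ all match the paper. Where you diverge is in how you certify that $(\bC_\bullet,\epsilon)$ is weakly uniformly acyclic: the paper does not verify Definition~\ref{defn:weak_uniform_acyc} directly, but instead uses Proposition~\ref{prop:equiv_weakuniformacyc}, checking exactness of the augmented complex (via the same cone identity $\partial h^x_\#+h^x_\#\partial=\id$), the uniform preimage property of each $\partial$, and the existence of local augmentation sections $m_x=[x]$. Those three conditions only refer to the filtration $\bC_k(i)$ and to $\supp$, never to the two-parameter subcomplexes $\bC_\bullet(i,Y)$, so no subtlety about subcomplexes arises.

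Your direct verification of Definition~\ref{defn:weak_uniform_acyc} has a gap at exactly the point where you conclude $h_x(\sigma)\in\bC_{k+1}(\Omega(i,r),N_{\Omega(i,r)}(x))$. You justify this by noting $h_x(\sigma)\in\bC_{k+1}(i')$ and $\supp(h_x(\sigma))=\{x\}$, but Definition~\ref{defn:standard} makes $\bC_{k+1}(\Omega,N_\Omega(x))$ the maximal \emph{standard subcomplex} with the height and support constraints, which is in general strictly smaller than the set of chains of height $\leq\Omega$ with support in $N_\Omega(x)$ --- membership requires that all iterated faces also satisfy the constraints (this is exactly the content, and the reason for the $2^k\Phi^k(i)$ blow-up, of Lemma~\ref{lem:standard}(2)). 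If you invoke Lemma~\ref{lem:standard}(2), $\Omega$ must grow by a factor $2^{k+1}$, which is fine for each fixed $n$ but means your stated $\Omega(i,r)=\max(2(r+i),i+1)$ is not correct as written. Alternatively you can avoid the factor by observing --- which your estimate almost does --- that every tuple $[x,y_0,\dots,y_k]$ appearing in $h_x(\sigma)$ has \emph{all} entries in $N_{r+i}(x)$ and all pairwise distances $\leq 2(r+i)$, so that $h_x(\sigma)$ lies in the span of exactly the basis tuples describing $\bC_{k+1}(i',N_{i'}(x))$ for the standard resolution (cf.\ Remark~\ref{rem:subcomplex_standard}, Lemma~\ref{lem:ordered_res_rips}); but that observation needs to be stated, since ``$h_x(\sigma)$ has height $i'$ and support $\{x\}$'' is not by itself the right condition. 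The paper's route through Proposition~\ref{prop:equiv_weakuniformacyc} sidesteps this entirely, which is why it is the cleaner choice here.
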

\begin{proof}
	We first verify each $\bC_j$ is actually a free $R$-module over $X$. Indeed, since  for each $b\in B_j(i)$, we have $m_b\in \bC_j(i)$ and $\supp_{\bC_j}(m_b)=\{p(b)\}$, we see that $\bC_j$ has displacement $\id_\bbN$ over $X$. We also observe that $\{\bC_j\}_j$ is staggered. We now note that if $b=[x_0,\dots,x_j]\in B_j(i)$, then $\supp_{\bC_{j-1}}(\partial b)=\{x_0,x_1\}\subseteq N_i(p(b))$. Thus the boundary maps have finite displacement over $\id_X$.

	For each $x\in X$, we define the map $h^x_\#:\bC_\bullet\to \bC_\bullet$ given by $h^x_\#([x_0,\dots,x_j])=[x,x_0,\dots,x_j]$. This map does \emph{not} have finite displacement unless  $X$ is bounded. It is easy to verify that $\partial h^x_\#\sigma+h^x_\#\partial \sigma=\sigma$ for all $\sigma\in \bC_j$ and $j>0$, and this  also holds when $j=0$ if we also assume  $\sigma\in \ker(\epsilon)$. This verifies that \[\cdots\to \bC_2\to\bC_1\to \bC_0\xrightarrow{\epsilon} R\to 0\] is exact. Moreover, suppose $\sigma\in \bC_j(i)$ is a reduced cycle with $x\in\supp(\sigma)$ and $D=\lceil\diam(\supp(\sigma))\rceil$. If $b=[x_0,\dots,x_j]\in B_j(i)$ with $\delta_b(\sigma)\neq 0$, then $x_0\in \supp(\sigma)$ and so $d(x,x_k)\leq D+i$ for all $0\leq k\leq j$. It follows that $h_\#^x(\sigma)\in \bC_{j+1}(i')$, where $i'=\max(j+1,i+D)$,  $\partial h_\#^x(\sigma)=\sigma$ and $\supp(h_\#^x(\sigma))=x\subseteq \supp(\sigma)$. This shows that each boundary map has uniform preimages.

	Finally, for each $x\in X$, we have $x\in B_0(0)$ and $\epsilon(x)=1$. By Proposition~\ref{prop:equiv_weakuniformacyc}, we deduce $\bC_\bullet$ is a projective $R$-resolution over $X$.
\end{proof}
\begin{proof}[Proof of Proposition~\ref{prop:resln_exist}]
	The first part of Proposition~\ref{prop:resln_exist} follows directly from Proposition~\ref{prop:standard_proj_res}. Moreover, since $B_0(0)=B_0$ in Definition~\ref{defn:standard_res}, for every standard resolution  $\bC_\bullet$, $\bC_0$ has finite height.

	Suppose $X$ is a proper metric space. Let $Y\subseteq X$ be a 1-net in $X$, i.e.\ a maximal subset $Y\subseteq X$ such that $d(y,y')\geq 1$ for all distinct $y,y'\in Y$.  Since $X$ is proper and is  covered by the collection of radius 1 open balls centred at a point of $Y$, it follows that   $\{y\in Y\mid d(x,y)\leq r\}$ is finite for each $x\in X$ and $r\in \bbR$. Thus the standard resolution $\bC_\bullet$ of $Y$ is proper. Letting  $f:X\to Y$ be a closest point projection, which is a coarse equivalence, it follows  from Propositions~\ref{prop:finiteness_operations_first} and~\ref{prop:pullback_res} that $f^*\bC_\bullet$  is a proper projective $R$-resolution over $X$.
	Moreover, Proposition~\ref{prop:finiteness_operations_first} ensures the pullback of $f^*\bC_0$  also has finite height.
\end{proof}
The above proof yields the following refinement of Proposition~\ref{prop:resln_exist} in the case $X$ is proper:
\begin{cor}\label{cor:standard_proper}
	If $X$ is a proper metric space, then there exists  a subspace $Y\subseteq X$ such $N_A(Y)=X$ for some $A\geq 0$, and the standard $R$-resolution of $Y$ is a proper free $R$-resolution over $X$.
\end{cor}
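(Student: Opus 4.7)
The plan is to take $Y \subseteq X$ to be a maximal $1$-separated subset (which exists by Zorn's lemma) and verify that its standard $R$-resolution, regarded over $X$ via the isometric inclusion $i : Y \hookrightarrow X$, yields the desired resolution. By maximality of $Y$, every $x \in X$ satisfies $d(x, Y) \leq 1$, so setting $A \coloneqq 1$ gives $N_A(Y) = X$. Properness of $X$ combined with the $1$-separation of $Y$ forces local finiteness: for each $x \in X$ and $r \geq 0$, the open balls of radius $\tfrac{1}{2}$ about the points of $N_r(x) \cap Y$ are pairwise disjoint and contained in the compact set $N_{r+1}(x)$, so $N_r(x) \cap Y$ is finite.

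Let $\bC_\bullet$ denote the standard $R$-resolution of $Y$, which by Proposition~\ref{prop:standard_proj_res} is a free $R$-resolution over $Y$. The claim is that $i_* \bC_\bullet$ is a proper free $R$-resolution over $X$. Proposition~\ref{prop:projective_operations_second}(1) ensures each $i_* \bC_j$ is a free $R$-module over $X$, and because $i$ is an isometric embedding the filtration, staggering, and finite-displacement conditions for the boundary maps are preserved verbatim. For properness I would observe that for each bounded $F \subseteq X$ and each $k$, the set $(i \circ p_j)^{-1}(F) \cap B_j(k)$ consists of tuples $[y_0, \ldots, y_j]$ with $y_0 \in F \cap Y$ and all pairwise distances at most $k$; both $F \cap Y$ and each $N_k(y_0) \cap Y$ are finite by the local finiteness established above, so this set is finite.

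The main step is to verify weak uniform acyclicity of $i_* \bC_\bullet$ over $X$ using Proposition~\ref{prop:equiv_weakuniformacyc}. Exactness of the augmented complex is a property of the underlying $R$-modules and passes from $\bC_\bullet$ to $i_* \bC_\bullet$ unchanged. Since $i$ is isometric, supports and their diameters are computed identically in $Y$ and in $X$, so the same function $\Omega$ witnessing uniform preimages for $\bC_\bullet$ over $Y$ continues to witness uniform preimages for $i_* \bC_\bullet$ over $X$. The only genuinely new condition is (c) of Proposition~\ref{prop:equiv_weakuniformacyc}: for each $x \in X$ I would pick $y \in Y$ with $d(x, y) \leq 1$ and take $m_x \coloneqq y \in (i_* \bC_0)(0)$, which satisfies $\epsilon(y) = 1$ and $\supp_{i_* \bC_0}(y) = \{y\} \subseteq N_1(x)$, so the constant $D = 1$ suffices.

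The hardest part is essentially bookkeeping, ensuring that every quantitative condition stated for $\bC_\bullet$ over $Y$ transfers verbatim to $i_* \bC_\bullet$ over $X$ under the isometric inclusion, and that local finiteness of $Y$ inside $X$ is strong enough to yield properness of every $i_* \bC_j$, not merely of $i_* \bC_0$.
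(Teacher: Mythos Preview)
Your proof is correct and follows essentially the same approach as the paper: both choose $Y$ to be a $1$-net in $X$ and exploit local finiteness of $Y$ coming from properness of $X$. The only difference is packaging: the paper views the standard resolution of $Y$ as a resolution over $X$ via the pullback $f^*\bC_\bullet$ along a closest-point projection $f:X\to Y$ and then invokes Propositions~\ref{prop:finiteness_operations_first} and~\ref{prop:pullback_res} to obtain properness and weak uniform acyclicity in one stroke, whereas you use the pushforward $i_*\bC_\bullet$ along the isometric inclusion and verify the conditions of Proposition~\ref{prop:equiv_weakuniformacyc} by hand. Since $i$ is an isometric section of $f$, the two constructions produce the same object, so there is no substantive divergence. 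Your direct verification has the minor advantage that freeness (not just projectivity) of $i_*\bC_j$ is immediate, since the pushforward keeps the same $R$-linearly independent basis; the paper's pullback formulation only literally asserts projectivity via Proposition~\ref{prop:projective_operations_second}, though freeness follows for the same reason.
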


\begin{rem}\label{rem:subcomplex_standard}
	If $\bC_\bullet$ is the standard resolution of a metric space $X$, then for each $i\in \bbN$, $\bC_\bullet(i)$ is a subcomplex of finite-height free $R$-modules over $X$, since each $\bC_j(i)$ is the free summand of $\bC_j$ generated by $B_j(i)\subseteq B_j$. The analogous statement does not necessarily hold for an  arbitrary projective $R$-resolution $\bC_\bullet$ over $X$.
	Moreover, every submodule of the form $\bC_k(i,Y)$ is a free summand of  $\bC_k$, since it is a standard submodule of $\bC_k$.
\end{rem}

\begin{cor}\label{cor:projres_ginv}
	Let $G$ be a countable group equipped with a proper left-invariant metric. Then the standard $R$-resolution is a proper $R$-resolution $\bC_\bullet$ over $G$ such that  each $\bC_k$ is a  $G$-induced projective $R$-module over $X$, and all boundary maps are $G$-equivariant.
\end{cor}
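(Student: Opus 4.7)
The plan is to define the natural left action of $G$ on the standard resolution and check each of the four bullet points in the definition of a $G$-induced projective $R$-module, using Proposition~\ref{prop:standard_proj_res} to get the projective $R$-resolution structure for free.

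First I would define, for each $j\geq 0$, a $G$-action on $B_j$ by $g\cdot[g_0,\dots,g_j]\coloneqq [gg_0,\dots,gg_j]$, extending $R$-linearly to $C_j$. Since $B_j$ is a free $R$-basis of $C_j$ and the dual basis $\{\delta_b\}_{b\in B_j}$ satisfies $\delta_b(b')=[b=b']$, a direct check gives $m_{gb}=g m_b$ (with $m_b=b$) and $\delta_{gb}(m)=\delta_b(g^{-1}m)$. The control function $p_j:B_j\to G$, $[g_0,\dots,g_j]\mapsto g_0$, is then manifestly $G$-equivariant, and each $B_j(i)$ is $G$-invariant because the left-invariance of the metric gives $d(gg_k,gg_l)=d(g_k,g_l)$.

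Next I would verify the $G$-orbit finiteness of each $B_j(i)$. Every $G$-orbit in $B_j(i)$ has a representative of the form $[1,g_1,\dots,g_j]$ with $d(1,g_k)\leq i$ and $d(g_k,g_l)\leq i$ for all $k,l$. Since $G$ carries a proper left-invariant metric, the ball $N_i(1)\cap G$ is finite, so there are only finitely many such representatives. This simultaneously gives properness of $\bC_k$: for any bounded $F\subseteq G$, the set $p_k^{-1}(F)\cap B_k(i)$ consists of the finitely many tuples $[g_0,\dots,g_k]$ with $g_0\in F$ and $d(g_0,g_l)\leq i$, which is finite by properness of $G$.

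Finally, I would check $G$-equivariance of the boundary maps:
\begin{align*}
\partial(g\cdot[g_0,\dots,g_j])&=\partial[gg_0,\dots,gg_j]=\sum_{i=0}^{j}(-1)^i[gg_0,\dots,\widehat{gg_i},\dots,gg_j]\\
&=g\cdot\sum_{i=0}^{j}(-1)^i[g_0,\dots,\widehat{g_i},\dots,g_j]=g\cdot\partial[g_0,\dots,g_j].
\end{align*}
Combining this with Proposition~\ref{prop:standard_proj_res}, which already establishes that $\bC_\bullet$ is a free (hence projective) $R$-resolution over $G$, finishes the proof. There is no real obstacle here; the only subtlety is remembering that the finiteness of $G$-orbits in each $B_j(i)$ requires properness of the left-invariant metric on $G$, which is exactly the hypothesis imposed at the start of Section~\ref{sec:prelim}.
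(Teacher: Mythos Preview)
Your proposal is correct and follows exactly the approach the paper intends; the paper's own proof is a single sentence (``This follows immediately from the proof of Proposition~\ref{prop:resln_exist}, since the standard resolution of $G$ is constructed $G$-equivariantly''), and you have simply spelled out the verifications that this sentence leaves implicit. Your explicit checks of the four conditions for a $G$-induced projective $R$-module, the orbit-finiteness via properness of the metric, and the $G$-equivariance of $\partial$ are all accurate and constitute precisely the details the paper is asking the reader to supply.
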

\begin{proof}
	This follows immediately from the proof of Proposition~\ref{prop:resln_exist}, since the standard resolution of $G$ is constructed $G$-equivariantly.
\end{proof}

A related construction is the following:
\begin{defn}\label{defn:ordered_standard}
	Let $X$ be a metric space, and equip $X$ with an arbitrary total order $<$. Let $\bC_\bullet$ be the standard resolution of $X$. The \emph{ordered standard resolution} of $X$ is the subcomplex $\bD_\bullet$ of $\bC_\bullet$ in which each $\bD_k$ is the geometric submodule of $\bC_\bullet$ generated by $(k+1)$-tuples $(x_0,\dots,x_k)$
	with $x_0<x_1<\cdots<x_k$.
\end{defn}
An argument similar to the proof of Proposition~\ref{prop:standard_proj_res} yields the following:
\begin{lem}
	Let $X$ be a metric space, and let $\bD_\bullet$ be an ordered standard resolution of $X$. Then $\bD_\bullet$ is a projective $R$-resolution over $X$.
\end{lem}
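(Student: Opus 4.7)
The plan is to follow the proof of Proposition~\ref{prop:standard_proj_res} closely, with the cone homotopy $h^x_\#$ replaced by a signed \emph{insertion} homotopy compatible with the ordering. Since $\bD_\bullet$ is a geometric subcomplex of the standard resolution $\bC_\bullet$, most of the required structure is inherited: each $\bD_k$ is a free $R$-module over $X$ whose basis is the subset $B_k^{<}\subseteq B_k$ of strictly increasing $(k+1)$-tuples, equipped with the restrictions of the filtration and control function $p_k$; the boundary map $\partial$ restricts to $\bD_\bullet$ since dropping any entry of a strictly increasing tuple leaves a strictly increasing tuple; the collection $\{\bD_k\}_k$ is staggered; and the whole complex has displacement $\id_\bbN$. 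The augmentation condition~(\ref{item:equiv_weakuniformacyc_2c}) of Proposition~\ref{prop:equiv_weakuniformacyc} is satisfied by $m_x=[x]\in\bD_0(0)$, exactly as in the unordered case.

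The only new ingredient is a chain contraction. For each $x\in X$ I would define a (non-finite-displacement) map $h^x_\#\colon \bD_\bullet\to\bD_{\bullet+1}$ by setting $h^x_\#([x_0,\ldots,x_j])=0$ if $x=x_k$ for some $k$, and otherwise letting $i\in\{0,\ldots,j+1\}$ be the unique index with $x_{i-1}<x<x_i$ (with conventions $x_{-1}=-\infty$ and $x_{j+1}=+\infty$) and setting
\[h^x_\#([x_0,\ldots,x_j])\coloneqq (-1)^i[x_0,\ldots,x_{i-1},x,x_i,\ldots,x_j].\]
The image lies in $\bD_{j+1}$ since the inserted tuple is strictly increasing. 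A direct sign computation, grouping the terms of $h^x_\#\partial[x_0,\ldots,x_j]$ according to whether the omitted entry $x_k$ satisfies $k<i$ or $k\geq i$, shows that all the off-diagonal contributions of $\partial h^x_\#+h^x_\#\partial$ cancel in opposite-sign pairs, leaving only the term $(-1)^{2i}[x_0,\ldots,x_j]$ coming from the omission of the inserted $x$. Hence $\partial h^x_\#+h^x_\#\partial=\id$ on $\bD_j$ for $j\geq 1$ and on $\ker(\epsilon)\subseteq\bD_0$; this is the standard contracting homotopy for the ordered simplicial chain complex.

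The control estimate then proceeds exactly as in Proposition~\ref{prop:standard_proj_res}. Given a reduced cycle $\sigma\in\bD_j(i)$ with $D\coloneqq\lceil\diam(\supp(\sigma))\rceil$, choose any $x\in\supp(\sigma)$ and set $\omega\coloneqq h^x_\#(\sigma)$. Every basis element $[x_0,\ldots,x_j]$ appearing in $\sigma$ has $x_0\in\supp(\sigma)$ and pairwise $x_k$-distances at most $i$, so $d(x,x_l)\leq D+i$ for each $l$; thus the $(j+2)$-tuple produced by insertion has pairwise diameter at most $D+i$, and its first entry is either $x$ or $x_0$, both lying in $\supp(\sigma)$. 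Therefore $\omega\in\bD_{j+1}(i')$ with $i'\coloneqq \max(j+1,D+i)$, $\partial\omega=\sigma$, and $\supp(\omega)\subseteq\supp(\sigma)$. Invoking Proposition~\ref{prop:equiv_weakuniformacyc} then yields weak uniform acyclicity, and hence that $\bD_\bullet$ is a free (and in particular projective) $R$-resolution over $X$. The main obstacle is the sign bookkeeping for $\partial h^x_\#+h^x_\#\partial=\id$, but this is the classical computation for ordered simplicial chains and presents no real difficulty.
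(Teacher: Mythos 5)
The paper states this lemma without proof, noting only that an argument similar to Proposition~\ref{prop:standard_proj_res} applies; your proposal carries out exactly that adaptation, replacing the cone operator with the signed insertion homotopy and tracking the same support and filtration bounds. The sign bookkeeping and the diameter estimate both check out, so this is correct and matches the intended argument.
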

One advantage of working with the (unordered) standard resolution is that if a group acts freely and isometrically on $X$, then it acts freely on the standard resolution $\bC_\bullet$ of $X$. However, ordered standard resolutions are useful as they are related to simplicial chain complexes of Rips complexes.
The following is a consequence of Definition~\ref{defn:ordered_standard}:
\begin{lem}\label{lem:ordered_res_rips}
	Let $X$ be a metric space, let $\bD_\bullet$ be the ordered  standard resolution of $X$. For each $j\in \bbN$, the subcomplex $\bD_\bullet(j)$ of $\bD_\bullet$ coincides in dimensions $k\leq j$ with the simplicial chain complex $\bC_\bullet(P_j(X);R)$ of the Rips complex $P_j(X)$  with coefficients in $R$, i.e.\ $\bD_k(j)=\bC_k(P_j(X);R)$ for all $k\leq j$, and the boundary maps coincide.
	Moreover, for each subspace $Y\subseteq X$ and $k,j\in \bbN$ with $k\leq j$, we have \[C_k(P_j(Y);R)\subseteq \bD_k(j,Y)\subseteq C_k(P_j(N_j(Y));R).\]
\end{lem}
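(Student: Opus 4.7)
The plan is to unpack both sides of the claimed identifications using the explicit descriptions in Definition~\ref{defn:standard_res}, Definition~\ref{defn:ordered_standard}, and Definition~\ref{defn:standard}, and then to use maximality of the standard subcomplex $\bD_\bullet(j, Y)$ to sandwich it between the two Rips-complex chain modules.

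For the first assertion, I would observe that by construction $\bD_k$ is the free $R$-module whose basis consists of strictly increasing $(k+1)$-tuples $(x_0, \dots, x_k)$ in $X$, with the alternating face differential inherited from $\bC_\bullet$. The filtration level $\bD_k(j)$ is then the free summand on those tuples lying in $B_k(j)$, i.e.\ with pairwise distances at most $j$; under the hypothesis $k \leq j$ this is precisely the ordered basis of $C_k(P_j(X); R)$. Since both complexes carry the same alternating face differential, $\bD_k(j) = C_k(P_j(X); R)$ as chain complexes.

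For the second assertion I would first unpack $\bD_k(j, Y)$ using Definition~\ref{defn:standard}. Because $\bD_\bullet$ is free, any standard submodule of $\bD_k$ is a free summand on a subset of the basis, and the support of a basis element $(x_0, \dots, x_k)$ equals $\{x_0\}$, since $p([x_0, \dots, x_k]) = x_0$ in Definition~\ref{defn:standard_res}. Hence a standard submodule has all supports in $Y$ precisely when each of its basis elements has first entry in $Y$. For the inclusion $C_k(P_j(Y); R) \subseteq \bD_k(j, Y)$, the plan is to exhibit $C_\bullet(P_j(Y); R)$ as a standard subcomplex of $\bD_\bullet$ satisfying the two defining conditions of $\bD_\bullet(j, Y)$: its generators are ordered tuples with all entries in $Y$, hence lie in $\bD_\bullet(j)$ and have $x_0 \in Y$, and the alternating face differential preserves the property of having all entries in $Y$. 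Maximality of $\bD_\bullet(j, Y)$ then yields the inclusion.

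For the inclusion $\bD_k(j, Y) \subseteq C_k(P_j(N_j(Y)); R)$, any basis element $(x_0, \dots, x_k)$ of the free summand $\bD_k(j, Y)$ satisfies $x_0 \in Y$ and has diameter at most $j$, so $x_i \in N_j(x_0) \subseteq N_j(Y)$ for every $i$; hence each such generator is a simplex of $P_j(N_j(Y))$, giving $\bD_k(j, Y) \subseteq C_k(P_j(N_j(Y)); R)$. I do not expect any serious obstacle here: the whole argument is a definitional comparison, and the only point that needs care is the observation that the support function $p([x_0,\dots,x_k]) = x_0$ makes the support condition translate into a condition on only the first entry of an ordered tuple, which is exactly what allows the maximality step to produce the clean sandwich in the statement.
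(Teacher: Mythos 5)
The paper leaves this lemma unproved, deriving it directly from Definitions~\ref{defn:standard_res}, \ref{defn:standard}, and~\ref{defn:ordered_standard}; your overall strategy is exactly the intended definitional unpacking, and both the identification $\bD_k(j)=C_k(P_j(X);R)$ for $k\le j$ and the second inclusion $\bD_k(j,Y)\subseteq C_k(P_j(N_j(Y));R)$ are argued correctly.

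However, your maximality argument for $C_k(P_j(Y);R)\subseteq\bD_k(j,Y)$ has a small gap. You assert that the generators of $C_\bullet(P_j(Y);R)$ all lie in $\bD_\bullet(j)$, but $P_j(Y)$ may well have simplices of dimension greater than $j$ (take $j+2$ points of $Y$ with pairwise distances at most $j$), so $C_k(P_j(Y);R)$ need not vanish for $k>j$, whereas $\bD_k(j)=0$ there because $B_k(j)=\emptyset$ for $k>j$ by Definition~\ref{defn:standard_res}. The full complex $C_\bullet(P_j(Y);R)$ therefore fails condition~(\ref{item:standard_1}) of Definition~\ref{defn:standard} in high degrees and is not an admissible competitor for the maximality of $\bD_\bullet(j,Y)$. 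The fix is easy: use instead the truncation $E_\bullet$ with $E_k=C_k(P_j(Y);R)$ for $k\le j$ and $E_k=0$ for $k>j$, which is still a standard subcomplex (the inclusion $\partial E_{j+1}\subseteq E_j$ is vacuous), satisfies both conditions of Definition~\ref{defn:standard}, and yields the desired inclusions in degrees $k\le j$ by maximality.

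As a side remark, the subcomplex requirement in Definition~\ref{defn:standard} is stronger than your closing paragraph suggests: by closure under iterated $\partial$, every subtuple of a basis element $[x_0,\dots,x_k]$ of $\bD_k(j,Y)$ must itself be a basis element of the appropriate $\bD_l(j,Y)$, and since the first entries of the subtuples taken together exhaust $\{x_0,\dots,x_k\}$, in fact every $x_i$ must lie in $Y$, not only $x_0$. Combined with the lower bound, this gives the sharper statement $\bD_k(j,Y)=C_k(P_j(Y);R)$ for $k\le j$; the sandwich between $Y$ and $N_j(Y)$ in the lemma is simply a convenient weakening of this, and does not arise because the support condition sees only the first coordinate.
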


We define the \emph{$n$-truncation}  $[\bC_\bullet]_n$  to be the chain complex $\bD_\bullet$ with \[\bD_k=\begin{cases}
		\bC_k & \textrm{if}\;k\leq n, \\
		0     & \textrm{if}\;k>n.
	\end{cases}\] We say $\bC_\bullet$ is \emph{$n$-dimensional} if $\bC_\bullet=[\bC_\bullet]_n$. A \emph{partial projective (resp.\ free) $R$-resolution of dimension $n$}  over $X$ is an augmented $n$-dimensional $R$-chain complex $(\bC_\bullet,\epsilon)$ over $X$ such that:
\begin{enumerate}
	\item Each $\bC_i$ is a projective (resp.\ free) $R$-module over $X$;
	\item  $(\bC_\bullet,\epsilon)$ is weakly uniformly $(n-1)$-acyclic.
\end{enumerate}
We now show that any partial projective resolution can be extended:
\begin{prop}\label{prop:extend_partial}
	Let $X$ be a metric space and let $\bC_\bullet$ be a partial projective (resp.\ free) $R$-resolution of dimensional $n$ over $X$. There exists a projective (resp.\ free) $R$-resolution $\bD_\bullet$ over $X$ with $[\bD_\bullet]_n=[\bC_\bullet]_n$.
\end{prop}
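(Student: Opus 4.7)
The plan is to set $\bD_k \coloneqq \bC_k$ for $k \leq n$ (retaining the boundary maps and augmentation of $\bC_\bullet$) and to construct $\bD_k$ for $k \geq n+1$ inductively as a \emph{tautological} free $R$-module over $X$ whose basis is indexed by pairs consisting of a cycle of degree $k-1$ together with a point in its support. Explicitly, assuming $\bD_0,\dots,\bD_{k-1}$ have been constructed, let $Z_{k-1}$ denote $\ker(\partial\colon \bD_{k-1}\to \bD_{k-2})$ when $k\geq 2$, and $\ker(\epsilon\colon \bD_0\to R)$ when $k=1$. Define $\bD_k$ to be the free $R$-module over $X$ with basis $B_k \coloneqq \{b_{z,x} : z \in Z_{k-1}\setminus\{0\},\; x \in \supp z\}$, control function $p(b_{z,x})=x$, filtration $B_k(i) \coloneqq \{b_{z,x}\in B_k : z \in \bD_{k-1}(i),\; \diam \supp z \leq i\}$, and boundary map $\partial(b_{z,x}) = z$.

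With this definition, every verification is immediate. Each $\bD_k$ for $k\geq n+1$ is a free $R$-module over $X$ of displacement $\id_\bbN$, since every basis element has singleton support $\{p(b_{z,x})\}$. Each boundary map $\partial\colon \bD_k\to \bD_{k-1}$ also has displacement $\id_\bbN$: if $b_{z,x}\in B_k(i)$, then $z \in \bD_{k-1}(i)$ and $\supp z \subseteq N_i(x)$ because $x\in \supp z$ and $\diam \supp z\leq i$. Exactness in degrees $\geq n$ is tautological, as $\im(\partial_{k+1})=Z_k$ by construction. Uniform preimages of $\partial_{k+1}$ is equally immediate: given $z \in Z_k \cap \bD_k(i)$ with $\diam \supp z \leq D$ and any $x \in \supp z$, the element $b_{z,x}$ lies in $B_{k+1}(\max(i,D))$, has support $\{x\}\subseteq \supp z$, and satisfies $\partial b_{z,x}=z$. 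Exactness and uniform preimages in degrees $\leq n$ are inherited from $\bC_\bullet$, since weak uniform $(n-1)$-acyclicity gives, via the relevant direction of Proposition~\ref{prop:equiv_weakuniformacyc}, that $\bC_\bullet$ is exact through degree $n-1$ and that $\partial_n\colon \bC_n \to \bC_{n-1}$ maps onto $Z_{n-1}$ with uniform preimages. Condition (\ref{item:equiv_weakuniformacyc_2c}) of Proposition~\ref{prop:equiv_weakuniformacyc} is inherited from $\bC_\bullet$ via $\bD_0 = \bC_0$. Applying Proposition~\ref{prop:equiv_weakuniformacyc} in the other direction, $\bD_\bullet$ is weakly uniformly acyclic, so after modifying the filtrations on $\bD_k$ for $k\geq n+1$ as in Remark~\ref{rem:staggering} to ensure that $\{\bD_k\}_k$ is staggered, $\bD_\bullet$ becomes a free (hence projective) $R$-resolution over $X$ extending $[\bC_\bullet]_n$.

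The only potential obstacle is purely cosmetic: one must check that each very large free module $\bD_k$ ($k>n$) genuinely satisfies the axioms of an $R$-module over $X$ in Definition~\ref{defn:rmod_overX}, and that the whole staggered chain-complex condition of Definition~\ref{defn:r-chain-complex} can be arranged. For the first, any element of $\bD_k$ has finite basis expansion and each $b_{z,x}$ that occurs lies in some $B_k(i_0)$, since the cycle $z$ has finite support (hence finite diameter) and lies in some filtration level of $\bD_{k-1}$; for the second, Remark~\ref{rem:staggering} supplies the modification. In the free case, every $\bD_k$ constructed for $k>n$ is free by definition and $\bD_k=\bC_k$ is free by assumption for $k\leq n$, so the whole resolution $\bD_\bullet$ is free, completing the proof.
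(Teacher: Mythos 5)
Your proof is correct and takes a genuinely different route from the paper. The paper's argument works inductively by fixing an auxiliary free resolution $\bE_\bullet$ over $X$ (for instance the standard one), using Proposition~\ref{prop:proj_res} and Lemma~\ref{lem:proj_res_basecase} to produce finite-displacement chain maps $f_\#\colon [\bD_\bullet]_k\to\bE_\bullet$, $g_\#\colon [\bE_\bullet]_k\to\bD_\bullet$ and a homotopy $h_\#$, then setting $\bD_{k+1}\coloneqq\bD_k\oplus\bE_{k+1}$ with boundary $\partial(d,e)=d+h_\#\partial d+g_\#\partial e-g_\#f_\# d$; uniform preimages for the new boundary are transplanted from those of $\bE_\bullet$. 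Your construction instead takes $\bD_{k+1}$ to be the tautological free module on pairs $(z,x)$ with $z$ a nonzero cycle and $x\in\supp z$. This buys considerable brevity: exactness and uniform preimages for the new boundary maps become tautologies, and there are no auxiliary chain maps, homotopies, or uses of the projectivity lifting property to manage. The verification that you \emph{do} carefully perform (finite displacement of $\partial$, that each $\bD_k$ is a valid free $R$-module over $X$, that the filtration is exhaustive because cycles have finite support, and the re-filtration via Remark~\ref{rem:staggering}) is all that is needed, and it goes through.

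What the paper's heavier construction buys is one property that your tautological module emphatically lacks: properness. The free module on all cycle--basepoint pairs is not proper in general --- already for $X=\bbZ$ with the standard resolution, infinitely many $0$-cycles have support contained in $\{0,1\}$, so $p^{-1}(\{0\})\cap B_1(i)$ is infinite --- whereas the paper's $\bD_k\oplus\bE_{k+1}$ inherits properness from a proper choice of $\bE_\bullet$ via Propositions~\ref{prop:resln_exist} and~\ref{prop:finiteness_operations_first}. Proposition~\ref{prop:extend_partial} as stated asserts nothing about properness, so your proof is a complete proof of the stated result. But be aware that the ``moreover'' clause in the proof of Proposition~\ref{prop:acyc_res} silently relies on the extension being proper over the net $Y\subseteq X$, so your construction would not serve as a drop-in replacement there; the paper is implicitly proving slightly more than the proposition records.

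One cosmetic note: Proposition~\ref{prop:equiv_weakuniformacyc} is stated for (fully) weakly uniformly acyclic complexes, and it presupposes the $R$-metric-complex axioms, including staggeredness. So the cleaner order is: build the $\bD_k$, re-filter per Remark~\ref{rem:staggering} to make $\{\bD_k\}_k$ staggered, verify the three hypotheses (a)--(c) of Proposition~\ref{prop:equiv_weakuniformacyc}(2) (using, for degrees $\leq n$, the partial version of its proof applied to weak uniform $(n-1)$-acyclicity of $\bC_\bullet$, rather than the proposition verbatim), and only then invoke the equivalence. You essentially say all of this in your last paragraph, but the narrative presents the invocation of Proposition~\ref{prop:equiv_weakuniformacyc} before the re-filtration, which reads slightly backwards.
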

\begin{proof}
	We construct $\bD_k$ and the boundary map $\partial: \bD_k\to \bD_{k-1}$  inductively on $k$, setting  $\bD_k=\bC_k$ for $k\leq n$.
	For the inductive step, we assume that for some $k\geq n$  we have  a partial projective resolution  $\bD_\bullet$ over $X$ of dimension $k$.

	Let $\bE_\bullet$ be an arbitrary free $R$-resolution over $X$, which exists by Proposition~\ref{prop:resln_exist}. Applying Lemma~\ref{lem:proj_res_basecase} and then Proposition~\ref{prop:projective} inductively, we see there exist finite displacement  chain maps $f_\#:[\bD_\bullet]_k\to \bE_\bullet$ and $g_\#:[\bE_\bullet]_k\to \bD_\bullet$, and a finite displacement chain homotopy $h_\#:[\bD_\bullet]_{k-1}\to \bD_{\bullet+1}$ satisfying $\partial h_\#+h_\# \partial=g_\#f_\#-\id$ on $[\bD_\bullet]_{k-1}$.

	We consider  the following chain complex:
	\[\cdots \to 0 \to \bD_k\oplus \bE_{k+1}\xrightarrow{\partial_{k+1}} \bD_k\to \bD_{k-1}\to \dots \to \bD_0\to 0,\]where  $\partial_{k+1}$ is given by  $(d,e)\mapsto d+h_\#\partial d+g_\#\partial e-g_\#f_\#d$. It follows from Proposition~\ref{prop:projective_operations_second} that $\bD_k\oplus \bE_{k+1}$ is a projective $R$-module over $X$, and is a free $R$-module over $X$ when $\bD_k$ is. Since $f_\#$, $g_\#$ and  $h_\#$  have finite displacement, so does $\partial _{k+1}$. We also have $\partial\partial_{k+1}(d,e)=-h_\#\partial\partial d=0$ for all $(d,e)\in\bD_k\oplus \bE_{k+1}$. By Proposition~\ref{prop:equiv_weakuniformacyc}, all that remains is to show $\partial_{k+1}$ has uniform preimages, and that $\bD_{k}\oplus \bE_{k+1}\to \bD_k\to \bD_{k-1}$  is exact.

	To see this, suppose  $f_\#$, $g_\#$ and $h_\#$ have displacement $\Phi$, and that $\bE_\bullet$ has $\Omega$-uniform preimages. Suppose $\sigma\in \bD_k(i)$ is a reduced $k$-cycle with $\supp(\sigma)\subseteq N_r(x)$. Setting $i_1\coloneqq \Phi(i)$ and $r_1\coloneqq r+\Phi(i)$, we see $f_\#\sigma\in \bE_k(i_1)$ with $\supp(f_\#\sigma)\subseteq N_{r_1}(x)$. Setting $i_2\coloneqq \Omega(i_1,2r_1)$ and $r_2\coloneqq r_1+i_2$, we see that there is some $\omega\in \bE_{k+1}(i_2)$ with $\partial \omega=f_\#\sigma$ and $\supp(\omega)\subseteq N_{r_2}(x)$. We have $\partial_{k+1} (\sigma,\omega)=\sigma+h_\#\partial\sigma+g_\#\partial\omega-g_\#f_\#\sigma=\sigma$ with $(\sigma,\omega)\in (\bD_k\oplus\bE_{k+1})(i_2)$ and $\supp_{\bD_k\oplus\bE_{k+1}}(\sigma,\omega)\subseteq N_{r_2}(x)$ by Proposition~\ref{prop:dirsum_overX}, and we are done.
\end{proof}

Recall that if $\bC_\bullet$ and $\bD_\bullet$ are chain complexes, then $\bE_\bullet=\bC_\bullet\otimes \bD_\bullet$ is a chain complex where
\[\bE_k=\bigoplus_{i=0}^k\bC_i\otimes \bD_{k-i}\] and the boundary map $\partial$ is given by \[\partial (\sigma_1\otimes \sigma_2)=\partial^I \sigma_1\otimes \sigma_2+(-1)^i \sigma_1\otimes \partial^{II}\sigma_2\] for all $\sigma_1\in \bC_i$ and $\sigma_2\in \bD_{k-i}$, where $\partial^I$ and $\partial^{II}$ are the boundary maps of $\bC_\bullet$ and $\bD_\bullet$ respectively. If $\bC_\bullet$ and $\bD_\bullet$ are equipped with augmentations $\epsilon_\bC:\bC_0\to R$ and $\epsilon_\bD:\bD_0\to R$, then $\bC_\bullet$ is equipped with the augmentation  $\epsilon_\bC\otimes \epsilon_\bD:\bC_0\otimes \bD_0\to R$ given by $(\epsilon_\bC\otimes \epsilon_\bD)(\sigma\otimes \tau)=\epsilon_\bC(\sigma)\epsilon_\bD(\tau)$.

The following proposition is the starting point for the theory of cup products, cap products, and coarse Poincar\'e duality that will be developed in Sections~\ref{sec:cupcap} and~\ref{sec:coarsePDn}.
\begin{prop}\label{prop:tensor_res}
	Let  $\bC_\bullet$ and $\bD_\bullet$ be projective $R$-resolutions over $X$ and $Y$. Then $\bC_\bullet\otimes_R \bD_\bullet$ is a projective $R$-resolution over $X\times Y$.
\end{prop}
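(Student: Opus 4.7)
The plan is to verify each of the conditions in Definition~\ref{defn:proj_res} for the total complex $\bE_\bullet = \bC_\bullet \otimes_R \bD_\bullet$. That each $\bE_k = \bigoplus_{p+q=k}\bC_p \otimes \bD_q$ is a projective $R$-module over $X \times Y$ follows by combining Proposition~\ref{prop:tensor_overX}(1) with Proposition~\ref{prop:projective_operations_second}(3); the total boundary $\partial = \partial^I \otimes \id + (-1)^{p}\,\id \otimes \partial^{II}$ has finite displacement over $\id_{X\times Y}$ by Proposition~\ref{prop:tensor_overX}(5); and staggering of $\{\bE_k\}_k$ follows from the staggering of $\{\bC_p\}_p$ and $\{\bD_q\}_q$, since for each $i$ only finitely many pairs $(p,q)$ satisfy $\bC_p(i)\neq 0$ and $\bD_q(i)\neq 0$. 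Equipped with the augmentation $\epsilon_\bC \otimes \epsilon_\bD$, it remains to verify weak uniform acyclicity.

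For this, I will apply the criterion in Proposition~\ref{prop:equiv_weakuniformacyc}. Condition (c) is handled by taking $m_{(x,y)} := m_x \otimes n_y$ where $m_x,n_y$ are the fundamental elements supplied by weak uniform acyclicity of $\bC_\bullet$ and $\bD_\bullet$: its height is controlled by Proposition~\ref{prop:tensor_overX}(3), and by Proposition~\ref{prop:tensor_overX}(2) its support is $\supp(m_x)\times\supp(n_y)$, which in the $\ell_1$-product metric is contained in a bounded neighbourhood of $(x,y)$. For condition (a), exactness of the augmented complex $\bE_\bullet \to R$ is the classical Künneth double-complex statement: since each $\bC_p$ is projective and hence flat, tensoring the exact sequence $\bD_\bullet \to R \to 0$ by $\bC_p$ preserves exactness, so every column of $\bC_\bullet \otimes \bD_\bullet$ (augmented by $\id \otimes \epsilon_\bD$) is exact, and the acyclic assembly lemma yields a quasi-isomorphism $\Tot(\bC_\bullet \otimes \bD_\bullet) \simeq \bC_\bullet$, whence $\bE_\bullet \to R$ is exact.

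The main obstacle is condition (b): establishing uniform preimages for $\partial:\bE_{n+1}\to \bE_n$. My strategy is to produce, for each basepoint $(x_0,y_0)$, a local chain contraction of $\bE_\bullet$ with bounds independent of the basepoint, using the classical tensor-product formula for contracting homotopies. Let $f^{x_0}:\bC_\bullet \to \bC_\bullet$ be the chain map $\sigma \mapsto \epsilon_\bC(\sigma)m_{x_0}$ in degree $0$ and zero in positive degree, and define $f^{y_0}$ analogously. If $s$ is a chain homotopy $\id \simeq f^{x_0}$ on $\bC_\bullet$ and $t$ is an analogous chain homotopy $\id \simeq f^{y_0}$ on $\bD_\bullet$, then a direct bookkeeping calculation shows $H := s \otimes \id + f^{x_0} \otimes t$ (with signs chosen appropriately) satisfies $\partial H + H\partial = \id - f^{x_0}\otimes f^{y_0}$ on $\bE_\bullet$. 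Applied to a reduced cycle $z$, the map $f^{x_0}\otimes f^{y_0}$ vanishes, so $H(z)$ is a preimage of $z$ under $\partial$.

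The hard part is to construct $s$ and $t$ with bounds genuinely uniform in the basepoint. I will rerun the inductive construction of Proposition~\ref{prop:proj_res}(2) to build $s$ only on elements of $\bC_\bullet$ whose support lies in a bounded neighbourhood of $x_0$, at each stage invoking the $\Omega$-uniform preimage property of $\bC_\bullet$. The key observation is that if the input has height $i$ and support in $N_r(x_0)$, then every intermediate object produced in the induction likewise has support in $N_{r'}(x_0)$ for $r'$ controlled only by $i,r,\Phi,\Omega$, so the bounds on $s$ do not depend on $x_0$ itself; constructing $t$ analogously and tracking the displacement through the formula for $H$ yields an effective preimage for every reduced cycle, completing the proof.
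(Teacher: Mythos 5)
Your proof is correct in outline but takes a genuinely different route from the paper for the critical weak uniform acyclicity step. The paper reduces to the case where $\bC_\bullet$ and $\bD_\bullet$ are standard resolutions (so each subcomplex $\bC_\bullet(i,N_r(x))$ is free), then invokes naturality of the K\"unneth short exact sequence (for $R$ a PID) or the K\"unneth spectral sequence (for general $R$) to conclude that the inclusion maps between the filtered subcomplexes of $\bC_\bullet\otimes\bD_\bullet$ vanish on reduced homology. Your approach is more hands-on: you produce, for each basepoint, an explicit local chain contraction $H = s\otimes\id + f^{x_0}\otimes t$ and apply it to the cycle to be filled. This buys you a single argument valid for all commutative rings $R$ (no bifurcation into PID vs.\ general case, and no spectral sequence), and it fits better with the quantitative, chain-level philosophy of the rest of the paper; the price is substantial bookkeeping that you only sketch. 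The tensor formula you write is correct — with the Koszul sign convention, $\partial H + H\partial = \id - f^{x_0}\otimes f^{y_0}$ — and your observation that $(f^{x_0}\otimes f^{y_0})(z)=0$ on reduced cycles (including the degree-zero case, where $(\epsilon_\bC\otimes\epsilon_\bD)(z)=0$) is the right reason $H(z)$ is a filling.

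Two points you should be careful about when writing this up. First, $f^{x_0}$ does not have finite displacement over $\id_X$ (it collapses everything to $m_{x_0}$), so you cannot cite Proposition~\ref{prop:proj_res}(\ref{item:proj_res2}) directly; you must redo the degree-by-degree induction, defining $s$ on the projective basis elements $m_b$ and verifying by hand that the resulting bounds on $s(m_b)$ depend only on the height filtration level of $b$ and on $d(p(b),x_0)$, not on $x_0$ itself. This is exactly what you describe, and it does work — the point is to choose the basepoint $(x_0,y_0)$ inside $\supp(z)$ so that the resulting neighbourhood of $(x_0,y_0)$ is a controlled neighbourhood of $\supp(z)$, as required by the uniform preimage definition. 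Second, in your verification of exactness of the augmented total complex, note that the relevant fact is that each $\bC_p$ has a projective (hence flat) underlying $R$-module; this is automatic from Definition~\ref{defn:projective}, but worth flagging since the flatness is what licenses the acyclic-assembly argument you invoke.
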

\begin{proof}
	Set $\bE_\bullet=\bC_\bullet\otimes_R \bD_\bullet$. Each $\bE_k$ is given the structure of an $R$-module over $X\times Y$ via Propositions~\ref{prop:dirsum_overX} and~\ref{prop:tensor_overX}. It follows from Propositions~\ref{prop:projective_operations_second} and~\ref{prop:tensor_overX} that each $\bE_k$ is a projective $R$-module over $X\times Y$,  that all boundary maps have finite displacement and that $\bE_\bullet$ is staggered. All that remains is to show that $\bE_\bullet$ is weakly uniformly acyclic. Pick $\Omega$ such that both $\bC_\bullet$ and $\bD_\bullet$ are $\Omega$-weakly uniformly acyclic.

	Let $\bC'_\bullet$ and $\bD'_\bullet$ be standard $R$-resolutions over $X$ and $Y$ respectively. By Corollary~\ref{cor:projres_indpt}, there exist augmentation-preserving finite displacement chain homotopy equivalences $f_\#:\bC_\bullet\to \bC'_\bullet$ and $g_\#:\bD_\bullet\to \bD'_\bullet$. Hence  $(f_\#\otimes g_\#):\bE_\bullet\to \bC'_\bullet\otimes \bD'_\bullet$ is also an augmentation-preserving finite displacement chain homotopy equivalence. Since being weakly uniformly acyclic is invariant under augmentation-preserving finite displacement chain homotopy equivalences, we can assume without loss of generality that both $\bC_\bullet$ and $\bD_\bullet$ are standard $R$-resolutions.

	By Remark~\ref{rem:subcomplex_standard}, for each $i\in \bbN$ and $Z\subseteq X$,  each subcomplex of the form $\bC_\bullet(i,Z)$ is a complex of free $R$-modules, and similarly for $\bD_\bullet$.
	Set $\Omega_0\coloneqq \Omega(0,0)$. Weak uniformly acyclicity of $\bC_\bullet$ and $\bD_\bullet$ ensure that for each $(x,y)\in X\times Y$, there exist $m_x\in \bC_0(\Omega_0)$ and $n_y\in \bD_0(\Omega_0)$ with  $(m_x\otimes n_y)\in \bE_0(\Omega_0)$, $(\epsilon_\bC\otimes \epsilon_\bD)(m_x\otimes n_y)=1$ and $\supp(m_x\otimes n_y)\subseteq N_{\Omega_0}(x)\times N_{\Omega_0}(y)$.

	We first argue in the case $R$ is a PID\@.
	The K\"unneth short exact sequence~\cite[Theorem 10.81]{rotman2009homalg} implies that for all $i,k,r\in \bbN$, $x\in X$ and $y\in Y$, we have a natural short exact sequence
	\begin{align*}
		0 & \to \bigoplus_{p+q=n}\widetilde H_p(\bC_\bullet(i,N_r(x))) \otimes_R \widetilde H_q(\bD_\bullet(i,N_r(y)))  \to \widetilde{H}_n(\bC_\bullet(i,N_r(x))\otimes_R \bD_\bullet(i,N_r(y))) \\
		  & \to \bigoplus_{p+q=n-1} \Tor_1^R(\widetilde H_p(\bC_\bullet(i,N_r(x))),\widetilde H_q(\bD_\bullet(i,N_r(y))))\to 0.
	\end{align*}
	Here, we use the fact that all subcomplexes of the form $\bC_\bullet(i,N_r(x))$ are complexes of free (hence flat) $R$-modules, so the K\"unneth theorem does apply.
	Naturality of this short sequence, coupled with $\Omega$-weak uniform acyclicity of $\bC_\bullet$ and $\bD_\bullet$, ensures that the maps
	\begin{align}
		\widetilde{H}_n(\bC_\bullet(i,N_r(x))\otimes_R \bD_\bullet(i,N_r(y)))\to \widetilde{H}_n(\bC_\bullet(j,N_{j}(x))\otimes_R \bD_\bullet(j,N_{j}(y))), \label{eqn:hom_inclusion}
	\end{align} induced by inclusion, are zero, where $j=\Omega(i,r)$. This implies that $\bC_\bullet\otimes_R \bD_\bullet$ is weakly uniformly acyclic.

	For a general commutative ring $R$ we argue similarly, using the K\"unneth spectral sequence~\cite[Theorem 10.90]{rotman2009homalg} instead of the K\"unneth short exact sequence.
	Recall this spectral sequence is a first quadrant homological  spectral sequence \begin{align*}
		E^2_{p,q}(P_\bullet,Q_\bullet)=\bigoplus_{s+t=1}\Tor_p^R(H_s(P_\bullet) ,H_t(Q_\bullet)) \Rightarrow_p H_n(P_\bullet\otimes_R Q_\bullet)
	\end{align*}
	for chain complexes $P_\bullet$ and $Q_\bullet$ of flat $R$-modules, and this spectral sequence is natural in $P_\bullet$ and $Q_\bullet$.
	Now weak acyclicity of $\bC_\bullet$ and $\bD_\bullet$ implies that the morphism of spectral sequences  \[	E^2_{p,q}(\bC_\bullet(i,N_r(x)),\bD_\bullet(i,N_r(y)))\to E^2_{p,q}(\bC_\bullet(j,N_j(x)),\bD_\bullet(j,N_j(y))), \] induced by inclusion, is zero, where $j=\Omega(i,r)$. The induced analogous morphism between $E^\infty$ terms  of these spectral sequences is also zero, which implies the   maps (\ref{eqn:hom_inclusion}), induced by inclusion, are zero.
\end{proof}

We can use a similar argument to prove a change of rings formula for projective resolutions over a metric space. If $f:R\to S$ is a ring homomorphism and $\bC_\bullet$ is a chain complex of $R$-modules, let $\bC_\bullet\otimes_R S$ denote the chain complex of $S$-modules with boundary maps given by $\partial\otimes \id_S:\bC_{k}\otimes_R S\to \bC_{k-1}\otimes_R S$.
\begin{prop}\label{prop:change_rings_res}
	Let $f:R\to S$ be a homomorphism between commutative rings, and let $\bC_\bullet$ be a projective $R$-resolution over $X$. Then $\bC_\bullet\otimes_R S$ is a projective $S$-resolution  over $X$.
\end{prop}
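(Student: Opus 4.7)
The plan is to verify each requirement of Definition~\ref{defn:proj_res} for $\bC_\bullet \otimes_R S$. That each $\bC_i \otimes_R S$ is a projective $S$-module over $X$ is immediate from Proposition~\ref{prop:change_rings}(\ref{item:change_rings1}); that each boundary map $\partial \otimes \id_S$ has finite displacement over $\id_X$ is part~(\ref{item:change_rings4}) of the same proposition; and staggering is preserved because the indexing sets and filtrations are unchanged by the change of rings. The substantive work is therefore to verify that $\bC_\bullet \otimes_R S$ is weakly uniformly acyclic, which I will do via the criteria of Proposition~\ref{prop:equiv_weakuniformacyc}. Condition~(c) of that proposition is immediate: if $m_x \in \bC_0(D)$ satisfies $\epsilon(m_x) = 1$ and $\supp(m_x) \subseteq N_D(x)$, then parts~(\ref{item:change_rings2}) and~(\ref{item:change_rings3}) of Proposition~\ref{prop:change_rings} guarantee that $m_x \otimes 1 \in (\bC_0 \otimes_R S)(D)$ has augmentation $1$ and support contained in $N_D(x)$.

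For the remaining conditions — exactness of the augmented complex $\bC_\bullet \otimes_R S \to S$ and uniform preimages of the boundary maps — I will follow the strategy used in the proof of Proposition~\ref{prop:tensor_res}: reduce to the case of the standard resolution, then appeal to the explicit contracting chain homotopies. By Corollary~\ref{cor:projres_indpt}, there is an augmentation-preserving finite-displacement chain homotopy equivalence between $\bC_\bullet$ and the standard $R$-resolution $\bC'_\bullet$ of $X$. Proposition~\ref{prop:change_rings}(\ref{item:change_rings4}) ensures this tensors with $\id_S$ to an augmentation-preserving finite-displacement chain homotopy equivalence between $\bC_\bullet \otimes_R S$ and $\bC'_\bullet \otimes_R S$, and a routine check shows that weak uniform acyclicity is invariant under such equivalences (given a reduced cycle in one complex, pull back through the equivalence, apply weak uniform acyclicity in the other complex, then push forward). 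Thus it suffices to treat the case where $\bC_\bullet$ is the standard resolution.

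In that case, for each $x \in X$ the contracting chain homotopy $h^x_\#$ constructed in the proof of Proposition~\ref{prop:standard_proj_res} tensors to $h^x_\# \otimes \id_S$, which continues to satisfy the formal identity $\partial(h^x_\# \otimes \id_S) + (h^x_\# \otimes \id_S)\partial = \id - \eta\epsilon$, where $\eta$ is the augmentation section sending $1 \mapsto [x] \otimes 1$. Given a reduced cycle $\sigma \in (\bC_k \otimes_R S)(i)$ of support diameter at most $D$, Proposition~\ref{prop:change_rings}(\ref{item:change_rings2}) first places $\sigma$ in $\bC_k(\Phi(i)) \otimes_R S$; choosing any $x \in \supp(\sigma)$ and applying $h^x_\# \otimes \id_S$, the filtration and support bounds on $h^x_\#(\sigma)$ proved in Proposition~\ref{prop:standard_proj_res} carry through verbatim, yielding a chain in $(\bC_{k+1} \otimes_R S)(i')$ bounding $\sigma$ with $\supp \subseteq \supp(\sigma)$ and $i'$ depending only on $i$ and $D$. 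This simultaneously supplies exactness and uniform preimages, completing the verification of weak uniform acyclicity. The main piece of bookkeeping will be keeping track of how the filtration bounds transform under change of rings, but this is uniformly controlled by the function $\Phi$ supplied by Proposition~\ref{prop:change_rings}, and no appeal to flatness of $S$ over $R$ is needed since the relevant chain homotopies are pre-existing identities of $R$-modules that tensor formally.
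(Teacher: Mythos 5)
Your argument is correct, but it takes a genuinely different and more elementary route than the paper's. After reducing to the standard $R$-resolution $\bC'_\bullet$ (a step the paper also takes, and where you correctly note --- as does the paper in Proposition~\ref{prop:tensor_res} --- that weak uniform acyclicity is invariant under augmentation-preserving finite-displacement chain homotopy equivalences), the paper then ``thinks of $S$ as a chain complex of $R$-modules concentrated in dimension $0$'' and invokes the K\"unneth spectral sequence, exactly as in Proposition~\ref{prop:tensor_res}. You instead observe that the contracting chain homotopy $h^x_\#$ from Proposition~\ref{prop:standard_proj_res} tensors formally with $\id_S$ and retains its filtration/support bounds, since the standard resolution is free on $B_j$ and $(\bC'_j\otimes_R S)(i)=\bC'_j(i)\otimes_R S$ with coordinates given by the dual basis. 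This avoids the spectral sequence altogether. In fact, your computation amounts to the clean observation that $\bC'_\bullet\otimes_R S$ \emph{is} the standard $S$-resolution over $X$ (the free $S$-module on $B_j$ with the same coordinates $\delta_b$, control function $p_j$, filtration, boundary maps, and augmentation), so Proposition~\ref{prop:standard_proj_res} applied directly over $S$ already gives weak uniform acyclicity; stating it this way would make the chain-homotopy bookkeeping in your last paragraph unnecessary. Either way, what your approach buys is a self-contained, hands-on proof in place of the paper's appeal to the K\"unneth spectral sequence; what it gives up is the uniformity with Proposition~\ref{prop:tensor_res}, which genuinely needs the spectral sequence and where no such ``the tensor is again standard'' shortcut is available.
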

\begin{proof}
	It follows from Proposition~\ref{prop:change_rings} that $\bC_\bullet\otimes_R S$ is an $S$-chain complex over $X$ of projective $S$-modules over $X$. All that remains  is to show $\bC_\bullet\otimes_R S$ is weakly uniformly acyclic. This argument is virtually identical to the proof of Proposition~\ref{prop:tensor_res}: we first restrict to the case $\bC_\bullet$ is the standard resolution, and then apply the K\"unneth spectral sequence, thinking of $S$ as a chain complex of $R$-modules concentrated in dimension 0.
\end{proof}

\section{Coarse cohomology and homology}\label{sec:coarse_cohom}
\emph{For every metric space $X$ and   $R$-module $\bM$ over $X$, we define the \emph{coarse cohomology} $\coarse^k(X;\bM)$ and prove in Theorem~\ref{thm:functor_cohom} it is functorial with respect to coarse embeddings. We relate $\coarse^k(X;\bM)$ to Roe's coarse cohomology in Proposition~\ref{prop:roe_iso}. For a proper metric space $X$ and commutative ring $R$, we define coarse homology $\hcoarse_k(X;R)$. In Theorem~\ref{thm:univcoeff_ctblehyp}, we prove a version of the K\"unneth and Universal Coefficient Theorems relating coarse cohomology and homology, under the hypotheses of countable generation.
}
\vspace{.3cm}

For the remainder of article, we introduce the following \emph{sign convention} regarding the dual complex  of a chain or cochain complex. If $C_\bullet$ is a chain complex with boundary map $\partial:C_k\to C_{k-1}$ for each $k$, then the cochain complex $\Hom_R(C_\bullet,R)$ has coboundary map $\delta:\Hom_R(C_k,R)\to \Hom_R(C_{k+1},R)$ is  given by $\delta \alpha=(-1)^{k+1}\alpha\circ \partial$ for each $\alpha \in \Hom_R(C_k,R)$. We adopt a similar convention regarding the chain complex $\Hom_R(C^\bullet,R)$ that is the dual of  a cochain complex $C^\bullet$

We can now define  cohomology modules $\coarse^k(X;\bN)$ of a metric space.
If $\bC_\bullet$ is a projective $R$-resolution over $X$ and $\bN$ is an $R$-module over $X$, it follows from Proposition~\ref{prop:induced_hom} that since the boundary maps have finite displacement over $\id_X$, there is an induced  cochain complex $\Homfd(\bC_\bullet,\bN)$. We can now define coarse cohomology modules:
\begin{defn}
	Let $X$ be a metric space and let $\bN$ be an $R$-module over $X$. We define $\coarse^k(X;\bN)\coloneqq H^k(\Homfd(\bC_\bullet,\bN))$, where $\bC_\bullet$ is a projective $R$-resolution over $X$.
\end{defn}
It follows from Corollary~\ref{cor:projres_indpt} and Proposition~\ref{prop:induced_hom} that $\coarse^k(X;\bN)$ is well-defined, independent of the choice of  projective $R$-resolution over $X$. More generally, we have the following:

\begin{thm}\label{thm:functor_cohom}
	For any coarse embedding $f:X\to Y$ and $R$-module $\bN$ over $X$, there is an induced map $f^*:\coarse^k(Y;f_*\bN)\to \coarse^k(X;\bN)$ for every $k\in \bbN$, satisfying the following:
	\begin{enumerate}
		\item If $g:Y\to Z$ is a coarse embedding,  then  $(gf)^*=f^*g^*$.\label{propitem:functor_cohom1}
		\item If $Y=X$ and $f=\id_X$, then $f^*$ is the identity.\label{propitem:functor_cohom2}
		\item If $g:X\to Y$ is close to $f$, then $\coarse^k(Y;f_*\bN)=\coarse^k(Y;g_*\bN)$ and $f^*=g^*$.\label{propitem:functor_cohom3}
	\end{enumerate}
\end{thm}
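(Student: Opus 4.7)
The plan is to construct $f^\ast$ by applying the Fundamental Lemma of homological algebra over metric spaces (Proposition~\ref{prop:proj_res}) to obtain a chain map between projective resolutions of $X$ and $Y$, dualize with the functor $\Homfd(-,\bullet)$ via Proposition~\ref{prop:induced_hom}, and then take cohomology. The three properties will follow from the corresponding statements at the chain level.

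\textbf{Construction and well-definedness.} Fix projective $R$-resolutions $\bC_\bullet$ over $X$ and $\bD_\bullet$ over $Y$. Since $f$ is a coarse embedding it is bornologous, so Proposition~\ref{prop:proj_res}(1) produces an augmentation-preserving chain map $f_\#\colon\bC_\bullet\to\bD_\bullet$ of finite displacement over $f$. By Proposition~\ref{prop:induced_hom}, each $f_k$ induces $f_k^\ast\colon\Homfd(\bD_k,f_\ast\bN)\to\Homfd(\bC_k,\bN)$ given by $\psi\mapsto\iota^{-1}\circ\psi\circ f_k$, where $\iota\colon\bN\to f_\ast\bN$ is the canonical identification (which is the identity on underlying $R$-modules). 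A direct check using the paper's sign convention for the coboundary shows these assemble into a cochain map, and I define $f^\ast$ as its induced map on $H^k$. Independence of the choice of $f_\#$ follows from Proposition~\ref{prop:proj_res}(2): any two augmentation-preserving chain maps over $f$ are joined by a finite displacement chain homotopy, which dualizes to a cochain homotopy between the induced cochain maps. Independence of the choice of projective resolutions follows from Corollary~\ref{cor:projres_indpt}, which guarantees that any two are related by a finite displacement chain homotopy equivalence and hence induce an isomorphism on the cohomology of the dualized complex.

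\textbf{Properties.} For (1), if $f_\#\colon\bC_\bullet\to\bD_\bullet$ and $g_\#\colon\bD_\bullet\to\bE_\bullet$ represent $f^\ast$ and $g^\ast$, then Lemma~\ref{lem:compos_findisp} (applied using that $g$ is bornologous) shows $g_\#\circ f_\#$ has finite displacement over $g\circ f$, so it represents $(gf)^\ast$ by the uniqueness statement of Proposition~\ref{prop:proj_res}(2); since dualization reverses composition this gives $(gf)^\ast=f^\ast g^\ast$. For (2), the identity chain map on $\bC_\bullet$ represents $\id_X^\ast$ and yields the identity on the dual. For (3), Lemma~\ref{lem:induced_close} provides a canonical isomorphism between $f_\ast\bN$ and $g_\ast\bN$, and Remark~\ref{rem:hom_close} then forces the literal equality $\Homfd(\bD_k,f_\ast\bN)=\Homfd(\bD_k,g_\ast\bN)$ as subsets of $\Hom_R(D_k,N)$, hence $\coarse^k(Y;f_\ast\bN)=\coarse^k(Y;g_\ast\bN)$; moreover the same $f_\#$ has finite displacement over both $f$ and $g$ (by the observation following Definition~\ref{defn:findisp}), and induces the same dual map on underlying modules, so $f^\ast=g^\ast$.

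The main potential obstacle is bookkeeping with the canonical identification $\iota$ that appears in the formula $\psi\mapsto\iota^{-1}\circ\psi\circ f_k$: one must verify it disappears both under composition of chain maps and under replacing $f$ by a close map. Both verifications are clean because $\iota$ is by construction the identity on underlying $R$-modules, so the required identities hold on the nose rather than only up to canonical isomorphism.
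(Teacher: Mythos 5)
Your proposal is correct and takes essentially the same route as the paper: fix projective $R$-resolutions, apply Proposition~\ref{prop:proj_res} to get a finite-displacement chain map, dualize via Proposition~\ref{prop:induced_hom}, pass to cohomology, and deduce (1)--(3) respectively from Lemma~\ref{lem:compos_findisp}, the identity chain map, and Remark~\ref{rem:hom_close} combined with the observation following Definition~\ref{defn:findisp}. Your explicit remark that the canonical identifications $\iota$ are identities on underlying modules, so that the functoriality equalities hold on the nose, is a worthwhile bookkeeping check that the paper leaves implicit.
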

\begin{proof}
	Let $\bC_\bullet$ and $\bD_\bullet$ be projective $R$-resolutions over $X$ and $Y$ respectively. By Proposition~\ref{prop:proj_res}, $f$ induces an augmentation-preserving chain map $f_\#:\bC_\bullet\to \bD_\bullet$ of finite displacement over $f$. By Proposition~\ref{prop:induced_hom}, $f_\#$ induces cochain maps $\Homfd(\bD_\bullet,f_*\bN)\to \Homfd(\bC_\bullet,\bN)$, which in turn induces the required maps 	$f^*:\coarse^k(Y;f_*\bN)\to \coarse^k(X;\bN)$ in cohomology. Proposition~\ref{prop:proj_res} also ensures that such a map $f_\#$ is unique up to a finite displacement chain homotopy. It follows $f^*$ does not depend on the choice of chain map $f_\#$.

	(\ref{propitem:functor_cohom1}): Let $\bE_\bullet$ be a projective $R$-resolution over $Z$ and let $g_\#:\bD_\bullet\to \bE_\bullet$ be a map of finite displacement over $g$.  Since the composition  $g_\#f_\#:\bC_\bullet\to \bE_\bullet$ has finite displacement over $gf$ by Lemma~\ref{lem:compos_findisp}, it follows that $(gf)^*=f^*g^*$ on the level of cohomology.

	(\ref{propitem:functor_cohom2}): This is clear, since the identity map $\bC_\bullet\to \bC_\bullet$ has finite displacement over $\id_X$.

	(\ref{propitem:functor_cohom3}): Although  $f_*\bN$ and $g_*\bN$ are not necessarily equal, Remark~\ref{rem:hom_close} implies  $\Homfd(\bD_\bullet,f_*\bN)=\Homfd(\bD_\bullet,g_*\bN)$. It follows  $\coarse^k(Y;f_*\bN)=\coarse^k(Y;g_*\bN)$. Since any map $f_\#:\bC_\bullet\to \bD_\bullet$ has finite displacement over $f$ if and only if it has finite displacement over $g$, it immediately follows that $f^*=g^*$.
\end{proof}

\begin{defn}
	If $X$ is a metric space and $N$ is an $R$-module, we define $\coarse^k(X;N)=\coarse^k(X;\bN)$, where $\bN$ is a constant $R$-module over $X$  with underlying $R$-module $N$.
\end{defn}
For any coarse embedding $f:X\to Y$, we observe that $f_*\bN$ is a constant $R$-module over $Y$ with underlying $R$-module $N$. It follows that $\coarse^k(Y;f_*\bN)=\coarse^k(Y;N)$. We can thus restate Theorem~\ref{thm:functor_cohom} in this setting:
\begin{cor}\label{cor:functor_cohom_bddsupport}
	For any coarse embedding $f:X\to Y$ and $R$-module $N$, there is an induced map $f^*:\coarse^k(Y;N)\to \coarse^k(X;N)$ for every $k\in \bbN$, satisfying (\ref{propitem:functor_cohom1}) --- (\ref{propitem:functor_cohom3}) of Theorem~\ref{thm:functor_cohom}.
\end{cor}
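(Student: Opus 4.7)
The plan is to deduce this immediately from Theorem~\ref{thm:functor_cohom} once we verify that the pushforward of a constant module is again constant with the same underlying $R$-module. This is essentially a bookkeeping exercise: the content is already packaged in Theorem~\ref{thm:functor_cohom}, and the corollary is simply the specialization to constant coefficients.

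First I would fix a constant $R$-module $\bN$ over $X$ with underlying $R$-module $N$, chosen so that $\supp_\bN(\bN)=\{x_0\}$ for some $x_0\in X$ and $\bN(i)=\bN$ for all $i$. Unwinding Definition~\ref{defn:pushforward} of the pushforward (given in Section~\ref{sec:rmod_metric}), $f_*\bN$ has the same underlying $R$-module $N$, the same filtration $\{B(i)\}$, and control function $f\circ p$. Consequently $f_*\bN(i)=\bN(i)=N$ for every $i$, and $\supp_{f_*\bN}(f_*\bN)=f(\{x_0\})=\{f(x_0)\}$. Thus $f_*\bN$ is a constant $R$-module over $Y$ whose underlying $R$-module is $N$.

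Next, I would invoke the remark following Definition~\ref{defn:constant}, which states that any two constant $R$-modules over a given metric space sharing the same underlying $R$-module are canonically isomorphic, so by Remark~\ref{rem:hom_close} and the definition $\coarse^k(Y;N)\coloneqq \coarse^k(Y;\bN')$ for any such choice $\bN'$, we obtain the identification
\[
\coarse^k(Y;f_*\bN)=\coarse^k(Y;N).
\]
Theorem~\ref{thm:functor_cohom} then produces the induced map $f^*:\coarse^k(Y;f_*\bN)\to \coarse^k(X;\bN)$, which under the above identifications is exactly the desired $f^*:\coarse^k(Y;N)\to \coarse^k(X;N)$.

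Finally, properties (\ref{propitem:functor_cohom1})--(\ref{propitem:functor_cohom3}) transfer verbatim: functoriality under composition and the identity case are immediate from the corresponding parts of Theorem~\ref{thm:functor_cohom} once one notes that $(gf)_*\bN=g_*f_*\bN$ so that the two identifications are consistent. For the closeness statement (\ref{propitem:functor_cohom3}), if $g:X\to Y$ is close to $f$, then Lemma~\ref{lem:induced_close} gives a canonical isomorphism $f_*\bN\to g_*\bN$; since both sides share the same underlying $R$-module $N$, the identifications with $\coarse^k(Y;N)$ agree, and the equality $f^*=g^*$ at the level of $\coarse^k(Y;N)$ follows from the corresponding equality in Theorem~\ref{thm:functor_cohom}. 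There is no genuine obstacle here; the only thing to double-check is that the various canonical identifications between constant modules commute with pushforward, and this is direct from the definitions.
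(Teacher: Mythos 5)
Your proof is correct and follows the same route as the paper: observe that the pushforward of a constant module is again constant with the same underlying $R$-module, so $\coarse^k(Y;f_*\bN)=\coarse^k(Y;N)$, and then the statement is a direct restatement of Theorem~\ref{thm:functor_cohom} in this special case. The paper records exactly this observation in the sentence preceding the corollary; your write-up merely spells it out in more detail.
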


We now relate cohomology discussed above with Roe's coarse cohomology~\cite{roe1993coarse,roe2003lectures}.
First, we recall some key definitions from~\cite{roe2003lectures}, translated from the setting of abstract coarse spaces to metric spaces. If  $X$ is a metric space and $\Omega\subseteq X^{k+1}$, then:
\begin{itemize}
	\item $\Omega$ is \emph{controlled} if there exists a constant $A$ such that if $[x_0,\dots, x_k]\in \Omega$, then $d(x_i,x_j)\leq A$ for all $0\leq i,j\leq k$;
	\item $\Omega$ is \emph{bounded} if there is a bounded subset $F\subseteq X$ such that if $[x_0,\dots,x_k]\in \Omega$, then $x_j\in F$ for each $0\leq j\leq k$;
	\item $\Omega$ is \emph{cocontrolled} if, for every controlled $\Psi\subseteq X^{k+1}$, $\Omega\cap \Psi$ is bounded.
\end{itemize}
The \emph{coarse complex of $X$ with coefficients in $G$}  is then defined to be the cochain complex $CX^\bullet(X,G)$ where $CX^k(X,G)$ consists of functions $\phi:X^{k+1}\to G$ with cocontrolled support, equipped with the  coboundary map:
\[\delta \phi({[x_0,\dots, x_{k+1}]})=(-1)^{k+1}\sum_{i=0}^{k+1}(-1)^i\phi([x_0,\dots, \widehat{x_i},\dots, x_{k+1}]).\] Roe defines the \emph{coarse cohomology of $X$ with coefficients in $G$} to be \[HX^k(X,G)\coloneqq H^k(CX^\bullet(X,G)).\]

\begin{prop}\label{prop:roe_iso}
	Let $X$ be a metric space and let  $G$ be an abelian group. Then \[\coarse^k(X;G)\cong HX^k(X;G)\] for all $k\in \bbN$.
\end{prop}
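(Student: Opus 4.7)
The plan is to compute $\coarse^k(X;G)$ using the standard $\bbZ$-resolution $\bC_\bullet$ of $X$ from Definition~\ref{defn:standard_res}, and to give an explicit chain isomorphism between the cochain complex $\Homfd(\bC_\bullet,\bG)$ (where $\bG$ is a constant module with underlying group $G$) and Roe's complex $CX^\bullet(X,G)$. Since $G$ is only assumed to be an abelian group, we work with $R=\bbZ$; by Proposition~\ref{prop:standard_proj_res}, $\bC_\bullet$ is a projective $\bbZ$-resolution over $X$, so $\coarse^k(X;G)=H^k(\Homfd(\bC_\bullet,\bG))$, and by Lemma~\ref{lem:trivial_module} this equals $H^k(\Hom_c(\bC_\bullet,G))$.

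The first step is the cochain-level identification. Each $\bC_k$ is the free $\bbZ$-module on the basis $B_k$ of all $(k+1)$-tuples $[x_0,\dots,x_k]$ in $X$, so there is a canonical bijection between $\bbZ$-linear homomorphisms $\phi\colon \bC_k\to G$ and arbitrary functions $\bar\phi\colon X^{k+1}\to G$, given by restriction to $B_k$. Under this bijection, I will show that $\phi\in\Hom_c(\bC_k,G)$ if and only if $\bar\phi$ has cocontrolled support in the sense of Roe. Here one uses that, in the standard resolution, $B_k(i)$ consists precisely of tuples of diameter at most $i$ (for $i\geq k$), so the controlled subsets of $X^{k+1}$ are exactly the subsets contained in some $B_k(i)$, and $\supp_{\bC_k}(b)=\{p(b)\}=\{x_0\}$ for each $b=[x_0,\dots,x_k]\in B_k$.

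For the forward direction, suppose $\phi$ has locally bounded support with bounded set $F_i$ witnessing the condition at level $i$. If $\Psi\subseteq X^{k+1}$ is controlled, then $\Psi\subseteq B_k(A)$ for some $A$, so any tuple $[x_0,\dots,x_k]\in\Psi$ on which $\bar\phi$ is nonzero satisfies $x_0\in F_A$ (by Proposition~\ref{prop:bdd_support}) and hence $x_j\in N_A(F_A)$ for all $j$; this is bounded, so $\bar\phi$ is cocontrolled. For the converse, apply the cocontrolled hypothesis with the controlled set $\Psi=B_k(i)$: the resulting bounded set in $X^{k+1}$ projects to a bounded $F_i\subseteq X$ via the first coordinate, and this $F_i$ witnesses locally bounded support at level $i$.

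The second step is to check that the two coboundary maps agree. By the sign convention introduced at the start of Section~\ref{sec:coarse_cohom}, the coboundary on $\Homfd(\bC_\bullet,\bG)$ is $\delta\alpha=(-1)^{k+1}\alpha\circ\partial$; combined with the formula
\[
\partial[x_0,\dots,x_{k+1}]=\sum_{i=0}^{k+1}(-1)^i[x_0,\dots,\widehat{x_i},\dots,x_{k+1}]
\]
from Definition~\ref{defn:standard_res}, this gives exactly Roe's formula
\[
\delta\bar\phi([x_0,\dots,x_{k+1}])=(-1)^{k+1}\sum_{i=0}^{k+1}(-1)^i\bar\phi([x_0,\dots,\widehat{x_i},\dots,x_{k+1}]).
\]
Thus the bijection $\phi\leftrightarrow\bar\phi$ is an isomorphism of cochain complexes, and passing to cohomology yields $\coarse^k(X;G)\cong HX^k(X;G)$. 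The only real work is the support comparison in the first step; everything else is just bookkeeping with the standard resolution, and there is no obstacle once one remembers that $p([x_0,\dots,x_k])=x_0$ so that controlledness of a tuple translates the boundedness condition on $x_0$ into boundedness for all coordinates.
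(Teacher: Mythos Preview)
Your proof is correct and follows essentially the same approach as the paper: both use the standard $\bbZ$-resolution, identify controlled subsets of $X^{k+1}$ with subsets of some $B_k(i)$, and verify that locally bounded support corresponds to cocontrolled support under the free--forgetful adjunction. The paper phrases the bijection between $\Hom_\bbZ(\bC_k,G)$ and functions $X^{k+1}\to G$ via that adjunction and does not separately spell out the coboundary check, while you make this explicit; your citation of Proposition~\ref{prop:bdd_support} in the forward direction is harmless but not strictly needed, since the argument follows directly from Definition~\ref{defn:homc}.
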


\begin{proof}

	Let $\bC_\bullet$ be the standard $\bbZ$-resolution over $X$, and fix $k$.  Recall from Definition~\ref{defn:standard_res} that $\bC_k$ is a free abelian group with basis $B_k=X^{k+1}$. Moreover, if $i\geq k$, then $\bC_k(i)\leq \bC_k$ is the free abelian group generated by $B_k(i)\subseteq B_k$, where $B_k(i)$ consists of all $[x_0,\dots, x_k]$ with $d(x_j,x_l)\leq i$ for all $0\leq j,l\leq k$. In particular, we note that each $B_k(i)\subseteq X^{k+1}$ is controlled, and that every controlled set is contained in $\Omega\subseteq X^{k+1}$ is contained in $B_k(i)$ for $i$ sufficiently large.  Recall also that if $\sigma=[x_0,\dots,x_k]\in B_k$, then $\supp_{\bC_k}(\sigma)=\{x_0\}$.

	Let $\alpha\in \Hom(\bC_k,G)$ and $S(\alpha)\coloneqq\{\sigma\in X^{k+1}\mid \alpha(\sigma)\neq 0\}$. We claim $\alpha\in \Hom_c(\bC_k,G)$ if and only if $S(\alpha)$ is cocontrolled. Indeed, suppose $\alpha\in \Hom_c(\bC_k,G)$ and $\Psi\subseteq X^{k+1}$ is controlled. Then $\Psi\subseteq B_k(i)$ for some $i$. By Definition~\ref{defn:homc}, there is a bounded subset $F$ such that if $\sigma=[x_0,\dots, x_k]\in B_k(i)\subseteq \bC_k(i)$ with $x_0\in  X\setminus F$, then $\alpha(\sigma)=0$. Consequently, if $[x_0,\dots, x_k]\in S(\alpha)\cap \Psi$, then  $x_j\in N_i(F)$ for each $0\leq j\leq k$, showing that $S(\alpha)$ is cocontrolled.

	Conversely, suppose $S(\alpha)$ is cocontrolled and $i\in \bbN$. Then $S(\alpha)\cap B_k(i)$ is bounded. Thus there is a bounded subset $F\subseteq X$ such that if $[x_0,\dots,x_k]\in S(\alpha)\cap B_k(i)$, then $x_0\in F$. This implies if $\sigma\in \bC_k(i)$ with $\supp(\sigma)\subseteq X\setminus F$, then $\alpha(\sigma)=0$. By Definition~\ref{defn:homc}, this ensures $\alpha\in \Hom_c(\bC_k,G)$.

	The universal property of free abelian groups says that the functor $F:\texttt{Set}\to \texttt{Ab}$, taking $B$ to the free abelian group $F(B)$ with basis $B$, is  left-adjoint to the forgetful functor $U:\texttt{Ab}\to \texttt{Set}$. Moreover, for an abelian group $G$,  the natural isomorphism $\Hom_\texttt{Set}(B,U(G))\cong \Hom_\texttt{Ab}(F(B),G)$ is an isomorphism of abelian groups. We have thus shown that this natural isomorphism yields an isomorphism $CX^\bullet(X;G)\cong \Hom_c(\bC_\bullet,G)$ of cochain complexes.
\end{proof}

\textbf{For the remainder of this section, we restrict to the case that $X$ is a proper metric space, and hence assume that $\bC_\bullet$ is a proper projective $R$-resolution over $X$.}
We define $\bC^\bullet=\Hom_R(\bC_\bullet,R)$, equipped with the boundary map $\delta\alpha=(-1)^{k+1}\alpha\circ \partial$ for each $\alpha\in \bC^k$. The subcomplex $\Hom_c(\bC_\bullet,R)$ of $\bC^\bullet$ is denoted $\bC^\bullet_c$.
We define the $R$-chain complex $\bC_\bullet^{\lf}$ as follows. Each $\bC_k^{\lf}$ is defined as in Definition~\ref{defn:lf} and Proposition~\ref{prop:induced_proj_lf}, and we identify $\bC_k$ with a geometric submodule of $\bC_k^{\lf}$ as in Proposition~\ref{prop:induced_proj_lf}. The boundary map $\partial:\bC_k^{\lf}\to \bC_{k-1}^{\lf}$ is induced by the boundary map $\partial:\bC_k\to \bC_{k-1}$ as in Lemma~\ref{lem:lf_induced}. This boundary map is obtained by dualising the coboundary map $\delta:\bC^{k-1}_c\to \bC^k_c$ and  applying the sign convention.

\begin{defn}
	If $X$ is a proper metric space, we define $\hcoarse_k(X;R)\coloneqq H_k(\bC_\bullet^{\lf})$, where $\bC_\bullet$ is a proper projective $R$-resolution over $X$.
\end{defn}

\begin{prop}\label{prop:functor_hom}
	For any coarse embedding $f:X\to Y$ between proper metric spaces, there is an induced map $f_k:\hcoarse_k(X;R)\to \hcoarse_k(Y;R)$ for every $k\in \bbN$, satisfying the following:
	\begin{enumerate}
		\item If $g:Y\to Z$ is a coarse embedding,  then  $(gf)_*=g_*f_*$.\label{propitem:functor_hom1}
		\item If $Y=X$ and $f=\id_X$, then $f_*$ is the identity map.\label{propitem:functor_hom2}
		\item If $g:X\to Y$ is close to $f$, then $f_*=g_*$.\label{propitem:functor_hom3}
	\end{enumerate}
\end{prop}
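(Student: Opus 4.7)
The plan is to mimic the proof of Theorem~\ref{thm:functor_cohom}, but using Lemma~\ref{lem:lf_induced} to extend chain maps between projective resolutions to maps between the locally-finite chain complexes. Since $X$ and $Y$ are proper, Proposition~\ref{prop:resln_exist} provides proper projective $R$-resolutions $\bC_\bullet$ over $X$ and $\bD_\bullet$ over $Y$. Proposition~\ref{prop:proj_res}(1) yields an augmentation-preserving chain map $f_\#:\bC_\bullet \to \bD_\bullet$ of finite displacement over $f$, and Lemma~\ref{lem:lf_induced} extends each $f_k:\bC_k\to \bD_k$ to a finite displacement map $f_k^{\lf}:\bC_k^{\lf}\to \bD_k^{\lf}$ via $f_k^{\lf}(\psi)(\alpha)=\psi(f_k^*\alpha)$.

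I would then check that $f_\#^{\lf}$ is a chain map. Recall that the boundary $\partial^{\lf}:\bC_k^{\lf}\to \bC_{k-1}^{\lf}$ is the dual of the coboundary $\delta:\bC^{k-1}_c\to \bC^k_c$ (as explained right before the definition of $\hcoarse_k$); a direct unwinding of the definitions combined with the sign convention shows that $f_\#$ being a chain map implies $f^*:\bD_c^\bullet\to \bC_c^\bullet$ is a cochain map, and dualising gives $\partial^{\lf}\circ f_k^{\lf}=f_{k-1}^{\lf}\circ \partial^{\lf}$. Taking homology defines $f_*:\hcoarse_k(X;R)\to \hcoarse_k(Y;R)$.

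To see that $f_*$ is independent of the choice of $f_\#$, suppose $f_\#,g_\#:\bC_\bullet\to \bD_\bullet$ are two lifts of $f$ with finite displacement. By Proposition~\ref{prop:proj_res}(2) there is a finite displacement chain homotopy $h_\#:\bC_\bullet\to \bD_{\bullet+1}$; applying Lemma~\ref{lem:lf_induced} componentwise and using the same sign-tracking argument yields $h_\#^{\lf}$ which is a chain homotopy from $f_\#^{\lf}$ to $g_\#^{\lf}$. Independence of the choice of resolutions $\bC_\bullet$ and $\bD_\bullet$ follows from Corollary~\ref{cor:projres_indpt}: a finite displacement chain homotopy equivalence $\bC_\bullet\to \bC'_\bullet$ extends via Lemma~\ref{lem:lf_induced} to a chain homotopy equivalence $\bC^{\lf}_\bullet\to (\bC')^{\lf}_\bullet$, hence gives a canonical isomorphism on $\hcoarse_*$.

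With the construction settled, the three functoriality properties fall out formally. For (\ref{propitem:functor_hom1}), Lemma~\ref{lem:compos_findisp} shows $g_\#\circ f_\#$ has finite displacement over $gf$, and the definition in Lemma~\ref{lem:lf_induced} immediately gives $(g_\# f_\#)^{\lf}=g_\#^{\lf}\circ f_\#^{\lf}$ since $(gf)^*=f^*\circ g^*$ on the dual complexes. Property (\ref{propitem:functor_hom2}) is trivial as $\id_\bC$ extends to $\id_{\bC^{\lf}}$. For (\ref{propitem:functor_hom3}), if $g$ is close to $f$, then the canonical identifications $f_*\bC_\bullet$ and $g_*\bC_\bullet$ (in the sense of Lemma~\ref{lem:induced_close}) permit a single chain map $\bC_\bullet\to\bD_\bullet$ to be simultaneously of finite displacement over both; equivalently, any lift of $f$ is also a lift of $g$ up to finite displacement chain homotopy via Proposition~\ref{prop:proj_res}(2), so $f_*=g_*$. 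The main obstacle, and the only place requiring genuine care, is the sign bookkeeping involved in confirming that $f_\#^{\lf}$ is a chain map and that $h_\#^{\lf}$ is a chain homotopy; once these signs are verified, everything else is formal.
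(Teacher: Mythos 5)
Your proposal is correct and follows essentially the same route as the paper: pick proper projective resolutions, lift $f$ to a finite-displacement chain map $f_\#$ via Proposition~\ref{prop:proj_res}, extend to $f_\#^{\lf}$ via Lemma~\ref{lem:lf_induced}, take homology, and verify the functoriality properties just as in Theorem~\ref{thm:functor_cohom}. One small remark: the sign bookkeeping you flag as the delicate step is actually automatic — the maps $\phi^{\lf}$ of Lemma~\ref{lem:lf_induced} are defined via the unsigned dual $\phi^*$, so the chain-map identity $\partial^{\lf}\circ f_k^{\lf}=f_{k-1}^{\lf}\circ\partial^{\lf}$ follows immediately from $(\partial\circ f_k)^*=(f_{k-1}\circ\partial)^*$ without any sign-tracking; the sign convention enters only in relating $\partial^{\lf}$ to the dual of the coboundary $\delta$ on $\bC^\bullet_c$, which plays no role in verifying the chain-map or chain-homotopy identities.
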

\begin{proof}
	Let $\bC_\bullet$ and $\bD_\bullet$ be projective $R$-resolutions over $X$ and $Y$ respectively. By Proposition~\ref{prop:proj_res}, $f$ induces an augmentation-preserving map $f_\#:\bC_\bullet\to \bD_\bullet$ of finite displacement over $f$. By Lemma~\ref{lem:lf_induced}, $f_\#:\bC_\bullet\to \bD_\bullet$ extends to $f_\#^{\lf}:\bC_\bullet^{\lf}\to \bD_\bullet^{\lf}$, and this induces maps  $f_*:H_*(\bC_\bullet^{\lf})\to H_*(\bD_\bullet^{\lf})$ as required. Since $f_\#$ is obtained by dualising the map $f^\#:\Hom_c(\bD_\bullet,R)\to \Hom_c(\bC_\bullet,R)$, we can argue as in the proof of Theorem~\ref{thm:functor_cohom} to deduce           the required properties.
\end{proof}

If $X$ is a proper metric space and $\bC_\bullet$ is a proper projective $R$-resolution over $X$, then for each $k$, there is a pairing $H_k(\bC_\bullet^{\lf})\times H^k(\bC^\bullet_c)\to R$ given by $[\tau]\times [\alpha]=\tau(\alpha)$.
\begin{prop}\label{prop:pairing_indep}
	Let $X$ be a proper metric space. There is a well-defined pairing
	\[\hcoarse_*(X;R)\times \coarse^*(X;R)\to R\] given by the natural pairing $H_k(\bC_\bullet^{\lf})\times H^k(\bC^\bullet_c)\to R$ for some proper projective $R$-resolution $\bC_\bullet$ over $X$.
\end{prop}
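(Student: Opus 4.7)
The plan is to prove this in two stages: first fix a proper projective $R$-resolution $\bC_\bullet$ and check that the chain-level pairing $\bC^{\lf}_\bullet\times \bC^\bullet_c\to R$, $(\tau,\alpha)\mapsto \tau(\alpha)$, descends to (co)homology; then show that the resulting pairing is independent of the choice of $\bC_\bullet$ up to the canonical isomorphisms on coarse homology and coarse cohomology induced by any chain homotopy equivalence between two resolutions.

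For the first stage, recall from the sign conventions that the boundary $\partial\colon \bC^{\lf}_k\to \bC^{\lf}_{k-1}$ is dual to the coboundary $\delta\colon \bC^{k-1}_c\to \bC^k_c$ up to sign; unwinding Lemma~\ref{lem:lf_induced} in the special case $\phi=\partial$ gives $(\partial\tau)(\alpha)=\pm\tau(\delta\alpha)$ for all $\tau\in\bC^{\lf}_k$ and $\alpha\in\bC^{k-1}_c$. Hence if $\tau$ is a cycle and $\alpha=\delta\alpha'$ a coboundary, then $\tau(\alpha)=\tau(\delta\alpha')=\pm(\partial\tau)(\alpha')=0$; symmetrically, boundaries pair trivially with cocycles. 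This is the standard computation, and yields a well-defined pairing $H_k(\bC^{\lf}_\bullet)\times H^k(\bC^\bullet_c)\to R$.

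For the second stage, let $\bC_\bullet$ and $\bD_\bullet$ be two proper projective $R$-resolutions over $X$. By Corollary~\ref{cor:projres_indpt} there is an augmentation-preserving finite displacement chain homotopy equivalence $f_\#\colon \bC_\bullet\to \bD_\bullet$ with finite displacement inverse $g_\#$. Lemma~\ref{lem:lf_induced} produces $f^{\lf}_\#\colon \bC^{\lf}_\bullet\to \bD^{\lf}_\bullet$, and Proposition~\ref{prop:dual_fd} produces $f^\#\colon \bD^\bullet_c\to \bC^\bullet_c$; both are chain/cochain homotopy equivalences (using Lemma~\ref{lem:lf_induced} and Proposition~\ref{prop:dual_fd} applied to the finite displacement chain homotopies), and hence induce isomorphisms $f^{\lf}_*\colon H_k(\bC^{\lf}_\bullet)\to H_k(\bD^{\lf}_\bullet)$ and $f^*\colon H^k(\bD^\bullet_c)\to H^k(\bC^\bullet_c)$. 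The key compatibility is the defining formula in Lemma~\ref{lem:lf_induced}: for $\tau\in\bC^{\lf}_k$ and $\alpha\in\bD^k_c$,
\[
(f^{\lf}_\#\tau)(\alpha)\;=\;\tau(f^\#\alpha).
\]
This identity says exactly that the pairing computed in $\bD_\bullet$ using $f^{\lf}_\#[\tau]$ and $[\alpha]$ agrees with the pairing computed in $\bC_\bullet$ using $[\tau]$ and $f^*[\alpha]$, which is the naturality one needs to identify the two pairings under the canonical isomorphisms of Theorem~\ref{thm:functor_cohom} and Proposition~\ref{prop:functor_hom} (specialised to $f=\id_X$ with $\bC_\bullet$ and $\bD_\bullet$ as the two chosen resolutions).

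No step here is really the main obstacle: the argument is essentially a formal consequence of Corollary~\ref{cor:projres_indpt} together with Lemmas~\ref{lem:lf_induced} and the sign-convention compatibility of $\bC^{\lf}_\bullet$ with $\bC^\bullet_c$. The only minor care required is keeping track of signs so that the chain-level computation $(\partial\tau)(\alpha)=\pm\tau(\delta\alpha)$ holds with the sign convention adopted at the beginning of Section~\ref{sec:coarse_cohom}; this is a purely bookkeeping matter and does not affect the vanishing arguments.
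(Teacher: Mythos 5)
Your proof is correct and follows essentially the same route as the paper's: fix a resolution, check the chain-level pairing descends, then use Corollary~\ref{cor:projres_indpt} together with Lemma~\ref{lem:lf_induced} and Proposition~\ref{prop:dual_fd} to identify the pairings for two resolutions. The only small difference is in the independence step: the paper computes $(f_\#^{\lf}\tau)(g^\#\alpha)=(g_\#^{\lf}f_\#^{\lf}\tau)(\alpha)$ and collapses $g_\#^{\lf}f_\#^{\lf}$ to the identity on cycles via the extended chain homotopy $h_\#^{\lf}$, whereas you invoke the adjoint identity $(f_\#^{\lf}\tau)(\alpha)=\tau(f^\#\alpha)$ at the chain level and then appeal to the fact that $f^*$ and $g^*$ are inverse isomorphisms on cohomology; these two bookkeeping choices are equivalent.
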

\begin{proof}
	We need to show the pairing doesn't depend on the choice of $\bC_\bullet$. Indeed, suppose $\bC_\bullet$ and $\bD_\bullet$ are two proper projective $R$-resolutions over $X$. By Proposition~\ref{prop:proj_res}, there are finite displacement (over $\id_X$) chain maps $f_\#:\bC_\bullet\to \bD_\bullet$ and $g_\#:\bD_\bullet\to \bC_\bullet$, unique up to finite displacement chain homotopy. Moreover, there is a finite displacement chain homotopy $\id \stackrel{h_\#}{\simeq} g_\#f_\#$. Then $f_\#, g_\#$ and $h_\#$ induce maps  $f^\#$, $g^\#$, $f_\#^{\lf}$, $g_\#^{\lf}$, and $h_\#^{\lf}$ as in Proposition~\ref{prop:dual_fd} and Lemma~\ref{lem:lf_induced}, and these induce isomorphisms $f^*=(g^*)^{-1}$ and $f_*^{\lf}=(g_*^{\lf})^{-1}$ on homology and cohomology.

	Thus if $\tau\in \bC_n^{\lf}$ and $\alpha\in \bC^{n}_c$, we have
	\begin{align*}
		(f_\#^{\lf}\tau)(g^\#\alpha)=(g_\#^{\lf}f_\#^{\lf}\tau)(\alpha)=(\tau+h_\#\partial\tau+\partial h_\#\tau)(\alpha)=\tau(\alpha).
	\end{align*}
	This implies the pairings $H_k(\bC_\bullet^{\lf})\times H^k(\bC^\bullet_c)\to R$ and $H_k(\bD_\bullet^{\lf})\times H^k(\bD^\bullet_c)\to R$ coincide  when we identify $H_k(\bC_\bullet^{\lf})$ with $H_k(\bD_\bullet^{\lf})$ via $f_*^{\lf}$, and $H^k(\bC^\bullet_c)$ with $H^k(\bD^\bullet_c)$ via $g^*$.
\end{proof}

We now discuss versions of the K\"unneth and Universal Coefficient Theorem for coarse cohomology. These theorems do not hold in full generality, essentially because neither $\bC_\bullet^{\lf}$ nor $\bC^\bullet_c$ is a chain complex of projective $R$-modules.
We require the following lemma, which uses the language of inverse and direct limits. We refer the reader to Appendix~\ref{sec:inverse_limits} for background on these topics.
\begin{lem}\label{lem:dirinv_lim_space}
	Let $X$ be a proper metric space, and let $\bC_\bullet$ be a proper projective $R$-resolution of $X$, that is also the standard $R$-resolution of some subspace $Y\subseteq X$, which exists by Corollary~\ref{cor:standard_proper}.
	Remark~\ref{rem:subcomplex_standard} implies that for $i\in \bbN$, $\bC_\bullet(i)$ is a geometric subcomplex of $\bC_\bullet$ of finite-height projective $R$-modules over $X$.
	For each $i$, let $\bC^*_c(i)$ be the cochain complex $\Hom_c(\bC_\bullet(i),R)$.
	Then:
	\begin{enumerate}
		\item\label{item:inv_lim_space} $\bC^\bullet_c=\varprojlim \bC^\bullet_c(i)$, where the bonding maps $f_i^j:\bC^\bullet_c(j)\to \bC^\bullet_c(i)$ for $i\leq j$ are restriction maps. Moreover, the inverse system $\{\bC^\bullet_c(i)\}_i$ satisfies the Mittag-Leffler condition, and each $\bC^\bullet_c(i)$ is a chain complex of countably generated proper projective  $R$-modules over $X$.
		\item\label{item:dir_lim_space} $\bC_\bullet^{\lf}=\varinjlim \Hom_R(\bC^\bullet_c(i),R)$, where the bonding maps \[g_i^j:\Hom_R(\bC^\bullet_c(i),R)\to \Hom_R(\bC^\bullet_c(j),R)\] for $i\leq j$ are dual to $f_i^j$.
	\end{enumerate}
\end{lem}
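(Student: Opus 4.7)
The plan is to carry out this lemma by careful bookkeeping, exploiting two features of the standard resolution. First, Remark~\ref{rem:subcomplex_standard} implies each $\bC_k(i)$ is the free summand of $\bC_k$ generated by $B_k(i) \subseteq B_k$, so the inclusion $\bC_k(i) \hookrightarrow \bC_k(j)$ for $i \leq j$ splits. Second, Proposition~\ref{prop:duality_ht} then equips $\bC^k_c(i) = \Hom_c(\bC_k(i), R)$ with the structure of a finite-height proper projective $R$-module over $X$, with projective basis indexed by the same set $B_k(i)$. Countable generation of $\bC^k_c(i)$ follows because $Y$ --- being uniformly discrete in the separable proper metric space $X$, with finite metric balls --- is countable, so $B_k(i) \subseteq Y^{k+1}$ is countable.

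For part (1), I would define $\Phi: \bC^\bullet_c \to \varprojlim \bC^\bullet_c(i)$ by $\alpha \mapsto (\alpha|_{\bC_k(i)})_i$. Injectivity is immediate from $\bC_k = \bigcup_i \bC_k(i)$, while surjectivity follows by taking a compatible family $(\alpha_i)$ and noting that at each filtration level $j$, the restriction $\alpha_j$ already witnesses the locally bounded support of the union $\alpha$. For the Mittag--Leffler condition, I would show each restriction $f_i^j$ is in fact surjective: given $\alpha_i \in \bC^k_c(i)$, the direct-summand decomposition $\bC_k(j) = \bC_k(i) \oplus F$ (where $F$ is generated by $B_k(j) \setminus B_k(i)$) allows extension by zero to $\tilde\alpha_i \in \Hom_R(\bC_k(j), R)$, and a direct calculation of local supports via Proposition~\ref{prop:bdd_support} places $\tilde\alpha_i$ in $\bC^k_c(j)$.

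For part (2), I would build $\Psi_i: \Hom_R(\bC^\bullet_c(i), R) \to \bC_\bullet^{\lf}$ by $\psi_i \mapsto \psi_i \circ f_i$, where $f_i = \iota_i^*: \bC^\bullet_c \to \bC^\bullet_c(i)$ is the restriction. The image lies in $\bC_\bullet^{\lf}$ because $\psi_i \circ f_i$ annihilates $\ker(f_i) = \Ann(\bC_k(i))$, and compatibility with the bonding maps follows from $f_i = f_i^j \circ f_j$. For the induced map $\Psi: \varinjlim \Hom_R(\bC^\bullet_c(i), R) \to \bC_\bullet^{\lf}$, surjectivity is precisely Definition~\ref{defn:lf} combined with the Mittag--Leffler surjectivity of part (1): any $\psi \in \bC_k^{\lf}$ vanishes on $\Ann(\bC_k(i)) = \ker(f_i)$ for some $i$, and since $f_i$ is surjective, $\psi$ descends uniquely to a $\psi_i \in \Hom_R(\bC^k_c(i), R)$. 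Injectivity likewise follows from surjectivity of $f_i$. Compatibility with the boundary maps on each side holds automatically, since both sides dualize the coboundaries induced by $\partial: \bC_k(i) \to \bC_{k-1}(i)$.

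The only genuinely delicate point --- and the main obstacle I would anticipate --- is keeping careful track of the locally bounded support condition throughout, especially verifying that the extension-by-zero $\tilde\alpha_i$ in the Mittag--Leffler step truly lies in $\bC^k_c(j)$ rather than merely in $\Hom_R(\bC_k(j), R)$. Once that bookkeeping is in hand, the inverse and direct limit identifications of parts (1) and (2) follow formally from the free-summand structure of the standard resolution.
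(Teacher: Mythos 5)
Your proposal is correct and essentially mirrors the paper's argument: both parts turn on the free-summand structure from Remark~\ref{rem:subcomplex_standard}, which makes the restriction maps $f_i^j$ surjective (hence Mittag--Leffler) and renders the two limit identifications formal, with the locally-bounded-support bookkeeping you flag handled exactly as you anticipate. The only cosmetic difference is in part (2), where you construct the direct-limit isomorphism directly from the surjectivity of $f_i$ and the identity $\ker(f_i)=\Ann(\bC_k(i))$, whereas the paper passes through the filtration $\{\bC_k^{\lf}(i)\}$ of $\bC_k^{\lf}$ as an $R$-module over $X$ via Proposition~\ref{prop:induced_proj_lf}; the two routes are equivalent.
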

\begin{proof}
	(\ref{item:inv_lim_space}): Proposition~\ref{prop:duality_ht} and Remark~\ref{rem:subcomplex_standard} imply that as each $\bC_k(i)$ is a finite-height proper projective $R$-module over $X$, so is each  $\bC^k_c(i)$. In particular, each $\bC^k_c(i)$ is a countably generated $R$-module.
	Remark~\ref{rem:subcomplex_standard} also says  each  $\bC_k(i)$ is a direct summand of $\bC_k$. Therefore, for each  $i\leq j$, it follows that $\bC_k(i)$ is a direct summand of $\bC_k(j)$. This implies that each restriction map  $f_i^j$ is surjective. Thus  $\{\bC^\bullet_c(i)\}_i$ is an inverse system with surjective bonding maps, hence satisfies the Mittag-Leffler condition.

	Now we have \[\Hom_R(\bC_\bullet,R)=\Hom_R(\varinjlim\bC_\bullet(i),R)\cong \varprojlim\Hom_R(\bC_\bullet(i),R),\]  and the projection $\Hom_R(\bC_\bullet,R)\to \Hom_R(\bC_\bullet(i),R)$ is the restriction map. Definition~\ref{defn:homc} implies that $\phi\in \Hom_R(\bC_\bullet,R)$ has locally bounded support if and only if the restriction of $\phi$ to each geometric submodule of the form $\bC_\bullet(i)$ has locally bounded support. This implies $\varprojlim \bC^\bullet_c(i)= \bC^\bullet_c$.

	(\ref{item:dir_lim_space}): We remark that as $\bC_k^{\lf}$ is an $R$-module over $X$, it has a filtration  $\{\bC_k^{\lf}(i)\}$, and so $\varinjlim \bC_\bullet^{\lf}(i)=\bC_\bullet^{\lf}$. As the geometric submodule $\bC_k(i)\leq \bC_k$ has finite height, we have $\Ann(\bC_k(j))=0$ for  $j$ sufficiently large. This ensures that $\bC_k^{\lf}(i)=\Hom_R(\bC^k_c(i),R)$ for all $k$ and $i$, yielding the desired equality.
\end{proof}
Combined with results from Appendix~\ref{sec:inverse_limits}, we deduce:
\begin{thm}\label{thm:univcoeff_ctblehyp}
	Let $X$ be a proper metric space.  Assume that $R$ is either a field or a countable PID\@, and that $\coarse^k(X;R)$ is a countably generated $R$-module for $k\leq n+1$. Let $\bC_\bullet$ be a proper projective $R$-resolution over $X$.
	\begin{enumerate}
		\item\label{item:univcoeff_ctblehyp1} Let $f:R\to S$ be a ring homomorphism and let $\bD_\bullet\coloneqq \bC_\bullet\otimes_R S$, which is a proper projective $S$-resolution over $X$ by Proposition~\ref{prop:change_rings_res}.  For each $k<n$, we have a natural short exact sequence \[0\to  H^k(\bC^\bullet_c) \otimes_R S\xrightarrow{\phi} H^k(\bD_c^\bullet)\to \Tor_1^R(H^{k+1}(\bC_c^\bullet),S)\to 0\]
		      and an injection $H^n(\bC^\bullet_c) \otimes_R S\xrightarrow{\phi} H^n(\bD_c^\bullet)$.
		      Moreover, the injections $\phi$ are given by $[\alpha]\otimes s\mapsto [\alpha\otimes s]$.
		\item\label{item:univcoeff_ctblehyp2}
		      We have surjections $H_k(\bC_\bullet^{\lf})\xrightarrow{} \Hom_R(H^{k}(\bC_c^\bullet), R)$ for $k\leq n$, and also for $k=n+1$ in the case $R$ is a field. These surjections $\psi$ are given by $\psi([\phi])([\alpha])=\phi(\alpha)$.
	\end{enumerate}
\end{thm}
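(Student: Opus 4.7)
The plan is to reduce both parts to the classical Künneth and Universal Coefficient theorems applied at each level of the presentation from Lemma~\ref{lem:dirinv_lim_space}, namely $\bC^\bullet_c = \varprojlim_i \bC^\bullet_c(i)$ and $\bC_\bullet^{\lf} = \varinjlim_i \Hom_R(\bC^\bullet_c(i), R)$, where each $\bC^k_c(i)$ is a finite-height proper projective $R$-module over $X$, and hence is projective as an $R$-module. By Lemma~\ref{lem:dirinv_lim_space}(1) the bonding maps on the inverse system are surjective, so Mittag--Leffler holds; and by Proposition~\ref{prop:tensor_dual} there are natural isomorphisms $\bC^\bullet_c(i) \otimes_R S \cong \bD^\bullet_c(i)$ at each level. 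Appendix~\ref{sec:inverse_limits} will be used to kill derived inverse limits of the relevant cohomology systems under the countable generation hypothesis.

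For part (\ref{item:univcoeff_ctblehyp1}), the classical Künneth theorem, applicable since $\bC^\bullet_c(i)$ is a complex of projective $R$-modules, yields at each level a natural split short exact sequence
\[
0 \to H^k(\bC^\bullet_c(i)) \otimes_R S \to H^k(\bD^\bullet_c(i)) \to \Tor^R_1(H^{k+1}(\bC^\bullet_c(i)), S) \to 0.
\]
Taking $\varprojlim_i$ and combining with the Milnor short exact sequence
\[
0 \to \varprojlim{}^1 H^{k-1}(\bC^\bullet_c(i)) \to H^k(\bC_c^\bullet) \to \varprojlim H^k(\bC^\bullet_c(i)) \to 0
\]
and its analogue for $\bD_c^\bullet$ should yield the desired sequence for $k < n$ and the injection for $k = n$, provided the $\varprojlim^1$ terms, as well as the higher derived functors measuring the failure of $\otimes_R S$ and $\Tor^R_1(-, S)$ to commute with $\varprojlim$, all vanish in the required range. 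This is where the Appendix enters: under the hypothesis that $\coarse^j(X; R)$ is countably generated for $j \leq n+1$ and $R$ is a field or countable PID, the requisite vanishings hold. The cochain-level formula $[\alpha] \otimes s \mapsto [\alpha \otimes s]$ for $\phi$ follows by tracing through the identifications, and naturality is immediate from the functoriality of Künneth.

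For part (\ref{item:univcoeff_ctblehyp2}), the classical Universal Coefficient theorem at level $i$ gives a natural short exact sequence
\[
0 \to \Ext^1_R(H^{k+1}(\bC^\bullet_c(i)), R) \to H_k(\Hom_R(\bC^\bullet_c(i), R)) \to \Hom_R(H^k(\bC^\bullet_c(i)), R) \to 0.
\]
Direct limits are exact and commute with homology, so by Lemma~\ref{lem:dirinv_lim_space}(2) we obtain a surjection $H_k(\bC_\bullet^{\lf}) \twoheadrightarrow \varinjlim_i \Hom_R(H^k(\bC^\bullet_c(i)), R)$, which we then compose with the natural map $\varinjlim_i \Hom_R(H^k(\bC^\bullet_c(i)), R) \to \Hom_R(\varprojlim_i H^k(\bC^\bullet_c(i)), R) \cong \Hom_R(H^k(\bC_c^\bullet), R)$, the last isomorphism obtained from the Milnor sequence above. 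Tracing through these identifications shows the resulting map is $\psi([\tau])([\alpha]) = \tau(\alpha)$. The main obstacle, and the crucial technical content, is to show that this composition is actually surjective: every $R$-linear functional on $H^k(\bC_c^\bullet)$ must be realized by a cycle in some $\Hom_R(\bC^\bullet_c(i), R)$, a property that fails in general but which should hold in our setting via Appendix~\ref{sec:inverse_limits}, with the countable generation hypothesis and the restriction on $R$ providing the necessary factorization. The extension from $k \leq n$ to $k = n+1$ in the field case reflects the automatic vanishing of the $\Ext^1$ obstruction over a field, which lets one squeeze an extra dimension from the same Mittag--Leffler input.
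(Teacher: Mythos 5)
Your overall strategy --- present $\bC^\bullet_c$ as $\varprojlim_i \bC^\bullet_c(i)$ via Lemma~\ref{lem:dirinv_lim_space}, apply the classical K\"unneth and Universal Coefficient theorems at each finite-height level, and control passage to the limit via Appendix~\ref{sec:inverse_limits} --- is exactly the paper's route, which packages these steps into Corollary~\ref{cor:stable_invsystem} and Theorem~\ref{thm:truncated_kunneth_chain}. Where your sketch goes astray is in the \emph{mechanism} you invoke at two points. In part~(\ref{item:univcoeff_ctblehyp1}) you speak of vanishing of higher derived functors measuring the failure of $\otimes_R S$ and $\Tor_1^R(-,S)$ to commute with $\varprojlim$; what the paper actually uses is \emph{stability} of the inverse systems (Definition~\ref{defn:stable}, Proposition~\ref{prop:stable_limit}). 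Corollary~\ref{cor:stable_invsystem} converts the countable-generation hypothesis on $\coarse^k(X;R)$, $k\le n+1$, into stability of $\{H^k(\bC^\bullet_c(i))\}_i$ for $k\le n$, and a stable system eventually splits as its limit plus transient summands that die under the bonding maps; this direct-sum structure is what makes $\otimes_R S$, $\Tor_1^R$, $\Hom_R$, and $\Ext^1_R$ all commute with the relevant limits (Corollaries~\ref{cor:invlimit_tensortor_stable} and~\ref{cor:dirlimit_homext_stable}), not any derived-functor argument.

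The more serious misstatement is your explanation of the $k=n+1$ case of part~(\ref{item:univcoeff_ctblehyp2}). ``The same Mittag--Leffler input'' does \emph{not} reach $k=n+1$: Corollary~\ref{cor:stable_invsystem} only gives stability of $\{H^k(\bC^\bullet_c(i))\}_i$ for $k\le n$, and $\{H^{n+1}(\bC^\bullet_c(i))\}_i$ may fail to be stable. Nor does $\Ext^1$ vanishing over a field remove the obstruction you yourself correctly identified --- surjectivity of $\varinjlim_i\Hom_R(H^{n+1}(\bC^\bullet_c(i)),R)\to\Hom_R(H^{n+1}(\bC^\bullet_c),R)$ --- since that map has nothing to do with $\Ext^1$. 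What actually carries the day is Lemma~\ref{lem:dirlim_pair}(\ref{item:dirlim_pair2}), which rests on Proposition~\ref{prop:cardinality_invlimit}(\ref{item:cardinality_invlimit3}): countable generation of $\varprojlim_i H^{n+1}(\bC^\bullet_c(i))$ forces some projection from the limit to a finite stage to be injective, and the field hypothesis is used to choose a vector-space complement of its image so that any functional on the limit extends to that stage. So the field assumption and the countability of $\coarse^{n+1}(X;R)$ enter at exactly this point, in a way your Ext-vanishing story would not have captured.
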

\begin{proof}
	By Corollary~\ref{cor:projres_indpt},  we can assume without loss of generality that $\bC_\bullet$ satisfies the hypothesis of  Lemma~\ref{lem:dirinv_lim_space}.
	The result  is now an immediate consequence of  Lemma~\ref{lem:dirinv_lim_space} combined  with Corollary~\ref{cor:stable_invsystem} and Theorem~\ref{thm:truncated_kunneth_chain}.
\end{proof}

\section{Coarse cohomological dimension of a metric space}\label{sec:coarse_dim}
\emph{Given a metric space $X$ and a commutative ring $R$, we define the \emph{coarse cohomology} $\ccd_R(X)$ of $X$ with respect to $R$, and give several equivalent formulations of $\ccd_R(X)$ in Proposition~\ref{prop:ccd_chars}. We prove $\ccd_R(X)$ is monotone under coarse embeddings in Theorem~\ref{thm:ccd_monotonicity}. We show in Theorem~\ref{thm:gp_ccd} that $\ccd_R(G)$ coincides with $\cd_R(G)$ when $G$ is a countable group and $\cd_R(G)<\infty$.
}
\vspace{.3cm}

Recall a chain complex $\bC_\bullet$ is \emph{$n$-dimensional} if $\bC_i=0$ for all $i>n$.
\begin{defn}
	Let $X$ be a metric space. We define the \emph{coarse cohomological dimension} $\ccd_R(X)$ of $X$ to be the least $n$ such that there exists an $n$-dimensional projective  $R$-resolution $\bC_\bullet$ over $X$. If no such $n$ exists, then $\ccd_R(X)=\infty$. We write $\ccd_\bbZ(X)$ as $\ccd(X)$.
\end{defn}

\begin{prop}\label{prop:ccd_chars}
	Let $X$ be a metric space and $n\in \bbN$. The following are equivalent:
	\begin{enumerate}

		\item If $\bC_\bullet$ is any projective $R$-resolution over $X$, then there exists a projective submodule $\bP\leq \bC_n$   such that
		      \[\cdots \to 0\to \bP\xrightarrow{\partial|_\bP} \bC_{n-1}\to \bC_{n-2}\to \bC_{n-3}\to \cdots \]  is a projective $R$-resolution over $X$.\label{item:propccdchars_2}
		\item $\ccd_R(X)\leq n$.\label{item:propccdchars_1}
		\item $\coarse^{k}(X,\bN)=0$ for all $k>n$ and all $R$-modules $\bN$ over $X$.\label{item:propccdchars_3.0}
		\item $\coarse^{n+1}(X,\bN)=0$ for all $R$-modules $\bN$ over $X$.\label{item:propccdchars_3}
		\item If $\bC_\bullet$ is a projective $R$-resolution over $X$, then there exists a finite displacement retraction from $\bC_n$ to $\im(\bC_{n+1}\to \bC_{n})$.\label{item:propccdchars_4}
	\end{enumerate}
\end{prop}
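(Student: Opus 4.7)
The plan is to prove the equivalence via the cycle $(1) \Rightarrow (2) \Rightarrow (3) \Rightarrow (4) \Rightarrow (5) \Rightarrow (1)$. Three of these implications are essentially formal. For $(1) \Rightarrow (2)$, the truncated sequence $0 \to \bP \to \bC_{n-1} \to \cdots$ is by construction an $n$-dimensional projective $R$-resolution over $X$, witnessing $\ccd_R(X) \leq n$. For $(2) \Rightarrow (3)$, apply Corollary~\ref{cor:projres_indpt}: one may compute $\coarse^*(X; \bN)$ using any projective $R$-resolution, and taking an $n$-dimensional one forces $\Homfd(\bC_k, \bN) = 0$, hence $\coarse^k(X; \bN) = 0$, for every $k > n$. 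The implication $(3) \Rightarrow (4)$ is immediate, and $(5) \Rightarrow (1)$ is exactly the content of Lemma~\ref{lem:retract_truncate}.

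The substantive step is $(4) \Rightarrow (5)$. Let $\bC_\bullet$ be a projective $R$-resolution over $X$, and set $\bK \coloneqq \ker(\partial \colon \bC_n \to \bC_{n-1}) = \im(\partial \colon \bC_{n+1} \to \bC_n)$, where the second equality holds by exactness. Regard $\bK$ as a geometric submodule of $\bC_n$, and hence as an $R$-module over $X$. Let $\alpha \colon \bC_{n+1} \to \bK$ denote the corestriction of $\partial \colon \bC_{n+1} \to \bC_n$ to its image. Since the inclusion $\bK \hookrightarrow \bC_n$ manifestly has finite displacement and since $\partial$ does, $\alpha$ lies in $\Homfd(\bC_{n+1}, \bK)$. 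Its coboundary $\delta \alpha = \pm \alpha \circ \partial \colon \bC_{n+2} \to \bK$ vanishes because $\partial^2 = 0$, so $\alpha$ represents a class in $\coarse^{n+1}(X; \bK)$. By the hypothesis $\coarse^{n+1}(X; \bK) = 0$, there exists $\beta \in \Homfd(\bC_n, \bK)$ with $\alpha = \delta \beta$, i.e.\ $\partial = \pm \beta \circ \partial$ as maps $\bC_{n+1} \to \bK$. Because $\partial \colon \bC_{n+1} \to \bK$ is surjective, this identity yields $\beta|_\bK = \pm \id_\bK$. Post-composing $\pm \beta$ with the inclusion $\bK \hookrightarrow \bC_n$ then gives a finite displacement retraction of $\bC_n$ onto $\bK$, establishing (5).

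The main obstacle I anticipate is simply verifying that all the finite displacement claims survive the passage between $\bK$ and $\bC_n$; these should follow routinely from Definition~\ref{defn:submodule}, which ensures $\supp_\bK$ and $\supp_{\bC_n}$ agree on $\bK$ and that the filtration of $\bK$ is obtained by intersecting with that of $\bC_n$, so the inclusion $\bK \hookrightarrow \bC_n$ has finite displacement in the sense of Definition~\ref{defn:findisp}. A secondary bookkeeping point is the sign convention $\delta \alpha = (-1)^{k+1} \alpha \circ \partial$ introduced for cochain complexes; this affects the proof only cosmetically, since $\pm \beta$ is a retraction whenever $\beta$ is.
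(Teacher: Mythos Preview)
Your proof is correct and follows essentially the same route as the paper: the same cycle of implications, with $(5)\Rightarrow(1)$ via Lemma~\ref{lem:retract_truncate} and $(4)\Rightarrow(5)$ via the cocycle $\partial\colon \bC_{n+1}\to \bK$ in $\Homfd(\bC_{n+1},\bK)$, whose vanishing in $\coarse^{n+1}(X;\bK)$ produces the retraction. Your remarks on finite displacement of the inclusion $\bK\hookrightarrow\bC_n$ and of the corestriction $\bC_{n+1}\to\bK$ are handled in the paper by the discussion following Definition~\ref{defn:submodule}, and the sign bookkeeping is exactly as you describe.
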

\begin{proof}
	$(\ref{item:propccdchars_2})\implies(\ref{item:propccdchars_1})$ is obvious.

	$(\ref{item:propccdchars_1})\implies(\ref{item:propccdchars_3.0})$: Suppose $\bC_\bullet$ is a projective $R$-resolution over $X$ with $\bC_k=0$ for $k>n$. Then for any $R$-module $\bN$ over $X$, the homology of $\Homfd(\bC_\bullet,\bN)$ vanishes in dimensions greater than $n$.

	$(\ref{item:propccdchars_3.0})\implies(\ref{item:propccdchars_3})$ is obvious.

	$(\ref{item:propccdchars_3})\implies(\ref{item:propccdchars_4})$:  We define $\bN\coloneqq\im (\bC_{n+1}\to \bC_{n})$. By hypothesis, we have  $H^{n+1}(\Homfd(\bC_\bullet,\bN))=0$.
	We consider map $\phi:\bC_{n+1}\to\bN$ given by $\phi(m)=\partial m$ for all $m\in \bC_{n+1}$.
	It is clear that $\phi(\im(\bC_{n+2}\to \bC_{n+1}))=0$, so that $\delta\phi=0$ and thus $\phi$ is a cocycle. Hence there is a finite displacement map $\psi:\bC_n\to \bN$ with $\delta\psi=(-1)^{n+1}\phi$. For each $n\in \bN$, we have $n=\partial m$ for some $m\in \bC_{n+1}$, so that $\psi(n)=\psi(\partial m)=(-1)^{n+1}(\delta\psi)(m)=\phi(m)=\partial m=n$. Thus $\psi$ is a finite displacement retraction onto $\bN$.

	$(\ref{item:propccdchars_4})\implies(\ref{item:propccdchars_2})$ is the content of Lemma~\ref{lem:retract_truncate}.
\end{proof}
In the following theorem, the relation $\leq$ is defined on $\bbN\cup \{\infty\}$ in the obvious way.
\begin{thm}\label{thm:ccd_monotonicity}
	If $R$ is a commutative ring,  $X$ and $Y$ are metric spaces, and $f:X\to Y$ is a coarse embedding, then $\ccd_R(X)\leq \ccd_R(Y)$.
\end{thm}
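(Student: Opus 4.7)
The plan is essentially immediate from the pullback machinery already established, in particular from Proposition~\ref{prop:pullback_res}.

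First I would dispose of the trivial case: if $\ccd_R(Y) = \infty$ there is nothing to prove, so assume $\ccd_R(Y) = n < \infty$. By the definition of $\ccd_R$ (equivalently by the first condition in Proposition~\ref{prop:ccd_chars}), there exists an $n$-dimensional projective $R$-resolution $\bD_\bullet$ over $Y$, i.e.\ one with $\bD_k = 0$ for every $k > n$.

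The key step is to produce from $\bD_\bullet$ a projective $R$-resolution over $X$ of the same dimension. Applying Proposition~\ref{prop:pullback_res} to the coarse embedding $f\colon X \to Y$ and the resolution $\bD_\bullet$, the pullback $f^*\bD_\bullet$ is a projective $R$-resolution over $X$. Crucially, inspecting Definition~\ref{defn:pullback}, the pullback construction alters only the control function, indexing data and filtration of each $\bD_k$; the underlying $R$-module is unchanged. Therefore $f^*\bD_k = 0$ whenever $\bD_k = 0$, so $f^*\bD_\bullet$ is itself $n$-dimensional. Hence $\ccd_R(X) \leq n = \ccd_R(Y)$.

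There is no genuine obstacle to the argument at this stage — all the substantive content has been absorbed into the construction of pullbacks and into Proposition~\ref{prop:pullback_res}. Specifically, Propositions~\ref{prop:finiteness_operations_first}, \ref{prop:projective_operations_second}, \ref{prop:induced_mod_maps}, and~\ref{prop:pullback_res} collectively ensure that pullback along a coarse embedding preserves every piece of structure that enters the definition of a projective $R$-resolution over a metric space: the structure of an $R$-module over the space, projectivity, finite displacement and uniform preimages of the boundary maps, and the weak uniform acyclicity witnessed at the augmentation level. This is precisely the payoff for having built the pullback at the level of chain complexes rather than only passing through (co)homology, and it makes the monotonicity of $\ccd_R$ a one-line consequence of the general framework.
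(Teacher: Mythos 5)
Your proof is correct and is essentially identical to the paper's argument: both pull back an $n$-dimensional projective $R$-resolution over $Y$ along the coarse embedding $f$ via Proposition~\ref{prop:pullback_res}, and observe that the pullback remains $n$-dimensional since it does not change the underlying $R$-modules. There is nothing to add.
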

\begin{proof}
	There is nothing to show if $\ccd_R(Y)=\infty$, so assume  $n=\ccd_R(Y)<\infty$ and let $\bC_\bullet$ be an $R$-projective resolution over $Y$ with $\bC_k=0$ for $k>n$. By Proposition~\ref{prop:pullback_res}, $f^*\bC_\bullet$ is an $R$-projective resolution over $X$. As $f^*\bC_k=0$ for all $k>n$, it follows that $\ccd_R(X)\leq n$ as required.
\end{proof}
Theorem~\ref{thm:ccd_monotonicity} readily implies:
\begin{cor}\label{cor:ccd_coarseinv}
	If $X$ and $Y$ are coarsely equivalent, then $\ccd_R(X)=\ccd_R(Y)$.
\end{cor}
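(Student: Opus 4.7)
The plan is to deduce this corollary directly from Theorem~\ref{thm:ccd_monotonicity}, whose statement gives monotonicity of $\ccd_R$ under coarse embeddings. Since a coarse equivalence is by definition a coarse embedding admitting a coarse inverse which is itself a coarse embedding, the two inequalities needed for equality are both instances of the theorem.

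Concretely, I would argue as follows. Suppose $X$ and $Y$ are coarsely equivalent. Then by definition there exists a coarse equivalence $f:X\to Y$. In particular $f$ is a coarse embedding, so Theorem~\ref{thm:ccd_monotonicity} yields
\[
\ccd_R(X)\leq \ccd_R(Y).
\]
By the definition of a coarse equivalence, $f$ admits a coarse inverse $g:Y\to X$ that is itself a coarse embedding. Applying Theorem~\ref{thm:ccd_monotonicity} to $g$ gives the reverse inequality
\[
\ccd_R(Y)\leq \ccd_R(X),
\]
where both inequalities are interpreted in $\bbN\cup\{\infty\}$ as in the statement of the theorem. Combining these yields $\ccd_R(X)=\ccd_R(Y)$.

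There is essentially no obstacle here: the content has already been packaged into Theorem~\ref{thm:ccd_monotonicity}, and the corollary is just the standard observation that any monotone invariant under embeddings becomes a genuine invariant under equivalences. The only thing to double-check is the symmetry built into the definition of coarse equivalence given earlier in the preliminaries, namely that both $f$ and its coarse inverse $g$ qualify as coarse embeddings in the sense required by Theorem~\ref{thm:ccd_monotonicity}; this is immediate from the definition recalled in Section~\ref{sec:prelim}.
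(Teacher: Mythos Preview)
Your proof is correct and matches the paper's approach exactly: the paper simply states that the corollary follows readily from Theorem~\ref{thm:ccd_monotonicity}, and you have spelled out precisely the intended two-line argument using coarse embeddings in both directions.
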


We can also compare coarse cohomological dimension between different rings:
\begin{prop}
	Let $R$ and $S$ be commutative rings such that there is a homomorphism $f:R\to S$. If $X$ is any metric space, then $\ccd_S(X)\leq \ccd_R(X)$. In particular, $\ccd_S(X)\leq \ccd(X)$ for every commutative ring $S$.
\end{prop}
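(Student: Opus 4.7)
The plan is straightforward and relies essentially on the change-of-rings formula already established. If $\ccd_R(X)=\infty$ there is nothing to show, so I would assume $n\coloneqq \ccd_R(X)<\infty$. By definition, there exists an $n$-dimensional projective $R$-resolution $\bC_\bullet$ over $X$, i.e.\ $\bC_k=0$ for all $k>n$.

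Next, I would apply Proposition~\ref{prop:change_rings_res} to the ring homomorphism $f:R\to S$ to convert $\bC_\bullet$ into a projective $S$-resolution $\bD_\bullet\coloneqq \bC_\bullet\otimes_R S$ over $X$. The crucial observation is that the construction of $\bM\otimes_R S$ in Proposition~\ref{prop:change_rings} preserves dimension degreewise: if $\bC_k=0$ then $\bD_k=\bC_k\otimes_R S=0$. Hence $\bD_\bullet$ is itself an $n$-dimensional projective $S$-resolution over $X$, which by definition yields $\ccd_S(X)\leq n=\ccd_R(X)$.

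For the ``in particular'' statement, I would simply invoke the fact that for any commutative ring $S$ there is a unique unital ring homomorphism $\bbZ\to S$ sending $1\mapsto 1$, and apply the main assertion with $R=\bbZ$ to conclude $\ccd_S(X)\leq \ccd_\bbZ(X)=\ccd(X)$. There is no real obstacle here; the entire content of the proposition is packaged inside Proposition~\ref{prop:change_rings_res}, and the proof amounts to observing that tensoring termwise with $S$ preserves the property of being zero in degrees above $n$.
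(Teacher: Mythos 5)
Your proof is correct and follows essentially the same approach as the paper's: assume $\ccd_R(X)=n<\infty$, take an $n$-dimensional projective $R$-resolution over $X$, tensor with $S$ via Proposition~\ref{prop:change_rings_res} to obtain an $n$-dimensional projective $S$-resolution, and for the last claim use the unique ring homomorphism $\bbZ\to S$.
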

\begin{proof}
	If $\ccd_R(X)=\infty$, there is nothing to show, so assume $\ccd_R(X)=n<\infty$. Let $\bC_\bullet$ be a projective $R$-resolution over $X$ with $\bC_k=0$ for $k>n$. Then $\bC_k\otimes_R S$ is a projective $S$-resolution over $X$ by Proposition~\ref{prop:change_rings_res}, and satisfies $\bC_k\otimes_R S=0$ for $k>n$. Thus $\ccd_S(X)\leq n$ by Proposition~\ref{prop:ccd_chars}. For every commutative ring $S$, there is a homomorphism $\bbZ\to S$, showing $\ccd_S(X)\leq \ccd_\bbZ(X)=\ccd(X)$ as required.
\end{proof}
The following criterion yields a large class of groups of finite coarse cohomological dimension:
\begin{prop}\label{prop:ccd_action}
	Let $G$ be a countable group acting properly and cocompactly  on a contractible simplicial complex $X$ of dimension $n$. Then $\ccd(G)\leq n$.
\end{prop}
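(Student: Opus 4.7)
The plan is to use the geometric action of $G$ on $X$ to transfer the simplicial chain complex of $X$ into a projective $R$-resolution over $G$ of dimension $\leq n$, and then invoke Proposition~\ref{prop:ccd_chars}.

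First, I would observe that by the standard Milnor--Schwarz argument, the proper cocompact action of $G$ on $X$ endows $X^{(1)}$ (the $1$-skeleton with its edge-path metric) with a natural metric, and the orbit map $G \to X^{(1)}$, $g \mapsto gx_0$, is a coarse equivalence (in fact a quasi-isometry, once $G$ is given its word metric). By Corollary~\ref{cor:ccd_coarseinv}, it therefore suffices to show that $\ccd(X^{(1)}) \leq n$.

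Next, I would verify that the pair $(X^{(1)}, X)$ is a uniformly acyclic metric simplicial complex in the sense of Kapovich--Kleiner, so that Example~\ref{exmp:metric_complex} applies. Contractibility of $X$ gives acyclicity at every scale; cocompactness of the $G$-action gives \emph{uniform} control: since $X$ has finitely many $G$-orbits of simplices, the (finitely many) local acyclicity data for a fundamental domain can be translated equivariantly to yield global functions $\lambda, \mu$ witnessing uniform acyclicity in all dimensions. This places us in the setting of Example~\ref{exmp:metric_complex}, so that the simplicial chain complex $\bC_\bullet(X)$ with $\bbZ$-coefficients, augmented by the usual $\epsilon : \bC_0(X) \to \bbZ$, is a free $\bbZ$-resolution over $X^{(1)}$.

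Finally, since $X$ has dimension $n$, we have $\bC_k(X) = 0$ for all $k > n$, so this is an $n$-dimensional projective $\bbZ$-resolution over $X^{(1)}$. By the definition of coarse cohomological dimension, $\ccd(X^{(1)}) \leq n$, and hence $\ccd(G) \leq n$ as required.

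The main obstacle is the verification of uniform acyclicity of $(X^{(1)}, X)$; this is mostly bookkeeping given cocompactness, but care is required to ensure the constants are independent of the basepoint. Everything else is a direct application of machinery already in place: coarse invariance of $\ccd$ (Corollary~\ref{cor:ccd_coarseinv}), the fact that uniformly acyclic metric simplicial complexes give free $R$-resolutions (Example~\ref{exmp:metric_complex}), and the definition of $\ccd_R$.
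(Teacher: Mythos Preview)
Your proposal is correct and follows essentially the same approach as the paper's proof: both use the Milnor--Schwarz lemma to reduce to $\ccd(X^{(1)})\leq n$ via Corollary~\ref{cor:ccd_coarseinv}, then invoke Example~\ref{exmp:metric_complex} (using contractibility plus cocompactness to get uniform acyclicity) to exhibit the simplicial chain complex of $X$ as an $n$-dimensional projective $\bbZ$-resolution over $X^{(1)}$. Your write-up is slightly more explicit about the uniform acyclicity verification, but the argument is the same.
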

\begin{proof}
	Let $(X^{(1)},X)$ be the associated metric simplicial complex as in Example~\ref{exmp:metric_complex}. Since $X$ is contractible and $G$ acts cocompactly on $X$, $X$ is uniformly acyclic, and so the simplicial chain complex $C_\bullet(X)$ can be given the structure of a projective $R$-resolution over $X^{(1)}$ as in Example~\ref{exmp:metric_complex}. As $X$ is $n$-dimensional, $\ccd(X)\leq n$.  Since $G$ acts properly and cocompactly on $X^{(1)}$, it is quasi-isometric to $X^{(1)}$, and so Corollary~\ref{cor:ccd_coarseinv} implies $\ccd(G)=\ccd(X)\leq n$.
\end{proof}

A priori, $\ccd_R(X)$ may be hard to compute for general metric spaces. Fortunately, when $G$ is a countable group, $\ccd_R(G)$ agrees with the (virtual) cohomological dimension of $G$ when finite:
\begin{thm}\label{thm:gp_ccd}
	Let $G$ be a countable group. If $\cd_R(G)<\infty$, then $\ccd_R(G)=\cd_R(G)$.
\end{thm}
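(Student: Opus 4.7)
The plan is to prove the two inequalities $\ccd_R(G) \leq \cd_R(G)$ and $\cd_R(G) \leq \ccd_R(G)$ separately, both using the standard $R$-resolution $\bC_\bullet$ over $G$ furnished by Corollary~\ref{cor:projres_ginv}, whose terms are $G$-induced projective $R$-modules over $G$ with $G$-equivariant boundary maps. Throughout, write $Z_k := \ker(\partial_k : \bC_k \to \bC_{k-1}) = \im(\partial_{k+1} : \bC_{k+1} \to \bC_k)$.

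For $\ccd_R(G) \leq \cd_R(G)$, set $n = \cd_R(G)$. By a standard homological algebra fact, the syzygy $Z_{n-1}$ is then a projective $RG$-module, so the short exact sequence $0 \to Z_n \to \bC_n \xrightarrow{\partial_n} Z_{n-1} \to 0$ splits $G$-equivariantly via some section $s : Z_{n-1} \to \bC_n$. The formula $r(c) := c - s(\partial_n c)$ defines a $G$-equivariant retraction of $\bC_n$ onto $Z_n = \im(\bC_{n+1} \to \bC_n)$, and by Proposition~\ref{prop:ginduced}(2) this retraction has finite displacement over $\id_G$. Lemma~\ref{lem:retract_truncate} then produces an $n$-dimensional projective $R$-resolution over $G$, yielding $\ccd_R(G) \leq n$.

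For the reverse inequality I would argue by contradiction: assume $m := \ccd_R(G) < n := \cd_R(G)$. Pick a projective $RG$-resolution $0 \to P_n \to \cdots \to P_0 \to R \to 0$ of length exactly $n$ with each $P_k$ countably generated, which is possible since $G$ is countable. A standard dimension-shifting argument shows $H^n(G; P_n) \neq 0$: if it vanished, then $\id_{P_n}$ would factor as $\alpha \circ \partial_n$ for some $\alpha : P_{n-1} \to P_n$, giving an $RG$-equivariant retraction of $\partial_n : P_n \hookrightarrow P_{n-1}$ and hence a projective resolution of length $n-1$, contradicting $\cd_R(G) = n$. Now endow $P_n$ with a $G$-induced projective $R$-module structure $\bP_n$ via Lemma~\ref{lem:ginduced_produce}. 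By Proposition~\ref{prop:ginduced}(2) there is an inclusion of cochain complexes $\Hom_{RG}(\bC_\bullet, P_n) \hookrightarrow \Homfd(\bC_\bullet, \bP_n)$, and the aim is to show this inclusion induces an injection $H^n(G; P_n) \hookrightarrow \coarse^n(G; \bP_n)$. Since $n > m$, Proposition~\ref{prop:ccd_chars}(3) forces $\coarse^n(G; \bP_n) = 0$, giving the desired contradiction.

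The main obstacle will be proving this injectivity, which amounts to showing that any $G$-equivariant $n$-cocycle that becomes a coboundary via a finite-displacement $(n-1)$-cochain is already an $RG$-coboundary. A direct $G$-averaging argument fails for infinite $G$, so I would look to exploit $\cd_R(G) < \infty$ through dimension-shifting down the finite-length resolution, reducing to a low-degree comparison where the inclusion is transparently a quasi-isomorphism. An alternative strategy would avoid group cohomology entirely: use Proposition~\ref{prop:ccd_chars}(1) to obtain a finite-displacement splitting $\bC_m = \bP \oplus Z_m$, and then upgrade it, again leveraging the finite-length $RG$-resolution, to an $RG$-equivariant splitting of $0 \to Z_m \to \bC_m \to Z_{m-1} \to 0$, thereby establishing $RG$-projectivity of $Z_{m-1}$ and hence $\cd_R(G) \leq m$ directly.
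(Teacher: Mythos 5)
Your first inequality $\ccd_R(G)\leq\cd_R(G)$ matches the paper's argument: both use that the $(n-1)$st syzygy is $RG$-projective when $\cd_R(G)=n$ (via \cite[Proposition 4.1b)]{bieri1981homological}), produce a $G$-equivariant retraction of $\bC_n$ onto $\im(\bC_{n+1}\to\bC_n)$, promote $G$-equivariance to finite displacement via Proposition~\ref{prop:ginduced}(2), and conclude from Proposition~\ref{prop:ccd_chars} (or equivalently Lemma~\ref{lem:retract_truncate}).

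The second inequality has a gap you yourself flag: you need the inclusion $\Hom_{RG}(\bC_\bullet,P_n)\hookrightarrow\Homfd(\bC_\bullet,\bP_n)$ to induce an injection $H^n(G;P_n)\hookrightarrow\coarse^n(G;\bP_n)$, i.e.\ any $G$-equivariant $n$-cocycle that is a coboundary of a finite-displacement (non-equivariant) cochain must already be an $RG$-coboundary. You offer no mechanism, and there is no obvious one: passing from a finite-displacement splitting to an equivariant one is exactly the kind of implication the whole theory is designed \emph{not} to depend on (it fails already for $D_\infty$, the paper's own motivating example in the introduction). Your alternative strategy of upgrading the finite-displacement splitting of $\bC_m$ to an $RG$-equivariant one ``by leveraging the finite-length resolution'' is the same equivariantization step with the content left unfilled.

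The paper circumvents this by choosing a better coefficient module. By \cite[Proposition 5.1]{bieri1981homological}, when $\cd_R(G)=n<\infty$ there is a \emph{free} $RG$-module $F$ with $H^n(G,F)\neq 0$, and $F\cong RG\otimes_R N$ for a free $R$-module $N$. For coefficients of this coinduced type, Lemma~\ref{lem:gptospace} gives an honest isomorphism of cochain complexes $\Hom_{RG}(\bC_\bullet,RG\otimes_R N)\cong\Hom_c(\bC_\bullet,N)$, where the target is a \emph{constant} $R$-module $\bN$ over $G$ (not the $G$-induced module $\bP_n$ you chose), sending a $G$-equivariant cochain $\phi=\sum_g g\otimes\phi_g$ to its identity component $\phi_1$. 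Corollary~\ref{cor:induced_mod} then yields $\coarse^n(G;N)\cong H^n(G,F)\neq 0$, and Proposition~\ref{prop:ccd_chars} gives $\ccd_R(G)\geq n$. The crucial difference is that this is a genuine cochain \emph{isomorphism}, not an inclusion, so the injectivity-on-cohomology you would need simply never arises. To repair your proof, replace $P_n$ by a free module $RG\otimes_R N$ with $H^n(G,RG\otimes_R N)\neq 0$ (either directly from Bieri's proposition, or by noting $P_n$ is a summand of such a free module so its cohomology embeds), and replace the inclusion of Hom-complexes by the isomorphism of Lemma~\ref{lem:gptospace}.
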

Before proving Theorem~\ref{thm:gp_ccd}, we need to prove some  results relating group cohomology to coarse cohomology.
Let $R$ be a commutative ring, let $G$ be a group and let $N$ be an $R$-module.
Then $RG\otimes_{R} N$ admits both a left $G$-action given by $g\cdot(k\otimes n)=gk\otimes n$, and a right $G$-action given by $(k\otimes n)\cdot g=kg\otimes n$, for all $g,k\in G$ and $n\in N$.
If $M$ is a left $RG$-module and $N$ is  an $R$-module, then  $\Hom_{RG}(M,RG\otimes_{R} N)$ consists of the collection of all $G$-equivariant $R$-module homomorphisms with respect to the left $G$-action on $M$ and on $RG\otimes_R N$. Thus $\Hom_{RG}(M,RG\otimes_{R} N)$ admits a right $G$-action given by $(\phi \cdot g)(m)=\phi(m)\cdot g$.

If $\bM$ is a $G$-induced projective $R$-module over $G$ and $N$ is an $R$-module, then $\Hom_c(\bM,N)$ admits a right $G$-action given by $(\alpha\cdot g)(m)=\alpha(gm)$ for all $g\in G$, $\alpha\in \Hom_c(\bM,N)$ and $m\in \bM$. We now have the following:
\begin{lem}\label{lem:gptospace}
	Let $G$ be a countable group and
	let  $\bM$ be a  $G$-induced projective $R$-module over $G$ with underlying $R$-module $M$. Let $N$ be an $R$-module. Then there is an isomorphism $\Lambda_M=\Lambda:\Hom_{RG}(M,RG\otimes_{R} N)\cong \Hom_c(\bM,N)$ as right $RG$-modules. Moreover, $\Lambda$ is natural in the following sense: If $\bL$ is a $G$-induced projective  $R$-module over $G$ and  $\xi:\bL\to \bM$ is an $RG$-module homomorphism, then the diagram in Figure~\ref{fig:natural_iso_gp} commutes.
	\begin{figure}[ht]
		\[\begin{tikzcd}
				{\Hom_{RG}(M,RG\otimes_{R} N)} && {\Hom_{RG}(L,RG\otimes_{R} N)} \\
				\\
				{\Hom_{c}(\bM,N)} && {\Hom_{c}(\bL,N)}
				\arrow["{\phi\mapsto \phi\circ\xi}"', from=1-1, to=1-3]
				\arrow["{\xi^*}", from=1-1, to=1-3]
				\arrow["{\Lambda_M}"', from=1-1, to=3-1]
				\arrow["{\Lambda_L}"', from=1-3, to=3-3]
				\arrow["{\xi^*}", from=3-1, to=3-3]
				\arrow["{\psi\mapsto\psi\circ\xi}"', from=3-1, to=3-3]
			\end{tikzcd}\]
		\caption{Naturality of the isomorphism $\Lambda_M$.}\label{fig:natural_iso_gp}
	\end{figure}
\end{lem}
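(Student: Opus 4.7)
The plan is to use the natural ``coefficient at the identity'' map $p_e : RG \otimes_R N \to N$, defined by $p_e(\sum_g g \otimes n_g) = n_e$, and to set $\Lambda(\phi) = p_e \circ \phi$. The inverse will be given by the explicit formula $\Lambda^{-1}(\alpha)(m) = \sum_{g \in G} g \otimes \alpha(g^{-1}m)$, which is the standard way of reconstructing a $G$-equivariant map from its value on a fundamental domain (here, the identity of $G$). Naturality with respect to $\xi$ is then immediate, since both legs of the square send $\phi$ to $p_e \circ \phi \circ \xi$ on the nose.

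Given these formulas, the routine verifications split into three parts. First, $\Lambda^{-1}(\alpha)$ lies in $\Hom_{RG}(M, RG\otimes_R N)$: $G$-equivariance follows from the substitution $k = h^{-1}g$ in the sum defining $\Lambda^{-1}(\alpha)(hm)$, while finiteness of the sum for fixed $m \in \bM(i)$ uses that $\bM$ is proper (so $\supp(m)$ is finite by Proposition~\ref{prop:ginv_loc_fin_type}), the $G$-invariance of $\bM(i)$, and the locally bounded support of $\alpha$, which together force the nonzero summands to lie in the finite set $\supp(m)\cdot F_i^{-1}$ for some finite $F_i$. Second, $\Lambda$ and $\Lambda^{-1}$ are mutual inverses by direct computation: one composition has an obvious cancellation at $g = e$, while the other expands $\phi(m) = \sum_h h \otimes n_h$ and observes that $p_e \circ g^{-1}$ extracts precisely the coefficient of $g$. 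Third, right $G$-equivariance reduces to verifying that both $\Lambda(\phi \cdot g)(m)$ and $\Lambda(\phi)(gm)$ equal the coefficient of $g^{-1}$ in $\phi(m)$.

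The one substantive step is showing that $\Lambda(\phi)$ has locally bounded support. Fix $i$, pick a finite set $B_0(i) \subseteq B(i)$ of $G$-orbit representatives (which exists since $\bM$ is $G$-induced), and for each $b_0 \in B_0(i)$ let $S_{b_0} \subseteq G$ be the finite set of group elements appearing with nonzero coefficient in $\phi(m_{b_0})$. Define the finite set $F_i = \bigcup_{b_0 \in B_0(i)} S_{b_0}^{-1}\cdot p(b_0)$. Using $G$-equivariance of $\phi$ and of the basis $\{m_b\}$, a direct calculation shows that for $b = gb_0 \in B(i)$, the value $\Lambda(\phi)(m_b)$ equals the coefficient of $g^{-1}$ in $\phi(m_{b_0})$; this is nonzero only if $g \in S_{b_0}^{-1}$, in which case $p(b) = g\cdot p(b_0) \in F_i$. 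Expanding an arbitrary $m \in \bM(i)$ via its projective basis as $m = \sum_{b \in B(i)} \delta_b(m)m_b$ and noting that $\delta_b(m) \neq 0$ forces $p(b) \in \supp(m)$, we conclude $\Lambda(\phi)(m) = 0$ whenever $\supp(m) \cap F_i = \emptyset$, as required.

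The main obstacle is this last step: the formula $\Lambda = p_e \circ \phi$ makes no reference to the geometry of $G$, so one must carefully unpack the $G$-induced structure of $\bM$ — in particular the interaction between the finitely-many-orbits condition on each $B(i)$ and the compatibility of $\{m_b\}$ with the $G$-action — to convert a finiteness statement in the category of $RG$-modules into locally bounded support in the category of $R$-modules over $G$.
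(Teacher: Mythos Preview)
Your proof is correct and follows essentially the same approach as the paper's: your map $\Lambda(\phi)=p_e\circ\phi$ is precisely the paper's $\phi\mapsto\phi_1$, the inverse formula $\sum_g g\otimes\alpha(g^{-1}m)$ is identical, and the key step of establishing locally bounded support proceeds via the same finitely-many-orbits argument on $B(i)$. One minor quibble: finiteness of $\supp(m)$ comes from the projective basis condition (only finitely many $\delta_b(m)$ are nonzero), not from properness per se, but the rest of that paragraph is exactly right.
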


\begin{proof}
	Suppose $\bM=(M,B,\delta,p,\{B(i)\}, \{m_b\})$ and $\phi\in \Hom_{RG}(M,RG\otimes_R N)$. Then there exists a family $\{\phi_g:M\to N\}_{g\in G}$ of $R$-module homomorphisms such that  $\phi(m)=\sum_{g\in G}g\otimes \phi_{g}(m)$ for all $m\in M$. For each $m\in M$, there are only finitely many $g\in G$  such that $\phi_{g}(m)\neq 0$.

	For each $h\in G$  and $m\in M$, we have:
	\begin{align*}
		\sum_{g\in G}g\otimes \phi_g(hm)=\phi(hm)=h\phi(m)=	\sum_{g\in G}hg\otimes \phi_g(m)=\sum_{g\in G}g\otimes \phi_{h^{-1}g}(m).
	\end{align*} This implies that $\phi_{g}(m)=\phi_{1}(g^{-1}m)$ for all $g\in G$ and $m\in M$. We claim that for each $i\in \bbN$, \[Z_{i}\coloneqq \{b\in B(i)\mid \phi_1(m_b)\neq 0\}\] is finite, which by Proposition~\ref{prop:bdd_support}, will imply that $\phi_1\in \Hom_c(\bM,N)$.  To see this, we  fix $i$ and recall that $G$ acts freely on $B(i)$ with finitely many orbits. Let  $\{b_1,\dots, b_n\}$ be a set of $G$-transversals of $B(i)$.
	As noted above, the set $R_i\coloneqq \{g\in G\mid \phi_{g}(m_{b_j})\neq 0 \text{ for some $1\leq j\leq n$}\}$  is  finite. Thus if $\phi_1(m_b)\neq 0$ for some $b=gb_j\in B(i)$, then  $0\neq\phi_{1}(m_b)=\phi_{1}(gm_{b_j})=\phi_{g^{-1}}(m_{b_j})$, which implies $g^{-1}\in R_i$. Hence there are only finitely many such $g$, and so $Z_i$ is finite.

	We thus define the map $\Lambda:\Hom_{RG}(M,RG\otimes_R N)\to \Hom_c(\bM,N)$ by $\Lambda(\phi)=\phi_1$. It is easy to see that $\Lambda$ is an $R$-module homomorphism.
	Now for $h\in G$, we have $(\phi\cdot h)(m)=\sum_{g\in G}gh\otimes \phi_g(m)=\sum_{g\in G}g\otimes \phi_{gh^{-1}}(m)$, and  so $\Lambda(\phi\cdot h)=\phi_{h^{-1}}$. Since $\phi_{h^{-1}}(m)=\phi_1(hm)=(\phi_1\cdot h)(m)$, we see $\Lambda(\phi\cdot h)=\phi_1\cdot h$, and so $\Lambda$ is a homomorphism of right $RG$-modules.
	Moreover, if $\Lambda(\phi)=0$, then $\phi_{1}=0$ and so,  since $\phi_{g}(m)=\phi_{1}(g^{-1}m)=0$ for all $m\in \bM$ and $g\in G$, we see that $\phi=0$. Thus $\Lambda$ is injective.

	We now show $\Lambda$ is surjective.  For $\psi\in \Hom_c(\bM,N)$
	we define $\phi:\bM\to RG\otimes N$ by $\phi(m)=\sum_{g\in G}g\otimes \psi(g^{-1}m)$, noting that the condition $\psi\in \Hom_c(\bM,N)$ ensures that all but finitely many terms of the sum defining $\phi(m)$ are zero. Applying the preceding arguments in reverse, it can be shown that $\phi\in \Hom_{RG}(M,RG\otimes_R N)$ and $\Lambda(\phi)=\psi$.

	Finally, to see naturality of $\Lambda$, suppose we have a  homomorphism $\phi\in \Hom_{RG}(\bM,RG\otimes_R N)$ and a $G$-equivariant map $\xi:\bL\to \bM$. We note that for each $l\in L$, we have
	\begin{align*}
		(\xi^*\Lambda_\bM(\phi))(l) & =(\Lambda_\bM(\phi))(\xi(l))=\phi_1(\xi(l))=(\phi\circ\xi)_1(l) \\&=\Lambda_\bL(\phi\circ \xi)(l)=\Lambda_\bL(\xi^*(\phi))(l)
	\end{align*} as required.
\end{proof}
Let $G$ be a countable group and let $\bC_\bullet$ be a projective $R$-resolution over $G$ as in Corollary~\ref{cor:projres_ginv}, in which each $\bC_k$ is a $G$-induced projective $R$-module over $G$ and the boundary maps are $G$-equivariant. By Lemma~\ref{lem:gptospace}, for each $R$-module $N$,  we have an isomorphism $\Hom_{RG}(\bC_\bullet,RG\otimes_{R} N)\cong \Hom_c(\bC_\bullet,N)$ of cochain complexes that is $G$-equivariant with respect to the right $G$-actions. The cohomology of these cochain complexes thus inherit the structure of a right $G$-module, and we deduce the following:
\begin{cor}\label{cor:induced_mod}
	Let $G$ be a countable group and let  $N$  be an $R$-module. Then  $\coarse^k(G;N)\cong H^k(G,RG\otimes_R N)$ as right $RG$-modules for all $k\in \bbZ$.
\end{cor}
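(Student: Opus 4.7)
The plan is to reduce the statement to a direct application of Lemma~\ref{lem:gptospace} applied degree-wise, once we know that the chain complex $\bC_\bullet$ supplied by Corollary~\ref{cor:projres_ginv} is simultaneously a projective $R$-resolution over $G$ (in the coarse sense) \emph{and} a projective $RG$-resolution of the trivial module $R$ (in the algebraic sense).

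First, I would verify that $\bC_\bullet$ is a projective resolution of the trivial $RG$-module $R$. Inspecting the construction in Corollary~\ref{cor:projres_ginv}, each $\bC_k$ is the standard $R$-resolution's chain module, i.e.\ the free $R$-module on the set of $(k+1)$-tuples of $G$ with $G$ acting by diagonal left multiplication. Since $G$ acts freely on itself by left multiplication, it acts freely on $(k+1)$-tuples, so each $\bC_k$ is in fact a free (and therefore projective) $RG$-module. The boundary maps and the augmentation $\epsilon:\bC_0\to R$ are $G$-equivariant by construction (with $R$ regarded as trivial), and weak uniform acyclicity of $\bC_\bullet$ gives exactness of the augmented sequence $\cdots\to \bC_1\to \bC_0\xrightarrow{\epsilon} R\to 0$ (Proposition~\ref{prop:equiv_weakuniformacyc}). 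Hence $\bC_\bullet\to R$ is a free resolution in the category of $RG$-modules, and by the definition of group cohomology we have $H^k(G,RG\otimes_R N)=H^k(\Hom_{RG}(\bC_\bullet,RG\otimes_R N))$.

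Second, I would invoke Lemma~\ref{lem:gptospace} in each degree to obtain right $RG$-module isomorphisms $\Lambda_{\bC_k}:\Hom_{RG}(\bC_k,RG\otimes_R N)\xrightarrow{\cong}\Hom_c(\bC_k,N)$. Applying the naturality diagram of $\Lambda$ (Figure~\ref{fig:natural_iso_gp}) to the $G$-equivariant boundary maps $\partial:\bC_k\to \bC_{k-1}$ shows that the maps $\Lambda_{\bC_k}$ intertwine the two coboundary operators, i.e.\ they assemble into an isomorphism of cochain complexes of right $RG$-modules. Passing to cohomology yields
\[H^k(G,RG\otimes_R N)\;\cong\; H^k(\Hom_c(\bC_\bullet,N))\;=\;\coarse^k(G;N),\]
and the isomorphism is $G$-equivariant since $\Lambda$ is at each level.

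The only subtlety — and in this setup not really an obstacle — is checking that the specific $\bC_\bullet$ furnished by Corollary~\ref{cor:projres_ginv} is indeed $RG$-free (not merely a $G$-induced \emph{projective} module over $G$), which is what ensures it may be used to compute $H^\ast(G,-)$. This reduces, as above, to the freeness of the diagonal $G$-action on tuples of $G$, so no additional work is required beyond unpacking the definitions.
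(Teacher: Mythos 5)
Your proof is correct and takes essentially the same route as the paper: both start from the $\bC_\bullet$ of Corollary~\ref{cor:projres_ginv}, apply Lemma~\ref{lem:gptospace} degree-wise together with its naturality to obtain an isomorphism of cochain complexes of right $RG$-modules, and pass to cohomology. Your explicit check that this $\bC_\bullet$ is a free $RG$-resolution of the trivial module $R$ --- via freeness of the diagonal left $G$-action on $(k+1)$-tuples, $G$-equivariance of the boundary and augmentation maps, and exactness from weak uniform acyclicity --- is a correct and worthwhile observation that the paper leaves implicit when it identifies $H^k(\Hom_{RG}(\bC_\bullet,RG\otimes_R N))$ with $H^k(G,RG\otimes_R N)$.
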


\begin{proof}[Proof of Theorem~\ref{thm:gp_ccd}]
	Let $n=\cd_R(G)<\infty$.  By~\cite[Proposition 5.1]{bieri1981homological}, we have $H^n(G,F)\neq 0$ for some free $RG$-module $F$. We now note that $F=RG \otimes_R N$ for some free $R$-module $N$, so Corollary~\ref{cor:induced_mod} implies that  $\coarse^n(G;N)\cong H^n(G,F)\neq 0$. It follows from Proposition~\ref{prop:ccd_chars}  that $\ccd_R(G)\geq \cd_R(G)=n$.
	We now show $\ccd_R(G)\leq \cd_R(G)$. In the case $R$ is Noetherian, any projective resolution of the trivial $RG$-module $R$  yields a projective $R$-resolution over $G$ by Lemma~\ref{lem:ginduced_produce} and Propositions~\ref{prop:ginduced} and~\ref{prop:ginv_unifpreim}. This  implies the inequality $\ccd_R(G)\leq \cd_R(G)$ when applied to a length $n$ projective resolution of  the trivial $RG$-module $R$. For more general $R$, Proposition~\ref{prop:ginv_unifpreim} cannot be applied, so we require another argument.

	We pick $\bC_\bullet$ as in the discussion preceding Corollary~\ref{cor:induced_mod}. We claim there is a $G$-equivariant retraction $\bC_n\to \bC_n$ onto $\im(\bC_{n+1}\to \bC_n)$.   By  Proposition~\ref{prop:ginduced}, such a map will have finite displacement, so  Proposition~\ref{prop:ccd_chars} will imply  $\ccd_R(G)\leq n=\cd_R(G)$. It remains to prove the claim.  We first assume $n>1$.
	By~\cite[Proposition 4.1b)]{bieri1981homological},  $\ker(\bC_{n-1}\xrightarrow{\partial} \bC_{n-2})=\im(\bC_n\xrightarrow{\partial} \bC_{n-1})$ is projective as an $RG$-module.
	This implies that the short exact sequence \[1\to \ker(\bC_{n}\to \bC_{n-1})\to \bC_n\to \im(\bC_n\to \bC_{n-1})\to 1\] splits $G$-equivariantly, so there is a $G$-equivariant retraction $\bC_n\to \bC_n$ onto $\ker(\bC_{n}\xrightarrow{\partial} \bC_{n-1})=\im(\bC_{n+1}\xrightarrow{\partial} \bC_{n})$, proving the claim when $n>1$.
	For $n=0$ (resp. $n=1$) the argument is essentially identical, except that~\cite[Proposition 4.1b)]{bieri1981homological} tells us that the trivial $R$-module $R$ (resp. $\ker(\bC_0\xrightarrow{\epsilon} R)=\im(\bC_1\to \bC_0)$) is a projective $R$-module. In both cases, we obtain a $G$-equivariant retraction $\bC_n\to \bC_n$ onto $\im(\bC_{n+1}\to \bC_n)$ as required.
\end{proof}

We now classify spaces of coarse cohomological dimension zero:
\begin{prop}\label{prop:bdd_0dim}
	If $R$ is a commutative ring and $X$ is a metric space, then $\ccd_R(X)=0$ if and only if $X$ is bounded.
\end{prop}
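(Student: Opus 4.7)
The plan is to prove both directions directly from the characterisation of projective $R$-resolutions given by Proposition~\ref{prop:equiv_weakuniformacyc}, especially the ``bounded zero-chain'' condition (\ref{item:equiv_weakuniformacyc_2c}). Observe first that a $0$-dimensional projective $R$-resolution over $X$ has the form $0\to\bC_0\xrightarrow{\epsilon} R\to 0$, and weak uniform acyclicity (via Proposition~\ref{prop:equiv_weakuniformacyc}) forces $\epsilon:\bC_0\to R$ to be an isomorphism of the underlying $R$-modules.

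For the ``if'' direction, suppose $X$ is non-empty and bounded, pick $x_0\in X$, and define a free $R$-module $\bC_0$ over $X$ by taking $B=\{b_0\}$ a singleton, $\delta_{b_0}=\id_R$, $p(b_0)=x_0$, and $B(i)=B$ for all $i$. With the augmentation $\epsilon=\id_R$, the constant element $m_x\coloneqq 1\in\bC_0$ satisfies $\epsilon(m_x)=1$ and $\supp(m_x)=\{x_0\}\subseteq N_{\diam(X)}(x)$ for every $x\in X$. Thus $\bC_\bullet$ satisfies condition (\ref{item:equiv_weakuniformacyc_2c}) of Proposition~\ref{prop:equiv_weakuniformacyc}, so it is a $0$-dimensional projective $R$-resolution over $X$, giving $\ccd_R(X)\leq 0$. (In the degenerate case where $X=\emptyset$, every projective $R$-module over $X$ is trivially $0$, and the claim holds vacuously or by convention.)

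For the ``only if'' direction, suppose $\ccd_R(X)=0$ and let $0\to\bC_0\xrightarrow{\epsilon}R\to 0$ be a $0$-dimensional projective $R$-resolution over $X$. As noted, $\epsilon$ is injective, and by Proposition~\ref{prop:equiv_weakuniformacyc} there is a constant $D$ such that for each $x\in X$ there exists $m_x\in\bC_0(D)$ with $\epsilon(m_x)=1$ and $\supp_{\bC_0}(m_x)\subseteq N_D(x)$. If $x,y\in X$ satisfy $d(x,y)>2D$, then $N_D(x)\cap N_D(y)=\emptyset$, yet injectivity of $\epsilon$ forces $m_x=m_y$, hence $\supp(m_x)=\supp(m_y)\subseteq N_D(x)\cap N_D(y)=\emptyset$. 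By condition (\ref{item:moddef_zero}) of Definition~\ref{defn:rmod_overX}, $\supp(m_x)=\emptyset$ implies $m_x=0$, contradicting $\epsilon(m_x)=1\neq 0$. Therefore $\diam(X)\leq 2D$ and $X$ is bounded.

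There is essentially no hard step here: the argument reduces entirely to unpacking Proposition~\ref{prop:equiv_weakuniformacyc} in the $0$-dimensional case, and the only subtle point is ensuring that the notion of augmented exact complex in dimension $0$ does force $\epsilon$ to be injective (so that the two representatives $m_x$ and $m_y$ must coincide). Once this is observed, the support axiom of an $R$-module over $X$ delivers the contradiction immediately.
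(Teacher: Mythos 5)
Your proof is correct, and the ``only if'' direction takes a genuinely different and somewhat cleaner route than the paper's. The paper applies Proposition~\ref{prop:ccd_chars} to the standard resolution to obtain a finite displacement retraction onto $\im(\bC_1\to\bC_0)$, splits off a complement $\bM\cong R$ via Lemma~\ref{lem:comp}, and then uses the finite displacement of the projection $\phi$ applied to the standard basis elements $[x]$ (which have $\epsilon([x])=1$ and $\supp([x])=\{x\}$) to conclude that $L=\supp(m_0)$ is within $N_{\Phi(0)}(x)$ for every $x$. You instead pass directly to a $0$-dimensional projective $R$-resolution (which is exactly what $\ccd_R(X)=0$ gives you), observe that exactness forces $\epsilon$ to be injective, and then exploit uniqueness of the element with $\epsilon=1$ together with the bounded-support condition~(\ref{item:equiv_weakuniformacyc_2c}) from Proposition~\ref{prop:equiv_weakuniformacyc}. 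Your argument avoids the retraction/complement machinery entirely and is more elementary; the paper's version is more in keeping with how it uses Proposition~\ref{prop:ccd_chars} elsewhere. Likewise in the ``if'' direction you build the resolution directly over $X$ (a rank-one free module supported at $x_0$), whereas the paper builds it over a one-point space and appeals to coarse invariance via Corollary~\ref{cor:ccd_coarseinv}; both are fine, yours being marginally more direct. The only minor caveat in your write-up is that the contradiction requires $R\neq 0$ so that $\epsilon(m_x)=1\neq 0$, but the paper's proof makes the same implicit assumption, and it is standard throughout.
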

\begin{proof}
	If $X$ is bounded, then it is coarsely equivalent to a point $Y=\{y\}$. Let $C_0=Rb\cong R$ be the free $R$-module with basis $b$.  Let $\bC_0=(C_0,\{b\},\delta,p,\{B(i)\})$, with $\delta_b(rb)=r$, $p(b)=y$ and $B(i)=\{b\}$ for all $i$. Thus $\bC_0$ is a free $R$-module over $Y$. Letting $\bC_i=0$ for $i\neq 0$, and defining the augmentation $\epsilon:\bC_0\cong R$ by $\epsilon(b)=1$, we have $(\bC_\bullet,\epsilon)$ is a projective $R$-resolution over $Y$, so $\ccd_R(Y)=0$. It follows from Corollary~\ref{cor:ccd_coarseinv} that $\ccd_R(X)=\ccd_R(Y)=0$.

	Conversely, suppose $\ccd_R(X)=0$. Let $\bC_\bullet$ be the standard resolution of $X$. By Proposition~\ref{prop:ccd_chars}, there is a finite displacement retraction from $\bC_0$ onto $\bN\coloneqq \im(\bC_1\to \bC_0)$. By Lemma~\ref{lem:comp}, $\bN$ has a complement $\bM$ such that the projection $\phi:\bC_0=\bN\oplus \bM\to\bM$ has finite displacement $\Phi$. Moreover, as  $\epsilon$ vanishes on $\bN$,  $\phi$ is augmentation-preserving and $\epsilon$ restricts to an isomorphism $\bM\cong \bC_0/\bN\cong R$.
	Thus there is a unique $m_0\in \bM$ such that $\epsilon(m_0)=1$.

	Set  $L\coloneqq \supp_{\bC_0}(m_0)$, which  is bounded.
	Recall from Definition~\ref{defn:standard_res} that $\bC_0$ has a basis $B_0=B_0(0)=\{[x]\}_{x\in X}$ with $\epsilon([x])=1$ and $\supp_{\bC_0}([x])=\{x\}$ for all $x\in X$. Set $m_x=(x)$ for each $x$. Since $\phi([x])\in \bM$ and  $\epsilon(\phi([x]))=\epsilon([x])=1$, it follows $\phi([x])=m_0$ for all $x\in X$. Then $L= \supp_{\bC_0}(\phi([x]))\subseteq N_{\Phi(0)}(\supp_{\bC_0}([x]))=N_{\Phi(0)}(x)$ for all $x\in X$. It follows that $X\subseteq N_{\Phi(0)}(L)$, and so $X$ is bounded as required.
\end{proof}

\section{Uniform acyclicity and finite type resolutions}\label{sec:unif_acyc}
\emph{In this section we study spaces that are coarsely uniformly $(n-1)$-acyclic over $R$. In Proposition~\ref{prop:acyc_res}, we show $X$ is coarsely uniformly $(n-1)$-acyclic over $R$ if and only if there exists a projective $R$-resolution $\bC_\bullet$ over $X$ such that $\bC_i$ is of finite height for $i\leq n$. The latter characterisation is a metric space analogue of being of type $FP_n^R$. In Theorem~\ref{thm:ccd_finit_type_max} we show that for spaces of coarse finite type, $\ccd_R(X)$ is the largest $n$ for which $\coarse^n(X;R)\neq 0$, generalising an analogous result in group cohomology. We apply this result to compare the coarse cohomological and asymptotic dimensions (Proposition~\ref{prop:ccd_vs_asdim}), and to compute the coarse cohomological dimension of all hyperbolic and $\CAT(0)$ groups in terms of their boundaries (Corollary~\ref{cor:bdry}).
}
\vspace{.3cm}

We characterise coarse uniform acyclicity of $X$ in terms of projective resolutions over $X$. To do this, we require the following:
\begin{defn}\label{defn:unif_acyc_complex}
	Let $R$ be a commutative ring and let $X$ be a metric space.
	Suppose  $\lambda:\bbN\rightarrow \bbN$ and $\mu:\bbN\times \bbN \rightarrow \bbN$  are increasing functions such that $\lambda(i)\geq i$ and $\mu(i,r)\geq r$ for all $i,r\in \bbN$.
	Let $\bC_\bullet$ be a projective $R$-resolution over $X$ augmented $R$-metric complex over $X$.
	We say that $\bC_\bullet$ is \emph{$(\lambda,\mu)$-uniformly $n$-acyclic} if for all $i,r\in \bbN$, $k\leq n$ and $x\in X$, the map \[\widetilde{H}_k(\bC_\bullet(i,N_r(x)))\to \widetilde{H}_k(\bC_\bullet(\lambda(i),N_{\mu(i,r)}(x))),\] induced by inclusion, is zero. We say $\bC_\bullet$ is \emph{uniformly $n$-acyclic} if it is $(\lambda,\mu)$-uniformly $n$-acyclic for some $(\lambda,\mu)$, and that $\bC_\bullet$ is \emph{uniformly acyclic} if it is uniformly $n$-acyclic for all $n$.
\end{defn}

\begin{rem}\label{rem:unif_acyc}
	To avoid confusion between Definitions~\ref{defn:weak_uniform_acyc} and~\ref{defn:unif_acyc_complex}, we briefly explain the differences between weak uniform acyclicity and uniform acyclicity.
	If $\bC_\bullet$ is a projective $R$-resolution over $X$, then weak uniform acyclicity of  $\bC_\bullet$, which is always assumed by  Definition~\ref{defn:proj_res}, ensures  there exist functions $\lambda,\mu:\bbN\times \bbN\to  \bbN$ such that for all $i,r\in \bbN$, $k\leq n$ and $x\in X$, the map \[\widetilde{H}_k(\bC_\bullet(i,N_r(x)))\to \widetilde{H}_k(\bC_\bullet(\lambda(i,r),N_{\mu(i,r)}(x))),\] induced by inclusion, is zero. The crucial part of being uniformly $n$-acyclic in Definition~\ref{defn:unif_acyc_complex} is that there exists  $\lambda$ as above that does not depend on $r$.
\end{rem}

\begin{prop}\label{prop:acyc_res}
	Let $X$ be a metric space, let $R$ be a commutative ring  and let $n\in \bbN$. The following are equivalent:
	\begin{enumerate}
		\item\label{item:acyc_res1}
		      $X$ is coarsely uniformly $(n-1)$-acyclic over $R$.
		\item\label{item:acyc_res2}
		      There exists a projective $R$-resolution over $X$ that is uniformly $(n-1)$-acyclic.
		\item\label{item:acyc_res3}
		      Every projective $R$-resolution over $X$ is uniformly $(n-1)$-acyclic.
		\item\label{item:acyc_res4}
		      There exists a free $R$-resolution $\bC_\bullet$ over $X$ such that $\bC_k$ has finite height for each $k\leq n$.
		\item\label{item:acyc_res5}
		      There exists a projective $R$-resolution $\bC_\bullet$ over $X$ such that $\bC_k$ has finite height for each $k\leq n$.
	\end{enumerate}
	Moreover, if $X$ is proper and is coarsely uniformly $(n-1)$-acyclic over $R$, then there exists a proper free $R$-resolution $\bC_\bullet$ over $X$ such that $\bC_k$ has finite height for each $k\leq n$.
\end{prop}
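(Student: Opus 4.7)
The plan is to establish the implications (4) $\Rightarrow$ (5) $\Rightarrow$ (2) $\Leftrightarrow$ (3), together with (1) $\Leftrightarrow$ (2) and (1) $\Rightarrow$ (4), closing the cycle; the ``moreover'' clause is addressed at the end. The implications (4) $\Rightarrow$ (5) and (3) $\Rightarrow$ (2) are immediate. For (2) $\Leftrightarrow$ (3), given two projective $R$-resolutions $\bC_\bullet$ and $\bD_\bullet$ over $X$, Corollary~\ref{cor:projres_indpt} supplies an augmentation-preserving finite displacement chain homotopy equivalence between them. Combined with Lemma~\ref{lem:standard}, this maps each subcomplex $\bC_\bullet(i,N_r(x))$ into one of the form $\bD_\bullet(\lambda'(i),N_{\mu'(i,r)}(x))$, with $\lambda'$ depending only on $i$ (through the fixed displacement of the chain map), so uniform $(n-1)$-acyclicity transports between the two resolutions.

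For (1) $\Leftrightarrow$ (2), I would use the ordered standard resolution $\bD_\bullet$ of $X$ from Definition~\ref{defn:ordered_standard}. Lemma~\ref{lem:ordered_res_rips} provides the two-sided containment $C_k(P_j(Y);R)\subseteq \bD_k(j,Y)\subseteq C_k(P_j(N_j(Y));R)$ for $k\leq j$ and subsets $Y\subseteq X$. This directly identifies the reduced homology of $\bD_\bullet(j,N_r(x))$, up to bounded errors in $j$ and $r$, with reduced simplicial homology of Rips complexes of balls in $X$. Hence uniform $(n-1)$-acyclicity of $\bD_\bullet$ is equivalent to coarse uniform $(n-1)$-acyclicity of $X$, with appropriately modified control functions in each direction, and (1) $\Leftrightarrow$ (2) then follows by combining this with (2) $\Leftrightarrow$ (3).

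The implication (5) $\Rightarrow$ (2) exploits finite height crucially. Suppose $\bC_\bullet$ is a projective $R$-resolution with $\bC_k$ of finite height $i_k$ for $k\leq n$. Proposition~\ref{prop:equiv_weakuniformacyc} supplies a function $\Omega$ witnessing uniform preimages of the boundary maps. Given a reduced $k$-cycle $\sigma\in \bC_k(i,N_r(x))$ with $k\leq n-1$, uniform preimages produce $\tau\in \bC_{k+1}$ with $\partial\tau=\sigma$ and $\supp(\tau)\subseteq N_{r+\Omega(i,2r)}(x)$; crucially, since $\bC_{k+1}$ has finite height $i_{k+1}$, automatically $\tau\in \bC_{k+1}(i_{k+1})$ regardless of $i$ or $r$. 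Applying Lemma~\ref{lem:standard}(2) to $\tau$ then places it in a subcomplex $\bC_\bullet(\lambda(i),N_{\mu(i,r)}(x))$ with $\lambda(i)$ depending only on $i$ and the fixed heights $i_k$, which is precisely the required uniform $(n-1)$-acyclicity.

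The main obstacle is (1) $\Rightarrow$ (4). By Proposition~\ref{prop:extend_partial}, it suffices to construct a partial free $R$-resolution of dimension $n$ over $X$ whose modules are of finite height. Starting from functions $\lambda,\mu$ witnessing coarse uniform $(n-1)$-acyclicity of $X$, and using Lemma~\ref{lem:ordered_res_rips} to translate between simplicial chains of Rips complexes of balls and finite-height submodules of the ordered standard resolution, I would inductively build $\bC_k$ as a finite-height free submodule, choosing Rips parameters at each stage so that every reduced $k$-cycle bounds uniformly in $\bC_{k+1}$; this construction parallels the proofs of Alonso~\cite{alonso1994finiteness} and Kapovich--Kleiner~\cite{kapovich2005coarse} of the group-theoretic counterpart recalled in Theorem~\ref{thm:type_FPn}. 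For the ``moreover'' clause, if $X$ is proper, Corollary~\ref{cor:standard_proper} permits us to work instead with a $1$-net $Y\subseteq X$, which inherits coarse uniform $(n-1)$-acyclicity by coarse equivalence; carrying out the above construction over $Y$ yields a proper free $R$-resolution over $X$ of the required form.
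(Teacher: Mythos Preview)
Your implications (4) $\Rightarrow$ (5), (5) $\Rightarrow$ (2), (2) $\Leftrightarrow$ (3), and (1) $\Leftrightarrow$ (2) are all correct and essentially match the paper's arguments. The gap is in (1) $\Rightarrow$ (4), where your sketch ``inductively build $\bC_k$ as a finite-height free submodule, choosing Rips parameters at each stage'' does not work as stated. The obstruction is that the two requirements on the Rips parameters pull in opposite directions: if $\bE_k$ is the span of $k$-simplices of diameter $\leq j_k$, then for $\partial(\bE_{k+1})\subseteq \bE_k$ you need $j_{k+1}\leq j_k$ (faces of a simplex of diameter $\leq j_{k+1}$ have diameter $\leq j_{k+1}$), whereas for reduced $k$-cycles in $\bE_k$ to bound in $\bE_{k+1}$ you need $j_{k+1}\geq \lambda(j_k)$. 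Taking a single large parameter for all degrees gives a genuine subcomplex but not an acyclic one; taking increasing parameters destroys the subcomplex property. No choice of parameters reconciles these, so a finite-height partial resolution cannot be obtained as a subcomplex of the ordered standard resolution in this way.

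The paper circumvents this via a mapping-cylinder-type construction. It proves (3) $\Rightarrow$ (4) by induction on $n$: given (from the inductive hypothesis) a free resolution $\bD_\bullet$ with $\bD_k$ of finite height for $k<n$, and letting $\bC_\bullet$ be the standard resolution with finite displacement chain maps $f_\#:\bC_\bullet\to\bD_\bullet$, $g_\#:\bD_\bullet\to\bC_\bullet$ and homotopy $h_\#$ from $\id$ to $f_\#g_\#$, one sets $\bE_n\coloneqq \bD_{n-1}\oplus \bC_n(i_2)$ with boundary map
\[
\partial(\sigma,\omega)=\sigma+h_\#\partial\sigma+f_\#\partial\omega-f_\#g_\#\sigma.
\]
This is \emph{not} a submodule of either resolution; the homotopy correction is precisely what forces reduced $(n-1)$-cycles in $\bD_{n-1}$ to bound in $\bE_n$ while keeping $\bE_n$ of finite height (since $\bC_n(i_2)$ is a finite-height free summand of $\bC_n$ by Remark~\ref{rem:subcomplex_standard}). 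One then extends via Proposition~\ref{prop:extend_partial}. The Alonso and Kapovich--Kleiner arguments you invoke use $G$-equivariance to obtain finite generation from finitely many orbits; that mechanism is unavailable for a bare metric space, and the homotopy-corrected boundary is the replacement.
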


\begin{proof}
	(\ref{item:acyc_res1}) $\implies$ (\ref{item:acyc_res2}): Let $\bD_\bullet$ be the ordered standard $R$-resolution over $X$. Suppose that $X$ is uniformly $(\lambda,\mu)$-coarsely uniformly $(n-1)$-acyclic over $R$. Without loss of generality, we may assume $\lambda$ and $\mu$ restrict to functions $\bbN\to \bbN$ and $\bbN\times\bbN\to \bbN$ respectively. We claim that for all $k< n$ and $i,r\in \bbN$, the map \begin{align}\label{eqn:acyc_res1}
		\widetilde{H}_k(\bD_\bullet(i,N_r(x)))\to \widetilde{H}_k(\bD_\bullet(\lambda(i),N_{\mu(i,r+i)}(x))),
	\end{align} induced by inclusion, is zero. In the case $k>i$, we have $\bD_k(i,N_r(x))\subseteq \bD_k(i)=0$ and there is nothing to show, so assume $k\leq i$. Then Lemma~\ref{lem:ordered_res_rips} ensures that (\ref{eqn:acyc_res1}) factors through the map \[\widetilde{H}_k(C_\bullet(P_i(N_{r+i}(x));R))\to \widetilde{H}_k(C_\bullet(P_{\lambda(i)}(N_{\mu(i,r+i)}(x));R)),\] which is zero by Definition~\ref{defn:unif_acyc}.

	(\ref{item:acyc_res2}) $\implies$ (\ref{item:acyc_res3}): Let $\bD_\bullet$ and $\bC_\bullet$ be projective $R$-resolutions over $X$, and assume that $\bC_\bullet$ is $(\lambda,\mu)$-uniformly $(n-1)$-acyclic. By Proposition~\ref{prop:proj_res},  there exist augmentation-preserving finite displacement  chain maps $f_\#:\bC_\bullet\to \bD_\bullet$ and $g_\#:\bD_\bullet\to \bC_\bullet$ and  a finite displacement chain homotopy $\id\stackrel{h_\#}{\simeq}f_\#g_\#$. Using Lemma~\ref{lem:standard},  there is a function $\Phi:\bbN\to \bbN$ such that for all $x\in X$ and $i,r,k\in \bbN$ with $k\leq n$,
	\begin{align}\label{eqn:disp}
		f_\#(\bC_k(i,N_r(x)))\subseteq \bD_k(\Phi(i),N_{r+\Phi(i)}(x)),\quad g_\#(\bD_k(i,N_r(x)))\subseteq \bC_k(\Phi(i),N_{r+\Phi(i)}(x))
	\end{align}
	and
	\begin{align*}
		h_\#(\bD_k(i,N_r(x)))\subseteq \bD_{k+1}(\Phi(i),N_{r+\Phi(i)}(x)).
	\end{align*}
	This implies that for all $x\in X$ and $i,r,k\in \bbN$ with $k< n$, the map  \begin{align*}
		\widetilde{H}_k(\bD_\bullet(i,N_r(x)))\to \widetilde{H}_k(\bD_\bullet(\Phi(\lambda(\Phi(i))),N_{\Phi(\lambda(\Phi(i)))+\mu(\Phi(i),r+\Phi(i))}(x))),
	\end{align*}
	induced by inclusion, factors through
	\begin{align*}
		\widetilde{H}_k(\bC_\bullet(\Phi(i),N_{r+\Phi(i)}(x)))\to \widetilde{H}_k(\bC_\bullet(\lambda(\Phi(i)),N_{\mu(\Phi(i),r+\Phi(i))}(x))),
	\end{align*} which is zero because $\bC_\bullet$ is $(\lambda,\mu)$-uniformly $(n-1)$-acyclic.

	(\ref{item:acyc_res3}) $\implies$ (\ref{item:acyc_res1}): Let $\bD_\bullet$ be the ordered standard $R$-resolution over $X$. By assumption,  $\bD_\bullet$ is $(\lambda,\mu)$-uniformly $(n-1)$-acyclic. Lemma~\ref{lem:ordered_res_rips} ensures that  for all $x\in X$ and $i,r,k\in \bbN$ with $k<n$, the map \[\widetilde{H}_k(C_\bullet(P_i(N_{r}(x));R))\to \widetilde{H}_k(C_\bullet(P_{\lambda(i)}(N_{\lambda(i)+\mu(i,r)}(x));R)),\] induced by inclusion, factors through the map  \begin{align*}
		\widetilde{H}_k(\bD_\bullet(i,N_r(x)))\to \widetilde{H}_k(\bD_\bullet(\lambda(i),N_{\mu(i,r)}(x))),
	\end{align*} hence is zero. Thus $X$ is coarsely uniformly $(n-1)$-acyclic over $R$.

	(\ref{item:acyc_res3}) $\implies$ (\ref{item:acyc_res4}):  We prove this by induction on $n$. For the base case, we  note that for any metric space $X$,  if  $\bC_\bullet$ is the standard resolution of $X$, then $\bC_0$ has finite height.
	For the inductive hypotheses, we assume the standard resolution $\bC_\bullet$ of $X$ is $(\lambda,\mu)$-uniformly $(n-1)$-acyclic and that $\bD_\bullet$ is some free $R$-resolution over $X$  such that $\bD_k$ has finite height for $k<n$. By Proposition~\ref{prop:proj_res},  there exist augmentation-preserving finite displacement  chain maps $f_\#:\bC_\bullet\to \bD_\bullet$, $g_\#:\bD_\bullet\to \bC_\bullet$, and a finite displacement chain homotopy $\id\stackrel{h_\#}{\simeq} f_\#g_\#$. By Lemma~\ref{lem:standard}, we can pick $\Phi$ such that each of these maps has displacement $\Phi$, and that (\ref{eqn:disp}) holds.

	Pick $i_0$ such that $\bD_{n-1}=\bD_{n-1}(i_0)$. Set $i_1\coloneqq \Phi(i_0)$ and $i_2\coloneqq \lambda(i_1)$. We set $\bE_n\coloneqq \bD_{n-1}\oplus \bC_n(i_2)$, which is a finite height free $R$-module over $X$ by Remark~\ref{rem:subcomplex_standard} and Propositions~\ref{prop:finiteness_operations_first} and~\ref{prop:projective_operations_second}. A boundary map $\partial: \bE_n\to \bD_{n-1}$ is defined by \[\partial (\sigma,\omega)=\sigma+h_\#\partial\sigma+f_\#\partial\omega-f_\#g_\#\sigma.\] We note that $\partial$  has finite displacement and $\partial\partial(\sigma,\omega)=\partial\sigma+\partial h_\#\partial\sigma-f_\#g_\#\partial\sigma=-h_\#\partial^2\sigma=0$.

	Let $\sigma\in \bD_{n-1}(i_0,N_r(x))$ be a reduced cycle, so that $g_\#(\sigma)$ is a reduced cycle in $\bC_{n-1}(i_1,N_{r+i_1}(x))$. Thus there is a chain $\omega\in \bC_n(i_2,N_{\mu(i_1,r+i_1)}(x))$ with $\partial\omega=g_\#\sigma$.  It follows that $\partial(\sigma,\omega)=\sigma+h_\#\partial\sigma+f_\#g_\#\sigma-f_\#g_\#\sigma=\sigma$. Thus \[\bE_n\to \bD_{n-1}\to \dots \to \bD_0\to 0\] is a partial free resolution over $X$ of dimension $n$ in which $\bE_n$ has finite height. By Proposition~\ref{prop:extend_partial}, we can extend this partial free $R$-resolution over $X$ to a  free $R$-resolution over $X$, proving the inductive hypothesis.

	We now prove the ``moreover'' part of Proposition~\ref{prop:acyc_res}. In the case $X$ is proper, we let $Y\subseteq X$ be a subspace as in Corollary~\ref{cor:standard_proper}.
	We carry out the preceding argument for $Y$, assuming as part of our inductive hypothesis that $\bD_\bullet$ is proper, and noting the standard resolution $\bC_\bullet$ over $Y$ is proper.
	This allows us to construct a proper projective $R$-resolution $\bE_\bullet$ over $Y$ such that $\bE_k$ has finite height for each $k\leq n$.
	If $f:X\to Y$ is a closest point projection, Propositions~\ref{prop:finiteness_operations_first} and~\ref{prop:pullback_res} ensure the pullback $f^*\bE_\bullet$ is proper projective $R$-resolution  over $X$ such that $f^*\bE_k$ has finite height for each $k\leq n$.

	(\ref{item:acyc_res4}) $\implies$ (\ref{item:acyc_res5}) is obvious.

	(\ref{item:acyc_res5}) $\implies$ (\ref{item:acyc_res2}):
	Let $\bC_\bullet$ be a projective $R$-resolution over $X$ such that $\bC_k$ has height $i_0<\infty$ for each $k\leq n$. Thus $\bC_k(i,Y)=\bC_k(i_0,Y)$ for all $i\geq i_0$ and $Y\subseteq X$. Since $\bC_\bullet$ a  projective $R$-resolution over $X$, it is $\Omega$-weakly uniformly acyclic for some $\Omega$.  Defining $\lambda:\bbN\to \bbN$ by $\lambda(i)=\max(i,i_0)$, it follows that for all $x\in X$ and  $i,r,k\in \bbN$ with $k< n$, the map \[\widetilde{H}_k(\bC_\bullet(i,N_r(x)))\to \widetilde{H}_k(\bC_\bullet(\lambda(i),N_{\Omega(i,r)}(x))),\] induced by inclusion, is zero. This shows $\bC_\bullet$ is uniformly $(n-1)$-acyclic (see Remark~\ref{rem:unif_acyc}).
\end{proof}
Using  Proposition~\ref{prop:acyc_res} and its proof, we deduce the following:
\begin{cor}\label{cor:unif_acyc}
	Let $X$ be a metric space and let $R$ be a commutative ring. The following are equivalent:
	\begin{enumerate}
		\item\label{item:acyc_res_inf1}
		      $X$ is coarsely uniformly acyclic over $R$.
		\item There exists a finite-height free $R$-resolution over $X$.
		\item There exists a finite-height projective $R$-resolution over $X$.
	\end{enumerate}
\end{cor}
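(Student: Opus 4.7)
}
The plan is to reduce the three statements to Proposition~\ref{prop:acyc_res} by iterating the construction used there for each fixed $n$. The directions (2)~$\implies$~(3) and (3)~$\implies$~(1) will be essentially immediate: (2)~$\implies$~(3) is trivial since a free $R$-resolution is projective, while (3)~$\implies$~(1) follows because a finite-height projective $R$-resolution $\bC_\bullet$ over $X$ is in particular one whose $\bC_k$ have finite height for every $k\leq n$, so Proposition~\ref{prop:acyc_res}(\ref{item:acyc_res5})~$\implies$~(\ref{item:acyc_res1}) applied for every $n$ shows that $X$ is coarsely uniformly $(n-1)$-acyclic over $R$ for all $n\in\bbN$, hence coarsely uniformly acyclic over $R$.

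The main content is therefore the direction (1)~$\implies$~(2), where the challenge is to produce a single free resolution $\bD_\bullet$ over $X$ which is finite height in \emph{every} dimension simultaneously, rather than one finite height in dimensions $\leq n$ for a fixed $n$. First I would set up an inductive construction of $\bD_\bullet$ dimension by dimension, mirroring the argument in the proof of (\ref{item:acyc_res3})~$\implies$~(\ref{item:acyc_res4}) of Proposition~\ref{prop:acyc_res}. Let $\bC_\bullet$ denote the standard $R$-resolution over $X$, which is uniformly acyclic by the equivalence (\ref{item:acyc_res1}) $\iff$ (\ref{item:acyc_res3}) of Proposition~\ref{prop:acyc_res}. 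In the base case we take $\bD_0$ to be the degree-zero part of $\bC_\bullet$, which has finite height by Proposition~\ref{prop:resln_exist} (or Definition~\ref{defn:standard_res}). For the inductive step, assume we have built a partial free $R$-resolution $\bD_\bullet$ of dimension $n-1$ over $X$ in which each $\bD_k$ has finite height. Using the chain maps $f_\#:\bC_\bullet\to\bD_\bullet$, $g_\#:\bD_\bullet\to\bC_\bullet$ and chain homotopy $\id\simeq f_\# g_\#$ provided by Proposition~\ref{prop:proj_res}, one defines
\[
\bE_n\coloneqq\bD_{n-1}\oplus\bC_n(i_2),\qquad \partial(\sigma,\omega)=\sigma+h_\#\partial\sigma+f_\#\partial\omega-f_\# g_\#\sigma,
\]
exactly as in the proof of Proposition~\ref{prop:acyc_res}, for appropriate constants $i_0,i_1,i_2$ depending on the displacement of $f_\#,g_\#,h_\#$ and the height of $\bD_{n-1}$. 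The module $\bC_n(i_2)$ is of finite height by Remark~\ref{rem:subcomplex_standard} (since $\bC_\bullet$ is the standard resolution, each $\bC_n(i_2)$ is a free $R$-module on a subset of the basis), so $\bE_n$ is of finite height by Propositions~\ref{prop:finiteness_operations_first} and~\ref{prop:projective_operations_second}, and the same computation as in Proposition~\ref{prop:acyc_res} shows that the extended complex $\bE_n\to\bD_{n-1}\to\cdots\to\bD_0\to 0$ is a partial free $R$-resolution of dimension $n$. Setting $\bD_n\coloneqq\bE_n$ and continuing the induction produces the desired finite-height free $R$-resolution $\bD_\bullet$ over $X$ in all dimensions.

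The main obstacle I anticipate is a bookkeeping issue rather than a conceptual one: at each stage of the induction the constants $i_0,i_1,i_2$ and the displacements of the chain maps depend on the partial resolution built so far, and we need to make sure the resulting infinite direct system fits together into a staggered $R$-chain complex as required by Definition~\ref{defn:r-chain-complex}. This is handled by invoking Remark~\ref{rem:staggering} after the inductive construction, replacing the filtrations of each $\bD_n$ by their staggered refinements, which does not alter the modules up to canonical isomorphism and therefore preserves both the finite-height and free $R$-resolution properties. Finally, if one also wishes to record a proper version in the case $X$ is proper, one would run the entire induction with $\bC_\bullet$ taken to be the standard resolution of a $1$-net $Y\subseteq X$ as in Corollary~\ref{cor:standard_proper}, and then pull back via a closest-point projection $X\to Y$ using Propositions~\ref{prop:finiteness_operations_first} and~\ref{prop:pullback_res}, exactly as in the ``moreover'' clause of Proposition~\ref{prop:acyc_res}.
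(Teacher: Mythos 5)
Your overall strategy is correct and is exactly what the paper intends: iterate the inductive step of Proposition~\ref{prop:acyc_res} $(\ref{item:acyc_res3})\implies(\ref{item:acyc_res4})$ and observe that the low-dimensional modules stabilize, giving a resolution of finite height in every dimension. The directions $(2)\implies(3)$ and $(3)\implies(1)$ are handled correctly, and your observations about staggering via Remark~\ref{rem:staggering} and the proper case via Corollary~\ref{cor:standard_proper} and Proposition~\ref{prop:pullback_res} are both appropriate.

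There is, however, a technical imprecision in your inductive hypothesis that needs to be addressed. You assume $\bD_\bullet$ is a \emph{partial} free $R$-resolution of dimension $n-1$ and then invoke Proposition~\ref{prop:proj_res} to obtain the chain maps $f_\#:\bC_\bullet\to\bD_\bullet$, $g_\#:\bD_\bullet\to\bC_\bullet$ and the homotopy $\id\simeq f_\#g_\#$. But Proposition~\ref{prop:proj_res} is stated for \emph{full} projective $R$-resolutions over $X$, not for partial ones. Indeed, a full chain homotopy $h_\#:\bD_\bullet\to\bD_{\bullet+1}$ requires $h_{n-1}:\bD_{n-1}\to\bD_n$, which does not exist if $\bD_n=0$, and a full chain map $f_\#:\bC_\bullet\to\bD_\bullet$ would require $f_{n-1}\circ\partial=\partial\circ f_n=0$ on $\bC_n$, which is not automatic. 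The cleanest fix is to do what the paper's proof of $(\ref{item:acyc_res3})\implies(\ref{item:acyc_res4})$ actually does: keep $\bD_\bullet$ a \emph{full} free $R$-resolution in the inductive hypothesis, track only that $\bD_k$ has finite height for $k<n$, and observe that the extension supplied by Proposition~\ref{prop:extend_partial} modifies only the dimensions $>n$. Alternatively you could first extend your partial resolution to a full one via Proposition~\ref{prop:extend_partial}, then apply Proposition~\ref{prop:proj_res}, then restrict; or emulate the proof of Proposition~\ref{prop:extend_partial} to produce partial chain maps $f_\#:[\bC_\bullet]_{n-1}\to\bD_\bullet$, $g_\#:\bD_\bullet\to\bC_\bullet$ and a homotopy $h_\#:[\bD_\bullet]_{n-2}\to\bD_{\bullet+1}$, which is all the construction of $\bE_n$ actually uses. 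Any of these makes the induction rigorous, after which the rest of your argument goes through.
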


We now consider the following class of metric spaces:
\begin{defn}
	A metric space $X$ is said to be of \emph{coarse finite type over $R$} if there exists a finite-dimensional finite-height projective $R$-resolution over $X$.
\end{defn}

\begin{prop}\label{prop:finite_type}
	Let $X$ be a metric space that is coarsely uniformly $(n-1)$-acyclic over $R$ with  $\ccd_R(X)\leq n$. Then there exists a finite height $n$-dimensional projective $R$-resolution $\bC_\bullet$ over $X$, which is proper when $X$ is proper. In particular, $X$ has coarse finite type over $R$.
\end{prop}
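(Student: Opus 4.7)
The plan is to combine three results already at hand: Proposition~\ref{prop:acyc_res} (to get partial finiteness up to dimension $n$), Proposition~\ref{prop:ccd_chars} (to get a retraction in dimension $n$), and Lemma~\ref{lem:retract_truncate} (to truncate), then verify that the truncation preserves the finiteness properties.

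First I would invoke Proposition~\ref{prop:acyc_res} applied to $n$: since $X$ is coarsely uniformly $(n-1)$-acyclic over $R$, there exists a projective $R$-resolution $\bD_\bullet$ over $X$ such that $\bD_k$ has finite height for every $k\leq n$. If in addition $X$ is proper, the ``moreover'' clause of Proposition~\ref{prop:acyc_res} allows us to choose such a $\bD_\bullet$ that is also proper. Fix such a $\bD_\bullet$.

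Next, since $\ccd_R(X)\leq n$, condition (\ref{item:propccdchars_4}) of Proposition~\ref{prop:ccd_chars} applied to this particular resolution $\bD_\bullet$ yields a finite displacement retraction from $\bD_n$ onto $\bN\coloneqq\im(\bD_{n+1}\to \bD_n)$. Lemma~\ref{lem:retract_truncate} then produces a projective geometric submodule $\bP\leq \bD_n$ (appearing as a direct summand of $\bD_n$ with finite displacement projections, by Lemma~\ref{lem:comp}) such that
\[
\bC_\bullet\colon\quad 0\to \bP\xrightarrow{\partial|_\bP}\bD_{n-1}\xrightarrow{\partial}\bD_{n-2}\to\cdots\to\bD_0\to 0
\]
is a projective $R$-resolution over $X$. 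This $\bC_\bullet$ is $n$-dimensional by construction.

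It remains only to observe that $\bC_\bullet$ is of finite height, and proper whenever $\bD_\bullet$ is. For $k<n$ this is immediate because $\bC_k=\bD_k$. For $k=n$, $\bP$ is a geometric submodule of $\bD_n$ in the sense of Definition~\ref{defn:submodule}, so by Proposition~\ref{prop:finiteness_operations_first} it inherits finite height (and properness) from $\bD_n$. Concretely, if $\bD_n=\bD_n(i_0)$, then $\bP(i_0)=\bP\cap \bD_n(i_0)=\bP$, so $\bP$ has height at most $i_0$; the properness statement is identical. The final sentence of the proposition then follows from the definition of coarse finite type. I do not foresee a serious obstacle here: the entire argument is a bookkeeping exercise that assembles existing machinery, and the only mildly delicate point is to remember that the truncated top module $\bP$ is a \emph{geometric} submodule (not just an abstract submodule) of $\bD_n$, so that inheritance of finiteness properties goes through Proposition~\ref{prop:finiteness_operations_first}.
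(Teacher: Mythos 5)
Your proof is correct and follows essentially the same route as the paper: Proposition~\ref{prop:acyc_res} to get finite height up to dimension $n$, then truncation in dimension $n$ using Proposition~\ref{prop:ccd_chars} (you invoke item~(\ref{item:propccdchars_4}) plus Lemma~\ref{lem:retract_truncate}, whereas the paper cites item~(\ref{item:propccdchars_2}) directly, but these are equivalent by that same proposition). Your added observation that $\bP$ inherits properness and finite height from $\bD_n$ as a geometric submodule via Proposition~\ref{prop:finiteness_operations_first} is the right justification and slightly more explicit than the paper's brief remark.
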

\begin{proof}
	By Proposition~\ref{prop:acyc_res}, there exists a projective $R$-resolution $\bC_\bullet$ with $\bC_k$ of finite height for $k\leq n$, which is proper when $X$ is proper. By Proposition~\ref{prop:ccd_chars}, there exists a projective submodule $\bP\leq \bC_n$ such that $0\to \bP\xrightarrow{\partial|_\bP} \bC_{n-1}\to \dots\to \bC_0\to 0$ is the required finite height projective $R$-resolution over $X$, noting that since $\bC_n$ has finite height, so does $\bP$.
\end{proof}
It is not known to the author if the following is true:
\begin{ques}
	If $X$ is  coarsely uniformly $(n-1)$-acyclic over $R$ and $\ccd_R(X)\leq n$, does there exist a  finite-height $n$-dimensional  \emph{free}  $R$-resolution over $X$?
\end{ques}
This question is a coarse analogue of the following open problem regarding groups: is every group of type $FP$ also of type $FL$?

The following lemma shows that certain coarse cohomology modules of coarsely uniformly acyclic metric spaces are countably generated:
\begin{prop}\label{prop:ctble_cohomology_fingen}
	Let $R$ be a PID\@.
	Let $X$ be a proper coarsely uniformly $(n-1)$-acyclic metric space over $R$. Then $\coarse^k(X;R)$ is a countably generated $R$-module for $k\leq n$.
\end{prop}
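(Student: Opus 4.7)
The plan is to construct a proper free $R$-resolution $\bC_\bullet$ over $X$ whose cochain modules $\bC^\bullet_c$ are countably generated in degrees $\leq n$, and then use the PID hypothesis to transfer countable generation to the subquotient $\coarse^k(X;R)$. To start, I would invoke the ``moreover'' clause of Proposition~\ref{prop:acyc_res}: since $X$ is proper and coarsely uniformly $(n-1)$-acyclic over $R$, there exists a proper free $R$-resolution $\bC_\bullet$ over $X$ with $\bC_k$ of finite height for every $k\leq n$. By Lemma~\ref{lem:trivial_module}, $\coarse^k(X;R)=H^k(\bC^\bullet_c)$ where $\bC^k_c=\Hom_c(\bC_k,R)$.

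Next I would verify that the cochain modules $\bC^k_c$ are themselves countably generated as $R$-modules for $k\leq n$. Write $\bC_k=(C_k,B_k,\delta^k,p_k,\{B_k(i)\})$ and let $i_0$ denote the height of $\bC_k$, so that $B_k=B_k(i_0)$. Since $X$ is proper, it is $\sigma$-compact, so $X=\bigcup_{j\in\bbN} F_j$ for some countable family of bounded sets $F_j$. Properness of $\bC_k$ in the sense of Definition~\ref{defn:finiteness_geom} then ensures each $p_k^{-1}(F_j)\cap B_k(i_0)$ is finite, whence $B_k$ is a countable union of finite sets, hence countable. By Proposition~\ref{prop:duality_ht}, $\bC^k_c$ is itself a finite-height proper projective $R$-module over $X$ with the same indexing set $B_k$ and projective basis $\{\delta_b\}_{b\in B_k}$; since this basis is countable and generates $\bC^k_c$ as an $R$-module, $\bC^k_c$ is countably generated.

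Finally I would invoke the PID hypothesis. A countably generated projective $R$-module embeds as a direct summand of a free $R$-module $F=\bigoplus_{j\in\bbN}Re_j$ of countable rank. Filtering $F$ by the finite-rank summands $F_j=\bigoplus_{i\leq j}Re_i$, any submodule $M\leq F$ satisfies $M=\bigcup_j(M\cap F_j)$, and each $M\cap F_j$ is a submodule of a finite-rank free module over the PID $R$, hence is free of finite rank. Consequently, every submodule of a countably generated projective $R$-module over a PID is countably generated. Applying this to $\ker(\delta^k)\leq \bC^k_c$ for $k\leq n$ yields that $\ker(\delta^k)$ is countably generated, and therefore so is the quotient $\coarse^k(X;R)=\ker(\delta^k)/\im(\delta^{k-1})$.

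The main step to get right is the second paragraph, where one must combine all three ingredients --- finite height of $\bC_k$, properness of $\bC_k$, and $\sigma$-compactness of $X$ --- to force the indexing set $B_k$ to be countable, and then feed this through the duality of Proposition~\ref{prop:duality_ht} to pass to $\bC^k_c$. Choosing the resolution provided by the ``moreover'' clause of Proposition~\ref{prop:acyc_res} is essential here, as a generic projective $R$-resolution would lack either the finite-height or properness hypothesis needed by the duality. The PID argument in the third paragraph is standard module theory and poses no serious difficulty.
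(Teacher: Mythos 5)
Your proposal follows essentially the same route as the paper: Proposition~\ref{prop:acyc_res} gives a proper finite-height resolution through degree $n$, Proposition~\ref{prop:duality_ht} transfers this to $\bC^k_c$, countable generation of $\bC^k_c$ follows, and then the PID hypothesis (which is precisely Lemma~\ref{lem:submodule_ctble}, which you re-derive rather than cite) closes the argument. One small imprecision: finite height $i_0$ means $\bC_k = \bC_k(i_0)$ as \emph{modules}, which forces $\delta_b = 0$ for $b \notin B_k(i_0)$ but does \emph{not} force $B_k = B_k(i_0)$ as indexing sets; however, since only $B_k(i_0)$ can carry nonzero coordinates, the projective basis $\{\delta_b\}_{b\in B_k}$ of $\bC^k_c$ has at most countably many nonzero members by your properness argument, so the conclusion that $\bC^k_c$ is countably generated still holds.
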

\begin{proof}
	By Proposition~\ref{prop:acyc_res}, there exists a  proper projective $R$-resolution $\bC_\bullet$ over $X$ such that $\bC_k$ has finite height for each $k\leq n$. By Proposition~\ref{prop:duality_ht}, $\bC^k_c$ is a finite height proper projective $R$-module over $X$ for each $k\leq n$. Thus $\bC^k_c$ is countably generated for $k\leq n$. Since $\coarse^k(X;R)\cong H^k(\bC^\bullet_c)$ is isomorphic to a quotient of a submodule of $\bC^k_c$, the result follows by Lemma~\ref{lem:submodule_ctble}.
\end{proof}

\begin{defn}\label{defn:chom}
	A metric space $X$ is \emph{coarsely homogeneous} if there exist $\Lambda$, $\Upsilon$ such that  for every $x_1,x_2\in X$, there exists a $(\Lambda, \Upsilon)$-coarse equivalence $f:X\to X$ with $f(x_1)=x_2$.
\end{defn}
For instance, any metric space admitting a cocompact isometric group action, or more generally a cobounded quasi-action, is coarsely homogeneous.
\begin{prop}\label{prop:unif_preimage_cobdry}
	Assume $R$ is a Noetherian ring. Suppose $X$ is a coarsely homogeneous proper  metric space. Let $\bC_\bullet$ be a proper projective $R$-resolution over $X$ such that $\bC_k$ is of finite height for $k\leq n$.
	\begin{enumerate}
		\item\label{item:unif_preimage_cobdry}  For $k\leq n$, the coboundary map $\delta:\bC^{k-1}_c\to \bC^{k}_c$ has uniform preimages.
		\item\label{item:full_support} Suppose that  $H^k(\bC^\bullet_c)$ is a finitely generated $R$-module for some $k\leq n$. Then there  is a constant $D$ such for every $x\in X$ and class $[\alpha]\in H^k(\bC^\bullet_c)$, there is a cocycle $\alpha_x\in \bC^k_c$ such that $[\alpha_x]=[\alpha]$ and $\supp(\alpha_x)\subseteq N_D(x)$.
	\end{enumerate}
\end{prop}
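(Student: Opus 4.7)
The plan is to mirror the $G$-equivariant argument of Proposition~\ref{prop:ginv_unifpreim}, with coarse homogeneity playing the role of group invariance. Since $\bC_k$ is finite height for $k \leq n$, Proposition~\ref{prop:duality_ht} endows both $\bC^{k-1}_c$ and $\bC^k_c$ with the structure of finite-height proper projective $R$-modules over $X$, so their projective bases have uniform displacement and support bounds.

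For (\ref{item:unif_preimage_cobdry}), I would fix a basepoint $x_0 \in X$ and, for each $D \in \bbN$, set $\Omega_D := \{\beta \in \bC^k_c : \supp(\beta) \subseteq N_D(x_0)\}$. The first step is to observe that $\Omega_D$ is a finitely generated $R$-module: every such $\beta$ is an $R$-linear combination of those basis elements $\delta_b$ of $\bC^k_c$ whose projection $p(b)$ lies within a uniform neighbourhood of $N_D(x_0)$, and this collection is finite by properness. Noetherianity then implies $\Omega_D \cap \im(\delta)$ is finitely generated, so picking preimages of a generating set produces cochains $\alpha_1,\ldots,\alpha_l \in \bC^{k-1}_c$ supported in $N_{\Psi(D)}(x_0)$, for some function $\Psi$, whose coboundaries generate $\Omega_D \cap \im(\delta)$. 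Every coboundary in $\Omega_D$ then admits a preimage controlled by $\Psi(D)$.

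For a general coboundary $\beta \in \bC^k_c$ of support diameter $\leq D$, pick any $x \in \supp(\beta)$ and invoke coarse homogeneity to produce uniformly $(\Lambda,\Upsilon)$-coarse self-equivalences $f_x:X\to X$ with $f_x(x_0)=x$ and coarse inverses $g_x$. Proposition~\ref{prop:proj_res} provides chain maps $f_{x\#},g_{x\#}:\bC_\bullet\to\bC_\bullet$ and a chain homotopy $h_\#$ realising $f_{x\#}g_{x\#} \simeq \id$, all with displacement depending only on $\Lambda,\Upsilon$ and the weak uniform acyclicity constants of $\bC_\bullet$. Dualising via Proposition~\ref{prop:duality_ht}(\ref{item:duality_ht_natural}), I obtain $f_x^\#,g_x^\#:\bC^\bullet_c\to\bC^\bullet_c$ and a cochain homotopy $H^\#$ with $g_x^\#f_x^\# - \id = \delta H^\# + H^\# \delta$, again with uniform bounds. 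Since $f_x^\#\beta$ is a coboundary supported in some $\Omega_{D'}$ with $D'$ depending only on $D$ and the uniform constants, the first step yields $\alpha' \in \bC^{k-1}_c$ with $\delta\alpha' = f_x^\#\beta$ and $\supp(\alpha') \subseteq N_{\Psi(D')}(x_0)$. Setting $\alpha := g_x^\#\alpha' - H^\#\beta$, a direct computation using $\delta\beta = 0$ shows $\delta\alpha = g_x^\#f_x^\#\beta - \delta H^\#\beta = \beta$, and pushing $\supp(\alpha')$ forward via $f_x$ together with the displacement bound on $H^\#$ yields $\supp(\alpha) \subseteq N_{\Omega(i,D)}(\supp(\beta))$.

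For (\ref{item:full_support}), use finite generation of $H^k(\bC^\bullet_c)$ to fix cocycle representatives $\gamma_1,\ldots,\gamma_m$ of generators. Each $\gamma_j$ has bounded support by Proposition~\ref{prop:bdd_support} applied at the height $i_0$ of $\bC_k$, so all $\gamma_j$ lie in $N_{D_0}(x_0)$ for some $D_0$, and consequently every class has a cocycle representative supported in $N_{D_0}(x_0)$. Given arbitrary $x \in X$ and $[\alpha] \in H^k(\bC^\bullet_c)$, pick a representative $\gamma \in \bC^k_c$ of the class $f_x^*[\alpha]$ with $\supp(\gamma) \subseteq N_{D_0}(x_0)$. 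Then $g_x^\#\gamma$ is a cocycle whose support lies in $N_{\Phi_1 + \Upsilon(D_0)}(x)$ by the uniform displacement bound for $g_x^\#$ over $f_x$, and which represents the class $g_x^* f_x^*[\alpha] = (f_x \circ g_x)^*[\alpha] = [\alpha]$, where the last equality uses that $f_x \circ g_x$ is close to $\id_X$ together with Theorem~\ref{thm:functor_cohom}(\ref{propitem:functor_cohom3}). Taking $D := \Phi_1 + \Upsilon(D_0)$ gives the required uniform constant.

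The main subtlety in both parts is tracking uniformity. In (\ref{item:unif_preimage_cobdry}), one must use the explicit control in Proposition~\ref{prop:proj_res} to ensure the chain maps and chain homotopies associated to the family $\{f_x\}_{x \in X}$ have displacements depending only on $\Lambda,\Upsilon$ and not on $x$. In (\ref{item:full_support}), the key observation is that although $f_x^*$ and $g_x^*$ individually act non-trivially on $H^k(\bC^\bullet_c)$, their composition $(f_xg_x)^*$ equals the identity; this is precisely what allows a representative near $x_0$ to be transported to one near $x$ \emph{within the same cohomology class}.
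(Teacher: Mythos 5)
Your proposal is correct and takes essentially the same approach as the paper: both proofs fix a basepoint, use properness and Noetherianity to establish the uniform preimage property for coboundaries supported near the basepoint, and then transport to arbitrary $x \in X$ via coarse homogeneity, the dualised chain maps from Propositions~\ref{prop:proj_res} and~\ref{prop:duality_ht}, and a dualised chain homotopy correction term. The only cosmetic difference is in part~(\ref{item:full_support}): the paper observes that since $g_x^*$ is an isomorphism of $H^k(\bC^\bullet_c)$, the classes $[g^\#_x\alpha_1],\ldots,[g^\#_x\alpha_m]$ already generate, whereas you push the given class forward by $f_x^*$ and back by $g_x^*$; these are equivalent.
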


\begin{proof}
	We  fix $i_0$ such that $\bC_k(i_0)=\bC_k$ for all $k\leq n$, and fix $\Lambda,\Upsilon$ as in Definition~\ref{defn:chom}. By Proposition~\ref{prop:duality_ht}, this implies that $\bC^k_c(i_0)=\bC^k_c$ for all $k\leq n$.
	Fix a basepoint $x_0\in X$ and for each $x\in X$, choose a $(\Lambda, \Upsilon )$-coarse equivalence $f_x:X\to X$ with  $f_x(x_0)=x$.
	Setting $A\coloneqq \Upsilon(0)$, we see that for each $x\in X$ there exists a  $(\Lambda, \Upsilon)$-coarse equivalence  $g_x:X\to X$ that is an $A$-coarse inverse to $f_x$.  By applying  Propositions~\ref{prop:proj_res} and~\ref{prop:duality_ht}, there exists  $\Phi:\bbN\to\bbN$ such that:
	\begin{enumerate}
		\item For every $x\in X$, there exist augmentation-preserving chain homotopy-equivalences $(f_x)_\#,(g_x)_\#:\bC_\bullet\to \bC_\bullet$ of displacement $\Phi$ over $f_x$ and $g_x$.
		\item For each $x\in X$,  there exists a chain homotopy $\id\stackrel{(h_{x})_\#}{\simeq} (g_x)_\#(f_x)_\#$ of displacement $\Phi$ over $\id_X$.
		\item The induced dual maps $(f_x)^\#$, $(g_x)^\#$ and $(h_{x})^\#$ have  displacement $\Phi$ over $g_x$, $f_x$ and $\id_X$, when restricted to $\bC^k_c$ for $k\leq n$.
	\end{enumerate}
	We define $i_1\coloneqq \Phi(i_0)$

	(\ref{item:unif_preimage_cobdry}): There is nothing to prove when $k=0$, so fix $1\leq k\leq n$. For each $r\geq 0$, let $B_r$ be the submodule of $\bC^{k}_c$ generated by \[\{\alpha\in \bC^k_c\mid \supp(\alpha)\subseteq N_{i_1+\Upsilon(r)+A}(x_0)\}.\] Since $R$ is Noetherian and  $\bC^k_c$ is proper and of finite height, $B_r$ is a finitely generated $R$-module.

	Since $R$ is Noetherian,  $B_r\cap \im(\bC^{k-1}_c\to \bC^k_c)$ is also a  finitely generated $R$-module, so we can pick $s_r\geq r$ such that for every  $\alpha\in B_r\cap \im(\bC^{k-1}_c\to \bC^k_c)$, there exists $\beta\in \bC^{k-1}_c$ with $\supp(\beta)\subseteq N_{s_r}(x_0)$ and $\delta\beta=\alpha$.  We pick an increasing function $\Omega:\bbN\to\bbN$ such that
	\[\Omega(r)\geq i_1+\Upsilon(s_r)+r.\]
	We claim $\delta:\bC^{k-1}_c\to \bC^{k}_c$ has $\Omega$-uniform preimages. To see this,
	we fix $\alpha\in \im(\bC^{k-1}_c\to \bC^k_c)$ with $\supp(\alpha)\subseteq N_r(x)$, and will show there exists $\beta\in \bC^{k-1}_c$ with $\supp(\alpha)\subseteq N_{\Omega(r)}(x)$ and $\delta\beta=\alpha$.
	We first observe that \begin{align*}
		\supp((f_x)^\#(\alpha))\subseteq N_{i_1}(g_x(\supp(\alpha)))\subseteq N_{i_1+\Upsilon(r)+A}(x_0),
	\end{align*}
	so that $(f_x)^\#(\alpha)\in B_r\cap \im(\bC^{k-1}_c\to \bC^k_c)$.
	Thus there exists $\beta'\in \bC^{k-1}_c$ with $\supp(\beta')\subseteq N_{s_r}(x_0)$ and $\delta\beta'=(f_x)^\#\alpha$. We set $\beta=(g_x)^\#\beta'-(h_x)^\#\alpha$, so that \[\supp(\beta)\subseteq N_{i_1}(f_{x}(\supp(\beta')))\cup N_{i_1+r}(x)\subseteq N_{\Omega(r)}(x)\]
	and \[\delta\beta=(g_x)^\#(f_x)^\#\alpha-\delta(h_x)^\#\alpha =\alpha+(h_x)^\#\delta\alpha=\alpha.\] Thus the coboundary map $\delta:\bC^{k-1}_c\to \bC^{k}_c$ has $\Omega$-uniform preimages.

	(\ref{item:full_support}): Pick a collection $\alpha_1,\dots,\alpha_m$ of cocycles in $\bC^k_c$ such that $[\alpha_1], \dots, [\alpha_m]$ generate the finitely generated $R$-module $H^k(\bC^\bullet_c)$. We can thus pick $D_0$ large enough so that $\supp(\alpha_i)\subseteq N_{D_0}(x_0)$ for all $1\leq i\leq m$. Set $D\coloneqq i_1+\Upsilon(D_0)$. Then for each $x\in X$ and $1\leq i\leq m$, we have
	\begin{align*}
		\supp(g^\#_x\alpha_i)\subseteq N_{i_1}(f_x(\supp(\alpha_i)))\subseteq N_{i_1}(f_x(N_{D_0}(x_0)))\subseteq N_{D}(x).
	\end{align*}

	Now for each $x\in X$, as $g_\#^x$ is a chain homotopy equivalence, it follows that $g^\#_x:\bC^\bullet_c\to \bC^\bullet_c$ induces isomorphisms $g^*_x:H^k(\bC^\bullet_c)\to H^k(\bC^\bullet_c)$. Therefore, $[g^\#_x\alpha_1], \dots, [g^\#_x\alpha_m]$ generate $H^k(\bC^\bullet_c)$. It follows that  each class $[\alpha]\in H^k(\bC^\bullet_c)$ can be represented by a cocycle $\alpha_x$ that is an $R$-linear combination of $g^\#_x\alpha_1, \dots, g^\#_x\alpha_m$. By Lemma~\ref{lem:support_sum}, we deduce that $\supp(\alpha_x)\subseteq N_D(x)$ as required.
\end{proof}

A basic result and very useful result in group cohomology is that if $G$ is of type $FP$, then $\cd(G)=\max\{n\mid H^n(G,\bbZ G)\neq 0\}$~\cite[Proposition VIII.6.7]{brown1982cohomology}.
The following theorem is a generalisation of this result. However, our proof is quite different  from  the proof of the analogous result in group cohomology, which makes use of the fact that for a group $G$ of type $FP_n$, the functor $H^n(G,-)$ commutes with direct limits. This result has no immediate analogue for  general metric spaces, so we appeal to a more hands-on proof.
\begin{thm}\label{thm:ccd_finit_type_max}
	Assume $R$ is a Noetherian ring. Suppose $X$ is a coarsely homogeneous proper  metric space of coarse finite type over $R$. Then $\ccd(X)=\max\{\coarse^n(X;R)\neq 0\}$.
\end{thm}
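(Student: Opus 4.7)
The plan is as follows. Let $d = \ccd_R(X)$ and $n_0 = \max\{n \mid \coarse^n(X;R) \neq 0\}$. The inequality $n_0 \leq d$ is immediate from Proposition~\ref{prop:ccd_chars}, which gives $\coarse^k(X;R) = 0$ for all $k > d$. For the reverse direction, I would argue by contradiction: assuming $d > n_0$, construct a projective $R$-resolution over $X$ of dimension $d-1$, contradicting minimality. Since $X$ is of coarse finite type, Proposition~\ref{prop:finite_type} supplies a proper projective $R$-resolution $\bC_\bullet$ over $X$ with every $\bC_k$ of finite height and $\bC_k = 0$ for $k > d$. Because $\bC_{d+1}^*_c = 0$ and $H^d(\bC^\bullet_c) \cong \coarse^d(X;R) = 0$, the coboundary map $\delta : \bC^{d-1}_c \to \bC^d_c$ is surjective.

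The key and main obstacle is producing a finite displacement splitting of the chain-level injection $\partial : \bC_d \to \bC_{d-1}$; the plan is to first split at the cochain level and then dualize. By Proposition~\ref{prop:duality_ht}, $\bC^d_c$ is a proper finite-height projective $R$-module over $X$, and one has the crucial identification $(\bC^k_c)^*_c = \bC_k$ in every degree $k\leq d$. Since $R$ is Noetherian and $X$ is proper and coarsely homogeneous, Proposition~\ref{prop:unif_preimage_cobdry}(1) guarantees that $\delta$ has uniform preimages. Combining projectivity of $\bC^d_c$ with the lifting property of Proposition~\ref{prop:projective}, I obtain a finite displacement section $s : \bC^d_c \to \bC^{d-1}_c$ with $\delta \circ s = \id$. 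Dualizing $s$ via Proposition~\ref{prop:duality_ht}(4), tracking the sign arising from the convention $\delta\alpha = (-1)^{k+1}\alpha \circ \partial$, converts $s$ into a finite displacement map $t : \bC_{d-1} \to \bC_d$ with $t \partial = \id_{\bC_d}$ (after absorbing a sign into $t$). This dualization step is the delicate part, as it is precisely here that the proper finite-height hypothesis is consumed.

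Once $t$ is in hand, the truncation is routine. Setting $\bL = \ker(t)$, the map $\partial t$ is a finite displacement retraction of $\bC_{d-1}$ onto $\im(\partial)$, so Lemma~\ref{lem:comp} decomposes $\bC_{d-1} = \im(\partial) \oplus \bL$ with finite displacement projections, and $\bL$ becomes a projective $R$-module over $X$. The truncated complex
\[
0 \to \bL \xrightarrow{\partial|_\bL} \bC_{d-2} \to \cdots \to \bC_0 \xrightarrow{\epsilon} R \to 0
\]
is then a projective $R$-resolution over $X$ of dimension $d-1$: $\partial|_\bL$ is injective since $\bL \cap \im(\partial) = 0$, its image equals $\partial(\bC_{d-1})$ since $\partial$ vanishes on $\im(\partial)$, and uniform preimages are inherited by composing uniform preimages of the original $\partial$ with the finite displacement projection $\bC_{d-1}\to \bL$. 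This contradicts $\ccd_R(X) = d$, so $d \leq n_0$ and equality holds.
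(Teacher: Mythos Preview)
Your argument is correct and follows essentially the same strategy as the paper's proof: both reduce to constructing a finite displacement retraction of $\bC_{d-1}$ onto $\im(\bC_d\to\bC_{d-1})$ by first observing that $\delta:\bC^{d-1}_c\to\bC^d_c$ is surjective with uniform preimages (Proposition~\ref{prop:unif_preimage_cobdry}), then lifting and dualizing. The only difference is packaging: the paper writes the retraction explicitly as $\pi(\tau)=\sum_b\mu_b(\tau)\,\partial m_b$ with $\delta\mu_b=\delta_b$, whereas you invoke the abstract lifting property (Proposition~\ref{prop:projective}) to produce a section $s$ and then dualize via Proposition~\ref{prop:duality_ht} to obtain $t=\pm s^*$; the paper's formula is precisely $\partial\circ s^*$ unpacked. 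Your final paragraph could be shortened by appealing directly to Proposition~\ref{prop:ccd_chars}(\ref{item:propccdchars_4}) once the retraction $\partial t$ is in hand, rather than reproving the truncation by way of Lemma~\ref{lem:comp}.
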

\begin{proof}
	Let $n=\ccd_R(X)$ and $m=\max\{\coarse^n(X;R)\neq 0\}$. It follows from Proposition~\ref{prop:ccd_chars} that $m\leq n$. For the converse, we assume for contradiction that $\coarse^n(X;R)=0$.
	Let $\bC_\bullet$ be a proper finite-height $n$-dimensional projective $R$-resolution over $X$.  Since $\bC_n$ is $n$-dimensional, the assumption $\coarse^n(X;R)=0$ ensures the coboundary map $\bC^{n-1}_c\xrightarrow{\delta}\bC^n_c$  is surjective. By Proposition~\ref{prop:unif_preimage_cobdry}, this coboundary map has $\Omega$-uniform preimages for some $\Omega$. We pick $\Psi$ such that $\bC_n$, $\bC_{n-1}$ and  $\partial:\bC_n\to \bC_{n-1}$ have displacement $\Psi$.

	Suppose that $\bC_n=(C_n,B, \delta, p, \cF)$ and $\bC_{n-1}=(C_{n-1},C, \epsilon, q, \cF')$ both have height $i_0$. By Lemma~\ref{lem:proj_basis_finheight},  $\bC_n$ and $\bC_{n-1}$ have projective bases $\{m_b\}_{b\in B}$ and  $\{n_c\}_{c\in C}$ respectively, of displacement $\Psi$ and satisfying $m_b=n_c=0$ for all $b\in B\setminus B(i_0)$ and $c\in C\setminus C(i_0)$.
	For each $b\in B$, we have $\delta_b\in \bC^n_c$ by Remark~\ref{rem:coord_boundedsupp}. Moreover,  Proposition~\ref{prop:duality_ht} ensures that $\supp_{\bC^n_c}(\delta_b)\subseteq N_{\Psi(i_0)}(p(b))$. Thus $\diam(\supp_{\bC^n_c}(\delta_b))\leq 2\Psi(i_0)$. Setting $r\coloneqq \Omega(i_0,2\Psi(i_0))+\Psi(i_0)$, we see there exists $\mu_b\in \bC^{n-1}_c$ such  that $\delta \mu_b=\delta_b$ and $\supp_{\bC^{n-1}_c}(\mu_b)\subseteq N_r(p(b))$. Since $\delta_{b}=0$ for $b\in B\setminus B(i_0)$, we may assume $\mu_b=0$ for all $b\in B\setminus B(i_0)$.

	We claim $\pi:\bC_{n-1}\to \bC_{n-1}$ given by $\pi(\tau)=\sum_{b\in B}\mu_b(\tau)\partial m_b$ for each $\tau\in \bC_{n-1}$ is a finite displacement retraction onto $\bN\coloneqq \im(\bC_{n}\xrightarrow{\partial}\bC_{n-1})$. By Proposition~\ref{prop:ccd_chars}, this claim would imply $\ccd_R(X)\leq n-1$, which is the desired contradiction.

	First let $\tau\in \bC_{n-1}$ and  suppose  $\mu_b(\tau)\neq 0$ for some $b\in B$. Then $b\in B(i_0)$,  and  Proposition~\ref{prop:duality_ht} implies that $d(\supp_{\bC^{n-1}_c}(\mu_b),\supp_{\bC_{n-1}}(\tau))\leq \Psi(i_0)$. This implies $p(b)\in N_{\Psi(i_0)+r}(\supp_{\bC_{n-1}}(\tau))$. Since $\bC_{n-1}$ is proper, we see that for each $\tau\in\bC_{n-1}$, there are only finitely many $b\in B$ with $\mu_b(\tau)\neq0$, and so $\pi$ is well-defined. Moreover,
	\begin{align*}\supp_{\bC_{n-1}}(\pi(\tau)) & \subseteq \bigcup_{\substack{b\in B \\ \mu_b(\tau)\neq 0}}\supp_{\bC_{n-1}}(\partial m_b)\\
                                           & \subseteq \bigcup_{\substack{b\in B \\ \mu_b(\tau)\neq 0}}N_{2\Psi(i_0)}(p(b))\subseteq N_{3\Psi(i_0)+r}(\supp_{\bC_{n-1}}(\tau)),\end{align*}
	showing that as $\bC_{n-1}$ has finite height, $\pi$ has finite displacement.
	Finally, if $\tau=\partial\sigma$ for some $\sigma\in \bC_n$, we have
	\begin{align*}
		\pi(\tau) & =\pi(\partial\sigma)=\sum_{b\in B}\mu_b(\partial \sigma)\partial m_b=\partial\left(\sum_{b\in B}(\delta\mu_b)( \sigma) m_b\right) \\
		          & =\partial\left(\sum_{b\in B}\delta_b( \sigma) m_b\right)=\partial \sigma=\tau
	\end{align*} showing that $\pi$ is a retraction onto $\bN$.
\end{proof}

Theorem~\ref{thm:ccd_finit_type_max} allows us to compare $\ccd$ with asymptotic dimension $\asdim(X)$ of a metric space. For background about asymptotic dimension, we refer the reader to~\cite{belldranishnikov2011asymptotic}. It was shown by Dranishnikov that if $G$ is a group of type $FP$, then $\vcd(G)\leq \asdim(G)$~\cite[Proposition 5.10]{dranishnikov2009cohom}. We generalise this to metric spaces:
\begin{prop}\label{prop:ccd_vs_asdim}
	Let $X$ be a proper coarsely homogeneous metric space of coarse finite type over $\bbZ$. Then $\ccd(X)\leq \asdim(X)$.
\end{prop}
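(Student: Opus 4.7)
The plan is to reduce the statement to a vanishing result for coarse cohomology and then invoke Dranishnikov's theorem. The first step is to apply Theorem~\ref{thm:ccd_finit_type_max}, whose hypotheses are met here since $\bbZ$ is Noetherian and $X$ is by assumption coarsely homogeneous, proper, and of coarse finite type over $\bbZ$. This gives
\[
\ccd(X) \;=\; \max\bigl\{\,n\in\bbN \,\bigm|\, \coarse^n(X;\bbZ)\neq 0\,\bigr\},
\]
which is finite. We may assume $\asdim(X)<\infty$, since otherwise the inequality is trivial. Thus it suffices to show $\coarse^n(X;\bbZ)=0$ for every $n>\asdim(X)$.

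Next, I translate our coarse cohomology into Roe's framework: by Proposition~\ref{prop:roe_iso}, $\coarse^n(X;\bbZ)\cong HX^n(X;\bbZ)$ naturally in $n$, so it is enough to show $HX^n(X;\bbZ)=0$ for $n>\asdim(X)$. Coarse finite type combined with Corollary~\ref{cor:unif_acyc} gives that $X$ is coarsely uniformly acyclic over $\bbZ$; equivalently, any sufficiently large Rips complex $P_r(X)$ is a uniformly acyclic metric simplicial complex in the sense of Example~\ref{exmp:metric_complex}, coarsely equivalent to $X$. This is exactly the setting in which Dranishnikov's construction of an $(n{+}1)$-colored cover of controlled diameter from an asymptotic dimension bound is carried out in~\cite{dranishnikov2009cohom}: one uses such a cover to construct an $\asdim(X)$-dimensional nerve-type  chain approximation that computes Roe's coarse cohomology of $X$ in degrees beyond $\asdim(X)$ trivially.

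Third, I invoke the conclusion of Dranishnikov's Proposition 5.10 (and the surrounding results of~\cite{dranishnikov2009cohom}) in this geometric form: for a proper coarsely uniformly acyclic metric space $X$ of finite asymptotic dimension, one has $HX^n(X;\bbZ)=0$ whenever $n>\asdim(X)$. Combining this with the isomorphism from Proposition~\ref{prop:roe_iso} and the maximal-degree characterisation from Theorem~\ref{thm:ccd_finit_type_max} immediately yields $\ccd(X)\leq \asdim(X)$.

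The main obstacle is the third step. Dranishnikov's original statement is phrased for discrete groups of type $FP$, whereas we must apply it to an arbitrary coarsely homogeneous proper metric space of coarse finite type over $\bbZ$. The translation is natural because coarsely uniformly acyclic metric spaces play the role at the coarse level that contractible free $G$-CW complexes play for groups, and the Rips complex provides the needed simplicial model; but one should either cite the geometric version of Dranishnikov's theorem directly or sketch how his Rips-complex-based argument transfers verbatim once the group $G$ is replaced by $X$ and $H^*(G,\bbZ G)$ is replaced by $\coarse^*(X;\bbZ)=HX^*(X;\bbZ)$. No further ingredients from Sections~\ref{sec:rmod_metric}--\ref{sec:unif_acyc} are needed.
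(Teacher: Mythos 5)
Your proposal follows essentially the same route as the paper: apply Theorem~\ref{thm:ccd_finit_type_max} to write $\ccd(X)$ as the top nonvanishing degree of $\coarse^*(X;\bbZ)$, identify this with Roe's $HX^*(X;\bbZ)$ via Proposition~\ref{prop:roe_iso}, and then appeal to Dranishnikov's bound on the top degree of $HX^*$ by $\asdim$. The ``obstacle'' you flag in your third step is, however, not actually present: you cite \cite[Proposition~5.10]{dranishnikov2009cohom}, which is indeed phrased for groups of type $FP$, but the result one needs is \cite[Propositions~5.1~\&~5.6]{dranishnikov2009cohom} (which is what the paper cites). Those propositions are stated directly for Roe's coarse cohomology of proper metric spaces of finite asymptotic dimension, so no translation from groups to metric spaces is required, and the Rips-complex/nerve sketch you append becomes unnecessary. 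With the corrected citation your argument is complete and matches the paper's.
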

\begin{proof}
	Let $n=\max\{k\mid HX^k(X;\bbZ)\neq0\}$, where $HX^k(X;\bbZ)$ is Roe's coarse cohomology.  Dranishnikov showed that $n\leq \asdim(X)$~\cite[Propositions 5.1 \& 5.6]{dranishnikov2009cohom}. By Theorem~\ref{thm:ccd_finit_type_max} and Proposition~\ref{prop:roe_iso}, $n=\ccd_\bbZ(X)$.
\end{proof}
Specializing to the case of groups, we deduce:
\begin{cor}
	Let $G$ be a group of type $FP_\infty$ with $\ccd_\bbZ(G)<\infty$. Then $\ccd_\bbZ(G)\leq \asdim(G)$.
\end{cor}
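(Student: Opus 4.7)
The plan is to deduce the corollary directly from Proposition~\ref{prop:ccd_vs_asdim} by verifying that $G$, equipped with a proper left-invariant metric, satisfies the hypotheses of that proposition. Specifically, I need to check that (i) $G$ is a proper metric space, (ii) $G$ is coarsely homogeneous, and (iii) $G$ is of coarse finite type over $\bbZ$.

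For (i), any countable group equipped with a proper left-invariant metric is by definition proper. For (ii), the left-multiplication action of $G$ on itself is by isometries, and given any $g_1, g_2 \in G$, the map $x \mapsto g_2 g_1^{-1} x$ is an isometry (hence a $(\Lambda,\Upsilon)$-coarse equivalence with uniform control functions $\Lambda(t)=\Upsilon(t)=t$) sending $g_1$ to $g_2$; this verifies coarse homogeneity. For (iii), I would argue as follows. Since $G$ is of type $FP_\infty$, Theorem~\ref{thm:type_FPn} implies $G$ is coarsely uniformly $(n-1)$-acyclic over $\bbZ$ for every $n$, i.e.\ coarsely uniformly acyclic over $\bbZ$. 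Combined with the assumption $n \coloneqq \ccd_\bbZ(G) < \infty$, Proposition~\ref{prop:finite_type} (applied with this $n$) produces a finite-height $n$-dimensional projective $\bbZ$-resolution over $G$, so $G$ is of coarse finite type over $\bbZ$.

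With all three hypotheses verified, Proposition~\ref{prop:ccd_vs_asdim} applies directly to $G$ and yields $\ccd_\bbZ(G) \leq \asdim(G)$. The proof is essentially a short assembly of previously established facts, so there is no serious obstacle: the only non-trivial input is the equivalence between type $FP_\infty$ and coarse uniform acyclicity (Theorem~\ref{thm:type_FPn}), together with Proposition~\ref{prop:finite_type} to upgrade coarse uniform acyclicity plus finite $\ccd$ to coarse finite type.
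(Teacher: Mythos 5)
Your proof is correct and follows exactly the route the paper intends: the corollary is stated immediately after Proposition~\ref{prop:ccd_vs_asdim} as a specialisation, and your argument simply verifies the three hypotheses of that proposition (properness, coarse homogeneity via left translations, and coarse finite type via Theorem~\ref{thm:type_FPn} together with Proposition~\ref{prop:finite_type}).
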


We can now strengthen Theorem~\ref{thm:univcoeff_ctblehyp} in the case that $X$ is uniformly acyclic over $R$.
\begin{thm}\label{thm:truncated_kunneth}
	Let $X$ be a metric space,  let $R$ be a PID, and let $n\in \bbN$. Assume $X$ is coarsely uniformly $(n-1)$-acyclic over $R$.  Suppose $\bC_\bullet$ is a proper projective $R$-resolution over $X$.
	\begin{enumerate}
		\item\label{item:truncated_kunneth1} Let $f:R\to S$ be a ring homomorphism and let $\bD_\bullet\coloneqq \bC_\bullet\otimes_R S$, which is a proper projective $S$-resolution over $X$ by Proposition~\ref{prop:change_rings_res}.  For each $k<n$, we have a natural short exact sequence \[0\to  H^k(\bC^\bullet_c) \otimes_R S\xrightarrow{\phi} H^k(\bD_c^\bullet)\to \Tor_1^R(H^{k+1}(\bC_c^\bullet),S)\to 0\]
		      and an injection $H^n(\bC^\bullet_c) \otimes_R S\xrightarrow{\phi} H^n(\bD_c^\bullet)$.
		      Moreover, the injections $\phi$ are given by $[\alpha]\otimes s\mapsto [\alpha\otimes s]$.
		\item\label{item:truncated_kunneth2}  For each $k<n$, we have a natural short exact sequence \[0\to \Ext^1_R(H^{k+1}(\bC_c^\bullet),R) \to H_k(\bC_\bullet^{\lf})\xrightarrow{\psi} \Hom_R(H^{k}(\bC_c^\bullet), R)\to 0\]
		      and a surjection $H_n(\bC_\bullet^{\lf})\xrightarrow{\psi} \Hom_R(H^{n}(\bC_c^\bullet), R)$. Moreover, the surjections $\psi$ are given by $\psi([\phi])([\alpha])=\phi(\alpha)$.
	\end{enumerate}
\end{thm}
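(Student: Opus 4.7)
The plan is to sharpen Theorem~\ref{thm:univcoeff_ctblehyp} by exploiting coarse uniform $(n-1)$-acyclicity to arrange that the relevant cochain complex is degree-wise free in the range of interest, thereby eliminating the $\varprojlim^1$ obstructions that forced the earlier statement to be weaker. First, by Corollary~\ref{cor:projres_indpt} both conclusions depend only on the chain homotopy type of $\bC_\bullet$, so we may freely replace it. Using coarse uniform $(n-1)$-acyclicity together with Proposition~\ref{prop:acyc_res}, I would choose $\bC_\bullet$ to be a proper projective $R$-resolution over $X$ with $\bC_k$ of finite height for every $k\le n$. Proposition~\ref{prop:duality_ht} then guarantees that $\bC^k_c$ is itself a finite-height proper projective $R$-module over $X$ for $k\le n$, and since $R$ is a PID any such module is free as an $R$-module. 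Proposition~\ref{prop:tensor_dual} furnishes natural isomorphisms $\bC^k_c\otimes_R S\cong\bD^k_c$ in degrees $k\le n$, so that the cochain complexes $\bC^\bullet_c\otimes_R S$ and $\bD^\bullet_c$ coincide through degree $n$; similarly, Proposition~\ref{prop:induced_proj_lf}(\ref{item:induced_proj_lf4}) gives $\bC^{\lf}_k=\Hom_R(\bC^k_c,R)$ for $k\le n$.

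Next I would introduce the auxiliary truncated cochain complex $\widetilde{\bC}^\bullet$ defined by $\widetilde{\bC}^j=\bC^j_c$ for $j\le n$, $\widetilde{\bC}^{n+1}=\im\bigl(\delta\colon\bC^n_c\to\bC^{n+1}_c\bigr)$, and $\widetilde{\bC}^j=0$ for $j>n+1$, equipped with the inherited differentials. This will be a bounded cochain complex of flat $R$-modules: the terms in degrees $\le n$ are free by the previous step, while $\widetilde{\bC}^{n+1}$ is torsion-free as a submodule of $\bC^{n+1}_c\subseteq\Hom_R(\bC_{n+1},R)$, which itself is a direct summand of a direct product of copies of $R$ (since $\bC_{n+1}$ is a summand of a free $R$-module), and over a PID torsion-free is equivalent to flat. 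By construction $H^k(\widetilde{\bC}^\bullet)=H^k(\bC^\bullet_c)$ for $k\le n$ while $H^{n+1}(\widetilde{\bC}^\bullet)=0$, which is the structural feature that will make the $k=n$ boundary case close up.

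The classical K\"unneth formula and Universal Coefficient Theorem, applied to the bounded flat cochain complex $\widetilde{\bC}^\bullet$, will then produce the claimed exact sequences. For part~(\ref{item:truncated_kunneth1}), the K\"unneth SES for $\widetilde{\bC}^\bullet$ holds in every degree; for $k<n$ the terms match those stated in the theorem by the degree-wise agreement of $\widetilde{\bC}^\bullet$ with $\bC^\bullet_c$ and of $\widetilde{\bC}^\bullet\otimes_R S$ with $\bD^\bullet_c$ through degree $n$, and for $k=n$ the vanishing $H^{n+1}(\widetilde{\bC}^\bullet)=0$ upgrades the SES to an isomorphism $H^n(\bC^\bullet_c)\otimes_R S\cong H^n(\widetilde{\bC}^\bullet\otimes_R S)$, which I would compose with the natural injection $H^n(\widetilde{\bC}^\bullet\otimes_R S)\hookrightarrow H^n(\bD^\bullet_c)$ (injective because the corresponding comparison of cocycle modules is an inclusion) to get the stated injection. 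Part~(\ref{item:truncated_kunneth2}) is entirely parallel: dualize $\widetilde{\bC}^\bullet$, apply the classical UCT, translate via $\bC^{\lf}_k=\Hom_R(\bC^k_c,R)$ in degrees $\le n$, and handle the $k=n$ case by composing the natural surjection $H_n(\bC^{\lf})\twoheadrightarrow H_n(\Hom_R(\bC^\bullet_c,R))$ with the UCT surjection. The principal delicate point will be the boundary degree $k=n$, where the flatness of $\widetilde{\bC}^{n+1}$ and the vanishing $H^{n+1}(\widetilde{\bC}^\bullet)=0$ are precisely what allow the injection/surjection to be deduced despite having no control over the modules $\bC^{n+1}_c$ or $\bC^{\lf}_{n+1}$ beyond torsion-freeness.
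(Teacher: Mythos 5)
Your approach differs substantially from the paper's, which works with the filtration $\bC^\bullet_c=\varprojlim\bC^\bullet_c(i)$ from Lemma~\ref{lem:dirinv_lim_space}, proves stability of the inverse systems $\{H^k(\bC^\bullet_c(i))\}_i$ for $k\leq n$, and then invokes Theorem~\ref{thm:truncated_kunneth_chain}, so that the classical K\"unneth and UCT are only ever applied to the finite-stage complexes $\bC^\bullet_c(i)$, which consist of finite-height proper projective (hence free) $R$-modules. Your idea of truncating to a bounded complex $\widetilde{\bC}^\bullet$ with $\widetilde{\bC}^{n+1}=\im(\delta^n)$ is appealing, and part~(\ref{item:truncated_kunneth1}) of your argument does go through: over a PID the K\"unneth theorem with $\Tor$ genuinely applies to bounded complexes of flat (equivalently torsion-free) modules, and the cocycle-level comparison of $\widetilde{\bC}^\bullet\otimes_R S$ with $\bD^\bullet_c$ in degree $n$ gives the required injection.

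The gap is in part~(\ref{item:truncated_kunneth2}) at $k=n$. The surjection $H_n(\Hom_R(\widetilde{\bC}^\bullet,R))\to\Hom_R(H^n(\widetilde{\bC}^\bullet),R)$ is \emph{not} a consequence of flatness of $\widetilde{\bC}^{n+1}$. Unwinding the UCT, surjectivity at this degree requires that every $R$-module homomorphism $Z^n\to R$ vanishing on $B^n$ extend to $\widetilde{\bC}^n\to R$; the obstruction to such extensions lives in $\Ext^1_R(\widetilde{\bC}^{n+1},R)$, since $\widetilde{\bC}^n/Z^n\cong\im(\delta^n)=\widetilde{\bC}^{n+1}$ and $0\to Z^n\to\widetilde{\bC}^n\to\widetilde{\bC}^{n+1}\to 0$ need not split unless $\widetilde{\bC}^{n+1}$ is projective. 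Torsion-free does not imply projective over a PID: for $R=\bbZ$, even a countably generated torsion-free module can have $\Ext^1_\bbZ(M,\bbZ)\neq 0$ (e.g.\ $M=\bbQ$, or any non-free countable torsion-free group). The concrete failure mode: take $\delta^n:\bC^n_c\twoheadrightarrow\widetilde{\bC}^{n+1}$ with $\widetilde{\bC}^{n+1}$ non-projective; then $\Hom_R(\bC^n_c,R)\to\Hom_R(Z^n,R)$ has cokernel $\Ext^1_R(\widetilde{\bC}^{n+1},R)\neq 0$, so some $\phi_0\colon Z^n\to R$ killing $B^n$ has no extension to a cocycle, and the claimed surjection to $\Hom_R(H^n,R)$ fails. (Flatness and $H^{n+1}(\widetilde{\bC}^\bullet)=0$, which you invoke here, control $\Tor$ and $\varprojlim^1$ issues but not this $\Ext^1$ obstruction.) Two additional caveats: you would in any case have to rule out that $\widetilde{\bC}^{n+1}$ is actually forced to be free --- it is a quotient of the free module $\bC^n_c$ and a submodule of $\bC^{n+1}_c\subseteq\Hom_R(\bC_{n+1},R)$, and for $R=\bbZ$ a countably generated subgroup of a power of $\bbZ$ \emph{is} free by Specker's theorem, so your argument can be rescued there at the cost of a nontrivial abelian-group-theoretic input --- but that rescue does not cover arbitrary PIDs nor the possibly uncountably generated case. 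This is precisely why the paper retreats to the filtration $\bC^\bullet_c(i)$, where every term is free, before applying the classical theorems.
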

This result should be thought of as a generalisation of the group-theoretic  Universal Coefficient Theorems for groups of type $FP_\infty$~\cite[\S 3]{bieri1981homological}. While there is significant overlap between Theorem~\ref{thm:univcoeff_ctblehyp} and Theorem~\ref{thm:truncated_kunneth} in light of  Proposition~\ref{prop:ctble_cohomology_fingen}, neither one implies the other, and it is necessary to use both to obtain the strongest results in Section~\ref{sec:coarsePDn}.
\begin{proof}
	By Corollary~\ref{cor:projres_indpt}, we may assume without loss of generality that $\bC_\bullet$ satisfies the hypothesis of  Lemma~\ref{lem:dirinv_lim_space}.
	We show that for each $k\leq n$, the inverse systems $\{H^k(\bC^\bullet_c(i))\}_i$ are \emph{stable} as defined in Definition~\ref{defn:stable}. After proving this, the  result will follow from Lemma~\ref{lem:dirinv_lim_space} and Theorem~\ref{thm:truncated_kunneth_chain}.

	By Proposition~\ref{prop:acyc_res}, there exists a projective $R$-resolution $\bE_\bullet$ over $X$ with $\bE_k$ of finite height for $k\leq n$. By Corollary~\ref{cor:projres_indpt}, there are chain maps $f_\#:\bC_\bullet\to \bE_\bullet$ and $g_\#:\bE_\bullet\to \bC_\bullet$ of displacement $\Phi$ and  chain homotopies $\id\stackrel{h_\#}{\simeq}g_\#f_\#$ and $\id\stackrel{}{\simeq}f_\#g_\#$ of displacement $\Phi$.

	For $i\leq j$, we let $m_\#^{i,j}:\bC_\bullet(i)\to \bC_\bullet(j)$ be the inclusion map. Since $\bE_k$ has finite height for $k\leq n$, we can pick $i_0$ such that $g_\#(\bE_k)\subseteq \bC_k(i_0)$ for $k\leq n$, and set $j_0\coloneqq\Phi(i_0)$. Let $f_\#^i$ and $h_\#^i$ be the restrictions  of $f_\#$ and $h_\#$ to $\bC_\bullet(i)$. For arbitrary $i$, set $j\coloneqq\Phi(i)$, and consider the map  \[m_\#^{i,j}, m_\#^{i_0,j}g_\#f_\#^i:\bC_\bullet(i)\to \bC_\bullet(j).\] We observe that since  $m_\#^{i_0,j}g_\#f_\#^i- m_\#^{i,j}=\partial h_\#^i+h_\#^i\partial$, these two maps
	induce identical maps \[f^*_i\circ g^*\circ m^*_{i_0,j}=m^*_{i,j}:H^k(\bC^\bullet_c(j))\to H^k(\bC^\bullet_c(i))\] on cohomology.
	We claim that $m^*_{i_0,i}$ restricts to an isomorphism \[\im(m^*_{i,j})\cong \im(m^*_{i_0,j_0}).\] Since this holds for arbitrary large $i$, it follows  that $\{H^k(\bC^\bullet_c(i))\}_i$ is a stable inverse system.

	To see this, first suppose $[\alpha]\in H^k(\bC^\bullet_c(j))$ with $m^*_{i_0,j}([\alpha])=0$. Then  $m^*_{i,j}([\alpha])=f^*_i g^* m^*_{i_0,j}([\alpha])=0$, showing that $m^*_{i_0,i}$ restricted to ${\im(m^*_{i,j})}$ is injective.
	Now suppose $[\beta]\in H^k(\bC^\bullet_c(j_0))$. Then \begin{align*}m^*_{i_0,j_0}({[\beta]})=f^*_{i_0} g^*m^*_{i_0,j_0}({[\beta]}) = m^*_{i_0,i}m^*_{i,j}f^*_j g^*m^*_{i_0,j_0}({[\beta]}),\end{align*} where last inequality follows because $f_\#^j m_\#^{i,j}m_\#^{i_0,i}=f_\#^{i_0}$. Since $[\beta]\in H^k(\bC^\bullet_c(j_0))$ was arbitrary, this shows $m^*_{i_0,i}$ maps ${\im(m^*_{i,j})}$  onto $\im(m^*_{i_0,j_0})$ as required.
\end{proof}

The $\cZ$-boundary was developed by Bestvina for certain torsion-free groups~\cite{bestvina1996zboundary}, and later generalised by Dranishnikov~\cite{dranishnikov2006bestvinamess}. In this article, a \emph{$\cZ$-boundary} will always refer to a $\cZ$-boundary in the sense of Dranishnikov~\cite{dranishnikov2006bestvinamess}.
For a hyperbolic group $G$, its $\cZ$-boundary is  its Gromov boundary. For a $\CAT(0)$ group $G$, its  $\cZ$-boundary is the visual boundary $\partial X$ of a proper $\CAT(0)$ space $X$ admitting a proper cocompact $G$-action.

If $Z$ is a topological space, let $\dim(X)$ be the Lebesgue covering dimension of $X$, and let $\dim_R(X)$ be the cohomological dimension of $X$ with coefficients in $R$, i.e.\ the largest $n$ such that there exists a closed subset $A\subseteq X$ such that $\check{ H}^n(X,A;R)\neq 0$, where $\check{ H}$ is  \v Cech cohomology. A theorem of Alexandroff says if $X$ is a compact metric space with $\dim(X)<\infty$, then $\dim(X)=\dim_\bbZ(X)$~\cite[Theorem 1.4]{dranishnikov2009cohom}.
\begin{prop}[{cf.~\cite[Proposition 3.9]{margolis2019codim1}}]\label{prop:zbdry}
	Suppose that $R$ is a commutative ring and $G$ is a group  such that  $\ccd_R(G)<\infty$. If  $G$ admits a $\cZ$-boundary $Z$, then $\dim_R(Z)+1=\ccd_R(G)$. Moreover, $\dim(Z)+1=\ccd(G)$.
\end{prop}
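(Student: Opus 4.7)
The strategy is a direct adaptation of the author's argument in~\cite[Proposition 3.9]{margolis2019codim1}, itself modelled on the classical Bestvina--Mess theorem, with the crucial refinement that the coarse cohomological dimension $\ccd_R$ (rather than $\cd_R$) allows the argument to proceed in the presence of torsion in $G$.

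By Dranishnikov's generalization of Bestvina's $\cZ$-structure, the hypothesis provides a compact ENR $\overline X$ with $\cZ$-set $Z \subseteq \overline X$ and contractible complement $X = \overline X \setminus Z$ on which $G$ acts properly and cocompactly. Work of Dranishnikov and Moran ensures $X$ may be taken to be a uniformly contractible bounded-geometry metric simplicial complex with $G$-equivariant triangulation. By Example~\ref{exmp:metric_complex}, the simplicial chain complex $\bC_\bullet(X;R)$ is a finite-dimensional, finite-height free $R$-resolution over $X^{(1)}$, and hence via the quasi-isometry $G \to X^{(1)}$ and Proposition~\ref{prop:pullback_res}, also over $G$. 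In particular, $\coarse^n(G;R) \cong H^n_c(X;R)$ for every $n \in \bbN$.

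The Bestvina--Mess computation, in its form for general $\cZ$-structures, identifies $H^*_c(X;R)$ with \v{C}ech cohomology of the boundary: via the long exact sequence of $(\overline X, Z)$, the contractibility of $\overline X$, and the $\cZ$-set property, one obtains $H^n_c(X;R) \cong \tilde{\check H}^{n-1}(Z;R)$, and the argument relativises to give $H^n_c(U;R) \cong \check H^{n-1}(Z, A;R)$ for suitable open $U \subseteq X$ paired with closed $A \subseteq Z$. Combined with the standard definition of $\dim_R(Z)$, this yields
\[\max\{n \in \bbN : H^n_c(X;R) \neq 0\} = \dim_R(Z) + 1,\]
with finite-dimensionality of $Z$ forced by $\ccd_R(G) < \infty$ together with Proposition~\ref{prop:ccd_chars}. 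To upgrade this to an equality $\ccd_R(G) = \dim_R(Z) + 1$ via Proposition~\ref{prop:ccd_chars}, I would additionally verify that $\coarse^n(G;\bN) = 0$ for $n > \dim_R(Z) + 1$ and \emph{all} $R$-modules $\bN$ over $G$. This is achieved by an argument modelled on the proof of Theorem~\ref{thm:ccd_finit_type_max} applied to the explicit finite-height resolution $\bC_\bullet(X;R)$: one constructs a finite displacement retraction from $\bC_{\dim_R(Z)+1}(X;R)$ onto the image of $\partial$, using cocycle representatives of classes in $H^{\dim_R(Z)+1}_c(X;R)$ supported in uniformly bounded neighbourhoods, which exist by coarse homogeneity of $X$ (as in Proposition~\ref{prop:unif_preimage_cobdry}). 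The ``moreover'' statement follows by setting $R = \bbZ$ and invoking Alexandroff's theorem $\dim(Z) = \dim_\bbZ(Z)$ for compact finite-dimensional metric spaces.

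The main technical difficulty will be executing the relative Bestvina--Mess identification carefully and constructing the level-$(\dim_R(Z)+1)$ coboundary retraction without the luxury of Noetherianity in $R$. These pieces are worked out in the torsion-free case in~\cite{margolis2019codim1}, and the framework of $R$-modules over metric spaces developed here permits the same arguments to go through mutatis mutandis.
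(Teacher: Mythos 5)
Your proposal follows the same architecture as the paper's proof. The key observation — that replacing $\cd_R$ with $\ccd_R$ circumvents the torsion obstruction present in the original Bestvina--Mess and Dranishnikov arguments — is correctly identified, and the chain of identifications $\coarse^*(G;R)\cong H^*_c(X;R)\cong\widetilde{\check H}^{*-1}(Z;R)$ is the same. The paper reaches $\coarse^n(G;R)\cong H^n_c(X;R)$ via Roe's coarse cohomology, using Proposition~\ref{prop:roe_iso} together with~\cite[Proposition 3.33]{roe1993coarse}, whereas you go directly through the metric simplicial complex structure of Example~\ref{exmp:metric_complex}; both routes are valid.

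Where you genuinely differ is in proposing to reprove two external inputs that the paper cites as black boxes. You propose to rerun the relative Bestvina--Mess argument (passing from $\check H^*(Z;R)$ to $\dim_R(Z)$ via pairs $(Z,A)$); the paper instead invokes the main theorem of~\cite{dranishnikov2006bestvinamess}, which handles exactly this. You also propose to reconstruct the finite-displacement retraction at level $\dim_R(Z)+1$ by hand; this is unnecessary, since uniform contractibility of $X$ gives type $FP_\infty$ via Theorem~\ref{thm:type_FPn}, which combined with $\ccd_R(G)<\infty$ and Proposition~\ref{prop:finite_type} yields coarse finite type, so Theorem~\ref{thm:ccd_finit_type_max} applies off the shelf. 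Your worry about Noetherianity of $R$ need not be resolved to match the paper: the paper's own argument, as written, only establishes the proposition for PIDs. Finally, the ``moreover'' clause invokes Alexandroff's theorem, which requires $\dim(Z)<\infty$ as an input; this is Moran's theorem~\cite{moran2016finite} and should be cited explicitly — it does not follow from $\ccd_R(G)<\infty$, which a priori bounds only $\dim_\bbZ(Z)$ and not $\dim(Z)$.
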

\begin{proof}
	Let $(\overline{X},Z)$ be a $\cZ$-structure on $G$ in the sense of~\cite{dranishnikov2006bestvinamess}. In particular, $G$ acts cocompactly and properly discontinuously on $X\coloneqq \overline{X}\setminus Z$. Since $X$ is uniformly contractible~\cite[Proposition 1]{dranishnikov2006bestvinamess},  Theorem~\ref{thm:type_FPn} implies $G$ is of type $FP_\infty$.
	As $\overline{X}$ is contractible,  the long exact sequence in cohomology yields isomorphisms $H^{n+1}_c(X;R)\cong H^{n}(Z;R)$ for all $n$, where $H^*_c$ is cohomology with compact supports; we refer to~\cite[Proposition 1.5]{bestvina1996zboundary} for details. Since $X$ is uniformly contractible, we have $H^{n+1}_c(X;R)\cong HX^{n+1}(X;R)\cong \coarse^{n+1}(X;R)$  by Proposition~\ref{prop:roe_iso} and~\cite[Proposition 3.33]{roe1993coarse}.  It follows that   $H^{n+1}(G,RG)\cong\coarse^{n+1}(G;R)\cong H^{n}(Z;R)$ as $R$-modules.  Theorem~\ref{thm:ccd_finit_type_max} and the main result of~\cite{dranishnikov2006bestvinamess} imply  that $\dim_R(Z)+1= \ccd_R(G)$ for any PID $R$. Moreover, Moran showed that the topological dimension of $Z$ is finite~\cite{moran2016finite}, so Alexandroff's theorem implies that $\dim(Z)+1= \ccd(G)$.
\end{proof}

\begin{cor}\label{cor:bdry}
	\bdry{}
\end{cor}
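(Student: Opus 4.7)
My strategy is to verify the two hypotheses of Proposition~\ref{prop:zbdry} in each setting -- namely, that $G$ admits a $\cZ$-boundary homeomorphic to $\partial X$ and that $\ccd(G)<\infty$ -- whereupon that proposition immediately yields $\ccd(G)=\dim(\partial X)+1$. The $\cZ$-boundary hypothesis is classical in both cases: Bestvina--Mess show that $(P_d(G)\cup \partial G,\partial G)$ is a $\cZ$-structure on a hyperbolic group $G$ for $d$ sufficiently large, with the Gromov boundary $\partial G$ naturally identified with $\partial X$ for any Cayley graph $X$; while Bestvina (in the torsion-free case) and Dranishnikov (in general) show that $(\overline{X},\partial X)$ is a $\cZ$-structure for a $\CAT(0)$ group $G$ acting geometrically on a proper $\CAT(0)$ space $X$.

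For the finiteness of $\ccd(G)$, the hyperbolic case is immediate: for $d$ sufficiently large the Rips complex $P_d(G)$ is a contractible simplicial complex of finite dimension (bounded above by the cardinality of a ball of radius $d/2$ in the Cayley graph) carrying a proper cocompact $G$-action, so Proposition~\ref{prop:ccd_action} gives $\ccd(G)\leq \dim P_d(G)<\infty$. In the $\CAT(0)$ case, I would exhibit a finite-dimensional contractible simplicial complex admitting a proper cocompact $G$-action -- for example, by taking the barycentric subdivision of a $G$-equivariant polyhedral (Voronoi-type) decomposition of $X$ associated to a $G$-orbit $Gx_0$: the cells are convex, $G$ permutes them, and cocompactness of the action forces both finitely many orbit types of cells and a uniform bound on their dimension. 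Applying Proposition~\ref{prop:ccd_action} to this simplicial complex again yields $\ccd(G)<\infty$.

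The principal obstacle is the $\CAT(0)$ case, where $G$ may contain torsion and fail to be virtually torsion-free (as in Wise's example mentioned in the introduction), so one cannot naively reduce to the classical $\vcd$ framework by passing to a torsion-free finite-index subgroup. This is precisely the reason for working with coarse cohomological dimension: $\ccd$ is insensitive to torsion, and the finite-dimensional equivariant cellular structure lives on $X$ itself rather than on any hypothetical torsion-free cover. With both hypotheses of Proposition~\ref{prop:zbdry} established in each case, that proposition delivers the desired identity $\ccd(G)=\dim(\partial X)+1<\infty$.
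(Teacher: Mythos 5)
Your overall strategy --- reduce to $\ccd(G)<\infty$ via Proposition~\ref{prop:zbdry} and establish that via Proposition~\ref{prop:ccd_action} --- is exactly the paper's, and the hyperbolic case is handled identically via Rips complexes. The $\CAT(0)$ construction you sketch, though, does not work as written. Voronoi cells of a discrete orbit in a proper $\CAT(0)$ space are compact convex sets but carry no intrinsic polyhedral or simplicial structure, so there is no ``barycentric subdivision'' to take; and cocompactness of the action does not by itself give ``a uniform bound on the dimension'' of those cells --- that would require a bound on the topological dimension of $X$ itself, which is not a formal consequence of the hypotheses. The paper sidesteps this entirely by citing \cite[Remark~III.$\Gamma$.3.27]{bridson1999metric}, which produces the needed complex as the nerve of a $G$-invariant cover of $X$ by fixed-radius balls around an orbit: the dimension bound there comes from bounded multiplicity of the cover (a consequence of properness and discreteness of the orbit), with no control on $\dim X$ required, and uniform contractibility of $X$ gives contractibility of the nerve once the radius is large. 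Replacing your Voronoi sketch with the nerve construction, or simply with the citation, closes the argument.
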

\begin{proof}
	By Proposition~\ref{prop:zbdry} and the discussion preceding it, it is sufficient to show $\ccd(G)<\infty$, which we do in both cases by appealing to  Proposition~\ref{prop:ccd_action}. In the case $G$ is hyperbolic, it acts geometrically on each Rips complex $P_r(G)$, which is contractible for $r$ sufficiently large~\cite[{Theorem III.$\Gamma$.3.21}]{bridson1999metric}. In the case $G$ is $\CAT(0)$, we know it acts properly and cocompactly on some finite-dimensional contractible simplicial complex~\cite[{Remark {III.$\Gamma$.3.27}}]{bridson1999metric}.
\end{proof}

\section{Spaces of coarse cohomological dimension one}\label{sec:ccd1}
\emph{In this section we prove Theorem~\ref{thm:ccd1}, characterising quasi-geodesic metric spaces $X$ of coarse cohomological dimension one as unbounded  quasi-trees.
}
\vspace{.3cm}

\begin{lem}\label{lem:geod-acyc}
	Let $R$ be a commutative ring with 1. If $X$ is a quasi-geodesic metric space, then $X$ is coarsely uniformly $0$-acyclic over $R$.
\end{lem}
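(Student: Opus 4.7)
The plan is to verify Definition~\ref{defn:unif_acyc} for $n=0$ by producing suitable functions $\lambda$ and $\mu$. Since $\widetilde{H}_0(Z;R)=0$ is equivalent to $Z$ being nonempty and path-connected, and since $P_i(N_r(x))$ has the same path components as the graph on $N_r(x)$ with edges joining pairs at distance $\leq i$, the statement reduces to the following: for every $i,r\geq 0$, every $x\in X$, and every pair $y_1,y_2\in N_r(x)$, one can find a sequence $y_1=q_0,q_1,\dots,q_n=y_2$ of points in $N_{\mu(i,r)}(x)$ with $d_X(q_{j-1},q_j)\leq\lambda(i)$ for each $j$.

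To build such a chain, I will fix a $(K,L)$-quasi-isometry $f:X\to Y$ to a geodesic metric space $Y$, together with an $L$-coarse inverse $g:Y\to X$. Given $y_1,y_2\in N_r(x)$, take a geodesic $\gamma$ in $Y$ from $f(y_1)$ to $f(y_2)$; its length is at most $2Kr+L$. Sample $\gamma$ at unit spacing to obtain points $p_0=f(y_1),p_1,\dots,p_n=f(y_2)$ with $d_Y(p_{j-1},p_j)\leq 1$, and set $q_0:=y_1$, $q_n:=y_2$, and $q_j:=g(p_j)$ for $0<j<n$. Standard quasi-isometry estimates then yield $d_X(q_{j-1},q_j)\leq K(1+3L)$, while $d_Y(p_j,f(y_1))\leq 2Kr+L$ gives $d_X(q_j,y_1)\leq 2K^2r+3KL$, so that $q_j\in N_{(2K^2+1)r+3KL}(x)$.

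It then suffices to set
\[
\lambda(i):=\max\bigl(i,\,K(1+3L)\bigr),\qquad \mu(i,r):=\max\bigl(r,\,(2K^2+1)r+3KL+i\bigr),
\]
both of which are increasing and satisfy $\lambda(i)\geq i$ and $\mu(i,r)\geq r$ as required by Definition~\ref{defn:unif_acyc}. The argument is entirely mechanical once the quasi-isometry to a geodesic space is brought in, so no genuine obstacle arises; the only step requiring any care is keeping track of constants to confirm that $\lambda$ can be chosen independently of $r$.
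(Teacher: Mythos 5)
Your argument is correct, and the route is mildly but genuinely different from the paper's. The paper first observes that in a \emph{geodesic} space $Y$ every ball $N_r(y)$ is path-connected (being a union of geodesics emanating from $y$), so $P_i(N_r(y))$ is connected for $i>0$ and hence $\widetilde{H}_0(P_i(N_r(y));R)=0$; it then concludes by citing invariance of coarse uniform $n$-acyclicity under coarse equivalence (\cite[Proposition 2.15]{margolis2018quasi}). You instead bypass the invariance lemma and transport the argument through the quasi-isometry by hand: sample a $Y$-geodesic, push the samples back into $X$ via the coarse inverse $g$, and read off explicit $\lambda$ and $\mu$ from the quasi-isometry constants. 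Both approaches reduce to the same underlying geometric fact; yours is more self-contained and makes the control functions explicit, while the paper's is shorter because it leans on a cited black-box. One small remark in your favour: the paper's phrasing ``$P_i(N_r(y))$ is connected for all $i$'' is slightly off at $i=0$ (where the Rips complex has no edges), whereas your choice $\lambda(i)=\max(i,K(1+3L))$ handles $i=0$ correctly without comment. Conversely, your clause ``nonempty and path-connected'' for $\widetilde{H}_0(Z;R)=0$ should just say ``path-connected'' (vacuously true if $Z=\emptyset$, but $N_r(x)\ni x$ anyway), and the $+i$ in your $\mu$ is unnecessary though harmless. None of these affect correctness.
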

\begin{proof}
	Suppose $X$ is quasi-isometric to a geodesic metric space $Y$. Then for each $y\in Y$ and $r\geq 0$, $N_r(y)$ is path-connected, and so the Rips complex $P_i(N_r(y))$ is connected for all $i$. This ensures the map $\widetilde{H}_0(P_i(N_r(x));R)\to \widetilde{H}_0(P_{i}(N_{r}(x));R) $ induced by inclusion is zero, and so $Y$ is coarsely uniformly $0$-acyclic over $R$. Since being coarsely uniformly $0$-acyclic over $R$ is invariant under coarse equivalence, $X$ is also coarsely uniformly $0$-acyclic over $R$.
\end{proof}

\begin{thm}\label{thm:ccd1}
	Let $X$ be a quasi-geodesic metric space. Then $\ccd_R(X)=1$ if and only if $X$ is quasi-isometric to an unbounded simplicial tree.
\end{thm}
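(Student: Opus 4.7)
\emph{Proof plan.} For the direction $(2) \Rightarrow (1)$: since $\ccd_R$ is a coarse invariant (Corollary~\ref{cor:ccd_coarseinv}), I reduce to checking $\ccd_R(T) = 1$ for any unbounded simplicial tree $T$. Example~\ref{exmp:metric_complex} realises the simplicial chain complex of $T$ as a free $R$-resolution over $T$, and this resolution is concentrated in dimensions $0$ and $1$, giving $\ccd_R(T) \leq 1$; unboundedness combined with Proposition~\ref{prop:bdd_0dim} gives $\ccd_R(T) \geq 1$.

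For $(1) \Rightarrow (2)$: Proposition~\ref{prop:bdd_0dim} gives unboundedness. Since $X$ is quasi-geodesic, passing to a quasi-isometric geodesic metric space (both $\ccd_R$ and the property of being quasi-isometric to a tree are quasi-isometry invariants) allows me to assume $X$ is geodesic and verify Manning's $\Delta$-bottleneck criterion. By Lemma~\ref{lem:geod-acyc}, $X$ is coarsely uniformly $0$-acyclic over $R$, so Proposition~\ref{prop:finite_type} produces a finite-height $1$-dimensional projective $R$-resolution $\bE_\bullet$ over $X$. Letting $\bD_\bullet$ be the ordered standard $R$-resolution of $X$, Proposition~\ref{prop:proj_res} furnishes finite-displacement chain maps $f_\# \colon \bD_\bullet \to \bE_\bullet$ and $g_\# \colon \bE_\bullet \to \bD_\bullet$ and a chain homotopy $h_\# \colon \bD_\bullet \to \bD_{\bullet+1}$ satisfying $g_\# f_\# - \id = \partial h_\# + h_\# \partial$, all of some common displacement $\Psi$.

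The central observation is that \emph{$\bE_1$ carries no non-zero $1$-cycles}: exactness of $\bE_\bullet$ together with $\bE_2 = 0$ forces $\ker(\partial \colon \bE_1 \to \bE_0) = 0$. Hence for any $1$-cycle $\sigma \in \bD_1(i)$, $f_1(\sigma)$ is a $1$-cycle in $\bE_1$, so $f_1(\sigma) = 0$, and the chain homotopy identity collapses to $\sigma = -\partial h_1(\sigma)$, with $\tau \coloneqq -h_1(\sigma) \in \bD_2(\Psi(i))$ and $\supp \tau \subseteq N_{\Psi(i)}(\supp \sigma)$. Via Lemma~\ref{lem:ordered_res_rips}, this translates into a \emph{tight filling} property: every $1$-cycle in $\bD_1(1)$, interpreted as a simplicial $1$-cycle in $P_1(X)$, bounds in $C_2(P_L(N_L(\supp\sigma)); R)$ for the fixed constant $L \coloneqq \Psi(1)$.

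To deduce the $\Delta$-bottleneck property, I will argue by contradiction with $\Delta$ taken much larger than $L$. Failure supplies points $x, y \in X$, a geodesic $\gamma$ from $x$ to $y$ with midpoint $m$, and a continuous path $p$ from $x$ to $y$ disjoint from $N_\Delta(m)$. Sampling $\gamma$ and $p$ at unit spacing yields $1$-chains $\sigma_\gamma, \sigma_p \in \bD_1(1)$ with common boundary, and the tight filling property produces $\tau \in \bD_2(L)$ with $\partial \tau = \sigma_\gamma - \sigma_p$ and $\supp \tau \subseteq N_L(\gamma \cup p)$. The contradiction will come from showing that $[\sigma_\gamma - \sigma_p]$ is a non-zero class in $H_1(P_L(N_L(\gamma \cup p)); R)$ once $\Delta \gg L$: the two $L$-tubes $N_L(\gamma)$ and $N_L(p)$ are separated at scale $L$ away from the endpoints (points of $N_L(\gamma)$ within $L$ of $m$ lie at distance at least $\Delta - L > L$ from every point of $N_L(p)$), and Rips complexes at scale $L$ of $L$-tubes around geodesic segments are contractible, so a Mayer--Vietoris argument on this cover computes $H_1 \cong R$ with the loop as generator. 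Manning's criterion then yields the desired quasi-isometry to a simplicial tree. The main obstacle is the execution of this final Mayer--Vietoris step in a general geodesic metric space: one must verify the contractibility at scale $L$ of the Rips complexes of the two tubes and pin down their intersection pattern, so that the resulting constants depend only on $L$, hence ultimately only on the single datum $\ccd_R(X) = 1$.
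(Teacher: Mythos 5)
Your direction $(2)\Rightarrow(1)$ matches the paper. For $(1)\Rightarrow(2)$, your reduction to a geodesic space, your use of Lemma~\ref{lem:geod-acyc} and Proposition~\ref{prop:finite_type} to get a finite-height $1$-dimensional resolution $\bE_\bullet$, and your ``tight filling'' observation (a $1$-cycle $\sigma\in\bD_1(i)$ satisfies $\sigma=-\partial h_1(\sigma)$ with controlled support, since $\bE_1$ carries no nonzero cycles) are all sound and in the same spirit as the paper.

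The gap is the Mayer--Vietoris step, and it is not a removable technical detail. You need $H_1(P_L(N_L(\gamma\cup p));R)$ to have a computable, nontrivial contribution from the loop $\sigma_\gamma-\sigma_p$, which you propose to extract by covering with the two tubes $N_L(\gamma)$, $N_L(p)$ and asserting that their Rips complexes at scale $L$ are contractible. That assertion is false in a general geodesic space: $p$ is an arbitrary rectifiable path, so $N_L(p)$ can have essentially arbitrary coarse topology, and even for the geodesic $\gamma$ the tube $N_L(\gamma)$ can support nontrivial $1$-cycles at scale $L$ (attach to a geodesic small loops of diameter slightly larger than $L$). You cannot use $\ccd_R(X)=1$ to rescue this, since the conclusion you want is precisely that no such local junk is possible --- that would be circular. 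Moreover, once you have produced the controlled filling $\tau$, showing $[\sigma_\gamma-\sigma_p]=0$ in $H_1$ of the large Rips complex is trivial; what you actually need is $[\sigma_\gamma-\sigma_p]\ne 0$, and that is what the Mayer--Vietoris was supposed to supply.

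The paper avoids homology computations on tubes entirely. Instead of trying to show a class is nontrivial, it uses Proposition~\ref{prop:ccd_chars} and Lemma~\ref{lem:comp} to write $\bC_1=\im(\bC_2\to\bC_1)\oplus\bP$ with a finite-displacement retraction $r_\#:\bC_1\to\bC_1$ onto $\bP$, and exploits that $\partial|_\bP$ is \emph{injective}. Taking $\omega$ a $1$-chain near $\gamma$ and $\tau$ a $1$-chain near $p$, both with boundary $y-x$, one has $r_\#\omega=r_\#\tau$ by injectivity of $\partial|_\bP$; combined with the displacement bound on $r_\#$, this single chain lies in a bounded neighbourhood of $\gamma$ \emph{and} in a bounded neighbourhood of $p$, and Lemma~\ref{lem:chain-path} then forces the midpoint $m$ to be uniformly close to it, hence to $p$. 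The point is \emph{uniqueness} of a canonical ``cycle-free'' representative, not nontriviality of a homology class. If you want to salvage your route you should replace the Mayer--Vietoris argument with this uniqueness device.
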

We think of a simplicial tree as a metric space by taking the induced path metric in which each edge has length 1.
We prove Theorem~\ref{thm:ccd1} by appealing to Manning's bottleneck criterion:
\begin{thm}[{\cite[Theorem 4.6]{manning2005pseudo}}]\label{thm:bottleneck}
	Let $X$ be a geodesic metric space. Then $X$ is quasi-isometric to a simplicial tree if and only if there exists a constant $\Delta>0$ such that the following condition holds.  For all $x,y\in X$, there exists some $m\in X$ with $d(x,m)=d(y,m)=\frac{1}{2}d(x,y)$ such that every path from $x$ to $y$ intersects $B_\Delta(m)$.
\end{thm}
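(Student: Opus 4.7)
The plan is to treat the two directions of the biconditional separately. The forward direction --- that being quasi-isometric to a simplicial tree implies the bottleneck property --- is relatively direct. The backward implication requires the construction of a simplicial tree approximating $X$ and is where the substantive work lies.

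For the forward direction, suppose $f: X \to T$ is a $(\lambda, \epsilon)$-quasi-isometry with quasi-inverse $g: T \to X$. Given $x, y \in X$ and the midpoint $m$ of a chosen geodesic from $x$ to $y$, let $m_T$ be the midpoint of the unique geodesic $[f(x), f(y)]$ in $T$; a direct computation using the quasi-isometry constants gives a uniform bound on $d_T(f(m), m_T)$. Given any path $\alpha$ from $x$ to $y$ in $X$, I would discretize $\alpha$ by sampling $x = \alpha(t_0), \alpha(t_1), \ldots, \alpha(t_n) = y$ with consecutive distances at most $1$. The sequence $\{f(\alpha(t_i))\}$ has uniformly bounded consecutive jumps in $T$, and since $T$ is a simplicial tree, any such chain from $f(x)$ to $f(y)$ must come within a constant of $m_T$ (otherwise two consecutive images would have to straddle the edge of $T$ through $m_T$, contradicting the bounded jumps). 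This forces some $f(\alpha(t_i))$ to lie within a constant $C = C(\lambda, \epsilon)$ of $m_T$, hence of $f(m)$; applying $g$ and using that $g \circ f$ is close to $\id_X$ yields $d(\alpha(t_i), m) \leq \Delta$ for $\Delta$ depending only on $\lambda$ and $\epsilon$.

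For the backward direction, assume the bottleneck property holds with constant $\Delta$. The strategy is to build a simplicial tree $T$ together with a quasi-isometry $X \to T$. Fix a basepoint $x_0 \in X$ and, for each $x \in X$, a geodesic $\gamma_x$ from $x_0$ to $x$. A crucial preliminary step is to prove that $X$ is Gromov hyperbolic: applied iteratively to midpoints along the sides of a geodesic triangle, the bottleneck condition forces corresponding interior points to be mutually close, from which uniform thinness of triangles follows. Next, one defines a relation on the discrete samples $\{\gamma_x(k) : x \in X,\; k \in \bbN \cap [0, d(x_0, x)]\}$ by identifying nearby points whose geodesics to $x_0$ share a common initial portion (equivalently, the same ``first divergence time'' from $x_0$). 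The resulting quotient is then shown to be a simplicial tree $T$, again by appealing to the bottleneck condition to exclude cycles, and the map assigning each $x$ to a representative of its equivalence class is verified to be a quasi-isometry with constants depending only on $\Delta$.

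The main obstacle lies in the backward direction, specifically in making the tree construction both genuinely combinatorial and quasi-isometrically faithful. The essential geometric content is that two geodesics from $x_0$ which fellow-travel for a long time must, by iterated application of the bottleneck property at successive midpoints, be uniformly close on that common initial segment; this is what allows a well-defined ``common tree segment'' up to bounded error. The delicate point is discretization --- naively one would produce an $\bbR$-tree or even a non-Hausdorff quotient, and a careful choice of scale (depending on $\Delta$) is required to extract a genuine simplicial tree without sacrificing quasi-isometric control. Proving that the comparison map is coarsely Lipschitz is straightforward, but establishing the reverse inequality --- that distinct branches in $X$ produce distinct branches in $T$ --- is the main technical hurdle and requires the Gromov hyperbolicity step as an essential intermediate.
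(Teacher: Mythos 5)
A preliminary remark: the paper does not prove this statement at all --- it is Manning's bottleneck criterion, imported verbatim from \cite[Theorem 4.6]{manning2005pseudo} and used as a black box in the proof of Theorem~\ref{thm:ccd1}. So there is no in-paper proof to compare against; I am assessing your proposal against the known argument.

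Your forward direction is structurally right but contains a false step. It is \emph{not} true that $d_T(f(m),m_T)$ is uniformly bounded: a $(\lambda,\epsilon)$-quasi-isometry can move the midpoint of a geodesic a distance linear in $d(x,y)$ along the image arc (stretch one half of a segment more than the other), so the ``direct computation with the quasi-isometry constants'' does not close. What is true, and what you actually need, is that $f(m)$ lies within a uniform distance of the arc $[f(x),f(y)]_T$; this follows from stability of quasi-geodesics in trees applied to the image of the geodesic through $m$, not from the two-point inequalities. One then runs your chain argument with the nearest point $p\in[f(x),f(y)]_T$ to $f(m)$ in place of $m_T$: any interior point of the arc separates $f(x)$ from $f(y)$, so some consecutive pair of the chain straddles $p$ and some $f(\alpha(t_i))$ lands within the jump bound of $p$. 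With that repair the forward direction is fine.

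The backward direction has a genuine gap, in two places. First, the hyperbolicity step as described does not work: iterating the bottleneck at dyadic midpoints fails because a path from $x$ to $y$ does not restrict to a path from $x$ to $m$; one must cut the path where it comes within $\Delta$ of $m$ and splice in a short geodesic, and the resulting error accumulates over $\log_2 d(x,y)$ levels, giving only a logarithmic rather than uniform bound. Upgrading the midpoint condition to ``every point of every geodesic is uniformly close to every path between the endpoints'' is a real lemma requiring a different argument, and it is the engine of the whole proof. Second, and more seriously, the tree construction is not carried out. The relation ``nearby points whose geodesics from $x_0$ have the same first divergence time'' is not transitive; you must take its transitive closure, and nothing in the proposal prevents the closure from collapsing large portions of $X$ --- the lower quasi-isometry bound for the quotient map is exactly the content of the theorem, not an after-the-fact verification, and it also depends on the (unconstrained) choice of geodesics $\gamma_x$. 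A construction that actually discharges this uses the bottleneck hypothesis combinatorially: at a suitable scale $k=k(\Delta)$, two points $y_1,y_2$ in the annulus $\{y:nk\le d(x_0,y)<(n+1)k\}$ that are joined by a path avoiding $B_{nk}(x_0)$ force their designated midpoint $m(y_1,y_2)$ to be simultaneously far from $x_0$ (it is $\Delta$-close to that path) and close to $B_{nk}(x_0)$ (it is $\Delta$-close to the concatenation of the two geodesics through $x_0$), which bounds $d(y_1,y_2)$; the nerve of the resulting decomposition into bounded components is a simplicial tree and the comparison map is a quasi-isometry for free. In your write-up the two hardest steps are correctly identified as ``the main technical hurdle'' but are not resolved, so the proof is incomplete as it stands.
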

In order to prove Theorem~\ref{thm:ccd1}, we require a few lemmas.
The following lemma relates 1-chains in the standard resolution to paths:
\begin{lem}\label{lem:chain-path}
	Let $X$ be a metric space and let $\bC_\bullet$ be the ordered standard  $R$-resolution over $X$. If $x,y\in X$ and  $\partial \sigma=y-x$ for some  $\sigma\in \bC_1(i)$, then there exists a sequence $\{x_0,x_1,\dots, x_n\}\subseteq \supp(\sigma)\cup\{x,y\}$ with $x_0=x$, $x_n=y$ and $d(x_j,x_{j+1})\leq i$ for all $j$.
\end{lem}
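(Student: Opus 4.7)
The proof reduces to a graph-theoretic argument extracting a path from the $1$-chain $\sigma$. Write $\sigma = \sum_{(a,b)} r_{(a,b)}(a,b)$, where the sum runs over ordered pairs $(a,b)$ with $a<b$ in the fixed total order on $X$ and $r_{(a,b)}\in R$ is almost always zero. Because $\sigma\in\bC_1(i)$, every $(a,b)$ with $r_{(a,b)}\neq 0$ satisfies $d(a,b)\leq i$. I would then introduce the finite graph $G$ whose vertex set $V$ consists of $\{x,y\}$ together with the endpoints of all simplices $(a,b)$ with $r_{(a,b)}\neq 0$, and whose edges are the unordered pairs $\{a,b\}$ with $r_{(a,b)}\neq 0$.

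The crux of the proof is to show $x$ and $y$ lie in the same connected component $S$ of $G$. For each $v\in V$, let $c(v)\in R$ denote the coefficient of $v$ in $\partial\sigma$; by hypothesis $c(x)=-1$, $c(y)=1$, and $c(v)=0$ for all other $v$. Every simplex $(a,b)$ with $r_{(a,b)}\neq 0$ contributes $-r_{(a,b)}$ to $c(a)$ and $+r_{(a,b)}$ to $c(b)$. Hence its net contribution to $\sum_{v\in S}c(v)$ is zero when both or neither of $a,b$ lie in $S$, and is $\pm r_{(a,b)}$ when exactly one does. Since $S$ is a connected component of $G$, no edge of $G$ straddles $S$ and $V\setminus S$, so only the first case occurs. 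Therefore $\sum_{v\in S}c(v)=0$. If $x\neq y$ and $y\notin S$, this sum equals $c(x)=-1$, a contradiction; hence $y\in S$ (the case $x=y$ being vacuous, as one may take $n=0$).

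Because $y$ lies in the connected component of $x$, one may extract a simple path $x=x_0,x_1,\ldots,x_n=y$ in $G$. Each consecutive pair $\{x_j,x_{j+1}\}$ is an edge of $G$, so $d(x_j,x_{j+1})\leq i$. Each $x_j$ is either in $\{x,y\}$ or an endpoint of some $(a,b)$ with $r_{(a,b)}\neq 0$, hence lies in $\supp(\sigma)\cup\{x,y\}$. I do not anticipate a serious obstacle: the argument is elementary, with the only genuine work being the coefficient bookkeeping $\sum_{v\in S}c(v)=0$. Minor care is needed to treat correctly a vertex $v\in V$ that happens to appear only as a second coordinate of simplices in $\sigma$, but this is handled uniformly by the component argument above, since the vanishing $c(v)=0$ is automatic once $v\notin\{x,y\}$.
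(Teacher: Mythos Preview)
Your connectivity argument via connected components is correct and is an explicit version of the ``standard argument'' the paper invokes. The gap is in your last step: you assert that each path vertex, being an endpoint of some simplex appearing in $\sigma$, lies in $\supp(\sigma)\cup\{x,y\}$. But in the standard resolution the control function is $p_1([a,b])=a$, so $\supp(\sigma)$ contains only the \emph{first} coordinates of simplices occurring in $\sigma$; a vertex that occurs solely as a second coordinate need not lie there. Your component argument establishes that $x$ and $y$ are connected in $G$, but gives no control over which vertices a connecting path must traverse, so your dismissal of this case is not justified. Concretely: take $X=\{1,2,3,4\}$ with $1<2<3<4$, a metric satisfying $d(1,3)=d(2,3)=d(2,4)=i$ but $d(1,2),d(1,4)>i$, and $\sigma=[1,3]-[2,3]+[2,4]\in\bC_1(i)$. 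Then $\partial\sigma=4-1$ and $\supp(\sigma)=\{1,2\}$, yet the only path in $G$ from $1$ to $4$ is $1,3,2,4$, and no sequence inside $\supp(\sigma)\cup\{x,y\}=\{1,2,4\}$ with $i$-jumps connects $1$ to $4$.

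The paper's own proof shares this oversight (the sentence ``Since $\supp(\sigma)=\{z_0,\dots,z_k\}$, this completes the proof'' ignores the second coordinates $y_j$). The lemma becomes true --- and your argument then proves it --- if $\supp(\sigma)\cup\{x,y\}$ is replaced by the set of all endpoints of simplices occurring in $\sigma$ together with $\{x,y\}$; since every second coordinate lies within distance $i$ of some first coordinate, this set is contained in $N_i(\supp(\sigma))\cup\{x,y\}$, which is all that is used in the application to Theorem~\ref{thm:ccd1} up to an inessential change of constants.
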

\begin{proof}
	We note that $\sigma\in\bC_1(i)$ can be expressed as a finite sum $\sigma=\sum_{j=0}^k r_j[z_j,y_j]$, where for each $j$, we have $r_j\in R\setminus \{0\}$, $z_j,y_j\in X$ and  $d(z_j,y_j)\leq i$.
	By Lemma~\ref{lem:ordered_res_rips}, the chain complex $\bC_1(i)\to \bC_0(i)\to 0$ can be identified with the  simplicial chain complex of the 1-skeleton of $P_i(X)$, the Rips complex of $X$ with parameter $i$. A standard argument implies that $\sigma$ must be a sum of a path in $P_i(X)$ from $x$ to $y$, and a linear combination of closed loops in $P_i(X)$. Since $\supp(\sigma)=\{z_0,\dots, z_k\}$, this completes the proof.
\end{proof}
We also show that a path can be approximated by a 1-chain in the following sense:
\begin{lem}\label{lem:path_approx}
	Let $X$ be a metric space and $\bC_\bullet$ the ordered standard  $R$-resolution over $X$. Suppose  $\bC_\bullet$ is $(\lambda,\mu)$-uniformly $0$-acyclic over $R$ and   $p:[0,L]\to X$ is a  path from $x$ to $y$. Then there exists a 1-chain $\omega\in \bC_1(\lambda(0))$ with $\partial \omega=y-x$ and $\supp(\omega)\subseteq N_{\mu(0,1)}(\im(p))$.
\end{lem}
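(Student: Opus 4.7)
The plan is to approximate $p$ by a discrete sequence of short hops along the path and then use uniform $0$-acyclicity to fill in each hop by a $1$-chain. By uniform continuity of $p$ on the compact interval $[0,L]$, I can choose a subdivision $0=t_0<t_1<\cdots<t_n=L$ with $d(p(t_k),p(t_{k+1}))\leq 1$ for all $k$. Set $z_k\coloneqq p(t_k)$, so that $z_0=x$, $z_n=y$ and each $z_k\in\im(p)$.

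For each $k$, the chain $\tau_k\coloneqq z_{k+1}-z_k\in \bC_0$ is a reduced $0$-cycle (since $\epsilon(\tau_k)=0$) lying in $\bC_0(0)$ with $\supp(\tau_k)\subseteq \{z_k,z_{k+1}\}\subseteq N_1(z_k)$. Applying Lemma~\ref{lem:standard}(\ref{item:lemstandard2}) in dimension zero with $i=0$ gives $\tau_k\in \bC_0(0,N_1(z_k))$. Now invoking $(\lambda,\mu)$-uniform $0$-acyclicity at $i=0$, $r=1$, $x=z_k$, the class $[\tau_k]$ vanishes in $\widetilde{H}_0(\bC_\bullet(\lambda(0),N_{\mu(0,1)}(z_k)))$, so there exists some $\omega_k\in \bC_1(\lambda(0),N_{\mu(0,1)}(z_k))$ with $\partial\omega_k=\tau_k$. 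Lemma~\ref{lem:standard}(\ref{item:lemstandard1}) then yields $\omega_k\in \bC_1(\lambda(0))$ and $\supp(\omega_k)\subseteq N_{\mu(0,1)}(z_k)\subseteq N_{\mu(0,1)}(\im(p))$.

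Setting $\omega\coloneqq \sum_{k=0}^{n-1}\omega_k\in \bC_1(\lambda(0))$, the boundaries telescope to give
\[\partial\omega=\sum_{k=0}^{n-1}(z_{k+1}-z_k)=z_n-z_0=y-x,\]
while Lemma~\ref{lem:support_sum} (applied iteratively) yields $\supp(\omega)\subseteq \bigcup_{k}\supp(\omega_k)\subseteq N_{\mu(0,1)}(\im(p))$, as required.

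I anticipate no serious obstacle: the argument is essentially a chain-level version of connecting two points via a path and lifting the hops to a $1$-chain. The only point to verify carefully is that the short-hop $0$-cycles $\tau_k$ actually sit inside the standard subcomplex $\bC_0(0,N_1(z_k))$ so that uniform $0$-acyclicity applies, which is immediate from the $k=0$ case of Lemma~\ref{lem:standard}(\ref{item:lemstandard2}). The degenerate case $z_k=z_{k+1}$ is handled by setting $\omega_k=0$.
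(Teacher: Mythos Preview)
Your proof is correct and follows essentially the same approach as the paper's: subdivide the path into hops of length at most $1$ using uniform continuity, apply $(\lambda,\mu)$-uniform $0$-acyclicity to fill each reduced $0$-cycle $z_{k+1}-z_k\in\bC_0(0,N_1(z_k))$ by a $1$-chain $\omega_k\in\bC_1(\lambda(0),N_{\mu(0,1)}(z_k))$, and sum. Your invocations of Lemma~\ref{lem:standard} and Lemma~\ref{lem:support_sum} make explicit a couple of steps the paper leaves implicit, but the argument is otherwise identical.
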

\begin{proof}
	Since $p$ is uniformly continuous, there is a constant $\delta>0$ such that $d(p(s),p(t))\leq 1$ whenever $|s-t|\leq \delta$. We thus pick a sequence $0=t_0<t_1<\cdots<t_N=L$ with $t_j-t_{j-1}\leq\delta$ for each $j$.
	Setting $x_j=p(t_j)$ for each $j$, we have a sequence $x=x_0,\dots, x_N=y$ with $d(x_{j-1},x_j)\leq 1$ for all $j$.
	Note that by Lemma~\ref{lem:standard}, each $x_{j}-x_{j-1}$ is a reduced 0-cycle in $\bC_\bullet(0,N_1(x_j))$. Thus there exists  some $\omega_j\in \bC_1(\lambda(0),N_{\mu(0,1)}(x_j))$ with $\partial \omega_j=x_j-x_{j-1}$. Therefore  $\supp(\omega_j)\subseteq N_{\mu(0,1)}(x_j)\subseteq N_{\mu(0,1)}(\im(p))$. Setting $\omega\coloneqq \sum_{j=1}^N\omega_j$, we see $\supp(\omega)\subseteq N_{\mu(0,1)}(\im(p))$ and $\partial\omega=y-x$ as required.
\end{proof}

\begin{proof}[Proof of Theorem~\ref{thm:ccd1}]
	First suppose $X$ is quasi-isometric to an unbounded simplicial tree $T$. It follows from Example~\ref{exmp:metric_complex} that the simplicial chain complex of $T$ with coefficients in $R$  can be endowed with the structure of projective $R$-resolution over $T$, hence $\ccd_R(T)\leq 1$. Since $T$ is unbounded, Proposition~\ref{prop:bdd_0dim} implies $\ccd_R(T)=1$.  As $\ccd_R(-)$ is invariant under quasi-isometries, it follows $\ccd_R(X)= 1$.

	Conversely, suppose $\ccd_R(X)=1$. Replacing $X$ by geodesic metric space quasi-isometric to $X$ if necessary, we can assume $X$ is itself a geodesic metric space. Let $\bC_\bullet$ be the standard $R$-resolution over $X$. By Proposition~\ref{prop:acyc_res} and Lemma~\ref{lem:geod-acyc}, we can choose $(\lambda,\mu)$ such that $\bC_\bullet$ is $(\lambda,\mu)$-uniformly $0$-acyclic over $R$.  By Lemma~\ref{lem:comp} and Proposition~\ref{prop:ccd_chars}, $\im(\bC_2\to \bC_1)\leq \bC_1$ has a complement $\bP\leq \bC_1$, and there is a finite displacement retraction $r_\#$ from $\bC_1$ to $\bP$. Let $\Phi$ be the displacement of $r_\#$. Since $\bP$ is a complement of $\im(\bC_2\to\bC_1)=\ker(\bC_1\to \bC_0)$,  $\partial|_\bP:\bP\to \bC_0$ is injective and  $\partial m=\partial r_\#m$ for each $m\in \bC_1$. Set $i_0\coloneqq \lambda(0)$, $i_1=\Phi(i_0)$, $D\coloneqq \mu(0,1)$, $D_1\coloneqq \Phi(i_0)+D$,  $D_2\coloneqq 3D_1+i_1$ and $\Delta\coloneqq D_1+D_2$.

	Let $x,y\in X$ and let $\gamma$ be a geodesic from $x$ to $y$ with midpoint $m$.   By Lemma~\ref{lem:path_approx}, there exists $\omega\in \bC_1(i_0)$ with $\supp(\omega)\subseteq N_D(\im(\gamma))$ and $\partial\omega=y-x$.
	Therefore, we deduce that $r_\#(\omega)\in \bC_1(i_1)$, that $\partial r_\#\omega=\partial \omega=y-x$ and that
	$\supp(r_\#\omega)\subseteq N_{\Phi(i_0)}(\supp(\omega))\subseteq N_{D_1}(\im(\gamma))$. It follows from Lemma~\ref{lem:chain-path} that there exists a sequence $x=x_0,\dots, x_n=y$ in $\supp(r_\#\omega)\cup \{x,y\}\subseteq N_{D_1}(\im(\gamma))$ with $d(x_{j-1},x_j)\leq i_1$ for all $j$.
	Let $\pi:X\to \im(\gamma)$ be a closest point projection, and set $y_j\coloneqq \pi(x_j)$ for each $j$. Then $d(y_{j-1},y_j)\leq  2D_1+i_1$. Let $\gamma_j$ be the geodesic segment $[y_{j-1},y_j]_\gamma$ along $\gamma$. Since $\gamma$ is a geodesic and the concatenation $\gamma_1 \cdots  \gamma_n$ is path from $x$ to $y$ contained in $\im(\gamma)$, it follows $m\in \gamma_j$ for some $j$. Thus \[d(x_j,m)\leq d(x_j,y_j)+d(y_j,m)\leq d(x_j,y_j)+d(y_j,y_{j-1})\leq 3D_1+i_1=D_2,\] and so $m\in N_{D_2}(\supp(r_\#\omega)\cup \{x,y\})$.

	Now let $p$ be any path from $x$ to $y$. By Lemma~\ref{lem:path_approx}, there exists $\tau\in \bC_1(i_0)$ with $\supp(\tau)\subseteq N_D(\im(p))$ and $\partial\tau=y-x$. Thus $\supp(r_\#\tau)\cup\{x,y\}\subseteq N_{D_1}(\im(p))$. However, since  $\partial r_\#\tau=\partial r_\#\omega=y-x$ and $\partial|_\bP$ is injective, we deduce $r_\#\tau=r_\#\omega$. Therefore, \[m\in N_{D_2}(\supp(r_\#\omega)\cup \{x,y\})\subseteq N_{D_2}(\supp(r_\#\tau)\cup \{x,y\}) \subseteq N_{\Delta}(\im(p)).\] In other words, every path $p$ from $x$ to $y$ must intersect the ball $B_\Delta(m)$. By Theorem~\ref{thm:bottleneck}, this condition guarantees that $X$ is quasi-isometric to a simplicial tree.
\end{proof}

\section{Cap and cup products}\label{sec:cupcap}
\emph{In this section, we develop a notion of cup and cap product for proper metric spaces, whose properties are described in Lemma~\ref{lem:cupcap_properties} and Proposition~\ref{prop:cupcapinf_properties}. In applications, both in Section~\ref{sec:coarsePDn} and in upcoming work, the utility of the theory  lies not in simply looking at the induced maps on the level of homology/cohomology, but on  the precise quantitative bounds on supports on the chain/cochain level.
}
\vspace{.3cm}

Throughout this section, we fix a  metric space $X$ and a proper projective $R$-resolution $\bC_\bullet$ over $X$. We equip $X\times X$ with the $\ell_1$-metric. We define the \emph{diagonal embedding} $\gamma:X\to X\times X$ by $\gamma(x)=(x,x)$, so that $d_{X\times X}(\gamma(x),\gamma(x'))=2d_X(x,x')$.
\begin{defn}
	A \emph{diagonal approximation} of $\bC_\bullet$ is a chain map $\Delta:\bC_\bullet\to \bC_\bullet\otimes \bC_\bullet$ that is  augmentation-preserving and of finite displacement over the  diagonal embedding $\gamma$.
\end{defn}
Proposition~\ref{prop:proj_res} implies that:
\begin{prop}\label{prop:diagonal_approx}
	The diagonal approximation exists and is unique up to a finite displacement chain homotopy equivalence.
\end{prop}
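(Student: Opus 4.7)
The plan is to recognise this proposition as a direct consequence of the machinery already established, specifically Propositions~\ref{prop:tensor_res} and~\ref{prop:proj_res} applied to the diagonal embedding $\gamma:X\to X\times X$.

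First, I would observe that $\gamma$ is bornologous: since $X\times X$ is equipped with the $\ell_1$-metric, we have $d_{X\times X}(\gamma(x),\gamma(x'))=2d_X(x,x')$, so $\gamma$ is $2$-Lipschitz with control function $\Upsilon(t)=2t$. Next, $\bC_\bullet$ is a proper projective $R$-resolution over $X$ by hypothesis, and by Proposition~\ref{prop:tensor_res}, $\bC_\bullet\otimes_R\bC_\bullet$ is a projective $R$-resolution over $X\times X$, with augmentation $\epsilon\otimes\epsilon$.

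For existence, I would apply Proposition~\ref{prop:proj_res}(\ref{item:proj_res1}) with $f=\gamma$, taking the domain resolution to be $\bC_\bullet$ (over $X$) and the target resolution to be $\bC_\bullet\otimes_R\bC_\bullet$ (over $X\times X$). This yields an augmentation-preserving chain map $\Delta:\bC_\bullet\to\bC_\bullet\otimes_R\bC_\bullet$ of finite displacement over $\gamma$, which is by definition a diagonal approximation. For uniqueness, given two diagonal approximations $\Delta,\Delta':\bC_\bullet\to\bC_\bullet\otimes_R\bC_\bullet$, Proposition~\ref{prop:proj_res}(\ref{item:proj_res2}) applied with $f=\gamma$ produces a chain homotopy $\Delta\stackrel{h_\#}{\simeq}\Delta'$ of finite displacement over $\gamma$, which is exactly the finite displacement chain homotopy equivalence asserted.

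There is no substantive obstacle here; the proposition is essentially a formal corollary of the Fundamental Lemma for projective $R$-resolutions over metric spaces (Proposition~\ref{prop:proj_res}) once one knows that $\bC_\bullet\otimes_R\bC_\bullet$ is a projective $R$-resolution over $X\times X$. All of the real work — K\"unneth-type arguments for weak uniform acyclicity of tensor products, and the inductive lifting/chain-homotopy construction — has already been carried out in the earlier sections.
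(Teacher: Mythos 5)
Your proof is correct and is essentially the paper's own argument: the paper states Proposition~\ref{prop:diagonal_approx} as an immediate consequence of Proposition~\ref{prop:proj_res} applied over the (bornologous) diagonal embedding $\gamma$, with Proposition~\ref{prop:tensor_res} supplying the fact that $\bC_\bullet\otimes_R\bC_\bullet$ is a projective $R$-resolution over $X\times X$. You have simply spelled out the details the paper leaves implicit.
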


We now fix a diagonal approximation $\Delta$ and pick $\Phi$ large enough such that $\bC_\bullet$ and $\bC_\bullet\otimes \bC_\bullet$ have displacement $\Phi$, and $\Delta$ has displacement $\Phi$ over $\gamma$.
We suppose $\bC_k=(C_k,B_k,\delta^k,p_k,\{B_k(i)\})$ and each $\bC_k$ is equipped with a projective basis $\{m_b^k\}_{b\in B_k}$ satisfying the conclusion of Lemma~\ref{lem:proj_basis_finheight} when $\bC_k$ has finite height.

\begin{defn}
	Given $\alpha\in \bC^j$ and $\beta\in \bC^k$, we define the \emph{cup product} $\alpha\smile \beta\in \bC^{j+k} $ by \[(\alpha\smile \beta)(\sigma)=(\alpha\otimes\beta)\Delta(\sigma)\] for each $\sigma\in \bC_{j+k}$.
	If $\tau\in \bC_n$ and $\alpha\in \bC^j$ with $j\leq n$, we define the \emph{cap product} $\tau\frown\alpha\in \bC_{n-j}$ by $(\alpha\otimes 1)\Delta\tau$.
\end{defn}

\begin{rem}
	For explicitness, we note that if $\alpha\in \bC^j$,  $\beta\in \bC^k$ and $\sigma_1\otimes \sigma_2\in \bC_{l}\otimes \bC_{j+k-l}$, then $(\alpha\otimes \beta)(\sigma_1\otimes \sigma_2)$ is equal to $\alpha(\sigma_1)\beta(\sigma_2)$ if $l=j$ and is equal to zero otherwise. Similarly,  $(\alpha\otimes 1)(\sigma_1\otimes \sigma_2)$ is equal to $\alpha(\sigma_1)\sigma_2$ if $l=j$ and is equal to zero otherwise.
\end{rem}

The following properties of cup and cap products will be used extensively:
\begin{lem}\label{lem:cupcap_properties}
	There exists an increasing function  $\Psi:\bbN\to\bbN$, depending only on $\Phi$, such that for all  $\alpha\in \bC^j$, $\beta\in \bC^k$ and $\tau\in \bC_n$ with $n\geq j$,  the following hold:
	\begin{enumerate}
		\item\label{item:cupcap_properties_bdry_cup}
		      $\delta(\alpha\smile \beta)=(-1)^k\delta\alpha\smile\beta+\alpha\smile\delta\beta$.
		\item  $\partial(\tau\frown\alpha)=\tau\frown\delta\alpha+(-1)^j\partial\tau\frown\alpha$.\label{item:cupcap_properties_bdry_cap}
		\item\label{item:cupcap_properties_cup}
		      For each $i\in \bbN$, \begin{align*}
			      \supp(\alpha\smile \beta,i)\subseteq N_{\Psi(i)}(\supp(\alpha,\Psi(i)))\cap N_{\Psi(i)}(\supp(\beta,\Psi(i))).
		      \end{align*}
		\item\label{item:cupcap_properties_cap}
		      If $\tau\in \bC_n(i)$, then $\tau\frown\alpha\in \bC_{n-j}(\Psi(i))$ and \[\supp(\tau\frown \alpha)\subseteq N_{\Psi(i)}(\supp(\tau))\cap N_{\Psi(i)}(\supp(\alpha,\Psi(i))).\]
		\item\label{item:cupcap_properties_aug}
		      If $j=n$, $\partial\tau=0$ and $\delta\alpha=0$, then $\epsilon(\tau\frown\alpha)=\alpha(\tau)$.

	\end{enumerate}
\end{lem}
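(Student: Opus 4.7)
My plan is to reduce all five statements to a combination of standard sign-juggling in chain complexes and careful bookkeeping of supports through the diagonal approximation $\Delta$, using Proposition~\ref{prop:tensor_overX} as the main accounting device. Throughout, $\Psi$ will be chosen large enough to absorb the displacement $\Phi$ of $\bC_\bullet$, the displacement $\Phi$ of $\Delta$ over the diagonal embedding $\gamma$, and the constants appearing in Proposition~\ref{prop:tensor_overX}.

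For (\ref{item:cupcap_properties_bdry_cup}) and (\ref{item:cupcap_properties_bdry_cap}), I would use the facts that $\Delta\partial=\partial\Delta$ and that $(\alpha\otimes\beta)\circ\partial$, computed on the relevant bigraded pieces of $\bC_\bullet\otimes\bC_\bullet$, unfolds via the tensor-product boundary formula $\partial(\sigma_1\otimes\sigma_2)=\partial\sigma_1\otimes\sigma_2+(-1)^{|\sigma_1|}\sigma_1\otimes\partial\sigma_2$ together with the sign convention $\delta\gamma=(-1)^{|\gamma|+1}\gamma\circ\partial$. Expanding $\delta(\alpha\smile\beta)(\sigma)=(-1)^{j+k+1}(\alpha\otimes\beta)\partial\Delta\sigma$ and splitting the contributions by bidegrees $(j{+}1,k)$ and $(j,k{+}1)$ yields exactly $(-1)^k\delta\alpha\smile\beta+\alpha\smile\delta\beta$, while the analogous decomposition of $\partial\bigl((\alpha\otimes 1)\Delta\tau\bigr)$ yields (\ref{item:cupcap_properties_bdry_cap}). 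No geometric input is needed for these two parts; they are purely algebraic identities flowing from $\Delta$ being a chain map.

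For the support bounds (\ref{item:cupcap_properties_cup}) and (\ref{item:cupcap_properties_cap}), the key is that Proposition~\ref{prop:tensor_overX} identifies the $(j,k)$-graded piece of $\bC_j\otimes\bC_k$ with the projective $R$-module over $X\times X$ having basis $\{m^j_{b_1}\otimes m^k_{b_2}\}$, dual basis $\{\delta^j_{b_1}\otimes\delta^k_{b_2}\}$, control function $p_j\times p_k$, and support $\{p(b_1)\}\times\{p(b_2)\}$. For (\ref{item:cupcap_properties_cup}), I would fix $b\in B_{j+k}(i)$ with $(\alpha\smile\beta)(m^{j+k}_b)\neq 0$ and expand $\Delta m^{j+k}_b$ in this bigraded basis: a surviving term $(\delta^j_{b_1}\otimes\delta^k_{b_2})(\Delta m^{j+k}_b)\,m^j_{b_1}\otimes m^k_{b_2}$ with nonzero contribution forces $p(b_1)\in\supp(\alpha,\Psi(i))$, $p(b_2)\in\supp(\beta,\Psi(i))$, and $(p(b_1),p(b_2))\in N_{\Psi(i)}(\gamma(p(b)))$, since $\Delta$ has displacement $\Phi$ over $\gamma$ and $m^{j+k}_b$ is supported near $p(b)$; the three conditions together pin $p(b)$ to the claimed intersection. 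Part (\ref{item:cupcap_properties_cap}) is the same bookkeeping applied to $(\alpha\otimes 1)\Delta\tau$: each surviving $m^{n-j}_{b_2}$ requires an accompanying $b_1$ with $\alpha(m^j_{b_1})\neq 0$ and $(p(b_1),p(b_2))$ near $\gamma(\supp\tau)$, so $p(b_2)$ lies near both $\supp\tau$ and $\supp(\alpha,\Psi(i))$, while the height bound $\tau\frown\alpha\in\bC_{n-j}(\Psi(i))$ follows from the displacement of $\Delta$.

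Part (\ref{item:cupcap_properties_aug}) is the only step requiring genuine homological algebra rather than pure bookkeeping, and it is where I expect the main conceptual content to lie. I would set $f\coloneqq(\id\otimes\epsilon)\circ\Delta\colon\bC_\bullet\to\bC_\bullet$ and observe that $f$ is augmentation-preserving because $(\epsilon\otimes\epsilon)\Delta=\epsilon$, and is of finite displacement over $\id_X$ because $\Delta$ has finite displacement over $\gamma$ while $(\id\otimes\epsilon)$ has finite displacement over the first-coordinate projection $X\times X\to X$ (it merely forgets the second factor of the support). By the uniqueness part of Proposition~\ref{prop:proj_res}, $f$ is chain-homotopic to $\id_{\bC_\bullet}$ via a finite displacement chain homotopy $h$, so $\partial h\tau=(f-\id)\tau$ whenever $\partial\tau=0$. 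Applying $\alpha$ gives $\alpha(f\tau)-\alpha(\tau)=\alpha(\partial h\tau)=(-1)^{j+1}(\delta\alpha)(h\tau)=0$. Because $j=n$, both $(\id\otimes\epsilon)$ and $(\alpha\otimes 1)$ isolate the $(n,0)$-component of $\Delta\tau$, so $\alpha(f\tau)=(\alpha\otimes\epsilon)(\Delta\tau)=\epsilon(\tau\frown\alpha)$, completing (\ref{item:cupcap_properties_aug}). The main obstacle throughout is keeping the signs and bidegrees of $\Delta$ consistent with the sign convention on $\delta$; once this is set up, the rest is a routine unwinding of the definitions.
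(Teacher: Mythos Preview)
Your proposal is correct and matches the paper's proof essentially line for line: parts (\ref{item:cupcap_properties_bdry_cup}) and (\ref{item:cupcap_properties_bdry_cap}) via the chain-map identity $\Delta\partial=\partial\Delta$ and the tensor boundary formula, parts (\ref{item:cupcap_properties_cup}) and (\ref{item:cupcap_properties_cap}) via expansion in the projective basis of $\bC_\bullet\otimes\bC_\bullet$ from Proposition~\ref{prop:tensor_overX} and the displacement of $\Delta$ over $\gamma$, and part (\ref{item:cupcap_properties_aug}) via the auxiliary map $f_\#=(1\otimes\epsilon)\circ\Delta$ together with Proposition~\ref{prop:proj_res}. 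The only cosmetic difference is notation ($\id\otimes\epsilon$ versus $1\otimes\epsilon$), and your identification of the finite-displacement structure of $f$ as the composition of $\Delta$ over $\gamma$ with the first-coordinate projection is exactly the reasoning the paper leaves implicit.
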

\begin{rem}
	The sign conventions adopted mean the formulae (\ref{item:cupcap_properties_bdry_cup}) and (\ref{item:cupcap_properties_bdry_cap}) are somewhat non-standard. They were  chosen specifically so that in the case where $\partial \tau=0$, the map $g_\#:\bC^{n-\bullet}\to \bC_\bullet$ defined by $\alpha\mapsto \tau\frown \alpha$ is a chain map. This fact will be used extensively in Section~\ref{sec:coarsePDn}.
\end{rem}
\begin{proof}
	Let $\bD_\bullet=\bC_\bullet\otimes \bC_\bullet$.
	For $\sigma\in \bD_n$, let $\sigma_j$ denote the component of $\sigma$ contained in $\bC_j\otimes \bC_{n-j}$.  For each $i,s\in \bbN$, we let $i_s\coloneqq \Phi^s(i)$.

	(\ref{item:cupcap_properties_bdry_cup}): Let $\sigma\in \bC_{j+k+1}$. Then \begin{align*}
		\delta(\alpha\smile\beta)(\sigma) & =(-1)^{j+k+1}(\alpha\smile\beta)(\partial\sigma)=
		(-1)^{j+k+1}(\alpha\otimes\beta)\Delta(\partial\sigma)                                                                                                      \\
		                                  & =(-1)^{j+k+1}(\alpha\otimes\beta)((\partial\otimes 1) (\Delta\sigma)_{j+1}+(-1)^j(1\otimes \partial)(\Delta\sigma)_{j}) \\
		                                  & =(-1)^{k}(\delta\alpha\otimes \beta)(\Delta\sigma)_{j+1}+(\alpha\otimes \delta\beta)(\Delta\sigma)_j                    \\
		                                  & =\left((-1)^k\delta\alpha\otimes \beta +\alpha\otimes \delta\beta\right)(\Delta\sigma)                                  \\
		                                  & =\left((-1)^k\delta\alpha\smile\beta+\alpha\smile\delta\beta\right)(\sigma)
	\end{align*} as required.

	(\ref{item:cupcap_properties_bdry_cap}): \begin{align*}
		\partial\tau\frown\alpha & =(\alpha\otimes 1)\Delta\partial\tau=(\alpha\otimes 1)\left((\partial\otimes 1) (\Delta\tau)_{j+1}+(-1)^j(1\otimes \partial)(\Delta\tau)_{j}\right) \\
		                         & =(-1)^{j+1}(\delta\alpha\otimes 1)(\Delta\tau)+(-1)^j\partial(\alpha\otimes 1)(\Delta\tau)                                                          \\
		                         & =(-1)^j\left(-\tau\frown\delta\alpha+\partial(\tau\frown\alpha)\right)
	\end{align*}
	so that $\partial(\tau\frown\alpha)=\tau\frown\delta\alpha+(-1)^j\partial\tau\frown\alpha$.

	(\ref{item:cupcap_properties_cup}):
	Suppose $x\in \supp(\alpha\smile \beta,i)$. Then there is some $b\in p_{j+k}^{-1}(x)\cap B_{j+k}(i)$ with $(\alpha\smile\beta)(m^{j+k}_b)=(\alpha\otimes \beta)(\Delta (m^{j+k}_b))\neq 0$. Thus $(\alpha\otimes \beta)(\Delta (m^{j+k}_b)_j)\neq 0$.
	Since $m^{j+k}_b\in \bC(i_1)$, we have $	\Delta(m^{j+k}_b)\in \bD_{j+k}(i_2)$ and so Proposition~\ref{prop:tensor_overX} implies
	\begin{align*}
		\Delta(m^{j+k}_b)_j=\sum_{(b_1,b_2)\in B_j(i_2)\times B_k(i_2)}(\delta^j_{b_1}\otimes \delta^k_{b_2})(\Delta (m^{j+k}_b))m^j_{b_1}\otimes m^k_{b_2}.\label{eqn:cup_prod_splt}
	\end{align*}
	As $(\alpha\otimes \beta)(\Delta (m^{j+k}_b)_j)\neq 0$, there exist $b_1\in B_j(i_2)$ and $b_2\in B_k(i_2)$ such that:
	\begin{itemize}
		\item $(\delta_{b_1}\otimes \delta_{b_2})(\Delta (m^{j+k}_b)_j)\neq 0$;
		\item $(\alpha\otimes\beta)(m^j_{b_1}\otimes m^k_{b_2})=\alpha(m^j_{b_1})\beta(m^k_{b_2})\neq 0$.
	\end{itemize}
	Setting $x_1\coloneqq p_j(b_1)$ and $x_2\coloneqq p_k(b_2)$, this implies $(x_1,x_2)\in \supp(\Delta(m^{j+k}_b))$, $x_1\in \supp(\alpha,i_2)$ and $x_2\in \supp(\beta,i_2)$.
	As $\Delta$ has finite displacement over $\gamma$, \begin{align*}
		\supp(\Delta(m^{j+k}_b))\subseteq N_{i_2}(\gamma(\supp(m^{j+k}_b)))\subseteq N_{i_2}(\gamma(N_{i_1}(x)))\subseteq N_{i'}(x,x),
	\end{align*} where $i'\coloneq i_2+2i_1$. This implies $x_1,x_2\in N_{i'}(x)$, and so \[\supp(\alpha\smile \beta,i)\subseteq N_{i'}(\supp(\alpha,i_2))\cap N_{i'}(\supp(\beta,i_2)).\]

	(\ref{item:cupcap_properties_cap}):
	Suppose $\tau\in \bC_n(i)$. Since $\Delta$ has displacement $\Phi$, we can apply Proposition~\ref{prop:tensor_overX} to see that $\Delta(\tau)\in \bD_n(i_1)\subseteq \bC_\bullet(i_2)\otimes \bC_\bullet(i_2)$. This implies \[\tau\frown\alpha=(\alpha\otimes 1)(\Delta(\tau))\in (\alpha\otimes 1)\left(\bC_{j}(i_2)\otimes \bC_{n-j}(i_2)\right)\subseteq \bC_{n-j}(i_2)\]  as required.

	Now suppose $x\in \supp(\tau\frown \alpha)=\supp((\alpha\otimes 1)\Delta\tau)$.
	Since $\tau\in \bC_n(i)$, we deduce $\tau=\sum_{b\in B_n(i)}\delta^n_b(\tau)m^n_b$. Thus $x\in \supp((\alpha\otimes 1)\Delta m^n_b)$ for  some $b\in B_n(i)$ with $\delta^n_b(\tau)\neq 0$.
	Since $m^n_b\in \bC_\bullet(i_1)$, we see $\Delta(m^n_b)\in \bD_n(i_2)$ and so
	\begin{align*}
		\Delta(m^n_b)_j=\sum_{(b_1,b_2)\in B_j(i_2)\times B_{n-j}(i_2)}(\delta^j_{b_1}\otimes \delta^{n-j}_{b_2})(\Delta (m^n_b))m^j_{b_1}\otimes m^{n-j}_{b_2}.
	\end{align*}
	Since $x\in \supp((\alpha\otimes 1)\Delta (m^n_b))$, there exist $b_1\in B_j(i_2)$ and $b_2\in B_{n-j}(i_2)$ such that:
	\begin{itemize}
		\item $(\delta^j_{b_1}\otimes \delta^{n-j}_{b_2})(\Delta (m^n_b))\neq 0$,
		\item $x\in \supp((\alpha\otimes 1)(m^j_{b_1}\otimes m^{n-j}_{b_2}))=\supp(\alpha(m^j_{b_1})m^{n-j}_{b_2})$. In particular, $\alpha(m^j_{b_1})\neq 0$ and $x\in \supp(m^{n-j}_{b_2})$.
	\end{itemize}
	We now set $x_0=p_n(b)$, $x_1=p_{j}(b_1)$ and $x_2=p_{n-j}(b_2)$, observing that $x_0\in \supp(\tau)$, that $x_1\in \supp(\alpha,i_2)$, and that $x\in \supp(m_{b_2}^{n-j})\subseteq N_{i_3}(x_2)$. Moreover,
	\begin{align*}
		(x_1,x_2) & \in \supp(\Delta(m_b^n))\subseteq N_{i_2}(\gamma(\supp(m_b^n)))   \\
		          & \subseteq N_{i_2}(\gamma(N_{i_1}(x_0)))\subseteq N_{i'}(x_0,x_0),
	\end{align*}
	where $i'\coloneqq i_2+2i_1$. Thus $x_1,x_2\in N_{i'}(x_0)$. We conclude that
	\begin{align*} x\in N_{i_3+i'}(x_0) & \subseteq N_{i_3+i'}(x_0)\cap N_{i_3+2i'}(x_1)                       \\
                                    & \subseteq N_{i_3+i'}(\supp(\tau))\cap N_{i_3+2i'}(\supp(\alpha,i_2))\end{align*}
	as required.

	(\ref{item:cupcap_properties_aug}): Since $\Delta$ is augmentation-preserving, $\epsilon=(\epsilon\otimes\epsilon)\circ \Delta=\epsilon\circ (1\otimes \epsilon)\circ \Delta$ on $\bC_0$. Thus $f_\#\coloneqq (1\otimes \epsilon)\circ \Delta:\bC_\bullet\to \bC_\bullet$ is an augmentation-preserving chain map of finite displacement over the identity, hence there is a finite displacement chain homotopy $\id\stackrel{h_\#}{\simeq} f_\#$.
	Thus \begin{align*}
		\epsilon(\tau\frown\alpha) & =\epsilon((\alpha\otimes 1)\Delta\tau)=\alpha((1\otimes \epsilon)\Delta\tau)=\alpha (f_\#\tau) \\
		                           & =\alpha(\tau+\partial h_\#\tau+h_\#\partial\tau)=\alpha(\tau),
	\end{align*} where the last equality holds because  $\delta\alpha=0$ and $\partial\tau=0$.\end{proof}
Proposition~\ref{prop:cupcapinf_properties} implies:
\begin{cor}
	The cup and cap product in Proposition~\ref{prop:cupcapinf_properties} descend to a  cup  product \[\smile:H^j(\bC^\bullet)\times H^k(\bC^\bullet)\xrightarrow{[\alpha]\times [\beta]\to [\alpha\smile \beta]} H^{j+k}(\bC^\bullet)\] and a cap product
	\[\frown:H_n(\bC_\bullet)\times H^j(\bC^\bullet)\xrightarrow{[\tau]\times [\alpha]\to [\tau\frown \alpha]} H_{n-j}(\bC_\bullet).\]
\end{cor}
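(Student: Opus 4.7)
The plan is to deduce the well-definedness of the two induced operations directly from the Leibniz-type identities (\ref{item:cupcap_properties_bdry_cup}) and (\ref{item:cupcap_properties_bdry_cap}) of Lemma~\ref{lem:cupcap_properties}. Since the target $\bC^\bullet=\Hom_R(\bC_\bullet,R)$ and domain $\bC_\bullet$ are just the (co)chain complex and its full algebraic dual, no support or finite-displacement bookkeeping is needed for this corollary; the quantitative statements (\ref{item:cupcap_properties_cup}) and (\ref{item:cupcap_properties_cap}) play no role here and are kept in reserve for later applications.

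First I would handle the cup product. Given cocycles $\alpha\in \bC^j$ and $\beta\in \bC^k$, item (\ref{item:cupcap_properties_bdry_cup}) gives
\[
\delta(\alpha\smile\beta)=(-1)^k\delta\alpha\smile\beta+\alpha\smile\delta\beta=0,
\]
so $\alpha\smile\beta$ is a cocycle. For well-definedness on cohomology classes, if $\alpha$ is replaced by $\alpha+\delta\alpha'$ while $\delta\beta=0$, then item (\ref{item:cupcap_properties_bdry_cup}) yields
\[
(\alpha+\delta\alpha')\smile\beta-\alpha\smile\beta=\delta\alpha'\smile\beta=(-1)^k\delta(\alpha'\smile\beta),
\]
a coboundary. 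A symmetric argument, replacing $\beta$ by $\beta+\delta\beta'$, handles the second variable.

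Next I would apply the same strategy to the cap product. If $\partial\tau=0$ and $\delta\alpha=0$, item (\ref{item:cupcap_properties_bdry_cap}) gives
\[
\partial(\tau\frown\alpha)=\tau\frown\delta\alpha+(-1)^j\partial\tau\frown\alpha=0,
\]
so $\tau\frown\alpha$ is a cycle. Replacing $\tau$ by $\tau+\partial\tau'$ (with $\delta\alpha=0$) gives
\[
(\tau+\partial\tau')\frown\alpha-\tau\frown\alpha=\partial\tau'\frown\alpha=(-1)^j\partial(\tau'\frown\alpha),
\]
a boundary; replacing $\alpha$ by $\alpha+\delta\alpha'$ (with $\partial\tau=0$) gives
\[
\tau\frown(\alpha+\delta\alpha')-\tau\frown\alpha=\tau\frown\delta\alpha'=\partial(\tau\frown\alpha'),
\]
again a boundary. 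Thus both operations descend to the claimed bilinear maps on homology and cohomology.

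There is no genuine obstacle in this argument; it is a routine verification that the Leibniz identities supplied by Lemma~\ref{lem:cupcap_properties} force cocycles to cup to cocycles and cycles to cap with cocycles to cycles, while any coboundary or boundary appearing in either slot produces a coboundary or boundary in the output. The only care needed is bookkeeping of signs, which match the sign conventions imposed earlier in the section.
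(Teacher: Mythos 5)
Your argument is correct and is exactly the routine verification the paper intends; the paper offers no explicit proof, merely asserting that the result follows, and your computation fills in precisely the expected details using the Leibniz identities. One small remark: you are right to invoke Lemma~\ref{lem:cupcap_properties} rather than Proposition~\ref{prop:cupcapinf_properties} as the source of the Leibniz rules, since the cap product in the corollary pairs $H_n(\bC_\bullet)$ with $H^j(\bC^\bullet)$, not $H_n(\bC_\bullet^{\lf})$, so the finite-sum version from the lemma is the relevant one.
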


We recall from Proposition~\ref{prop:induced_proj_lf} that we identify $\bC_\bullet$ as a submodule of $\bC_\bullet^{\lf}$ via the double dual map. We thus show that the cap products extends as follows:
\begin{prop}\label{prop:cupcapinf_properties}
	The cap product  $\bC_n\times \bC^j\to \bC_{n-j}$ extends linearly to a cap product \[\bC_n^{\lf}\times \bC^j\to \bC_{n-j}^{\lf}\] such the following holds. There exists a function  $\Psi:\bbN\to\bbN$, depending only on $\Phi$, such that for all  $\alpha\in \bC^j$ and $\tau\in \bC_n^{\lf}$ with $n\geq j$,  the following hold:
	\begin{enumerate}
		\item\label{item:cupcapinf_properties_bdr} $\partial(\tau\frown\alpha)=\tau\frown\delta\alpha+(-1)^j\partial\tau\frown\alpha$.
		\item\label{item:cupcapinf_properties_cap}
		      If $\tau\in \bC_n^{\lf}(i)$, then $\tau\frown\alpha\in \bC_{n-j}^{\lf}(\Psi(i))$ and \[\supp(\tau\frown \alpha)\subseteq N_{\Psi(i)}(\supp(\tau))\cap N_{\Psi(i)}(\supp(\alpha,\Psi(i))).\]
		\item\label{item:cupcapinf_properties_finitesum}
		      If $\alpha\in \bC^j_c$, then $\tau\frown \alpha\in \bC_{n-j}$.
		\item\label{item:cupcapinf_properties_augment}
		      Suppose that $j=n$, that $\alpha\in \bC^n_c$, that $\partial\tau=0$ and that $\delta\alpha=0$.  Then $\epsilon(\tau\frown\alpha)= \alpha(\tau)$.
	\end{enumerate}
\end{prop}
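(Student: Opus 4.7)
The plan is to define the extended cap product by dualising the cup product: for $\tau \in \bC_n^{\lf}$ and $\alpha \in \bC^j$, set $(\tau \frown \alpha)(\beta) := \tau(\alpha \smile \beta)$ for each $\beta \in \bC^{n-j}_c$. First I would verify this is well defined. By Lemma~\ref{lem:cupcap_properties}(\ref{item:cupcap_properties_cup}), $\supp(\alpha \smile \beta, i) \subseteq N_{\Psi(i)}(\supp(\beta, \Psi(i)))$, which is bounded whenever $\beta \in \bC^{n-j}_c$ (via Proposition~\ref{prop:bdd_support}); hence $\alpha \smile \beta \in \bC^n_c$ and $\tau(\alpha \smile \beta)$ makes sense. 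To place $\tau \frown \alpha$ inside $\bC_{n-j}^{\lf}$ I would exhibit, for $\tau \in \bC_n^{\lf}(i)$, an index $i'$ with $(\tau\frown\alpha)(\Ann(\bC_{n-j}(i')))=0$: Proposition~\ref{prop:induced_proj_lf}(\ref{item:induced_proj_lf4}) gives $\tau(\Ann(\bC_n(\Phi(i))))=0$, and for $\sigma \in \bC_n(\Phi(i))$ Proposition~\ref{prop:tensor_overX}(3) places the $(j,n-j)$-component of $\Delta\sigma$ inside $\bC_j(\Phi^3(i)) \otimes \bC_{n-j}(\Phi^3(i))$, so $\beta \in \Ann(\bC_{n-j}(\Phi^3(i)))$ forces $(\alpha \otimes \beta)\Delta\sigma=0$, giving $\tau \frown \alpha \in \bC_{n-j}^{\lf}(\Phi^3(i))$. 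Consistency with the original cap product on $\bC_n \hookrightarrow \bC_n^{\lf}$ is a line of unwinding: $(\sigma \frown \alpha)_{\text{new}}(\beta) = \sigma(\alpha \smile \beta) = (\alpha \otimes \beta)\Delta\sigma = \beta((\alpha \otimes 1)\Delta\sigma) = \beta((\sigma \frown \alpha)_{\text{old}})$.

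For (\ref{item:cupcapinf_properties_bdr}), the boundary formula follows by combining the cup-product Leibniz rule in Lemma~\ref{lem:cupcap_properties}(\ref{item:cupcap_properties_bdry_cup}) with the fact that $\partial$ on $\bC_\bullet^{\lf}$ dualises $\delta$ on $\bC^\bullet_c$ with the sign dictated by the sign convention in Section~\ref{sec:coarse_cohom} (explicitly, $(\partial\tau)(\gamma) = (-1)^{k+1}\tau(\delta\gamma)$ for $\tau \in \bC_{k+1}^{\lf}$, $\gamma \in \bC^k_c$). Evaluating both sides of (\ref{item:cupcapinf_properties_bdr}) on an arbitrary $\beta \in \bC^{n-j-1}_c$ reduces the identity, after substituting $\delta(\alpha \smile \beta) = (-1)^{n-j-1}\delta\alpha \smile \beta + \alpha \smile \delta\beta$, to routine sign tracking.

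For (\ref{item:cupcapinf_properties_cap}) the filtration claim has already been shown. For the support inclusion, I would pick $x \in \supp_{\bC_{n-j}^{\lf}}(\tau\frown\alpha)$ and some $b \in p_{n-j}^{-1}(x)$ with $(\tau \frown \alpha)(\delta^{n-j}_b) = \tau(\alpha \smile \delta^{n-j}_b) \neq 0$. Expanding $\tau = \sum_{b' \in B_n(\Phi(i))} \tau(\delta^n_{b'}) m^n_{b'}$ via Proposition~\ref{prop:induced_proj_lf}(\ref{item:induced_proj_lf1}) and expanding $\Delta m^n_{b'}$ on the projective basis, I would find $b' \in B_n(\Phi(i))$ with $p_n(b') \in \supp(\tau)$, $b_1 \in B_j(\Phi^3(i))$ with $p_j(b_1) \in \supp(\alpha, \Phi^3(i))$, and $b_2 \in B_{n-j}(\Phi^3(i))$ with $\delta^{n-j}_b(m^{n-j}_{b_2}) \neq 0$. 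Applying Remark~\ref{rem:coord_boundedsupp} to the last condition gives $d(p_{n-j}(b), p_{n-j}(b_2)) \leq \Phi^4(i)$, while the diagonal displacement of $\Delta$ gives $d(p_j(b_1), p_n(b')), d(p_{n-j}(b_2), p_n(b')) \leq \Phi^3(i)+\Phi^2(i)$; chaining these inequalities yields the required containment with a single function $\Psi$ depending only on $\Phi$. For (\ref{item:cupcapinf_properties_finitesum}), if $\alpha \in \bC^j_c$, then the same expansion writes $\tau \frown \alpha$ as a sum $\sum_{b' \in B_n(\Phi(i))} \tau(\delta^n_{b'})(m^n_{b'} \frown \alpha)$, and the analysis just performed forces every $b'$ contributing non-trivially to satisfy $p_n(b') \in N_{\Phi^3(i)+\Phi^2(i)}(\supp(\alpha, \Phi^3(i)))$, a bounded set; properness of $\bC_n$ then makes the sum finite, so $\tau \frown \alpha \in \bC_{n-j}$.

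Finally for (\ref{item:cupcapinf_properties_augment}), properties (\ref{item:cupcapinf_properties_finitesum}) and $j=n$ give $\tau \frown \alpha \in \bC_0$. Reusing the chain map $f_\# := (1 \otimes \epsilon)\circ \Delta$ and the finite displacement chain homotopy $\id \stackrel{h_\#}{\simeq} f_\#$ from the proof of Lemma~\ref{lem:cupcap_properties}(\ref{item:cupcap_properties_aug}), I would compute $\epsilon(m^n_{b'} \frown \alpha) = \alpha(f_\# m^n_{b'}) = \alpha(m^n_{b'}) + \alpha(\partial h_\# m^n_{b'}) + \alpha(h_\# \partial m^n_{b'})$; the middle term vanishes because $\delta\alpha = 0$. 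Summing with weights $\tau(\delta^n_{b'})$, the first terms recombine (as finite sums, by (\ref{item:cupcapinf_properties_finitesum})) into $\tau(\alpha) = \alpha(\tau)$, while the last terms assemble into $\pm\tau(\delta(\alpha\circ h_\#))$, a well-formed expression since $\alpha\circ h_\# \in \bC^{n-1}_c$ (finite displacement of $h_\#$ plus $\alpha \in \bC^n_c$); but this equals $\pm(\partial\tau)(\alpha\circ h_\#)$, which is $0$ because $\partial\tau = 0$.

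The main obstacle will be the book-keeping in property (\ref{item:cupcapinf_properties_cap}): turning the combinatorial support $\supp_{\bC^{\lf}_{n-j}}(\tau \frown \alpha)$ into a geometric statement about $\supp(\tau)$ and $\supp(\alpha, \Psi(i))$ requires simultaneously controlling the displacement of $\Delta$, the diameter of projective basis elements of $\bC_\bullet$ and $\bC_\bullet \otimes \bC_\bullet$, and the (initially non-obvious) distance $d(p_{n-j}(b), p_{n-j}(b_2))$ when $\delta^{n-j}_b(m^{n-j}_{b_2}) \neq 0$. Remark~\ref{rem:coord_boundedsupp} is the crucial observation that makes the last of these bounded, and without it the support statement in (\ref{item:cupcapinf_properties_cap}) would not be quantitative enough for the applications in Section~\ref{sec:coarsePDn}.
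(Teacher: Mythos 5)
Your proof is correct, but it takes a genuinely different route from the paper's, and the difference is worth noting. The paper defines $\tau\frown\alpha$ as the formal infinite sum $\sum_{b\in B_n(i)}\delta^n_b(\tau)\,(m^n_b\frown\alpha)$ and must then check by hand that pairing this against an arbitrary $\beta\in\bC^{n-j}_c$ gives a finite sum and that an annihilator vanishes, before concluding it lands in $\bC_{n-j}^{\lf}$. You instead set $(\tau\frown\alpha)(\beta):=\tau(\alpha\smile\beta)$, which makes membership in $\Hom_R(\bC^{n-j}_c,R)$ automatic --- no convergence argument is required, and the only thing left to check is the annihilator condition. The two definitions agree, since unwinding $\tau=\sum_b\tau(\delta^n_b)m^n_b$ gives $\tau(\alpha\smile\beta)=\sum_b\tau(\delta^n_b)(\alpha\smile\beta)(m^n_b)=\sum_b\tau(\delta^n_b)\,\beta(m^n_b\frown\alpha)$; your consistency check on the image of $\bC_n$ in $\bC^{\lf}_n$ is the finite version of this.

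The dualising definition pays off most visibly in item (1): rather than extending the Leibniz identity from $\bC_n$ by linearity, you evaluate both sides on $\beta\in\bC^{n-j-1}_c$ and reduce to Lemma~\ref{lem:cupcap_properties}(\ref{item:cupcap_properties_bdry_cup}) together with the dualisation sign. Your sign $(\partial\tau)(\gamma)=(-1)^{k+1}\tau(\delta\gamma)$ for $\tau\in\bC^{\lf}_{k+1}$, $\gamma\in\bC^k_c$, is the one forced by compatibility with the double-dual embedding, and the resulting bookkeeping (factoring $\alpha\smile\delta\beta=\delta(\alpha\smile\beta)-(-1)^{n-j-1}\delta\alpha\smile\beta$, then $\tau(\delta(\alpha\smile\beta))=(-1)^n(\partial\tau)(\alpha\smile\beta)$) does close up with the advertised $(-1)^j$. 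For item (4) you also differ slightly from the paper: the paper extends $f_\#=(1\otimes\epsilon)\circ\Delta$ and $h_\#$ to $\bC^{\lf}_\bullet$ via Lemma~\ref{lem:lf_induced} and evaluates $\alpha$ against $f^{\lf}_\#\tau$, whereas you stay at the level of $f_\#,h_\#$ on $\bC_\bullet$ and reassemble the series, observing that $\alpha\circ h_\#\in\bC^{n-1}_c$ by Proposition~\ref{prop:dual_fd} so the third terms telescope into $\pm(\partial\tau)(\alpha\circ h_\#)=0$. Both are legitimate; yours avoids reinvoking the $\lf$-extension machinery. Items (2) and (3) are, modulo the change of definition, the same basis-decomposition computation as the paper's, and your identification of Remark~\ref{rem:coord_boundedsupp} as the step that bounds $d(p_{n-j}(b),p_{n-j}(b_2))$ is the right diagnosis of where the quantitative content comes from.
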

\begin{proof}
	Pick $\Psi$ such that Lemma~\ref{lem:cupcap_properties} hold. If $\tau\in \bC_n^{\lf}$, then $\tau(\Ann(\bC_n(i)))=0$ for some $i$. Then Proposition~\ref{prop:induced_proj_lf} ensures $\tau$ is given by the formal sum $\tau=\sum_{b\in B_n(i)}\delta^n_b(\tau)m^n_b$, interpreted as in Notation~\ref{not:infsum}.
	We  define \[\tau\frown\alpha\coloneqq \sum_{b\in B_n(i)}\delta^n_b(\tau)\left(m^n_b\frown \alpha\right).\]
	In order to show this is a well-defined element of $\bD_{n-j}^{\lf}$, we need to show the following.
	\begin{enumerate}[label= (\roman*)]
		\item\label{item:lf_cap_define1}
		      For each $\beta\in \bC^{n-j}_c$, there are only  finitely $b\in B_n(i)$ such that $\delta^n_b(\tau)\beta (m^n_b\frown \alpha)\neq 0$.  Therefore, $\tau\frown \alpha$ defines an element of $\Hom_R(\bC^{n-j}_c,R)$ given by \[\beta\mapsto \sum_{b\in B_n(i)}\delta^n_b(\tau)\beta (m^n_b\frown \alpha).\]
		\item\label{item:lf_cap_define2}
		      There is some $i_1\in \bbN$ such $(\tau\frown\alpha)(\Ann(\bC_{n-j}(i_1)))=0$.
	\end{enumerate} For~\ref{item:lf_cap_define1}, we note that if $b\in B_n(i)$, then $m^n_b\in \bC_n(\Phi(i))$, so that by Lemma~\ref{lem:cupcap_properties} we know that $m^n_b\frown \alpha\in \bC_{n-j}(i_1)$ and that $\supp(m^n_b\frown \alpha)\subseteq N_{i_1}(p_n(b))$, where $i_1\coloneqq \Psi(\Phi(i))+\Phi(i)$. We now fix $\beta\in \bC^{n-j}_c$. We see that if $\beta(m^n_b\frown \alpha)\neq 0$, then by Lemma~\ref{lem:pairing_support_finite}, $\emptyset\neq \supp(\beta,i_1)\cap \supp(m^n_b\frown \alpha)$, and so $p_n(b)\in N_{i_1}(\supp(\beta,i_1))$. Since $\bC_n$ is proper and $\supp(\beta,i_1)$ is finite by Proposition~\ref{prop:bdd_support}, we conclude that there are only finitely many $b\in B_n(i)$ with $\beta(m^n_b\frown \alpha)\neq 0$, proving~\ref{item:lf_cap_define1}.

	For~\ref{item:lf_cap_define2}, we note that if $\beta\in \Ann(\bC_{n-j}(i_1))$ with $i_1$ as above, then for each $b\in B_n(i)$, we have $m^n_b\frown \alpha\in \bC_{n-j}(i_1)$, and so $\beta(m^n_b\frown\alpha)=0$. This implies $(\tau\frown\alpha)(\Ann(\bC_{n-j}(i_1)))=0$ as required.

	Property (\ref{item:cupcapinf_properties_bdr}) now follows by linearity and Lemma~\ref{lem:cupcap_properties}.  For (\ref{item:cupcapinf_properties_cap}), we first note that since $(\tau\frown\alpha)(\Ann(\bC_{n-j}(i_1)))=0$, we have $\tau\frown\alpha\in \bC_{n-j}^{\lf}(\Phi(i_1))$ by Proposition~\ref{prop:induced_proj_lf}. Moreover, \begin{align*}
		\supp(\tau\frown\alpha) & =\left\{p_{n-j}(b')\,\middle\vert\, b'\in B_{n-j}, \sum_{b\in B_n(i)}\delta^n_b(\tau)\delta_{b'}^{n-j}\left(m^n_b\frown \alpha\right)\neq0 \right\} \\
		                        & \subseteq \bigcup_{\substack{b\in B_n(i)                                                                                                            \\p_n(b)\in \supp(\tau)}}\supp(m^n_b\frown\alpha)
	\end{align*}
	Since $\supp(m^n_b\frown\alpha)\subseteq N_{i_1}(p_n(b))\cap N_{i_1}(\supp(\alpha,i_1))$ by Lemma~\ref{lem:cupcap_properties} and the choice of $i_1$, it follows that \[\supp(\alpha\frown \tau)\subseteq N_{i_1}(\supp(\tau))\cap N_{i_1}(\supp(\alpha,i_1))\] as required for (\ref{item:cupcapinf_properties_cap}).

	If $\alpha\in \bC^j_c$, then Proposition~\ref{prop:bdd_support} implies $\supp(\alpha,i_1)$ is finite, and s Lemma~\ref{lem:cupcap_properties} implies that all but finitely many summands of  $\tau\frown\alpha= \sum_{b\in B_n(i)}\delta^n_b(\tau)\left(m^n_b\frown \alpha\right)$ are zero. This ensures that $\tau\frown \alpha\in \bC_{n-j}$ as required for (\ref{item:cupcapinf_properties_finitesum}).

	It remains to prove (\ref{item:cupcapinf_properties_augment}). Let $f_\#$ and $h_\#$ be as in the proof of  (\ref{item:cupcap_properties_aug}) of Lemma~\ref{lem:cupcap_properties}. In particular, we have $\alpha(f_\#(\sigma))=\epsilon(\sigma\frown \alpha)$ for all $\sigma\in \bC_n$, and $\partial h_\#+h_\#\partial =f_\#-\id$. Applying Lemma~\ref{lem:lf_induced}, we see that $f_\#$ can be extended to a chain map $f_\#^{\lf}:\bC_\bullet^{\lf}\to \bC_\bullet^{\lf}$, and $h_\#$ extends to a chain homotopy $h_\#^{\lf}:\bC_\bullet^{\lf}\to \bC_{\bullet+1}^{\lf}$ such that  $\partial h_\#^{\lf}+h_\#^{\lf}\partial =f_\#^{\lf}-\id$.
	Since $\alpha\in \bC^n_c$ and $\bC_n$ is proper, it follows that there are only finitely many $b\in B_n(i)$ with $m^n_b\frown \alpha\neq 0$. Thus \begin{align*}
		\epsilon(\tau\frown\alpha) & =\epsilon\left(\sum_{b\in B_n(i)}\delta^n_b(\tau)\left(m^n_b\frown \alpha\right)\right)=
		\sum_{b\in B_n(i)}\delta^n_b(\tau)\epsilon\left(m^n_b\frown \alpha\right)                                             \\
		                           & =\sum_{b\in B_n(i)}\delta^n_b(\tau)\alpha(f_\#(m^n_b))
		=\alpha\left(f_\#^{\lf}\left(\sum_{b\in B}\delta^n_b(\tau)m^n_b\right)\right)
		=\alpha(f_\#^{\lf}(\tau))                                                                                             \\
		                           & =\alpha(\tau+\partial h_\#^{\lf}\tau+h_\#^{\lf}\partial\tau)=\alpha(\tau)
	\end{align*}
	since $\partial\tau=0$ and $\delta\alpha=0$. This proves (\ref{item:cupcapinf_properties_augment}).
\end{proof}

\section{Coarse Poincar\'e duality spaces}\label{sec:coarsePDn}
\emph{In this section, we recall Kapovich--Kleiner's notion of  coarse Poincar\'e duality spaces~\cite{kapovich2005coarse}. We give several characterisations of these spaces, and prove Theorems~\ref{thm:main_field_intro} and~\ref{thm:main_finitenss_space_intro} from the introduction.
}
\vspace{.3cm}

We recall from Proposition~\ref{prop:duality_ht} that if $\bM$ is a proper projective $R$-module over $X$ of finite height, then so is $\bM^*_c$. Thus if $\bC_\bullet$ is a finite height $R$-chain complex over $X$, the notion of a chain map $f_\#:\bC_\bullet\to \bC^{n-\bullet}_c$ being of finite displacement makes sense. We thus define a coarse Poincar\'e duality group as follows:
\begin{defn}\label{defn:coarsePD_n}
	Given a natural number $n\geq 1$ and a commutative ring $R$, a proper metric space $X$ is a \emph{coarse $R$-Poincar\'e duality space of dimension $n$}, or coarse $PD_n^R$ space for short, if there exists a proper finite height projective $R$-resolution $\bC_\bullet$ over $X$ such that the following holds. There are finite displacement chain maps $f_\#:\bC_\bullet\to \bC^{n-\bullet}_c$ and $\overline{f}_\#:\bC^{n-\bullet}_c\to \bC_{\bullet}$ and finite displacement chain homotopies $\overline f_\#\circ f_\#\stackrel{h_\#}{\simeq} \id$ and $ f_\#\circ \overline f_\#\stackrel{\overline h_\#}{\simeq} \id$. We call any such  $\bC_\bullet$ a \emph{$PD_n^R$-complex over $X$}.
\end{defn}
\begin{lem}\label{lem:coarse_pdc_coarseequiv}
	Let $X$ be a coarse $PD_n^R$ space and let $\bC_\bullet$ be a coarse $PD_n^R$-complex over $X$. Suppose $X$ is coarsely equivalent to $Y$ and $\bD_\bullet$ is any proper finite height projective $R$-resolution over $Y$. Then $\bD_\bullet$ is  a coarse $PD_n^R$-complex over $Y$, and so $Y$ is a coarse $PD_n^R$ space.
	In particular, setting $X=Y$, we observe that any proper finite height projective $R$-resolution over $X$ is a coarse $PD_n^R$ complex.
\end{lem}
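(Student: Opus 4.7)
The plan is to transfer the Poincar\'e duality structure from $\bC_\bullet$ to $\bD_\bullet$ by conjugating with a chain homotopy equivalence between the two resolutions. Let $f:X\to Y$ be a coarse equivalence with coarse inverse $g:Y\to X$. First I would apply Proposition~\ref{prop:proj_res}(\ref{item:proj_res1}) to obtain augmentation-preserving chain maps $\phi_\#:\bC_\bullet\to \bD_\bullet$ of finite displacement over $f$, and $\psi_\#:\bD_\bullet\to \bC_\bullet$ of finite displacement over $g$. Since $g\circ f$ is close to $\id_X$, the composition $\psi_\#\circ \phi_\#$ has finite displacement over $\id_X$ by Lemma~\ref{lem:compos_findisp}, and Proposition~\ref{prop:proj_res}(\ref{item:proj_res2}) then produces a finite-displacement chain homotopy $\psi_\#\circ\phi_\#\stackrel{h_\#}{\simeq}\id_{\bC_\bullet}$; symmetrically, $\phi_\#\circ\psi_\#\stackrel{\overline h_\#}{\simeq}\id_{\bD_\bullet}$.

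Next I would dualize. Because both $\bC_\bullet$ and $\bD_\bullet$ are proper finite-height projective $R$-resolutions, Proposition~\ref{prop:duality_ht}(\ref{item:duality_ht_natural}) applies to each $\phi_k$ and $\psi_k$, yielding chain maps $\phi^\#:\bD^{n-\bullet}_c\to \bC^{n-\bullet}_c$ of finite displacement over $g$ and $\psi^\#:\bC^{n-\bullet}_c\to \bD^{n-\bullet}_c$ of finite displacement over $f$. Applying the dual construction to the chain homotopies $h_\#$ and $\overline h_\#$ (and observing that dualizing preserves finite displacement by the same naturality statement) gives finite-displacement chain homotopies $\phi^\#\circ\psi^\#\simeq \id_{\bC^{n-\bullet}_c}$ and $\psi^\#\circ\phi^\#\simeq \id_{\bD^{n-\bullet}_c}$. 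Writing $f_\#^{\bC}$ and $\overline f_\#^{\bC}$ for the $PD_n^R$ structure on $\bC_\bullet$, I would then define
\[
    f_\#^{\bD}\coloneqq \psi^\#\circ f_\#^{\bC}\circ \psi_\#,\qquad \overline f_\#^{\bD}\coloneqq \phi_\#\circ \overline f_\#^{\bC}\circ \phi^\#.
\]
Each is a composition of three finite-displacement maps over $f$, $\id$, $g$ (or $g$, $\id$, $f$), hence by Lemma~\ref{lem:compos_findisp} has finite displacement over the map $f\circ g$ (resp.\ $g\circ f$) which is close to $\id_Y$; the observation that finite displacement depends only on closeness classes of the control map then promotes this to finite displacement over $\id_Y$.

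It remains to check that $\overline f_\#^{\bD}\circ f_\#^{\bD}\simeq \id_{\bD_\bullet}$ and $f_\#^{\bD}\circ \overline f_\#^{\bD}\simeq \id_{\bD^{n-\bullet}_c}$. For the first, expanding the composition and inserting the homotopies gives a telescoping chain
\[
    \phi_\#\,\overline f_\#^{\bC}\,\phi^\#\,\psi^\#\,f_\#^{\bC}\,\psi_\#\;\simeq\;\phi_\#\,\overline f_\#^{\bC}\, f_\#^{\bC}\,\psi_\#\;\simeq\;\phi_\#\,\psi_\#\;\simeq\;\id_{\bD_\bullet},
\]
where the three homotopies are obtained by pre- and post-composing the already constructed finite-displacement homotopies with the finite-displacement chain maps $\phi_\#,\psi_\#,\phi^\#,\psi^\#$; finite displacement of the resulting chain homotopy again follows from Lemma~\ref{lem:compos_findisp}. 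The symmetric argument handles $f_\#^{\bD}\circ\overline f_\#^{\bD}\simeq \id$. The main obstacle is purely bookkeeping --- ensuring that at every step the displacement control maps compose correctly through the various $f$'s and $g$'s and reduce to the identity up to closeness --- and the key technical input making this bookkeeping go through is the naturality and finite-displacement behaviour of duality guaranteed by Proposition~\ref{prop:duality_ht}(\ref{item:duality_ht_natural}). No genuinely new idea beyond the fundamental lemma and naturality of duality is required.
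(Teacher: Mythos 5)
Your argument is correct and follows essentially the same route the paper intends: use Proposition~\ref{prop:proj_res} to produce finite-displacement chain homotopy equivalences $\phi_\#$, $\psi_\#$ between $\bC_\bullet$ and $\bD_\bullet$, dualize via Proposition~\ref{prop:duality_ht}(\ref{item:duality_ht_natural}), and conjugate the duality data of $\bC_\bullet$ to obtain that of $\bD_\bullet$, keeping track of displacement through Lemma~\ref{lem:compos_findisp} and the observation that finite displacement over $f\circ g$ promotes to finite displacement over $\id_Y$ because $f\circ g$ is close to $\id_Y$. The one point worth flagging in a fully written-up version is the sign bookkeeping when passing from a chain homotopy $h_\#$ to its dual cochain homotopy under the paper's convention $\delta\alpha=(-1)^{k+1}\alpha\circ\partial$, but this is routine and does not affect the validity of the telescoping computation.
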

\begin{proof}
	This follows easily from Proposition~\ref{prop:proj_res}, using an argument similar to~\cite[Proposition 4.3]{margolis2018quasi}.
\end{proof}

\begin{exmp}
	We compare the notion of Poincar\'e duality spaces in Definition~\ref{defn:coarsePD_n} with the definitions considered by Kapovich--Kleiner~\cite{kapovich2005coarse}: \begin{itemize}
		\item Recall that a metric simplicial complex is a pair $(X^{(1)},X)$, where $X$ is a simplicial  complex and $X^{(1)}$ is its 1-skeleton endowed with the induced path metric~\cite{kapovich2005coarse}. In Definition 6.1 of~\cite{kapovich2005coarse}, Kapovich--Kleiner  define a Poincar\'e duality space to be a uniformly acyclic  metric simplicial complex $(X^{(1)},X)$ whose simplicial chain complex $C_\bullet(X)$ satisfies  properties analogous to those defined in Definition~\ref{defn:coarsePD_n}. By Example~\ref{exmp:metric_complex}, the simplicial  chain complex $C_\bullet(X)$ can be given the structure of a projective $\bbZ$-resolution over $X^{(1)}$, which is hence  a coarse $PD_n^\bbZ$ complex. Thus, for any coarse Poincar\'e duality space $(X^{(1)},X)$ of dimension $n$ in the sense of~\cite[Definition 6.1]{kapovich2005coarse}, the underlying metric space $X^{(1)}$ is a coarse $PD_n^R$ space in the sense of Definition~\ref{defn:coarsePD_n}.
		\item More generally, in $\S 11$ of~\cite{kapovich2005coarse}, Kapovich--Kleiner define \emph{metric complexes} $(X,\bC_\bullet)$, where $X$ is a metric space and $\bC_\bullet$ is a chain complex. Up to some freedom in the choice of filtrations, it is straightforward to check that there is a correspondence between proper finite-height free $\bbZ$-resolutions over $X$, and the chain complex $\bC_\bullet$ of a finite-type uniformly acyclic metric complex in the sense of $\S 11$ of~\cite{kapovich2005coarse}.
		      In $\S 11$ of~\cite{kapovich2005coarse}, Kapovich--Kleiner define a \emph{Poincar\'e duality space}
		      to be a metric complex  $(X,\bC_\bullet)$,
		      where $\bC_\bullet$ satisfies properties analogous to those defined in Definition~\ref{defn:coarsePD_n}. In particular, it is easy to deduce  $X$ is a coarse $PD_n^\bbZ$ space in the sense of Definition~\ref{defn:coarsePD_n} if and only if there is a metric complex $(X,\bC_\bullet)$ that is a coarse Poincar\'e duality space of dimension $n$ in the sense of~\cite[$\S 11$]{kapovich2005coarse}.
	\end{itemize}
\end{exmp}

We describe some immediate properties of coarse $PD_n^R$ spaces:
\begin{prop}\label{prop:pdn_properties}
	If $X$ is a coarse $PD_n^R$ space, the following hold:
	\begin{enumerate}
		\item\label{item:prop_pdn_properties2}
		      For each $k\in \bbZ$, \[\coarse^k(X;R)\cong\begin{cases}
				      R & \textrm{if}\;k=n,     \\
				      0 & \textrm{if}\;k\neq n.
			      \end{cases}\]
		\item\label{item:prop_pdn_properties1}
		      $X$ has coarse finite type over $R$  and $\ccd_R(X)=n$.
	\end{enumerate}
\end{prop}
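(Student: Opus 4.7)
The plan is to let $\bC_\bullet$ be the $PD_n^R$-complex provided by Definition \ref{defn:coarsePD_n}, with finite-displacement duality maps $f_\# : \bC_\bullet \to D_\bullet$ and $\overline{f}_\# : D_\bullet \to \bC_\bullet$ (where $D_\bullet := \bC^{n-\bullet}_c$) and finite-displacement chain homotopies $h_\#, \overline{h}_\#$ witnessing $\overline{f}_\# f_\# \simeq \id$ and $f_\# \overline{f}_\# \simeq \id$. The pivotal observation is the dimensional vanishing $D_k = \bC^{n-k}_c = 0$ for $k > n$; I propagate this through both the chain map and chain homotopy relations to drive both parts of the proposition.

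For $\ccd_R(X) \leq n$, I would first observe that $f_{n+1} : \bC_{n+1} \to D_{n+1} = 0$ is automatically zero, so the chain map identity $\partial_D \circ f_{n+1} = f_n \circ \partial$ forces $f_n$ to vanish on $K := \im(\partial : \bC_{n+1} \to \bC_n) = \ker(\partial : \bC_n \to \bC_{n-1})$. Evaluating the homotopy identity $\id - \overline{f}_n f_n = \partial h_n + h_{n-1} \partial$ at any $c \in K$ then collapses both $\overline{f}_n f_n(c)$ and $h_{n-1}(\partial c)$ to zero, yielding $c = \partial h_n(c)$. Hence $r := \partial \circ h_n : \bC_n \to \bC_n$ has image in $\im \partial = K$, restricts to the identity on $K$, and is of finite displacement by Lemma \ref{lem:compos_findisp}. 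This is exactly the finite-displacement retraction onto $\im(\bC_{n+1} \to \bC_n)$ demanded by Proposition \ref{prop:ccd_chars}(\ref{item:propccdchars_4}), so $\ccd_R(X) \leq n$.

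For part (1), I identify $\coarse^k(X;R) = H^k(\bC^\bullet_c)$ via Lemma \ref{lem:trivial_module}, then apply the contravariant functor $\Hom_c(-, R)$ to the finite-displacement chain homotopy equivalence $\bC_\bullet \simeq D_\bullet$, using Proposition \ref{prop:dual_fd} to dualize each map and each homotopy. Proposition \ref{prop:duality_ht}(\ref{item:duality_ht_double}) identifies $(\bC^{n-k}_c)^*_c = \bC_{n-k}$, so $\Hom_c(D_\bullet, R)$ is the cochain complex $\bC_{n-\bullet}$ --- that is, $\bC_\bullet$ reindexed with $\bC_{n-k}$ in degree $k$. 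The resulting cochain homotopy equivalence yields $H^k(\bC^\bullet_c) \cong H^k(\bC_{n-\bullet}) = H_{n-k}(\bC_\bullet)$. Since $\bC_\bullet$ is a resolution of $R$, its unaugmented homology is $R$ in degree $0$ and vanishes in positive degrees, while $\bC_{n-k} = 0$ for $k > n$. Assembling these, $\coarse^n(X;R) \cong R$ and $\coarse^k(X;R) = 0$ for $k \neq n$.

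To complete part (2), non-vanishing $\coarse^n(X;R) \cong R \neq 0$ combined with Proposition \ref{prop:ccd_chars}(\ref{item:propccdchars_3}) forces $\ccd_R(X) \geq n$, so $\ccd_R(X) = n$. Finally, since $\bC_\bullet$ is a finite-height projective $R$-resolution, Corollary \ref{cor:unif_acyc} shows $X$ is coarsely uniformly acyclic over $R$; coupled with $\ccd_R(X) = n < \infty$, Proposition \ref{prop:finite_type} supplies a finite-dimensional finite-height projective $R$-resolution, so $X$ has coarse finite type over $R$. The main subtle point is the retraction construction in paragraph two: the dimensional vanishing $D_{n+1} = 0$ is exactly what forces $f_n$ to kill $K$, which in turn lets the chain homotopy identity restricted to $K$ collapse and exhibit $\partial h_n$ as the sought-after retraction.
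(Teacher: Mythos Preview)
Your proof is correct and follows essentially the same strategy as the paper. For part~(\ref{item:prop_pdn_properties2}) the paper avoids the dualization step by observing directly that the given chain homotopy equivalence $\overline f_\# \colon \bC^{n-\bullet}_c \to \bC_\bullet$ already yields $\coarse^k(X;R)=H^{k}(\bC^\bullet_c)\cong H_{n-k}(\bC_\bullet)$, and for $\ccd_R(X)\le n$ it invokes Lemma~\ref{lem:ccd_pdn}, whose proof is precisely your retraction argument (with retraction $\overline f_n f_n+\partial h_n$, which agrees with your $\partial h_n$ on $K$ since $f_n|_K=0$).
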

\begin{proof}Let $\bC_\bullet$ be a coarse $PD_n^R$ complex and pick $f_\#$, $\overline f_\#$, $h_\#$ and $\overline h_\#$ as in Definition~\ref{defn:coarsePD_n}.

	(\ref{item:prop_pdn_properties2}):  Since $\overline f_\#:\bC^{n-\bullet}_c\to \bC_{\bullet}$ is a chain homotopy equivalence and \[H_0(\bC_\bullet)\cong\begin{cases}
			R & \textrm{if}\;k=0,     \\
			0 & \textrm{if}\;k\neq 0,
		\end{cases}\] the result follows.

	(\ref{item:prop_pdn_properties1}): The fact that $X$ is coarsely uniformly acyclic over  $R$ follows from Proposition~\ref{prop:acyc_res} and the fact that $\bC_\bullet$  is a  finite-height projective $R$-resolution over $X$. Applying Lemma~\ref{lem:ccd_pdn} below to the maps $f_\#:\bC_\bullet\to \bC^{n-\bullet}_c$,  $g_\#=\overline{f}_\#:\bC^{n-\bullet}_c\to\bC_\bullet$ and $\id\stackrel{h_\#}{\simeq}g_\#f_\#$, we conclude that $\ccd_R(X)\leq n$. Since $\coarse^n(X;R)=R\neq 0$, Proposition~\ref{prop:ccd_chars} implies $\ccd_R(X)=n$. By Proposition~\ref{prop:finite_type}, $X$ has coarse finite type over $R$.
\end{proof}

\begin{defn}
	A countable group $G$ is a  \emph{coarse $PD_n^R$ group} if, when equipped with a proper left-invariant metric, it is a coarse $PD_n^R$ space.
\end{defn}
We recall the following class of groups introduced by Bieri--Eckmann:
\begin{defn}[{\cite{bierieckmann1973duality,bierieckmann1974finiteness}}]\label{defn:pdn_group}
	A group  $G$ is a \emph{Poincar\'e duality group of dimension $n$ over the ring $R$}, or a  $PD_n^R$ group for short, if it is of type $FP_\infty^R$, $\cd_R(G)<\infty$ and \[H^k(G;RG)\cong\begin{cases}
			R & \textrm{if}\;k=n,     \\
			0 & \textrm{if}\;k\neq n.
		\end{cases}\]
\end{defn}

We now state some useful properties of coarse $PD_n^R$ groups and their relation to $PD_n^R$ groups.
\begin{prop}\label{prop:pdngroup_properties}
	Let $G$ be coarse $PD_n^R$ group. Then the following hold:
	\begin{enumerate}
		\item\label{item:prop_pdngp_properties2}
		      For each $k\in \bbZ$, \[H^k(G;RG)\cong\begin{cases}
				      R & \textrm{if}\;k=n,     \\
				      0 & \textrm{if}\;k\neq n.
			      \end{cases}\]
		\item\label{item:prop_pdngp_properties1}
		      $G$ is of type $FP_\infty^R$ and $\ccd_R(G)=n$.
		\item\label{item:prop_pdngp_properties3} Every $PD_n^R$ group is a coarse $PD_n^R$ group.
		\item\label{item:prop_pdngp_properties4} $G$ is a Poincar\'e duality group of dimension $n$ over the ring $R$ if and only if $\cd_R(G)<\infty$.
	\end{enumerate}
\end{prop}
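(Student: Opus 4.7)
Parts (\ref{item:prop_pdngp_properties2}) and (\ref{item:prop_pdngp_properties1}) will follow from the corresponding items of Proposition~\ref{prop:pdn_properties}. For (\ref{item:prop_pdngp_properties2}), I will identify $\coarse^k(G;R)$ with $H^k(G,RG\otimes_R R)=H^k(G,RG)$ via Corollary~\ref{cor:induced_mod} and conclude using Proposition~\ref{prop:pdn_properties}(\ref{item:prop_pdn_properties2}). For (\ref{item:prop_pdngp_properties1}), Proposition~\ref{prop:pdn_properties}(\ref{item:prop_pdn_properties1}) already gives $\ccd_R(G)=n$ together with coarse finite type over $R$, hence the existence of a finite-height projective $R$-resolution; Corollary~\ref{cor:unif_acyc} makes $G$ coarsely uniformly acyclic over $R$, and Theorem~\ref{thm:type_FPn} upgrades this to type $FP_\infty^R$.

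For (\ref{item:prop_pdngp_properties3}), let $G$ be a $PD_n^R$ group. Being of type $FP_\infty^R$ with $\cd_R(G)=n$, it admits a length-$n$ resolution $C_\bullet\to R$ of the trivial $RG$-module by finitely generated projective $RG$-modules. Apply Lemma~\ref{lem:ginduced_produce} and Remark~\ref{rem:staggering_ginv} to endow $C_\bullet$ with the structure of a chain complex $\bC_\bullet$ of $G$-induced projective $R$-modules over $G$ with $G$-equivariant boundary maps. Proposition~\ref{prop:ginv_loc_fin_type} makes each $\bC_k$ proper and of finite height, and Proposition~\ref{prop:ginduced} supplies finite displacement of the boundary maps. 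The first delicate point is verifying $\bC_\bullet$ is an actual projective $R$-resolution over $G$: by Proposition~\ref{prop:equiv_weakuniformacyc}, this reduces to checking that the boundary maps have uniform preimages. Rather than invoking Proposition~\ref{prop:ginv_unifpreim}, which requires $R$ to be Noetherian, I will compare $\bC_\bullet$ with the standard $R$-resolution $\bD_\bullet$ of Corollary~\ref{cor:projres_ginv}: classical $RG$-module lifting provides mutually inverse $G$-equivariant chain equivalences $\bC_\bullet\leftrightarrows \bD_\bullet$, which are of finite displacement by Proposition~\ref{prop:ginduced}; since $\bD_\bullet$ has uniform preimages, a routine transfer argument (lift a relevant element of $\bC_{k-1}$ to $\bD_{k-1}$, find a bounded-support preimage in $\bD_k$, push back to $\bC_k$ and correct by the finite-displacement chain homotopy) yields uniform preimages for $\bC_\bullet$.

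The second step in (\ref{item:prop_pdngp_properties3}) is to produce the finite displacement chain equivalence $\bC_\bullet\simeq \bC^{n-\bullet}_c$ required by Definition~\ref{defn:coarsePD_n}. I will invoke Bieri--Eckmann's Poincar\'e duality theorem for $PD_n^R$ groups, which supplies a $G$-equivariant chain equivalence $C_\bullet\simeq C^{n-\bullet}$ of $RG$-chain complexes, where $C^k=\Hom_{RG}(C_k,RG)$. Via Lemma~\ref{lem:gptospace} and its naturality, this translates into a chain equivalence between $\bC_\bullet$ and $\bC^{n-\bullet}_c$; the latter inherits a $G$-induced structure from Proposition~\ref{prop:duality_ht} (compatibly with the natural left $G$-action $g\cdot\alpha=\alpha\cdot g^{-1}$). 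A final application of Proposition~\ref{prop:ginduced} ensures all the chain maps and homotopies have finite displacement, so $\bC_\bullet$ is a coarse $PD_n^R$ complex.

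Part (\ref{item:prop_pdngp_properties4}) is then immediate: a $PD_n^R$ group has $\cd_R(G)<\infty$ by definition and is coarse $PD_n^R$ by (\ref{item:prop_pdngp_properties3}), while a coarse $PD_n^R$ group $G$ with $\cd_R(G)<\infty$ is of type $FP_\infty^R$ by (\ref{item:prop_pdngp_properties1}) and satisfies the cohomological hypothesis of Definition~\ref{defn:pdn_group} by (\ref{item:prop_pdngp_properties2}). The main obstacle in the plan is verifying weak uniform acyclicity of the $G$-induced $\bC_\bullet$ in (\ref{item:prop_pdngp_properties3}) when $R$ is not assumed Noetherian; this is precisely where the comparison with the standard resolution $\bD_\bullet$ plays an essential role.
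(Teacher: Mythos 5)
Parts (\ref{item:prop_pdngp_properties2}), (\ref{item:prop_pdngp_properties1}) and (\ref{item:prop_pdngp_properties4}) match the paper's proof, which cites Proposition~\ref{prop:pdn_properties}, Corollary~\ref{cor:induced_mod} and Theorem~\ref{thm:type_FPn} and then observes that (\ref{item:prop_pdngp_properties4}) is a formal consequence. For (\ref{item:prop_pdngp_properties3}) the paper simply cites \cite[Example 11.6]{kapovich2005coarse}, whereas you attempt a full internal proof; this is a genuinely different route and worth discussing.

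Your workaround for the Noetherian hypothesis in Proposition~\ref{prop:ginv_unifpreim} is correct: transferring uniform preimages from the standard resolution $\bD_\bullet$ via a $G$-equivariant (hence finite-displacement) chain equivalence and the homotopy $h_\#$ is exactly the mechanism already used in the proofs of Propositions~\ref{prop:extend_partial} and~\ref{prop:acyc_res}, and it works here because a $\sigma\in\bC_{k-1}$ lying in $\im(\partial)$ is automatically a cycle, so $\sigma=\partial(g_\#\omega+h_\#\sigma)$ with $\omega$ a controlled preimage of $f_\#\sigma$.

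The gap is in the duality step. You assert that Bieri--Eckmann supplies a \emph{$G$-equivariant} chain equivalence $C_\bullet\simeq C^{n-\bullet}$. This is only true for orientable $PD_n^R$ groups. In general a $PD_n^R$ group has a dualizing module $D$ with $D\cong R$ as an $R$-module but with a possibly non-trivial $G$-action given by a character $\chi:G\to R^\times$, and $C^{n-\bullet}=\Hom_{RG}(C_{n-\bullet},RG)$ is a finite projective $RG$-resolution of $D$, not of $R$. Since $C_\bullet$ resolves the trivial module $R$ and $C^{n-\bullet}$ resolves $D$, there is no $G$-equivariant chain equivalence between them as $RG$-complexes when $\chi$ is non-trivial, so your appeal to Proposition~\ref{prop:ginduced} (which requires honest $G$-equivariance) does not directly apply. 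The fix is small but must be made explicit: the Bieri--Eckmann duality map $\phi:C_\bullet\to C^{n-\bullet}$ satisfies $\phi(g\sigma)=\chi(g)\,g\phi(\sigma)$, and because multiplication by a unit $\chi(g)\in R^\times$ leaves both $\supp_{\bM}(\cdot)$ and the filtration levels $\bM(i)$ unchanged, the proof of Proposition~\ref{prop:ginduced} goes through verbatim for such $\chi$-twisted-equivariant maps, yielding finite displacement; the same applies to the twisted-equivariant chain homotopies. As written, your proposal implicitly assumes orientability and so does not cover the general case.
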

\begin{proof}
	Let $\bC_\bullet$ be a proper projective $R$-resolution over $G$. We deduce (\ref{item:prop_pdngp_properties2}) and (\ref{item:prop_pdngp_properties1}) from Proposition~\ref{prop:pdn_properties},  Corollary~\ref{cor:induced_mod} and Theorem~\ref{thm:type_FPn}. Kapovich--Kleiner showed (\ref{item:prop_pdngp_properties3})~\cite[Example 11.6]{kapovich2005coarse}, whilst (\ref{item:prop_pdngp_properties4}) follows readily from  (\ref{item:prop_pdngp_properties2}), (\ref{item:prop_pdngp_properties1}) and Definition~\ref{defn:pdn_group}.
\end{proof}
\begin{rem}\label{rem:coarse_pdn}
	In~\cite[Example 11.6]{kapovich2005coarse}, Kapovich--Kleiner further clarify the relationship between coarse $PD_n^R$ groups and $PD_n^R$ groups. They observe that if $G$ is a coarse $PD_n^R$ group, then $G$ is a $PD_n^R$ group precisely when there exists a coarse $PD_n^R$ complex $\bC_\bullet$ as in Definition~\ref{defn:coarsePD_n}, admitting a free $G$-action, such that the maps $f_\#$, $g_\#$, $h_\#$ and $\overline h_\#$   are all $G$-equivariant.
\end{rem}

\begin{lem}\label{lem:ccd_pdn}
	Let $\bC_\bullet$ be a projective $R$-resolution over a metric space $X$. Suppose that for some $n\in \bbN$,  there exists a non-negative cochain complex  $D^\bullet$ and chain maps $f_\#:\bC_\bullet\to D^{n-\bullet}$ and $g_\#:D^{n-\bullet}\to \bC_\bullet$ such that the composition $g_\#f_\#:\bC_\bullet\to \bC_\bullet$ is augmentation-preserving and of finite displacement over $\id_X$.   Then $\ccd_R(X)\leq n$.
\end{lem}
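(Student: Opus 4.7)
The plan is to use Proposition~\ref{prop:ccd_chars}, specifically the equivalence between $\ccd_R(X) \leq n$ and the existence of a finite displacement retraction $\bC_n \to \bC_n$ onto $\im(\bC_{n+1} \xrightarrow{\partial} \bC_n)$. So the task reduces to constructing such a retraction from the given data.

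First I would observe the crucial consequence of $D^\bullet$ being non-negative: the degree-$(n+1)$ component of $f_\#$ is a map $f_{n+1}: \bC_{n+1} \to D^{-1} = 0$, hence is identically zero. Consequently $(g_\# f_\#)_{n+1} = 0$, while in every other degree $g_\# f_\#$ is perfectly well-defined and has finite displacement. Next I would invoke Proposition~\ref{prop:proj_res}(\ref{item:proj_res2}): since both $g_\# f_\#$ and $\id_{\bC_\bullet}$ are augmentation-preserving finite displacement chain maps $\bC_\bullet \to \bC_\bullet$, there exists a finite displacement chain homotopy $h_\#: \bC_\bullet \to \bC_{\bullet+1}$ satisfying $g_\# f_\# - \id = \partial h_\# + h_\# \partial$.

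The key construction is then to define $r: \bC_n \to \bC_n$ by
\[
r(\sigma) \coloneqq \sigma + h_{n-1}(\partial \sigma) - g_n f_n(\sigma).
\]
Rearranging the chain homotopy identity in degree $n$ gives $r(\sigma) = -\partial h_n(\sigma)$, which shows that $\im(r) \subseteq \im(\partial: \bC_{n+1} \to \bC_n)$. Finite displacement of $r$ is immediate from Lemma~\ref{lem:compos_findisp} applied to the finite displacement maps $h_{n-1}$, $\partial$, $g_n$, and $f_n$. To check $r$ is a retraction, fix $\tau = \partial \omega \in \im(\partial: \bC_{n+1} \to \bC_n)$ with $\omega \in \bC_{n+1}$; then
\[
r(\partial \omega) = \partial \omega + h_{n-1}(\partial^2 \omega) - g_n f_n(\partial \omega) = \partial \omega - \partial \bigl(g_{n+1} f_{n+1}(\omega)\bigr) = \partial \omega,
\]
using that $g_\# f_\#$ is a chain map and that $f_{n+1} = 0$. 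Thus $r$ is a finite displacement retraction of $\bC_n$ onto $\im(\bC_{n+1} \to \bC_n)$, and Proposition~\ref{prop:ccd_chars} yields $\ccd_R(X) \leq n$.

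There is no serious obstacle here; the argument is essentially a bookkeeping exercise once one notices that vanishing of $D^\bullet$ in negative degrees forces $f_{n+1} = 0$. The only mildly delicate point is recognising that the standard telescoping retraction from homological algebra — which would normally only produce an equivariant retraction in the group-cohomology setting — works verbatim in the metric setting precisely because  Proposition~\ref{prop:proj_res}(\ref{item:proj_res2}) provides the chain homotopy $h_\#$ with finite displacement, rather than requiring any equivariance.
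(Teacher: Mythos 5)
Your proof is correct, and it follows the same high-level strategy as the paper: reduce to constructing a finite displacement retraction via Proposition~\ref{prop:ccd_chars}, and build the retraction from the chain homotopy $h_\#$ supplied by Proposition~\ref{prop:proj_res}. However, the specific retraction you use is cleaner than the paper's, and I think it sidesteps a slip in the paper's argument. The paper sets $r' \coloneqq g_n f_n + \partial h_n$ and argues $\im(r')\subseteq\ker(\partial)$ by computing $\partial g_n f_n\sigma = g_{n-1}\delta f_n\sigma$ and claiming this vanishes because ``$\delta\colon D^0\to D^{-1}=0$.'' But the coboundary of $D^\bullet$ in degree $0$ is $\delta\colon D^0\to D^1$, not $D^0\to D^{-1}$; by the chain-map relation $\delta f_n = f_{n-1}\partial$, one gets $\partial g_n f_n = g_{n-1}f_{n-1}\partial = (g_\#f_\#)_{n-1}\partial$, which is not zero in general. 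The correct consequence of non-negativity is not $\delta f_n = 0$ but rather $f_n\partial = \delta f_{n+1} = 0$ (equivalently, $f_{n+1} = 0$), which is precisely what you exploit. Your $r = \id + h_{n-1}\partial - g_n f_n = -\partial h_n$ has image manifestly contained in $\im(\partial)$, and the identity on $\im(\partial)$ follows from $(g_\#f_\#)_n\partial = \partial(g_\#f_\#)_{n+1} = 0$. A further advantage is that your argument is uniform in $n$: for $n=0$, $h_{-1}=0$ and everything still goes through, whereas the paper handles $n=0$ with a separate (and correct) support argument precisely because its $n>0$ construction does not specialise cleanly.
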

In  Lemma~\ref{lem:ccd_pdn}, $D^\bullet$ is only assumed to be a complex of $R$-modules, not $R$-modules over $X$. Thus $f_\#$ and $g_\#$ are only required to be chain maps, not finite displacement chain maps.
\begin{proof}
	We first assume $n>0$. Since $g_\#f_\#$ has finite displacement and is augmentation-preserving, Proposition~\ref{prop:proj_res} implies there is a finite displacement chain homotopy $\id\stackrel{h_\#}{\simeq}g_\#f_\#$.
	Let $r\coloneqq g_n f_n+\partial h_n:\bC_n\to\bC_n$, which has finite displacement. For each $\sigma\in \bC_n$, we have \[\partial r(\sigma)=\partial g_n f_n\sigma+\partial^2 h_n\sigma=g_n\delta f_n\sigma=0,\] where the last equality follows because $f_n\sigma\in D^0$ and $\delta:D^0 \to D^{-1}=0$ is the zero map. Thus $\im(r)\subseteq \ker(\bC_{n}\xrightarrow{\partial} \bC_{n-1})=\im(\bC_{n+1}\xrightarrow{\partial} \bC_{n})$.
	Moreover, if $\sigma\in \im(\bC_{n+1}\xrightarrow{\partial} \bC_{n})$, then  \[\sigma=g_\#f_\#\sigma+h_\#\partial\sigma+\partial h_\#\sigma=r(\sigma).\]
	We thus see that  $r$ is a finite displacement retraction onto  $\im(\bC_{n+1}\xrightarrow{\partial} \bC_{n})$. Proposition~\ref{prop:ccd_chars} now implies $\ccd_R(X)\leq n$.

	In the case $n=0$, we suppose $g_\#f_\#$ has displacement $\Phi$. We know from weak uniform acyclicity of $\bC_\bullet$ that there is a constant $\Omega_0$ such that for each $x\in X$, there is some $m_x\in \bC_0(\Omega_0)$ with $\supp(m_x)\subseteq N_{\Omega_0}(x)$ and $\epsilon(m_x)=1$. Therefore $\supp(g_\#f_\#m_x)\subseteq N_{\Phi(\Omega_0)+\Omega_0}(x)$ for all $x\in X$. Moreover, $\supp(g_\#f_\#m_x)$ is non-empty since $\epsilon(g_\#f_\#m_x)=\epsilon(m_x)=1$ and so $g_\#f_\#m_x\neq 0$. For each $x,y\in X$, we have $m_x-m_y=\partial \omega$ for some $\omega\in \bC_1$, and so $f_\#(\omega)\in D^{-1}=0$. Therefore $f_\#(m_x-m_y)=\delta f_\#(\omega)=0$, and so $f_\#(m_x)=f_\#(m_y)$ for all $x,y\in X$. This implies \[\emptyset\neq \supp(g_\#f_\#m_x)=\supp(g_\#f_\#m_y)\subseteq N_{\Phi(\Omega_0)+\Omega_0}(x)\cap N_{\Phi(\Omega_0)+\Omega_0}(y).\] Thus $d(x,y)\leq 2\Phi(\Omega_0)+2\Omega_0$ for all $x,y\in X$, and so $X$ is bounded. Therefore, $\ccd_R(X)=0$ by Proposition~\ref{prop:bdd_0dim}.
\end{proof}

In the remainder of this section, we give several conditions that imply a space is a coarse $PD_n^R$ space. These all rely on the following  criterion. It is important to note that in the following lemma, we do not assume $X$ is uniformly acyclic.
\begin{lem}\label{lem:pdn_criterion_main}
	Let $R$ be a Noetherian ring, let $X$ be a coarsely homogeneous proper metric space and let $(\bC_\bullet,\epsilon)$ be a proper projective $R$-resolution over $X$. Let $n\geq 1$ and suppose $H^k(\bC^\bullet_c)=0$ for $k<n$ and there exists  $[\alpha]\in H^n(\bC^\bullet_c)$ and a chain map $f_\#:\bC_\bullet\to \bC^{n-\bullet}_c$ satisfying the following:
	\begin{enumerate}
		\item For each $\sigma\in \bC_0$, $[f_\#(\sigma)]=\epsilon(\sigma)[\alpha]$ in $H^n(\bC^\bullet_c)$.
		\item There is a function $\Phi:\bbN\to \bbN$ such that if $i\in \bbN$ and $\sigma\in \bC_k(i)$, then \[\supp(f_\#(\sigma),i)\subseteq N_{\Phi(i)}(\supp(\sigma)).\]
	\end{enumerate}
	Suppose also there exists some $[\tau]\in H_n(\bC^{\lf}_\bullet)$ such that $\tau(\alpha)=1$. Then $X$ is a coarse $PD_n^R$ space.
\end{lem}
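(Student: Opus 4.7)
The plan is to use cap product with $\tau$ to construct a ``dual'' chain map $g_\# \colon \bC^{n-\bullet}_c \to \bC_\bullet$ defined by $g_k(\beta) \coloneqq \tau \frown \beta$ for $\beta \in \bC^{n-k}_c$. By parts~(\ref{item:cupcapinf_properties_bdr}) and~(\ref{item:cupcapinf_properties_finitesum}) of Proposition~\ref{prop:cupcapinf_properties}, the hypothesis $\partial \tau = 0$ makes $g_\#$ a well-defined chain map; by part~(\ref{item:cupcapinf_properties_cap}), the image of each $g_k$ lies in the finite-height submodule $\bC_k(J)$, where $J = \Psi(i_\tau)$ and $i_\tau$ is any integer with $\tau \in \bC_n^{\lf}(i_\tau)$. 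This finite-height output is what ultimately allows one to build a finite-height resolution.

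First I would verify that $g_\# f_\# \colon \bC_\bullet \to \bC_\bullet$ is augmentation-preserving on $\bC_0$ and of finite displacement over $\id_X$. For the augmentation, condition~(1) writes $f_\#(\sigma) = \epsilon(\sigma) \alpha + \delta \gamma$ for $\sigma \in \bC_0$; then $\tau \frown f_\#(\sigma) = \epsilon(\sigma)(\tau \frown \alpha) + \partial(\tau \frown \gamma)$ by Proposition~\ref{prop:cupcapinf_properties}(\ref{item:cupcapinf_properties_bdr}), and $\epsilon(\tau \frown \alpha) = \tau(\alpha) = 1$ from Proposition~\ref{prop:cupcapinf_properties}(\ref{item:cupcapinf_properties_augment}) gives $\epsilon(g_\# f_\# \sigma) = \epsilon(\sigma)$. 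For finite displacement, combining condition~(2) with the cap product support estimate gives, for $\sigma \in \bC_k(i)$, that $g_\# f_\# \sigma \in \bC_k(J)$ with $\supp(g_\# f_\# \sigma) \subseteq N_{J + \Phi(\max(i,J))}(\supp(\sigma))$. Lemma~\ref{lem:ccd_pdn} then yields $\ccd_R(X) \leq n$.

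The main obstacle will be showing $X$ is coarsely uniformly $(n-1)$-acyclic over $R$, which Definition~\ref{defn:coarsePD_n} requires via a finite-height resolution. Given a reduced $k$-cycle $\sigma \in \bC_k(i, N_r(x))$ with $k < n$, $f_\#(\sigma) \in \bC^{n-k}_c$ is a cocycle whose local supports are controlled by condition~(2). Since $H^{n-k}(\bC^\bullet_c) = 0$ (by hypothesis for $n-k < n$ and by $\ccd_R(X) \leq n$ for $n-k > n$), $f_\#(\sigma) = \delta \gamma$ is a coboundary, so $g_\# f_\# \sigma = \partial(\tau \frown \gamma)$ is a boundary; chaining with $g_\# f_\# \simeq \id_{\bC_\bullet}$ from Proposition~\ref{prop:proj_res}, $\sigma$ itself is a boundary. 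The hard part is making the filling \emph{quantitatively uniform} --- producing $\gamma$ with height and support bounded only in terms of $i$, not of $r$. I would follow the strategy of Proposition~\ref{prop:unif_preimage_cobdry}: coarse homogeneity translates the cycle to a fixed basepoint, and the Noetherian hypothesis, applied to the finite-height subcomplex $\im(g_\#)$, yields the finite-generation bounds needed to produce $\gamma$ with uniform control. Proposition~\ref{prop:finite_type} then produces a proper $n$-dimensional finite-height projective $R$-resolution $\bD_\bullet$ over $X$.

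Finally, Corollary~\ref{cor:projres_indpt} and Proposition~\ref{prop:duality_ht}(\ref{item:duality_ht_natural}) transfer $f_\#, g_\#$ to finite-displacement chain maps $f'_\# \colon \bD_\bullet \to \bD^{n-\bullet}_c$ and $g'_\# \colon \bD^{n-\bullet}_c \to \bD_\bullet$, and the chain homotopy $g'_\# f'_\# \simeq \id_{\bD_\bullet}$ transfers from the preceding step. For the reverse composition, since $\bD_\bullet$ is finite-height, $\bD^{n-\bullet}_c$ is a chain complex of finite-height proper projective $R$-modules (Proposition~\ref{prop:duality_ht}), which I would augment by $\tau \colon \bD^n_c \to R$ (well-defined as $\partial \tau = 0$). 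To apply Proposition~\ref{prop:proj_res} and obtain $f'_\# g'_\# \simeq \id_{\bD^{n-\bullet}_c}$, it suffices to verify $\bD^{n-\bullet}_c$ is itself a projective $R$-resolution over $X$, which reduces to $\coarse^n(X;R) \cong R$ via $[\alpha] \mapsto 1$. The injection $R \hookrightarrow \coarse^n(X;R)$, split by $\tau_*$, exhibits $\coarse^n(X;R) = R \oplus \ker(\tau_*)$; I would close the loop by ruling out $\ker(\tau_*)$ using coarse homogeneity together with Proposition~\ref{prop:unif_preimage_cobdry}(\ref{item:full_support}) applied to the $R$-summand. This completes the verification that $\bD_\bullet$ is a $PD_n^R$-complex, and hence $X$ is a coarse $PD_n^R$ space.
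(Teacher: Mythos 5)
Your proposal correctly identifies the main tools --- the cap-product map $g_\#(\beta)=\tau\frown\beta$ with bounded-height image, Lemma~\ref{lem:ccd_pdn}, the reduction to a finite-height $n$-dimensional $\bD_\bullet$, and the final application of Proposition~\ref{prop:proj_res} --- so the architecture matches the paper's. But there is a genuine gap in the central step of establishing coarse uniform acyclicity. You propose to find a cochain $\gamma$ with $\delta\gamma=f_\#(\sigma)$ of controlled support by a Noetherian argument in the style of Proposition~\ref{prop:unif_preimage_cobdry}. That proposition, however, requires $\bC_k$ to be finite-height for $k\le n$, which via Proposition~\ref{prop:acyc_res} is \emph{equivalent} to the coarse uniform acyclicity you are trying to prove --- the route is circular. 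Moreover $\im(g_\#)$ is not a projective $R$-module over $X$, and $f_\#(\sigma)$ lives in $\bC^{n-k}_c$ rather than in $\im(g_\#)$, so the Noetherian hypothesis does not attach to it in the way your sketch suggests. (You also ask for too much: Definition~\ref{defn:unif_acyc_complex} requires only the \emph{height} bound to be independent of $r$; the support bound $\mu(i,r)$ is allowed to depend on $r$.)

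The paper sidesteps this entirely with a device you have all the pieces for but did not assemble. Weak uniform acyclicity of $\bC_\bullet$ --- automatic for any projective $R$-resolution over $X$ --- supplies $\omega'\in\bC_{k+1}$ with $\partial\omega'=\sigma$ and controlled support but uncontrolled height. Setting $\omega\coloneqq g_\#f_\#\omega'-h_\#\sigma$, where $\id\stackrel{h_\#}{\simeq}g_\#f_\#$, one computes $\partial\omega=g_\#f_\#\sigma-\partial h_\#\sigma=\sigma$. The key observation, which you made in your first paragraph but did not exploit, is that $g_\#$ has image in the finite-height submodule $\bC_\bullet(J)$; hence $g_\#f_\#\omega'$ has height $\le J$ regardless of the height of $\omega'$, and so $\omega$ has height $\le\max(J,\Phi_1(i))$ with support controlled by $i$ and $r$. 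No Noetherian input is needed at this stage. (Equivalently, your $\gamma$ can simply be taken to be $f_\#(\omega')$: its height is irrelevant because $g_\#$ caps it, and its local support is controlled by condition~(2) of the lemma.) Finally, your closing argument that $\coarse^n(X;R)\cong R$ via Proposition~\ref{prop:unif_preimage_cobdry}(\ref{item:full_support}) has a similar circularity, since that item takes finite generation of $H^n(\bC^\bullet_c)$ as a hypothesis; the paper instead dualizes $f_\#$ and $g_\#$ via Proposition~\ref{prop:duality_ht} to exhibit $H^n(\bC^\bullet_c)$ as a retract of $H_0(\bC_\bullet)\cong R$, then uses $\tau(\alpha)=1$ to rule out the zero module.
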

\begin{proof}
	Since the proof is fairly long, we split it into three  steps.

	\textbf{Step 1: $X$ is coarsely uniformly acyclic over $R$.}\\
	We pick $\Phi_0:\bbN\to \bbN$ such that  $\bC_\bullet$ has displacement $\Phi_0$. Fix a diagonal approximation $\Delta:\bC_\bullet\to \bC_\bullet\otimes \bC_\bullet$  and choose $\Psi:\bbN\to \bbN$ large enough so that Lemma~\ref{lem:cupcap_properties} and Proposition~\ref{prop:cupcapinf_properties} hold. Pick $\Omega:\bbN\times \bbN\to \bbN$ such that $\bC_\bullet$ is $\Omega$-weakly uniformly acyclic.
	Pick $j_0$ large enough so that $\tau\in \bC^{\lf}_n(j_0)$, and set $j_1\coloneqq \Psi(j_0)$. We define $g_\#:\bC^{n-\bullet}_c\to \bC_\bullet$ by $g_\#(\alpha)=\tau\frown\alpha$. Since $\partial\tau=0$, Proposition~\ref{prop:cupcapinf_properties} ensures that $g_\#$ is a chain map.

	We   claim that the composition $g_\#f_\#$ is an augmentation-preserving chain map of finite displacement over $\id_X$. Firstly, suppose $\sigma\in \bC_0$.
	Then using  Proposition~\ref{prop:cupcapinf_properties} and the hypotheses on $\tau$ and on $f_\#$, we see \begin{align*}
		\epsilon(g_\#f_\#(\sigma))=\epsilon(\tau\frown f_\#(\sigma))=\tau(f_\#(\sigma))=\epsilon(\sigma)\tau(\alpha)=\epsilon(\sigma).
	\end{align*}
	For each $i\in \bbN$ and $\sigma\in \bC_k(i)$,  Proposition~\ref{prop:cupcapinf_properties} implies:
	\begin{itemize}
		\item $g_\#f_\#(\sigma)\in \bC_k(j_1)$;
		\item $\supp(g_\#f_\#(\sigma))\subseteq N_{j_1}(\supp(f_\#\sigma,j_1))\subseteq N_{j_2}(\supp(\sigma))$,
	\end{itemize}
	where $j_2\coloneqq j_1+\Phi(j_1)$.
	This shows that $g_\#f_\#$ has finite displacement. Proposition~\ref{prop:proj_res} now implies that there is finite-displacement chain homotopy $\id\stackrel{h_\#}{\simeq} g_\#f_\#$, which we may assume is of displacement $\Phi_1$.

	We define \begin{enumerate}
		\item $\lambda(i)\coloneqq  \max(j_1,\Phi_1(i))$ and
		\item $\mu(i,r)\coloneqq \max(\Phi_1(i)+r,j_2+\Omega(i,r)+r)$.
	\end{enumerate}
	We  show that for each  $x\in X$,  $i,r\in \bbN$, and reduced cycle $\sigma\in \bC_k(i,N_r(x))$, there is some $\omega\in \bC_{k+1}(\lambda(i))$ with $\supp(\omega)\subseteq N_{\mu(i,r)}(x)$ such that $\partial \omega=\sigma$. By Lemma~\ref{lem:standard} and Proposition~\ref{prop:acyc_res}, this will imply that $X$ is uniformly acyclic.
	We thus fix some reduced  cycle $\sigma\in \bC_k(i,N_r(x))$.
	As $\bC_\bullet$ is $\Omega$-weakly uniformly acyclic, there exists some $\omega'\in \bC_{k+1}(\Omega(i,r),N_{\Omega(i,r)}(x))$ with $\partial \omega'=\sigma$.
	We set $\omega\coloneqq g_\#f_\#\omega'-h_\#\sigma$ and first note that \begin{align*}
		\partial \omega=\partial g_\#f_\#\omega'-\partial h_\#\sigma=g_\#f_\#\sigma-\partial h_\#\sigma=\sigma+h_\#\partial\sigma=\sigma.
	\end{align*}
	By using the properties of $g_\#f_\#$ described above, we have $g_\#f_\#(\omega')\in \bC_{k+1}(j_1)\subseteq \bC_{k+1}(\lambda(i))$ and \[\supp(g_\#f_\#(\omega'))\subseteq N_{j_2}(\supp(\omega'))\subseteq N_{j_2+\Omega(i,r)}(\supp(\sigma))\subseteq N_{\mu(i,r)}(x)\]
	Moreover, as $h_\#$ has displacement $\Phi_1$, we see $h_\#(\sigma)\in \bC_{k+1}(\Phi_1(i))\subseteq \bC_{k+1}(\lambda(i))$  and
	\[\supp(h_\#\sigma)\subseteq N_{\Phi_1(i)}(\supp(\sigma))\subseteq N_{\Phi_1(i)+r}(x)\subseteq N_{\mu(i,r)}(x).\] We have thus shown $\partial \omega=\sigma$ with $\omega\in \bC_{k+1}(\lambda(i))$ and $\supp(\omega)\subseteq N_{\mu(i,r)}(x)$ as required. This implies $X$ is coarsely uniformly acyclic over $R$.

	\textbf{Step 2: Reduction to the case $\bC_\bullet$ is $n$-dimensional and of finite height.}\\
	Applying Lemma~\ref{lem:ccd_pdn} to the maps $f_\#$ and  $g_\#$, we conclude that $\ccd_R(X)\leq n$. By Proposition~\ref{prop:finite_type}, there exists an $n$-dimensional finite-height proper $R$-resolution $\bD_\bullet$ of $X$. We want to show all the hypotheses of the lemma we are trying to prove, which we assume hold for a projective $R$-resolution $\bC_\bullet$, also hold for $\bD_\bullet$.

	By Proposition~\ref{prop:proj_res}, there are finite displacement augmentation-preserving chain maps $l_\#:\bC_\bullet\to\bD_\bullet$ and $m_\#:\bD_\bullet\to \bC_\bullet$ and chain homotopy equivalences $\id\stackrel{i_\#}{\simeq} m_\#l_\#$ and $\id\stackrel{ j_\#}{\simeq} l_\#m_\#$, and we assume all these maps have displacement $\Phi_2$. By increasing $\Phi_2$, we may assume $\bC_\bullet$ and $\bD_\bullet$  also have displacement $\Phi_2$.

	We define $\hat f_\#:\bD_\bullet\to \bD^{n-\bullet}_c$ to be the composition
	\[\bD_\bullet\xrightarrow{m_\#} \bC_\bullet\xrightarrow{f_\#}\bC^{n-\bullet}_c\xrightarrow{m^\#_c}\bD^{n-\bullet}_c ,\] and define $[\hat\alpha]\coloneqq [m^\#_c\alpha]\in H^n(\bD^n_c)$.
	For each $\sigma\in \bD_0$, we have \begin{align*}
		[\hat f_\#(\sigma)]=[m^\#_c f_\#m_\#\sigma] = m^*_c([f_\#m_\#\sigma])=m^*_c(\epsilon(m_\#\sigma)[\alpha])=\epsilon(\sigma)[\hat \alpha]
	\end{align*} as required.
	For each $\sigma\in \bC_D(i)$, we apply Lemma~\ref{lem:local_support_disp} to deduce
	\begin{align*}
		\supp(\hat f_\#(\sigma),i) & =\supp(m^\#_c f_\#m_\#(\sigma),i)                                        \\
		                           & \subseteq N_{\Phi_2^2(i)+\Phi_2(i)}(\supp(f_\#m_\#(\sigma),\Phi_2^2(i))) \\ &\subseteq N_{\Phi_2^2(i)+\Phi_2(i)+\Phi(\Phi_2^2(i))}(\supp(m_\#(\sigma)))\\
		                           & \subseteq N_{\hat\Phi(i)}(\supp(\sigma)),
	\end{align*}
	where $\hat\Phi(i)\coloneqq\Phi_2^2(i)+2\Phi_2(i)+\Phi(\Phi_2^2(i))$. Moreover, setting $\hat\tau\coloneqq l^{\lf}_\#(\tau)$, the proof of  Proposition~\ref{prop:pairing_indep} implies $\hat\tau(\hat\alpha)=\tau(\alpha)=1$. Finally, since $m_\#$ is a finite displacement chain homotopy equivalence, it induces isomorphism $H^k(\bC^\bullet_c)\cong H^k(\bD^\bullet_c)$ for each $k$, so that $H^k(\bD^\bullet_c)=0$ for $k<n$.  We have thus shown that all the hypotheses of the lemma hold after replacing $\bC_\bullet$, $f_\#$, $[\alpha]$, $[\tau]$ and $\Phi$ with $\bD_\bullet$, $\hat f_\#$, $[\hat \alpha]$, $[\hat \tau]$ and $\hat \Phi$.

	\textbf{Step 3: Proof in the  case $\bC_\bullet$ is $n$-dimensional and of finite height.}\\
	By Step 2, it is sufficient to complete the proof when $\bC_\bullet$ is $n$-dimensional and of finite height. We assume $\bC_\bullet$ satisfies these properties, and consider the  chain map  $g_\#:\bC^{n-\bullet}_c\to \bC_\bullet$ and the finite displacement  chain homotopy $\id\stackrel{h_\#}{\simeq} g_\#f_\#$ as in Step 1.
	The maps $f_\#$ and $g_\#$ are shown in the commutative diagram in Figure~\ref{fig:commutative}.

	\begin{figure}[htb]
		\[\begin{tikzcd}
				0 & {\mathbf{C}_{n}} & \cdots & {\mathbf{C}_{1}} & {\mathbf{C}_{0}} & 0 \\
				0 & {\mathbf{C}^0_c} & \cdots & {\mathbf{C}^{n-1}_c} & {\mathbf{C}^{n}_c} & 0 \\
				0 & {\mathbf{C}_{n}} & \cdots & {\mathbf{C}_{1}} & {\mathbf{C}_{0}} & 0
				\arrow["\partial", from=1-1, to=1-2]
				\arrow[from=1-1, to=2-1]
				\arrow["\partial", from=1-2, to=1-3]
				\arrow["{f_\#}", from=1-2, to=2-2]
				\arrow["\partial", from=1-3, to=1-4]
				\arrow["\partial", from=1-4, to=1-5]
				\arrow["{f_\#}", from=1-4, to=2-4]
				\arrow["\partial", from=1-5, to=1-6]
				\arrow["{f_\#}", from=1-5, to=2-5]
				\arrow[from=1-6, to=2-6]
				\arrow["\delta", from=2-1, to=2-2]
				\arrow[from=2-1, to=3-1]
				\arrow["\delta", from=2-2, to=2-3]
				\arrow["{g_\#}", from=2-2, to=3-2]
				\arrow["\delta", from=2-3, to=2-4]
				\arrow["\delta", from=2-4, to=2-5]
				\arrow["{g_\#}", from=2-4, to=3-4]
				\arrow["\delta", from=2-5, to=2-6]
				\arrow["{g_\#}", from=2-5, to=3-5]
				\arrow[from=2-6, to=3-6]
				\arrow["\partial", from=3-1, to=3-2]
				\arrow["\partial", from=3-2, to=3-3]
				\arrow["\partial", from=3-3, to=3-4]
				\arrow["\partial", from=3-4, to=3-5]
				\arrow["\partial", from=3-5, to=3-6]
			\end{tikzcd}\]
		\caption{A commutative diagram showing the chain maps $f_\#$ and $g_\#$.}\label{fig:commutative}
	\end{figure}

	Since $\bC_\bullet$ is a finite-height resolution, Proposition~\ref{prop:duality_ht} implies $\bC^{n-\bullet}_c$ is a finite-height $R$-chain complex over $X$.  Thus Remark~\ref{rem:height_support}, combined with the hypothesis on $f_\#$ and  Proposition~\ref{prop:cupcapinf_properties}, ensures that $f_\#$ and $g_\#$ have finite displacement over $\id_X$. We thus have finite-displacement chain maps $f_\#:\bC_\bullet\to \bC^{n-\bullet}_c$ and $g_\#:\bC^{n-\bullet}_c\to \bC_\bullet$, and a finite displacement chain homotopy $\id\stackrel{}{\simeq} g_\#f_\#$.
	By Definition~\ref{defn:coarsePD_n}, it remains to show there exists a finite displacement  chain homotopy $\id\stackrel{}{\simeq} f_\#g_\#$. To do this, we will construct an augmentation $\kappa:\bC^n_c\to R$ such that $f_\#$ and $g_\#$ are augmentation-preserving, and $(\bC^{n-\bullet}_c,\kappa)$ is a projective $R$-resolution over $X$. The result will then follow from Proposition~\ref{prop:proj_res}.

	We first claim $H^n(\bC^\bullet_c)\cong R$ and is generated by $[\alpha]$.
	Proposition~\ref{prop:duality_ht} implies $\bC_k\cong (\bC^k_c)^*_c$ for each $k\in \bbZ$, and that  we can dualise $f_\#$, $g_\#$ and $h_\#$ to obtain finite displacement  cochain maps
	$\bC^\bullet_c\xrightarrow{g^\#_c} \bC_{n-\bullet}\xrightarrow{f^\#_c}\bC^\bullet_c$
	and a cochain homotopy $\id\stackrel{h^\#_c}{\simeq}{f^\#_c}{g^\#_c}$.
	We now observe that:
	\begin{itemize}
		\item The composition $H^n(\bC^\bullet_c)\xrightarrow{g^*_c} H_0(\bC_{\bullet})\xrightarrow{f^*_c}H^n(\bC^\bullet_c)$ is the identity map.
		\item $H^n(\bC^\bullet_c)\neq 0$, since there exists $[\alpha]\in H^n(\bC^\bullet_c)$ and $[\tau]\in H_n(\bC_\bullet^{\lf})$ with $\tau(\alpha)=1$.
		\item $H_0(\bC_\bullet)\cong R$, since the projective $R$-resolution $\bC_\bullet$ is weakly uniformly acyclic.
	\end{itemize} These facts imply that $H^n(\bC^\bullet_c)\cong R$. Let $\beta\in \bC^n_c$ be a cocycle such that $[\beta]$ generates $H^n(\bC^\bullet_c)\cong R$. Thus there is some $r\in R$ with $[\alpha]=r[\beta]$. Since $1=\tau(\alpha)=\tau(r\beta)=r\tau(\beta)$, we see that $r$ is a unit, proving the claim.

	We  define  $\kappa:\bC^n_c\to R$ by $\kappa\coloneqq \epsilon\circ g_\#$, where  $\epsilon:\bC_0\to R$ is the augmentation of $\bC_\bullet$. We first observe that \begin{align*}
		(\kappa\circ\delta)(\bC^{n-1}_c)=(\epsilon\circ g_\#\circ\delta)(\bC^{n-1}_c)=(\epsilon\circ\partial\circ g_\#)(\bC^{n-1}_c)=0
	\end{align*} since $\epsilon\circ \partial=0$; thus $\kappa$ is an augmentation of $\bC^{n-\bullet}_c$.
	Pick now pick some $\sigma\in \bC_0$ with $\epsilon(\sigma)=1$, so  that $[f_\#\sigma]=\epsilon(\sigma)[\alpha]=[\alpha]$. Thus \[\kappa(\alpha)=\kappa(f_\#\sigma)=\epsilon(g_\#f_\#\sigma)=\epsilon(\sigma)=1\] since  $g_\#f_\#$ is augmentation-preserving. Thus for each $\sigma'\in \bC_0$, we have $\kappa(f_\#\sigma')=\kappa(\epsilon(\sigma')\alpha)=\epsilon(\sigma')$, so that $f_\#$ is augmentation-preserving. By the definition of $\kappa$, it is clear that $g_\#$ is augmentation-preserving.

	Since $(\bC^{n-\bullet}_c,\kappa)$ is an $R$-chain complex of projective $R$-modules over $X$ by Proposition~\ref{prop:duality_ht},   all that remains is to show $(\bC^{n-\bullet}_c,\kappa)$ is weakly uniformly acyclic.
	Suppose $\beta\in \bC^n_c$ satisfies $\kappa(\beta)=0$. As $\bC^{n+1}_c=0$, we note  that $\delta\beta=0$. Since $\kappa(\beta)=0$,  $\kappa(\alpha)=1$ and $[\alpha]$ generates $H^n(\bC^\bullet_c)$, it follows that $[\beta]=0$, showing that $\im(\bC^{n-1}_c\xrightarrow{\delta}\bC^n_c)=\ker(\kappa)$. Since $H^k(\bC^\bullet_c)=0$ for $k<n$, this implies that:
	\[0\to \bC^0_c\to\cdots\to \bC^n_c\xrightarrow{\kappa} R\to 0\] is exact. We know from Proposition~\ref{prop:unif_preimage_cobdry} that the coboundary maps $\delta:\bC^{k-1}_c\to \bC^k_c$ have uniform preimages.
	As $\bC_\bullet$ is weakly uniformly acyclic, there exists a constant $D\in \bbN$ such that for each $x\in X$, there exists some $\sigma_x\in \bC_0(D)$ with $\supp(\sigma_x)\subseteq N_D(x)$ and $\epsilon(\sigma_x)=1$. For each $x\in X$, set $\alpha_x\coloneqq f_\#(\sigma_x)$.
	Since $f_\#$ is augmentation-preserving and of finite displacement,
	there exists a constant $D'$ such that for all $x\in X$, $\kappa(\alpha_x)=1$, $\supp(\alpha_x)\subseteq N_{D'}(x)$ and $\alpha_x\in \bC^n_c(D')$. By Proposition~\ref{prop:equiv_weakuniformacyc}, $(\bC^{n-\bullet}_c,\kappa)$ is weakly uniformly acyclic, and we are done.
\end{proof}

We now give conditions under which the hypothesis of Lemma~\ref{lem:pdn_criterion_main} can be satisfied. In the case that we assume $X$ satisfies appropriate finiteness conditions, we obtain the following:
\begin{thm}\label{thm:main_finiteness}
	Let $R$ be a PID and let $n\geq 1$. Suppose $X$ is a coarsely homogeneous proper metric space that is coarsely uniformly $(n-1)$-acyclic over $R$. Assume that:
	\begin{enumerate}
		\item $\coarse^k(X;R)=0$ for $k<n$.
		\item $\coarse^n(X;R)$ is a non-zero finitely generated $R$-module.
	\end{enumerate}  Then $X$ is a coarse $PD_n^R$ space.
\end{thm}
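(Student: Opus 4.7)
The plan is to apply the criterion Lemma~\ref{lem:pdn_criterion_main}. That requires producing a cohomology class $[\alpha] \in H^n(\bC^\bullet_c)$, a chain map $f_\#: \bC_\bullet \to \bC^{n-\bullet}_c$ with the stated support and cohomology properties, and a locally finite cycle class $[\tau] \in H_n(\bC^{\lf}_\bullet)$ with $\tau(\alpha) = 1$. First, use Proposition~\ref{prop:acyc_res} to choose a proper free $R$-resolution $\bC_\bullet$ over $X$ with $\bC_k$ of finite height for every $k \leq n$. Because each $\bC_k$ is free, the functor $\Hom_c(\bC_k, -)$ is exact --- given any lift on a free basis, setting it to zero on basis elements whose image vanishes preserves the locally-bounded-support condition --- so any short exact sequence of coefficient $R$-modules induces a long exact sequence in $\coarse^\bullet(X; -)$. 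Because $\coarse^n(X;R)$ is finitely generated and $X$ is coarsely homogeneous, Proposition~\ref{prop:unif_preimage_cobdry}(2) furnishes a constant $D$ such that every class in $\coarse^n(X;R)$ admits, about any prescribed $x \in X$, a cocycle representative supported in $N_D(x)$.

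The crux of the argument is to show that $\coarse^n(X;R) \cong R^r$ for some $r \geq 1$, since only then does the surjection $H_n(\bC^{\lf}_\bullet) \twoheadrightarrow \Hom_R(\coarse^n(X;R), R)$ provided by Theorem~\ref{thm:truncated_kunneth}(2) hit a functional taking value $1$ on some class. I would prove this by induction on $n$. For the inductive step, suppose some prime $p \in R$ kills a nonzero element of $\coarse^n(X;R)$, and apply the long exact sequence associated to $0 \to R \xrightarrow{p} R \to R/(p) \to 0$. Using $\coarse^k(X;R) = 0$ for $k < n$, this yields $\coarse^{n-1}(X; R/(p)) \cong \coarse^n(X;R)[p]$, a nonzero finite-dimensional $R/(p)$-vector space, and $\coarse^k(X; R/(p)) = 0$ for $k < n-1$. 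The inductive hypothesis applied over the field $R/(p)$ in dimension $n-1$ then makes $X$ a coarse $PD_{n-1}^{R/(p)}$ space, and Proposition~\ref{prop:pdn_properties} forces $\coarse^n(X; R/(p)) = 0$. But the same long exact sequence gives an injection $\coarse^n(X;R)/p\coarse^n(X;R) \hookrightarrow \coarse^n(X; R/(p))$, so $\coarse^n(X;R) = p\coarse^n(X;R)$; Nakayama applied to the finitely generated $R_{(p)}$-module $\coarse^n(X;R)_{(p)}$ forces its vanishing, which contradicts the existence of $p$-torsion (any $p$-torsion element killed in the localisation is annihilated by some $s$ with $\gcd(s,p) = 1$, whence it is killed by $1$). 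The base case $n = 1$ requires a separate direct treatment, for instance via Theorem~\ref{thm:ccd1_intro} together with the observation that a coarsely homogeneous unbounded quasi-tree with finitely generated $\coarse^1(X;R)$ must be quasi-isometric to a line.

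With $\coarse^n(X;R) \cong R^r$ established, pick $[\alpha]$ generating a free direct summand and $\phi \in \Hom_R(\coarse^n(X;R), R)$ dual to $[\alpha]$. Theorem~\ref{thm:truncated_kunneth}(2) supplies $[\tau] \in H_n(\bC^{\lf}_\bullet)$ with $\psi([\tau]) = \phi$, so $\tau(\alpha) = 1$ for a cocycle representative $\alpha$. The chain map $f_\#$ is then built dimension by dimension. Fix uniformly supported representatives $\alpha_x$ of $[\alpha]$ with $\supp(\alpha_x) \subseteq N_D(x)$, and set $f_0(m_b) = \epsilon(m_b)\alpha_{p(b)}$ on a projective basis of $\bC_0$, extended $R$-linearly; then $[f_0(\sigma)] = \epsilon(\sigma)[\alpha]$ and the support estimate required by Lemma~\ref{lem:pdn_criterion_main}(2) follows immediately. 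For $k \geq 1$, proceed inductively: $f_{k-1}(\partial m_c)$ is a cocycle in $\bC^{n-k+1}_c$ that is a coboundary, either because $H^{n-k+1}(\bC^\bullet_c) = 0$ (when $k \geq 2$, since $n-k+1 < n$) or, for $k = 1$, because it is a finite $R$-combination of coboundaries $\alpha_x - \alpha_y$. Proposition~\ref{prop:unif_preimage_cobdry}(1) provides a preimage $f_k(m_c) \in \bC^{n-k}_c$ with controlled support; extending linearly yields the required chain map $f_\#$, and Lemma~\ref{lem:pdn_criterion_main} concludes that $X$ is a coarse $PD_n^R$ space.

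The main obstacle is the induction establishing torsion-freeness of $\coarse^n(X;R)$, and especially the base case $n = 1$, to which the Kunneth descent does not directly apply.
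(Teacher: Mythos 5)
Your proposal follows essentially the same route as the paper: reduce to showing $\coarse^n(X;R)$ is free, produce a primitive class $[\alpha]$ and a pairing element $[\tau]$ via Theorem~\ref{thm:truncated_kunneth}, build $f_\#$ dimension by dimension using the uniformly-supported representatives from Proposition~\ref{prop:unif_preimage_cobdry}, then invoke Lemma~\ref{lem:pdn_criterion_main}, with torsion eliminated by a descent to the residue field $R/(p)$. Two variants are cosmetic: you derive the change-of-coefficients exact sequence directly from exactness of $\Hom_c(\bC_k,-)$ on a free resolution rather than invoking Theorem~\ref{thm:truncated_kunneth}/Proposition~\ref{prop:change_rings_res} (both are valid, and your observation about lifting on a free basis while preserving locally bounded support is correct), and you build $f_\#$ on a free basis where the paper phrases the same construction via the exact sequence $\bC^{n-1}_c\to\bL\to R\to 0$ and Proposition~\ref{prop:projective}.

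The one genuine gap is your handling of the base case $n=1$, and the proposed patch does not work. To invoke Theorem~\ref{thm:ccd1} you would need $\ccd_R(X)=1$, but the hypotheses only give $\ccd_R(X)\geq 1$; moreover, the claim that a coarsely homogeneous unbounded quasi-tree with finitely generated $\coarse^1$ is a line essentially \emph{is} the $n=1$ case of the dichotomy being proved (infinitely many ends forcing infinite-dimensional $\coarse^1$), so this route is circular unless one independently proves that sub-claim. The correct and much simpler fix: since $\coarse^1(X;R)\neq 0$, Proposition~\ref{prop:ccd_chars} gives $\ccd_R(X)\geq 1$, so $X$ is unbounded by Proposition~\ref{prop:bdd_0dim}; but then $\coarse^0(X;\bbF)=0$ for any field $\bbF$ (a compactly supported $0$-cocycle must factor through $\epsilon$ and hence vanish when $X$ is unbounded), which directly forces $\Tor_1^R(\coarse^1(X;R),R/(p))=0$ for every prime $p$, i.e.\ $\coarse^1(X;R)$ is torsion-free. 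No quasi-tree input is needed. (Incidentally the paper's written proof also implicitly encounters the $n=1$ edge case, since ``coarse $PD_0^\bbF$'' is not defined by Definition~\ref{defn:coarsePD_n}; the same remark repairs it.) A second, minor omission: before applying the field case at $n-1$ you must verify that $X$ is coarsely uniformly $(n-2)$-acyclic over $R/(p)$; this follows by tensoring the finite-height partial resolution with $R/(p)$ (Propositions~\ref{prop:change_rings}, \ref{prop:change_rings_res}, \ref{prop:acyc_res}), but it should be stated.
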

\begin{proof}
	By Proposition~\ref{prop:acyc_res}, there exists a proper projective $R$-resolution $\bC_\bullet$ over $X$ such that $\bC_k$ has finite height for $k\leq n$. We pick $i_0$ such that $\bC_k(i_0)=\bC_k$ for $k\leq n$. By Proposition~\ref{prop:duality_ht}, for each $k\leq n$, $\bC^k_c$ is a  projective $R$-module over $X$ of height at most $i_0$.  We first complete the proof under the assumption that $H^n(\bC^\bullet_c)\cong \coarse^n(X;R)$ is a torsion-free $R$-module.

	We proceed by showing that the hypotheses of Lemma~\ref{lem:pdn_criterion_main} are satisfied. Since $H^n(\bC^\bullet_c)$ is a finitely generated torsion-free module over a PID, it is free. Let $[\alpha]\in H^n(\bC^\bullet_c)$ be a non-zero primitive element of $H^n(\bC^\bullet_c)$, so that there is a homomorphism $\overline\tau:H^n(\bC^\bullet_c)\to R$ with $\overline\tau([\alpha])=1$. It thus follows from Theorem~\ref{thm:truncated_kunneth} that there exists some $[\tau]\in H_n(\bC_\bullet^{\lf})$ with $\tau(\alpha)=1$. It remains to construct a map $f_\#:\bC_\bullet\to \bC^{n-\bullet}_c$ satisfying the hypotheses of Lemma~\ref{lem:pdn_criterion_main}.

	We consider $\bZ^n_c\coloneqq \ker(\bC^n_c\to \bC^{n+1}_c)$ and $\bB^n_c=\im(\bC^{n-1}_c\to \bC^n_c)$ as geometric submodules of $\bC^n_c$. Let $\bL\coloneqq \bB^{n}_c+R\alpha$ be the geometric submodule of $\bC^n_c$ generated by $\bB^n_c$ and the cyclic submodule $R\alpha$. Since $[\alpha]$ is primitive in $H^n(\bC^\bullet_c)$, the quotient map $\bZ^n_c\to \bZ^n_c/\bB^n_c= H^n(\bC^\bullet_c)$ restricts to a surjection $j:\bL\to R$ with $j(\alpha)=1$ and $j(\bB^n_c)=0$. By construction, we have  that  $[\beta]=j(\beta)[\alpha]$ for all $\beta\in \bL$, and that \begin{align}\label{eqn:exact_aug}
		\bC^{n-1}_c\xrightarrow{\delta}\bL\xrightarrow{j} R\to 0
	\end{align}
	is exact.

	\begin{figure}[htbp]
		\[\begin{tikzcd}
				{\bC_{n+1}} & {\bC_n} & {\bC_{n-1}} & \cdots & {\bC_1} & {\bC_0} & 0 \\
				0 & {\bC^0_c} & {\bC^1_c} & \cdots & {\bC^{n-1}_c} & {\bC^n_c}
				\arrow[from=1-1, to=1-2]
				\arrow["0", dashed, from=1-1, to=2-1]
				\arrow["\partial", from=1-2, to=1-3]
				\arrow["{f_n}", dashed, from=1-2, to=2-2]
				\arrow["\partial", from=1-3, to=1-4]
				\arrow["{f_{n-1}}", dashed, from=1-3, to=2-3]
				\arrow["\partial", from=1-4, to=1-5]
				\arrow["\partial", from=1-5, to=1-6]
				\arrow["{f_1}", dashed, from=1-5, to=2-5]
				\arrow["0", from=1-6, to=1-7]
				\arrow["{f_0}", dashed, from=1-6, to=2-6]
				\arrow[from=2-1, to=2-2]
				\arrow["\delta", from=2-2, to=2-3]
				\arrow["\delta", from=2-3, to=2-4]
				\arrow["\delta", from=2-4, to=2-5]
				\arrow["\delta", from=2-5, to=2-6]
			\end{tikzcd}\]
		\caption{A commutative diagram showing the map $f_\#:\bC_\bullet\to \bC^{n-\bullet}_c$.}\label{fig:dualmap}
	\end{figure}

	By Proposition~\ref{prop:unif_preimage_cobdry}, there is a constant $D$ such that for each $x\in X$, there is a cocycle $\alpha_x\in \bC^n_c$ with $[\alpha_x]=[\alpha]$ and $\supp(\alpha_x)\subseteq N_D(x)$. In particular, we note that $\alpha_x\in \bL$ and $j(\alpha_x)=1$. Since $\bC^n_c$ has height at most $i_0$, we have $\alpha_x\in \bC^n_c(i_0)$ for all $x\in X$.
	Suppose that $\bC_0=(C_0,B,\delta,p,\cF)$ with projective basis $\{m_b\}$.  We can therefore use Lemma~\ref{lem:define_fin_disp} to construct a finite displacement map $f_0:\bC_0\to \bC^{n}_c$ such that $f_0(m_b)=\epsilon(m_b)\alpha_{p(b)}$ for all $b\in B$. Since $\im(f)\subseteq \bL$ and  $j(f_0(m_b))=\epsilon(m_b)j(\alpha_{p(b)})=\epsilon(m_b)$ for all $b\in B$, it follows that $j\circ f_0$ and $\epsilon$ agree on $\bC_0$.
	Thus for each $\sigma\in \bC_0$,  \[{[f_0(\sigma)]}=j(f_0(\sigma))[\alpha]=\epsilon(\sigma)[\alpha].\] Therefore, the first condition in the statement of Lemma~\ref{lem:pdn_criterion_main} is satisfied.

	We now extend $f_0$ to a chain map $f_\#:\bC_\bullet\to \bC^{n-\bullet}_c$ as shown in Figure~\ref{fig:dualmap}. Indeed, it follows from Proposition~\ref{prop:unif_preimage_cobdry} that all the maps in the bottom row of Figure~\ref{fig:dualmap} have uniform preimages.
	Since $j\circ f_0=\epsilon$ and  (\ref{eqn:exact_aug}) is exact, it follows that  $\im(\bC_1\xrightarrow{f_0\circ \partial}\bC^{n}_c)\subseteq \im(\bC^{n-1}_c\xrightarrow{\delta}\bC^n_c)$. We can thus apply Proposition~\ref{prop:projective} to define a finite displacement map $f_1:\bC_1\to \bC^{n-1}_c$ with $\delta f_1=f_0\partial$. Using the hypothesis $H^k(\bC^\bullet_c)=0$ for $k<n$, we repeatedly apply Proposition~\ref{prop:projective} to define a finite displacement chain map $f_\#:\bC_\bullet\to \bC^{n-\bullet}_c$ as in Figure~\ref{fig:dualmap}.
	Since $f_\#$ is of finite displacement and each $\bC_k$ and $\bC^k_c$ has finite height for $k\leq n$, it follows from Remark~\ref{rem:height_support} that $f_\#$ satisfies the second condition in the statement of Lemma~\ref{lem:pdn_criterion_main}. Thus all the hypotheses of Lemma~\ref{lem:pdn_criterion_main} are satisfied, so we conclude that $X$ is a coarse $PD_n^R$ space. This completes the proof under the assumption $H^n(\bC^\bullet_c)$ is assumed to be torsion-free. In particular, this completes the proof in the case $R$ is a field.

	We now assume for contradiction that $H^n(\bC^\bullet_c)$ is not torsion-free.  The structure theory for finitely generated modules over a PID implies $H^n(\bC^\bullet_c)$  contains a direct summand isomorphic to $R/p^j R$ for some  $j\geq 1$ and prime $p\in R$. We thus consider the field  $\bbF\coloneqq R/pR$. Our choice of $p$ ensures that $\Tor_1^R(H^n(\bC^\bullet_c),\bbF)$ and $H^n(\bC^\bullet_c)\otimes_\bbF$ are both non-zero, being the kernel and cokernel respectively of the map $H^n(\bC^\bullet_c)\xrightarrow{\times p}H^n(\bC^\bullet_c)$.
	Let $\bD_\bullet=\bC_\bullet \otimes_R \bbF$, which is a proper projective $\bbF$-resolution over $X$ by Proposition~\ref{prop:change_rings_res}. As $\bD_k$ has finite height for $k\leq n$ by Proposition~\ref{prop:change_rings}, it follows that $X$ is coarsely uniformly $(n-1)$-acyclic over $\bbF$ by Proposition~\ref{prop:acyc_res}.

	Since $\coarse^k(X;\bbF)\cong H^k(\bD^\bullet_c)$ and $H^k(\bC^\bullet_c)=0$ for $k<n$,  Theorem~\ref{thm:truncated_kunneth} implies that $\coarse^k(X;\bbF)=0$ for $k<n-1$, and $\coarse^{n-1}(X;\bbF)\cong \Tor_1^R(H^n(\bC^\bullet_c),\bbF)$ is non-zero and finite-dimensional. As we have already completed the proof in the  case where the coefficients are a  field, we apply the result to conclude $X$ is a coarse $PD_{n-1}^\bbF$ space. In particular, this implies that $\coarse^n(X;\bbF)=0$ by Proposition~\ref{prop:pdn_properties}. However, we know from Theorem~\ref{thm:truncated_kunneth} that $0\neq H^n(\bC^\bullet_c)\otimes_R\bbF$ is isomorphic to a submodule of $\coarse^n(X;\bbF)\cong H^n(\bD^\bullet_c)$, which yields the desired contradiction.
\end{proof}
Combining Theorem~\ref{thm:main_finiteness} with Proposition~\ref{prop:pdn_properties}, we conclude:
\begin{cor}\label{cor:main_finiteness_space}
	\mainspace{}
\end{cor}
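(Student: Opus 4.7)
The plan is to deduce this corollary almost immediately from Theorem~\ref{thm:main_finiteness} by a simple case analysis on whether $\coarse^n(X;R)$ is finitely generated as an $R$-module. The trichotomy in the statement is already set up this way: cases~(1) and (2) together cover all possibilities in which $\coarse^n(X;R)$ is finitely generated (since $0$ and $R$ are the two candidate values), while case~(3) is precisely the complementary possibility.

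Concretely, I would argue as follows. Suppose first that $\coarse^n(X;R)$ is not finitely generated as an $R$-module; then conclusion~(3) holds and there is nothing to prove. Suppose instead that $\coarse^n(X;R)$ is finitely generated. If $\coarse^n(X;R)=0$, then conclusion~(1) holds. Otherwise $\coarse^n(X;R)$ is a non-zero finitely generated $R$-module, so the hypotheses of Theorem~\ref{thm:main_finiteness} are all satisfied: $X$ is a coarsely homogeneous proper metric space, coarsely uniformly $(n-1)$-acyclic over the PID $R$, with $\coarse^k(X;R)=0$ for $k<n$ and $\coarse^n(X;R)$ non-zero and finitely generated. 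Applying that theorem, we conclude that $X$ is a coarse $PD_n^R$ space. Finally, Proposition~\ref{prop:pdn_properties}(\ref{item:prop_pdn_properties2}) implies $\coarse^n(X;R)\cong R$, which yields case~(2) together with the asserted coarse Poincar\'e duality.

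There is no real obstacle here: the substantive geometric and homological content — constructing a chain map $f_\#:\bC_\bullet\to \bC^{n-\bullet}_c$ out of the representative cocycles produced by Proposition~\ref{prop:unif_preimage_cobdry}, lifting it using the liftings afforded by Proposition~\ref{prop:projective}, and pairing it with a fundamental class obtained via Theorem~\ref{thm:truncated_kunneth} to invoke Lemma~\ref{lem:pdn_criterion_main} — has already been carried out in the proof of Theorem~\ref{thm:main_finiteness}. The only point worth emphasising is the logical coherence of the trichotomy: the three listed possibilities are mutually exclusive (no non-zero finitely generated $R$-module is isomorphic to the zero module, and a non-finitely-generated $R$-module cannot be isomorphic to $R$) and exhaustive by the dichotomy on finite generation, so the statement is genuinely a trichotomy rather than merely a disjunction with overlap.
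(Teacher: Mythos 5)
Your argument is correct and is precisely the paper's: the paper dispatches the corollary by combining Theorem~\ref{thm:main_finiteness} with Proposition~\ref{prop:pdn_properties}, exactly as you do, and the case split on whether $\coarse^n(X;R)$ is finitely generated is the obvious unpacking of that combination.
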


We would like to strengthen Theorem~\ref{thm:main_finiteness} by removing the hypothesis that $X$ is coarsely uniformly $(n-1)$-acyclic. Unfortunately, one of the key points in the proof of Theorem~\ref{thm:main_finiteness} relies on the fact the coboundary maps $\delta:\bC^{k-1}_c\to \bC^k_c$ in the bottom row of Figure~\ref{fig:dualmap} have the uniform preimage property, and so Proposition~\ref{prop:projective} can be applied. In the case where the $\bC_k$ do not have finite height, it is not clear what the analogue of the uniform preimage property should be.

However, we can make progress if we restrict ourselves to the case where $X$ is a countable group $G$ equipped, as always, with a proper left-invariant metric. The key point is that we can choose $\bC_\bullet$ so the coboundary maps   $\delta:\bC^{k-1}_c\to \bC^k_c$ are $G$-equivariant. We can then define a chain map $f_\#:\bC_\bullet\to \bC^{n-\bullet}_c$ as in the statement of Lemma~\ref{lem:pdn_criterion_main}, by requiring that each $f_\#(\bC_k)$ is contained in a finitely generated $G$-submodule of $\bC^{n-k}_c$; see Proposition~\ref{prop:construct_map_weak_fin_disp} for a precise statement. The condition that $\im(f_\#)$ be contained in a finitely generated $G$ maps is a surrogate for the uniform preimage condition, and allows us to construct the desired map $f_
	\#$. This  idea allows us to prove the following  far-reaching generalisation of a result of Farrell, who proved a similar theorem in the case that $G$ satisfies additional finiteness properties~\cite[Theorem 1]{farrell1975pdgroups}.
\begin{thm}\label{thm:main_field}
	\maingroup{}
\end{thm}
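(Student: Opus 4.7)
The plan is to reduce the trichotomy to the ``moreover'' clause and then prove the clause by verifying the hypotheses of Lemma~\ref{lem:pdn_criterion_main}. By Corollary~\ref{cor:induced_mod}, $H^k(G,\bbF G)\cong \coarse^k(G;\bbF)$ as right $\bbF G$-modules. If $0<\dim_\bbF H^n(G,\bbF G)<\infty$, then $H^n(G,\bbF G)$ has countable (in fact finite) dimension and is a nonzero finite-dimensional $G$-invariant subspace of itself, so the ``moreover'' clause would force $\dim_\bbF H^n(G,\bbF G)=1$; this establishes the trichotomy, and case (2) is identified as coarse $PD_n^\bbF$ by the ``moreover'' clause itself.

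For the ``moreover'' clause, I would fix a proper projective $\bbF$-resolution $\bC_\bullet$ over $G$ as in Corollary~\ref{cor:projres_ginv}, in which each $\bC_k$ is $G$-induced and the boundary maps are $G$-equivariant. By Lemma~\ref{lem:gptospace}, $\bC^\bullet_c$ is a complex of right $\bbF G$-modules with $G$-equivariant coboundary maps and cohomology $H^\ast(G,\bbF G)$. Pick a nonzero class $[\alpha]$ in the given finite-dimensional $G$-invariant subspace $V\leq H^n(\bC^\bullet_c)$, lifted to a cocycle $\alpha\in \bC^n_c$. My plan is to supply the data required by Lemma~\ref{lem:pdn_criterion_main}, which will imply $G$ is a coarse $PD_n^\bbF$ space, whereupon Proposition~\ref{prop:pdn_properties}(\ref{item:prop_pdn_properties2}) forces $\dim_\bbF H^n(G,\bbF G)=1$.

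The class $[\tau]\in H_n(\bC_\bullet^{\lf})$ with $\tau(\alpha)=1$ comes easily from the countable-dimension hypothesis via Theorem~\ref{thm:univcoeff_ctblehyp}(\ref{item:univcoeff_ctblehyp2}), which provides a surjection $H_n(\bC_\bullet^{\lf})\to \Hom_\bbF(H^n(\bC^\bullet_c),\bbF)$; one preimages any $\bbF$-linear functional pairing to $1$ with $[\alpha]$.  The hard part will be constructing the chain map $f_\#:\bC_\bullet\to \bC^{n-\bullet}_c$ with $[f_0(\sigma)]=\epsilon(\sigma)[\alpha]$ and the required support bound, without the aid of coarse uniform acyclicity (which is actually a consequence, not a hypothesis, of the theorem).

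Here the hypothesis that $V$ is finite-dimensional and $G$-invariant is crucial, and will play the role in our $G$-equivariant setting that finite generation of $\coarse^n$ played in the proof of Theorem~\ref{thm:main_finiteness}. Fix lifts $\alpha_1,\dots,\alpha_m\in \bC^n_c$ of an $\bbF$-basis of $V$, and observe that for every $x\in G$ the invertibility of the $G$-action on $V$ produces unique coefficients $d_i(x)\in \bbF$ such that the cocycle $\alpha_x\coloneqq \sum_i d_i(x)(\alpha_i\cdot x^{-1})$ represents $[\alpha]$ and has support of uniformly bounded diameter near $x$, since each $\alpha_i$ has bounded support and the translate $\alpha_i\cdot x^{-1}$ has support near $x$. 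Setting $f_0(m_b)\coloneqq \epsilon(m_b)\alpha_{p(b)}$ on the projective basis of $\bC_0$ then yields a finite-displacement $f_0$ satisfying the cohomological condition. The inductive extension producing $f_{k+1}$ from $f_k$ is the main technical obstacle and exploits the $G$-equivariance of the coboundary maps combined with the countable generation of each $\bC_k$ as an $\bbF G$-module: I would arrange inductively that each $f_k(\bC_k)$ lies in a finitely generated $\bbF G$-submodule of $\bC^{n-k}_c$, using the vanishing $H^{n-k}(\bC^\bullet_c)=0$ for $k\geq 1$ and $G$-equivariant lifting techniques in the spirit of Propositions~\ref{prop:ginv_unifpreim} and~\ref{prop:ginduced}(\ref{item:ginduced2}) (the former applicable since $\bbF$ is Noetherian) to ensure the finite displacement of $f_{k+1}$. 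Once $f_\#$ is constructed, Lemma~\ref{lem:pdn_criterion_main} finishes the proof.
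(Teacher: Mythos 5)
Your proposal follows essentially the same route as the paper: reduce the trichotomy to the ``moreover'' clause, translate to $\coarse^\ast(G;\bbF)$ via Corollary~\ref{cor:induced_mod}, obtain $[\tau]$ from Theorem~\ref{thm:univcoeff_ctblehyp}(\ref{item:univcoeff_ctblehyp2}), construct a suitable chain map $f_\#:\bC_\bullet\to\bC^{n-\bullet}_c$, and close with Lemma~\ref{lem:pdn_criterion_main} and Proposition~\ref{prop:pdn_properties}. The paper isolates the construction of $f_\#$ as a standalone statement, Proposition~\ref{prop:construct_map_weak_fin_disp}, and you have correctly identified it as the crux; your description of $f_0$ (twisting a finite set of representative cocycles of $V$ by $G$ and using $G$-invariance) matches the paper's.

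Two points in your sketch of the induction need sharpening. First, you propose to arrange that $f_k(\bC_k)$ lands in a single finitely generated $\bbF G$-submodule of $\bC^{n-k}_c$; for an infinite group the standard resolution $\bC_k$ is not finitely generated over $\bbF G$, and the displacement condition requires the image of $\sigma$ to track $\supp(\sigma)$, so one cannot expect a single such submodule. The correct bookkeeping is a family $\{M_{k,i}\}_i$ of finitely generated $\bbF$-submodules of $\bC^{n-k}_c$, indexed by filtration height, with the twisted condition $f_\#(g\Sigma_k(i))\subseteq M_{k,i}g^{-1}$ for all $g\in G$ and all $i$; Lemma~\ref{lem:equiv-uniformity} then converts this into the displacement bound needed by Lemma~\ref{lem:pdn_criterion_main}. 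Second, the appeal to Proposition~\ref{prop:ginv_unifpreim} is not quite the right tool: that result is about $G$-equivariant maps, whereas $f_\#$ is manifestly not $G$-equivariant (this is the entire point of the non-equivariant framework). Noetherianity does enter, but differently: at the inductive step one intersects $\im(\delta:\bC^{n-k-1}_c\to\bC^{n-k}_c)$ with a finitely generated $\bbF$-module $\langle M_{k,i}F_i^{-1}\rangle$ (where $F_i=N_{\Phi(i)}(1)$) and uses Noetherianity to conclude this intersection is finitely generated, which is what lets you choose a finitely generated $M_{k+1,i}$ mapping onto it under $\delta$. These are repairable imprecisions rather than fatal gaps, but they are exactly where the work of Proposition~\ref{prop:construct_map_weak_fin_disp} lives.
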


Before proving Theorem~\ref{thm:main_field}, we briefly recall some details regarding the standard resolution $\bC_\bullet$ of a countable group $G$. As noted in Corollary~\ref{cor:projres_ginv}, each $\bC_k=(C_k,B_k,\delta^k,p_k,\{B_k(i)\})$ is a proper $G$-induced $R$-module over $G$, and all boundary maps are $G$-invariant. We recall that $\bC_k$ is a free $R$-module over $G$ with basis $B_k=\{[g_0,\dots, g_k]\mid g_0,\dots,g_k\in G\}$ and $p_k([g_0,\dots, g_k])=g_0$. Since the $G$-action on $B_k$ is given by $g[g_0,\dots, g_k]=[gg_0,\dots, gg_k]$, we observe that \[\Sigma_k=\{[g_0,g_1,\dots,g_n]\in B_k\mid g_0=1\}\] is a basis of  $\bC_k$ as an $RG$-module. Moreover, for each $i\in \bbN$, $\Sigma_k(i)\coloneqq \Sigma_k\cap \bC_k(i)$ is a finite basis of $\bC_k(i)$ as an $RG$-module.

The left $G$-action on $\bC_\bullet$ satisfies $g\supp(\sigma)=\supp(g\sigma)$ for all $\sigma\in \bC_\bullet$ and $g\in G$.
There is a right $G$-action on $\bC^\bullet_c$ given by $(\alpha\cdot g)(\sigma)=\alpha(g\sigma)$ for all $k\in \bbN$, $\alpha\in \bC^k_c$, $\sigma\in \bC_k$ and $g\in G$. It is straightforward to verify that $\supp(\alpha\cdot g,i)=g^{-1}\supp(\alpha,i)$ for all $i,k\in \bbN$, $\alpha\in \bC^k_c$ and $g\in G$. Moreover, since the coboundary maps $\delta:\bC^k_c\to \bC^{k+1}_c$ are $G$-equivariant, each cohomology module $H^k(\bC^\bullet_c)$ admits a right $G$-action given by $[\alpha]\cdot g=[\alpha\cdot g]$.

We can now formulate  the new ingredient needed to prove Theorem~\ref{thm:main_field}:
\begin{prop}\label{prop:construct_map_weak_fin_disp}
	Let $R$ be a commutative ring and let $n\in \bbN$. Let $G$ be a countable group and let $\bC_\bullet$ be the standard $R$-resolution of $G$. Suppose $H^k(\bC^\bullet_c)=0$ for $k<n$, and there exists a non-zero class $[\alpha]\in H^n(\bC^\bullet_c)$ contained in a   $G$-invariant submodule $N\leq H^n(\bC^\bullet_c)$ that is finitely generated as an $R$-module. Then there exists a chain map $f_\#:\bC_\bullet\to \bC^{n-\bullet}_c$ such that:
	\begin{enumerate}
		\item For each $\sigma\in \bC_0$, $[f_\#(\sigma)]=\epsilon(\sigma)[\alpha]$ in $H^n(\bC^\bullet_c)$.
		\item There is a function $\Phi:\bbN\to \bbN$ such that if $i\in \bbN$ and $\sigma\in \bC_k(i)$, then \[\supp(f_\#(\sigma),i)\subseteq N_{\Phi(i)}(\supp(\sigma)).\]
	\end{enumerate}
\end{prop}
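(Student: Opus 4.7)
My plan is to construct $f_\#$ by induction on $k$, maintaining as an inductive invariant that $\im(f_k)$ lies in a finitely generated $RG$-submodule $M_k \leq \bC^{n-k}_c$ whose chosen generators have local supports uniformly bounded near the identity $1 \in G$. Together with the $G$-equivariance of the coboundary maps $\delta\colon \bC^j_c \to \bC^{j+1}_c$ (which holds because $\bC_\bullet$ is the standard resolution of $G$), this invariant serves as the surrogate for the uniform preimage property announced in the paper.

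For the base case, I would choose cocycles $\beta_1 = \alpha, \beta_2, \dots, \beta_m \in \bZ^n_c$ whose classes $R$-generate $N$, and, using Proposition~\ref{prop:bdd_support}, a function $\rho_0\colon \bbN \to \bbN$ with $\supp(\beta_j, i) \subseteq N_{\rho_0(i)}(1)$ for every $j, i$. For each $g \in G$, the $G$-invariance of $N$ gives $[\alpha] \cdot g = \sum_j s_j(g)[\beta_j]$ for some $s_j(g) \in R$, and the cocycle
\[
\tilde\alpha_g \coloneqq \sum_j s_j(g)(\beta_j \cdot g^{-1})
\]
satisfies $[\tilde\alpha_g] = [\alpha]$ and, via the identity $\supp(\beta \cdot g^{-1}, i) = g\,\supp(\beta, i)$ (from left-invariance of the metric and the definition of the right $G$-action on $\bC^n_c$), $\supp(\tilde\alpha_g, i) \subseteq N_{\rho_0(i)}(g)$. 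Defining $f_0([g]) = \tilde\alpha_g$ and extending $R$-linearly, condition~(1) is immediate; condition~(2) follows from a sum argument on local supports analogous to Lemma~\ref{lem:support_sum}; and $\im(f_0) \subseteq M_0 \coloneqq RG \cdot \{\beta_1, \dots, \beta_m\}$.

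For the inductive step, given $f_{k-1}$ with $\im(f_{k-1}) \subseteq M_{k-1}$, the composition $f_{k-1}\partial\colon \bC_k \to M_{k-1}$ takes values in cocycles (since $\delta f_{k-1}\partial = \pm f_{k-2}\partial^2 = 0$ for $k\geq 2$, directly for $k=1$) that are in fact coboundaries (by $H^{n-k+1}(\bC^\bullet_c) = 0$ when $k \geq 2$, and because $f_0\partial b$ represents $[\alpha]-[\alpha]=0$ when $k = 1$). I would then select finitely many coboundary elements $\chi_1, \dots, \chi_{l_k}$ that $RG$-generate a submodule of $M_{k-1}$ containing $\im(f_{k-1}\partial)$, pick primitives $\eta_j \in \bC^{n-k}_c$ with $\delta \eta_j = \chi_j$ and $\supp(\eta_j, i) \subseteq N_{\rho_k(i)}(1)$ (possible by Proposition~\ref{prop:bdd_support}), express each $f_{k-1}\partial b = \sum_{j,h} c_{j,h}(b)(\chi_j \cdot h^{-1})$, and set $f_k(b) = \sum_{j,h} c_{j,h}(b)(\eta_j \cdot h^{-1})$. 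The chain-map condition $\delta f_k = f_{k-1}\partial$ then follows from $G$-equivariance of $\delta$, and $\im(f_k) \subseteq M_k \coloneqq RG \cdot \{\eta_1, \dots, \eta_{l_k}\}$.

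The main obstacle is that $f_{k-1}$ need not be $G$-equivariant (since $[\alpha]$ need not be $G$-fixed), so the expansion coefficients $c_{j,h}(b)$ must be chosen so that $\{h : c_{j,h}(b) \neq 0\}$ stays in a bounded neighbourhood of $\supp(b)$ uniformly in $b$; this is what converts the local support bounds $\supp(\eta_j \cdot h^{-1}, i) \subseteq N_{\rho_k(i)}(h)$ into the required bound $\supp(f_k(b), i) \subseteq N_{\Phi(i)}(\supp(b))$. I expect the induction to be organised level by level on $\bC_k(i)$, using that each $\bC_k(i)$ is finitely generated over $RG$ on the finite transversal $\Sigma_k(i)$ and that the inductive local support bound on $f_{k-1}\partial b$ constrains which translates $\chi_j \cdot h^{-1}$ must appear in the expansion. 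Consistently resolving potential $R$-linear relations among the $\chi_j \cdot h^{-1}$, and showing that the submodule of $M_{k-1}$ generated by the required coboundaries can indeed be cut out by finitely many $\chi_j$, is the delicate bookkeeping that makes the argument technical.
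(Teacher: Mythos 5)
Your high-level strategy --- building $f_\#$ by induction with a finiteness invariant, relying on $G$-equivariance of the coboundary maps to play the role of the uniform preimage property --- matches the paper's, and your base case is essentially identical to the paper's (you simply give the element $\tilde\alpha_g$ more explicitly). The gap lies in the inductive step. You propose to choose finitely many coboundaries $\chi_1,\dots,\chi_{l_k}$ that $RG$-generate a submodule of $M_{k-1}$ containing $\im(f_{k-1}\partial)$. But $RG$ is typically not Noetherian, so the coboundaries inside a finitely generated $RG$-module need not lie in a finitely generated $RG$-submodule, and there is no reason the $RG$-span of $\im(f_{k-1}\partial)$ should be finitely generated. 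You flag this as ``delicate bookkeeping'', but it is a genuine structural obstruction to the $RG$-module framing. A second issue is the claim that the local support bound on $f_{k-1}\partial b$ ``constrains which translates $\chi_j\cdot h^{-1}$ must appear in the expansion'': expansions in $M_{k-1}$ are not unique, so cancellations could force far-away translates into a given representation, and nothing in your invariant forbids this.

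The paper's proof avoids both problems by keeping a different invariant: a family $\{M_{k,i}\}_{i\in\bbN}$ of finitely generated $R$-modules (finitely generated over the Noetherian ring $R$, not over $RG$), with the requirement $f_\#(g\Sigma_k(i))\subseteq M_{k,i}\,g^{-1}$ for all $g\in G$. The extra index $i$ reflects the fact that $\Sigma_k=\bigcup_i\Sigma_k(i)$ with each $\Sigma_k(i)$ finite, and the condition $f_\#(g\sigma)\in M_{k,i}\,g^{-1}$ builds the local support control in intrinsically rather than via an a posteriori expansion. In the inductive step, for $\sigma\in\Sigma_{k+1}(i)$ one shows $f_\#(\partial(g\sigma))\in\langle M_{k,i}F_i^{-1}\rangle\,g^{-1}$ where $F_i$ is a fixed finite ball; since $R$ is Noetherian, $\im(\delta)\cap\langle M_{k,i}F_i^{-1}\rangle$ is a finitely generated $R$-module, so one can choose a finitely generated $R$-module $M_{k+1,i}$ of $\delta$-preimages and transport it by $g^{-1}$ using $G$-equivariance of $\delta$. (As an aside, the proposition's statement asks only for $R$ commutative, but the paper's proof --- and any correct version of yours --- uses that $R$ is Noetherian; this is harmless in the applications, where $R$ is a field or a PID.) Replacing your $RG$-module invariant with the paper's $R$-module, $(k,i)$-indexed one is the needed fix.
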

To prove Proposition~\ref{prop:construct_map_weak_fin_disp}, we will make repeated use of the following lemma:
\begin{lem}\label{lem:equiv-uniformity}
	Let $G$ be a countable group and let $\bC_\bullet$ be the standard $R$-resolution over $G$.
	Assume there is a  homomorphism $f:\bC_k\to \bC^{n-k}_c$ such that the following holds.
	For each $i\in \bbN$, there is a finitely generated submodule $M_i\leq  \bC^{n-k}_c$ such that for all $g\in G$, $f(g\Sigma_k(i))\subseteq M_i g^{-1}$.
	Then:
	\begin{enumerate}
		\item\label{item:equiv-uniformity1}
		      $f(\sigma)\subseteq \langle M_i \supp(\sigma)^{-1}\rangle$ for all $\sigma\in \bC_k(i)$, where $\langle M_i \supp(\sigma)^{-1}\rangle$ is the $R$-submodule generated by $M_i \supp(\sigma)^{-1}$.
		\item\label{item:equiv-uniformity2}
		      There exists a function $\Phi:\bbN\to \bbN$ such that if $i\in \bbN$ and $\sigma\in \bC_k(i)$, then \[\supp(f(\sigma),i)\subseteq N_{\Phi(i)}(\supp(\sigma)).\]
	\end{enumerate}
\end{lem}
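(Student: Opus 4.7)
The plan is to exploit the fact that $\bC_k$ is a free $RG$-module with $RG$-basis $\Sigma_k$, so that any $\sigma \in \bC_k(i)$ decomposes as an $R$-linear combination of $G$-translates of finitely many generators, with translating elements lying in $\supp(\sigma)$. Applying the hypothesis then reduces the problem to a calculation about how the right $G$-action on $\bC^\bullet_c$ transforms local supports.

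For part (\ref{item:equiv-uniformity1}), I would expand $\sigma = \sum_{b \in B_k(i)} \delta^k_b(\sigma) b$. For each $b = [g_0, \ldots, g_k] \in B_k(i)$, left-invariance of the metric on $G$ gives $b = g_0 \tau_b$ with $\tau_b \coloneqq [1, g_0^{-1}g_1, \ldots, g_0^{-1}g_k] \in \Sigma_k(i)$, and whenever $\delta^k_b(\sigma) \neq 0$ we have $g_0 = p_k(b) \in \supp(\sigma)$. The hypothesis then yields $f(g_0 \tau_b) \in M_i g_0^{-1}$, and so
\[f(\sigma) = \sum_b \delta^k_b(\sigma)\, f(g_0 \tau_b) \in \langle M_i \cdot \supp(\sigma)^{-1}\rangle,\]
establishing (\ref{item:equiv-uniformity1}).

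For part (\ref{item:equiv-uniformity2}), the key observation is how local support transforms under the right $G$-action on $\bC^{n-k}_c$. Unwinding definitions using the left $G$-action on $B_{n-k}$ (which preserves each filtration level $B_{n-k}(i)$ and satisfies $p_{n-k}(g \cdot b) = g\, p_{n-k}(b)$), one obtains the identity $\supp(\beta \cdot g, i) = g^{-1}\supp(\beta, i)$ for all $\beta \in \bC^{n-k}_c$ and $g \in G$. Since $M_i$ is finitely generated with generators $\gamma_1, \ldots, \gamma_m$, and each $\supp(\gamma_\ell, i)$ is finite by Proposition~\ref{prop:bdd_support} (using that $\bC_{n-k}$ is proper), the union $S_i \coloneqq \bigcup_\ell \supp(\gamma_\ell, i)$ is finite; Lemma~\ref{lem:support_sum} then forces $\supp(\beta, i) \subseteq S_i$ for every $\beta \in M_i$. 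Choose $\Phi(i)$ with $S_i \subseteq N_{\Phi(i)}(1)$. Writing each term in the decomposition from (\ref{item:equiv-uniformity1}) as $\beta_b \cdot p_k(b)^{-1}$ with $\beta_b \in M_i$, and combining Lemma~\ref{lem:support_sum} with the identity above, yields
\[\supp(f(\sigma), i) \subseteq \bigcup_{g \in \supp(\sigma)} g \cdot S_i \subseteq N_{\Phi(i)}(\supp(\sigma)).\]

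The argument is essentially bookkeeping; the only conceptual step is translating the algebraic hypothesis---which controls $f$ on $RG$-basis elements up to the right $G$-action---into a geometric bound on local supports via the identity $\supp(\beta \cdot g, i) = g^{-1}\supp(\beta, i)$. I do not anticipate a serious obstacle beyond carefully tracking on which side the $G$-action acts at each step.
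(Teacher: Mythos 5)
Your proposal is correct and takes essentially the same route as the paper: expand $\sigma$ over the $R$-basis $B_k(i)$, observe each $b\in B_k(i)$ lies in $p_k(b)\Sigma_k(i)$ (your $b=g_0\tau_b$), apply the hypothesis to deduce $(\ref{item:equiv-uniformity1})$, and then use finite generation of $M_i$ together with the identity $\supp(\beta\cdot g,i)=g^{-1}\supp(\beta,i)$ to obtain $(\ref{item:equiv-uniformity2})$. One small misattribution worth correcting: you invoke Lemma~\ref{lem:support_sum} to conclude $\supp(\beta,i)\subseteq S_i$ for $\beta\in M_i$, but that lemma concerns $\supp_\bM(m)$ for an $R$-module $\bM$ over a metric space, and $\bC^{n-k}_c$ is not an $R$-module over $G$ here (since $\bC_{n-k}$ is not of finite height for the standard resolution of an infinite group). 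What you actually need is the analogous sub-additivity of the local support function $\supp(-,i)$ from Definition~\ref{defn:localsupport}, namely $\supp(r_1\alpha_1+r_2\alpha_2,i)\subseteq \supp(\alpha_1,i)\cup \supp(\alpha_2,i)$, which follows immediately from the definition (if $(r_1\alpha_1+r_2\alpha_2)(m_b)\neq 0$ then some $\alpha_\ell(m_b)\neq 0$) but is not literally the cited lemma.
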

\begin{proof}
	(\ref{item:lemstandard1}): For each $b\in B_k(i)$  we have $b\in p_k(b)\Sigma_k(i)$, and so $f(b)\in M_i p_k(b)^{-1}$. For $\sigma\in \bC_k(i)$, we have $\sigma=\sum_{b\in B_k(i)}\delta_b(\sigma)b$. Since $\delta_b(\sigma)\neq 0$ if and only if $p_k(b)\in \supp(\sigma)$, we have
	\[f(\sigma)\in \sum_{p_k(b)\in \supp(\sigma)} M_i p_k(b)^{-1}= \langle M_i \supp(\sigma)^{-1}\rangle\] as required.

	(\ref{item:lemstandard2}):
	Since each $M_i$ is finitely generated, we can pick $\Phi$ such that for each $i\in \bbN$ and $\alpha\in M_i$, $\supp(\alpha,i)\subseteq N_{\Phi(i)}(1)$.
	Thus for each $\alpha\in M_i g^{-1}$, we have $\supp(\alpha,i)\subseteq N_{\Phi(i)}(g)$. Consequently, for each $\alpha\in  \langle M_i \supp(\sigma)^{-1}\rangle$, we have $\supp(\alpha,i)\subseteq N_{\Phi(i)}(\supp(\sigma))$. Combining this with (\ref{item:lemstandard1}) yields the desired conclusion.
\end{proof}

\begin{proof}[Proof of Propsotion~\ref{prop:construct_map_weak_fin_disp}]
	Pick $\Phi$ such that $\bC_\bullet$ has displacement $\Phi$. We recall $\Sigma_k(i)$  is a free $RG$-basis of each $\bC_k(i)$, and that $B_k(i)=G\Sigma_k(i)$ is a free $R$-basis of each $\bC_k(i)$.  Moreover, $\epsilon(b)=1$ for all $g\in G$ and $b\in B_0(i)$.
	We will define a chain map $f_\#:\bC_\bullet\to \bC^{n-\bullet}_c$ and a collection  $\{M_{k,i}\}_{k,i\in \bbN}$, where each $M_{k,i}$ is a finitely generated $R$-submodule of $\bC^{n-k}_c$ satisfying $f_\#(g\Sigma_k(i))\subseteq M_{k,i} g^{-1} $ for all $k,i\in \bbN$ and $g\in G$. Furthermore, we will also show that $[f_\#(\sigma)]=\epsilon(\sigma)[\alpha]$ for all $\sigma\in \bC_0$, and so applying Lemma~\ref{lem:equiv-uniformity} will prove the result.

	We will define $f_k:\bC_k\to \bC^{n-k}_c$ and $\{M_{k,i}\}_{i\in \bbN}$ by induction on $k$.
	First, we prove the base case $k=0$.
	Let $[\alpha_1], \dots, [\alpha_m]$ be a finite generating set  of $N$.
	For each $i\in \bbN$, let $M_{0,i}=\langle\alpha_1,\dots,\alpha_m\rangle$ be the $R$-submodule of $\bC^n_c$ generated by $\alpha_1,\dots,\alpha_m$.
	Since $N$ is $G$-invariant, for each $g\in G$, the image of $M_{0,i}g^{-1}$ in $H^n(\bC^\bullet_c)$ is equal to  $N$. As $[\alpha]\in N$, for each $g\in G$, there is some $\alpha_g\in  M_{0,i}g^{-1}$ with $[\alpha_g]=[\alpha]$.

	We note $\Sigma_0=\Sigma_0(i)=\{[1]\}$ for all $i$, and $B_0=B_0(i)=G\Sigma_0(i)=\{[g]\mid g\in G\}$ for all $i$.
	We can thus define an $R$-module homomorphism $f_0:\bC_0\to \bC^n_c$ by $f_0([g])=\alpha_{g}$ for all $g\in G$. By construction, this map satisfies the property that \[f_\#(g\Sigma_0(i))\subseteq M_{0,i}g^{-1}\] for all $i\in \bbN$ and $g\in G$.
	Moreover, for each $\sigma\in \bC_0$, we have $\sigma=\sum_{g\in G}\delta_{[g]}(\sigma)[g]$ with $\epsilon(\sigma)=\sum_{g\in G}\delta_{[g]}(\sigma)$. Thus
	\begin{align*}
		[f_\#(\sigma)]=\sum_{g\in G}\delta_{[g]}(\sigma)[f_\#({[g]})]=\sum_{g\in G}\delta_{[g]}(\sigma)[\alpha_g]=\sum_{g\in G}\delta_{[g]}(\sigma)[\alpha]=\epsilon(\sigma)[\alpha]
	\end{align*}
	as required. This proves the base case.

	For inductive hypotheses,  assume we have a chain map  $f_\#:[\bC_\bullet]_k\to \bC^{n-\bullet}_c$ and  $\{M_{k,i}\}_{i\in \bbN}$, where each $M_{k,i}$ is finitely generated $R$-submodule of $\bC^{n-k}_c$ such that $f_\#(g\Sigma_k(i))\subseteq M_{k,i}g^{-1}$ for all $i\in \bbN$ and $g\in G$.
	For each $i\in \bbN$, let $F_i\coloneqq N_{\Phi(i)}(1)\subseteq G$.
	Since $F_i$ is finite and $M_{k,i}$ is finitely generated, so is $\langle M_{k,i}F_i^{-1}\rangle$. Since $R$ is Noetherian,  $T_{k,i}\coloneqq \im(\bC^{n-k-1}_c\xrightarrow{\delta}\bC^{n-k}_c)\cap \langle M_{k,i}F_i^{-1}\rangle$ is also finitely generated. Hence, we can pick a finitely generated submodule $M_{k+1,i}\subseteq \bC^{n-k-1}_c$ such that $\delta (M_{k+1,i})=T_{k,i}$.

	We now fix some  $\sigma\in \Sigma_{k+1}(i)$ with $g\in \supp(\sigma)$. Since  $\supp(g\sigma)=\{g\}$ and $\partial$ has displacement $\Phi$, we see  $\supp(\partial g\sigma)\subseteq N_{\Phi(i)}(g)=gF_i$.  By the inductive hypothesis and Lemma~\ref{lem:equiv-uniformity}, we deduce $f_\#(\partial g\sigma)\subseteq \langle M_{k,i}F_i^{-1}g^{-1}\rangle=\langle M_{k,i}F_i^{-1}\rangle g^{-1}$.  We now show  $f_\#(\partial g\sigma)$ is a coboundary. Indeed, we first note that $\delta f_\#(\partial g\sigma)=f_\#(\partial^2 g\sigma)=0$ so that $f_\#(\partial g\sigma)$ is a cocycle. Since  $H^j(\bC^\bullet_c)=0$ for $j<n$, we have $f_\#(\partial g\sigma)\in \im(\delta)$ when $k>0$. In the case $k=0$, we have $[f_\#(\partial g\sigma)]=\epsilon(\partial g\sigma)[\alpha]=0$, again showing $f_\#(\partial g\sigma)$ is a coboundary.

	We have thus shown that for all $\sigma\in \Sigma_{k+1}(i)$ and $g\in G$, $f_\#(\partial g\sigma)\in\langle M_{k,i}F_i^{-1}\rangle g^{-1}$ and $f_\#(\partial g\sigma)$ is a coboundary, so that $f_\#(\partial b)\in T_{k,i}g^{-1}$. By the definition of $M_{k+1,i}$ and $G$-equivariance of $\delta$,  we can choose some $n_{g\sigma}\in M_{k+1,i}g^{-1}$ with $\delta n_{g\sigma} =f_\#(\partial g\sigma)$ for each $\sigma\in \Sigma_{k+1}(i)$ and $g\in G$. Defining $f_\#(g\sigma)=n_{g\sigma}$ allows us to extend $f_\#$ to a map $f_{k+1}:\bC_{k+1}\to \bC^{n-k-1}_c$ with \[f_\#(g\Sigma_{k+1}(i))\subseteq M_{k+1,i}g^{-1}\] for all $g\in G$ and $i\in \bbN$. This proves the inductive step.
\end{proof}

\begin{proof}[Proof of Theorem~\ref{thm:main_field}]
	Let $\bC_\bullet$ be the standard projective $\bbF$-resolution over $G$. By Corollary~\ref{cor:induced_mod}, there is an isomorphism $H^k(G,\bbF G)\cong H^k(\bC^\bullet_c)$ as right $RG$-modules. The hypotheses ensure there exists a non-zero class $[\alpha]\in H^n(\bC^\bullet_c)$ contained  in a non-zero finite-dimensional $G$-invariant subspace. By Theorem~\ref{thm:univcoeff_ctblehyp}, there exists an element $\tau\in H_n(\bC_\bullet^{\lf})$ such that $\tau(\alpha)=1$. Moreover, we can apply Proposition~\ref{prop:construct_map_weak_fin_disp} to construct a map $f_\#:\bC_\bullet\to \bC^{n-\bullet}_c$ satisfying the  hypotheses of Lemma~\ref{lem:pdn_criterion_main}. We conclude by applying Lemma~\ref{lem:pdn_criterion_main} to deduce $X$ is a coarse $PD_n^\bbF$ space as required.
\end{proof}

We now focus on the case the coefficient ring is not a field:
\begin{thm}\label{thm:main_PID}
	Let $R$ be a countable PID, let $G$ be a countable group and let $n\in \bbN$. Suppose:
	\begin{enumerate}
		\item $H^k(G,R G)=0$ for $k<n$.
		\item $H^n(G,R G)$ is a non-zero finitely generated  $R$-module.
		\item $H^{n+1}(G,R G)$ is a countably generated $R$-module.
	\end{enumerate} Then  $G$ is a coarse $PD_{n}^{R}$ group.
\end{thm}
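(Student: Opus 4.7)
The plan is to closely mirror the proof of Theorem~\ref{thm:main_field}, with the one new ingredient being an argument to eliminate possible torsion in $H^n(G,RG)$ by reducing coefficients modulo a prime and appealing back to the field case already handled. Let $\bC_\bullet$ denote the standard $R$-resolution over $G$, so that Corollary~\ref{cor:induced_mod} identifies $H^k(\bC^\bullet_c)\cong H^k(G,RG)$ as right $RG$-modules. By hypothesis, $\coarse^k(G;R)$ is countably generated for every $k\leq n+1$, so Theorem~\ref{thm:univcoeff_ctblehyp} is available in every dimension we will need.

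The first (and hardest) step is to show $M\coloneqq H^n(G,RG)$ is torsion-free. Suppose for contradiction it is not; then, since $M$ is finitely generated over the PID $R$, one can choose a prime $p\in R$ with both $M\otimes_R\bbF\neq 0$ and $\Tor_1^R(M,\bbF)\neq 0$, where $\bbF\coloneqq R/pR$. By Proposition~\ref{prop:change_rings_res}, $\bD_\bullet\coloneqq \bC_\bullet\otimes_R\bbF$ is a proper projective $\bbF$-resolution over $G$. Combining the vanishing of $H^k(\bC^\bullet_c)$ for $k<n$ with Theorem~\ref{thm:univcoeff_ctblehyp}(1) yields $H^k(\bD^\bullet_c)=0$ for $k<n-1$ and an isomorphism $H^{n-1}(\bD^\bullet_c)\cong \Tor_1^R(M,\bbF)$, a non-zero finite-dimensional $\bbF$-vector space. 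Translating back via Corollary~\ref{cor:induced_mod} and feeding into Theorem~\ref{thm:main_field} applied in dimension $n-1$ over $\bbF$, one concludes $G$ must be a coarse $PD_{n-1}^\bbF$ group (the edge case $n=1$ is even easier, since $G$ is necessarily infinite and so $H^0(G,\bbF G)=0$ directly contradicts $H^0(\bD^\bullet_c)\cong \Tor_1^R(M,\bbF)\neq 0$). Proposition~\ref{prop:pdngroup_properties} then forces $H^n(G,\bbF G)=0$, but Theorem~\ref{thm:univcoeff_ctblehyp}(1) simultaneously provides an injection $M\otimes_R\bbF\hookrightarrow H^n(\bD^\bullet_c)\cong H^n(G,\bbF G)$, contradicting $M\otimes_R\bbF\neq 0$. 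Hence $M$ is torsion-free, and being finitely generated over a PID, is free of positive rank.

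With $M$ now free, I would pick a primitive element $[\alpha]\in M$ together with an $R$-module homomorphism $\bar\tau\colon M\to R$ satisfying $\bar\tau([\alpha])=1$, and then lift $\bar\tau$ through the surjection of Theorem~\ref{thm:univcoeff_ctblehyp}(2) to a class $[\tau]\in H_n(\bC_\bullet^{\lf})$ with $\tau(\alpha)=1$. Since $M$ itself is a finitely generated $G$-invariant $R$-submodule of $H^n(\bC^\bullet_c)$ containing $[\alpha]$, Proposition~\ref{prop:construct_map_weak_fin_disp} produces a chain map $f_\#\colon \bC_\bullet\to \bC^{n-\bullet}_c$ satisfying $[f_\#(\sigma)]=\epsilon(\sigma)[\alpha]$ for every $\sigma\in \bC_0$ together with the required local-support bound. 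Since $G$ equipped with a proper left-invariant metric is a coarsely homogeneous proper metric space and $R$ is Noetherian, every hypothesis of Lemma~\ref{lem:pdn_criterion_main} is now verified, and that lemma immediately delivers the conclusion that $G$ is a coarse $PD_n^R$ group. The main obstacle throughout is the torsion-killing step, and it is precisely there that the countability assumption on $H^{n+1}(G,RG)$ pays off: it is what enables Theorem~\ref{thm:univcoeff_ctblehyp} through dimension $n+1$, and thus allows the reduction to the field case already handled by Theorem~\ref{thm:main_field}.
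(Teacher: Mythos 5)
Your proof is correct and follows essentially the same route as the paper's: the torsion-killing argument via reduction mod a prime $p$ and Theorem~\ref{thm:univcoeff_ctblehyp}, followed by lifting a dual functional to $H_n(\bC_\bullet^{\lf})$ and invoking Proposition~\ref{prop:construct_map_weak_fin_disp} together with Lemma~\ref{lem:pdn_criterion_main}. The one small improvement over the paper's write-up is your explicit treatment of the $n=1$ edge case, where applying Theorem~\ref{thm:main_field} in dimension $n-1=0$ would nominally produce a ``coarse $PD_0^\bbF$ group,'' a notion Definition~\ref{defn:coarsePD_n} does not define; your observation that $H^0(G,RG)=0$ already forces $G$ infinite, so that $H^0(G,\bbF G)=0$ contradicts $H^0(\bD^\bullet_c)\cong\Tor_1^R(M,\bbF)\neq 0$ directly, sidesteps this cleanly.
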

\begin{rem}
	Theorem~\ref{thm:main_finiteness} shows that  if  $G$ of type $FP_n^R$, we can remove the hypothesis that  $H^{n+1}(G,R G)$ is countably generated from the preceding theorem.
\end{rem}
\begin{proof}
	We will use  Corollary~\ref{cor:induced_mod} implicitly throughout this proof.
	Suppose for contradiction that $M\coloneqq H^n(G,R G)$ contains torsion. As in the proof of Theorem~\ref{thm:main_finiteness}, we see that for some prime $p\in R$, both $\Tor_1^R(M,\bbF)$ and $M\otimes_R \bbF$ are non-zero, where $\bbF$ is the field $R/pR$. It follows from Theorem~\ref{thm:univcoeff_ctblehyp} and Corollary~\ref{cor:induced_mod} that $H^k(G,\bbF G)=0$ for $k<n-1$,  that $H^{n-1}(G,\bbF G)\cong \Tor_1^R(M,\bbF)$, and that $M\otimes_R \bbF$ is a submodule of $H^n(G,\bbF G)$. Since $H^{n-1}(G,\bbF G)\cong \Tor_1^R(M,\bbF)$ is finite dimensional as $M$ is finitely generated, Theorem~\ref{thm:main_field} implies $G$ is a coarse $PD_{n-1}^\bbF$ group. This contradicts Proposition~\ref{prop:pdn_properties} and  the fact that $0\neq M\otimes_R \bbF$ is a submodule of $H^n(G,\bbF G)$, so we conclude that $M$ is torsion-free.

	Since $M$ is torsion-free and finitely generated, it is free. Let $\bC_\bullet$ be a proper projective $R$-resolution over $G$, and  let $[\alpha]\in H^n(\bC^\bullet_c)\cong M$ be a non-zero primitive element. By Theorem~\ref{thm:univcoeff_ctblehyp}, there is some class $[\tau]\in H_n(\bC_\bullet^{\lf})$ with $\tau(\alpha)=1$. We can thus apply Proposition~\ref{prop:construct_map_weak_fin_disp} and Lemma~\ref{lem:pdn_criterion_main} to conclude that $X$ is a coarse $PD_n^R$ space as required.
\end{proof}

This implies the following characterisation of coarse $PD_n^R$ groups.
\begin{cor}\label{cor:char_pdn}
	Suppose $G$ is a countable group, $n\in \bbN$ and $R$ is either a field or a countable PID\@. Then the following are equivalent:
	\begin{enumerate}
		\item $G$ is a coarse $PD_n^R$ group.
		\item $H^k(G;RG)\cong\begin{cases}
				      R & \textrm{if}\;k=n,     \\
				      0 & \textrm{if}\;k\neq n.
			      \end{cases}$
	\end{enumerate}
\end{cor}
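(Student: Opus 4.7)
The plan is to prove each implication separately, noting that both are essentially immediate consequences of results already established in the excerpt; Corollary~\ref{cor:char_pdn} should be viewed as a convenient repackaging of Proposition~\ref{prop:pdngroup_properties}, Theorem~\ref{thm:main_field}, and Theorem~\ref{thm:main_PID}.

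For the forward direction (1)$\Rightarrow$(2), I would simply invoke Proposition~\ref{prop:pdngroup_properties}(\ref{item:prop_pdngp_properties2}), which states exactly that a coarse $PD_n^R$ group $G$ has $H^k(G,RG) \cong R$ for $k = n$ and $H^k(G,RG) = 0$ otherwise. No further argument is needed in this direction.

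For the reverse direction (2)$\Rightarrow$(1), the argument splits according to whether $R$ is a field or a countable PID\@. If $R$ is a field $\bbF$, then the hypothesis gives $H^k(G,\bbF G) = 0$ for $k < n$ and $\dim_\bbF H^n(G,\bbF G) = 1$. Applying Theorem~\ref{thm:main_field}, we rule out cases (1) and (3) of that trichotomy, leaving us in case (2), which asserts that $G$ is a coarse $PD_n^\bbF$ group. If instead $R$ is a countable PID, then the hypothesis yields $H^k(G,RG) = 0$ for $k < n$, $H^n(G,RG) \cong R$ is a non-zero finitely generated $R$-module, and $H^{n+1}(G,RG) = 0$ is vacuously countably generated. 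These are precisely the three hypotheses of Theorem~\ref{thm:main_PID}, whose conclusion is that $G$ is a coarse $PD_n^R$ group.

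There is no real obstacle in this argument: all the substantive work --- notably the construction of the chain map $f_\#:\bC_\bullet \to \bC^{n-\bullet}_c$ via Proposition~\ref{prop:construct_map_weak_fin_disp} and the verification of the hypotheses of Lemma~\ref{lem:pdn_criterion_main} --- is already done in the proofs of Theorem~\ref{thm:main_field} and Theorem~\ref{thm:main_PID}. The only minor bookkeeping point is recognising that when $R$ is a field, the one-dimensional space $H^n(G,\bbF G) \cong \bbF$ qualifies as the ``non-zero finite-dimensional $G$-invariant subspace'' needed to invoke the quantitative part of Theorem~\ref{thm:main_field}, and when $R$ is a countable PID, the vanishing of $H^{n+1}(G,RG)$ makes the countable-generation hypothesis on that module automatic.
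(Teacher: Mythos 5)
Your proposal is correct and takes essentially the same approach the paper intends: (1)$\Rightarrow$(2) is Proposition~\ref{prop:pdngroup_properties}(\ref{item:prop_pdngp_properties2}), and (2)$\Rightarrow$(1) follows by reading off case (2) of the trichotomy in Theorem~\ref{thm:main_field} when $R$ is a field, or by verifying the three hypotheses of Theorem~\ref{thm:main_PID} when $R$ is a countable PID (with the countable-generation of $H^{n+1}$ holding vacuously since it is zero). The closing remark about invoking the ``Moreover'' clause of Theorem~\ref{thm:main_field} is harmless but unnecessary, since knowing $\dim H^n = 1$ already places you in case (2) of the trichotomy directly.
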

Crucially, Corollary~\ref{cor:char_pdn} does not assume $G$ satisfies any finiteness conditions.

\appendix

\section{Some results about derived limits}\label{sec:inverse_limits}
\emph{In this appendix, we deduce some useful facts about inverse and derived limits. The key result is Corollary~\ref{cor:stable_invsystem}, which states  that under mild hypotheses, if $C^\bullet=\varprojlim C^\bullet_i$ is an inverse limit and $H^k(C^\bullet)$ is countably generated for all $k$, then the inverse systems $\{H^k(C^\bullet_i)\}_i$ are stable.
}
\vspace{.3cm}

In this section, we assume $R$ is a PID\@.
We make use of the following facts about modules over $R$:
\begin{lem}[{\cite[Theorem B-2.28]{rotman2015advanced}}]\label{lem:submodule_ctble}
	Let $R$ be a PID\@.
	\begin{enumerate}
		\item If $F$ is a free $R$-module and $F'\leq F$, then $F'$ is a free $R$-module with $\rank(F')\leq \rank(F)$, where these ranks may be infinite cardinals.
		\item Any submodule of a  countably generated $R$-module is countably generated.
	\end{enumerate}
\end{lem}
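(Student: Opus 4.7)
The plan is to prove part (1) by well-ordering and transfinite induction, and then derive part (2) as a quick corollary.

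For part (1), fix a basis $\{e_i\}_{i\in I}$ of $F$ and well-order the index set $I$. For each $\alpha\in I$, let $F_\alpha$ denote the submodule of $F$ generated by $\{e_\beta : \beta<\alpha\}$ and set $F_\alpha' \coloneqq F'\cap F_\alpha$. The quotient $F_{\alpha+1}/F_\alpha$ is isomorphic to $R$, so $F_{\alpha+1}'/F_\alpha'$ embeds as a submodule of $R$; since $R$ is a PID, this image is either zero or an ideal, which is free of rank one. In the latter case I would choose a lift $x_\alpha\in F_{\alpha+1}'$ of a generator, and otherwise set no element. The claim is that the collection $\{x_\alpha\}$ thus chosen is a basis of $F'$. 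Linear independence and generation are then both verified by transfinite induction on $\alpha$: at successor stages one appends at most one new basis element with the right lifting property, and at limit stages one takes the union, using that every element of $F_\alpha'$ lies in some $F_\beta'$ with $\beta<\alpha$ because any element has finite support in the basis $\{e_i\}$. The rank bound is immediate since we introduce at most one $x_\alpha$ per index $\alpha\in I$.

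For part (2), suppose $M$ is countably generated, so there is a surjection $\pi:F\to M$ with $F$ free of countable rank. Given a submodule $M'\leq M$, its preimage $\pi^{-1}(M')\leq F$ is free of rank at most countable by part (1), and projecting to $M'$ by $\pi$ exhibits $M'$ as countably generated.

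The only nontrivial step is the transfinite induction in part (1); this is standard, and the key ingredient specific to PIDs is that every submodule of $R$ is cyclic and free. There are no real obstacles beyond bookkeeping, so I would probably just cite Rotman and move on, which is of course what the author has done.
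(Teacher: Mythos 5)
Your proof of part (2) is exactly the paper's argument (take the preimage in a countable-rank free module under a surjection and apply part (1)), and for part (1) the paper simply cites Rotman's Theorem B-2.28, which is precisely the transfinite-induction proof you sketch; as you yourself note, citing Rotman is the route taken. No discrepancies.
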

\begin{proof}
	(1) is~\cite[Theorem B-2.28]{rotman2015advanced}. For (2), let $M$ be a countable generated $R$-module, and let $\pi:F\to M$ be a surjection, with $F$ a countably generated free $R$-module. Then $F'\coloneqq \pi^{-1}(N)$ is a submodule of $F$ with $\rank(F')\leq \rank(F)$ by (1). Thus $F'$ is countably generated, and since $\pi|_{F'}:F'\to N$ is surjective, it follows that $N$ is countably generated.
\end{proof}

We recall an \emph{inverse system} consists of  a sequence   $\{M_i\}_{i\in \bbN}$ of $R$-modules equipped with \emph{bonding maps} $f_i^j:M_j\to M_i$ for each $i\leq j$, satisfying the following:
\begin{itemize}
	\item $f_i^i=\id$
	\item $f_i^j\circ f_j^k=f_i^k$ for all $i\leq j\leq k$.
\end{itemize}

The \emph{Eilenberg map} of the inverse system $\{M_i\}$ is the map $\Delta:\prod_i M_i\to \prod_i M_i$ given by $(m_i)\mapsto (m_i-f_i^{i+1}(m_{i+1}))$. We define the \emph{inverse limit} of $\{M_i\}$ to be $\varprojlim M_i\coloneqq \ker(\Delta)$ and the \emph{derived limit} to be $\varprojlim ^1M_i\coloneqq \coker(\Delta)$. We note that there are \emph{projections} $f_i:\varprojlim M_i\to M_i$ that commute with the bonding maps. It can be shown that $\varprojlim M_i$ is characterised by the following \emph{universal property}: If there exists an $R$-module $N$ and a family of maps $\phi_i:N\to M_i$ such that $f^j_i\circ\phi_j=\phi_i$ for all $j\geq i$, then there is a unique map $\phi:N\to \varprojlim M_i$ such that $f_i\circ\phi=\phi_i$ for all $i$.

Suppose $0\to \{L_i\}\to \{M_i\}\to \{N_i\}\to 0$ is a short exact sequence of inverse systems, i.e.\ for each $i$ there is a short exact sequence $0\to L_i\to M_i\to N_i\to 0$, and these short exact sequence commute with the bonding maps. Applying the snake lemma yields the following \emph{six-term short exact sequence}:
\begin{align*}
	0\to \varprojlim  L_i\to \varprojlim  M_i\to \varprojlim  N_i\to \varprojlim{}^1 L_i\to \varprojlim{}^1 M_i\to \varprojlim{}^1 N_i\to 0
\end{align*}

An inverse system is said to satisfy the \emph{Mittag-Leffler condition} if for each $i$, there exists $j\geq i$ such that $\im(M_j\xrightarrow{f_i^j}M_i)=\im(M_k\xrightarrow{f_i^k}M_i)$ for all $k\geq j$.

\begin{prop}\label{prop:cardinality_invlimit}
	Assume $R$ is either a field or a countable PID\@.
	Let $\{M_i\}$ be an inverse system of countably generated $R$-modules.
	\begin{enumerate}
		\item\label{item:cardinality_invlimit1}
		      $\varprojlim{}^1 M_i=0$ if and only if $\{M_i\}$ satisfies the  Mittag-Leffler condition.
		\item\label{item:cardinality_invlimit2}
		      $\varprojlim{}^1 M_i$ is either zero or an uncountably generated $R$-module.
		\item\label{item:cardinality_invlimit3}
		      If $\varprojlim{} M_i$ is countably generated, then  there exists an $i_0$ such that the projection $f_k:\varprojlim{} M_i\to M_k$ is injective for all $k\geq i_0$.
	\end{enumerate}
\end{prop}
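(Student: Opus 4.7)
The three claims are proved in sequence, with (1) and (2) addressed together and (3) deduced from (1) via a short exact sequence of towers.

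For (1), the forward direction (Mittag-Leffler implies $\varprojlim^1 = 0$) is classical: pass to the stable-image subsystem $M_i^* := \bigcap_{j \geq i} \im(f_i^j)$, which under Mittag-Leffler has surjective bonding maps, and apply the six-term sequence to $0 \to \{M_i^*\} \to \{M_i\} \to \{M_i/M_i^*\} \to 0$. The quotient tower has stable image zero, so both its $\varprojlim$ and $\varprojlim^1$ vanish by a direct construction exploiting that for each $i$ the composite bonding map $M_k/M_k^* \to M_i/M_i^*$ is zero for $k$ sufficiently large; and $\varprojlim^1 \{M_i^*\} = 0$ by the standard inductive construction using surjectivity. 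For the reverse direction of (1) combined with (2), we establish the dichotomy: if $\{M_i\}$ fails Mittag-Leffler, then $\varprojlim^1 M_i$ has at least $2^{\aleph_0}$ elements, hence is uncountably generated under our assumption that each $M_i$ is countably generated over a field or countable PID (and is therefore countable as a set). By passing to a cofinal subsystem we may assume there is a fixed index $i_0$ such that $\im(f_{i_0}^{j+1}) \subsetneq \im(f_{i_0}^j)$ for all $j$ beyond some threshold; select coset representatives witnessing these strict containments, and for each $\varepsilon \in \{0,1\}^{\bbN}$ assemble an element $y^\varepsilon \in \prod_j M_j$ whose $j$-th entry is determined by $\varepsilon_j$. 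A diagonal argument then shows that distinct $\varepsilon$ produce pairwise non-cohomologous classes in $\coker(\Delta) = \varprojlim^1 M_i$, giving the required uncountable family.

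For (3), set $L = \varprojlim M_i$ and $K_j = \ker(f_j)$, where $f_j : L \to M_j$ is the projection. The $\{K_j\}$ form a descending chain of submodules of $L$ with $\bigcap_j K_j = 0$, and each $K_j$ is countably generated by Lemma~\ref{lem:submodule_ctble}. Using the compatibility $f_j = f_j^{j+1} \circ f_{j+1}$ together with $\bigcap_j K_j = 0$, one checks directly that the canonical map $L \to \varprojlim L/K_j$ is an isomorphism. Applying the six-term exact sequence to the short exact sequence of towers
\begin{equation*}
0 \to \{K_j\} \to \{L\}_{\mathrm{const}} \to \{L/K_j\} \to 0,
\end{equation*}
where $\{L\}_{\mathrm{const}}$ is the constant system (with $\varprojlim = L$ and $\varprojlim^1 = 0$), the sequence collapses to $\varprojlim K_j = 0$ and $\varprojlim^1 K_j = 0$. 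Now part (1) applied to $\{K_j\}$ yields Mittag-Leffler. Since the bonding maps $K_k \hookrightarrow K_j$ for $k \geq j$ are plain inclusions, the image of $K_k$ in $K_j$ is $K_k$ itself, so Mittag-Leffler forces the descending chain $\{K_j\}$ to be eventually constant. Combined with $\bigcap_j K_j = 0$ this gives an $i_0$ with $K_j = 0$ for all $j \geq i_0$, which is exactly injectivity of the projections beyond $i_0$.

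The main obstacle is the reverse direction of (1)--(2): the combinatorial construction of $2^{\aleph_0}$-many distinct classes in $\varprojlim^1 M_i$ requires delicate bookkeeping, ensuring the chosen coset representatives are independent enough that no Eilenberg coboundary can equalize classes arising from distinct binary sequences. This is the only place where the countability hypothesis on $R$ and on the $M_i$ enters the argument in a substantive way. Part (3), in contrast, is a clean formal consequence of (1) once the six-term exact sequence for the short exact sequence of towers involving $\{K_j\}$ and the constant system $\{L\}_{\mathrm{const}}$ is in hand.
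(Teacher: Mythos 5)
Your Part~(3) is correct but takes a slightly different route from the paper's. You reach the conclusion by extracting $\varprojlim^1 K_j = 0$ from the six-term exact sequence for $0 \to \{K_j\} \to \{L\}_{\mathrm{const}} \to \{L/K_j\} \to 0$, invoking Part~(1) to get Mittag--Leffler for the nested chain $\{K_j\}$, and then using $\bigcap_j K_j = 0$. The paper instead applies Lemma~\ref{lem:cardinality_invlimit} directly: since $\varprojlim L/K_j \cong L$ is countably generated, the chain $(K_j)$ must stabilise, and the stable value is $\bigcap_j K_j = 0$. Both routes work, and yours is a pleasant formal alternative; the paper's is shorter because it avoids routing through $\varprojlim^1$ and Part~(1).

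The genuine problem is in Parts~(1)--(2). The forward implication of~(1) is classical and your sketch is fine. But your proposed proof that failure of Mittag--Leffler forces $\varprojlim^1 M_i$ to be uncountably generated has a gap, and it is not merely a matter of ``delicate bookkeeping''. You claim that after choosing coset representatives witnessing the strict drops in $\im(f_{i_0}^j)$, the assignment $\varepsilon \mapsto [y^\varepsilon]$, $\varepsilon \in \{0,1\}^{\bbN}$, is injective into $\varprojlim^1 M_i$. This is false. Take $M_i = \bbZ$ for all $i$ with every bonding map equal to multiplication by $2$; this system is not Mittag--Leffler and $\varprojlim^1 M_i \cong \widehat{\bbZ}_2/\bbZ$. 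With the obvious witnesses $w_j = 1$, one computes that $[y^\varepsilon] = [y^{\varepsilon'}]$ if and only if the $2$-adic integer $\sum_{j} 2^{\,j}(\varepsilon_j - \varepsilon'_j)$ lies in $\bbZ$. Taking $\varepsilon \equiv 1$ and $\varepsilon' \equiv 0$ gives $\sum_j 2^{\,j} = -1 \in \bbZ$, so two binary sequences differing in every coordinate yield the same class. Choosing smarter representatives does not help in general, because the relation of being Eilenberg-cohomologous is governed by a quotient by a countable module, and the content of the statement is precisely that this quotient of a size-continuum set remains uncountable --- a cardinality fact, not an injectivity fact. What actually works is the paper's indirect route (Lemmas~\ref{lem:cardinality_invlimit} and~\ref{lem:not_mittag}): pass to the image subsystem $W_i = \im(M_i \to M_{i_0})$, use the six-term sequence to produce a surjection $\varprojlim^1 M_i \twoheadrightarrow \varprojlim^1 W_i$, then compare $\varprojlim^1 W_i$ against $\varprojlim W/W_i$ via the exact sequence $W \to \varprojlim W/W_i \to \varprojlim^1 W_i \to 0$, and finally show $\varprojlim W/W_i$ is uncountable for a non-stabilising descending chain. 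The last step is where the field-or-countable-PID hypothesis enters, via a splitting argument (field case) or a set-theoretic bijection with $\prod_i W_i/W_{i+1}$ (countable PID case). Your sketch correctly identifies this uncountability claim as the crux, but the diagonal-injectivity strategy you propose for establishing it does not go through.
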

In order to prove Proposition~\ref{prop:cardinality_invlimit}, we require two lemmas.
The following is a variant of a result of Gray~\cite{gray1966spaces}:
\begin{lem}\label{lem:cardinality_invlimit}
	Let $R$ be as in Proposition~\ref{prop:cardinality_invlimit}.
	Let $M$ be an $R$-module and let $M= M_0\supseteq M_1 \supseteq M_2 \supseteq \dots$ be a descending sequence of submodules that does not stabilise, i.e.\ for each $j$,  $\cap_i M_i\neq M_j$. Then $\varprojlim M/M_i$ is an uncountably generated $R$-module.
\end{lem}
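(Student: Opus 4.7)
The plan is to exploit non-stabilisation to construct uncountably many ``telescoping sum'' elements of $\varprojlim M/M_i$. First, inductively choose $0 = n_0 < n_1 < n_2 < \dots$ and elements $x_k \in M_{n_k} \setminus M_{n_{k+1}}$: given $n_k$, the hypothesis $\cap_i M_i \neq M_{n_k}$ furnishes some $x_k \in M_{n_k} \setminus \cap_i M_i$, and since the $M_i$ descend, $n_{k+1}$ can be taken to be any index with $x_k \notin M_{n_{k+1}}$. For each binary sequence $\epsilon = (\epsilon_k)_{k \in \bbN} \in \{0,1\}^{\bbN}$, set
\[
s_\epsilon^{(i)} \coloneqq \sum_{k\,:\,n_k < i} \epsilon_k\,x_k \in M, \qquad s_\epsilon \coloneqq \bigl(s_\epsilon^{(i)} + M_i\bigr)_i.
\]
Compatibility is immediate: for $j \geq i$, the difference $s_\epsilon^{(j)} - s_\epsilon^{(i)} = \sum_{i \leq n_k < j} \epsilon_k x_k$ lies in $M_i$ because each summand satisfies $x_k \in M_{n_k} \subseteq M_i$. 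Hence $s_\epsilon \in \varprojlim M/M_i$.

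Next I show that $\epsilon \mapsto s_\epsilon$ is injective. Given $\epsilon \neq \epsilon'$, let $k$ be the smallest index with $\epsilon_k \neq \epsilon'_k$; then
\[
s_\epsilon^{(n_{k+1})} - s_{\epsilon'}^{(n_{k+1})} \;=\; \sum_{l\,:\,n_l < n_{k+1}} (\epsilon_l - \epsilon'_l)\,x_l \;=\; (\epsilon_k - \epsilon'_k)\,x_k \;=\; \pm x_k,
\]
which is not in $M_{n_{k+1}}$ because $x_k \notin M_{n_{k+1}}$ and $M_{n_{k+1}}$ is a submodule. Therefore $s_\epsilon \neq s_{\epsilon'}$ already in $M/M_{n_{k+1}}$, and a fortiori in $\varprojlim M/M_i$. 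In particular $|\varprojlim M/M_i| \geq 2^{\aleph_0}$.

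When $R$ is countable (either a countable PID or a countable field), any countably generated $R$-module has cardinality at most $|R|\cdot\aleph_0 = \aleph_0$, so the bound $\geq 2^{\aleph_0}$ forces $\varprojlim M/M_i$ to be uncountably generated. When $R$ is an uncountable field (the only remaining case permitted by the hypotheses), cardinality alone is insufficient, so I upgrade the construction: the identical formula $(\epsilon_k)_k \mapsto (s_\epsilon^{(i)} + M_i)_i$ now defines an $R$-linear map $\Phi\colon R^{\bbN} \to \varprojlim M/M_i$. Injectivity holds by the same computation, invoking that in a field every nonzero $\epsilon_k$ is invertible, so $(\epsilon_k - \epsilon'_k) x_k \in M_{n_{k+1}}$ would force $x_k \in M_{n_{k+1}}$. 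Thus $R^\bbN$ embeds as an $R$-submodule of $\varprojlim M/M_i$, and the Vandermonde vectors $v_t \coloneqq (1, t, t^2, t^3, \dots) \in R^\bbN$ for $t \in R$ provide $|R| > \aleph_0$ many $R$-linearly independent elements (any finite dependence yields a non-vanishing Vandermonde determinant). Consequently $\varprojlim M/M_i$ has uncountable $R$-rank and cannot be countably generated.

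The main obstacle is precisely the uncountable-field case, in which a countably generated vector space can have cardinality $|R|$ and therefore the raw $2^{\aleph_0}$-element count is not enough; genuine $R$-linear independence is required. The telescoping-sum construction is rigid enough to upgrade from an injection of sets to an $R$-linear injection as soon as every nonzero scalar is invertible, after which Vandermonde supplies the requisite independent family inside $R^\bbN$.
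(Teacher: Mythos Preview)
Your proof is correct. The paper's argument is organised differently: it identifies $\varprojlim M/M_i$ with $\prod_i M_i/M_{i+1}$, as an $R$-module when $R$ is a field (using that each short exact sequence $0\to M_i/M_{i+1}\to M/M_{i+1}\to M/M_i\to 0$ splits), and merely as a set when $R$ is countable (using set-theoretic sections), and then asserts that such an infinite product is uncountably generated, respectively uncountable.

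Your approach is more explicit and self-contained. Rather than decomposing the whole inverse limit, you pass to a cofinal subsequence $(n_k)$ along which the chain strictly drops, pick witnesses $x_k\in M_{n_k}\setminus M_{n_{k+1}}$, and build telescoping sums indexed by $\{0,1\}^\bbN$ (or $R^\bbN$). This is essentially the minimal piece of the paper's product decomposition needed for the conclusion. One benefit of your route is that in the uncountable-field case you actually supply the linear-independence argument (via Vandermonde) that the paper leaves implicit when it says $\prod_i M_i/M_{i+1}$ is ``uncountably generated''. Conversely, the paper's product identification is a cleaner structural statement and makes the countable-ring case a one-liner. Both arguments rest on the same underlying idea; yours trades brevity for explicitness.
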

\begin{proof}
	For each $i$, we consider the short exact sequence \[1\to M_i/M_{i+1}\to M/M_{i+1}\to M/M_{i}\to 1.\]
	In the case $R$ is a field, the above short exact sequence splits as  $M/M_{i+1}\cong M/M_{i}\oplus M_i/M_{i+1}$. This ensures that  $\varprojlim M/M_i\cong \prod_i M_i/M_{i+1}$, which is uncountably generated as required.

	In the case $R$ is countable, we note that an $R$-module is countably generated if and only if it is countable. We can pick set-theoretic sections of the maps  $M/M_{i+1}\to M/M_{i}$ for each $i$, and hence identify $M/M_{i+1}$ with $M/M_{i}\times  M_i/M_{i+1}$ as sets. Thus $\varprojlim M/M_i$ can be identified with $\prod_i M_i/M_{i+1}$ as  sets, showing that $\varprojlim M/M_i$ is an uncountable set, hence uncountably generated as an $R$-module.
\end{proof}

\begin{lem}\label{lem:not_mittag}
	Let $R$ be as in Proposition~\ref{prop:cardinality_invlimit}.
	If $\{M_i\}$ is an inverse sequence of countably generated $R$-modules that does not satisfy the Mittag-Leffler condition, then $\varprojlim^1M_i$ is an uncountably generated $R$-module, and in particular, is non-trivial.
\end{lem}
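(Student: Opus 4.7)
The plan is to reduce the general statement to Lemma~\ref{lem:cardinality_invlimit} via two applications of the six-term exact sequence for inverse limits. The failure of Mittag--Leffler provides a non-stabilising descending chain of submodules inside some fixed $M_{i_0}$, which is exactly the input for Lemma~\ref{lem:cardinality_invlimit}; the challenge is then to transport the resulting uncountably generated inverse limit to a statement about $\varprojlim{}^1 M_i$ of the original system.

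First I would fix an index $i_0$ at which Mittag--Leffler fails and set $N_k \coloneqq \im(f_{i_0}^k\colon M_k \to M_{i_0})$ for each $k\geq i_0$. These are countably generated submodules of $M_{i_0}$ by Lemma~\ref{lem:submodule_ctble}, and they form a descending chain $N_{i_0}\supseteq N_{i_0+1}\supseteq\cdots$ which, by the failure of Mittag--Leffler at $i_0$, does not stabilise. As noted in the paragraph preceding the lemma, non-stabilisation forces $\bigcap_k N_k \neq N_j$ for every $j\geq i_0$, so Lemma~\ref{lem:cardinality_invlimit} applied to $M = M_{i_0}$ with the filtration $\{N_k\}$ yields that $\varprojlim M_{i_0}/N_k$ is an uncountably generated $R$-module.

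Next I would feed this into the six-term exact sequence coming from the short exact sequence of inverse systems
\[0 \to \{N_k\}_{k\geq i_0} \to \{M_{i_0}\}_{k\geq i_0} \to \{M_{i_0}/N_k\}_{k\geq i_0} \to 0,\]
where the middle term is the constant system and so has vanishing $\varprojlim{}^1$. The resulting four-term sequence
\[0 \to \varprojlim N_k \to M_{i_0} \to \varprojlim M_{i_0}/N_k \to \varprojlim{}^1 N_k \to 0\]
exhibits $\varprojlim{}^1 N_k$ as the cokernel of a map from the countably generated module $M_{i_0}$ into the uncountably generated module $\varprojlim M_{i_0}/N_k$. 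Since an extension of a countably generated module by a countably generated module is countably generated, it follows that $\varprojlim{}^1 N_k$ must itself be uncountably generated.

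Finally, I would relate this to $\varprojlim{}^1 M_i$ using the short exact sequence of inverse systems $0 \to \{K_k\} \to \{M_k\}_{k\geq i_0} \to \{N_k\}_{k\geq i_0} \to 0$, where $K_k \coloneqq \ker(M_k\to M_{i_0})$ and the bonding maps on $\{N_k\}$ are the inclusions $N_l\hookrightarrow N_k$ for $l\geq k$. The corresponding six-term sequence ends with a surjection $\varprojlim{}^1 M_k \twoheadrightarrow \varprojlim{}^1 N_k$; since a countably generated module cannot surject onto an uncountably generated one, $\varprojlim{}^1 M_k$ (for the cofinal subsystem, hence for the original system) is uncountably generated, and in particular non-trivial. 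The main obstacle is arranging that the failure of Mittag--Leffler really translates into a non-stabilising chain to which Lemma~\ref{lem:cardinality_invlimit} applies; everything else is formal homological algebra of inverse systems.
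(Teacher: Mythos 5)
Your proof is correct and follows essentially the same route as the paper's: you pick the same index $i_0$ where Mittag--Leffler fails, set up the same descending chain $N_k = \im(M_k \to M_{i_0})$ (the paper's $W_i$), invoke the same Lemma~\ref{lem:cardinality_invlimit} as the key ingredient, and apply the same two six-term exact sequences (for $0\to\{N_k\}\to\{M_{i_0}\}\to\{M_{i_0}/N_k\}\to 0$ with constant middle system, and for $0\to\{K_k\}\to\{M_k\}\to\{N_k\}\to 0$). The only differences are cosmetic: you argue directly rather than by contradiction, and you apply the two exact sequences in the opposite order; you are also slightly more explicit than the paper about passing from the tail $\{M_k\}_{k\geq i_0}$ back to the full system, which is a reasonable point to spell out.
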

\begin{proof}
	We assume for contradiction that   $\{M_i\}$ does not satisfy the Mittag-Leffler condition and $\varprojlim^1M_i$ is countably generated. We can  pick $i_0$  such that $\im(M_j\to M_{i_0})\neq \im(M_{j+1}\to M_{i_0})$ for infinitely many $j$. Set $W_i=\im(M_i\to M_{i_0})$. For each $i\geq i_0$, we have a short exact sequence $1\to K_i\to M_i\to W_i\to 1$, where $K_i=\ker(M_i\to M_{i_0})$.
	By the six-term exact sequence, we have a surjection $\varprojlim^1 M_i\to \varprojlim^1 W_i$, hence $\varprojlim^1 W_i$ is also countably generated.

	Let $W=W_{i_0}$ and note that the sequence $W=W_{i_0}\supseteq W_{i_0+1}\supseteq \dots$ does not stabilise by the choice of $i_0$. We now consider the short exact sequence $1\to \{W_i\}\to \{W\}\to \{W/W_i\}\to 1$.  The six-term exact sequence  yields an exact sequence $W\to \varprojlim W/W_i\to \varprojlim^1W_i\to 0$, noting that $\varprojlim^1 W=0$ as $\{W\}$ is the constant inverse system. Since $W$ and $\varprojlim^1W_i$ are countably generated, so is $\varprojlim W/W_i$. This contradicts  Lemma~\ref{lem:cardinality_invlimit} and the choice of $i_0$.
\end{proof}

\begin{proof}[Proof of Proposition~\ref{prop:cardinality_invlimit}]
	The  ``if'' direction of (\ref{item:cardinality_invlimit1}) is well-known, see~\cite[Proposition 3.5.7]{weibel1994introduction}, while the ``only if'' direction follows from Lemma~\ref{lem:not_mittag}. We deduce (\ref{item:cardinality_invlimit2}) from (\ref{item:cardinality_invlimit1}) and Lemma~\ref{lem:not_mittag}.
	It remains to show (\ref{item:cardinality_invlimit3}).
	Let $N=\varprojlim{} M_i$ and let $f_i:N\to M_i$ be the projection. Let $N_i\coloneq \ker(f_i)$, so that $N_0\supseteq N_1\supseteq \cdots$ is a descending chain of submodules of $N$.

	The universal property of inverse limits implies there is an isomorphism $\pi:N\to \varprojlim N/N_i$ such that postcomposing $\pi$ with the projection to $N/N_i$ is the quotient map.
	Since we are assuming that $\varprojlim N/N_i\cong  \varprojlim_{i} M_i$ is countably generated, it follows from Lemma~\ref{lem:cardinality_invlimit} that  the sequence $(N_i)$ stabilises, i.e.\ there exists an $i_0$ such that $N_k=N_{i_0}$ for all $k\geq i_0$. Since $N=\varprojlim{} M_i= \varprojlim N/N_i=N/N_{i_0}$, it follows that $N_{i_0}=0$ and thus $f_k:\varprojlim{} M_i\to M_k$ is injective for all $k\geq i_0$.
\end{proof}

\begin{defn}\label{defn:stable}
	An inverse system is said to be \emph{stable} if it satisfies the Mittag-Leffler condition and the projection $\varprojlim M_i\to M_i$ is injective for  $i$ sufficiently large.
\end{defn} Stable inverse systems have the following characterisation, which makes them easy to work with:
\begin{prop}[{\cite[Lemma 12.31.5.]{stacks-project}}]\label{prop:stable_limit}
	Let $\{M_i\}_i$ be a stable inverse system of $R$-modules, and let $M=\varprojlim M_i$. Then there exists an $i_0$ such that for all $i\geq i_0$, there is a direct sum decomposition $M_i\cong M\oplus Z_i$ such that the following hold:
	\begin{enumerate}
		\item For all $j\geq i\geq i_0$, the bonding maps $f^j_i:M\oplus Z_j\to M\oplus Z_i$ preserve the direct sum decomposition, and are of the form $f^j_i(m,z_j)=(m,f^j_i(z_j))$ for each $m\in M$ and $z_j\in Z_j$.
		\item For all $i\geq i_0$, there is some $j\geq i$ such that $f^j_i(Z_j)=0$.
		\item Each projection $f_i:\varprojlim M_i\to M_i=M\oplus Z_i$ is of the form $f_j(m)=(m,0)$.
	\end{enumerate}
\end{prop}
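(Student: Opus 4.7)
The plan is to pin down the stable image in each $M_i$ and then use it to produce compatible retractions. Setting $W_i := f_i(M) \subseteq M_i$ for $i \geq i_0$ and $W_i' := \bigcap_{k \geq i} \im(f_i^k)$ for the stable image supplied by Mittag--Leffler, I would first verify $W_i = W_i'$. The inclusion $W_i \subseteq W_i'$ is immediate from $f_i = f_i^k \circ f_k$; for the converse, the induced system $\{W_i'\}$ has surjective bonding maps (given $w \in W_i'$ and $j \geq i$, lift $w$ to $m_k \in M_k$ for $k$ large enough that both $\im(f_i^k) = W_i'$ and $\im(f_j^k) = W_j'$, then $f_j^k(m_k) \in W_j'$ maps to $w$ under $f_i^j$), so $\varprojlim W_i' \to W_i'$ is surjective. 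Since $M = \varprojlim M_i$ factors through and contains $\varprojlim W_i'$, one obtains $M = \varprojlim W_i'$ and hence $f_i(M) = W_i'$. Thus $\{W_i\}_{i \geq i_0}$ is identified with the constant system $\{M\}$ via the injections $f_i$.

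Next I would pass to a cofinal subsystem in which $f_i^{i+1}(M_{i+1}) = W_i$ holds at every index --- this is achieved by inductively choosing each new index to exceed the Mittag--Leffler stabilisation point for the previous one, and is harmless since the inverse limit is preserved. For each $i$ strictly greater than the base index, I would then define
\[ s_i := f_{i-1}^{-1} \circ f_{i-1}^i : M_i \to M, \]
which is well-defined because $f_{i-1}^i$ maps $M_i$ onto $W_{i-1} = f_{i-1}(M)$ and $f_{i-1}$ is injective. A direct check yields $s_i \circ f_i = f_{i-1}^{-1} \circ f_{i-1} = \id_M$, so setting $Z_i := \ker(s_i) = \ker(f_{i-1}^i)$ gives a splitting $M_i = f_i(M) \oplus Z_i \cong M \oplus Z_i$ via $f_i$.

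The verification of (1)--(3) is then mechanical. For (3), $f_i(m)$ lies in the $f_i(M)$-summand, hence equals $(m,0)$. For (2), taking $j = i+1$ yields $f_i^{i+1}(Z_{i+1}) = f_i^{i+1}(\ker(f_i^{i+1})) = 0$. For (1), the relation $f_i^j \circ f_j = f_i$ shows $f_i^j$ carries the $M$-summand identically, and for $z_j \in Z_j = \ker(f_{j-1}^j)$ with $j \geq i+1$ the computation
\[ f_{i-1}^i(f_i^j(z_j)) = f_{i-1}^j(z_j) = f_{i-1}^{j-1}(f_{j-1}^j(z_j)) = 0 \]
shows $f_i^j(z_j) \in \ker(f_{i-1}^i) = Z_i$, so the $Z$-summand is preserved as well.

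The hard part is producing the retractions $s_i$: an arbitrary submodule of an $R$-module need not be a direct summand, so the splitting cannot be obtained merely from knowing $W_i \subseteq M_i$. Mittag--Leffler (which, after cofinal relabelling, forces $f_{i-1}^i(M_i) = W_{i-1}$) and the injectivity of the $f_i$ (which makes $f_{i-1}^{-1}$ a well-defined partial inverse on $W_{i-1}$) must be combined precisely to define $s_i = f_{i-1}^{-1} \circ f_{i-1}^i$; neither condition alone suffices, and the cofinal passage is essential rather than cosmetic.
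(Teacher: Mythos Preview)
Your argument is correct on the cofinal subsystem, and the underlying mechanism---inverting an injective projection at a lower index to produce a retraction---is the same one the paper uses. The execution differs in one respect that is worth noting.

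The paper fixes a \emph{single} index $k$ with $f_k$ injective, picks $i_0\geq k$ at which the Mittag--Leffler image $\im(f_k^{i_0})$ has stabilised, and then defines one map $\beta\colon M_{i_0}\to M$ by $\beta = f_k^{-1}\circ f_k^{i_0}$. The retraction at every level $i\geq i_0$ is then $\beta\circ f_{i_0}^i$ (equivalently $f_k^{-1}\circ f_k^i$), and $Z_i=\ker(f_k^i)$. Because $\beta$ is defined on all of $M_{i_0}$, this works for every $i\geq i_0$ without any cofinal passage.

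Your construction instead uses a moving lower index, setting $s_i=f_{i-1}^{-1}\circ f_{i-1}^i$. This forces you to pass to a cofinal subsystem to ensure $\im(f_{i-1}^i)=W_{i-1}$ at each step. Your justification that this is ``harmless since the inverse limit is preserved'' does not quite close the gap: the proposition asserts a splitting of $M_i$ for \emph{all} $i\geq i_0$ in the original system, not merely on a cofinal subset, and interpolating the splitting to intermediate indices is not automatic (for $i_n<i<i_{n+1}$ one has $\im(f_{i_n}^i)\supsetneq W_{i_n}$ in general, so $f_{i_n}^{-1}\circ f_{i_n}^i$ is not defined). That said, the corollaries drawn from this proposition in the paper concern only $\varprojlim$ and $\varinjlim$, for which the cofinal version suffices; and your approach does buy the pleasant feature that property~(2) holds with $j=i+1$.

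On the positive side, you supply a careful proof that $f_i(M)$ equals the stable image $\bigcap_k\im(f_i^k)$, a fact the paper uses (in the form $\im(f_k)=\im(f_k^{i_0})$) but does not justify.
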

\begin{proof}
	As $\{M_i\}$ is stable, we can pick $k$ large enough such that $f_{k}:M\to M_k$ is injective. We pick $i_0\geq k$ such that $\im(M_{i_0}\xrightarrow{f_{k}^{i_0}} M_{k})=\im(M_{i}\xrightarrow{f_{k}^{i}} M_{k})$ for all $i\geq i_0$. Since $\im(f_{k})=\im(f_{k}^{i_0})$, there is homomorphism $\beta:M_{i_0}\to M$ such that $f_k^{i_0}=f_k\circ\beta$.
	For all $i\geq i_0$, we see that $f_k\beta f^i_{i_0}f_i=f_k$. Since $f_k=f_k^i f_i$ is injective, we deduce $f_i\beta f^i_{i_0}:M_i\to M_i$ is a projection onto $\im(f_i)$.  Therefore,  $M_{i}=\im(f_i)\oplus Z_i$, where $Z_i\coloneqq \ker(f_i\beta f^i_{i_0})$. Identifying $M$ with $\im(f_i)$ via $f_i$, we have $M_i\cong M\oplus Z_i$ and the bonding map respect these direct product decompositions. It is straightforward to verify each $f^j_i:M\oplus Z_j\to M\oplus Z_i$ is of the form $(m,z_j)=(m,f^j_i(z_j))$, and the projection $f_i:M\to M\oplus Z_i$ is of the form $m\mapsto (m,0)$. Moreover, since $\{M_i\}$ satisfies the Mittag-Leffler condition,  for each $i\geq i_0$ there is some $j\geq i$ with $\im(f_i^j)=\im(f_i)$. Thus $f_i^j(Z_j)=0$ as required.
\end{proof}
\begin{cor}\label{cor:invlimit_tensortor_stable}
	Let $\{M_i\}$ be a stable inverse system of $R$-modules with inverse limit $M$. For each $R$-module $N$, the induced inverse systems $\{M_i\otimes_R N\}$ and $\{\Tor_1^R(M_i,N)\}$ are stable with $\varprojlim{}(M_i\otimes_R N)=M\otimes_R N$ and $\varprojlim{}(\Tor_1^R(M_i,N))=\Tor_1^R(M,N)$.
\end{cor}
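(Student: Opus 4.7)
The plan is to reduce everything to the structural decomposition provided by Proposition~\ref{prop:stable_limit} and then exploit additivity of the functors $(-)\otimes_R N$ and $\Tor_1^R(-,N)$.

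First, I would apply Proposition~\ref{prop:stable_limit} to the stable inverse system $\{M_i\}$ to obtain an index $i_0$ such that for all $i\geq i_0$ there is a direct sum decomposition $M_i\cong M\oplus Z_i$, the bonding maps $f^j_i:M\oplus Z_j\to M\oplus Z_i$ are of the form $(m,z)\mapsto (m,f^j_i(z))$ for $j\geq i\geq i_0$, for every $i\geq i_0$ some $j\geq i$ satisfies $f^j_i(Z_j)=0$, and the projection $f_i:M\to M\oplus Z_i$ is $m\mapsto (m,0)$.

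Next, I would apply $(-)\otimes_R N$ to this setup. Since tensor product is additive, for $i\geq i_0$ we obtain natural isomorphisms
\[
M_i\otimes_R N\;\cong\;(M\otimes_R N)\oplus (Z_i\otimes_R N),
\]
and the induced bonding maps $f^j_i\otimes \id_N$ preserve this decomposition, acting as the identity on the first factor and as $f^j_i\otimes \id_N$ on the second. The condition $f^j_i(Z_j)=0$ yields $(f^j_i\otimes \id_N)(Z_j\otimes_R N)=0$, so for every $i\geq i_0$ some $j\geq i$ kills the $Z_j\otimes_R N$ summand, which gives the Mittag-Leffler condition and simultaneously forces the inverse limit to agree with $M\otimes_R N$ on the second factor as well; a direct computation with the Eilenberg map then shows $\varprojlim (M_i\otimes_R N)=M\otimes_R N$, and the projection to any $M_i\otimes_R N$ for $i\geq i_0$ is $m\otimes n\mapsto (m\otimes n,0)$, which is injective. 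This verifies both clauses of Definition~\ref{defn:stable}.

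The $\Tor$ case is essentially identical, using that $\Tor_1^R(-,N)$ is additive in its first argument: for $i\geq i_0$ we have $\Tor_1^R(M_i,N)\cong \Tor_1^R(M,N)\oplus \Tor_1^R(Z_i,N)$, with bonding maps respecting the decomposition, and $f^j_i(Z_j)=0$ implies $\Tor_1^R(f^j_i,\id_N)=0$ on the second summand. The same argument delivers stability and identifies the limit with $\Tor_1^R(M,N)$. I do not expect any real obstacle here; the only mildly delicate point is bookkeeping to confirm that the direct-sum decomposition produced by Proposition~\ref{prop:stable_limit} is preserved strictly enough (not merely up to isomorphism) by an additive functor, but this is automatic since we are just applying the functor to a concrete splitting of each $M_i$ and to the concrete bonding maps in block form.
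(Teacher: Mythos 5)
Your proof is correct and takes essentially the same approach as the paper: the paper's proof is the one-line observation that the result "follows from the isomorphisms $M_i\cong (\varprojlim M_i)\oplus Z_i$ from Proposition~\ref{prop:stable_limit} and the fact that $-\otimes_R N$ and $\Tor_1^R(-,N)$ commute with direct sums," which is exactly the reduction you carry out in more detail.
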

\begin{proof}
	This follows from the isomorphisms $M_i\cong (\varprojlim_{} M_i)\oplus Z_i$ from Proposition~\ref{prop:stable_limit} and the fact that  $-\otimes_R N$ and $\Tor_1^R(-,N)$ commute with direct sums.
\end{proof}

Under mild assumptions, the cohomology of an inverse limit of chain complexes can be computed using the Milnor exact sequence:
\begin{prop}\label{prop:milnor_exact}
	Let $\{C^\bullet_i\}_i$ be an inverse system of chain complexes such that each $C^\bullet_i$ is a chain complex of countably generated projective $R$-modules, and $\{C^\bullet_i\}$ satisfies the Mittag-Leffler condition.  Let $C^\bullet=\varprojlim C^\bullet_i$.
	For each $k$, there is a short exact sequence
	\[0\to \varprojlim_i{}^1 H^{k-1}(C_i^\bullet)\to H^k(C^\bullet)\xrightarrow{\psi}\varprojlim_i{} H^{k}(C_i^\bullet)\to 0.\] Moreover, the projection $H^k(C^\bullet)\xrightarrow{\psi}\varprojlim_i{} H^{k}(C_i^\bullet)\to H^k(C_i^\bullet)$ is induced by the projection $C^\bullet\to C^\bullet_i$.
\end{prop}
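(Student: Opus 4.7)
The plan is to run the classical Milnor exact sequence argument, which proceeds via the Eilenberg map. I would begin by setting up, for each cohomological degree $k$, the short exact sequence of cochain complexes
\begin{equation*}
0 \longrightarrow C^\bullet \longrightarrow \prod_i C^\bullet_i \xrightarrow{\Delta} \prod_i C^\bullet_i \longrightarrow 0,
\end{equation*}
where $\Delta$ is the Eilenberg map $\Delta((c_i)_i) = (c_i - f_i^{i+1}(c_{i+1}))_i$ applied levelwise. Exactness at the left and middle is immediate from the definition $\varprojlim C^\bullet_i = \ker \Delta$. The critical point is surjectivity of $\Delta$ on each $\prod_i C^k_i$, which is equivalent to the vanishing of $\varprojlim{}^1 C^k_i$. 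This is where the Mittag-Leffler hypothesis on $\{C^\bullet_i\}$ is used: combined with the fact that each $C^k_i$ is countably generated, Proposition~\ref{prop:cardinality_invlimit}(\ref{item:cardinality_invlimit1}) gives $\varprojlim{}^1 C^k_i = 0$, so $\Delta$ is surjective.

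Next I would take the associated long exact sequence in cohomology. Since direct products of $R$-modules are exact, cohomology commutes with arbitrary products: $H^k\!\left(\prod_i C^\bullet_i\right) \cong \prod_i H^k(C^\bullet_i)$, and under this identification the induced map $\Delta_*$ becomes precisely the Eilenberg map of the inverse system $\{H^k(C^\bullet_i)\}_i$ of cohomology modules. Consequently,
\begin{align*}
\ker\!\left(\Delta_* \colon \textstyle\prod_i H^k(C^\bullet_i) \to \prod_i H^k(C^\bullet_i)\right) &\cong \varprojlim_i H^k(C^\bullet_i), \\
\coker\!\left(\Delta_* \colon \textstyle\prod_i H^{k-1}(C^\bullet_i) \to \prod_i H^{k-1}(C^\bullet_i)\right) &\cong \varprojlim_i{}^1 H^{k-1}(C^\bullet_i).
\end{align*}
Splicing the long exact sequence at $H^k(C^\bullet)$ and reading off kernel and cokernel at the two adjacent terms yields the desired short exact sequence
\begin{equation*}
0 \to \varprojlim_i{}^1 H^{k-1}(C_i^\bullet) \to H^k(C^\bullet) \xrightarrow{\psi} \varprojlim_i H^k(C^\bullet_i) \to 0.
\end{equation*}

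For the naturality statement, I would trace the construction: the map $\psi$ is the one induced on $H^k$ by the inclusion $C^\bullet \hookrightarrow \prod_i C^\bullet_i$ followed by projection onto the $i$-th factor; since the composition $C^\bullet \hookrightarrow \prod_i C^\bullet_i \twoheadrightarrow C^\bullet_i$ is exactly the canonical projection $f_i \colon C^\bullet \to C^\bullet_i$, the composite $H^k(C^\bullet) \xrightarrow{\psi} \varprojlim_i H^k(C^\bullet_i) \to H^k(C^\bullet_i)$ agrees with the map on cohomology induced by $f_i$, as claimed.

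The main technical obstacle is the step establishing surjectivity of $\Delta$ at the chain level. A subtlety to mind is that the Mittag-Leffler condition hypothesised on $\{C^\bullet_i\}$ should be interpreted levelwise (on each $\{C^k_i\}_i$); once that is clarified, the invocation of Proposition~\ref{prop:cardinality_invlimit}(\ref{item:cardinality_invlimit1}) is direct, and everything else in the argument is bookkeeping with the long exact sequence.
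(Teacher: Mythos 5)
Your proposal is correct and runs the same Eilenberg-map/Milnor-sequence argument as the paper: both set up the short exact sequence $0 \to C^\bullet \to \prod_i C^\bullet_i \xrightarrow{\Delta} \prod_i C^\bullet_i \to 0$, use Mittag-Leffler via Proposition~\ref{prop:cardinality_invlimit}(\ref{item:cardinality_invlimit1}) to get surjectivity of $\Delta$, and read the result off the long exact sequence. Your handling of the naturality clause and the remark that Mittag-Leffler is meant levelwise are fine additions but do not change the route.
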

\begin{proof}
	Since $\{C^\bullet_i\}_i$ satisfies the Mittag-Leffler condition, Proposition~\ref{prop:cardinality_invlimit} implies the Eilenberg map $\Delta:\prod_i C^\bullet_i\to \prod_i C^\bullet_i$ is surjective. As $\ker(\Delta)=C^\bullet$, this yields the short exact sequence \[0\to C^\bullet\to \prod_i C^\bullet_i\xrightarrow{\Delta }\prod_i C^\bullet_i \to 0. \] The corresponding long exact sequence in cohomology is
	\[\cdots \prod_i H^{k-1}(C^\bullet_i)\xrightarrow{\Delta^{k-1}} \prod_i H^{k-1}(C^\bullet_i)\to H^k(C^\bullet)\to \prod_i H^{k}(C^\bullet_i) \xrightarrow{\Delta^k} %
		\cdots,  \] where $\Delta^*$ are the appropriate Eilenberg maps. The required short exact sequence follows from the observation that  $\varprojlim_i{}^1 H^{k-1}(C_i^\bullet)\cong \coker(\Delta^{k-1})$ and $\varprojlim_i H^{k}(C_i^\bullet)\cong \ker(\Delta^{k})$.
\end{proof}
Combining Lemma~\ref{lem:submodule_ctble}, Propositions~\ref{prop:cardinality_invlimit} and~\ref{prop:milnor_exact}, we conclude the following:
\begin{cor}\label{cor:stable_invsystem}
	Suppose that $\{C^\bullet_i\}_i$ and $C^\bullet$ are as in Proposition~\ref{prop:milnor_exact} and $R$ is either a countable PID or a field. Suppose $H^k(C^\bullet)$ is countably generated for $k\leq n$. Then  the inverse system  $\{H^k(C^\bullet_i)\}$ is stable for each $k< n$.
\end{cor}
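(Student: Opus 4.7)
The plan is to chain together the three parts of Proposition~\ref{prop:cardinality_invlimit} via the Milnor short exact sequence. First I would observe that since each $C^k_i$ is a countably generated projective $R$-module (so in particular a submodule of a countably generated free $R$-module), Lemma~\ref{lem:submodule_ctble} gives that every cocycle module $Z^k(C^\bullet_i)$ and every cohomology module $H^k(C^\bullet_i)$ is countably generated. This means that Proposition~\ref{prop:cardinality_invlimit} applies to each inverse system $\{H^k(C^\bullet_i)\}_i$.

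Next I would invoke Proposition~\ref{prop:milnor_exact} to obtain, for each $k$, the short exact sequence
\[
0 \to \varprojlim{}_i^{1} H^{k-1}(C^\bullet_i) \to H^k(C^\bullet) \xrightarrow{\psi_k} \varprojlim{}_i H^{k}(C^\bullet_i) \to 0.
\]
For any $k \leq n$, the hypothesis that $H^k(C^\bullet)$ is countably generated forces its submodule $\varprojlim{}^1 H^{k-1}(C^\bullet_i)$ to be countably generated by Lemma~\ref{lem:submodule_ctble}. Part~(\ref{item:cardinality_invlimit2}) of Proposition~\ref{prop:cardinality_invlimit} then forces this derived limit to vanish, and part~(\ref{item:cardinality_invlimit1}) translates this vanishing into the Mittag--Leffler condition for $\{H^{k-1}(C^\bullet_i)\}_i$. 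Re-indexing, the system $\{H^j(C^\bullet_i)\}_i$ satisfies Mittag--Leffler for every $j \leq n-1$, i.e.\ for every $j < n$.

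To finish, I would establish the injectivity-of-projection half of stability. Fix $k < n$. Applying the vanishing from the previous paragraph with index $k$ (legitimate since $k \leq n$) gives $\varprojlim{}^1 H^{k-1}(C^\bullet_i) = 0$, so the map $\psi_k$ in the Milnor sequence is an isomorphism. Therefore $\varprojlim_i H^k(C^\bullet_i) \cong H^k(C^\bullet)$ is countably generated, and part~(\ref{item:cardinality_invlimit3}) of Proposition~\ref{prop:cardinality_invlimit} yields some $i_0 = i_0(k)$ for which the projection $\varprojlim_i H^k(C^\bullet_i) \to H^k(C^\bullet_i)$ is injective whenever $i \geq i_0$. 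Combined with Mittag--Leffler, this is exactly the definition of stability in Definition~\ref{defn:stable}, completing the argument.

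The proof is almost purely bookkeeping once Proposition~\ref{prop:cardinality_invlimit} and Proposition~\ref{prop:milnor_exact} are in place; there is no substantive obstacle. The only mild subtlety is being careful with the indexing shift in the Milnor sequence, namely that countable generation of $H^k(C^\bullet)$ controls the Mittag--Leffler behaviour one degree below and (via injectivity of $\psi$) stability one degree below, which is why the conclusion drops from $k \leq n$ to $k < n$.
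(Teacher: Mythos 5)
Your argument is correct and is exactly what the paper intends: the paper's own ``proof'' is the one-line remark ``Combining Lemma~\ref{lem:submodule_ctble}, Propositions~\ref{prop:cardinality_invlimit} and~\ref{prop:milnor_exact}, we conclude the following,'' and your write-up supplies precisely the bookkeeping that remark leaves to the reader, including the degree shift that explains why stability is only obtained for $k<n$ rather than $k\leq n$. One micro-simplification: to see $\varprojlim_i H^k(C^\bullet_i)$ is countably generated you don't need $\psi_k$ to be an isomorphism, only a surjection (which the Milnor sequence always gives), since quotients of countably generated modules are countably generated.
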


Recall there is a dual notion of a direct system. A \emph{direct system} consists of  a sequence   $\{M_i\}_{i\in \bbN}$ of $R$-modules equipped with \emph{bonding maps} $f_i^j:M_i\to M_j$ for each $i\leq j$, satisfying the following:
\begin{itemize}
	\item $f_i^i=\id$; and
	\item $f_j^k\circ f_i^j =f_i^k$ for all $i\leq j\leq k$.
\end{itemize}
The \emph{direct limit} $\varinjlim M_i$ is an $R$-module $M$ equipped with \emph{canonical maps} $f_i:M_i\to M$ such that $f_i=f_j\circ f_i^j$ for all $i\leq j$, and satisfying the following universal property: If $N$ is any $R$-module such that there are maps $g_i:M_i\to N$ satisfying $g_i=g_j\circ f_i^j$ for all $i\leq j$, there is a unique map $g:M\to N$ such that $g_i=g\circ f_i$ for all $i$.

If $\{M_i\}$ is an \emph{inverse} system of $R$-modules and $N$ is an $R$-module, then applying the contravariant functors $\Hom_R(-,N)$ and $\Ext_R^1(-,N)$ to $\{M_i\}$ yields the induced direct systems $\{\Hom_R(M_i,N)\}$ and $\{\Ext_R^1(M_i,N)\}$. We have the following analogue of Corollary~\ref{cor:invlimit_tensortor_stable}:
\begin{cor}\label{cor:dirlimit_homext_stable}
	Let $\{M_i\}$ be a stable inverse system of $R$-modules with inverse limit $M$. Then the induced inverse systems $\{\Hom_R(M_i,N)\}$ and $\{\Ext_R^1(M_i,N)\}$ have direct limits $\varinjlim{}\Hom_R(M_i,N)=\Hom_R(M,N)$ and $\varinjlim{}\Ext_R^1(M_i,N)=\Ext_R^1(M,N)$.
\end{cor}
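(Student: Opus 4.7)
The plan is to reduce everything to Proposition~\ref{prop:stable_limit}, which already does the heavy lifting. That proposition provides, for $i \geq i_0$, a splitting $M_i \cong M \oplus Z_i$ compatible with the bonding maps, such that $f_i^j : M_j \to M_i$ takes the form $(m,z_j) \mapsto (m, f_i^j(z_j))$, and for every $i \geq i_0$ there is some $j \geq i$ with $f_i^j|_{Z_j} = 0$. Since the cofinal subsystem $\{M_i\}_{i \geq i_0}$ computes the same direct limit as the full system, I would work only with indices $i \geq i_0$ from the outset.

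Let $F$ denote either of the contravariant additive functors $\Hom_R(-,N)$ or $\Ext_R^1(-,N)$. Since $F$ is additive it preserves finite biproducts, so the splitting $M_i \cong M \oplus Z_i$ induces a splitting $F(M_i) \cong F(M) \oplus F(Z_i)$. Applying $F$ to the bonding map $f_i^j$ and using its form given by Proposition~\ref{prop:stable_limit}, the bonding map $F(f_i^j) : F(M_i) \to F(M_j)$ in the resulting direct system acts as the identity on the $F(M)$ summand and as $F(f_i^j|_{Z_j}) : F(Z_i) \to F(Z_j)$ on the complementary summand. In particular the direct system decomposes as an internal direct sum of the constant system $\{F(M)\}$ and the direct system $\{F(Z_i)\}$, so $\varinjlim_i F(M_i) \cong F(M) \oplus \varinjlim_i F(Z_i)$.

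Next I would show $\varinjlim_i F(Z_i) = 0$. By Proposition~\ref{prop:stable_limit}, for every $i \geq i_0$ there exists $j \geq i$ with $f_i^j|_{Z_j} = 0$, hence $F(f_i^j|_{Z_j}) = 0$ as well. Since every element of $\varinjlim_i F(Z_i)$ is represented by some $\alpha \in F(Z_i)$, and by choosing such a $j$ we have $\alpha$ mapping to $0$ in $F(Z_j)$, the element represents $0$ in the direct limit. Thus $\varinjlim_i F(M_i) \cong F(M)$, and it remains only to check that this isomorphism is the canonical one, i.e.\ is induced by $F$ applied to the projections $f_i : M \to M_i$. Under the splitting these projections are $m \mapsto (m,0)$, so $F(f_i) : F(M) \oplus F(Z_i) \to F(M)$ is projection to the first summand, which is exactly the inverse of the natural inclusion used in the identification above.

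I do not expect any serious obstacle here: the corollary is a formal consequence of the splitting supplied by Proposition~\ref{prop:stable_limit}, combined with the observation that a direct system whose bonding maps are eventually zero has vanishing direct limit. The only mild care needed is to restrict to the cofinal tail $\{M_i\}_{i \geq i_0}$ before invoking the splitting, and to verify that the cases $F = \Hom_R(-,N)$ and $F = \Ext_R^1(-,N)$ are handled uniformly, which they are since both are additive.
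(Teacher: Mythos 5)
Your proof is correct and is exactly the argument the paper intends: the paper's one-line proof invokes Proposition~\ref{prop:stable_limit} together with the additivity of $\Hom_R(-,N)$ and $\Ext_R^1(-,N)$, and you have simply spelled out why the direct summand $\{F(Z_i)\}$ has vanishing direct limit (the bonding maps are eventually zero) and why the resulting identification with $F(M)$ is the canonical one.
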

\begin{proof}
	This follows from  Proposition~\ref{prop:stable_limit} and the fact that  $\Hom_R(-,N)$ and $\Ext_R^1(-,N)$ commute with finite direct sums.
\end{proof}
Unlike inverse limits, direct limits of $R$-modules are exact: if  $0\to \{L_i\}\to \{M_i\}\to \{N_i\}\to 0$ is a short exact sequence of direct systems, then there is a short exact sequence $0\to \varinjlim L_i\to \varinjlim M_i\to  \varinjlim N_i\to 0$. Moreover, homology commutes with direct limits: if $C_\bullet=\varinjlim_i C_\bullet^i$ is a direct limit of chain complexes, then $H_n(C_\bullet)=\varinjlim_i(H_n(C_\bullet^i))$.
\begin{lem}\label{lem:dirlim_pair}
	Let $\{M_i\}$ be an inverse sequence of countably generated $R$-modules and let $N$ be an $R$-module.
	\begin{enumerate}
		\item\label{item:dirlim_pair1}  There  is a pairing \[\Phi:\varinjlim \Hom_R(M_i,N)\to \Hom( \varprojlim M_i,N)\] such that $\Phi(g_i(\alpha))(\sigma)=\alpha(f_i(\sigma))$ for each $i\in \bbN$, $\alpha\in \Hom_R(M_i,N)$ and $\sigma\in \varprojlim M_i$, where  $f_i:\varprojlim M_i\to M_i$ and $g_i:\Hom_R(M_i,N)\to\varinjlim \Hom_R(M_i,N)$ are the canonical maps.
		\item\label{item:dirlim_pair2} In the case where $R$ is a field and $ \varprojlim M_i$ has countable dimension, then $\Phi$ is surjective.
	\end{enumerate}
\end{lem}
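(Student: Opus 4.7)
For part (\ref{item:dirlim_pair1}), I would apply the universal property of the direct limit. For each $i$, define a map
\[\Phi_i : \Hom_R(M_i, N) \to \Hom_R(\varprojlim M_j, N), \qquad \Phi_i(\alpha) = \alpha \circ f_i,\]
where $f_i : \varprojlim M_j \to M_i$ is the canonical projection. The bonding map of the direct system $\{\Hom_R(M_i,N)\}$ is precomposition with the bonding map $f_i^j : M_j \to M_i$ of the inverse system, so for $\alpha \in \Hom_R(M_i,N)$ one has
\[\Phi_j(\alpha \circ f_i^j) = \alpha \circ f_i^j \circ f_j = \alpha \circ f_i = \Phi_i(\alpha),\]
since the projections satisfy $f_i^j \circ f_j = f_i$. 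Thus $\{\Phi_i\}$ is compatible with the bonding maps, and the universal property of the direct limit yields a unique $R$-module homomorphism $\Phi:\varinjlim \Hom_R(M_i,N) \to \Hom_R(\varprojlim M_j, N)$ with $\Phi \circ g_i = \Phi_i$, which is exactly the asserted formula $\Phi(g_i(\alpha))(\sigma) = \alpha(f_i(\sigma))$.

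For part (\ref{item:dirlim_pair2}), I would invoke Proposition~\ref{prop:cardinality_invlimit}(\ref{item:cardinality_invlimit3}). Set $M = \varprojlim M_i$. Since $R$ is a field and $M$ has countable dimension, $M$ is a countably generated $R$-module, so Proposition~\ref{prop:cardinality_invlimit}(\ref{item:cardinality_invlimit3}) applies and produces an index $i_0$ such that $f_{i_0} : M \to M_{i_0}$ is injective. Let $\beta \in \Hom_R(M,N)$ be arbitrary. Because $R$ is a field, the subspace $f_{i_0}(M) \leq M_{i_0}$ admits a complement $W$, so we may split $M_{i_0} = f_{i_0}(M) \oplus W$ and define $\alpha \in \Hom_R(M_{i_0},N)$ by $\alpha(f_{i_0}(\sigma) + w) = \beta(\sigma)$ for $\sigma \in M$ and $w\in W$; this is well-defined precisely because $f_{i_0}$ is injective. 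Then for every $\sigma \in M$,
\[\Phi(g_{i_0}(\alpha))(\sigma) = \alpha(f_{i_0}(\sigma)) = \beta(\sigma),\]
so $\Phi(g_{i_0}(\alpha)) = \beta$, establishing surjectivity.

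The substantive content in this argument is entirely bundled into Proposition~\ref{prop:cardinality_invlimit}(\ref{item:cardinality_invlimit3}), which in turn rests on Lemma~\ref{lem:cardinality_invlimit} (a Gray-type counting argument). Consequently there is no genuine obstacle in the proof of Lemma~\ref{lem:dirlim_pair} itself; part (\ref{item:dirlim_pair1}) is a direct unpacking of the universal property, and part (\ref{item:dirlim_pair2}) reduces to extending a linear map from a subspace, which is elementary once injectivity of some projection $f_{i_0}$ is in hand. The only care needed is to verify that countable dimension over a field gives countable generation as an $R$-module, which is immediate, so that the hypothesis of Proposition~\ref{prop:cardinality_invlimit}(\ref{item:cardinality_invlimit3}) is met.
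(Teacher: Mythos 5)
Your argument matches the paper's proof in both parts: part (1) is the universal property of direct limits applied to the compatible family $\alpha \mapsto \alpha\circ f_i$, and part (2) invokes Proposition~\ref{prop:cardinality_invlimit}(\ref{item:cardinality_invlimit3}) to get an injective projection $f_{i_0}$ and then extends $\beta$ along $f_{i_0}$ using a vector-space complement. No gaps, and no meaningful deviation from the paper's approach.
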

\begin{proof}
	(\ref{item:dirlim_pair1}): Let $f_i^j:M_j\to M_i$ be the bonding maps of $\{M_i\}$. Since the maps  $f_i^*:\Hom_R(M_i,N)\to \Hom( \varprojlim M_i,N)$ dual to $f_i: \varprojlim M_i\to M_i$  satisfy $f_j^*(f_i^j)^*=f_i^*$ for all $i\leq j$, the universal property of direct limits yields the required map $\Phi$.

	(\ref{item:dirlim_pair2}): Since $\varprojlim M_i$ has countable dimension, Proposition~\ref{prop:cardinality_invlimit} implies there is some $j$ such that $f_j:\varprojlim M_i\to M_j$ is injective. Let $\phi\in \Hom( \varprojlim M_i,N)$. Picking a complement of $\im(f_j)$ in  $M_j$, we can define a map $\widetilde\phi:M_j\to N$ such that $\widetilde\phi\circ f_j=\phi$. Thus we have $\Phi(g_j(\widetilde{\phi}))=\widetilde{\phi}\circ f_j=\phi$, showing $\Phi$ is indeed surjective.
\end{proof}

We now prove versions of the K\"unneth and Universal Coefficient Theorems for inverse limits:
\begin{thm}\label{thm:truncated_kunneth_chain}
	Let $R$ be a PID and let $N$ be an $R$-module.
	Suppose that $\{C^\bullet_i\}_i$ and $C^\bullet$ are as in Proposition~\ref{prop:milnor_exact}. Assume also that each $C^k_i$ is a free $R$-module.
	Assume that for $k\leq n$, $\{H^k(C^\bullet_i)\}$ is a stable inverse sequence.
	\begin{enumerate}
		\item\label{item:truncated_kunneth_chain1} Let $D^\bullet_i\coloneqq C^\bullet_i\otimes_R N$ for each $i$, and let $D^\bullet\coloneqq\varprojlim_i D^\bullet_i$.
		      For each $k<n$, we have a natural short exact sequence \[0\to  H^k(C^\bullet) \otimes_R N\xrightarrow{\phi} H^k(D^\bullet)\to \Tor_1^R(H^{k+1}(C^\bullet),S)\to 0,\]
		      and an injection $H^n(C^\bullet) \otimes_R N\xrightarrow{\phi} H^n(D^\bullet)$.
		      Moreover, the injections $\phi$ are given by $[\alpha]\otimes s\mapsto [\alpha\otimes s]$.
		\item\label{item:truncated_kunneth_chain2} Let $E_\bullet\coloneqq \varinjlim_i \Hom_R(C^\bullet_i,N)$.  For each $k<n$, we have a natural short exact sequence \[0\to \Ext^1_R(H^{k+1}(C^\bullet),N) \to H_k(E_\bullet)\xrightarrow{\psi} \Hom_R(H^{k}(C^\bullet), N)\to 0,\]
		      and a surjection $H_n(E_\bullet)\xrightarrow{\psi} \Hom_R(H^{n}(C^\bullet),N)$.   Moreover,  these  surjections $\psi$ are given by the pairing in Lemma~\ref{lem:dirlim_pair}.
		\item\label{item:truncated_kunneth_chain3}
		      In the case where $R$ is a field and $H^{n+1}(C^\bullet)$ is countable dimensional, there is a surjection $H_{n+1}(E_\bullet)\xrightarrow{\psi} \Hom_R(H^{n+1}(C^\bullet),N)$ given by Lemma~\ref{lem:dirlim_pair}.
	\end{enumerate}
\end{thm}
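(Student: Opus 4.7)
The plan is to combine the classical Universal Coefficient Theorem, applied at each level $i$, with the Milnor exact sequence (Proposition~\ref{prop:milnor_exact}) and the fact that direct limits are exact and commute with homology. The hypothesis that $\{H^k(C^\bullet_i)\}$ is stable for $k\leq n$ is what lets us control $\varprojlim$ and $\varprojlim^1$ of the derived tensor and Hom systems via Proposition~\ref{prop:stable_limit} and Corollaries~\ref{cor:invlimit_tensortor_stable} and~\ref{cor:dirlimit_homext_stable}.

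For (\ref{item:truncated_kunneth_chain1}), I would first apply the UCT over a PID to each free cochain complex $C^\bullet_i$, producing the natural short exact sequence
\[0\to H^k(C^\bullet_i)\otimes_R N\to H^k(D^\bullet_i)\to \Tor_1^R(H^{k+1}(C^\bullet_i),N)\to 0.\]
Viewed as a short exact sequence of inverse systems indexed by $i$, the associated six-term $\varprojlim/\varprojlim^1$ sequence, combined with Corollary~\ref{cor:invlimit_tensortor_stable} and Proposition~\ref{prop:stable_limit}, identifies the $\varprojlim$ terms as $H^k(C^\bullet)\otimes_R N$ and $\Tor_1^R(H^{k+1}(C^\bullet),N)$ and forces the $\varprojlim^1$ terms to vanish for $k<n$. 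This yields the desired short exact sequence with $\varprojlim_i H^k(D^\bullet_i)$ in the middle. To replace the middle term by $H^k(D^\bullet)$, I apply Proposition~\ref{prop:milnor_exact} to $\{D^\bullet_i\}$, which in turn requires verifying Mittag--Leffler for $\{D^\bullet_i\}$ (inherited from $\{C^\bullet_i\}$ since the bonding maps are of the form $f\otimes\id_N$) and vanishing of $\varprojlim^1 H^{k-1}(D^\bullet_i)$ (which itself follows from stability of the two UCT extremes at $k-1$). The injectivity at $k=n$ uses only the leftmost edge of the six-term sequence.

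For (\ref{item:truncated_kunneth_chain2}) I would dually apply the classical UCT
\[0\to \Ext^1_R(H^{k+1}(C^\bullet_i),N)\to H_k(\Hom_R(C^\bullet_i,N))\to \Hom_R(H^k(C^\bullet_i),N)\to 0\]
for each $i$. Since direct limits of $R$-modules are exact and commute with homology, taking $\varinjlim_i$ produces a short exact sequence with $H_k(E_\bullet)$ in the middle. For $k<n$, Corollary~\ref{cor:dirlimit_homext_stable} identifies the outer terms with $\Ext^1_R(H^{k+1}(C^\bullet),N)$ and $\Hom_R(H^k(C^\bullet),N)$, using that stability of $\{H^{k-1}(C^\bullet_i)\}$ and Milnor give $H^k(C^\bullet)=\varprojlim H^k(C^\bullet_i)$ for $k\leq n$. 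The surjection at $k=n$ is just the right edge. For (\ref{item:truncated_kunneth_chain3}), the Ext term vanishes over a field, so $H_{n+1}(E_\bullet)\cong \varinjlim\Hom_R(H^{n+1}(C^\bullet_i),N)$; stability of $\{H^n(C^\bullet_i)\}$ combined with Milnor gives $\varprojlim H^{n+1}(C^\bullet_i)=H^{n+1}(C^\bullet)$, which is countable dimensional by assumption, and Lemma~\ref{lem:dirlim_pair}(\ref{item:dirlim_pair2}) then supplies the required surjection onto $\Hom_R(H^{n+1}(C^\bullet),N)$.

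The main obstacle I expect is verifying that the natural maps $\phi$ and $\psi$ produced by these abstract machines are indeed the ones described in the statement, namely $[\alpha]\otimes s\mapsto [\alpha\otimes s]$ and the evaluation pairing of Lemma~\ref{lem:dirlim_pair}. This amounts to tracing the explicit formulas for the connecting maps in the UCT, the six-term sequence, and the Milnor sequence, and checking they are compatible with the canonical maps $f_i\colon \varprojlim\to M_i$ and $g_i\colon \Hom_R(C^\bullet_i,N)\to E_\bullet$. A secondary technical point is confirming that $\{D^\bullet_i\}$ and $\{H^{k-1}(D^\bullet_i)\}$ satisfy the Mittag--Leffler hypotheses needed to invoke Proposition~\ref{prop:milnor_exact}; these follow formally from the UCT at each degree together with stability of the relevant $\{H^k(C^\bullet_i)\}$.
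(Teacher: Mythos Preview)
Your proposal is correct and follows essentially the same route as the paper: apply the classical UCT at each level $i$, pass to the limit via the six-term $\varprojlim/\varprojlim^1$ sequence for (\ref{item:truncated_kunneth_chain1}) and exactness of $\varinjlim$ for (\ref{item:truncated_kunneth_chain2}), and invoke Corollaries~\ref{cor:invlimit_tensortor_stable} and~\ref{cor:dirlimit_homext_stable} to identify the outer terms. Your treatment of the secondary technical points (Mittag--Leffler for $\{D^\bullet_i\}$, and the explicit use of Lemma~\ref{lem:dirlim_pair}(\ref{item:dirlim_pair2}) in part (\ref{item:truncated_kunneth_chain3})) is if anything slightly more careful than the paper's own proof.
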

\begin{rem}\label{rem:ctbly_gen_invlimit_rem}
	Corollary~\ref{cor:invlimit_tensortor_stable} yields many situations in which Theorem~\ref{thm:truncated_kunneth_chain} can be applied.
\end{rem}
\begin{proof}
	Propositions~\ref{prop:cardinality_invlimit} and~\ref{prop:milnor_exact} imply that for $k\leq n+1$, we have  $\varprojlim H^k(C^\bullet_i)=H^k(C^\bullet)$.

	(\ref{item:truncated_kunneth_chain1}):
	Since each $C^\bullet_i$ is chain complex of free $R$-modules and $R$ is a PID, we have short exact sequences \begin{align}
		0\to H^k(C^\bullet_i)\otimes_R N\to H^k(D^\bullet_i )\to \Tor_1^R(H^{k+1}(C^\bullet_i),N)\to 0\label{eqn:kunneth_ses_chain_1}
	\end{align}for each $k$, and these are natural in $C^\bullet_i$. If $k\leq n$, then Corollary~\ref{cor:invlimit_tensortor_stable} implies  both $H^k(C^\bullet_i)\otimes_R N$ and $\Tor_1^R(H^{k}(C^\bullet_i),N)$ are stable, with inverse limits $H^k(C^\bullet)\otimes_R N$ and $\Tor_1^R(H^{k}(C^\bullet),N)$ respectively. Applying the six-term exact sequence to (\ref{eqn:kunneth_ses_chain_1}) for each $k<n$, we deduce  that $\varprojlim_{}^1H^k(D^\bullet_i )=0$ and there is a short exact sequence \begin{align}
		0\to H^k(C^\bullet)\otimes_R N\to \varprojlim  _i H^k(D^\bullet_i )\to \Tor_1^R(H^{k+1}(C^\bullet),N)\to 0\label{eqn:kunneth_ses_chain_2}
	\end{align} for each $k<n$.
	Propositions~\ref{prop:cardinality_invlimit} and~\ref{prop:milnor_exact} then imply  \[\varprojlim_i  H^k(D^\bullet_i)\cong H^k(D^\bullet)\] for $k\leq n$. Combining this with (\ref{eqn:kunneth_ses_chain_2}) yields the desired short exact sequence.
	Applying the six-term exact sequence to (\ref{eqn:kunneth_ses_chain_1}) with $k=n$ yields an injection $H^n(C^\bullet)\otimes_R N\to \varprojlim _i H^n( D^\bullet_i)\cong H^n(D^\bullet)$.

	(\ref{item:truncated_kunneth_chain2}):
	Since each $C^\bullet_i$ is chain complex of free $R$-modules and $R$ is a PID, we have short exact sequences \begin{align}
		0\to \Ext_R^1(H^{k+1}(C^\bullet_i),N)\to H_k(\Hom_R(C^\bullet_i,N))\to \Hom_R(H^{k}(C^\bullet_i),N)\to 0\label{eqn:kunneth_ses_chain_3}
	\end{align}for each $k$.
	Corollary~\ref{cor:dirlimit_homext_stable} implies that for each $k\leq n$, the direct systems $\{ \Ext_R^1(H^{k}(C^\bullet_i),N)\}$ and $\{\Hom_R(H^{k}(C^\bullet_i),N)\}$ are stable with direct limits $\Ext_R^1(H^{k}(C^\bullet),N)$ and $\Hom_R(H^{k}(C^\bullet),S)$ respectively.
	Thus applying the exact functor $\varinjlim_i$ to (\ref{eqn:kunneth_ses_chain_3}) with $k\leq n$, and using the fact the homology commutes with $\varinjlim_i$,   we deduce  that  there is a short exact sequence \begin{align*}
		0\to \varinjlim_i\Ext_R^1(H^{k+1}(C^\bullet_i),N)\to  H_k(E_\bullet)\to \Hom_R(H^{k}(C^\bullet),N)\to 0\label{eqn:kunneth_ses_chain_4}
	\end{align*} for each $k\leq n$.
	In the case $k<n$, we have \[\varinjlim_i\Ext_R^1(H^{k+1}(C^\bullet_i),N)\cong \Ext_R^1(H^{k+1}(C^\bullet),N)\] as required.

	(\ref{item:truncated_kunneth_chain3}) is just a restatement of Lemma~\ref{lem:dirlim_pair} in the present context.
\end{proof}

\bibliography{bibliography/bibtex}

\begin{thebibliography}{MSW11}

\bibitem[Alo94]{alonso1994finiteness}
Juan~M. Alonso.
\newblock Finiteness conditions on groups and quasi-isometries.
\newblock {\em J. Pure Appl. Algebra}, 95(2):121--129, 1994.

\bibitem[BB97]{bestvina1997morse}
Mladen Bestvina and Noel Brady.
\newblock Morse theory and finiteness properties of groups.
\newblock {\em Invent. Math.}, 129(3):445--470, 1997.

\bibitem[BBF15]{bestvinabrombergfujiwara2015constructing}
Mladen Bestvina, Ken Bromberg, and Koji Fujiwara.
\newblock Constructing group actions on quasi-trees and applications to mapping class groups.
\newblock {\em Publ. Math. Inst. Hautes \'{E}tudes Sci.}, 122:1--64, 2015.

\bibitem[BD11]{belldranishnikov2011asymptotic}
G.~Bell and A.~Dranishnikov.
\newblock Asymptotic dimension in {B}\k{e}dlewo.
\newblock {\em Topology Proc.}, 38:209--236, 2011.

\bibitem[BE73]{bierieckmann1973duality}
Robert Bieri and Beno Eckmann.
\newblock Groups with homological duality generalizing {P}oincar\'{e} duality.
\newblock {\em Invent. Math.}, 20:103--124, 1973.

\bibitem[BE74]{bierieckmann1974finiteness}
Robert Bieri and Beno Eckmann.
\newblock Finiteness properties of duality groups.
\newblock {\em Comment. Math. Helv.}, 49:74--83, 1974.

\bibitem[Bes96]{bestvina1996zboundary}
Mladen Bestvina.
\newblock Local homology properties of boundaries of groups.
\newblock {\em Michigan Math. J.}, 43(1):123--139, 1996.

\bibitem[BG84]{browngeoghegan1984infinitedimensional}
Kenneth~S. Brown and Ross Geoghegan.
\newblock An infinite-dimensional torsion-free {${\rm FP}_{\infty }$} group.
\newblock {\em Invent. Math.}, 77(2):367--381, 1984.

\bibitem[BH99]{bridson1999metric}
Martin~R. Bridson and Andr{\'e} Haefliger.
\newblock {\em Metric spaces of non-positive curvature}, volume 319 of {\em Grundlehren der Mathematischen Wissenschaften [Fundamental Principles of Mathematical Sciences]}.
\newblock Springer-Verlag, Berlin, 1999.

\bibitem[Bie72]{bieri1972gruppen}
Robert Bieri.
\newblock Gruppen mit {P}oincar\'{e}-{D}ualit\"{a}t.
\newblock {\em Comment. Math. Helv.}, 47:373--396, 1972.

\bibitem[Bie81]{bieri1981homological}
Robert Bieri.
\newblock {\em Homological dimension of discrete groups}.
\newblock Queen Mary College Mathematical Notes. Queen Mary College, Department of Pure Mathematics, London, second edition, 1981.

\bibitem[BM91]{bestvina1991boundary}
Mladen Bestvina and Geoffrey Mess.
\newblock The boundary of negatively curved groups.
\newblock {\em J. Amer. Math. Soc.}, 4(3):469--481, 1991.

\bibitem[Bro82]{brown1982cohomology}
Kenneth~S. Brown.
\newblock {\em Cohomology of groups}, volume~87 of {\em Graduate Texts in Mathematics}.
\newblock Springer-Verlag, New York-Berlin, 1982.

\bibitem[BW92]{blockweinberger1992aperiodic}
Jonathan Block and Shmuel Weinberger.
\newblock Aperiodic tilings, positive scalar curvature and amenability of spaces.
\newblock {\em J. Amer. Math. Soc.}, 5(4):907--918, 1992.

\bibitem[CdlH16]{cornulierdlH2016metric}
Yves Cornulier and Pierre de~la Harpe.
\newblock {\em Metric geometry of locally compact groups}, volume~25 of {\em EMS Tracts in Mathematics}.
\newblock European Mathematical Society (EMS), Z\"{u}rich, 2016.

\bibitem[DK18]{drutu2018geometric}
Cornelia Dru{\c t}u and Michael Kapovich.
\newblock {\em Geometric group theory}, volume~63 of {\em American Mathematical Society Colloquium Publications}.
\newblock American Mathematical Society, Providence, RI, 2018.
\newblock With an appendix by Bogdan Nica.

\bibitem[Dra06]{dranishnikov2006bestvinamess}
A.~N. Dranishnikov.
\newblock On {B}estvina-{M}ess formula.
\newblock In {\em Topological and asymptotic aspects of group theory}, volume 394 of {\em Contemp. Math.}, pages 77--85. Amer. Math. Soc., Providence, RI, 2006.

\bibitem[Dra09]{dranishnikov2009cohom}
A.~Dranishnikov.
\newblock Cohomological approach to asymptotic dimension.
\newblock {\em Geom. Dedicata}, 141:59--86, 2009.

\bibitem[Dun79]{dunwoody1979accessibility}
M.~J. Dunwoody.
\newblock Accessibility and groups of cohomological dimension one.
\newblock {\em Proc. London Math. Soc. (3)}, 38(2):193--215, 1979.

\bibitem[Dun85]{dunwoody1985accessibility}
M.~J. Dunwoody.
\newblock The accessibility of finitely presented groups.
\newblock {\em Invent. Math.}, 81(3):449--457, 1985.

\bibitem[EF97]{eskinfarb1997quasiflats}
Alex Eskin and Benson Farb.
\newblock Quasi-flats and rigidity in higher rank symmetric spaces.
\newblock {\em J. Amer. Math. Soc.}, 10(3):653--692, 1997.

\bibitem[Far74]{farrell1974second}
F.~Thomas Farrell.
\newblock The second cohomology group of {$G$} with {$Z_{2}G$} coefficients.
\newblock {\em Topology}, 13:313--326, 1974.

\bibitem[Far75]{farrell1975pdgroups}
F.~Thomas Farrell.
\newblock Poincar\'e duality and groups of type {${\textrm (FP)}$}.
\newblock {\em Comment. Math. Helv.}, 50:187--195, 1975.

\bibitem[FM98]{farbmosher1998bs1}
Benson Farb and Lee Mosher.
\newblock A rigidity theorem for the solvable {B}aumslag-{S}olitar groups.
\newblock {\em Invent. Math.}, 131(2):419--451, 1998.
\newblock With an appendix by Daryl Cooper.

\bibitem[FM00]{farbmosher2000abelianbycyclic}
Benson Farb and Lee Mosher.
\newblock On the asymptotic geometry of abelian-by-cyclic groups.
\newblock {\em Acta Math.}, 184(2):145--202, 2000.

\bibitem[FM02]{farbmosher2002surfacebyfree}
B.~Farb and L.~Mosher.
\newblock The geometry of surface-by-free groups.
\newblock {\em Geom. Funct. Anal.}, 12(5):915--963, 2002.

\bibitem[Fre45]{freudenthal1945uber}
Hans Freudenthal.
\newblock \"{U}ber die {E}nden diskreter {R}\"{a}ume und {G}ruppen.
\newblock {\em Comment. Math. Helv.}, 17:1--38, 1945.

\bibitem[FS96]{farbschwartz96}
Benson Farb and Richard Schwartz.
\newblock The large-scale geometry of {H}ilbert modular groups.
\newblock {\em J. Differential Geom.}, 44(3):435--478, 1996.

\bibitem[FW07]{fujiwarawhyte2007note}
Koji Fujiwara and Kevin Whyte.
\newblock A note on spaces of asymptotic dimension one.
\newblock {\em Algebr. Geom. Topol.}, 7:1063--1070, 2007.

\bibitem[Geo08]{geoghegan2008topological}
Ross Geoghegan.
\newblock {\em Topological methods in group theory}, volume 243 of {\em Graduate Texts in Mathematics}.
\newblock Springer, New York, 2008.

\bibitem[Ger93]{gersten1993quasi}
Steve~M. Gersten.
\newblock Quasi-isometry invariance of cohomological dimension.
\newblock {\em C. R. Acad. Sci. Paris S\'er. I Math.}, 316(5):411--416, 1993.

\bibitem[Gra66]{gray1966spaces}
Brayton~I. Gray.
\newblock Spaces of the same {$n$}-type, for all {$n$}.
\newblock {\em Topology}, 5:241--243, 1966.

\bibitem[Gro93]{gromov1993asymptotic}
M.~Gromov.
\newblock Asymptotic invariants of infinite groups.
\newblock In {\em Geometric group theory, {V}ol.\ 2 ({S}ussex, 1991)}, volume 182 of {\em London Math. Soc. Lecture Note Ser.}, pages 1--295. Cambridge Univ. Press, Cambridge, 1993.

\bibitem[Hop44]{hopf1944enden}
Heinz Hopf.
\newblock Enden offener {R}\"aume und unendliche diskontinuierliche {G}ruppen.
\newblock {\em Comment. Math. Helv.}, 16:81--100, 1944.

\bibitem[JW72]{johnsonwall1972groups}
F.~E.~A. Johnson and C.~T.~C. Wall.
\newblock On groups satisfying {P}oincar\'{e} duality.
\newblock {\em Ann. of Math. (2)}, 96:592--598, 1972.

\bibitem[KB02]{kapovichbenakli2002boundaries}
Ilya Kapovich and Nadia Benakli.
\newblock Boundaries of hyperbolic groups.
\newblock In {\em Combinatorial and geometric group theory ({N}ew {Y}ork, 2000/{H}oboken, {NJ}, 2001)}, volume 296 of {\em Contemp. Math.}, pages 39--93. Amer. Math. Soc., Providence, RI, 2002.

\bibitem[KK05]{kapovich2005coarse}
Michael Kapovich and Bruce Kleiner.
\newblock Coarse {A}lexander duality and duality groups.
\newblock {\em J. Differential Geom.}, 69(2):279--352, 2005.

\bibitem[Li18]{li2018dynamic}
Xin Li.
\newblock Dynamic characterizations of quasi-isometry and applications to cohomology.
\newblock {\em Algebr. Geom. Topol.}, 18(6):3477--3535, 2018.

\bibitem[Man05]{manning2005pseudo}
Jason~Fox Manning.
\newblock Geometry of pseudocharacters.
\newblock {\em Geom. Topol.}, 9:1147--1185, 2005.

\bibitem[Mar18]{margolis2018quasi}
Alexander~J. Margolis.
\newblock Quasi-isometry invariance of group splittings over coarse {P}oincar\'{e} duality groups.
\newblock {\em Proc. Lond. Math. Soc. (3)}, 116(6):1406--1456, 2018.

\bibitem[Mar21]{margolisxer2021geometry}
Alexander Margolis.
\newblock The geometry of groups containing almost normal subgroups.
\newblock {\em Geom. Topol.}, 25(5):2405--2468, 2021.

\bibitem[Mar24]{margolis2019codim1}
Alexander Margolis.
\newblock Groups of cohomological codimension one.
\newblock {\em Annales de l'{I}nstitut {F}ourier}, 2024.
\newblock To appear.

\bibitem[Mor16]{moran2016finite}
Molly~A. Moran.
\newblock Finite-dimensionality of {$\mathscr{Z}$}-boundaries.
\newblock {\em Groups Geom. Dyn.}, 10(2):819--824, 2016.

\bibitem[MSW03]{mosher2003quasi}
Lee Mosher, Michah Sageev, and Kevin Whyte.
\newblock Quasi-actions on trees. {I}. {B}ounded valence.
\newblock {\em Ann. of Math. (2)}, 158(1):115--164, 2003.

\bibitem[MSW11]{mosher2011quasiactions}
Lee Mosher, Michah Sageev, and Kevin Whyte.
\newblock Quasi-actions on trees {II}: {F}inite depth {B}ass-{S}erre trees.
\newblock {\em Mem. Amer. Math. Soc.}, 214(1008):vi+105, 2011.

\bibitem[NY23]{nowakyu2023large}
Piotr~W. Nowak and Guoliang Yu.
\newblock {\em Large scale geometry}.
\newblock EMS Textbooks in Mathematics. EMS Press, Berlin, 2023.

\bibitem[Ol'00]{olshanskiui2000basslubotzky}
A.~Yu. Ol'shanski\u{\i}.
\newblock On the {B}ass-{L}ubotzky question about quotients of hyperbolic groups.
\newblock {\em J. Algebra}, 226(2):807--817, 2000.

\bibitem[Pap07]{papasoglu2007group}
Panos Papasoglu.
\newblock Group splittings and asymptotic topology.
\newblock {\em J. Reine Angew. Math.}, 602:1--16, 2007.

\bibitem[Roe93]{roe1993coarse}
John Roe.
\newblock Coarse cohomology and index theory on complete {R}iemannian manifolds.
\newblock {\em Mem. Amer. Math. Soc.}, 104(497):x+90, 1993.

\bibitem[Roe03]{roe2003lectures}
John Roe.
\newblock {\em Lectures on coarse geometry}, volume~31 of {\em University Lecture Series}.
\newblock American Mathematical Society, Providence, RI, 2003.

\bibitem[Ros22]{rosendal2022coarse}
Christian Rosendal.
\newblock {\em Coarse geometry of topological groups}, volume 223 of {\em Cambridge Tracts in Mathematics}.
\newblock Cambridge University Press, Cambridge, 2022.

\bibitem[Rot09]{rotman2009homalg}
Joseph~J. Rotman.
\newblock {\em An introduction to homological algebra}.
\newblock Universitext. Springer, New York, second edition, 2009.

\bibitem[Rot15]{rotman2015advanced}
Joseph~J. Rotman.
\newblock {\em Advanced modern algebra. {P}art 1}, volume 165 of {\em Graduate Studies in Mathematics}.
\newblock American Mathematical Society, Providence, RI, third edition, 2015.

\bibitem[Sau06]{sauer2006cdqi}
R.~Sauer.
\newblock Homological invariants and quasi-isometry.
\newblock {\em Geom. Funct. Anal.}, 16(2):476--515, 2006.

\bibitem[Sch95]{schwartz95r1lattices}
Richard~Evan Schwartz.
\newblock The quasi-isometry classification of rank one lattices.
\newblock {\em Inst. Hautes \'Etudes Sci. Publ. Math.}, 82:133--168 (1996), 1995.

\bibitem[Sta68]{stallings1968torsionfree}
John~R. Stallings.
\newblock On torsion-free groups with infinitely many ends.
\newblock {\em Ann. of Math. (2)}, 88:312--334, 1968.

\bibitem[{Sta}24]{stacks-project}
The {Stacks project authors}.
\newblock The stacks project.
\newblock \url{https://stacks.math.columbia.edu}, 2024.

\bibitem[Swa69]{swan1969codim1}
Richard~G. Swan.
\newblock Groups of cohomological dimension one.
\newblock {\em J. Algebra}, 12:585--610, 1969.

\bibitem[Wei94]{weibel1994introduction}
Charles~A. Weibel.
\newblock {\em An introduction to homological algebra}, volume~38 of {\em Cambridge Studies in Advanced Mathematics}.
\newblock Cambridge University Press, Cambridge, 1994.

\bibitem[Wis96]{wise96thesis}
Daniel~T. Wise.
\newblock {\em Non-positively curved squared complexes: {A}periodic tilings and non-residually finite groups}.
\newblock ProQuest LLC, Ann Arbor, MI, 1996.
\newblock Thesis (Ph.D.)--Princeton University.

\end{thebibliography}
\bibliographystyle{alpha}
\end{document}